\theoremstyle{theorem}
\newtheorem{thm}{Theorem}[section]
\newtheorem{lem}[thm]{Lemma}
\newtheorem{prop}[thm]{Proposition}
\newtheorem{cor}[thm]{Corollary}
\newtheorem{conj}[thm]{Conjecture}
\theoremstyle{definition}
\newtheorem{defn}[thm]{Definition}
\newtheorem{quest}[thm]{Question}
\newtheorem{eg}[thm]{Example}
\newtheorem{rem}[thm]{Remark}
\newtheorem{definition}[thm]{Definition}
\newtheorem{lemma}[thm]{Lemma}
\newtheorem{cla}[thm]{Claim}
\newcommand\ben{\begin{enumerate}}
	\newcommand\een{\end{enumerate}}
\newcommand\cale{\mathscr{E}}
\newcommand\be{\begin{equation}}
	\newcommand\ee{\end{equation}}
\newcommand{\prodo}[2]{\prod_{{#1}=1}^{{#2}}}
\newcommand{\N}{\mathbb{N}}
\newcommand{\Z}{\mathbb{Z}}
\newcommand{\Q}{\mathbb{Q}}
\newcommand{\R}{\mathbb{R}}
\newcommand{\C}{\mathbb{C}}
\newcommand{\T}{\mathbb{T}}
\newcommand{\F}{\mathbb{F}}
\renewcommand{\O}{\mathcal{O}}
\newcommand{\Prime}{\mathbb{P}}
\newcommand{\Adele}{\mathbb{A}}
\newcommand{\K}{\mathbb{K}}
\newcommand{\I}{\mathcal{I}}
\newcommand{\B}{\mathcal{B}}
\newcommand{\D}{\mathcal{D}}
\newcommand{\M}{\mathcal{M}}
\newcommand{\Hil}{\mathcal{H}}
\newcommand{\es}{\emptyset}
\newcommand{\eps}{\varepsilon}
\newcommand{\ind}{\mathbbm{1}}
\newcommand{\innprod}[2]{\left\langle #1, #2 \right\rangle}
\newcommand{\norm}[2]{\left\| #2 \right\|_{#1}}
\newcommand{\floor}[1]{\left\lfloor #1 \right\rfloor}
\renewcommand{\hat}{\widehat}
\renewcommand{\tilde}{\widetilde}
\newcommand{\E}[2]{\mathbb{E}\left[ {#1} \mid {#2} \right]}
\newcommand{\X}{\mathbf{X}}
\newcommand{\Y}{\mathbf{Y}}
\newcommand{\UClim}{\text{UC-}\lim}
\newcommand{\Hom}{\textrm{Hom}}
\renewcommand{\Re}[1]{\text{Re}\left( #1 \right)}
\begin{document}

\begin{frontmatter}[classification=text]


\author[ack]{Ethan Ackelsberg}
\author[ber]{Vitaly Bergelson}
\author[bes]{Andrew Best}

\begin{abstract}
The purpose of this paper is to study the phenomenon of large intersections in the framework of multiple recurrence for measure-preserving actions of countable abelian groups.
Among other things, we show:
\begin{enumerate}
	\item	If $G$ is a countable abelian group and $\varphi, \psi : G \to G$ are homomorphisms such that
	$\varphi(G)$, $\psi(G)$, and $(\psi - \varphi)(G)$ have finite index in $G$, then for every ergodic measure-preserving system $(X, \B, \mu, (T_g)_{g \in G})$, every set $A \in \B$, and every $\eps > 0$, the set
	$\{g \in G : \mu(A \cap T_{\varphi(g)}^{-1}A \cap T_{\psi(g)}^{-1}A) > \mu(A)^3 - \eps\}$
	is syndetic.
	\item	If $G$ is a countable abelian group
	and $r, s \in \Z$ are integers such that
	$rG$, $sG$, and $(r \pm s)G$ have finite index in $G$,
	then for every ergodic measure-preserving system $(X, \B, \mu, (T_g)_{g \in G})$,
	every set $A \in \B$, and every $\eps > 0$, the set
	$\{g \in G : \mu(A \cap T_{rg}^{-1}A \cap T_{sg}^{-1}A \cap T_{(r+s)g}^{-1}A) > \mu(A)^4 - \eps\}$
	is syndetic.
\end{enumerate}
In particular, these extend and generalize results from \cite{bhk} concerning $\Z$-actions and \cite{btz2} on $\F_p^{\infty}$-actions.
Using an ergodic version of the Furstenberg correspondence principle, we obtain new combinatorial applications.
We also discuss numerous examples shedding light on the necessity of the various hypotheses above.
Our results lead to a number of interesting questions and conjectures, formulated in the introduction and at the end of the paper.
\end{abstract}
\end{frontmatter}

\section{Introduction}

The purpose of this paper is to study the phenomenon of \emph{large intersections} in the framework of multiple recurrence for measure-preserving systems.
For context, let us juxtapose some classical and some more recent results for $\Z$-actions. First, a definition:
\begin{definition} Let $(G,+)$ be a countable abelian group. A subset $S \subset G$ is called \emph{syndetic} if the union of finitely many translates of $S$ covers $G$, i.e., if there exist $g_1, \ldots, g_k \in G$ such that $G = \bigcup_{i=1}^k (g_i + S)$, where by $g_i + S$ we understand the set $\{g_i + s : s \in S\}$.
\end{definition}
\begin{thm}[Khintchine's recurrence theorem \cite{khintchine}]\label{history 1} For any invertible probability measure-preserving system $(X,\B,\mu,T)$, any $\eps > 0$, and any $A \in \B$, the set
	\be \{ n \in \Z : \mu(A \cap T^n A) > \mu(A)^2 - \eps \}
	\ee is syndetic.
\end{thm}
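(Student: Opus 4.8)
The plan is to linearize the problem in $L^2(\mu)$ via the spectral theorem. Put $f = \ind_A$ and let $U$ be the unitary operator $Uh = h \circ T$ on $L^2(\mu)$, so that $\mu(A \cap T^n A) = \innprod{f}{U^n f}$ for all $n \in \Z$ (using that $T$ preserves $\mu$). By the spectral theorem there is a finite positive Borel measure $\sigma$ on $\T = \R/\Z$ — the spectral measure of $f$ relative to $U$ — with $\innprod{f}{U^n f} = \int_\T e^{2\pi i n t}\, d\sigma(t)$; since $f$ is real this quantity is real, hence
\[ \mu(A \cap T^n A) = \int_\T \cos(2\pi n t)\, d\sigma(t) \qquad (n \in \Z). \]

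Next I would extract the contribution of the atom of $\sigma$ at $0$. The mass $\sigma(\{0\})$ equals $\norm{2}{Pf}^2$, where $P$ is the orthogonal projection onto the $U$-invariant vectors, i.e. onto $L^2(X, \I, \mu)$ with $\I$ the $\sigma$-algebra of $T$-invariant sets, so $Pf = \E{\ind_A}{\I}$. On a probability space $\norm{2}{Pf}^2 \geq \norm{1}{Pf}^2 = \left(\int \E{\ind_A}{\I}\, d\mu\right)^2 = \mu(A)^2$. Writing $\rho = \sigma - \sigma(\{0\})\delta_0$, a finite positive measure with $\rho(\{0\}) = 0$, we get for every $n$
\[ \mu(A \cap T^n A) = \sigma(\{0\}) + \int_\T \cos(2\pi n t)\, d\rho(t) \geq \mu(A)^2 + \int_\T \cos(2\pi n t)\, d\rho(t), \]
so it remains only to show that $S := \left\{ n \in \Z : \int_\T \cos(2\pi n t)\, d\rho(t) > -\eps \right\}$ is syndetic.

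For this, suppose for contradiction that the gaps of $S$ are unbounded, and take an interval $\{m, m+1, \dots, m+L-1\}$ of length $L$ that avoids $S$. Using the geometric sum identity and pulling the real part outside the integral,
\[ \frac{1}{L} \sum_{n=m}^{m+L-1} \int_\T \cos(2\pi n t)\, d\rho(t) = \Re{\int_\T e^{2\pi i m t}\, \frac{1}{L} \sum_{j=0}^{L-1} e^{2\pi i j t}\, d\rho(t)}, \]
and the integrand on the right has modulus at most $g_L(t) := \min\!\left(1, \frac{1}{2L\norm{}{t}}\right)$, where $\norm{}{t}$ is the distance from $t$ to the nearest integer. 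Since $g_L \le 1$ and $g_L(t) \to \ind_{\{0\}}(t)$ for every $t$ as $L \to \infty$, dominated convergence yields $\int_\T g_L\, d\rho \to 0$; as $\int_\T g_L\, d\rho$ bounds the modulus of the left-hand side for \emph{every} $m$, we may fix $L$ with $\int_\T g_L\, d\rho < \eps$, and then the average above exceeds $-\eps$, so the chosen interval must meet $S$ — a contradiction. Hence $S$ is syndetic, and for $n \in S$ the inequality of the previous display gives $\mu(A \cap T^n A) > \mu(A)^2 - \eps$, completing the proof.

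The spectral-theory ingredients are standard; the one place that needs care is the last paragraph. The atom at $0$ must be removed before averaging — otherwise its contribution to $\frac{1}{L}\sum_n \cos(2\pi n t)$ does not decay — and the decisive point is that the decay rate $\int_\T g_L\, d\rho$ of the normalized exponential sums tested against $\rho$ is independent of the window's starting point $m$. This uniformity is exactly what upgrades the statement ``the average over $[1,N]$ is eventually close to $0$'' to the syndeticity of $S$.
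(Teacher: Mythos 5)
Your proof is correct: the reduction $\mu(A \cap T^nA) = \innprod{f}{U^nf} = \int_\T \cos(2\pi nt)\,d\sigma(t)$, the identification $\sigma(\{0\}) = \norm{2}{\E{\ind_A}{\I}}^2 \ge \mu(A)^2$, and the uniform-in-$m$ estimate $\bigl|\frac1L\sum_{n=m}^{m+L-1}\int_\T\cos(2\pi nt)\,d\rho\bigr| \le \int_\T \min\bigl(1,(2L\norm{}{t})^{-1}\bigr)d\rho \to 0$ are all sound, and the last bound does exactly what you say: once $L$ is fixed with $\int g_L\,d\rho<\eps$, every interval of length $L$ must meet $S$, which is syndeticity. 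Be aware, though, that the paper does not prove Theorem \ref{history 1}; it is quoted from \cite{khintchine} as classical background, so there is no in-paper argument to match yours against.

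It is worth comparing your spectral route with the proof that the paper's own machinery suggests, since the latter is the skeleton actually used for Theorems \ref{thm: double Khintchine} and \ref{thm: triple Khintchine}. By the mean ergodic theorem (Theorem \ref{thm: ergodic theorem}), $\UClim_{n \in \Z}\mu(A \cap T^nA) = \int_X \ind_A\,\E{\ind_A}{\I}\,d\mu = \norm{2}{\E{\ind_A}{\I}}^2 \ge \mu(A)^2$ by Cauchy--Schwarz; if the return set were not syndetic, Lemma \ref{lem: syndetic thick} would supply a F{\o}lner sequence lying entirely in its complement, along which the averages would be at most $\mu(A)^2-\eps$, a contradiction. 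That argument needs no spectral theory, no invertibility, and transfers verbatim to arbitrary countable abelian (indeed amenable) group actions --- which is why it is the template for the generalizations in this paper. Your Herglotz/Wiener argument is essentially the classical proof: it isolates the same quantity ($\sigma(\{0\})$ is precisely $\norm{2}{\E{\ind_A}{\I}}^2$, and the standard proof of that identification is itself the mean ergodic theorem), and in exchange for invoking the spectral theorem it gives slightly more quantitative information, namely that the non-atomic part of the correlation sequence has interval averages decaying at a rate independent of the window's starting point, which is exactly the uniformity you correctly flag as the crux of upgrading average positivity to syndeticity.
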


This improves on the classical Poincar\'{e} recurrence theorem in two ways: it provides a lower bound on the size of the intersection and shows that the set of return times is large.
Notice that the bound $\mu(A)^2$ is optimal, since for mixing systems, $\mu(A \cap T^nA) \to \mu(A)^2$.
Colloquially, we say that Theorem~\ref{history 1} shows \emph{syndeticity of large intersections for single recurrence}.
A natural question to ask is whether similar improvements can be made for results about \emph{multiple recurrence}.
Recall the Furstenberg multiple recurrence theorem (also known as the ergodic Szemer\'{e}di theorem):
\begin{thm}[Furstenberg \cite{diag}] \label{thm: Szemeredi}
	For any invertible probability measure-preserving system $(X,\B,\mu,T)$, any $A \in \B$ with $\mu(A)>0$,
	and any positive integer $k \geq 1$,
	\begin{align} \label{eq: Furstenberg erg avg}
		\liminf_{N-M \to \infty}{\frac{1}{N-M} \sum_{n=M}^{N-1}{
				\mu\left( A \cap T^nA \cap \cdots \cap T^{kn}A \right)}} > 0.
	\end{align}
\end{thm}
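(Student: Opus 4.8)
The plan is to follow Furstenberg's ergodic-theoretic strategy, deriving multiple recurrence from a structure theorem for measure-preserving systems. First I would reduce to the ergodic case. By the ergodic decomposition $\mu = \int \mu_\omega \, d\mathbb{P}(\omega)$, we have $\mu(A \cap T^nA \cap \cdots \cap T^{kn}A) = \int \mu_\omega(A \cap T^nA \cap \cdots \cap T^{kn}A) \, d\mathbb{P}(\omega)$, so the averages in \eqref{eq: Furstenberg erg avg} are the $\mathbb{P}$-integrals of the corresponding averages for the components; since $\int \mu_\omega(A) \, d\mathbb{P}(\omega) = \mu(A) > 0$, a positive-measure set of components satisfies $\mu_\omega(A) > 0$, and Fatou's lemma ($\liminf \int \geq \int \liminf$) then reduces the theorem to the ergodic case. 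I would then isolate the ``SZ property'' — the assertion that \eqref{eq: Furstenberg erg avg} holds for a given system and every $A \in \B$ with $\mu(A) > 0$ — and aim to prove every ergodic system is SZ.

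The backbone is the Furstenberg--Zimmer structure theorem: every ergodic system is, via a transfinite tower of factors $\{\mathcal{Z}_\eta\}_{\eta \le \theta}$, built from the trivial system by successive compact (isometric) extensions with inverse limits at limit ordinals, and the whole system $X$ is a relatively weakly mixing extension of its maximal distal factor $\mathcal{Z}_\theta$. I would show the SZ property propagates up this tower via four lemmas: (i) the trivial one-point system is SZ (the $k$-fold intersection is just $\mu(A)$); (ii) an inverse limit of SZ factors is SZ, by approximating $\ind_A$ in $L^2$ by a function measurable with respect to some SZ factor; (iii) a compact extension of an SZ system is SZ; and (iv) a relatively weakly mixing extension of an SZ system is SZ. Transfinite induction then gives the SZ property for $X$, hence for all systems. (The case $k=2$ is, by contrast, just the mean ergodic theorem applied to $\ind_A$ and $T^{-n}\ind_A$, which already signals that for $k \geq 3$ one needs genuinely new input beyond Hilbert-space soft analysis.)

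I expect steps (iii) and (iv) to be the main obstacles, as they are the analytic core. For the compact extension step, the idea is that functions on $X$ that are almost periodic over $\mathcal{Z}_\eta$ are, along a syndetic set of times $n$, approximately $\mathcal{Z}_\eta$-measurable after translation by $T^n, T^{2n}, \dots, T^{kn}$, so a van der Waerden--type combinatorial input applied fiberwise lifts multiple recurrence from the base to the extension; the technical cost is the Hilbert-module formalism of conditional almost periodicity and verifying that the relevant return-time set is syndetic with uniformly positive lower density (needed to keep the $\liminf$ of averages, not just positivity for one $n$). For the relatively weakly mixing step, one runs a relative van der Corput / Roth $L^2$ estimate on the Furstenberg self-joining $X \times_{\mathcal{Z}_\theta} \cdots \times_{\mathcal{Z}_\theta} X$, using a relative Koopman--von Neumann dichotomy to show the averages in \eqref{eq: Furstenberg erg avg} differ by $o(1)$ from the corresponding averages for $\mathcal{Z}_\theta$, which are bounded below by the inductive hypothesis. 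Assembling these pieces completes the proof; alternatively, one could invoke the Host--Kra--Ziegler theory of characteristic nilfactors to replace the transfinite induction with an explicit characteristic factor and a statement about nilsystems, but the structure-theorem route sketched here is self-contained.
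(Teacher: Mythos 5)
The paper does not prove this theorem at all: it is stated as background and attributed to Furstenberg's 1977 paper \cite{diag}, so the only meaningful comparison is with that cited source. Your outline — reduction to the ergodic case via the ergodic decomposition and Fatou, then the Furstenberg--Zimmer structure theorem with the SZ property propagated through the trivial system, inverse limits, compact extensions (van der Waerden applied fiberwise to conditionally almost periodic functions), and relatively weakly mixing extensions (van der Corput on the relative self-joining) — is exactly the classical argument of the cited reference and is correct in outline, with the acknowledged caveat that steps (iii) and (iv) carry the real analytic work.
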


An immediate consequence of the positivity of this limit is that there is a constant $c > 0$ such that the set
\begin{align}
	\left\{ n \in \Z : \mu \left( A \cap T^nA \cap \cdots \cap T^{kn}A \right) > c \right\}
\end{align}
is syndetic.
In trying to find the optimal such $c > 0$ for $k \ge 2$, a curious picture emerges:
\begin{thm}[{\cite[Theorems 1.2 and 1.3]{bhk}}]\label{history 2} ~
	\begin{enumerate}[(i)]
		\item For any ergodic invertible probability measure-preserving system $(X, \B, \mu,T)$, any $\eps > 0$, and any $A \in \B$, the set
		\be \{ n \in \Z : \mu(A \cap T^nA \cap T^{2n}A) > \mu(A)^3 - \eps \} \ee is syndetic.
		\item For any ergodic invertible probability measure-preserving system $(X, \B, \mu,T)$, any $\eps > 0$, and any $A \in \B$, the set \be \{n \in \Z : \mu(A \cap T^n A \cap T^{2n}A \cap T^{3n}A) > \mu(A)^4 - \eps\} \ee is syndetic.
		\item There exists an ergodic system $(X, \B, \mu, T)$ with the following property: for any integer $\ell \geq 1$, there is a set $A =A(\ell) \in \B$ of positive measure such that
		\be
		\mu(A \cap T^nA \cap T^{2n} A \cap T^{3n}A\cap T^{4n}A ) \leq \frac 12\mu(A)^\ell
		\ee
		for every integer $n \neq 0$.
	\end{enumerate}
\end{thm}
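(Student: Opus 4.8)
The plan for (i) and (ii) is to pass to the characteristic factor of the relevant multiple ergodic average and to prove the statement there by combining a return-time argument on that factor with a ``long interval'' averaging trick; (iii) will instead be a direct construction on an ergodic $2$-step skew product. For (i): set $f=\ind_A$ and let $\tilde f=\E{f}{\mathcal{Z}_1}$, the conditional expectation onto the Kronecker factor $\mathcal{Z}_1=(Z,m_Z,R_\alpha)$, a rotation by $\alpha$ on a compact abelian group $Z$ with $\overline{\langle\alpha\rangle}=Z$; so $0\le\tilde f\le1$ and $\int_Z\tilde f\,dm_Z=\mu(A)$. By the theory of characteristic factors $\mathcal{Z}_1$ is characteristic for $\frac1N\sum_n\int f\cdot T^nf\cdot T^{2n}f\,d\mu$, and --- inserting an eigenvalue weight $\lambda^n$ ($\lambda\in\T$) into the van der Corput estimate, then approximating $\ind_U$ by trigonometric polynomials --- one gets for every open $U\subseteq Z$ that $\frac1{|I|}\sum_{n\in I}\ind_U(n\alpha)\big(\mu(A\cap T^nA\cap T^{2n}A)-\tilde a_n\big)\to0$ as $|I|\to\infty$, uniformly over intervals $I$, where $\tilde a_n:=\int_Z\tilde f(x)\tilde f(x+n\alpha)\tilde f(x+2n\alpha)\,dm_Z(x)$.

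The function $F(t):=\int_Z\tilde f(x)\tilde f(x+t)\tilde f(x+2t)\,dm_Z(x)$ is continuous on $Z$ with $F(0)=\int_Z\tilde f^3\ge\mu(A)^3$ (Jensen applied to $x\mapsto x^3$), so for a small open $U\ni0$ we have $F>\mu(A)^3-\eps/3$ on $U$; then $B_U=\{n:n\alpha\in U\}$ is syndetic and, by unique ergodicity of $R_\alpha$, has uniform positive lower density. If $\{n:\mu(A\cap T^nA\cap T^{2n}A)>\mu(A)^3-\eps\}$ were not syndetic, its complement would contain arbitrarily long intervals $I$; on $I\cap B_U$ we would have $\mu(A\cap T^nA\cap T^{2n}A)-\tilde a_n\le-2\eps/3$, so the average above over $I\cap B_U$ would stay $\le-\tfrac{2\eps}{3}\cdot(\text{density of }B_U)<0$ in the limit --- contradicting the displayed convergence. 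This gives (i). For (ii) the architecture is identical, but $\mathcal{Z}_1$ is replaced by the $2$-step nilfactor $\mathcal{Z}_2$, which is characteristic for the $4$-term average by the Conze--Lesigne / Host--Kra structure theory; with $\tilde f=\E{\ind_A}{\mathcal{Z}_2}$ and $\tilde a_n=\int\tilde f\cdot T^n\tilde f\cdot T^{2n}\tilde f\cdot T^{3n}\tilde f\,d\mu$, one checks (van der Corput, now with nilsequence weights) that $\frac1{|I|}\sum_{n\in I}w_n\big(\mu(A\cap T^nA\cap T^{2n}A\cap T^{3n}A)-\tilde a_n\big)\to0$ for every $2$-step nilsequence $(w_n)$. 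On a $2$-step nilsystem $n\mapsto\tilde a_n$ is itself a bounded $2$-step nilsequence $\Phi(S^nx_0)$ with $\Phi(x_0)=\tilde a_0=\int\tilde f^4\ge\mu(A)^4$, so by minimality and unique ergodicity of niltranslations on their orbit closures the set $\{n:\tilde a_n>\mu(A)^4-\eps/3\}$ contains a ``nil-Bohr'' set that is syndetic of uniform positive density, and the long-interval argument of (i) applies verbatim.

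For (iii): take the Anzai skew product $T(x,y)=(x+\alpha,y+x)$ on $\T^2$, which is ergodic for $\alpha$ irrational and has $\{n^2\alpha:n\ge1\}$ dense in $\T$ (Weyl), and let $A=\T\times B$. Since $T^m(x,y)=(x+m\alpha,\ y+mx+\binom m2\alpha)$, the second coordinates of $T^0(x,y),T^n(x,y),\dots,T^{4n}(x,y)$ are $b_k=a+kh+\binom k2\beta$ ($0\le k\le4$) with $\beta=n^2\alpha$, $a=y$, $h=nx+\binom n2\alpha$, and $(a,h)$ ranges over $\T^2$ measure-preservingly as $(x,y)$ does; hence
\begin{equation*}
\mu\big(A\cap T^nA\cap T^{2n}A\cap T^{3n}A\cap T^{4n}A\big)=\int_{\T^2}\prod_{k=0}^{4}\ind_B\!\left(a+kh+\tbinom k2\,n^2\alpha\right)\,da\,dh=:Q_B(n^2\alpha),
\end{equation*}
the ``quadratic $5$-progression count'' of $B$ with second difference $n^2\alpha$. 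As $Q_B$ is continuous in its argument when $B$ is open, it suffices to produce, for each $\ell$, an open $B=B(\ell)$ of positive measure with $\sup_{\beta\in\T}Q_B(\beta)\le\tfrac12\mu(B)^\ell$; such $B$ comes from a Behrend-type construction forcing $B$ to contain anomalously few $5$-term quadratic progressions uniformly in $\beta$ (the exponent $\ell$ being absorbed by taking $\mu(B)$ small enough relative to $\ell$, so that the unavoidable but tiny count falls below $\tfrac12\mu(B)^\ell$). Then $A(\ell)=\T\times B(\ell)$ works.

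The main obstacle is twofold. In (ii) it is the structure-theoretic input: not only that $\mathcal{Z}_2$ is characteristic for $4$-term averages (the full Host--Kra/Ziegler machinery together with equidistribution on nilmanifolds), but that this survives insertion of nilsequence weights, which requires redoing the van der Corput estimates while tracking the effect of the weight on the Gowers--Host--Kra-type seminorms controlling the error terms. In (iii) it is the construction of a positive-measure set with very few $5$-term quadratic progressions for \emph{every} second difference: since such progressions need not contain any ordinary $3$-term progression, the classical Behrend set does not suffice and one needs a genuinely ``quadratic'' sparse set, which is the combinatorial heart of the example (carried out in \cite{bhk}).
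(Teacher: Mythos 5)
Your parts (i) and (iii) are essentially sound and essentially the routes taken here: for (i), a weighted limit formula on the Kronecker factor plus Jensen and a F{\o}lner/syndeticity contradiction is exactly the mechanism of Theorem \ref{thm: double Khintchine} (the paper uses a continuous Urysohn weight $\eta$ rather than $\ind_U$, which avoids the boundary/approximation fuss you gloss over, but that is cosmetic); for (iii), the skew product $T(x,y)=(x+\alpha,y+x)$ with $A=\T\times B$ and a QC5-free set of Behrend--Ruzsa quality is the construction of \cite{bhk} and of Section \ref{sec: quadruple} here, though note your heuristic ``take $\mu(B)$ small relative to $\ell$'' is backwards: since the trivial configurations force $Q_B(\beta)\gtrsim \mu(B)/N$, one needs $\mu(B)^{\ell-1}\gtrsim 1/N$, i.e.\ the QC5-free set must be \emph{dense} ($|B|\gtrsim N^{1-1/(\ell-1)}$), which is precisely why Ruzsa's $Ne^{-c\sqrt{\log N}}$ bound is needed.

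Part (ii), however, has a genuine gap, and it sits at the step you treat as obvious rather than the one you flag. You assert that on the characteristic ($2$-step) factor the correlation sequence satisfies $\tilde a_n=\Phi(S^nx_0)$ for a continuous $\Phi$ on a nilmanifold with $\Phi(x_0)=\tilde a_0=\int\tilde f^4\ge\mu(A)^4$, and then harvest a syndetic set of near-returns. That principle is false in the generality you use it: multicorrelation sequences are nilsequences only up to an error tending to $0$ in uniform density, and the error at $n=0$ (equivalently, at the return times you want) is exactly what need not be small. Concretely, if your principle were valid it would apply verbatim with $\mathcal{Z}_3$ in place of $\mathcal{Z}_2$ and $\int\tilde f^5$ in place of $\int\tilde f^4$, yielding the $5$-term statement and contradicting part (iii) (indeed, for the $2$-step skew-product counterexample the whole system is its own $3$-step factor, so not even your weighted van der Corput step would be needed). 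A Fourier computation on the skew product shows the mechanism: the ``structured'' nilsequence part of $n\mapsto\int\prod_k T^{kn}h\,d\mu$ only retains frequency tuples satisfying the linear constraints $\sum_k q_k=\sum_k kq_k=0$, whereas $\tilde a_0=\int h^{k+1}$ sums over all tuples with $\sum_k q_k=0$; the discrepancy is the uniform-density-small error, concentrated precisely at $n=0$ and its near-returns. What actually makes the $4$-term case work --- in \cite{bhk} and in this paper's Theorem \ref{thm: triple Khintchine} --- is the parallelogram identity $\sum_i k_ia_i=\sum_i k_ia_i^2=0$: one localizes only the \emph{Kronecker} coordinate (so the weight is a Bohr weight, absorbable by eigenfunctions as in Lemma \ref{lem: Kronecker with a twist} and Claim \ref{cla: twisted formula}), and the limit formula on the Conze--Lesigne factor, with the fiber variables fully averaged, is bounded below by $\mu(A)^4$ via the change-of-variables-plus-Jensen argument of inequality \eqref{eq: CL factor bound} (using $a_iH=H$, Lemma \ref{lem: H divisibility}). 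Your proposal never engages this point, which is the reason length $4$ succeeds while length $5$ fails; your separately flagged worry about van der Corput with nilsequence weights is real but secondary, and would not rescue the argument even if resolved.
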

\noindent In other words, parts (i) and (ii) of Theorem \ref{history 2} show syndeticity of large returns for double (resp. triple) recurrence, and part (iii) shows that a natural generalization of parts (i) and (ii) for longer expressions cannot proceed.

The results of parts (i) and (ii) were subsequently generalized by Frantzikinakis,\footnote{We do not give a full statement of \cite[Theorem C]{frathree}, as it deals with various polynomial configurations that are beyond the scope of this paper. The linear patterns that appear here come as a special case.} and part (iii) was generalized by Donoso, Le, Moreira, and Sun:\footnote{Part (iii) of Theorem \ref{thm: fra} also follows from our more general considerations in Section \ref{sec: quadruple} (see Corollary \ref{cor: Z quadruple}).}
\begin{thm}[\cite{frathree}, special case of Theorem C; \cite{dlms}, Theorem 1.5] \label{thm: fra} ~
	\begin{enumerate}[(i)]
		\item	Let $a, b \in \Z$ be nonzero and distinct.
		For any ergodic invertible probability measure-preserving system
		$(X, \B, \mu,T)$, any $\eps > 0$, and any $A \in \B$, the set
		\be \{ n \in \Z : \mu( A \cap T^{an}A \cap T^{bn}A) > \mu(A)^3 - \eps \} \ee is syndetic.
		\item Let $a, b, c \in \Z$ be nonzero and distinct such that $(0,a,b,c)$ forms a parallelogram
		in the sense that $0+c = a+b$.
		For any ergodic invertible probability measure-preserving system
		$(X, \B, \mu,T)$, any $\eps > 0$, and any $A \in \B$, the set
		\be \{ n \in \Z : \mu( A \cap T^{an}A \cap T^{bn}A \cap T^{cn}A) > \mu(A)^4 - \eps \} \ee
		is syndetic.
		\item	Let $a, b, c, d \in \Z$ be nonzero and distinct.
		There exists an ergodic invertible probability measure-preserving system $(X, \B, \mu, T)$ such that,
		for any $\ell \ge 1$, there is a set $A = A(\ell) \in \B$ of positive measure such that
		\be
		\mu(A \cap T^{an}A \cap T^{bn} A \cap T^{cn}A\cap T^{dn}A ) \leq \frac 12\mu(A)^\ell
		\ee
		for every integer $n \neq 0$.
	\end{enumerate}
\end{thm}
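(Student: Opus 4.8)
The plan is to obtain parts~(i) and~(ii) through the theory of characteristic factors (in the spirit of \cite{bhk,frathree}), reducing each to an estimate on a nilsystem of bounded step, and to obtain part~(iii) by an explicit construction (cf.\ \cite{dlms}). For parts~(i) and~(ii), the Host--Kra structure theory tells us that the averages $\frac1N\sum_{n=1}^N\mu(A\cap T^{an}A\cap T^{bn}A)$ (and their four-term analogues, with $c=a+b$) converge and are governed by a characteristic factor which is an inverse limit of nilsystems of bounded step. What one needs in order to reach a \emph{syndeticity} — rather than a mere positivity — statement is the sharper fact that $n\mapsto\mu(A\cap T^{an}A\cap T^{bn}A)$ (resp.\ the four-term expression) agrees, up to an error negligible in uniform density, with the corresponding nilsequence coming from that factor; since removing a set of zero Banach density from a syndetic set leaves a syndetic set, it then suffices to treat the nilsystem itself. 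One presents a two-step nilsystem as an abelian group extension $Z\times_\rho K$ of its Kronecker factor $Z$ (a compact abelian group on which $T$ acts by a rotation $\alpha$), with $\rho\colon Z\to K$ satisfying the Conze--Lesigne equation, so that $T^m(z,k)=(z+m\alpha,\,k+\rho_m(z))$, where $\rho_m(z)=\sum_{j=0}^{m-1}\rho(z+j\alpha)$; higher steps are handled by the analogous tower.

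Now restrict $n$ to a Bohr set $\{n:n\alpha\in U\}$, which is syndetic for every neighbourhood $U$ of the identity of $Z$. Along it the relevant transformations are uniformly close to the identity in the $Z$-direction, so that the cocycle identity gives $\rho_{cn}(z)\approx\rho_{an}(z)+\rho_{bn}(z)$ — this approximate additivity in the $K$-fibre is exactly where the parallelogram relation $c=a+b$ is used — and, with $f=\ind_A$ and $f_z=f(z,\cdot)$, the triple (resp.\ quadruple) intersection becomes, up to $O(\eps)$, the average over $z\in Z$ of $\int_K f_z(k)f_z(k+\rho_{an}(z))f_z(k+\rho_{bn}(z))\,dk$ in case~(i), and of the box $\int_K f_z(k)f_z(k+\rho_{an}(z))f_z(k+\rho_{bn}(z))f_z(k+\rho_{an}(z)+\rho_{bn}(z))\,dk$ in case~(ii). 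When $K$ is trivial this is immediate — the Bohr set already makes all three transformations close to the identity, and the intersection is within $\eps$ of $\int f^3\,d\mu=\mu(A)\ge\mu(A)^3$ — so the two-step case is where the work lies. There one chooses, using the Conze--Lesigne equation, a \emph{syndetic} subset of these $n$ along which the cocycle sums $\rho_{an},\rho_{bn}$ are equidistributed enough (after averaging in $z$) that, since $f_z$ is an indicator, a convexity argument pushes the fibrewise average up to at least $\int_Z|A_z|^{3}\,dz\ge\bigl(\int_Z|A_z|\,dz\bigr)^{3}=\mu(A)^{3}$ in case~(i), and — applying in addition the monotonicity $\|f_z\|_{U^2(K)}^4\ge|A_z|^4$ of the Gowers norm to the box — up to at least $\mu(A)^{4}$ in case~(ii). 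I expect this last step to be the main obstacle: extracting the correct syndetic set of $n$ and the needed averaged equidistribution of $\rho_{an},\rho_{bn}$ from the Conze--Lesigne equation, all the more so since the unrestricted average of $\mu(A\cap T^{an}A\cap T^{bn}A)$ may genuinely fall below $\mu(A)^{3}$, so that the large values must be located by hand rather than read off from an average.

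For part~(iii), one gives an explicit construction: an ergodic system together with, for each $\ell$, a set $A(\ell)$ of positive measure in which the five-point configuration $\{x,T^{an}x,T^{bn}x,T^{cn}x,T^{dn}x\}$ essentially never occurs for $n\ne0$. The guiding idea is a Behrend-type construction — since $\{0,a,b,c,d\}$ consists of five distinct points, no five translates $\{x+c_in\}$ can all lie on a suitable sphere, so a spherical set in a finite cyclic group has empty five-fold intersection for every $n$ not divisible by the period — and the difficulty is to remove the ``periodic'' nonzero $n$ as well, which forces one to pass to an aperiodic system (an inverse limit, or a skew-product over a Kronecker base) and to choose $A(\ell)$ carefully so that $\mu(A(\ell)\cap T^{an}A(\ell)\cap\cdots\cap T^{dn}A(\ell))\le\tfrac12\mu(A(\ell))^\ell$ holds for every $n\ne0$ and every $\ell$ at once. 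This part also follows from the more general construction carried out in Section~\ref{sec: quadruple}, specialized to $\Z$-actions (Corollary~\ref{cor: Z quadruple}).
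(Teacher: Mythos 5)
Your route for parts (i)--(ii) is not the one this paper takes (here they are the $G=\Z$ special cases of Theorems \ref{thm: double Khintchine} and \ref{thm: triple Khintchine}, proved via the Kronecker and Conze--Lesigne characteristic factors, the limit formulas of Theorems \ref{thm: Kronecker} and \ref{thm: limit formula}, and the Urysohn-weight argument of Sections \ref{sec: dK}--\ref{sec: tK}), but it is the legitimate strategy of \cite{bhk, frathree}: decompose the correlation sequence into a nilsequence plus an error negligible in uniform density (your observation that deleting a zero-Banach-density set from a syndetic set leaves it syndetic is correct), then work on a two-step system $Z\times_\rho K$. The problem is that the two decisive steps are asserted rather than proved. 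The ``approximate additivity'' $\rho_{cn}\approx\rho_{an}+\rho_{bn}$ on a Bohr set does not follow from the cocycle identity: that identity gives $\rho_{(a+b)n}(z)=\rho_{an}(z)+\rho_{bn}(z+an\alpha)$, and the modulus of continuity of the cocycle sum $\rho_{bn}$ deteriorates with $n$, so closeness of $an\alpha$ to $0$ yields nothing unless the Conze--Lesigne equation is actually exploited --- which is exactly the step you defer. Likewise ``equidistributed enough \dots a convexity argument'' hides the real issue: the pair $(\rho_{an}(z),\rho_{bn}(z))$ equidistributes only in a Mackey subgroup of $K\times K$, which is in general proper, and one must show that the limiting integral over that subgroup still dominates $\mu(A)^3$ (resp.\ $\mu(A)^4$ for the parallelogram); this is precisely the content of the Mackey-group computation and the substitution/Jensen argument in Sections \ref{sec: limit formula 3} and \ref{sec: tK} (or of Frantzikinakis's Theorem C), and you explicitly leave it open. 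So the heart of the lower bound in (i)--(ii) is missing.

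For part (iii) the sketch rests on a wrong idea. A Behrend-type sphere argument aimed at the linear configuration $\{x+c_in : i=0,\dots,4\}$ is both superfluous (three collinear points already cannot lie on a sphere) and insufficient: a set avoiding that linear pattern only produces non-ergodic examples of the kind in Theorem \ref{history 2 needs ergodicity} and Section \ref{sec: ergodicity}, whereas (iii) requires an ergodic system. The ergodic construction (in \cite{bhk} and in Section \ref{sec: quadruple} here) is the quadratic skew product $T(x,y)=(x+\alpha,\ y+2x+\alpha)$ on $\T^2$, whose fibre pattern at times $c_0n,\dots,c_4n$ is the value of a polynomial of degree at most $2$ at five points; what must be avoided are the quadratic configurations QC5$(r)$, and this is where five points are genuinely needed (the degree-$4$ polynomial $\|P\|^2-c^2$ must acquire five roots before it vanishes identically, Lemma \ref{lem: Ruzsa}), i.e.\ Ruzsa's theorem and its generalization Theorem \ref{thm: gen Ruzsa}, not Behrend's theorem. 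Your closing appeal to Corollary \ref{cor: Z quadruple} is accurate --- the paper itself notes (iii) follows from it --- but that invokes the paper's own result rather than supplying an independent argument.
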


An astute reader may observe at this point that Theorem \ref{history 2}, though stated only for ergodic systems, must hold for certain non-ergodic systems.
Indeed, by Theorem \ref{thm: fra}, the set
$\{n \in \Z : \mu(A \cap T^{2n}A \cap T^{4n}A) > \mu(A)^3 - \eps\}$ is syndetic for ergodic $T$.
But this is the same as the set $\{n \in \Z : \mu(A \cap S^nA \cap S^{2n}A) > \mu(A)^3 - \eps\}$ for the transformation $S = T^2$, and $S$ need not be ergodic.
A similar argument shows that any power of an ergodic system still satisfies the conclusion of Theorem \ref{history 2}.
The essential fact about these systems is that their ergodic decompositions have only finitely many ergodic components.
For non-ergodic systems with more complicated ergodic decomposition, the conclusion of Theorem \ref{history 2}(i) may fail:

\begin{thm}[\cite{bhk}, Theorem 2.1]\label{history 2 needs ergodicity} There exists a non-ergodic system $(X, \B, \mu, T)$ with the following property: for any integer $\ell \geq 1$, there is a set $A = A(\ell) \in \B$ of positive measure such that
	\be
	\mu(A \cap T^nA \cap T^{2n}A) \leq \frac 12 \mu(A)^\ell
	\ee
	for every integer $n \neq 0$.
\end{thm}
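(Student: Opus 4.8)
The plan is to construct $(X, \B, \mu, T)$ as an inverse limit of nilsystems (or, more concretely, to take a direct product of suitable circle rotations with mutually incommensurable "frequency behavior"), exploiting the fact that for a single rotation the triple correlation $\mu(A \cap T^n A \cap T^{2n} A)$ can be made small compared to any fixed power of $\mu(A)$ only along sparse sets of $n$, but across infinitely many rotations these sparse sets can be arranged to exhaust $\Z \setminus \{0\}$. The cleanest realization: let $X = \prod_{k=1}^{\infty} (\T, m_k)$ with $\T = \R/\Z$, Lebesgue measure on each coordinate, and let $T$ act on the $k$th coordinate by rotation by an irrational $\alpha_k$, where the $\alpha_k$ are chosen so that the orbits $(n\alpha_k)_{n}$ "resolve" different scales of $n$. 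The system is manifestly non-ergodic because $T$ is a product of rotations on infinitely many coordinates (the invariant functions are nonconstant); more precisely, its ergodic decomposition has uncountably many components, which is exactly the obstruction hinted at in the paragraph preceding the statement.

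The core computation reduces to the following single-rotation fact: if $R_\alpha$ is rotation by $\alpha$ on $\T$ and $A = A(\ell) \subset \T$ is chosen to be an appropriately placed arc (or a union of arcs forming a "generalized arithmetic progression" in the Bohr sense), then $m(A \cap R_\alpha^{-n} A \cap R_\alpha^{-2n} A)$ equals the measure of the set of $x \in \T$ with $x, x - n\alpha, x - 2n\alpha$ all in $A$; by convexity/geometry of arcs, this intersection is empty (or tiny) once $n\alpha$ is of a size comparable to but not too small relative to $|A|$. So for a target $\ell$, I would first fix a scale $\delta = \delta(\ell)$ so small that $\delta < \frac{1}{2}\delta^{\ell}$ fails — wait, rather: pick $A$ of measure $\delta$ with $\delta^{\ell}$ smaller than any correlation we are forced to produce, and arrange the rotation on the relevant coordinate so that for the given range of $n$ the three arcs $A, A+n\alpha, A+2n\alpha$ are pairwise disjoint, forcing the triple intersection to vanish. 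The role of having infinitely many coordinates is to cover all $n \neq 0$ simultaneously: partition $\Z \setminus \{0\}$ into pieces $\{n : |n| \in [N_{k}, N_{k+1})\}$ and use the $k$th coordinate's rotation (with $\alpha_k \asymp 1/N_k$, say, or chosen via a Dirichlet-type argument) to kill the correlation for $n$ in the $k$th piece, while the set $A$ is taken to be $A_k \times \prod_{j \neq k} \T$ — no, this does not quite work because $A$ must be a single set; instead take $A = \prod_{k} A_k$ with $\sum (1 - m(A_k)) < \infty$ so $\mu(A) = \prod_k m(A_k) > 0$, and then $\mu(A \cap T^{-n}A \cap T^{-2n}A) = \prod_k m(A_k \cap R_{\alpha_k}^{-n} A_k \cap R_{\alpha_k}^{-2n} A_k)$, which is $0$ as soon as a single factor vanishes.

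Concretely, for each $\ell$ I would choose $A_k = A_k(\ell)$ to be an arc of length $\theta_k$ with $\prod_k \theta_k = \mu(A)$ bounded below, and choose $\alpha_k$ so that for all $n$ with $|n| \in [N_k, N_{k+1})$ the displacements $n\alpha_k \bmod 1$ and $2n\alpha_k \bmod 1$ both lie in an interval forcing $A_k \cap (A_k + n\alpha_k) \cap (A_k + 2n\alpha_k) = \es$; this is a simultaneous Diophantine condition on $\alpha_k$ over a finite range of $n$, hence satisfiable (it only requires $n\alpha_k$ to stay, modulo $1$, inside a fixed subinterval of positive length away from $0$ for all $n$ in a bounded window, which one gets by taking $\alpha_k$ rational-approximable appropriately, e.g. $\alpha_k$ close to a rational $p_k/q_k$ with $q_k$ just above $N_{k+1}$). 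Since given $n \neq 0$ there is exactly one $k$ with $|n|$ in the $k$th window, the corresponding factor vanishes, so $\mu(A \cap T^{-n}A \cap T^{-2n}A) = 0 \leq \frac{1}{2}\mu(A)^{\ell}$, which is even stronger than required.

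The main obstacle is the bookkeeping in the simultaneous Diophantine step: one must choose the windows $[N_k, N_{k+1})$, the arc lengths $\theta_k$, and the rotation numbers $\alpha_k$ in a compatible way so that (a) $\prod_k \theta_k > 0$, (b) for each $k$ the finitely many conditions "$n\alpha_k, 2n\alpha_k$ avoid a neighborhood of $0$ mod $1$ for $|n| \in [N_k, N_{k+1})$" are simultaneously satisfiable given the chosen $\theta_k$ — this forces the window length $N_{k+1}/N_k$ to be controlled in terms of $\theta_k$, since a shorter arc tolerates a longer window — and (c) the construction does not depend on $\ell$ except through the choice of the arcs $A_k(\ell)$, so that a single system $(X,\B,\mu,T)$ works for all $\ell$ at once. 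One must be slightly careful that shrinking $\theta_k$ (needed to beat $\mu(A)^\ell$ for larger $\ell$) is compatible with keeping the windows fixed; this is fine because a smaller arc only makes the disjointness condition \emph{easier}. I would also double-check that the resulting product system is a legitimate measure-preserving system on a standard Borel space (it is, being a countable product of circle rotations) and that it is genuinely non-ergodic (each coordinate projection $x \mapsto e(x_k)$ is a nonconstant eigenfunction with distinct eigenvalues, but more to the point, functions depending on a tail of coordinates give a non-trivial invariant $\sigma$-algebra — in fact the Kronecker factor is the whole system and the maximal spectral type is far from simple, yielding infinitely many ergodic components).
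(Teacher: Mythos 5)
The decisive problem is your endgame: you conclude $\mu(A \cap T^{-n}A \cap T^{-2n}A) = 0$ for every $n \neq 0$ with $\mu(A) > 0$, and this is impossible in \emph{any} measure-preserving system, ergodic or not --- Furstenberg's multiple recurrence theorem (Theorem \ref{thm: Szemeredi}, which assumes no ergodicity) forces some $n \neq 0$ with positive triple intersection. The mechanism also breaks down concretely: to have $\mu(A) = \prod_k m(A_k) > 0$ you must have $m(A_k) \to 1$, hence $m(A_k) > 2/3$ for all but finitely many $k$; but for any set $A_k \subseteq \T$ of measure $\theta > 2/3$ and any $\beta$, the union bound on complements gives $m\left( A_k \cap (A_k + \beta) \cap (A_k + 2\beta) \right) \geq 3\theta - 2 > 0$, so the coordinate you assign to the window containing $n$ cannot produce a vanishing (or even small) factor once $k$ is large. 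Shrinking the arcs to restore disjointness destroys $\mu(A) > 0$, and the finitely many coordinates that are allowed to carry small arcs cannot cover all windows; worse, for finitely many fixed rotations the common Bohr set $\{ n : \|n\alpha_k\| < \delta \text{ for all such } k \}$ is syndetic, and along it those factors are close to $m(A_k)$, so no finite collection of rotation coordinates can keep the correlation small for all $n$. (Your non-ergodicity claim is also unjustified --- an infinite product of circle rotations is ergodic precisely when the rotation vector satisfies no rational relation, and ``Kronecker factor equals the whole system'' is true of every ergodic rotation --- but that is secondary to the structural failure above.)

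The actual proof (in \cite{bhk}, and the template this paper generalizes in Section \ref{sec: ergodicity}) is quantitative rather than exact, and uses a skew product instead of genuine rotations: take $X = \T^2$, $T(x,y) = (x, y+x)$, which is non-ergodic since the first coordinate is invariant, and note $T^n(x,y) = (x, y+nx)$, so the ``rotation number'' $nx$ is averaged over $x$ --- this is exactly what removes the Bohr-recurrence obstruction that any fixed $\alpha$ has. Taking $A = \T \times \tilde{B}$, where $\tilde{B}$ is a union of $|B|$ intervals of length comparable to $1/N$ indexed by a Behrend $3$-AP-free set $B \subseteq \{0, \dots, N-1\}$ with $|B| > Ne^{-c\sqrt{\log N}}$ (Theorem \ref{thm: Behrend}), the substitution $x \mapsto nx$ shows $\mu(A \cap T^nA \cap T^{2n}A)$ equals the probability that $y$, $y+x$, $y+2x$ all lie in $\tilde{B}$; AP-freeness forces the three points into a single interval, giving a bound of order $\mu(A)/N$, while $\mu(A) \gtrsim e^{-c\sqrt{\log N}}$ decays more slowly than any power of $1/N$. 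Choosing $N = N(\ell)$ large then yields $\mu(A \cap T^nA \cap T^{2n}A) \leq \frac{1}{2}\mu(A)^{\ell}$ for all $n \neq 0$, with one fixed system and $A = A(\ell)$ varying. The small-but-positive bound, balanced against $\mu(A)^{\ell}$ via Behrend's density, is the content your construction tries to bypass and cannot.
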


\bigskip

It is reasonable to inquire whether the above results have a version for actions of other groups. As a reminder, if $G = (G,+)$ is a countable abelian group which acts on a probability space $(X,\B,\mu)$ by measure-preserving automorphisms $(T_g)_{g \in G}$, then we refer to the quadruple $\X = (X,\B,\mu,(T_g)_{g\in G})$ as a measure-preserving $G$-system, or $G$-system for short. Along these lines, some results were obtained in \cite{btz2} for measure-preserving actions of the additive group $\mathbb{F}_p^\infty = \bigoplus_{n=1}^{\infty}{\F_p}$, the direct sum of countably many copies of a finite field
of prime order $p$, which has a natural vector space structure over $\mathbb{F}_p$.

\begin{thm}[{\cite[Theorems 1.12 and 1.13]{btz2}}]\label{history 3} ~
	\begin{enumerate}[(i)]
		\item Fix a prime $p > 2$ and distinct elements $c_0, c_1, c_2 \in \mathbb{F}_p$. For any ergodic measure-preserving $\mathbb{F}_p^\infty$-system $(X, \B, \mu, (T_g)_{g\in \mathbb{F}_p^\infty} )$, any $\eps > 0$, and any $A \in \B$, the set \be \{ g \in \mathbb{F}_p^\infty : \mu(T_{c_0g} A \cap T_{c_1g}A \cap T_{c_2g}A) > \mu(A)^3 - \eps \} \ee is syndetic.
		\item Fix a prime $p > 3$ and distinct elements $c_0, c_1, c_2, c_3 \in \mathbb{F}_p$ which form a parallelogram in the sense that $c_i+c_j=c_k+c_\ell$ for some permutation $\{i,j,k,\ell\}$ of $\{0,1,2,3\}$. For any ergodic measure-preserving $\mathbb{F}_p^\infty$-system $(X, \B, \mu, (T_g)_{g\in \mathbb{F}_p^\infty} )$, any $\eps > 0$, and any $A \in \B$, the set \be \{ g \in \mathbb{F}_p^\infty : \mu(T_{c_0g} A \cap T_{c_1g}A \cap T_{c_2g}A \cap T_{c_3g}A) > \mu(A)^4 - \eps \} \ee is syndetic.
	\end{enumerate}
\end{thm}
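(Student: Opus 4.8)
The plan is to adapt the strategy of \cite{bhk}: for each configuration locate a \emph{characteristic factor}, push the problem onto it, and there combine the convexity estimate $\int \tilde f^{\,k}\,dm \ge \left(\int\tilde f\,dm\right)^{k}$ with a continuity argument to produce syndetically many good returns. Recall first the structure theory for an ergodic $\mathbb{F}_p^\infty$-system $\X$. Its Kronecker factor $\mathcal{Z}_1$ is a minimal rotation $R_g z = z + \phi(g)$ on a compact abelian group $Z$ by a homomorphism $\phi \colon \mathbb{F}_p^\infty \to Z$ with dense image; since $\mathbb{F}_p^\infty$ has exponent $p$, so does $Z$, which is therefore a compact $\mathbb{F}_p$-vector space, and scalar multiplication by $c \in \mathbb{F}_p$ is defined on $Z$ (with $\phi(cg) = c\phi(g)$). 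The factor $\mathcal{Z}_2$ is a Conze--Lesigne system, an extension of $\mathcal{Z}_1$ by a compact abelian group $W$ with $T_g(z,w) = (z+\phi(g),\, w+\rho_g(z))$, the cocycle $\rho$ satisfying the Conze--Lesigne equation. For distinct coefficients, $\mathcal{Z}_1$ is characteristic (in the Cesàro sense, along a Følner sequence) for the three-term averages $\tfrac{1}{|G_N|}\sum_g \prod_i T_{c_ig}f_i$ and $\mathcal{Z}_2$ for the four-term ones; the hypotheses $p > 2$, resp.\ $p > 3$, are exactly what is needed to have three, resp.\ four, distinct coefficients, the latter forming a non-degenerate parallelogram.

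For part~(i): applying the measure-preserving map $T_{c_0g}^{-1}$ we may take $c_0 = 0$, so that $c_1, c_2 \in \mathbb{F}_p^{\times}$ are distinct. Put $\tilde f = \mathbb{E}[\ind_A \mid \mathcal{Z}_1] \colon Z \to [0,1]$, so $\int_Z \tilde f \, dm = \mu(A)$, and set $F(t) = \int_Z \tilde f(z)\,\tilde f(z + c_1 t)\,\tilde f(z + c_2 t)\, dm(z)$. Then $F$ is continuous and $F(0) = \int_Z \tilde f^{\,3}\, dm \ge \mu(A)^3$ by Jensen's inequality. Hence $V = \{t \in Z : F(t) > \mu(A)^3 - \eps\}$ is open and nonempty, and since $\phi(\mathbb{F}_p^\infty)$ is dense in the compact group $Z$, its preimage $\phi^{-1}(V)$ is syndetic in $\mathbb{F}_p^\infty$. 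Because $\mathcal{Z}_1$ is characteristic, $\mu(T_{c_0g}A \cap T_{c_1g}A \cap T_{c_2g}A)$ agrees with $F(\phi(g))$ up to an error term handled in the last paragraph, and this yields the conclusion.

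For part~(ii): normalizing $c_0 = 0$ and relabelling, the parallelogram relation becomes $c_3 = c_1 + c_2$ with $c_1, c_2, c_1+c_2$ distinct and nonzero. Now $\tilde f = \mathbb{E}[\ind_A \mid \mathcal{Z}_2]$ lives on $Z \times W$, and the relevant quantity is $M(g) = \int_{Z\times W} \prod_{i=0}^{3} \tilde f\bigl(z + c_i\phi(g),\, w + \rho^{(c_i)}_g(z)\bigr)$, where $\rho^{(c)}_g = \sum_{0 \le j < c}\rho_g(\,\cdot + j\phi(g))$ is the $c$-th ergodic sum of the cocycle. By cocycle additivity, $\rho^{(c_3)}_g - \rho^{(c_1)}_g - \rho^{(c_2)}_g = \rho^{(c_2)}_g(\,\cdot + c_1\phi(g)) - \rho^{(c_2)}_g(\,\cdot)$, and the Conze--Lesigne equation forces this combination to be, modulo a coboundary, a character of $Z$. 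Substituting this, carrying out the induced measure-preserving change of variables on the $W$-fibre, and integrating out $W$ reduces $M(g)$ to an integral over the Kronecker factor to which a Gowers--Cauchy--Schwarz (box-norm) inequality applies, giving $M(0) \ge \mu(A)^4$. Continuity and density of $\phi(\mathbb{F}_p^\infty)$ again produce a syndetic set on which $M > \mu(A)^4 - \eps$, and characteristicity of $\mathcal{Z}_2$ transfers it to the original configuration.

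The transfer step, together with the cocycle reduction in part~(ii), is where the real work lies, and it is the step I expect to be the main obstacle. For the transfer one needs more than "characteristic on average": writing $e(g) = \mu\bigl(\bigcap_i T_{c_ig}A\bigr) - M(\phi(g))$ (with $M = F$ in case~(i)), one shows that $\tfrac{1}{|G_N|}\sum_{g \in h + G_N}|e(g)|^2 \to 0$ \emph{uniformly in the translate} $h$; this uniformity is available because the functions appearing in $e$ are orthogonal to the characteristic factor, hence of continuous spectral type, so the Wiener-type averages controlling $e$ decay uniformly. Consequently $\{g : |e(g)| \ge \eps/2\}$ has zero upper Banach density, and since a syndetic set minus a set of zero upper Banach density is still syndetic, intersecting it with the syndetic set $\phi^{-1}(\{M > \mu(A)^k - \eps/2\})$ completes the proof. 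The subtlest point is the Conze--Lesigne analysis underlying part~(ii): one must verify that the parallelogram relation makes precisely the right combination of fibre cocycles collapse to a character, so that the concluding Cauchy--Schwarz inequality delivers the clean exponent $4$ rather than a weaker bound.
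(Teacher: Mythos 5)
Your overall skeleton (characteristic factor, convexity/Jensen at $t=0$, continuity, density of the image of $\phi$ in $Z$) is the right one — it is the strategy of \cite{bhk}, \cite{btz2} and of the present paper's Theorems \ref{thm: double Khintchine} and \ref{thm: triple Khintchine}, of which the quoted theorem is a special case (in the paper itself the statement is simply cited from \cite{btz2}). But your transfer step contains a genuine error. You claim that, because the components of $\ind_A$ orthogonal to the characteristic factor have continuous spectral type, the error $e(g)=\mu\bigl(\bigcap_i T_{c_ig}A\bigr)-M(\phi(g))$ satisfies $\UClim_{g}|e(g)|^2=0$, so that the bad set has zero upper Banach density. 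Wiener-type decay applies to matrix coefficients $\langle T_gf,f\rangle$ of the unitary action, not to triple or quadruple correlations: being characteristic only gives $\UClim_g e(g)=0$, i.e.\ the signed averages vanish, and in general $e(g)$ does \emph{not} vanish in density. A concrete obstruction (in the $\Z$ setting, with an exact analogue over $\mathbb{F}_p^\infty$ using quadratic forms): for the skew product $T(x,y)=(x+\alpha,\,y+2x+\alpha)$ on $\mathbb{T}^2$ and $f_0=e(y)$, $f_1=e(-2y)$, $f_2=e(y)$ one computes $\int f_0\,T^nf_1\,T^{2n}f_2\,d\mu=e(2n^2\alpha)$, while the Kronecker projections of the $f_i$ vanish; so the error has modulus $1$ for every $n$ even though its Cesàro averages tend to $0$. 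The same phenomenon (quadratic phases invisible to the Kronecker factor but not null in density) occurs for indicator functions, so your inclusion ``good set $\supseteq$ (syndetic set) $\setminus$ (zero-Banach-density set)'' is not available, and the proof collapses at exactly the step you flagged as the main obstacle.

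The correct mechanism, used in \cite{frathree}, \cite{btz2} and in Sections \ref{sec: dK}--\ref{sec: tK} of this paper, avoids any pointwise-in-$g$ comparison: argue by contradiction via Lemma \ref{lem: syndetic thick}, take a F{\o}lner sequence along which the intersection is $\le\mu(A)^{k+1}-\eps$, and weight the averages by a continuous bump $\eta$ concentrated near $0$ in the relevant closed subgroup ($Z_{\varphi,\psi}$, resp.\ the group appearing in the limit formula). The point is that such weights are uniform limits of characters and can be absorbed into the functions, so the same factor remains characteristic for the \emph{weighted} averages (Lemma \ref{lem: Kronecker with a twist}, resp.\ Claim \ref{cla: twisted formula}); the weighted limit is then an integral of the factor expression against $\eta$, which Jensen bounds below by $\mu(A)^{k+1}-\eps/2$, contradicting the F{\o}lner bound. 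Separately, your part (ii) is under-specified even granting the transfer: the assertion that the Conze--Lesigne equation makes the combination of fibre cocycles collapse to a character, after which ``a Gowers--Cauchy--Schwarz inequality'' gives the exponent $4$, is exactly the content that requires the Mackey-group computation (Theorem \ref{thm: HK T10}, Corollary \ref{cor: HK C4}), the divisibility/torsion analysis of the fibre group $H$ (Lemma \ref{lem: H divisibility} — note that over $\mathbb{F}_p^\infty$ the fibre is $p$-torsion, so the $\Z$-case substitutions must be justified using $p>3$), and the explicit change-of-variables plus Jensen argument of Section \ref{sec: tK}; as written it is a plausible outline rather than a proof.
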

\begin{rem} By adapting from \cite{bhk} the idea of the proof of Theorem~\ref{history 2 needs ergodicity}, one can find that ergodicity is again a necessary assumption for Theorem~\ref{history 3}. This is done in Proposition~\ref{prop: ergodicity large p}.
\end{rem}

In this article, we show that Theorems~\ref{history 2},~\ref{history 2 needs ergodicity},~and~\ref{history 3} are instances of a more comprehensive phenomenon which pertains to actions of general countable abelian groups. The most general framework for our results is the following.
We let $G$ be a countable discrete abelian group and consider homomorphisms $\varphi_1, \dots, \varphi_k : G \to G$.
Our goal is to understand under what conditions the set
\begin{align} \label{eq: large intersection hom}
	\left\{ g \in G :
	\mu \left( A \cap T_{\varphi_1(g)}A \cap \cdots \cap T_{\varphi_k(g)}A \right) > \mu(A)^{k+1} - \eps \right\}
\end{align}
is syndetic for all ergodic $G$-systems $\left( X, \B, \mu, (T_g)_{g \in G} \right)$, sets $A \in \B$ with $\mu(A) > 0$, and $\eps > 0$.
In this case, we say the family $\{\varphi_1, \dots, \varphi_k\}$ has the \emph{large intersections property}.

A natural approach to take when addressing this question is to study \emph{uniform Ces\`{a}ro averages}, a special case of which are the averages \eqref{eq: Furstenberg erg avg} appearing in Furstenberg's Multiple Recurrence Theorem.
To develop appropriate averaging schemes in general abelian groups, we need the notion of a F{\o}lner sequence.
A \emph{F{\o}lner sequence} in $G$ is a sequence of finite subsets $F_N \subseteq G$ such that, for every $g \in G$,
\begin{align}
	\frac{|(F_N + g) \triangle F_N|}{|F_N|} \to 0.
\end{align}
A sequence $(u_g)_{g \in G}$ has \emph{uniform Ces\`{a}ro limit} equal to $u$, denoted by $\UClim_{g \in G}{u_g} = u$, if for every F{\o}lner sequence $(F_N)_{N \in \N}$,
\begin{align}
	\lim_{N \to \infty}{\frac{1}{|F_N|} \sum_{g \in F_N}{u_g}} = u.
\end{align}
For $G = \Z$, the uniform Ces\`{a}ro limit agrees with the average appearing in Theorem \ref{thm: Szemeredi}
(see Lemma \ref{lem: UC uniform shifts}).
The utility of this averaging approach comes from the following fact,
which we prove in Section \ref{sec: Cesaro}:
\begin{restatable}{lem}{SyndeticThick} \label{lem: syndetic thick}
	A set $S \subseteq G$ is syndetic if and only if for every F{\o}lner sequence $(F_N)_{N \in \N}$ in $G$, one has
	$\bigcup_{N \in \N} ({F_N} \cap S) \ne \es$.
\end{restatable}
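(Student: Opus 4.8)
The plan is to prove both implications directly from the definitions, unwinding what syndeticity means in terms of translates.

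For the forward direction, suppose $S$ is syndetic, so $G = \bigcup_{i=1}^k (g_i + S)$ for some $g_1, \dots, g_k \in G$. Let $(F_N)_{N \in \N}$ be any F\o{}lner sequence. The key observation is that the F\o{}lner property forces $\frac{|F_N \cap (g_i + S)|}{|F_N|}$ to be asymptotically comparable to $\frac{|(F_N - g_i) \cap S|}{|F_N|}$, since $F_N \cap (g_i + S) = \left( (F_N - g_i) \cap S \right) + g_i$ and $|(F_N - g_i) \triangle F_N| / |F_N| \to 0$. Because the $k$ sets $g_i + S$ cover $G$, for each $N$ we have $|F_N| \le \sum_{i=1}^k |F_N \cap (g_i + S)|$, so at least one index $i = i(N)$ satisfies $|F_N \cap (g_i + S)| \ge |F_N|/k > 0$. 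In particular $F_N \cap (g_{i(N)} + S) \ne \es$, hence $(F_N - g_{i(N)}) \cap S \ne \es$. Now I would observe that $(F_N - g_{i(N)})_{N \in \N}$ is again a F\o{}lner sequence (translating a F\o{}lner set by a fixed group element preserves the F\o{}lner property), and more importantly that it suffices to exhibit \emph{some} F\o{}lner sequence meeting $S$; but actually we want the statement for the \emph{given} $(F_N)$. To handle that, note that since there are only finitely many indices, some fixed $i$ occurs for infinitely many $N$; along that subsequence $F_N \cap (g_i + S) \ne \es$, and replacing $F_N$ by the translate $F_N - g_i$ we still get a F\o{}lner sequence hitting $S$. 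To conclude for the original sequence, I use that $\bigcup_N (F_N \cap S) \ne \es$ is equivalent to: $S$ intersects the union $\bigcup_N F_N$; and one checks that for a F\o{}lner sequence, passing to translates and subsequences does not change whether this union-intersection is nonempty, because a genuinely syndetic set meets every translate of every sufficiently large F\o{}lner set. So in fact it is cleaner to argue: if $S$ is syndetic then every F\o{}lner set $F_N$ with $|F_N|$ large enough must intersect $S$ outright (since the translates $g_i + S$ cover $G$ and $F_N$ cannot be concentrated near the "boundary" relative to all translations), giving $\bigcup_N (F_N \cap S) \ne \es$ immediately.

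For the reverse direction, I would prove the contrapositive: if $S$ is \emph{not} syndetic, I construct a F\o{}lner sequence $(F_N)$ with $\bigcup_N (F_N \cap S) = \es$, i.e.\ $F_N \cap S = \es$ for all $N$. Non-syndeticity means that for every finite set $E \subseteq G$, the translates $\{e + S : e \in E\}$ fail to cover $G$, equivalently the complement of $S$ is "thick": for every finite $E \subseteq G$ there exists $h \in G$ with $(h + E) \cap S = \es$, i.e.\ $h + E \subseteq G \setminus S$. Enumerate $G = \{a_1, a_2, \dots\}$ and fix any F\o{}lner sequence $(\Phi_N)$ in $G$ (which exists since $G$ is countable abelian, hence amenable). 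For each $N$, apply thickness to the finite set $E_N := \Phi_N$ to obtain $h_N \in G$ with $h_N + \Phi_N \subseteq G \setminus S$, and set $F_N := h_N + \Phi_N$. Then $F_N \cap S = \es$ for all $N$, and $(F_N)$ is a F\o{}lner sequence because translation by the fixed element $h_N$ preserves the F\o{}lner ratios: $|(F_N + g) \triangle F_N| = |(\Phi_N + g) \triangle \Phi_N|$ for every $g \in G$. Hence $\bigcup_N (F_N \cap S) = \es$, completing the contrapositive.

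The main obstacle is the forward direction, specifically making rigorous the claim that a syndetic set meets every sufficiently large F\o{}lner set. The clean way is: since $G = \bigcup_{i=1}^k(g_i + S)$, for any finite $F \subseteq G$ we have $F \subseteq \bigcup_{i=1}^k (g_i + S)$, so if $F \cap S = \es$ then $F \subseteq \bigcup_{i=1}^k \left( (g_i + S) \setminus S \right)$. One then shows $|(g_i + S)\setminus S|$-type quantities are controlled: more precisely, $F \cap (g_i + S) = \left( (F - g_i) \cap S \right) + g_i$, and if $F \cap S = \es$ while $F \cap (g_i + S) \ne \es$, the witnessing point lies in $\left( (F - g_i) \triangle F \right)$ after translating — so $F \subseteq \bigcup_{i=1}^k g_i + \left( (F - g_i) \setminus F \right)$ modulo bookkeeping, forcing $|F| \le \sum_{i=1}^k |(F - g_i)\triangle F| = o(|F|)$ along a F\o{}lner sequence, a contradiction for large $N$. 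Thus $F_N \cap S \ne \es$ eventually, which gives $\bigcup_N (F_N \cap S) \ne \es$. I expect the indexing and the $\triangle$ estimates here to be the only place requiring care; everything else is immediate from the definitions.
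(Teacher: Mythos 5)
Your proposal is correct, but the two halves compare differently with the paper. Your reverse direction is essentially identical to the paper's: non-syndeticity means the complement is thick, so one translates an arbitrary F{\o}lner sequence $(\Phi_N)$ by elements $h_N$ into $G \setminus S$, and translation preserves the F{\o}lner property. For the forward direction, the argument you should keep is the one in your final paragraph; the first paragraph's detour (passing to translates and subsequences of the given sequence, or suggesting it suffices to exhibit \emph{some} F{\o}lner sequence meeting $S$) does not prove the statement, since the claim is about every given sequence --- but you recognize this and discard it. The final argument is sound: if $G = \bigcup_{i=1}^k (g_i + S)$ and $F \cap S = \es$, then each $x \in F$ has $x - g_i \in S \setminus F \subseteq (F - g_i) \setminus F$ for some $i$, so $|F| \le \sum_{i=1}^k \left| (F - g_i) \triangle F \right|$, which is $o(|F_N|)$ along a F{\o}lner sequence; hence $F_N \cap S \ne \es$ for all large $N$. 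This is a genuinely different route from the paper, which instead pigeonholes over the finite set $K$ with $S + K = G$ to find, for each $N$, a translate $F_N - k_N$ capturing at least a $1/|K|$ proportion of $S$-points, and then uses F{\o}lner invariance to transfer this back to $F_N$; that argument yields the stronger conclusion $\liminf_N |S \cap F_N|/|F_N| \ge 1/|K| > 0$, i.e.\ positive lower density along every F{\o}lner sequence, whereas your displacement argument gives only eventual nonempty intersection --- which is all the lemma asserts, so both suffice here.
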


A standard technique for handling (multiple) ergodic averages is to work with \emph{characteristic factors}. A system $\Y = \left( Y, \D, \nu, (S_g)_{g \in G} \right)$ is a \emph{factor} of $\X = \left( X, \B, \mu, (T_g)_{g \in G} \right)$ if there are full measure subsets $X_0 \subseteq X$ and $Y_0 \subseteq Y$ and a measure-preserving map $\pi : X_0 \to Y_0$ such that $S_g\pi(x) = \pi(T_gx)$ for every $x \in X_0$, $g \in G$. There is a natural correspondence between factors and invariant sub-$\sigma$-algebras: $\Y$ corresponds to the $\sigma$-algebra $\pi^{-1}(\D) \subseteq \B$. In a standard abuse of notation, we will write $\E{f}{Y}$ to denote $\E{f}{\pi^{-1}(\D)}$ for $f \in L^2(\mu)$.
The factor $\Y$ is called a \emph{characteristic factor} for the family $\{\varphi_1, \dots, \varphi_k\}$ if for all $f_1, \dots, f_k \in L^{\infty}(\mu)$,
\begin{align}
	\UClim_{g \in G}{\left( \prod_{i=1}^k{T_{\varphi_i(g)}f_i}
		- \prod_{i=1}^k{T_{\varphi_i(g)}\E{f_i}{Y}} \right)} = 0
\end{align}
in $L^2(\mu)$.

While the proof of Theorem~\ref{history 2} in \cite{bhk} and the proof of Theorem~\ref{history 3} in \cite{btz2} both use explicit topological descriptions of characteristic factors for $\Z$-systems and $\mathbb{F}_p^\infty$-systems, respectively, the essential ingredients can be recovered with a softer approach that applies in greater generality. Namely, we adapt tools pioneered by Conze and Lesigne \cite{cl} and advanced in \cite{fw} and \cite{hk} to our setting. This approach provides just enough information about $G$-systems to produce large intersection results.

\bigskip

We begin by looking at the case $k = 2$ of \eqref{eq: large intersection hom}.
It is known that for any pair of homomorphisms $\{\varphi, \psi\}$, there is a constant $c > 0$ such that $\{g \in G : \mu(A \cap T_{\varphi(g)}^{-1}A \cap T_{\psi(g)}^{-1}A) > c\}$ is syndetic.
Indeed, this follows from the IP Szemer\'{e}di theorem of Furstenberg and Katznelson (see \cite[Theorem A]{fk-IP}). In fact, Roth's theorem holds for any countable group whatsoever (see \cite[Theorem 1.3]{bm-roth}).

Despite the wide generality in which the ergodic Roth theorem holds, we encounter an obstacle to its large intersections variant even in our significantly restricted setting: results of \cite{chu} can be utilized to show that not all pairs of homomorphisms have the large intersections property (see Example \ref{eg: Chu}).
However, imposing a simple condition on the homomorphisms (which we conjecture to be necessary) allows us to adapt the machinery of \cite{cl, fw} to recover large intersections.
Namely, we will assume that $\varphi, \psi : G \to G$ have the property that $\varphi(G)$, $\psi(G)$, and $(\varphi - \psi)(G)$ all have finite index in $G$. We call such pairs $\{\varphi, \psi\}$ \emph{admissible}.\footnote{Note that this encompasses the cases handled in \cite{bhk} and \cite{btz2}. Multiplication by an integer is a group homomorphism which has finite index image in $\Z$ (resp., $\F_p^\infty$) so long as the integer is nonzero (resp., nonzero modulo $p$).}
In this case, we will show that the \emph{Kronecker factor}\footnote{For ergodic systems, the \emph{Kronecker factor} is the maximal factor that can be described (up to isomorphism) as an action by rotations on a compact abelian group; see Section \ref{sec: factors} for further discussion.} $\mathbf{Z}$ is characteristic for certain ergodic averages: for any $f_1,f_2 \in L^\infty(\mu)$,
\be
\UClim_{g \in G}{\left( T_{\varphi(g)}f_1 \cdot T_{\psi(g)}f_2
	- T_{\varphi(g)}\E{f_1}{Z} \cdot T_{\psi(g)}\E{f_2}{Z} \right)} = 0
\ee
in $L^2(\mu)$.
Moreover, we actually derive a limit formula (still in norm) for expressions $\UClim_{g \in G} T_{\varphi(g)}f_1T_{\psi(g)}f_2$; see Theorem~\ref{thm: Kronecker} for a precise formulation. With the help of this limit formula, we will prove the following theorem.
\begin{restatable}{thm}{DoubleKhintchine} \label{thm: double Khintchine}
	Let $G$ be a countable discrete abelian group, and let $\{\varphi,\psi\}$ be an admissible pair of homomorphisms.
	For any ergodic system $\X = \left( X, \B, \mu, (T_g)_{g \in G} \right)$, any $\eps > 0$, and any $A \in \B$, the set
	\be
	\left\{ g \in G : \mu \left( A \cap T_{\varphi(g)}^{-1}A \cap T_{\psi(g)}^{-1}A \right)
	> \mu(A)^3 - \eps \right\}
	\ee
	is syndetic in $G$.
\end{restatable}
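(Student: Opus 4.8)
The plan is to split the return-time integral into a ``Kronecker part'' that is governed by an explicit continuous function (and is large on a Bohr set) plus an error term that is negligible in uniform density, and then to conclude via the syndeticity criterion of Lemma~\ref{lem: syndetic thick}. Write $a = \mu(A)$; let $\mathbf{Z} = (Z,\mathcal{Z},m,(R_g)_{g\in G})$ be the Kronecker factor with factor map $\pi$ and $R_g z = z + \rho(g)$, where $\rho : G \to Z$ is a homomorphism with dense image; put $\bar f = \E{\ind_A}{Z} \in L^\infty(m)$, so $\int_Z \bar f\,dm = a$ and $0 \le \bar f \le 1$, and $h = \ind_A - \bar f\circ\pi$, so $\E{h}{Z} = 0$. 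Substituting $\ind_A = \bar f\circ\pi + h$ into all three factors of $\int_X \ind_A\cdot T_{\varphi(g)}\ind_A\cdot T_{\psi(g)}\ind_A\,d\mu$ and expanding, the three terms containing exactly one copy of $h$ vanish identically (their other two factors are $\mathbf{Z}$-measurable and $\E{T_{g'}h}{Z} = 0$), so
\be
	\mu\!\left(A\cap T_{\varphi(g)}^{-1}A\cap T_{\psi(g)}^{-1}A\right) = I(g) + E(g),\qquad I(g) := \int_X (\bar f\circ\pi)\,T_{\varphi(g)}(\bar f\circ\pi)\,T_{\psi(g)}(\bar f\circ\pi)\,d\mu ,
\ee
where $E(g)$ is the sum of the terms in which $h$ occurs at least twice. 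Since $\bar f\circ\pi$ is $\mathbf{Z}$-measurable, $I(g) = \Theta(\rho(\varphi(g)),\rho(\psi(g)))$ with $\Theta(u,v) = \int_Z \bar f(z)\bar f(z+u)\bar f(z+v)\,dm(z)$. The limit formula of Theorem~\ref{thm: Kronecker} (equivalently, that $\mathbf{Z}$ is characteristic) already gives $\UClim_g E(g) = 0$, but --- as the uniform-density statement we are after is strictly stronger than a statement about $\UClim_g$ of the return-time integral --- the real work is to upgrade this to $\UClim_g |E(g)| = 0$.

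\emph{The main term.} The function $\Theta : Z\times Z\to[0,1]$ is continuous ($L^1(m)$-continuity of translation and $\bar f\in L^\infty$), and by convexity of $t\mapsto t^3$ together with Jensen's inequality $\Theta(0,0) = \int_Z \bar f^3\,dm \ge (\int_Z \bar f\,dm)^3 = a^3$. Hence $\{(u,v) : \Theta(u,v) > a^3 - \eps/2\}$ is an open neighbourhood of $(0,0)$, and therefore $S_1 := \{g : I(g) > a^3 - \eps/2\}$ contains the preimage, under the homomorphism $g\mapsto(\rho(\varphi(g)),\rho(\psi(g)))$, of a nonempty open subset of the compact group $\overline{\{(\rho(\varphi(g)),\rho(\psi(g))) : g\in G\}}$. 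Because that homomorphism has dense image, this preimage is syndetic and in fact has positive lower density $\beta > 0$ along every F{\o}lner sequence (by equidistribution of $G$-homomorphisms in the closure of their image).

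\emph{The error term (the crux).} Every term of $E(g)$ contains at least two copies of $h$, and the claim is that such terms are \emph{uniformly} --- not merely Ces\`{a}ro --- negligible. For the three terms with exactly two copies of $h$, expanding $\bar f$ in characters of $Z$ and translating shows that each is dominated by $\sum_{\chi\in\hat Z} |\widehat{\bar f}(\chi)|\,|\langle T_{\kappa(g)}h, k_\chi\rangle|$ for a suitable $\kappa\in\{\varphi,\psi,\varphi-\psi\}$ and uniformly bounded $k_\chi\in L^\infty(\mu)$; the all-$h$ term is reduced to the same shape by a van der Corput argument which passes to the product system $(X\times X,(T_g\times T_g))$ and replaces $h$ by $h\otimes\bar h$. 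In every case the relevant function ($h$, or $h\otimes\bar h$) has \emph{non-atomic} spectral measure for the $G$-action --- for $h$ because atoms of spectral measures correspond to eigenfunctions and $h\perp L^2(\mathbf{Z})$, and for $h\otimes\bar h$ because its spectral measure is a convolution built from that of $h$. Finally, for a homomorphism $\kappa$ with $\kappa(G)$ of finite index, $g\mapsto\langle T_{\kappa(g)}h, k\rangle$ is the Fourier transform on $G$ of the pushforward, under the dual homomorphism $\hat\kappa : \hat G\to\hat G$, of the spectral measure in question; since $\ker\hat\kappa = \kappa(G)^\perp$ is then \emph{finite}, $\hat\kappa$ is finite-to-one and this pushforward remains non-atomic, so Wiener's lemma yields $\UClim_g |\langle T_{\kappa(g)}h, k\rangle| = 0$. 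Summing over $\chi$ with a routine tail estimate (using $\sum_\chi |\widehat{\bar f}(\chi)|^2 = \|\bar f\|_2^2 < \infty$) gives $\UClim_g |E(g)| = 0$. This is the only step that uses the admissibility hypothesis: it is needed exactly for $\varphi$, $\psi$ and $\varphi-\psi$, and it is the point at which the Conze--Lesigne-type machinery is required.

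\emph{Conclusion.} Fix a F{\o}lner sequence $(F_N)_{N\in\N}$. Since $|E(g)|\le 1$ and $\UClim_g |E(g)| = 0$, the set $\{g : |E(g)|\ge\eps/2\}$ has density $0$ along $(F_N)$, while $S_1$ has lower density $\ge\beta>0$ along $(F_N)$; hence for all large $N$ there is some $g\in F_N$ with $I(g) > a^3 - \eps/2$ and $|E(g)| < \eps/2$, and for this $g$ one has $\mu(A\cap T_{\varphi(g)}^{-1}A\cap T_{\psi(g)}^{-1}A) = I(g) + E(g) > a^3 - \eps$. Thus the set in the statement meets every $F_N$ (for all large $N$), and Lemma~\ref{lem: syndetic thick} shows it is syndetic. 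I expect the error estimate of the third paragraph to be the main obstacle: controlling the terms of $E(g)$ uniformly rather than only in Ces\`{a}ro average is exactly what forces the finite-index (admissibility) hypotheses.
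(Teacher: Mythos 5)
Your overall route differs from the paper's: the paper never attempts to control the error term uniformly. Instead it proves a weighted limit formula (Lemma \ref{lem: Kronecker with a twist}, obtained from Theorem \ref{thm: Kronecker} by absorbing the characters of a continuous bump function $\eta$ on $Z_{\varphi,\psi}$ into the $f_i$'s), and then argues by contradiction with Lemma \ref{lem: syndetic thick}: if the return set missed a F{\o}lner sequence, the $\eta$-weighted averages of $\mu(A\cap T_{\varphi(g)}^{-1}A\cap T_{\psi(g)}^{-1}A)$ would be both $\ge \mu(A)^3-\eps/2$ (Jensen $+$ Urysohn, as in your main-term paragraph) and $\le \mu(A)^3-\eps$. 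Your main-term analysis and your treatment of the three terms containing exactly two copies of $h$ are essentially correct (peeling off the Kronecker-measurable factor into eigenfunctions, reducing to $\langle T_{\kappa(g)}h,k_\chi\rangle$ with $\kappa\in\{\varphi,\psi,\psi-\varphi\}$, and using that the pushforward of the non-atomic cross-spectral measure under the dual of a finite-index homomorphism stays non-atomic, so Wiener's lemma applies; the $\ell^2$-tail splitting is routine).

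The genuine gap is the all-$h$ term, i.e. the claim $\UClim_{g}\bigl|\int_X h\,T_{\varphi(g)}h\,T_{\psi(g)}h\,d\mu\bigr|=0$. Your one-line reduction (``van der Corput, pass to the product system, replace $h$ by $h\otimes\bar h$, same shape'') does not go through as stated. Squaring does give $|c_g|^2=\int_{X^2}H\cdot S_{\varphi(g)}H\cdot S_{\psi(g)}H\,d(\mu\times\mu)$ with $H=h\otimes\bar h$ and $S=T\times T$, and $H$ is indeed orthogonal to the Kronecker factor of the product (the convolution argument for its spectral measure is fine). But this is now a \emph{triple} correlation in which all three functions are $H$: there is no Kronecker-measurable factor left to expand into characters, so you never reach single-shift inner products $\langle S_{\kappa(g)}H,K\rangle$, and Wiener's lemma --- a statement about double correlations --- does not apply. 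Nor can you invoke Theorem \ref{thm: Kronecker} for the product system: it is not ergodic, and the proof of Theorem \ref{thm: Kronecker} uses ergodicity exactly where it decomposes the $(\psi-\varphi)$-subaction into finitely many ergodic components. Running van der Corput directly on the product leads to $\UClim_k\bigl|\int\E{\bar H S_{\varphi(k)}H}{\I_\xi}\,\E{\bar H S_{\psi(k)}H}{\I_\xi}\,d\mu^2\bigr|$ with $\xi=\psi-\varphi$ and $\I_\xi$ the invariant algebra of the $\xi$-subaction on the non-ergodic product, and controlling this requires genuinely new input --- e.g. an ergodic decomposition of $\mu\times\mu$ together with the fact that a.e.\ component is a conditional product relative to $Z\times Z$ (the Section \ref{sec: compact ext} machinery), or a spectral-projection argument onto the finite set $\xi(G)^{\perp}$ with a nontrivial interchange of the $k$-average with an infinite sum over the point spectrum. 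The statement you want is true, but it is a real piece of work, not a ``same shape'' reduction; this is precisely the difficulty the paper's weighted-average strategy is designed to avoid.
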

This theorem is a common generalization of Theorem \ref{history 2}(i), Theorem \ref{thm: fra}(i), and Theorem \ref{history 3}(i).
Once again, the assumption of ergodicity is crucial for producing large intersections, and we give a detailed discussion with many examples in Section \ref{sec: ergodicity}.

\bigskip

Now, let us turn to analyzing \eqref{eq: large intersection hom} for $k \ge 3$ with the intention of generalizing part (ii) of Theorem~\ref{history 2} and part (ii) of Theorem~\ref{history 3}.
We again impose a condition on the family of homomorphisms to avoid counterexamples along the lines of Example \ref{eg: Chu}.
Namely, we say a family of homomorphisms $\{\varphi_1, \varphi_2, \dots, \varphi_k\}$
is \emph{admissible} if $\varphi_i(G)$ has finite index in $G$ for every $1 \le i \le k$
and $(\varphi_j - \varphi_i)(G)$ has finite index in $G$ for every $1 \le i < j \le k$.
This condition appears in \cite[Chapter 4]{griesmer} in the study of multiple ergodic averages for $\Z^d$-systems, where it is also shown that this condition is necessary for obtaining certain results about characteristic factors. The issue of admissibility is addressed in more detail in Section \ref{sec: admissible}.

For these longer expressions, we turn to studying the $L^2$-limit of averages
\begin{equation} \label{intro eqn 2}
	\UClim_{g \in G}{T_{\varphi_1(g)}f_1 \cdot T_{\varphi_2(g)}f_2 \cdots T_{\varphi_k(g)}f_k}
\end{equation}
for $f_1,f_2, \dots, f_k \in L^\infty(\mu)$.
We will show that the characteristic factors for these averages are built inductively as towers of compact extensions, starting from the Kronecker factor (Theorem \ref{thm: compact ext}).

When $k = 3$, for admissible families of the form $\{\varphi, \psi, \theta\}$ with $\theta = \varphi + \psi$,
we can show that the characteristic factor is a skew-product of the Kronecker factor with a compact abelian group over a cocycle satisfying a \emph{Conze--Lesigne equation}.
This can be seen as a generalized form of the \emph{Conze--Lesigne factor}.
The relevant terminology will be made precise in Section \ref{sec: CL}.

With the help of a limit formula valid in the case that $\varphi$ and $\psi$ are multiplication by (necessarily distinct) integers (see Theorem~\ref{thm: limit formula}), we will also prove that admissible\footnote{In a slight abuse of notation, we say that a family of integers $\{r_1, \dots, r_k\}$ is admissible if the family of homomorphisms $\{\varphi_1, \dots, \varphi_k\}$ given by $\varphi_i(g) = r_i g$ is admissible.} triples $\{r,s,r+s\}$ have the large intersections property:\footnote{Theorem \ref{thm: triple Khintchine} was proved independently by a different method in a recent paper by Shalom (see \cite[Theorem 1.3]{shalom}).}

\begin{restatable}{thm}{TripleKhintchine}\label{thm: triple Khintchine}
	Let $G$ be a countable discrete abelian group.
	Let $r,s \in \Z$ be distinct nonzero integers such that
	$rG$, $sG$, and $(r \pm s)G$ have finite index in $G$.
	Then for any ergodic system $\X = \left( X, \B, \mu, (T_g)_{g \in G} \right)$,
	any $\eps > 0$, and any $A \in \B$, the set
	\be
	\left\{ g \in G : \mu \left( A \cap T_{rg}^{-1}A \cap T_{sg}^{-1}A \cap T_{(r+s)g}^{-1}A \right) > \mu(A)^4 - \eps \right\}
	\ee
	is syndetic in $G$.
\end{restatable}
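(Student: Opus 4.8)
The plan is to reduce the statement to an analysis on a well-understood characteristic factor --- following the strategy of \cite{bhk}, but carried out with the softer tools available here --- and then to convert the resulting structural information into syndeticity via Lemma~\ref{lem: syndetic thick}. Write
\[
I(g)\ :=\ \mu\!\left( A \cap T_{rg}^{-1}A \cap T_{sg}^{-1}A \cap T_{(r+s)g}^{-1}A \right)\ =\ \int_X \ind_A\cdot T_{rg}\ind_A\cdot T_{sg}\ind_A\cdot T_{(r+s)g}\ind_A\,d\mu .
\]
Under the stated hypotheses the triple $\{r,s,r+s\}$ is admissible, and the third homomorphism is the sum of the first two; so by Theorem~\ref{thm: compact ext} together with the description of the Conze--Lesigne factor (Section~\ref{sec: CL}), the averages $\UClim_{g} T_{rg}h_1\cdot T_{sg}h_2\cdot T_{(r+s)g}h_3$ are controlled by a characteristic factor $\mathbf{W}$ which is a compact extension $\mathbf{Z}\times_\rho U$ of the Kronecker factor $\mathbf{Z}$ --- on which $G$ acts by rotations $z\mapsto z+\iota(g)$ for a homomorphism $\iota:G\to Z$ of dense image (by ergodicity) --- by a compact abelian group $U$, over a cocycle $\rho$ satisfying a Conze--Lesigne equation. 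Put $f:=\E{\ind_A}{\mathbf{W}}$, so $0\le f\le 1$ and $\int_X f\,d\mu=\mu(A)$, and write $\mu_W$ and $(S_g)_{g\in G}$ for the measure and the action of $\mathbf{W}$. Since $\mathbf{W}$ is characteristic, $I(g)=J(g)+e(g)$ with $\UClim_{g\in G}\lvert e(g)\rvert=0$, where $J(g):=\int_W f\cdot S_{rg}f\cdot S_{sg}f\cdot S_{(r+s)g}f\,d\mu_W$; note that $J(0)=\int_W f^4\,d\mu_W\ge\big(\int_W f\,d\mu_W\big)^4=\mu(A)^4$ by convexity.

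The heart of the argument is to show that for every $\eta>0$ the set $B_\eta:=\{g\in G : J(g)>\mu(A)^4-\eta\}$ is syndetic. Here the parallelogram relation $0+(r+s)g=rg+sg$ is essential: it lets one factor the integrand defining $J(g)$ as a self-correlation, $f\cdot S_{sg}f\cdot S_{rg}\!\big(f\cdot S_{sg}f\big)$, which --- as in \cite{bhk} --- is what keeps the relevant characteristic factor at the Conze--Lesigne level rather than forcing a factor of higher order (for $4$-point configurations that are not parallelograms the analogue of the theorem genuinely fails; cf.\ Theorem~\ref{history 2}(iii)). Using the limit formula of Theorem~\ref{thm: limit formula} together with the explicit form of the Conze--Lesigne cocycle, one identifies $J(g)$, after passing to a suitable model of $\mathbf{W}$, with a function of the form $\Phi(\tau(g))$, where $\tau:G\to K$ is a homomorphism into a compact group $K=\overline{\tau(G)}$ and $\Phi:K\to\R$ is continuous, with $\Phi(e_K)=J(0)\ge\mu(A)^4$. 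Consequently
\[
B_\eta\ \supseteq\ \tau^{-1}\!\left(\{\kappa\in K : \Phi(\kappa)>\mu(A)^4-\eta\}\right),
\]
and the set on the right is the $\tau$-preimage of an open subset of $K$ that contains the identity $e_K$ (since $\Phi(e_K)\ge\mu(A)^4>\mu(A)^4-\eta$); such a set is syndetic in $G$.

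Finally one combines the two ingredients. Fix $\eps>0$ and a F{\o}lner sequence $(F_N)$ in $G$. The set $B_{\eps/2}$ is syndetic, hence $\liminf_N\lvert B_{\eps/2}\cap F_N\rvert/\lvert F_N\rvert>0$; on the other hand $\UClim_{g\in G}\lvert e(g)\rvert=0$ forces $\lvert\{g\in F_N:\lvert e(g)\rvert\ge\eps/2\}\rvert/\lvert F_N\rvert\to 0$. Therefore, for $N$ large enough, there is $g\in B_{\eps/2}\cap F_N$ with $\lvert e(g)\rvert<\eps/2$, and for such $g$,
\[
I(g)=J(g)+e(g)\ >\ \big(\mu(A)^4-\eps/2\big)-\eps/2\ =\ \mu(A)^4-\eps .
\]
Thus $\{g\in G : I(g)>\mu(A)^4-\eps\}$ meets every F{\o}lner sequence, and Lemma~\ref{lem: syndetic thick} shows it is syndetic. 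The main obstacle is the structural step of the second paragraph --- producing the description $J(g)=\Phi(\tau(g))$ with the correct value at the identity --- which is the analogue, for general countable abelian $G$, of the technical core of \cite{bhk} and \cite{btz2}; the admissibility (finite-index) hypotheses are exactly what is needed to make the characteristic-factor and limit-formula machinery (Theorems~\ref{thm: compact ext} and~\ref{thm: limit formula}) applicable.
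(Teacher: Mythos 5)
There are two genuine gaps, both in how you assemble the structural input into syndeticity. First, your error estimate is stronger than what the characteristic-factor machinery provides. Being characteristic (Theorem~\ref{thm: CL characteristic}, after passing to a normal system via Proposition~\ref{prop: normal ext}) gives $\UClim_{g}e(g)=0$ for the signed error $e(g)=I(g)-J(g)$, i.e.\ its Ces\`aro averages vanish; it does \emph{not} give $\UClim_{g}|e(g)|=0$. Smallness of $|e(g)|$ in uniform density is exactly the kind of statement that in the $\Z$-case requires the full Host--Kra/BHK structure theory, and it is not available in the soft framework here: $e(g)$ may oscillate in sign and remain bounded away from $0$ on a set of positive density, and then your final step (finding $g\in B_{\eps/2}\cap F_N$ with $|e(g)|<\eps/2$) collapses. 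The paper never splits $I(g)$ pointwise; it bounds \emph{weighted} averages $\UClim_g \eta(\alpha_g)\,I(g)$ from below, where the nonnegative continuous weight $\eta$ of integral $1$, concentrated near the identity, is absorbed into the functions so that the characteristic-factor and limit-formula machinery applies to the twisted averages (Lemma~\ref{lem: Kronecker with a twist}, Claim~\ref{cla: twisted formula}); syndeticity then follows by contradiction, via Lemma~\ref{lem: syndetic thick}, along a hypothetical F{\o}lner sequence avoiding the return set.

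Second, the structural step you defer --- $J(g)=\Phi(\tau(g))$ with $\Phi$ continuous on a compact group and $\Phi(e_K)=J(0)\ge\mu(A)^4$ --- is not what the machinery yields and is false in general: it would make the Conze--Lesigne correlation sequence a Bohr-type (one-step) object, whereas the CL factor is a genuinely two-step skew product. The cocycle values $\sigma_{a_ig}(z)$ do not depend continuously on $\hat g$; only the combination $\sum_i k_i\sigma_{a_ig}$ extends to a continuous map $t\mapsto\psi(t,\cdot)$ (Proposition~\ref{prop: HK P14}), and Theorem~\ref{thm: limit formula} computes \emph{averaged} limits, not the value of $J$ at an individual $g$. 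Accordingly, the paper obtains the lower bound only for weighted averages, through the Mackey-group computation (Theorem~\ref{thm: HK T10}, Corollary~\ref{cor: HK C4}), the change-of-variables and Jensen argument in the fibers $H_j$ using $a_iH_j=H_j$ (Lemma~\ref{lem: H divisibility}), and the reduction from $(r,s)$-CL cocycles to quasi-affine ones via the inverse limit over $\B_{CL((rs)^n,(rs)^n)}$ together with passage to the finite-index subgroup $d(rs)^NG$ and its finitely many ergodic components (Lemmas~\ref{lem: CL product} and~\ref{lem: CL to QA}). Your identity-value argument does not substitute for any of this, so while the outer skeleton (characteristic factor, limit formula, Lemma~\ref{lem: syndetic thick}) matches the paper's, the proof as proposed does not go through.
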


Theorem \ref{thm: triple Khintchine} contains Theorem \ref{history 2}(ii), Theorem \ref{history 3}(ii), and Theorem \ref{thm: fra}(ii) as special cases.
However, it is still not fully satisfactory: it leaves unaddressed the question of what happens for intersections of the form $A \cap T_{\varphi(g)}^{-1}A \cap T_{\psi(g)}^{-1}A \cap T_{(\varphi+\psi)(g)}^{-1}A$ for homomorphisms not arising as multiplication by integers (and hence does not fully extend Theorem \ref{thm: double Khintchine}).
Recent results about related combinatorial patterns in finite abelian groups suggest that additional conditions on the family $\{\varphi, \psi, \varphi + \psi\}$, beyond admissibility, are needed to guarantee large intersections (see \cite{bsst}).

\begin{quest}
	What are necessary and sufficient conditions for a family of homomorphisms $\{\varphi,\psi,\theta\}$ to have the large intersections property, i.e., to satisfy the following:
	for any ergodic system $\X = \left( X, \B, \mu, (T_g)_{g \in G} \right)$, any $\eps > 0$, and any $A \in \B$, the set
	\be
	\left\{ g \in G : \mu \left( A \cap T_{\varphi(g)}^{-1}A \cap T_{\psi(g)}^{-1}A \cap T_{\theta(g)}^{-1}A \right) > \mu(A)^4 - \eps \right\}
	\ee
	is syndetic in $G$?
\end{quest}

\bigskip

Results about multiple recurrence in measure-preserving systems have combinatorial implications.
To formulate combinatorial consequences of the above results, we need an ergodic version of the Furstenberg correspondence principle.
First, a definition: the \emph{upper Banach density} of a set $E \subseteq G$ is the quantity
\begin{align}
	d^*(E) := \sup\left\{ \limsup_{N \to \infty}{\frac{|E \cap F_N|}{|F_N|}} :
	(F_N)_{N \in \N}~\text{is a F{\o}lner sequence in}~G \right\}.
\end{align}
\begin{thm}[Ergodic Furstenberg Correspondence Principle \cite{bfm}, Theorem 2.8] \label{thm: erg Furstenberg}
	Let $G$ be a countable discrete abelian group.
	Let $E \subseteq G$ with $d^*(E) > 0$.
	Then there exists an ergodic measure-preserving system $\X = \left( X, \B, \mu, (T_g)_{g \in G} \right)$
	and a set $A \in \B$ with $\mu(A) = d^*(E)$ such that,
	for every $k \in \N$ and every $g_1, \dots, g_k \in G$,
	\begin{align}
		d^* \left( \bigcap_{i=1}^k{(E - g_i)} \right) \ge \mu \left( \bigcap_{i=1}^k{T_{g_i}^{-1}A} \right).
	\end{align}
\end{thm}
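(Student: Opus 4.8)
The plan is to run the classical Furstenberg correspondence construction and then pass to a carefully chosen ergodic component. Set $X = \{0,1\}^G$ with the shift action $(T_g \omega)(h) = \omega(g+h)$, put $x = \ind_E \in X$ and $A = \{\omega \in X : \omega(0) = 1\}$. Then $A$ is clopen, $\ind_A(T_g x) = \ind_E(g)$, and for any $g_1, \dots, g_k \in G$ the set $B := \bigcap_{i=1}^k T_{g_i}^{-1}A$ is clopen, with the bookkeeping identity $\{g \in G : T_g x \in B\} = \bigcap_{i=1}^k (E - g_i)$. Hence it suffices to produce a $T$-invariant \emph{ergodic} probability measure $\nu$ on $X$ with $\nu(A) = d^*(E)$ and $\nu(B) \le d^*(\{g : T_g x \in B\})$ for \emph{every} clopen $B$; then $\X := (X, \B, \nu, (T_g)_{g \in G})$ together with $A$ is the desired system.

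First I would build an invariant (not yet ergodic) measure with these properties. Choose a F{\o}lner sequence $(\Phi_N)$ along which $|E \cap \Phi_N|/|\Phi_N| \to d^*(E)$ (possible, after passing to a subsequence, by definition of $d^*$), form the empirical averages $\mu_N = \frac{1}{|\Phi_N|}\sum_{g \in \Phi_N}\delta_{T_g x}$, and pass to a weak-* limit $\mu_N \to \mu$ along a further subsequence, using compactness of the space of probability measures on the compact metrizable space $X$. The F{\o}lner condition forces $\mu$ to be $T$-invariant; $\mu$ is supported on the orbit closure $Y := \overline{\{T_g x : g\in G\}}$; and since clopen sets are $\mu$-continuity sets, $\mu(A) = \lim_N |E \cap \Phi_N|/|\Phi_N| = d^*(E)$ and $\mu(B) = \lim_N \mu_N(B) \le d^*(\{g : T_g x\in B\})$ for every clopen $B$.

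The key step, and the one I expect to be the main obstacle, is to upgrade this to an ergodic measure without losing any of the inequalities. The point is that \emph{every} ergodic invariant measure $\nu$ supported on $Y$ already satisfies $\nu(B) \le d^*(\{g : T_g x \in B\})$ for all clopen $B$ — the orbit-density constraint of the single point $x$ propagates to all invariant measures on its orbit closure. To see this, fix a tempered F{\o}lner sequence $(\Psi_N)$ and apply the Lindenstrauss pointwise ergodic theorem: for $\nu$-a.e.\ $y \in Y$, $\frac{1}{|\Psi_N|}\sum_{g \in \Psi_N}\ind_B(T_g y) \to \nu(B)$. Because a clopen $B$ depends on only finitely many coordinates and $y$ lies in the orbit closure of $x$, for each $N$ there is $h_N \in G$ with $\ind_B(T_g y) = \ind_B(T_{g + h_N}x)$ for all $g \in \Psi_N$; hence the $N$-th average equals $|D \cap (\Psi_N + h_N)|/|\Psi_N|$ with $D := \{g : T_g x \in B\}$, and since $(\Psi_N + h_N)_N$ is again a F{\o}lner sequence, its $\limsup$ is at most $d^*(D)$. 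Therefore $\nu(B) \le d^*(D)$.

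It remains to assemble the pieces. Take the ergodic decomposition $\mu = \int \nu \, d\tau(\nu)$; its components are supported on $Y$, so by the previous step each satisfies $\nu(A) \le d^*(\{g : T_g x \in A\}) = d^*(E)$. Since $\int \nu(A)\,d\tau(\nu) = \mu(A) = d^*(E)$, we get $\nu(A) = d^*(E)$ for $\tau$-a.e.\ $\nu$. Fixing one such ergodic $\nu$, it also obeys $\nu(B) \le d^*(\{g : T_g x \in B\})$ for all clopen $B$, in particular for $B = \bigcap_{i=1}^k T_{g_i}^{-1}A$, which via the bookkeeping identity reads $\nu\bigl(\bigcap_{i=1}^k T_{g_i}^{-1}A\bigr) \le d^*\bigl(\bigcap_{i=1}^k (E - g_i)\bigr)$ — exactly the conclusion. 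As an alternative to invoking the pointwise ergodic theorem in the propagation step, one could instead analyze the compact convex set of invariant measures that are ``dominated by the orbit density of $x$'' on clopen sets and extract an ergodic extreme point, but verifying nonemptiness and that the extreme points are ergodic costs a comparable amount of work.
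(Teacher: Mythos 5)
Your proposal is correct, but note that the paper itself offers no proof of this statement to compare against: it is imported verbatim from \cite{bfm} (Theorem 2.8). Your route is a legitimate self-contained argument: the classical non-ergodic correspondence on $X=\{0,1\}^G$ via empirical measures along a F{\o}lner sequence realizing $d^*(E)$, then the key observation that the clopen-set density bound propagates to \emph{every} ergodic invariant measure on the orbit closure of $\ind_E$ (using that clopen sets depend on finitely many coordinates, that translates of a F{\o}lner sequence are again F{\o}lner, and a pointwise ergodic theorem), and finally the ergodic decomposition together with $\int \nu(A)\,d\tau = \mu(A) = d^*(E)$ and $\nu(A)\le d^*(E)$ a.e.\ to select an ergodic component with $\nu(A)=d^*(E)$ exactly. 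Two points you gloss over are true but deserve a word: (i) that $d^*(E)$ is actually \emph{attained} as a limit along some F{\o}lner sequence (needed for the equality $\mu(A)=d^*(E)$, not just an inequality) requires a diagonalization — pick increasingly invariant F{\o}lner sets whose $E$-density exceeds $d^*(E)-1/j$ and string them together; and (ii) Lindenstrauss's pointwise theorem needs a tempered F{\o}lner sequence, which exists since every F{\o}lner sequence has a tempered subsequence; alternatively, the mean ergodic theorem (Theorem \ref{thm: ergodic theorem}) plus passage to an a.e.-convergent subsequence suffices, since you only need one generic point $y$ for each of the countably many clopen sets. For what it is worth, the argument in \cite{bfm} is organized around invariant means on $\ell^{\infty}(G)$ rather than generic points in a symbolic orbit closure, but for countable abelian $G$ your approach yields the same conclusion.
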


A routine application of Theorem \ref{thm: erg Furstenberg} translates Theorems \ref{thm: double Khintchine} and \ref{thm: triple Khintchine} into the following two combinatorial results:\footnote{Rewriting the intersections, these theorems show an abundance of combinatorial configurations of the form $\{x, x + \varphi(g), x + \psi(g)\}$ and $\{x, x + rg, x + sg, x + (r+s)g\}$ respectively.}
\begin{thm} \label{thm: combinatorial dK}
	Let $G$ be a countable discrete abelian group.
	Let $\{\varphi, \psi\}$ be an admissible pair of homomorphisms.
	For any $E \subseteq G$ with $d^*(E) > 0$ and any $\eps > 0$, the set
	\begin{align}
		\left\{ g \in G : d^* \left( E \cap (E - \varphi(g)) \cap (E - \psi(g)) \right) > d^*(E)^3 - \eps \right\}
	\end{align}
	is syndetic.
\end{thm}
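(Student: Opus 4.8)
The plan is to deduce this directly from the ergodic recurrence statement Theorem~\ref{thm: double Khintchine} by feeding the set $E$ into the ergodic Furstenberg correspondence principle, Theorem~\ref{thm: erg Furstenberg}. The point is that the correspondence principle hands us an \emph{ergodic} system together with a set whose measure is exactly $d^*(E)$, which is precisely the input required by Theorem~\ref{thm: double Khintchine}; the rest is bookkeeping with the inequality supplied by the correspondence principle.

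Concretely, I would proceed as follows. Since $d^*(E) > 0$, apply Theorem~\ref{thm: erg Furstenberg} to obtain an ergodic $G$-system $\X = \left( X, \B, \mu, (T_g)_{g \in G} \right)$ and a set $A \in \B$ with $\mu(A) = d^*(E)$ such that, for every $k \in \N$ and all $g_1, \dots, g_k \in G$,
\be
	d^*\left( \bigcap_{i=1}^k (E - g_i) \right) \ge \mu\left( \bigcap_{i=1}^k T_{g_i}^{-1} A \right).
\ee
Specializing to $k = 3$ with $g_1 = 0$ (the identity of $G$), $g_2 = \varphi(g)$, and $g_3 = \psi(g)$, and using that $E - 0 = E$, we get, for every $g \in G$,
\be
	d^*\left( E \cap (E - \varphi(g)) \cap (E - \psi(g)) \right) \ge \mu\left( A \cap T_{\varphi(g)}^{-1} A \cap T_{\psi(g)}^{-1} A \right).
\ee
Now apply Theorem~\ref{thm: double Khintchine} to the ergodic system $\X$, the set $A$, the admissible pair $\{\varphi,\psi\}$, and the given $\eps > 0$: the set
$S := \left\{ g \in G : \mu\left( A \cap T_{\varphi(g)}^{-1} A \cap T_{\psi(g)}^{-1} A \right) > \mu(A)^3 - \eps \right\}$
is syndetic in $G$. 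For every $g \in S$, combining the two displayed facts with $\mu(A) = d^*(E)$ gives
$d^*\left( E \cap (E - \varphi(g)) \cap (E - \psi(g)) \right) > \mu(A)^3 - \eps = d^*(E)^3 - \eps$,
so $S$ is contained in the set appearing in the statement. Since any superset of a syndetic set is syndetic (finitely many translates covering $S$ also cover anything containing $S$), the conclusion follows.

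\textbf{Main obstacle.} There is essentially no obstacle here: all the substance is packaged inside Theorem~\ref{thm: double Khintchine} (and, on the combinatorial side, inside Theorem~\ref{thm: erg Furstenberg}). The only thing to be careful about is that one genuinely needs the \emph{ergodic} version of the correspondence principle, since Theorem~\ref{thm: double Khintchine} fails for general non-ergodic systems; fortunately Theorem~\ref{thm: erg Furstenberg} produces an ergodic system, so the two results mesh without further work.
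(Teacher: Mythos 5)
Your proof is correct and is exactly the argument the paper has in mind: the authors simply state that Theorem~\ref{thm: combinatorial dK} follows from a routine application of the ergodic correspondence principle (Theorem~\ref{thm: erg Furstenberg}) to Theorem~\ref{thm: double Khintchine}, which is precisely what you carry out, including the key point that the ergodic version of the correspondence principle is needed. Nothing is missing.
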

\begin{thm} \label{thm: combinatorial tK}
	Let $G$ be a countable discrete abelian group.
	Let $r, s \in \Z$ be distinct nonzero integers such that
	$rG$, $sG$, and $(r \pm s)G$ have finite index in $G$.
	For any $E \subseteq G$ with $d^*(E) > 0$ and any $\eps > 0$, the set
	\begin{align}
		\left\{ g \in G : d^* \left( E \cap (E - rg) \cap (E - sg) \cap (E - (r+s)g) \right) > d^*(E)^4 - \eps \right\}
	\end{align}
	is syndetic.
\end{thm}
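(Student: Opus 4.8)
The plan is to obtain Theorem~\ref{thm: combinatorial tK} as a routine consequence of Theorem~\ref{thm: triple Khintchine} via the Ergodic Furstenberg Correspondence Principle (Theorem~\ref{thm: erg Furstenberg}); all the genuine work has already been done in the proof of Theorem~\ref{thm: triple Khintchine}. Fix a countable discrete abelian group $G$, distinct nonzero integers $r, s \in \Z$ with $rG$, $sG$, and $(r \pm s)G$ of finite index in $G$, a set $E \subseteq G$ with $d^*(E) > 0$, and $\eps > 0$.

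First I would invoke Theorem~\ref{thm: erg Furstenberg} to produce an ergodic measure-preserving system $\X = (X, \B, \mu, (T_g)_{g \in G})$ and a set $A \in \B$ with $\mu(A) = d^*(E)$ such that, for every $k \in \N$ and every $g_1, \dots, g_k \in G$,
\be
	d^*\!\left( \bigcap_{i=1}^k (E - g_i) \right) \ge \mu\!\left( \bigcap_{i=1}^k T_{g_i}^{-1} A \right).
\ee
Next, since the hypotheses on $r$ and $s$ are precisely those required by Theorem~\ref{thm: triple Khintchine}, that theorem applies to $\X$, $A$, and $\eps$: the set
\be
	S := \left\{ g \in G : \mu\!\left( A \cap T_{rg}^{-1}A \cap T_{sg}^{-1}A \cap T_{(r+s)g}^{-1}A \right) > \mu(A)^4 - \eps \right\}
\ee
is syndetic in $G$. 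Now for each $g \in S$, applying the displayed inequality with $k = 4$ and $(g_1, g_2, g_3, g_4) = (0, rg, sg, (r+s)g)$ gives
\be
	d^*\!\left( E \cap (E - rg) \cap (E - sg) \cap (E - (r+s)g) \right)
	\ge \mu\!\left( A \cap T_{rg}^{-1}A \cap T_{sg}^{-1}A \cap T_{(r+s)g}^{-1}A \right)
	> \mu(A)^4 - \eps = d^*(E)^4 - \eps,
\ee
so $S$ is contained in the set appearing in the statement of Theorem~\ref{thm: combinatorial tK}. Since any superset of a syndetic set is syndetic (immediate from the definition, as the finitely many translates covering $S$ also cover the larger set), this completes the proof.

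There is essentially no obstacle here beyond bookkeeping: the only points requiring a moment's care are (i) checking that the admissibility hypotheses of Theorem~\ref{thm: triple Khintchine} coincide exactly with the hypotheses imposed on $r, s$ in the combinatorial statement, and (ii) noting that the correspondence principle already delivers an \emph{ergodic} system, which is exactly what Theorem~\ref{thm: triple Khintchine} demands. An entirely analogous argument with $k = 3$ and $(g_1, g_2, g_3) = (0, \varphi(g), \psi(g))$ deduces Theorem~\ref{thm: combinatorial dK} from Theorem~\ref{thm: double Khintchine}, so I would present both deductions together.
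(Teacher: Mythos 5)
Your proposal is correct and is exactly the deduction the paper intends: the paper states that Theorem~\ref{thm: combinatorial tK} follows from Theorem~\ref{thm: triple Khintchine} by "a routine application" of the Ergodic Furstenberg Correspondence Principle (Theorem~\ref{thm: erg Furstenberg}), and your argument — applying the correspondence inequality with $(g_1,g_2,g_3,g_4)=(0,rg,sg,(r+s)g)$ for each $g$ in the syndetic set furnished by Theorem~\ref{thm: triple Khintchine}, and noting that supersets of syndetic sets are syndetic — fills in that routine step precisely as intended.
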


Families of homomorphisms with the large intersections property produce an endless variety of combinatorial configurations.

A natural question to ask is whether combinatorial results about large intersections, such as Theorem \ref{thm: combinatorial dK} and Theorem \ref{thm: combinatorial tK}, have finitary versions:
\begin{quest} \label{quest: finitary}
	Fix a F{\o}lner sequence $(F_N)_{N \in \N}$ in $G$.
	Suppose that $\mathcal{F} = \{\varphi_1, \dots, \varphi_k\}$ has the large intersections property.
	Given $\delta > 0$, $\eps > 0$, does there exist $N_0 = N_0(\delta, \eps)$ such that
	if $N \ge N_0$ and $A \subseteq F_N$ has cardinality $|A| \ge \delta |F_N|$,
	then there exists $g \ne 0$ such that
	\begin{align}
		\left| A \cap (A - \varphi_1(g)) \cap \cdots \cap (A - \varphi_k(g)) \right|
		> \left( \delta^{k+1} - \eps \right) |F_N|?
	\end{align}
\end{quest}
We will now briefly discuss a few examples and reformulate Question \ref{quest: finitary} for these concrete situations.

When $G = \Z$ and $F_N = \{1, \dots, N\}$, Question \ref{quest: finitary} has a positive answer:\footnote{In \cite{green, gt}, these results are only stated for arithmetic progressions (corresponding to the case $r=1, s=2$).
	However, the same method extends to general triples or parallelogram configurations as stated here.}
\begin{thm}[\cite{green}, Theorem 1.10; \cite{gt}, Theorem 1.12] \label{thm: GT APs}
	Let $r, s \in \Z$ be distinct and nonzero.
	Suppose $\delta, \eps > 0$.
	There exists $N_0 = N_0(r, s, \delta, \eps) \in \N$ such that if $N \ge N_0(\delta, \eps)$
	and $A \subseteq \{1, \dots, N\}$ has size $|A| \ge \delta N$, then there exist $n, m \ne 0$ such that
	\begin{align}
		\left| A \cap (A - rn) \cap (A - sn) \right| & > \left( \delta^3 - \eps \right) N \\
		\intertext{and}
		\left| A \cap (A-rm) \cap (A-sm) \cap (A-(r+s)m) \right| & > \left( \delta^4 - \eps \right) N.
	\end{align}
\end{thm}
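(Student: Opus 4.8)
The plan is to deduce the finitary statement from the infinitary Theorem \ref{thm: combinatorial tK} (equivalently, its $k=2$ counterpart Theorem \ref{thm: combinatorial dK}) applied to $G = \Z$, via a standard compactness/transference argument. First I would fix $r, s, \delta, \eps$ and argue by contradiction: suppose that for every $N_0$ there is some $N \ge N_0$ and a set $A_N \subseteq \{1, \dots, N\}$ with $|A_N| \ge \delta N$ such that for \emph{every} $n \ne 0$ we have $|A_N \cap (A_N - rn) \cap (A_N - sn)| \le (\delta^3 - \eps)N$ (and similarly for the four-term configuration). Passing to a subsequence, I would encode each $A_N$ as a point $x_N = \ind_{A_N} \in \{0,1\}^{\Z}$ (extending $A_N$ by $0$ outside $\{1,\dots,N\}$), and use a diagonal/weak-$*$ argument: let $\mu_N$ be the empirical measure associated to the orbit of $x_N$ under the shift along $F_N = \{1, \dots, N\}$, i.e.\ $\mu_N = \frac{1}{N}\sum_{j=1}^{N} \delta_{S^j x_N}$ where $S$ is the shift on $\{0,1\}^{\Z}$. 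By compactness of the space of Borel probability measures on $\{0,1\}^{\Z}$ in the weak-$*$ topology, pass to a subsequence so that $\mu_N \to \mu$; the limit $\mu$ is $S$-invariant because $F_N = \{1,\dots,N\}$ is a F\o lner sequence in $\Z$. Let $A = \{x \in \{0,1\}^{\Z} : x(0) = 1\}$ (a clopen set). Then $\mu(A) = \lim_N \frac{|A_N|}{N} \ge \delta$ (passing to a further subsequence so the densities converge), and for every fixed $n$, since $A \cap S^{-rn}A \cap S^{-sn}A$ is clopen,
\[
\mu\left(A \cap S^{-rn}A \cap S^{-sn}A\right) = \lim_{N} \frac{1}{N}\left|A_N \cap (A_N - rn) \cap (A_N - sn)\right| \le \delta^3 - \eps \le \mu(A)^3 - \eps,
\]
using the contradiction hypothesis and (after yet another subsequence) that $|A_N| / N \to \mu(A)$ monotonically enough. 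Hence for this system $\{n \ne 0 : \mu(A \cap S^{-rn}A \cap S^{-sn}A) > \mu(A)^3 - \eps\} = \es$, so in particular this set is not syndetic, contradicting Theorem \ref{thm: combinatorial dK} applied with $\varphi(g) = rg$, $\psi(g) = sg$ (these form an admissible pair in $\Z$ since $r, s, r - s, r + s$ are all nonzero, and actually $\{r, s\}$ being admissible only needs $r, s, r-s \ne 0$)—except that Theorem \ref{thm: combinatorial dK} is a statement about densities $d^*$ rather than about a single invariant measure. The cleanest route is therefore to instead invoke Theorem \ref{thm: combinatorial dK}/\ref{thm: combinatorial tK} directly with the set $E$ extracted from the $A_N$: one shows that $d^*$ of the limiting configuration densities dominate the corresponding $\mu$-measures, or more simply one uses that the $A_N$ already witness a density-$\delta$ set in a large interval and applies a "blow-up" construction gluing translates of the bad examples to manufacture a set $E \subseteq \Z$ with $d^*(E) \ge \delta$ for which $\{n : d^*(E \cap (E - rn) \cap (E - sn)) > \delta^3 - \eps\}$ fails to be syndetic. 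The four-term case is identical, using admissibility of $\{r, s, r+s\}$ in $\Z$ (requiring $r, s, r \pm s \ne 0$, which holds since $r \ne \pm s$ and both are nonzero) and Theorem \ref{thm: combinatorial tK}.

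The main subtlety—and the step I would be most careful about—is the transference from the finitary negation to a genuine counterexample for the infinitary theorem, i.e.\ making sure that the limiting object really violates syndeticity rather than merely failing to have large intersections along a single F\o lner sequence. Here Lemma \ref{lem: syndetic thick} is the right tool: a set $S \subseteq \Z$ fails to be syndetic precisely when there is some F\o lner sequence $(G_M)$ with $G_M \cap S = \es$ for all $M$. So what I actually need to produce from the sequence $(A_N)$ is a single set $E \subseteq \Z$ with $d^*(E) \ge \delta$ together with a F\o lner sequence avoiding the large-intersection set of $E$. This is arranged by a diagonalization: take the bad examples $A_{N_j}$ for a rapidly growing sequence $N_j$, translate them into widely separated "windows" $I_j$ of length $N_j$ in $\Z$, set $E = \bigsqcup_j (A_{N_j} + t_j)$ with the $t_j$ spaced so that $|I_j| / \mathrm{dist}(I_j, I_{j-1}) \to 0$, and let $G_j = I_j$; then $d^*(E) \ge \limsup_j |A_{N_j}|/N_j \ge \delta$, while for each fixed displacement $n$, once $j$ is large the configuration $E \cap (E - rn) \cap (E - sn)$ inside $I_j$ agrees with $A_{N_j} \cap (A_{N_j} - rn) \cap (A_{N_j} - sn)$ up to $O(|n|)$ boundary terms, hence has density $\le \delta^3 - \eps + o(1) \le \delta^3 - \eps/2$ along $(G_j)$, forcing $d^*(E \cap (E-rn) \cap (E-sn))$ to be at most... — wait, this only bounds the density along \emph{one} F\o lner sequence, which is not enough to bound $d^*$. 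The correct statement to extract is rather that along the F\o lner sequence $(G_j)$, the relative density of the configuration is $\le \delta^3 - \eps/2$; but $d^*$ is a supremum over all F\o lner sequences, so it could still be large. This shows the argument must be run the other way: one does not contradict Theorem \ref{thm: combinatorial dK} as literally stated (which lower-bounds $d^*$), but rather its slightly stronger underlying form—or, most efficiently, the \emph{measure-theoretic} Theorems \ref{thm: double Khintchine}/\ref{thm: triple Khintchine} together with the correspondence going from $(A_N)$ to an ergodic system.

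Concretely, the clean argument is: from the bad sequence $(A_{N})$ build, via the empirical-measure construction above, an $S$-invariant measure $\mu$ on $\{0,1\}^{\Z}$ with $\mu(A) \ge \delta$ and $\mu(A \cap S^{-rn}A \cap S^{-sn}A) \le \delta^3 - \eps$ for all $n \ne 0$; pass to the ergodic decomposition of $\mu$. Since $\int \mu_\omega(A)\, d\mathbb{P}(\omega) = \mu(A) \ge \delta$, the set of ergodic components $\omega$ with $\mu_\omega(A) \ge \delta - \eps'$ has positive probability, and for such $\omega$ (choosing $\eps'$ small in terms of $\delta, \eps$) Theorem \ref{thm: double Khintchine} gives that $S_\omega := \{n : \mu_\omega(A \cap S^{-rn}A \cap S^{-sn}A) > \mu_\omega(A)^3 - \eps/2\}$ is syndetic. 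A Fubini/averaging argument over these components then shows $\int \mu_\omega(A \cap S^{-rn}A \cap S^{-sn}A)\, d\mathbb{P}(\omega) > \delta^3 - \eps$ for $n$ in a syndetic set (in particular for some $n \ne 0$), contradicting $\mu(A \cap S^{-rn}A \cap S^{-sn}A) \le \delta^3 - \eps$ for all $n$—one must check the bookkeeping relating $\mu_\omega(A)^3$ averaged against $\mathbb{P}$ to $\mu(A)^3$, which is where convexity of $t \mapsto t^3$ and the concentration of $\mu_\omega(A)$ near $\delta$ are used; this is the genuinely delicate point, and it is exactly the same ergodic-decomposition subtlety that motivates the necessity of ergodicity discussed after Theorem \ref{history 2}. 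The four-term statement follows verbatim from Theorem \ref{thm: triple Khintchine} in place of Theorem \ref{thm: double Khintchine}. I expect the ergodic-decomposition averaging step to be the main obstacle; everything else is the routine finitary-to-infinitary compactness transference.
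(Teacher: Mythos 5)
Your strategy cannot work, and the obstruction is not the bookkeeping you flag at the end but a structural one that the paper itself documents. The weak-$*$ limit $\mu$ of empirical measures built from a putative sequence of finitary counterexamples is an invariant measure that has no reason to be ergodic, and for non-ergodic measures the statement you would need to contradict is simply \emph{false}: Theorem~\ref{history 2 needs ergodicity} (and, more generally, Section~\ref{sec: ergodicity}) exhibits invariant measures with $\mu(A)>0$ and $\mu(A\cap T^{n}A\cap T^{2n}A)\le\tfrac12\mu(A)^{\ell}$ for every $n\ne 0$, hence $\le \mu(A)^3-\eps$ for suitable $\eps$. Such a measure (pushed to the shift on $\{0,1\}^{\Z}$ and realized through quasi-generic points) is a perfectly consistent outcome of your compactness construction, so no contradiction can be extracted at the limit. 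Concretely, the ergodic-decomposition step fails because Theorem~\ref{thm: double Khintchine} gives, for each ergodic component $\mu_\omega$, a syndetic set $S_\omega$ of good $n$, but these sets depend on $\omega$ and can be pairwise essentially disjoint; there need not exist a single $n\ne 0$ that is good for a set of components of large enough measure, and the counterexamples just cited show this failure is genuine rather than a gap in your estimates. (Convexity of $t\mapsto t^3$ would help only if one $n$ worked simultaneously for almost all components, which is exactly what is unavailable.) Your first route, gluing translated copies of the bad sets to contradict Theorem~\ref{thm: combinatorial dK}, fails for the reason you yourself identify: you only control the density along one F{\o}lner sequence, while $d^*$ is a supremum over all of them.

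The reason there is no conflict between the truth of the finitary theorem and the existence of these bad invariant measures is that the finitary statement allows the witness $n$ to depend on $N$ and to grow with $N$; the weak-$*$ limit only records correlations at each fixed $n$, so precisely the information that makes the finitary statement true is destroyed in the limit. This is why the theorem is not deduced in the paper from Theorems~\ref{thm: double Khintchine}, \ref{thm: triple Khintchine}, \ref{thm: combinatorial dK}, or \ref{thm: combinatorial tK} at all: it is quoted from Green and Green--Tao, whose proofs are quantitative and finitary (arithmetic regularity and counting lemmas), with the paper's footnote noting that the same method handles general $r,s$ and the parallelogram configuration. A soft infinitary-to-finitary transference of the kind you propose is not available here, and any repair would have to inject genuinely quantitative input of the regularity-lemma type.
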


Now let us turn to $G = \Z^2$.
We will show in Example \ref{eg: Chu} that the pair of homomorphisms $\varphi(n,m) = (n,0)$, $\psi(n,m) = (0,n)$ fails to have the large intersections property.
The corresponding combinatorial configurations, $\{(a, b), (a+c, b), (a, b+c)\}$, are known as \emph{corners}.
Geometrically, we can view corners as isosceles right triangles in $\Z^2$ with legs parallel to the axes.
Sah, Sawhney, and Zhao have shown a finitary analogue of the fact that this pattern is not good for large intersections:
\begin{thm}[\cite{ssz}, Theorem 1.4]
	For any $l < 4$, there exists $\delta > 0$ such that for arbitrarily large $N$,
	there exists a set $A \subseteq \{1, \dots, N\}$ with $|A| > \delta N$ such that
	\begin{align*}
		\left| \left\{(a,b) \in \Z : \{(a,b), (a+c,b), (a,b+c)\} \subseteq A \right\} \right| \le \delta^l N
	\end{align*}
	for every $c \ne 0$.
\end{thm}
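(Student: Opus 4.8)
The statement is the finitary, quantitative shadow of the fact recorded in Example~\ref{eg: Chu} that the corner pair $\varphi(n,m) = (n,0)$, $\psi(n,m) = (0,n)$ fails the large intersections property, and it is sharp: it pushes the corner count along \emph{every} single difference down essentially to the extremal value $\delta^4 N^2$. Since the result is quoted from \cite{ssz}, in the present paper one would simply cite it; the following is a plan for proving it directly. For $A \subseteq \{1,\dots,N\}^2$ write $\Lambda_c(A) = |\{(a,b) : (a,b),(a+c,b),(a,b+c) \in A\}|$. Given $l < 4$, the task is to produce a small $\delta = \delta(l) > 0$ and, for arbitrarily large $N$, a set $A$ with $|A| \ge \delta N^2$ and $\max_{c \ne 0}\Lambda_c(A) \le \delta^l N^2$.

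The plan rests on three ingredients. First, a \emph{structured template} $A_0$: for every fixed $c \ne 0$, the three corner equations should cut out a subset of $A_0$ of dimension strictly below the naive one, so that $\Lambda_c(A_0)$ beats the random-model count $\delta_0^3 N^2$. Second, a \emph{random dilution}: replace $A_0$ by $A_0 \cap R$ with $R$ an independent random set of density $\rho$; since for $c \ne 0$ the three corner points are distinct, this multiplies densities by $\rho$ but $\Lambda_c$ only by $\rho^3$ in expectation, and a union bound over $c$ makes the estimate hold for all $c$ simultaneously. Third, \emph{optimization} of $\rho$ and of the parameters of $A_0$ so that the exponent $l$ of the final bound tends to $4$ as $\delta \to 0$.

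For the template one would use a niveau (Behrend-type) set: fix a base $q$ and dimension $d$ with $q^d$ of the order of $N$ (or, to avoid boundary and divisibility artifacts, work in $\Z/p\Z$ for a large prime $p$), embed $a \mapsto \vec a$ and $b \mapsto \vec b$ via base-$q$ digits into $\Z^d$, and let $A_0$ be the set of $(a,b)$ with $|\vec a|^2 + |\vec b|^2$ in a prescribed thin window. In the carry-free regime the three conditions $|\vec a|^2 + |\vec b|^2 = R$, $|\vec a + \vec c|^2 + |\vec b|^2 = R$, $|\vec a|^2 + |\vec b + \vec c|^2 = R$ force the two extra linear relations $\langle \vec a, \vec c\rangle = \langle \vec b, \vec c\rangle = -\tfrac12 |\vec c|^2$, which by nonnegativity of digit vectors can hold only when $\vec c = 0$; hence $\Lambda_c(A_0)$ reduces to the carry contribution alone. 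One then bounds the density of $A_0$ from below by a pigeonhole over the window values, chooses the window width to raise this density to the desired level, and bounds $\Lambda_c(A_0)$ for each $c \ne 0$ by counting lattice points on the relevant quadric slices.

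The hard part is making the corner bound \emph{uniform in $c \ne 0$} while still driving the exponent up to $4$. The digit embedding is only additive off the set of $c$ producing carries, and that set is large, so those differences must be controlled by a separate argument --- enlarging $q$, subdividing $A_0$ into pieces each of which still sees enough constraints along such a $c$, or using that $a, a+c$ and $b, b+c$ all lie in a structured set. Then one must carefully account for the cost of the dilution and of widening the window: with the width tuned so that $A_0$ has a \emph{fixed} positive density, the net exponent must come out as some $l < 4$ with $l \to 4$ as $\delta \to 0$. This is precisely where the constant $4$ enters, and it reflects optimality: every dense $A$ has a difference $c \ne 0$ with $\Lambda_c(A) \ge (\delta^4 - o(1))N^2$, so the construction has to be essentially extremal, and that extremality dictates the delicate balancing of the parameters $q$, $d$, window width, and $\rho$.
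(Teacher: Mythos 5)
The paper does not prove this statement at all---it is quoted verbatim from \cite{ssz}---so your plan has to stand on its own, and as written it has two concrete gaps. The first is in the template step. In the carry-free regime the three window conditions $|\vec a|^2+|\vec b|^2=R$, $|\vec a+\vec c|^2+|\vec b|^2=R$, $|\vec a|^2+|\vec b+\vec c|^2=R$ give only the two linear relations $2\langle\vec a,\vec c\rangle+|\vec c|^2=0$ and $2\langle\vec b,\vec c\rangle+|\vec c|^2=0$, and nonnegativity of the digit vectors does \emph{not} force $\vec c=0$, because $\vec c$ is a difference of digit vectors and may have negative coordinates: already $\vec a=\vec b=(1,0,\dots,0)$ and $\vec c=(-1,1,0,\dots,0)$ produce a genuine corner with $c\neq 0$ inside a single level set. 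So for each fixed $c\neq 0$ the niveau set still contains a codimension-two family of corners; quantifying that count (and beating the random benchmark uniformly in $c$) is the whole problem, and the claim that $\Lambda_c(A_0)$ ``reduces to the carry contribution alone'' is simply false. This is not a repairable detail of the same construction: it is exactly the reason a Behrend sphere, which kills three collinear points by strict convexity, does not kill corners.

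The second gap is that random dilution cannot do the work you assign to it. If the template has density $\delta_0$ and $\Lambda_c(A_0)\le\delta_0^{t}N^2$, then after dilution by $\rho$ the density is $\delta=\rho\delta_0$ and the count is about $\rho^3\delta_0^{t}N^2=\delta^3\delta_0^{t-3}N^2$, so the exponent achieved relative to the \emph{new} density is $3+(t-3)\log\delta_0/\log\delta\le t$; moreover for any difference $c$ where the template has no gain ($t=3$, e.g.\ the carry differences) the diluted set still has count about $\delta^3N^2$. Dilution rescales the benchmark $\delta^l N^2$ along with the count, so it can never raise the exponent toward $4$ nor repair bad differences; the entire burden falls on a constant-density template that already achieves exponent close to $4$ uniformly in $c$, which is precisely what your sketch does not provide. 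The actual argument in \cite{ssz} (building on Mandache) takes a different route: one assigns to each row $a$ and column $b$ an independent uniform label and keeps $(a,b)$ with probability $W(x_a,y_b)$; for every $c\neq0$ the expected corner count is then $N^2\,\mathbb{E}\bigl[W(x,y)W(x',y)W(x,y')\bigr]$ with all four labels independent, and the theorem reduces to constructing $W\colon[0,1]^2\to[0,1]$ of mean $\delta$ for which this functional is at most $\delta^{4-o(1)}$, together with concentration and a union bound over the $O(N)$ values of $c$. That extremal construction for $W$ is the missing idea your proposal would need to supply.
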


We now show that a slight variant of corner configurations is good for large intersections.
Consider the homomorphisms $\varphi(n,m) = (n,m)$ and $\psi(n,m) = (-m,n)$.
A configuration produced by the pair $\{\varphi,\psi\}$ has the form $\{(a,b), (a+n,b+m), (a-m,b+n)\}$.
This is an isosceles right triangle (with legs of length $\sqrt{n^2+m^2}$), but we now have an additional degree of freedom: varying the ratio $\frac{m}{n}$ changes the angle between the legs of the triangle and the coordinate axes.
When $m=0$, this reduces to a corner with sides of length $n$.
The pair $\{\varphi, \psi\}$ is admissible, so by Theorem \ref{thm: combinatorial dK},
there are syndetically many pairs $(n,m) \in \Z^2$ such that
\begin{align}
	d^*\left( \left\{(a,b) \in \Z^2 : \{(a,b), (a+n,b+m), (a-m,b+n)\} \subseteq E \right\} \right)
	> d^*(E)^3 - \eps.
\end{align}
Stated another way, out of all isosceles right triangle configurations in the two-dimensional integer lattice, syndetically many of them appear up to a shift with high frequency in $E$.
A finitary version of this result also holds, answering a question posed by us in an earlier version of this paper:
\begin{thm}[\cite{kovac}, Theorem 1; \cite{bsst}, Theorem 1.1]
	Let $\delta, \eps > 0$.
	There exists $N_0 = N_0(\delta, \eps) \in \N$ such that if $N \ge N_0(\delta, \eps)$
	and $A \subseteq \{1, \dots, N\}^2$ has size $|A| \ge \delta N^2$,
	then there exists $(n,m) \in \Z^2 \setminus \{(0,0)\}$ such that
	\begin{align}
		\left| A \cap \left( A - (n,m) \right) \cap \left( A - (-m,n) \right) \right| > (\delta^3 - \eps)N^2.
	\end{align}
\end{thm}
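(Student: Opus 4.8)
The plan is to establish the single-scale, quantitative counterpart of the mechanism behind Theorem~\ref{thm: double Khintchine} for the admissible pair $\varphi(n,m) = (n,m)$, $\psi(n,m) = (-m,n)$. Write $R$ for the quarter-turn rotation of $\Z^2$, so that $\varphi = \mathrm{id}$, $\psi = R$, $R^2 = -I$, and $\psi - \varphi = R - I$ has determinant $2$. First I would reduce to a finite group: assuming $N$ odd (throwing away a row and column of $\{1,\dots,N\}^2$ changes the density by $o(1)$), identify $\{1,\dots,N\}^2$ with $(\Z/N\Z)^2$ and restrict throughout to $v = (n,m)$ with $|n| + |m| < \eps N / 16$, so that the triangles $\{x, x+v, x+Rv\}$ do not wrap around modulo $N$. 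For such $v$, the genuine count in $\{1,\dots,N\}^2$ of $x$ with $x, x+v, x+Rv \in A$ differs from $N^2 c(v)$ by at most $2(|n|+|m|)N$, where $c(v) := \mathbb{E}_{x \in (\Z/N\Z)^2} \ind_A(x) \ind_A(x+v) \ind_A(x+Rv)$. Since $N$ is odd, $\varphi$, $\psi$, and $\psi - \varphi$ are all automorphisms of $(\Z/N\Z)^2$ --- the exact analogue of the admissibility hypotheses. It therefore suffices to produce, for all large $N$, some $v \ne 0$ in the above range with $c(v) > \delta^3 - \eps/2$, since then the genuine count exceeds $N^2(\delta^3 - \eps/2) - 2(|n|+|m|)N > N^2(\delta^3 - \eps)$.

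The structural heart comes next. Apply an arithmetic regularity lemma on $(\Z/N\Z)^2$ at a precision governed by $\eps$ to write $\ind_A = f_{\mathrm{str}} + f_{\mathrm{unf}}$, where $f_{\mathrm{str}}$ is the conditional expectation of $\ind_A$ onto the $\sigma$-algebra of a bounded-complexity Bohr set, so that $0 \le f_{\mathrm{str}} \le 1$, $\mathbb{E} f_{\mathrm{str}} = |A| / N^2 \ge \delta$, and $f_{\mathrm{str}} = F \circ \phi$ for a homomorphism $\phi : (\Z/N\Z)^2 \to \mathbb{T}^d$ of bounded rank and a measurable $F : \mathbb{T}^d \to [0,1]$ of mean $\ge \delta$, while $f_{\mathrm{unf}}$ has negligibly small Fourier $\ell^\infty$ norm. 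Expand $c(v) = \sum_{(\star,\star,\star) \in \{\mathrm{str}, \mathrm{unf}\}^3} \mathbb{E}_x f_\star(x) f_\star(x + v) f_\star(x + Rv)$. The change of variables $(x, v) \mapsto (y, z) = (x + v, x + Rv)$ is a bijection of $(\Z/N\Z)^4$ because $R - I$ is invertible, and under it the $v$-average of each summand becomes a $3$-linear form $\mathbb{E}_{y, z} f_\star(\ell(y, z))\, f_\star(y)\, f_\star(z)$ whose three linear forms $\ell(y,z)$, $y$, $z$ are pairwise independent --- a property one checks is equivalent to $\varphi$, $\psi$, $\psi - \varphi$ all being automorphisms. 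Hence the generalized von Neumann inequality for complexity-one linear systems (applied to this, and to the analogous doubled expressions controlling $\mathbb{E}_v |\cdot|^2$) shows that every summand containing a factor $f_{\mathrm{unf}}$ is $o_\eps(1)$ in $L^2_v$; this is precisely the finitary reflection of the fact, used in the proof of Theorem~\ref{thm: double Khintchine}, that the Kronecker factor is characteristic for admissible pairs. We are left with $c(v) = c_{\mathrm{str}}(v) + o_{L^2_v}(1)$, where $c_{\mathrm{str}}(v) = \mathbb{E}_x F(\phi(x))\, F(\phi(x) + \phi(v))\, F(\phi(x) + \phi(Rv)) = \int F(t) \, F(t + \phi(v)) \, F(t + \phi(Rv)) \, dt$, the last integral being against the push-forward under $\phi$ of the uniform measure.

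The main term is then handled by equidistribution. As $v$ ranges over $(\Z/N\Z)^2$ the pair $(\phi(v), \phi(Rv))$ equidistributes, by Weyl's theorem, in a closed subgroup $\Gamma \le \mathbb{T}^d \times \mathbb{T}^d$ containing $(0,0)$, and this persists on the range $|n| + |m| < \eps N / 16$ by Dirichlet's box principle applied to the bounded-rank $\phi$. The function $(s_1, s_2) \mapsto \int F(t) \, F(t + s_1) \, F(t + s_2) \, dt$ is continuous (by continuity of translation in $L^3$), and at $(0,0)$ it equals $\int F^3 \ge \big( \int F \big)^3 \ge \delta^3$ by the power-mean inequality --- here we crucially use $F \ge 0$, which is exactly why the \emph{bounded} structured part furnished by the regularity lemma, rather than a raw Fourier truncation, is needed. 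Hence this function exceeds $\delta^3 - \eps/4$ on a relatively open neighbourhood $U$ of $(0,0)$ in $\Gamma$, and $\gamma(U) > 0$ for Haar measure $\gamma$ on $\Gamma$. It follows that $c_{\mathrm{str}}(v) > \delta^3 - \eps/4$ for a set of $v$ of density $\ge \gamma(U)/2$; intersecting with the complement of the small-density set where the $L^2_v$-error is at least $\eps/4$, and with $\{v : 0 < |n| + |m| < \eps N / 16\}$, leaves a nonempty set of $v$ with $c(v) > \delta^3 - \eps/2$, which by the first paragraph completes the argument.

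I expect the combination of the second and third paragraphs --- the generalized von Neumann control together with the main-term equidistribution --- to be the crux, and this is precisely where any such argument must \emph{fail} for ordinary corners $\varphi(n,m) = (n,0)$, $\psi(n,m) = (0,n)$, which form a non-admissible pair: there the parameter $v$ effectively lives in a one-dimensional subgroup, the change of variables above is unavailable, the relevant count is genuinely of higher complexity, and the subgroup $\Gamma$ need not see a positive-measure neighbourhood of any point at which the power-mean positivity applies --- all in line with the construction of \cite{ssz}. Thus the proof must make honest use of all three admissibility conditions, in both the von Neumann step and the analysis of $\Gamma$. A secondary, purely quantitative obstacle is that the arithmetic-regularity route forces tower-type dependence of $N_0$ on $(\delta, \eps)$; the much better bounds obtained in \cite{kovac} and \cite{bsst} require replacing the regularity step by a more hands-on harmonic-analytic treatment of the relevant uniformity norm.
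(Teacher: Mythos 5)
First, note that the paper does not prove this statement at all: it is quoted from \cite{kovac} and \cite{bsst} (it answered a question posed in an earlier version of this paper), so the only internal point of comparison is the ergodic analogue, Theorem \ref{thm: double Khintchine}, whose proof proceeds by lower-bounding a \emph{weighted} average $\UClim_g \eta(\hat{\varphi(g)},\hat{\psi(g)})\,\mu(A\cap\cdots)$ with a nonnegative bump $\eta$ on the Kronecker factor (Lemma \ref{lem: Kronecker with a twist}), never by comparing the counting function to its structured projection pointwise or in $L^2$ of the parameter.

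That distinction is exactly where your argument has a genuine gap. The central claim — that every term of $c(v)-c_{\mathrm{str}}(v)$ containing a factor $f_{\mathrm{unf}}$ is $o(1)$ in $L^2_v$ ``by the generalized von Neumann inequality for complexity-one linear systems'' — is false. Expanding $\mathbb{E}_v|\cdot|^2$ produces the six forms $x,\ x+w,\ x+v,\ x+v+w,\ x+Rv,\ x+Rv+w$ in the variables $(x,v,w)\in((\Z/N\Z)^2)^3$, and because the variables are two-dimensional one must allow \emph{matrix} coefficients when testing independence; doing so, $x=-(I-R)^{-1}R\,(x+v)+(I-R)^{-1}(x+Rv)$, so the doubled system is not complexity one at the uniform position, and control requires a $U^3$-type norm. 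Concretely, take symmetric matrices with $M_1+M_2+M_3=0$ and $M_2=-M_3R$ (possible because trace-zero symmetric matrices anticommute with $R$), e.g.\ $M_3=\left(\begin{smallmatrix}1&0\\0&-1\end{smallmatrix}\right)$, $M_2=\left(\begin{smallmatrix}0&1\\1&0\end{smallmatrix}\right)$, $M_1=-M_2-M_3$, and set $f_j(y)=e^{2\pi i\, y^{T}M_j y/N}$ on $(\Z/N\Z)^2$ with $N$ odd. Each $f_j$ is $1$-bounded with all Fourier coefficients of modulus $N^{-1}$, yet the phases in $x$ cancel identically and $\bigl|\mathbb{E}_x f_1(x)f_2(x+v)f_3(x+Rv)\bigr|=1$ for \emph{every} $v$. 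So smallness of $\|\hat f_{\mathrm{unf}}\|_\infty$ cannot force $\mathbb{E}_v|c(v)-c_{\mathrm{str}}(v)|^2$ to be small, and your reduction to the structured count collapses at this step (a two-term regularity decomposition omitting the $L^2$-small piece, and the equidistribution of $(\phi(v),\phi(Rv))$ restricted to the small box, are further loose ends, but secondary). The repair is to imitate the paper's ergodic argument and Green's popular-difference strategy: do not compare $c(v)$ with $c_{\mathrm{str}}(v)$; instead lower-bound $\mathbb{E}_v\,\eta(v)\,c(v)$ for a nonnegative Bohr/Fej\'{e}r-type kernel $\eta$ adapted to the large spectrum (with $\|\hat\eta\|_{\ell^1}$ controlled relative to its mean). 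Then every error term is a genuine average over both $x$ and $v$ twisted by characters in $v$, and there the pairwise independence coming from $I$, $R$, $R-I$ being invertible mod odd $N$ does give Fourier control; handling the quadratic obstruction exhibited above without leaving linear Fourier analysis is precisely the nontrivial content of \cite{kovac} and \cite{bsst}, not a consequence of a complexity-one von Neumann estimate.
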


\bigskip

The structure of the paper is as follows.
After a brief discussion of key definitions and lemmas in Section \ref{sec: prelim}, the paper is broken into three main parts.
First, in Sections \ref{sec: Kronecker}--\ref{sec: limit formula 3}, we establish characteristic factors for families of admissible homomorphisms and prove limit formulae for families $\{\varphi,\psi\}$ and $\{r, s, r+s\}$.
This includes the bulk of the difficult technical arguments, combining ideas from \cite{cl, fw, ziegler, btz1} on characteristic factors and \cite{fw, hk} on computing multiple ergodic averages as integrals.
Then, in Sections \ref{sec: dK} and \ref{sec: tK}, we apply the limit formulae to prove Theorems \ref{thm: double Khintchine} and \ref{thm: triple Khintchine} using a strategy from \cite{frathree, btz2}.
The final four sections discuss each of the hypotheses that appear in Theorems \ref{thm: double Khintchine} and \ref{thm: triple Khintchine}: Section \ref{sec: admissible} deals with the issue of admissibility, Section \ref{sec: ergodicity} with ergodicity, Section \ref{sec: quadruple} with larger families of homomorphisms, and Section \ref{sec: parallelogram} with the parallelogram condition.



\section{Preliminaries} \label{sec: prelim}


\subsection{Standing assumptions}

Let $G = (G,+)$ be a countable abelian group which acts on a probability space $(X,\B,\mu)$ by measure-preserving automorphisms $(T_g)_{g \in G}$. As usual, we assume the space $(X,\B,\mu)$ is separable; that is, the $\sigma$-algebra $\B$ is countably generated modulo null sets. We refer to the quadruple $\X = (X,\B,\mu,(T_g)_{g\in G})$ as a measure-preserving $G$-system, or $G$-system.
Recall that a $G$-system $\X = (X,\B,\mu,(T_g)_{g\in G})$ is ergodic if every set $A \in \B$ satisfying $T_g^{-1}A = A$
for all $g \in G$ has $\mu(A) \in \{0,1\}$.

Suppose $\Y = \left( Y, \D, \nu, (S_g)_{g \in G} \right)$ is a factor of $\X$.
Recall that we denote by $\E{f}{Y}$ the conditional expectation $\E{f}{\pi^{-1}(\D)}$, where $\pi : \X \to \Y$ is the factor map.
We can also define the \emph{pullback map} $\pi^* : L^2(Y) \to L^2(X)$ by $\pi^*f = f \circ \pi$
and the \emph{pushforward map} $\pi_* : L^2(X) \to L^2(Y)$ as the adjoint of $\pi^*$.
In the case that the factor $\Y$ arises as a $(T_g)_{g \in G}$-invariant sub-$\sigma$-algebra of $\B$,
we have the equality $\pi_*f = \E{f}{Y}$.
In a standard abuse of notation, we will therefore also use $\E{f}{Y}$ to denote the pushforward $\pi_*f$.

It is well known that $\mu$ can be disintegrated with respect to $\Y$ into a family of nonnegative Borel probability measures $(\mu_y)_{y\in Y}$ on $X$ so that $\mu = \int_Y \mu_y d\nu(y)$. Note that $\E{f}{Y}(y) = \int f \ d\mu_y$ for a.e. $y \in Y$ and for every $f \in L^1(\mu)$ such that $f \in L^1(\mu_y)$ for a.e. $y$. See \cite[Theorem 5.8]{fursbook}. When $\Y$ is the factor associated to the sub-$\sigma$-algebra of $(T_g)_{g\in G}$-invariant sets, we obtain the ergodic decomposition, which, should we need to write it, will be written $\mu = \int \mu_x \ d\mu(x)$. See \cite[Theorem 3.22]{glasner} or \cite{vara}.

For $1\leq p \leq \infty$, we write $L^p(\mu)$ or $L^p(X)$ for the Lebesgue space $L^p(X,\B,\mu)$ of complex-valued functions with finite $p$-norm, where as usual, two functions in $L^p(\mu)$ are identified if they agree $\mu$-a.e. This identification makes $L^2(\mu)$ into a separable Hilbert space. We also denote by $L^0(\mu)$ the space of all measurable complex-valued functions up to equivalence $\mu$-a.e.


\subsection{Uniform Ces\`{a}ro limits} \label{sec: Cesaro}

We use uniform Ces\`{a}ro limits extensively in this paper and present several useful lemmas here.
All of these results are standard, but we present them here with proofs for the convenience of the reader.
First, we show that the uniform Ces\`{a}ro limit is the same as the uniform limit of shifted Ces\`{a}ro-type averages:
\begin{lem} \label{lem: UC uniform shifts}
	Let $(F_N)_{N \in \N}$ be a F{\o}lner sequence in $G$.
	For a sequence $(u_g)_{g \in G}$ in a Hilbert space $\Hil$,
	\begin{align*}
		\UClim_{g \in G}{u_g} = u
	\end{align*}
	if and only if
	\begin{align*}
		\frac{1}{|F_N|} \sum_{g \in F_N + h}{u_g} \to u
	\end{align*}
	uniformly in $h \in G$ as $N \to \infty$.
\end{lem}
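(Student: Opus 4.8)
The plan is to prove both implications directly from the definition of a Følner sequence, using the standard trick of comparing the shifted average over $F_N+h$ to the unshifted average over $F_N$ via the symmetric difference $(F_N+h)\triangle F_N$.

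For the backward implication, suppose that $\frac{1}{|F_N|}\sum_{g\in F_N+h}u_g\to u$ uniformly in $h$. Taking $h=0$ (the identity of $G$) immediately gives $\frac{1}{|F_N|}\sum_{g\in F_N}u_g\to u$, so this direction is trivial once we observe that the constant sequence $h\equiv 0$ is among the shifts; the point is that uniform convergence over all $h$ certainly implies convergence for the single choice $h=0$, and $(F_N)$ was an arbitrary Følner sequence, so $\UClim_{g\in G}u_g=u$.

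For the forward implication, assume $\UClim_{g\in G}u_g=u$, fix a Følner sequence $(F_N)$, and suppose toward a contradiction that the shifted averages do not converge to $u$ uniformly in $h$. Then there is $\eps>0$, a subsequence $N_k\to\infty$, and elements $h_k\in G$ with $\bigl\|\frac{1}{|F_{N_k}|}\sum_{g\in F_{N_k}+h_k}u_g-u\bigr\|\ge\eps$. The key computation is that $F_{N_k}+h_k$ is itself (the $N_k$-th term of) a Følner sequence: indeed $|(F_{N_k}+h_k+g')\triangle(F_{N_k}+h_k)|=|(F_{N_k}+g')\triangle F_{N_k}|$ for every $g'\in G$ by translation-invariance of counting measure, so dividing by $|F_{N_k}+h_k|=|F_{N_k}|$ and using that $(F_N)$ is Følner shows the Følner condition holds. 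One should also note that any subsequence of a Følner sequence is Følner, so $(F_{N_k}+h_k)_{k\in\N}$ is a Følner sequence in $G$; by hypothesis its Cesàro averages converge to $u$, contradicting the lower bound $\eps$. This completes the forward direction.

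The only mild subtlety — and the step I would be most careful about — is confirming that $(F_{N_k}+h_k)_k$ genuinely qualifies as a Følner sequence under the definition used here, since the indices of the original sequence have been both thinned and translated. This is purely a bookkeeping point: translation does not change the relevant ratios $\frac{|(F+g)\triangle F|}{|F|}$, and passing to a subsequence only discards terms, so neither operation destroys the defining limit. No real obstacle arises; the lemma is essentially an unwinding of definitions, and the argument above is complete modulo these routine verifications.
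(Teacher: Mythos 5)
Your forward implication is correct and is essentially the paper's argument: the paper, instead of arguing by contradiction, directly extracts indices $K_N\ge N$ and shifts $h_N$ nearly attaining the supremum and observes that $\Phi_N:=F_{K_N}+h_N$ is again a F{\o}lner sequence (translation does not change $\frac{|(F+g)\triangle F|}{|F|}$), exactly the computation you flag as the ``only mild subtlety.'' That half is fine.

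The backward implication, however, has a genuine gap. By definition, $\UClim_{g\in G}u_g=u$ means that the averages converge to $u$ along \emph{every} F{\o}lner sequence, whereas your hypothesis is uniform convergence of the shifted averages along the \emph{single fixed} sequence $(F_N)$ appearing in the statement. Taking $h=0$ only yields $\frac{1}{|F_N|}\sum_{g\in F_N}u_g\to u$, i.e.\ convergence along $(F_N)$ itself; the phrase ``$(F_N)$ was an arbitrary F{\o}lner sequence'' conflates the sequence fixed in the hypothesis with the universal quantifier hidden in the conclusion -- you cannot re-choose $(F_N)$ after having assumed the uniform convergence for it. (Indeed, the whole point of the lemma, and the way it is used later, e.g.\ in the van der Corput argument, is that uniform shifted convergence along \emph{one} F{\o}lner sequence already pins down the uniform Ces\`{a}ro limit.) To close the gap you must pass from $(F_N)$ to an arbitrary F{\o}lner sequence $(\Phi_M)$: set $v_N(h):=\frac{1}{|F_N|}\sum_{g\in F_N+h}u_g$, note that $\sup_{h\in G}\|v_N(h)-u\|\to 0$ by hypothesis, and that for each fixed $N$ the averages of $v_N(h)-u_h$ along $(\Phi_M)$ tend to $0$ as $M\to\infty$ because F{\o}lner averages are asymptotically shift-invariant ($\frac{1}{|\Phi_M|}\sum_{h\in\Phi_M}u_{g+h}$ and $\frac{1}{|\Phi_M|}\sum_{h\in\Phi_M}u_h$ differ only by boundary terms over $(\Phi_M+g)\triangle\Phi_M$). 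Combining the two estimates gives $\frac{1}{|\Phi_M|}\sum_{h\in\Phi_M}u_h\to u$ for every F{\o}lner sequence, which is what $\UClim_{g\in G}u_g=u$ requires; this is precisely how the paper argues this direction.
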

\begin{proof}
	Suppose $\UClim_{g \in G}{u_g} = u$.
	For each $N \in \N$, choose $K_N \ge N$ and $h_N \in G$ such that
	\begin{align*}
		\left\| \frac{1}{|F_{K_N}|} \sum_{g \in F_{K_N} + h_N}{u_g} - u \right\|
		\ge \sup_{K \ge N}{\sup_{h \in G}{
				\left\| \frac{1}{|F_K|} \sum_{g \in F_K + h}{u_g} - u \right\|}} - \frac{1}{N}.
	\end{align*}
	Set $\Phi_N := F_{K_N} + h_N$ for $N \in \N$.
	Since $K_N \to \infty$ and $(F_N)_{N \in \N}$ is a F{\o}lner sequence,
	$(\Phi_N)_{N \in \N}$ is also a F{\o}lner sequence in $G$.
	Thus,
	\begin{align*}
		\limsup_{N \to \infty}{\sup_{h \in G}{\left\| \frac{1}{|F_N|} \sum_{g \in F_N + h}{u_g} - u \right\|}}
		& \le \limsup_{N \to \infty}{\left\| \frac{1}{|F_{K_N}|} \sum_{g \in F_{K_N} + h_N}{u_g} - u \right\|} \\
		& = \lim_{N \to \infty}{\left\| \frac{1}{|\Phi_N|} \sum_{g \in \Phi_N}{u_g} - u \right\|} = 0.
	\end{align*}
	
	Conversely, suppose $\frac{1}{|F_N|} \sum_{g \in F_N + h}{u_g}$
	converges to $u$ uniformly in $h \in G$ as $N \to \infty$.
	Define a sequence of functions $v_N : G \to \Hil$ by
	\begin{align*}
		v_N(h) := \frac{1}{|F_N|} \sum_{g \in F_N + h}{u_g}.
	\end{align*}
	By assumption, $\sup_{h \in G}{\|v_N(h) - u\|} \to 0$ as $N \to \infty$.
	On the other hand, since averages along F{\o}lner sequences are shift-invariant, we have
	$\UClim_{g \in G}{\left( v_N(g) - u_g \right)} = 0$ for each $N \in \N$.
	Hence, $\UClim_{g \in G}{u_g} = u$.
\end{proof}

Using Lemma \ref{lem: UC uniform shifts}, we can prove a version of the van der Corput trick for uniform Ces\`{a}ro limits.
\begin{lem}[van der Corput Trick] \label{lem: vdC}
	Let $G$ be a (countable discrete) abelian group, and
	let $(u_g)_{g \in G}$ be a bounded sequence in a Hilbert space $\Hil$.
	Suppose that for every $h \in G$,
	\begin{align*}
		\gamma_h := \UClim_{g \in G}{\innprod{u_{g+h}}{u_g}}
	\end{align*}
	exists and
	\begin{align*}
		\UClim_{h \in G}{\gamma_h} = 0.
	\end{align*}
	Then
	\begin{align*}
		\UClim_{g \in G}{u_g} = 0.
	\end{align*}
\end{lem}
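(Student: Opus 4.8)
The plan is to follow the classical Hilbert-space van der Corput argument, adapted to uniform Ces\`aro limits by means of Lemma~\ref{lem: UC uniform shifts}. Fix a F{\o}lner sequence $(F_N)_{N \in \N}$ and let $M := \sup_{g \in G} \|u_g\|$. For each $N$ and each $h \in G$ set $v_N(h) := \frac{1}{|F_N|} \sum_{g \in F_N + h} u_g$. By Lemma~\ref{lem: UC uniform shifts} it suffices to show $\sup_{h \in G} \|v_N(h)\| \to 0$ as $N \to \infty$. The idea is to estimate $\|v_N(h)\|^2$ by introducing a second, ``inner'' averaging parameter. Pick an auxiliary F{\o}lner sequence $(H_L)_{L \in \N}$ (which may be taken to be $(F_L)_{L \in \N}$ itself). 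The key computation is that for fixed $L$, after swapping the order of summation and using that translating $F_N$ by elements of $H_L$ changes the averages negligibly when $N$ is large relative to $L$ (the F{\o}lner property), one obtains
\begin{align*}
	\left\| v_N(h) \right\|^2
	\le \frac{1}{|H_L|^2} \sum_{h_1, h_2 \in H_L} \frac{1}{|F_N|} \sum_{g \in F_N + h} \innprod{u_{g + h_1}}{u_{g + h_2}} + o_{N \to \infty}(1),
\end{align*}
uniformly in $h$, where the error term depends on $L$ but tends to $0$ as $N \to \infty$ for each fixed $L$.

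The next step is to reindex the double sum over $h_1, h_2 \in H_L$ by the difference $h = h_1 - h_2$ and recognize the inner average as approximating $\gamma_h = \UClim_{g \in G} \innprod{u_{g+h}}{u_g}$. Concretely, one writes the right-hand side above as a weighted average of the quantities $\frac{1}{|F_N|}\sum_{g \in F_N+h'}\innprod{u_{g+h}}{u_g}$ over $h \in H_L - H_L$ with weights $\frac{|H_L \cap (H_L + h)|}{|H_L|^2}$, plus shift terms $h'$ which are harmless by another application of Lemma~\ref{lem: UC uniform shifts} to the sequence $\innprod{u_{g+h}}{u_g}$. Letting $N \to \infty$ first (using the existence of $\gamma_h$ and the uniformity in the shift) and then $L \to \infty$, the weights average out and one is left with $\limsup_N \sup_h \|v_N(h)\|^2 \le \UClim_{h \in G} \|\gamma_h\|$-type bound; more carefully, using $|\gamma_h| \le \limsup$ of Cauchy--Schwarz estimates one gets $\le \UClim_{h \in G} |\gamma_h|$, which equals $0$ by hypothesis. (If one prefers to avoid taking $\limsup$ inside, one can instead bound $\|v_N(h)\|^2$ directly by $\frac{1}{|H_L|}\sum_{h \in H_L - H_L} |\text{something}| + o(1)$ and invoke that $\UClim_h \gamma_h = 0$ forces the $H_L$-averages of $|\gamma_h|$ to vanish as $L \to \infty$; here one uses that $\UClim_h \gamma_h = 0$ together with $\gamma_h$ being a UC-limit of inner products, so in fact $\UClim_h |\gamma_h|$ need not vanish in general, and one should be slightly careful — the standard fix is to note $\|v_N(h)\|^2$ is controlled by a genuine average of the $\gamma_h$'s, not their absolute values, once the cross terms are organized correctly, and a genuine average of $\gamma_h$ over a F{\o}lner sequence tends to $0$.)

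Because of the subtlety just flagged, I would actually organize the final estimate so that what appears is $\frac{1}{|H_L|} \sum_{h \in H_L} \gamma_h$ rather than $\frac{1}{|H_L|} \sum_{h \in H_L} |\gamma_h|$: expanding $\| \frac{1}{|H_L|}\sum_{h_1 \in H_L} u_{g+h_1} \|^2$ and averaging over $g$ produces $\frac{1}{|H_L|^2}\sum_{h_1,h_2} \gamma_{h_1 - h_2}$ exactly, and by the F{\o}lner property of $H_L$ this is $\frac{1}{|H_L|}\sum_{h \in H_L - H_L}\gamma_h + o_L(1)$, whose modulus is at most $\sup_{L' }|\frac{1}{|H_{L'}|}\sum_{h \in H_{L'}}\gamma_h|$-type quantity tending to $0$ since $(H_L)$ is F{\o}lner and $\UClim_h \gamma_h = 0$. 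Combining with a convexity/Jensen step $\|v_N(h)\| = \|\frac{1}{|F_N|}\sum_{g}u_{g+h}\| \le (\text{avg of }\|\frac{1}{|H_L|}\sum u_{g+h+h_1}\|^2)^{1/2} + o_N(1)$ gives $\limsup_{N}\sup_h \|v_N(h)\|^2 \le o_L(1)$ for every $L$, hence $= 0$.

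The main obstacle is the bookkeeping in the double limit: one must interchange the roles of the ``outer'' parameter $g$ (ranging over $F_N + h$, with the $\sup$ over $h$ to be controlled) and the ``inner'' parameter $h_1$ (ranging over $H_L$), and push $N \to \infty$ before $L \to \infty$, all while keeping every estimate uniform in the shift $h$. Making sure the F{\o}lner error terms genuinely vanish uniformly in $h$ — which is exactly the content of the ``averages along F{\o}lner sequences are shift-invariant'' remark used at the end of the proof of Lemma~\ref{lem: UC uniform shifts}, applied both to $(u_g)$ and to $(\innprod{u_{g+h}}{u_g})_g$ — is the step requiring the most care; the algebraic identity expanding the norm-square is entirely routine.
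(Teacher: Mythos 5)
Your argument is correct and is essentially the paper's own proof: replace the Ces\`aro average by a double average over an auxiliary F{\o}lner set (with a F{\o}lner error uniform in the shift), apply convexity and expand the inner products, approximate the shifted averages of $\innprod{u_{g+h}}{u_g}$ by $\gamma_h$ uniformly via Lemma~\ref{lem: UC uniform shifts}, and then use $\UClim_{h}\gamma_h=0$ through that same lemma so that only genuine shifted averages of $\gamma_h$ (not of $|\gamma_h|$) need to be small — the subtlety you flagged and resolved is exactly how the paper handles it, bounding $\bigl|\frac{1}{|F_M|^2}\sum_{h_1,h_2}\gamma_{h_1-h_2}\bigr|$ by the average over $k$ of $\bigl|\frac{1}{|F_M|}\sum_{h\in F_M-k}\gamma_h\bigr|$. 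The only difference is cosmetic: you organize it as an iterated limit ($N\to\infty$ then $L\to\infty$) where the paper fixes $\eps$, chooses $M$, and then takes $N$ large.
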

\begin{proof}
	Without loss of generality, assume $\|u_g\| \le 1$ for every $g \in G$.
	Fix a F{\o}lner sequence $(F_N)_{N \in \N}$ in $G$, and let $\eps > 0$.
	Since $\UClim_{h \in G}{\gamma_h} = 0$, by Lemma \ref{lem: UC uniform shifts}, the sequence of functions
	\begin{align*}
		f_M(k) := \frac{1}{|F_M|} \sum_{h \in F_M-k}{\gamma_h}
	\end{align*}
	converges to $0$ uniformly in $k \in G$ as $M \to \infty$.
	Thus, we can find $M \in \N$ such that $|f_M(k)| < \eps$ for every $k \in G$.
	
	Now, since $(F_N)_{N \in \N}$ is a F{\o}lner sequence, there is an $N_1 \in \N$
	such that if $N \ge N_1$, then $F_N$ is $(F_M, \eps)$-invariant.\footnote{A set $F$ is called
		\emph{$(K,\eps)$-invariant} if $\frac{|(F+K) \triangle F|}{|F|} < \eps$.
		It is easy to check that $(F_N)_{N \in \N}$ is a F{\o}lner sequence if and only if
		for every finite set $K \subseteq G$ and every $\eps > 0$, there is an $N_0 \in \N$
		so that $F_N$ is $(K,\eps)$-invariant for every $N \ge N_0$.}
	In particular,
	\begin{align*}
		\left\| \frac{1}{|F_N|} \sum_{g \in F_N}{u_g}
		- \frac{1}{|F_M|}\sum_{h \in F_M}{\frac{1}{|F_N|} \sum_{g \in F_N}{u_{g+h}}} \right\| < \eps.
	\end{align*}
	
	Moreover, we can find $N_2 \in \N$ such that if $N \ge N_2$, then
	\begin{align*}
		\left| \frac{1}{|F_N|} \sum_{g \in F_N+k}{\innprod{u_{g+h}}{u_g}} - \gamma_h \right| < \eps
	\end{align*}
	for $h \in F_M - F_M$, $k \in G$.
	
	Set $N_0 := \max\{N_1, N_2\}$.
	Then for $N \ge N_0$, we have
	\begin{align*}
		\left\| \frac{1}{|F_N|} \sum_{g \in F_N}{u_g} \right\|^2
		& < \left\| \frac{1}{|F_M|}\sum_{h \in F_M}{
			\frac{1}{|F_N|} \sum_{g \in F_N}{u_{g+h}}} \right\|^2 + \eps(2+\eps) \\
		& \le \frac{1}{|F_N|} \sum_{g \in F_N}{
			\left\| \frac{1}{|F_M|} \sum_{h \in F_M}{u_{g+h}} \right\|^2} + \eps(2+\eps) \\
		& = \frac{1}{|F_N|} \sum_{g \in F_N}{
			\frac{1}{|F_M|^2} \sum_{h_1, h_2 \in F_M}{\innprod{u_{g+h_1}}{u_{g+h_2}}}} + \eps(2+\eps) \\
		& = \frac{1}{|F_M|^2} \sum_{h_1,h_2 \in F_M}{\frac{1}{|F_N|} \sum_{g \in F_N+h_2}{
				\innprod{u_{g+h_1-h_2}}{u_g}}} + \eps(2+\eps) \\
		& < \left| \frac{1}{|F_M|^2} \sum_{h_1, h_2 \in F_M}{\gamma_{h_1-h_2}} \right| + \eps(3 + \eps) \\
		& \le \frac{1}{|F_M|} \sum_{k \in F_M}{\left| \frac{1}{|F_M|} \sum_{h \in F_M-k}{\gamma_h} \right|}
		+ \eps(3 + \eps) \\
		& < \eps(4 + \eps).
	\end{align*}
\end{proof}

The following lemma shows that positivity of uniform Ces\`{a}ro limits implies syndeticity of return times.

\SyndeticThick*

\begin{proof}
	Suppose $S$ is syndetic, and let $(F_N)_{N \in \N}$ be a F{\o}lner sequence.
	We will prove a stronger statement:
	$\underline{d}_{(F_N)}(S) := \liminf_{N \to \infty}{\frac{|S \cap F_N|}{|F_N|}} > 0$.
	Let $K \subseteq G$ be a finite set such that $S + K = G$.
	By a standard averaging argument, for each $N \in \N$, there is a $k_N \in K$ such that
	\begin{align*}
		\frac{|S \cap (F_N - k_N)|}{|F_N|} = \frac{|(S+k_N) \cap F_N|}{|F_N|} \ge \frac{1}{|K|}.
	\end{align*}
	Now by the F{\o}lner property, we have
	\begin{align*}
		\limsup_{N \to \infty}{\max_{k \in K}{\frac{|(F_N - k) \triangle F_N|}{|F_N|}}} = 0.
	\end{align*}
	Thus,
	\begin{align*}
		\liminf_{N \to \infty}{\frac{|S \cap F_N|}{|F_N|}}
		& \ge \liminf_{N \to \infty}{\frac{|S \cap (F_N-k_N) \cap F_N|}{|F_N|}} \\
		& \ge \liminf_{N \to \infty}{\frac{|S \cap (F_N - k_N)|}{|F_N|}}
		- \limsup_{N \to \infty}{\frac{|(F_N-k_N) \setminus F_N|}{|F_N|}} \\
		& \ge \frac{1}{|K|}.
	\end{align*}
	
	Conversely, suppose $S$ is not syndetic.
	We want to construct a F{\o}lner sequence entirely in $G \setminus S$.
	Let $(F_N)_{N \in \N}$ be any F{\o}lner sequence in $G$.
	Since $S$ is not syndetic, we have $S - F_N \ne G$ for every $N \in \N$.
	Thus, we can find elements $g_N \in G$ such that $g_N \notin S - F_N$.
	Equivalently, $S \cap (F_N + g_N) = \es$.
	Now $\Phi_N := F_N + g_N$ is a F{\o}lner sequence, and $\bigcup_{N \in \N}{\Phi_N} \cap S = \es$.
\end{proof}


\subsection{Ergodic theorem}

In our setting, we will need a general form of von Neumann's mean ergodic theorem.
This result is a standard exercise using the invariant splitting of a Hilbert space (see, e.g., \cite[Theorem 4.15]{et/dp} for a short proof). We record it here for reference:

\begin{thm}[Ergodic theorem] \label{thm: ergodic theorem}
	Let $(G, +)$ be a countable discrete abelian group.
	Let $\X = \left( X, \B, \mu, (T_g)_{g \in G} \right)$,
	and let $\mathcal{I} \subseteq \B$ be the $\sigma$-algebra of $T$-invariant sets.
	Then for $f \in L^2(\mu)$,
	\begin{align}
		\UClim_{g \in G}{T_gf} = \E{f}{\mathcal{I}}.
	\end{align}
	In particular, if $\X$ is ergodic, then
	\begin{align}
		\UClim_{g \in G}{T_gf} = \int_X{f~d\mu}.
	\end{align}
\end{thm}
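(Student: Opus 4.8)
The plan is to run the standard mean ergodic argument in the Hilbert space $\Hil = L^2(\mu)$, using the orthogonal splitting into invariant vectors and the closed span of coboundaries, and then to exploit the F{\o}lner property to make the coboundary part vanish uniformly along \emph{every} F{\o}lner sequence (which is precisely what the definition of $\UClim$ requires).

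First I would set up the splitting. Regard each $T_g$ as the (unitary) Koopman operator $f \mapsto f \circ T_g^{-1}$ on $\Hil$, let $\Hil_{\mathrm{inv}} = \{f \in \Hil : T_g f = f \text{ for all } g \in G\}$, and let $\mathcal{B} = \mathrm{span}\{T_g h - h : g \in G,\ h \in \Hil\}$. A one-line computation with adjoints shows $\mathcal{B}^{\perp} = \Hil_{\mathrm{inv}}$: a vector $f$ satisfies $\innprod{f}{T_g h - h} = 0$ for all $g, h$ iff $\innprod{T_g^{-1}f - f}{h} = 0$ for all $g, h$ iff $T_g f = f$ for all $g$ (using unitarity). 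Hence $\overline{\mathcal{B}} = \Hil_{\mathrm{inv}}^{\perp}$ and $\Hil = \Hil_{\mathrm{inv}} \oplus \overline{\mathcal{B}}$. Next, fix an arbitrary F{\o}lner sequence $(F_N)_{N \in \N}$ and write $A_N f = \frac{1}{|F_N|}\sum_{g \in F_N} T_g f$; since $A_N$ is an average of unitaries, $\norm{}{A_N} \le 1$. On $\Hil_{\mathrm{inv}}$ one has $A_N f = f$ for every $N$. On a coboundary $f = T_k h - h$, the identity $T_g f = T_{g+k}h - T_g h$ and a reindexing give $A_N f = \frac{1}{|F_N|}\bigl(\sum_{g \in F_N + k} T_g h - \sum_{g \in F_N} T_g h\bigr)$, so $\norm{}{A_N f} \le \frac{|(F_N+k)\triangle F_N|}{|F_N|}\,\norm{}{h} \to 0$ by the F{\o}lner property; by linearity $A_N f \to 0$ for all $f \in \mathcal{B}$, and by the uniform bound $\norm{}{A_N}\le 1$ together with density this extends to all $f \in \overline{\mathcal{B}} = \Hil_{\mathrm{inv}}^{\perp}$. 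Writing a general $f$ as $Pf + (f - Pf)$, where $P$ is the orthogonal projection onto $\Hil_{\mathrm{inv}}$, yields $A_N f \to Pf$ in $L^2(\mu)$; as the F{\o}lner sequence was arbitrary and the limit $Pf$ is independent of it, $\UClim_{g \in G} T_g f = Pf$.

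It then remains to identify $Pf$ with $\E{f}{\mathcal{I}}$. An $L^2$ function is $\mathcal{I}$-measurable iff all of its sublevel sets lie in $\mathcal{I}$, i.e.\ are (essentially) $T$-invariant, which in turn holds iff the function itself is $T$-invariant; thus $\Hil_{\mathrm{inv}} = L^2(X, \mathcal{I}, \mu)$, and the orthogonal projection onto this subspace is exactly conditional expectation onto $\mathcal{I}$, so $Pf = \E{f}{\mathcal{I}}$. This gives the first formula; when $\X$ is ergodic, $\mathcal{I}$ is trivial modulo null sets, so $\E{f}{\mathcal{I}} = \int_X f\, d\mu$, giving the second. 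I do not expect a genuine obstacle here: the only points needing care are the adjoint computation behind $\mathcal{B}^{\perp} = \Hil_{\mathrm{inv}}$ and checking that the F{\o}lner estimate on coboundaries is strong enough to force convergence along every F{\o}lner sequence — both are routine.
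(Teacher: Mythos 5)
Your proof is correct, and it is exactly the argument the paper has in mind: the paper gives no proof, remarking only that the result is "a standard exercise using the invariant splitting of a Hilbert space" and citing a reference, and your von Neumann-style splitting $\Hil = \Hil_{\mathrm{inv}} \oplus \overline{\mathrm{span}}\{T_g h - h\}$ together with the F{\o}lner estimate on coboundaries and the identification of the projection with $\E{\cdot}{\mathcal{I}}$ is precisely that exercise. The only points worth a word in a write-up are the routine ones you already flag: the adjoint computation, and the (countable-$G$) measure-theoretic identification of a.e.-invariant functions with $\mathcal{I}$-measurable ones.
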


In the language of characteristic factors, the ergodic theorem says that the trivial factor is characteristic for the averages
\begin{align}
	\UClim_{g \in G}{T_g f}
\end{align}
in ergodic systems.
If we apply an admissible homomorphism $\varphi : G \to G$,
then the action $\left( T_{\varphi(g)} \right)_{g \in G}$ will have finitely many ergodic components,
so $\UClim_{g \in G}{T_{\varphi(g)}f}$ is a function with finitely many values
(it is the conditional expectation with respect to the finitely many ergodic components).
Another way of describing this behavior is to say that the characteristic factor for the average
\begin{align}
	\UClim_{g \in G}{T_{\varphi(g)} f}
\end{align}
is built from rotations on the finite group $G/\varphi(G)$.


\subsection{Kronecker factor} \label{sec: factors}

Fix a countable discrete abelian group $(G, +)$.
A system $\X = \left( X, \B, \mu, (T_g)_{g \in G} \right)$ is \emph{compact} if there is a compact abelian group $Y$ and an action by group rotations $S_gy = y + a_g$ such that $\X$ is isomorphic to $\Y = \left( Y, \D, \nu, (S_g)_{g \in G} \right)$, where $\D$ is the Borel $\sigma$-algebra on $Y$ and $\nu$ is the Haar measure on $Y$.
The \emph{Kronecker factor} of an ergodic system $\X$ is the maximal compact factor of $\X$.

In our setting, the Kronecker factor can be described quite concretely.
Consider the Jacobs--de Leeuw--Glicksberg decomposition into compact and weakly mixing functions:
\begin{align*}
	L^2(\mu) = L^2(\mu)_c \oplus L^2(\mu)_{wm},
\end{align*}
where
\begin{align*}
	L^2(\mu)_c & = \left\{ f \in L^2(\mu) : \overline{\{T_gf : g \in G\}}^{\|\cdot\|_2}~\text{is compact} \right\}, \\
	L^2(\mu)_{wm} & = \left\{ f \in L^2(\mu) : \UClim_{g \in G}{|\innprod{T_gf}{f}|} = 0 \right\}.
\end{align*}
The compact functions, $L^2(\mu)_c$, are spanned by an orthonormal basis of eigenfunctions,
say $(f_\lambda)_{\lambda \in \Lambda}$, where $\Lambda$ is a countable subset of $\hat{G}$,
and $f_{\lambda}$ has the corresponding ``eigenvalue'' $\lambda$,
i.e. $T_g f_\lambda = \lambda(g) f_\lambda$ for all $g \in G$ and $\lambda \in \Lambda$.
We can assume, by rescaling if necessary, that $|f_\lambda| =1$ and $f_{\lambda}f_{\mu} = f_{\lambda+\mu}$
for all $\lambda, \mu \in \Lambda$
(the case $G = \Z$ is shown in \cite[Chapter 3]{walters}, and this easily generalizes to our setting).

For $\lambda \in \Lambda$ and $\varphi : G \to G$,
we can define $\varphi \lambda := \lambda \circ \varphi$.
While this defines a homomorphism on $G$ (so that $\varphi \lambda \in \hat{G}$),
it is not necessarily the case that $\varphi \lambda \in \Lambda$.

Now let $Z = \hat{\Lambda}$, where $\Lambda$ is treated as a discrete group so that $Z$ is compact,
and define an action
\begin{align*}
	\left( S_g(z) \right)(\lambda) := \lambda(g) z(\lambda).
\end{align*}
Equivalently, letting $\hat{g}$ be the evaluation map $\hat{g}(\lambda) = \lambda(g)$
and writing $Z$ additively (since it is an abelian group), we have $S_g(z) = z + \hat{g}$.
It is easy to check that $L^2(Z)$ is spanned by the evaluation functions $\{e_\lambda : \lambda \in \Lambda\}$,
where $e_\lambda(z) := z(\lambda)$.
These functions are also the eigenfunctions for the action $(S_g)_{g \in G}$.
The map $f_\lambda \mapsto e_{\lambda}$ then induces the factor map (see \cite{walters} for details).

We review a few facts about group rotations that will be helpful for analyzing the behavior of the Kronecker factor.
These facts are essentially the same as in the special case of $\Z$-systems (c.f. \cite[Theorem 6.20]{walters}),
but we present them here with proofs in the general case for completeness.
Recall that a $G$-action by homeomorphisms $(T_g)_{g \in G}$ on a compact Hausdorff space $X$ is \emph{minimal}
if the only closed $T$-invariant subsets of $X$ are $\es$ and $X$.
Equivalently, every orbit $\{T_gx : g \in G\}$, $x \in X$, is dense in $X$.

\begin{lem} \label{lem: compact group rotations}
	Let $G$ be an abelian group and $X$ a compact abelian group.
	Suppose $G$ acts on $X$ by group rotations $S_gx = x + a_g$,
	where $g \mapsto a_g$ is a (continuous) homomorphism $G \to X$.
	Then the following are equivalent:
	\begin{enumerate}[(i)]
		\item	The orbit $\{a_g : g \in G\}$ is dense in $X$;
		\item	The action $(S_g)_{g \in G}$ is minimal;
		\item	The action $(S_g)_{g \in G}$ is uniquely ergodic,
		and the unique invariant measure is the Haar measure on $X$.
	\end{enumerate}
\end{lem}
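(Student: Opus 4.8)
The plan is to prove the three conditions equivalent cyclically, (i) $\Rightarrow$ (iii) $\Rightarrow$ (ii) $\Rightarrow$ (i). The implication (i) $\Rightarrow$ (iii) is the substantive one and is handled by Fourier analysis on the compact abelian group $X$; the other two are soft and follow from general facts about minimality and unique ergodicity. This is essentially the argument for $G = \Z$ in \cite[Theorem 6.20]{walters}, transported to the general countable abelian setting.

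\emph{(i) $\Rightarrow$ (iii).} Let $\nu$ be an arbitrary $(S_g)_{g \in G}$-invariant Borel probability measure on $X$. For a character $\chi \in \hat{X}$, invariance of $\nu$ yields, for every $g \in G$,
\[
	\int_X \chi \, d\nu = \int_X \chi(x + a_g) \, d\nu(x) = \chi(a_g) \int_X \chi \, d\nu .
\]
Hence for each $\chi$ either $\int_X \chi \, d\nu = 0$, or $\chi(a_g) = 1$ for all $g \in G$; in the latter case $\chi \equiv 1$ on $\overline{\{a_g : g \in G\}} = X$ by (i), so $\chi$ is the trivial character. Thus the Fourier coefficients of $\nu$ agree with those of the Haar measure $m_X$ (equal to $1$ at the trivial character and $0$ elsewhere). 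Since the linear span of $\hat{X}$ is a self-conjugate, point-separating subalgebra of $C(X)$ containing the constants, it is $\|\cdot\|_\infty$-dense by Stone--Weierstrass, whence $\nu = m_X$. As $m_X$ is itself invariant, the action is uniquely ergodic with the Haar measure as its unique invariant measure.

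\emph{(iii) $\Rightarrow$ (ii) $\Rightarrow$ (i).} For (iii) $\Rightarrow$ (ii), suppose toward a contradiction that there is a nonempty proper closed $(S_g)$-invariant set $Y \subsetneq X$. Since $G$ is countable abelian, hence amenable, averaging a point mass $\delta_y$ with $y \in Y$ along a F{\o}lner sequence and passing to a weak-$*$ limit produces an $(S_g)$-invariant Borel probability measure $\nu$ with $\nu(Y) = 1$. But $X \setminus Y$ is nonempty and open and $m_X$ has full support, so $m_X(Y) < 1$, giving $\nu \ne m_X$ and contradicting (iii). For (ii) $\Rightarrow$ (i), minimality forces every orbit to be dense; applied to the orbit of the identity $0 \in X$ this says $\{S_g 0 : g \in G\} = \{a_g : g \in G\}$ is dense in $X$.

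I do not anticipate a genuine obstacle: the lemma is robust and the verifications (Stone--Weierstrass density of characters, full support of Haar measure, the Krylov--Bogolyubov construction) are standard. The only point meriting a moment's care is that the measure-averaging step in (iii) $\Rightarrow$ (ii) must be carried out for an arbitrary countable abelian group rather than just $\Z$, but this is immediate from amenability via any F{\o}lner sequence.
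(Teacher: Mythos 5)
Your proposal is correct, and its overall structure (the cycle (i)$\Rightarrow$(iii)$\Rightarrow$(ii)$\Rightarrow$(i), with the last two implications handled by soft general facts) coincides with the paper's. The one genuine divergence is in the main implication (i)$\Rightarrow$(iii): the paper argues directly that any invariant measure $\mu$ satisfies $\int_X f(x+y)\,d\mu(x) = \int_X f(x)\,d\mu(x)$ for \emph{every} $y \in X$, by approximating $y$ by elements $a_{g_n}$ of the dense orbit and applying dominated convergence, and then invokes the fact that full translation-invariance characterizes Haar measure; you instead test $\nu$ against characters, observe that $\int \chi\,d\nu = \chi(a_g)\int\chi\,d\nu$ forces $\int\chi\,d\nu = 0$ for every nontrivial $\chi$ (since $\chi(a_g)=1$ for all $g$ would make $\chi$ trivial by density of the orbit), and conclude $\nu = m_X$ via Stone--Weierstrass. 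Both are standard and complete; the paper's route stays purely measure-theoretic and needs no harmonic analysis, while your Fourier argument pinpoints exactly which characters can contribute and avoids the approximation/dominated-convergence step. Your (iii)$\Rightarrow$(ii) also makes explicit the Krylov--Bogolyubov/F{\o}lner-averaging construction that the paper compresses into the assertion that every closed invariant set supports an invariant measure; note only that the lemma as stated allows an arbitrary abelian $G$, so strictly one should appeal to amenability of abelian groups (via a F{\o}lner net or invariant mean) rather than countability, though in the paper's standing setting $G$ is countable and your version suffices.
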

\begin{proof}
	The implication (ii)$\implies$(i) is trivial.
	For (iii)$\implies$(ii), we note that any closed invariant set supports an invariant measure,
	and Haar measure always has full support.
	
	Now we will show (i)$\implies$(iii).
	Assume that $\{a_g : g \in G\}$ is dense in $X$.
	Let $\mu$ be a $G$-invariant measure on $X$, and let $f \in C(X)$.
	For every $g \in G$, invariance of $\mu$ gives the identity
	\begin{align*}
		\int_X{f(x + a_g)~d\mu(x)} = \int_X{f(x)~d\mu(x)}.
	\end{align*}
	Now, given $y \in X$, condition (i) implies that $y = \lim_{n \to \infty}{a_{g_n}}$
	for some sequence $(g_n)_{n \in \N}$ in $G$.
	Thus, by the dominated convergence theorem,
	\begin{align*}
		\int_X{f(x+y)~d\mu(x)} = \lim_{n \to \infty}{\int_X{f(x + a_{g_n})~d\mu(x)}} = \int_X{f(x)~d\mu(x)}.
	\end{align*}
	This invariance property uniquely defines the Haar measure, so $\mu$ is the Haar measure on $X$.
\end{proof}


\section{Kronecker factor is characteristic for double recurrence} \label{sec: Kronecker}

In this section, we show that the Kronecker factor is characteristic for the averages
\begin{align} \label{eq: double avg}
	\UClim_{g \in G}{T_{\varphi(g)}f_1 \cdot T_{\psi(g)}f_2}.
\end{align}
For ease of notation, we will write $\tilde{f}_i$ for the image of $f_i$ under the factor map $\pi : X \to Z$.
That is, $\E{f_i}{Z} = \tilde{f}_i \circ \pi$.

\begin{thm} \label{thm: Kronecker}
	Let $\X = (X, \B, \mu, (T_g)_{g \in G})$ be an ergodic measure-preserving system
	with Kronecker factor $\mathbf{Z}$ and factor map $\pi : \X \to \mathbf{Z}$.
	Let $\varphi, \psi : G \to G$ be homomorphisms
	such that $\varphi$, $\psi$, and $\psi - \varphi$ have finite index image in $G$.
	Then for $f_1, f_2 \in L^{\infty}(\mu)$, the limit
	\begin{align*}
		\UClim_{g \in G}{f_1(T_{\varphi(g)} x) f_2(T_{\psi(g)} x)}
	\end{align*}
	exists in $L^2(\mu)$ and is equal to
	\begin{align} \label{eq: ET}
		\UClim_{g \in G}{\tilde{f}_1(z + \hat{\varphi(g)}) \tilde{f}_2(z + \hat{\psi(g)})}
		= \int_{Z^2}{\tilde{f}_1(z+w_1) \tilde{f}_2(z+w_2)~d\nu_{\varphi,\psi}(w_1, w_2)},
	\end{align}
	where $z = \pi(x)$ and $\nu_{\varphi,\psi}$ is the Haar (probability) measure on the subgroup $Z_{\varphi,\psi} := \overline{\{(\hat{\varphi(g)}, \hat{\psi(g)}) : g \in G\}}$ of $Z^2$.
\end{thm}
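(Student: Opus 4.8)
The plan is the classical two-step strategy. First I would show that the Kronecker factor $\mathbf{Z}$ is \emph{characteristic} for the averages \eqref{eq: double avg}: if $\E{f_1}{Z} = 0$ or $\E{f_2}{Z} = 0$, then $\UClim_{g \in G}{T_{\varphi(g)}f_1 \cdot T_{\psi(g)}f_2} = 0$ in $L^2(\mu)$. Granting this, writing $f_i = \E{f_i}{Z} + (f_i - \E{f_i}{Z})$ and expanding bilinearly annihilates the cross terms, reducing the computation of $\UClim_{g}{T_{\varphi(g)}f_1 \cdot T_{\psi(g)}f_2}$ to that of $\UClim_{g}{T_{\varphi(g)}\E{f_1}{Z} \cdot T_{\psi(g)}\E{f_2}{Z}}$. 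The latter, transported to the Kronecker factor, becomes a twisted Ces\`aro average along a group rotation, which I would evaluate via unique ergodicity on the subgroup $Z_{\varphi,\psi}$. Existence of the uniform Ces\`aro limit in $L^2(\mu)$ falls out of the second step together with a density argument.

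For the characteristic-factor step, by symmetry (interchanging $(\varphi,f_1)$ with $(\psi,f_2)$) it suffices to treat $\E{f_1}{Z} = 0$, i.e. $f_1 \in L^2(\mu)_{wm}$. I would apply the van der Corput trick (Lemma~\ref{lem: vdC}) to $u_g := T_{\varphi(g)}f_1 \cdot T_{\psi(g)}f_2$. Expanding $\innprod{u_{g+h}}{u_g}$, regrouping the $T_{\varphi(g)}$- and $T_{\psi(g)}$-factors, and applying the measure-preserving map $T_{-\varphi(g)}$ inside the integral gives
\[
	\innprod{u_{g+h}}{u_g} = \int_X{\bigl(T_{\varphi(h)}f_1 \cdot \overline{f_1}\bigr) \cdot T_{(\psi-\varphi)(g)}\bigl(T_{\psi(h)}f_2 \cdot \overline{f_2}\bigr)~d\mu}.
\]
Since $(\psi-\varphi)(G)$ has finite index in $G$, the action $\bigl(T_{(\psi-\varphi)(g)}\bigr)_{g \in G}$ has only finitely many ergodic components; write $\mu = \sum_w{c_w \mu_w}$ for its ergodic decomposition. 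By the ergodic theorem (Theorem~\ref{thm: ergodic theorem}), $\UClim_g{T_{(\psi-\varphi)(g)}F} = \E{F}{\mathcal{I}_{\psi-\varphi}}$ in $L^2(\mu)$, and this conditional expectation equals the constant $\int{F~d\mu_w}$ on the $w$-th component; hence $\gamma_h := \UClim_g{\innprod{u_{g+h}}{u_g}}$ exists and
\[
	\gamma_h = \sum_w{c_w \, \langle T_{\varphi(h)}f_1, f_1 \rangle_{\mu_w} \, \langle T_{\psi(h)}f_2, f_2 \rangle_{\mu_w}}.
\]
It remains to show $\UClim_h{\gamma_h} = 0$. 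Bounding the $f_2$-factors trivially by $\norm{\infty}{f_2}^2 \le 1$, this reduces to $\UClim_h{\bigl| \langle T_{\varphi(h)}f_1, f_1 \rangle_{\mu_w} \bigr|} = 0$ for each $w$. Writing $\mu_w = c_w^{-1}\mathbbm{1}_{W_w}\mu$ for the corresponding $(\psi-\varphi)$-invariant atom $W_w$, this correlation equals $c_w^{-1}\langle T_{\varphi(h)}f_1, \mathbbm{1}_{W_w}f_1 \rangle_{\mu}$; since $T_{\varphi(h)}f_1 \perp L^2(\mu)_c$, only the $L^2(\mu)_{wm}$-component of $\mathbbm{1}_{W_w}f_1$ contributes, and I would finish by invoking weak mixing of the finite-index subgroup $\varphi(G)$ on its weak-mixing subspace — legitimate because $L^2(\mu)_{wm} \subseteq L^2_{wm}$ for the $\varphi(G)$-action (any finite-dimensional invariant subspace for a finite-index subgroup is spanned by eigenfunctions of the whole group, so $L^2_c(\varphi(G)) \subseteq L^2(\mu)_c$), and because a uniform Ces\`aro limit over $G$ along $\varphi(\cdot)$ agrees with the corresponding limit over $\varphi(G)$ (images of F{\o}lner sequences push forward to asymptotically invariant averaging sequences). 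Lemma~\ref{lem: vdC} then yields $\UClim_g{u_g} = 0$. The symmetric case uses $\psi(G)$ finite index in the same way, so all three hypotheses are invoked here.

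For the evaluation step, it remains to compute $\UClim_g{T_{\varphi(g)}\E{f_1}{Z} \cdot T_{\psi(g)}\E{f_2}{Z}}$, which I would transport to $L^2(Z,\nu)$: with $S_g : z \mapsto z + \widehat{g}$ the rotation action on $Z$ and $\tilde{f}_i \in L^{\infty}(Z)$ the images of $\E{f_i}{Z}$, I need $\UClim_g{S_{\varphi(g)}\tilde{f}_1 \cdot S_{\psi(g)}\tilde{f}_2}$. For continuous $\tilde{f}_1,\tilde{f}_2$, the average over $F_N$ at a point $z$ is $\frac{1}{|F_N|}\sum_{g \in F_N}{F_z\bigl(\widehat{\varphi(g)},\widehat{\psi(g)}\bigr)}$, where $F_z(w_1,w_2) := \tilde{f}_1(z+w_1)\tilde{f}_2(z+w_2) \in C(Z^2)$. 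The homomorphism $g \mapsto (\widehat{\varphi(g)},\widehat{\psi(g)})$ has image with dense closure $Z_{\varphi,\psi} \le Z^2$, so by Lemma~\ref{lem: compact group rotations} the induced translation action of $G$ on $Z_{\varphi,\psi}$ is uniquely ergodic with invariant measure $\nu_{\varphi,\psi}$; unique ergodicity gives $\frac{1}{|F_N|}\sum_{g \in F_N}{F_z(\widehat{\varphi(g)},\widehat{\psi(g)})} \to \int_{Z_{\varphi,\psi}}{F_z~d\nu_{\varphi,\psi}}$ along every F{\o}lner sequence, uniformly in $z$ (the family $\{F_z : z \in Z\}$ is norm-compact in $C(Z^2)$, so pointwise convergence of the uniformly bounded linear functionals $F \mapsto \frac{1}{|F_N|}\sum_{g \in F_N}{F(\widehat{\varphi(g)},\widehat{\psi(g)})} - \int F~d\nu_{\varphi,\psi}$ upgrades to uniform convergence over the family). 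Hence the average converges in $L^2(Z,\nu)$ to the function $z \mapsto \int_{Z^2}{\tilde{f}_1(z+w_1)\tilde{f}_2(z+w_2)~d\nu_{\varphi,\psi}(w_1,w_2)}$, and pulling back by $\pi^*$ gives \eqref{eq: ET} for continuous data. I would then extend to arbitrary $f_i \in L^{\infty}(\mu)$ by density: the Ces\`aro averages $\frac{1}{|F_N|}\sum_{g \in F_N}{T_{\varphi(g)}f_1 \cdot T_{\psi(g)}f_2}$ are, uniformly in $N$, Lipschitz in $(f_1,f_2)$ for the $L^2(\mu)$-norm on $\norm{\infty}{\cdot}$-bounded sets (since $\norm{2}{T_{\varphi(g)}u \cdot T_{\psi(g)}v} \le \norm{2}{u}\norm{\infty}{v}$), and the right-hand side of \eqref{eq: ET} has the same Lipschitz property (by Cauchy--Schwarz and Fubini); a standard $\eps/3$ argument then transfers both the existence of the $\UClim$ and the validity of \eqref{eq: ET} from continuous data to all of $L^{\infty}(\mu)$.

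The crux is the van der Corput step, specifically the verification that $\UClim_h{\gamma_h} = 0$. In the classical $\Z$-setting $\psi - \varphi$ acts ergodically (up to finitely many components one can absorb), so $\gamma_h$ is literally a product of two correlation coefficients and weak mixing kills it immediately; here $(\psi-\varphi)(G)$ only has finite index, the ergodic theorem produces a genuine conditional expectation, and one must decompose over the finitely many ergodic components of the $(\psi-\varphi)$-action and transfer weak mixing along the finite-index image of $\varphi$. Executing these two descents cleanly — which is also precisely where each of the three finite-index hypotheses enters — is the main work; everything else is routine approximation and bookkeeping.
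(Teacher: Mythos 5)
Your proposal is correct and follows essentially the same route as the paper: van der Corput applied to $u_g = T_{\varphi(g)}f_1\cdot T_{\psi(g)}f_2$, the ergodic theorem along the finite-index subaction $(T_{(\psi-\varphi)(g)})_{g\in G}$ with its finitely many ergodic components, transfer of weak mixing to the $\varphi(G)$-subaction on $L^2(\mu)_{wm}$ (the paper's footnote argument), and then unique ergodicity of the rotation on $Z_{\varphi,\psi}$ for continuous data followed by approximation. The only deviations are cosmetic bookkeeping (bounding the $f_2$-correlation trivially rather than by Cauchy--Schwarz on the averages, and the explicit equicontinuity/$\eps/3$ arguments), so no further comment is needed.
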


\begin{rem}
	If the spectrum of $\X$ has the additional property that
	$\varphi\Lambda + \psi\Lambda \subseteq \Lambda$,
	then we can define $(\varphi w)(\lambda) := w(\varphi \lambda)$ for $w \in Z$ and similarly for $\psi$,
	in which case $Z_{\varphi,\psi} = \{(\varphi w, \psi w) : w \in Z\}$, so the limit in \eqref{eq: ET} simplifies to
	\begin{align*}
		\int_Z{\tilde{f}(z + \varphi w) \tilde{g}(z + \psi w)~dw}.
	\end{align*}
	A formula of this form appears in \cite{fw} for $\Z$-systems,
	where $\varphi$ and $\psi$ are multiplication by $a$ and $b$ respectively.
\end{rem}

\begin{proof}[Proof of Theorem \ref{thm: Kronecker}]
	First we will show that the Kronecker factor is characteristic.
	By linearity, this is the same as showing
	\begin{align} \label{eq: lim=0}
		\UClim_{g \in G}{T_{\varphi(g)}f_1 \cdot T_{\psi(g)}f_2} = 0
	\end{align}
	when either $\tilde{f}_1 = 0$ or $\tilde{f}_2 = 0$.
	Since the expressions are symmetric in $f_1$ and $f_2$ (by interchanging $\varphi$ and $\psi$),
	we will assume $\tilde{f}_1 = 0$.
	
	Let $u_g = T_{\varphi(g)}f_1 \cdot T_{\psi(g)}f_2$.
	We will use the van der Corput trick (Lemma \ref{lem: vdC}) to show \eqref{eq: lim=0}.
	Since $T_{\varphi(g)}$ is $\mu$-preserving, we have
	\begin{align*}
		\innprod{u_{g+h}}{u_g}
		= \int{\left(\overline{f}_1 T_{\varphi(h)}f_1\right) T_{(\psi-\varphi)(g)}
			\left( \overline{f}_2 T_{\psi(h)}f_2 \right)~d\mu}.
	\end{align*}
	Since $(\psi - \varphi)(G) \subseteq G$ has finite index,
	the action of this subgroup $(T_{(\psi - \varphi)(g)})_{g \in G}$
	has only finitely many components in its ergodic decomposition.
	In fact, the ergodic decomposition is of the form $\mu = \frac{1}{d} \sum_{j=1}^d{\mu_j}$,
	where $\mu_j$ is the (normalized) restriction of $\mu$ to an invariant set of measure $\frac{1}{d}$,
	and $d \le [G : (\psi - \varphi)(G)]$.
	Now for each $h \in G$, the ergodic theorem gives
	\begin{align*}
		\gamma_h := \UClim_{g \in G}{\innprod{u_{g+h}}{u_g}}
		= \frac{1}{d} \sum_{j=1}^d{\left( \int{\overline{f}_1 T_{\varphi(h)}f_1~d\mu_j}
			\int{\overline{f}_2 T_{\psi(h)}f_2~d\mu_j} \right)}.
	\end{align*}
	Applying the triangle inequality and the Cauchy--Schwarz inequality,
	we have for any finite set $F \subseteq G$,
	\begin{align*}
		\left| \frac{1}{|F|} \sum_{h \in F}{\gamma_h} \right|
		& \le \frac{1}{d} \sum_{j=1}^d{\frac{1}{|F|} \sum_{h \in F}{ \left|
				\int{\overline{f}_1 T_{\varphi(h)}f_1~d\mu_j \int{\overline{f}_2 T_{\psi(h)}f_2~d\mu_j}} \right|}} \\
		& \le \frac{1}{d} \sum_{j=1}^d{\left(
			\frac{1}{|F|} \sum_{h \in F}{\left| \int{\overline{f}_1 T_{\varphi(h)}f_1~d\mu_j} \right|^2}
			\frac{1}{|F|} \sum_{h \in F}{\left| \int{\overline{f}_2 T_{\psi(h)}f_2~d\mu_j} \right|^2}
			\right)^{1/2}}.
	\end{align*}
	We assumed $\tilde{f}_1 = 0$, so $f_1 \in L^2(\mu)_{wm}$.
	It follows that $f_1$ is a weakly mixing function for the subaction
	along the finite index subgroup\footnote{
		To see this, consider $(T_g)_{g \in G}$ restricted to $L^2(\mu)_{wm}$.
		We want to show that $(T_{\varphi(h)})_{h \in G}$ is a weakly mixing action on this space.
		If not, then there is a nonzero function $f \in L^2(\mu)_{wm}$
		such that $\{T_{\varphi(h)}f : h \in G\}$ is pre-compact.
		But $\varphi(G)$ has finite index, so every element of $G$ can be expressed as
		$g = \varphi(h) + k$ for some $h \in G$ and some $k$ belonging to a finite set $K$.
		Hence,
		$\{T_g f : g \in G\} = \bigcup_{k \in K}{T_k \left( \{T_{\varphi(h)} f : h \in G\} \right)}$
		is a finite union of pre-compact sets and therefore pre-compact.
		But $f \in L^2(\mu)_{wm}$, so this is a contradiction.
	}
	$\varphi(G)$, so for each $1 \le j \le d$,
	\begin{align*}
		\UClim_{h \in G}{\left| \int{\overline{f}_1 T_{\varphi(h)}f_1~d\mu_j} \right|^2} = 0.
	\end{align*}
	Since $\left| \int{\overline{f}_2 T_{\psi(h)}f_2~d\mu_j} \right|^2 \le d^2 \|f_2\|_2^4 < \infty$, we have
	\begin{align*}
		\UClim_{h \in G}{\UClim_{g \in G}{\innprod{u_{g+h}}{u_g}}} = 0,
	\end{align*}
	so that \eqref{eq: lim=0} holds by Lemma \ref{lem: vdC}.
	Thus, the Kronecker factor is characteristic:
	\begin{align*}
		\UClim_{g \in G}{f_1(T_{\varphi(g)} x) f_2(T_{\psi(g)} x)}
		= \UClim_{g \in G}{\tilde{f}_1(z + \hat{\varphi(g)}) \tilde{f}_2(z + \hat{\psi(g)})}.
	\end{align*}
	
	It remains to compute the limit as an integral.
	Consider the diagonal action $(S_g)_{g \in G}$ on $Z_{\varphi,\psi}$ given by
	$S_g(w_1, w_2) = (w_1 + \hat{\varphi(g)}, w_2 + \hat{\psi(g)})$.
	It is easy to check that $S_g(Z_{\varphi,\psi}) \subseteq Z_{\varphi,\psi}$,
	so this action is well-defined.
	By Lemma \ref{lem: compact group rotations}, $(S_g)_{g \in G}$ is uniquely ergodic
	with unique invariant measure $\nu_{\varphi,\psi}$.
	Define $f : Z_{\varphi,\psi} \to \C$ by
	\begin{align*}
		f(w_1, w_2) := \tilde{f}_1(z + w_1) \tilde{f}_2(z + w_2).
	\end{align*}
	If $\tilde{f}_1$ and $\tilde{f}_2$ are continuous, then $f$ is continuous and unique ergodicity gives
	\begin{align*}
		\UClim_{g \in G}{\tilde{f}_1(z + \hat{\varphi(g)}) \tilde{f}_2(z + \hat{\psi(g)})}
		& = \UClim_{g \in G}{f(S_g0)} \\
		& = \int_{Z_{\varphi,\psi}}{f(w)~dw} \\
		& = \int_{Z^2}{\tilde{f}_1(z+w_1) \tilde{f}_2(z+w_2)~d\nu_{\varphi,\psi}(w_1, w_2)}.
	\end{align*}
	In general, we can approximate $\tilde{f}_1$ and $\tilde{f}_2$ by continuous functions
	(for example, with finite linear combinations of characters) to get the desired convergence in $L^2(\mu)$.
\end{proof}


\section{Compact extensions} \label{sec: compact ext}

The first step in studying characteristic factors for triple (and longer) recurrence is to observe
that they are formed as towers of \emph{compact extensions}.

For $k \in \N$, let $\mathbf{Z}_k$ be the minimal factor of $\X$ that is characteristic for all $k$-element admissible families. 
That is, for every admissible family $\{\varphi_1, \dots, \varphi_k\}$
and every $f_1, \dots, f_k \in L^{\infty}(\mu)$,
\begin{equation} \label{eq: multiple average}
	\UClim_{g \in G}{\left( T_{\varphi_1(g)}f_1 \cdots T_{\varphi_k(g)}f_k -T_{\varphi_1(g)} \E{f_1}{Z_k} \cdots T_{\varphi_k(g)} \E{f_k}{Z_k}\right)} = 0.
\end{equation}
Our goal is to prove the following:
\begin{thm} \label{thm: compact ext}
	For all $k \in \N$, $\mathbf{Z}_{k+1}$ is a compact extension of $\mathbf{Z}_k$.
\end{thm}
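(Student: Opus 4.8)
The plan is to proceed by induction on $k$, showing that the characteristic factor $\mathbf{Z}_{k+1}$ for $(k+1)$-element admissible families is a compact extension of the factor $\mathbf{Z}_k$ for $k$-element admissible families, starting from the base case supplied by Theorem~\ref{thm: Kronecker} (which identifies $\mathbf{Z}_2$ with the Kronecker factor $\mathbf{Z}$, a compact extension of the trivial factor $\mathbf{Z}_1$). The strategy is the classical Conze--Lesigne/Furstenberg--Weiss dichotomy adapted to general abelian $G$: one assumes that $\mathbf{Z}_{k+1}$ is \emph{not} a compact extension of $\mathbf{Z}_k$ and derives a contradiction with the minimality of $\mathbf{Z}_{k+1}$ among characteristic factors, by exhibiting a strictly smaller characteristic factor. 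Concretely, by the Furstenberg structure theory (compact/weakly-mixing dichotomy relative to a factor), if $\mathbf{Z}_{k+1}$ is not a compact extension of $\mathbf{Z}_k$, then there is a nontrivial \emph{relatively weakly mixing} piece of $L^2(\mathbf{Z}_{k+1})$ over $\mathbf{Z}_k$; one then shows any such function contributes nothing to the limit \eqref{eq: multiple average}, so $\mathbf{Z}_k$ together with the maximal compact extension sitting inside $\mathbf{Z}_{k+1}$ is already characteristic, contradicting minimality.

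The heart of the argument is a relative van der Corput computation. Take an admissible $(k+1)$-element family $\{\varphi_1, \dots, \varphi_{k+1}\}$ and functions $f_1, \dots, f_{k+1} \in L^\infty(\mu)$, and suppose $f_{k+1}$ is orthogonal to the maximal compact extension of $\mathbf{Z}_k$ inside $\mathbf{Z}_{k+1}$, i.e. $f_{k+1}$ lies in the relatively weakly mixing part over $\mathbf{Z}_k$. Set $u_g = \prod_{i=1}^{k+1} T_{\varphi_i(g)} f_i$. Reindexing by $h = \varphi_{k+1}$ applied to the shift (using that $T_{\varphi_{k+1}(g)}$ is measure-preserving), one computes $\innprod{u_{g+h}}{u_g}$ as an integral of a product $\prod_{i=1}^{k} T_{(\varphi_i - \varphi_{k+1})(g)}(\cdots) \cdot (\overline{f}_{k+1} T_{\varphi_{k+1}(h)} f_{k+1})$; the family $\{\varphi_i - \varphi_{k+1} : 1 \le i \le k\}$ is again admissible (here admissibility of the original family is used: each $(\varphi_i - \varphi_{k+1})(G)$ has finite index, and the pairwise differences $(\varphi_i - \varphi_j)(G)$ have finite index). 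Hence by the inductive hypothesis the inner $\UClim$ over $g$ is governed by $\mathbf{Z}_k$, and one is reduced to an average over $h$ of integrals against the $\mathbf{Z}_k$-characteristic data of the weakly-mixing-over-$\mathbf{Z}_k$ function $\overline{f}_{k+1} T_{\varphi_{k+1}(h)} f_{k+1}$. The relative weak mixing of $f_{k+1}$ over $\mathbf{Z}_k$ — specifically that the conditional inner products $\E{\overline{f}_{k+1} T_{\varphi_{k+1}(h)} f_{k+1}}{Z_k}$ tend to $0$ in uniform Cesàro mean along the finite-index subgroup $\varphi_{k+1}(G)$ (the same finite-index trick as in the footnote of Theorem~\ref{thm: Kronecker} promotes weak mixing for $(T_g)$ to weak mixing for $(T_{\varphi_{k+1}(g)})$) — then forces $\UClim_h \gamma_h = 0$, and Lemma~\ref{lem: vdC} gives $\UClim_g u_g = 0$. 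This shows the maximal compact extension of $\mathbf{Z}_k$ inside $\mathbf{Z}_{k+1}$ is characteristic, so by minimality it equals $\mathbf{Z}_{k+1}$.

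The main obstacle I expect is the bookkeeping around \emph{relative} weak mixing: one needs the correct relative version of the Jacobs--de Leeuw--Glicksberg / Furstenberg decomposition over $\mathbf{Z}_k$ (compact extensions versus relatively weakly mixing extensions, characterized via the relative Hilbert-Schmidt / eigenfunction criterion and via vanishing of $\UClim_h \|\E{\overline{f} \cdot T_h f}{Z_k}\|$-type quantities), together with the verification that the inductive hypothesis applied at level $k$ really does apply — that is, that after the van der Corput reindexing the new family $\{\varphi_i - \varphi_{k+1}\}_{i=1}^k$ is admissible and that the "extra" bounded factors (the $\overline{f}_i T_{\varphi_i(h)} f_i$ for $i \le k$) do not interfere with the estimate. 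A secondary technical point is handling the finitely-many ergodic components of the subactions along finite-index subgroups (as in the $d$-fold ergodic decomposition $\mu = \frac1d \sum_j \mu_j$ appearing in the proof of Theorem~\ref{thm: Kronecker}) uniformly across the induction. Once the relative structure theory is in place, the van der Corput estimate itself is a routine adaptation of the $k=2$ argument.
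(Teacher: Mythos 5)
Your proposal is correct in outline and rests on the same two pillars as the paper's argument --- a van der Corput step plus the relative compact/weakly-mixing structure theory over $\mathbf{Z}_k$ --- but it organizes the key computation differently. The paper never invokes a relative Jacobs--de Leeuw--Glicksberg dichotomy directly: it constructs the $(k+1)$-fold diagonal measure $\tilde{\mu}$ on $X^{k+1}$ (Lemma \ref{lem: diag meas}), identifies $\UClim_{h}\gamma_h$ with $\int_{X^{k+1}}\overline{F}\,\E{F}{\I_S}\,d\tilde{\mu}$ for the diagonal action $S$, and then applies Theorem \ref{thm: joining compact ext} (the conditional-product-joining theorem, in its finitely-many-ergodic-components form) to conclude that $\E{F}{\I_S}$ factors through $\hat{Z}_k^{k+1}$; the conditional-product structure of $\tilde{\mu}$ over $Z_k^{k+1}$ then annihilates the integral once one $f_i$ satisfies $\E{f_i}{\hat{Z}_k}=0$. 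You instead project the $k$ factors indexed by the difference family $\{\varphi_i-\varphi_{k+1}\}$ onto $\mathbf{Z}_k$ (this uses only the \emph{definition} of $\mathbf{Z}_k$ as characteristic for $k$-element admissible families; no induction on $k$ is actually needed), bound $|\gamma_h|$ by a constant times $\norm{L^1(\mu)}{\E{\overline{f}_{k+1}\,T_{\varphi_{k+1}(h)}f_{k+1}}{Z_k}}$, and appeal to the characterization of the orthocomplement of the maximal compact extension by Ces\`{a}ro vanishing of these conditional correlations. That characterization is exactly where the paper's Section 5 machinery would re-enter your proof: it is proved via the relative square $X\times_{Z_k}X$ and Theorem \ref{Diag77 Thm 7.1}, and --- the delicate point you flag yourself --- it must be applied along the subaction $(T_{\varphi_{k+1}(h)})_{h\in G}$, for which neither $\X$ nor $\mathbf{Z}_k$ is ergodic but only has finitely many ergodic components. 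One must check (by a module-rank analogue of the footnote trick in Theorem \ref{thm: Kronecker}, or by running the joining theorem in its finitely-many-components form) that the maximal compact extension of $\mathbf{Z}_k$ relative to this finite-index subaction coincides with $\hat{\mathbf{Z}}_k$. So your argument is sound modulo an input of comparable weight to the paper's Theorem \ref{thm: joining compact ext}; what your route buys is that it avoids the $(k+1)$-fold diagonal measure entirely, at the price of needing the full relative dichotomy as a black box.

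One correction to the framing: the function $f_{k+1}$ must be taken orthogonal to the maximal compact extension $\hat{\mathbf{Z}}_k$ of $\mathbf{Z}_k$ \emph{in $\X$}, not merely to the maximal compact extension sitting inside $\mathbf{Z}_{k+1}$; a function orthogonal to the latter can still have a nontrivial $\mathbf{Z}_k$-compact component, and then the conditional correlations $\E{\overline{f}_{k+1}T_{\varphi_{k+1}(h)}f_{k+1}}{Z_k}$ need not vanish in Ces\`{a}ro mean, so the van der Corput estimate breaks. The clean way to finish is the paper's: the vanishing shows $\hat{\mathbf{Z}}_k$ is characteristic for $(k+1)$-element admissible families, minimality gives $\mathbf{Z}_{k+1}\subseteq\hat{\mathbf{Z}}_k$ (rather than equality), and then the intermediate-factor fact (\cite[Lemma 9.12]{glasner}) yields that $\mathbf{Z}_{k+1}$ is a compact extension of $\mathbf{Z}_k$.
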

For $G = \Z$, this appears in \cite{ziegler}, and our methodology here is similar.
There are two key ingredients in the proof.
First, in the process of using the van der Corput trick (Lemma \ref{lem: vdC}),
we will encounter averages of the form
\begin{equation} \label{eq: int avg}
	\UClim_{g \in G}{\int_X{T_{\varphi_1(g)}f_1
			\cdots T_{\varphi_{k+1}(g)}f_{k+1}~d\mu}}.
\end{equation}
Since $T_{\varphi_1(g)}$ is measure-preserving, this is equal to the average
\begin{equation*}
	\UClim_{g \in G}{\int_X{f_1 T_{(\varphi_2-\varphi_1)(g)}f_2 \cdots T_{(\varphi_{k+1}-\varphi_1)(g)}f_{k+1}~d\mu}}.
\end{equation*}
An important observation at this stage is that the family
$\{\varphi_2-\varphi_1, \dots, \varphi_{k+1}-\varphi_1\}$ is admissible.
This follows easily from the definition of an admissible family, and it means that the average \eqref{eq: int avg}
is controlled by the factor $\mathbf{Z}_k$.

Analyzing these averages involves a diagonal measure on $X^{k+1}$.
We show that this diagonal measure is sufficiently well behaved to conclude that
the projection onto the invariant $\sigma$-algebra for the diagonal action is built from functions
on the maximal compact extension of $\mathbf{Z}_k$.
It will then follow that the maximal compact extension of $\mathbf{Z}_k$
is a characteristic factor for the averages \eqref{eq: multiple average}, completing the proof.

First, we define our terms.

\begin{definition} Suppose $\X = (X, \B,\mu,(T_g)_{g\in G})$ is a measure-preserving system with an ergodic factor $\Y = (Y,\D,\nu,(S_g)_{g\in G})$ and associated factor map $\alpha : X \to Y$.
	\ben
	\item A closed subspace $M \subset L^2(X)$ is called a \emph{$\Y$-module} if for every $f \in M$ and $h \in L^0(Y)$, if $hf \in L^2(X)$, then $hf \in M$.
	\item If $M$ is a $\Y$-module, we say that a subset $L \subset M$ \emph{spans} $M$ if for every $f \in M$, there exist sequences of functions $(f_n)_{n=1}^\infty \subset L$ and $(c_n)_{n=1}^\infty \subset L^0(Y)$ such that $f(x) = \sum_n c_n(\alpha(x)) f_n(x)$.
	\item A function $f \in L^2(X)$ is called a \emph{generalized eigenfunction} or \emph{$\Y$-eigenfunction} if the $\Y$-module spanned by $\{ T_gf : g \in G\}$ is of finite rank.\footnote{This is a technical assumption which ensures, among other desirable consequences, that $Y$-module spanned by $\{T_gf : g \in G\}$ has an orthonormal $Y$-basis.}
	\item Denote by $\cale(\X/\Y)$ the closure in $L^2(X)$ of the subspace of $\Y$-eigenfunctions. In words, $\cale(\X/\Y)$ is called the \emph{$\Y$-eigenfunction space of $\X$}. 
	\een
\end{definition}
The $\Y$-eigenfunction space $\cale(\X/\Y) \subset L^2(X)$ is a $\Y$-module that contains not only $L^2(Y)$ but also the invariant space $\{ f \in L^2(X) : T_gf = f \text{ for all } g \in G\}$. In general, if $L^2(X) = \cale(\X/\Y)$, then $\X$ is called a \emph{compact extension} of $\Y$. Moreover, $\cale(\X/\Y)$ generates a sub-$\sigma$-algebra of $\B$, and the resulting factor of $\X$ is called the \emph{maximal compact extension} of $\Y$ in $\X$. See \cite{diag} and \cite{glasner} for references.

Here is a second round of definitions.
\begin{definition} \label{rel indep joining} Suppose two measure-preserving systems $\X_1 = \left( X_1,\B_1,\mu_1,(T_g^{(1)})_{g\in G} \right)$ and \ $\X_2 = \left( X_2,\B_2,\mu_2,(T_g^{(2)})_{g\in G} \right)$ share a common factor $\Y = (Y, \mathcal{D},\nu,(S_g)_{g\in G})$, and let $\alpha_1 : \X_1 \to \Y$ and $\alpha_2 : \X_2 \to \Y$ be the associated factor maps. The \emph{relatively independent joining of $\X_1$ and $\X_2$ over $\Y$} is the measure-preserving system $\X_1 \times_\Y \X_2 = (X_1 \times_\Y X_2, \B_1 \times_\Y \B_2, \mu_1 \times_\Y \mu_2, (T_g^{(1)} \times T_g^{(2)})_{g\in G})$, where we set $X_1 \times_\Y X_2 := \{ (x_1,x_2) \in X_1 \times X_2 : \alpha_1(x_1) = \alpha_2(x_2) \}$, the $\sigma$-algebra $\B_1 \times_\Y \B_2$ is the restriction of $\B_1 \otimes \B_2$ to $X_1 \times_\Y X_2$, and $\mu_1 \times_\Y \mu_2$ is the measure defined by setting, for every $f_1 \in L^\infty(X_1)$ and $f_2 \in L^\infty(X_2)$,
	\be \int_{X_1 \times_\Y X_2} f_1 \otimes f_2 \ d(\mu_1 \times_\Y \mu_2) \ = \ \int_Y \E{f_1}{\D} \E{f_2}{\D} \ d\nu,
	\ee
	where $f_1 \otimes f_2$ denotes the function $(x_1,x_2) \mapsto f_1(x_1)f_2(x_2)$.
\end{definition}

We will need to compute $\cale(\X_1 \times_\Y \X_2 / \Y)$ in terms of the $\Y$-eigenfunction spaces of $\X_1$ and of $\X_2$. First, let us agree on one more piece of notation: If $M_1 \subset L^2(X_1)$ and $M_2 \subset L^2(X_2)$ are $\Y$-modules, denote by $M_1 \otimes_\Y M_2$ the closed $\Y$-module of $L^2(X_1 \times_\Y X_2)$ spanned by products $f_1 \otimes f_2$, where $f_1 \in M_1$ and $f_2 \in M_2$ are bounded functions. After some effort, one can compute the $\Y$-eigenfunction space of $X_1 \times_\Y X_2$ as follows. 
\begin{thm} \label{Diag77 Thm 7.1} Assuming the setup of Definition \ref{rel indep joining}, we have
	\be
	\cale(\X_1 \times_\Y \X_2/\Y) \ = \ \cale(\X_1/\Y) \otimes_\Y \cale(\X_2/\Y).
	\ee
\end{thm}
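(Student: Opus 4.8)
The plan is to prove the two inclusions separately. For the easy inclusion $\cale(\X_1/\Y) \otimes_\Y \cale(\X_2/\Y) \subseteq \cale(\X_1 \times_\Y \X_2 / \Y)$, I would first check that if $f_1 \in L^2(X_1)$ is a $\Y$-eigenfunction, then $f_1 \otimes 1$ is a $\Y$-eigenfunction in $L^2(X_1 \times_\Y X_2)$ (and symmetrically $1 \otimes f_2$): the $\Y$-module spanned by $\{(T_g^{(1)} \times T_g^{(2)})(f_1 \otimes 1)\}_{g \in G} = \{(T_g^{(1)} f_1) \otimes 1\}_{g \in G}$ is the ``lift'' of the finite-rank $\Y$-module spanned by $\{T_g^{(1)} f_1\}_{g \in G}$ and hence is again of finite rank. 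Then, since a product of two $\Y$-eigenfunctions is a $\Y$-eigenfunction (the tensor product of the two finite-rank spanning modules is finite-rank, using the $\Y$-basis guaranteed by the finite-rank condition), $f_1 \otimes f_2 = (f_1 \otimes 1)(1 \otimes f_2)$ lies in $\cale(\X_1 \times_\Y \X_2/\Y)$. Taking $\Y$-module closures of products gives the inclusion.

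For the reverse inclusion $\cale(\X_1 \times_\Y \X_2/\Y) \subseteq \cale(\X_1/\Y) \otimes_\Y \cale(\X_2/\Y)$, let $M := \cale(\X_1/\Y) \otimes_\Y \cale(\X_2/\Y)$, a closed $(T_g^{(1)} \times T_g^{(2)})$-invariant $\Y$-module in $L^2(X_1 \times_\Y X_2)$. Suppose toward a contradiction that there is a $\Y$-eigenfunction $F$ not in $M$; then its orthogonal projection $F_0$ onto $M^\perp$ (within $L^2(X_1 \times_\Y X_2)$) is nonzero, invariant under the action, and — because $M$ is a $\Y$-module — $F_0$ is itself a $\Y$-eigenfunction whose spanning $\Y$-module lies in $M^\perp$. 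So it suffices to show: any $\Y$-eigenfunction of $\X_1 \times_\Y \X_2$ lying in $M^\perp$ is zero. Here is where the relatively independent joining structure enters. The idea is to fix a $\Y$-orthonormal basis of the finite-rank $\Y$-module spanned by $\{(T_g^{(1)} \times T_g^{(2)})F_0\}$ and use the cocycle/matrix-coefficient description of a $\Y$-eigenfunction: $F_0$ satisfies a ``generalized eigenvalue'' relation over $\Y$, i.e. $(T_g^{(1)} \times T_g^{(2)}) \vec{F_0} = \rho(g, \cdot) \vec{F_0}$ for a measurable cocycle $\rho$ into the unitary group of some $\C^d$. Disintegrating $\mu_1 \times_\Y \mu_2 = \int_Y (\mu_1)_y \times (\mu_2)_y \, d\nu(y)$ over $\Y$, one then studies the fiberwise behavior and shows, via a Fourier/spectral decomposition on the fibers and the Peter--Weyl-type analysis of compact extensions (as in \cite{diag}), that $F_0$ must be built from products of fiberwise $\Y$-eigenfunctions of $\X_1$ and $\X_2$ — hence lies in $M$, contradicting $F_0 \in M^\perp \setminus \{0\}$.

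Concretely, I would follow the route of \cite[Chapter 7]{diag}: represent $\cale(\X_i/\Y)$ via the Hilbert-module of square-integrable sections and decompose an arbitrary $\Y$-eigenfunction $F$ of the joining into isotypic components under the $G$-action relative to $\Y$; each isotypic component corresponds to a finite-dimensional unitary cocycle over $\Y$, and the relatively independent joining makes the cross-fiber correlations factor, so that the component is a $\Y$-combination of $f_1 \otimes f_2$ with $f_i$ eigenfunctions of $\X_i$. Summing over components and closing up yields $F \in M$.

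The main obstacle, and the step I expect to require the most care, is this reverse inclusion — specifically, showing that a $\Y$-eigenfunction of the joining cannot ``correlate across the fibers'' in a way not captured by tensor products of eigenfunctions of the two factors. This is where one genuinely uses that the joining is \emph{relatively independent} over $\Y$ (rather than an arbitrary joining): the disintegration $\mu_1 \times_\Y \mu_2 = \int_Y (\mu_1)_y \times (\mu_2)_y \, d\nu(y)$ forces the fiber measures to be product measures, which is exactly what allows the spectral/Peter--Weyl decomposition of the eigenfunction to split as a tensor product fiberwise. Handling the measurability and the finite-rank bookkeeping uniformly in $y \in Y$ (so that the fiberwise statements assemble into a genuine $\Y$-module identity) is the technical heart of the argument; everything else is formal manipulation of $\Y$-modules and conditional expectations.
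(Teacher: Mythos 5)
You should first note that the paper does not actually prove this statement: it is quoted from Furstenberg \cite[Theorem 7.1]{diag} (and \cite[Theorem 9.21]{glasner} for $G$-actions), so the relevant comparison is with Furstenberg's argument. Your forward inclusion $\cale(\X_1/\Y) \otimes_\Y \cale(\X_2/\Y) \subseteq \cale(\X_1 \times_\Y \X_2/\Y)$ is fine: lifts $f_1 \otimes 1$, $1 \otimes f_2$ of generalized eigenfunctions span finite-rank $\Y$-modules, and products of generalized eigenfunctions are generalized eigenfunctions. The problem is the reverse inclusion, which is the entire content of the theorem, and your proposal does not actually prove it.

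Concretely: after reducing (correctly in spirit) to showing that a $\Y$-eigenfunction $F_0$ of the joining lying in $M^\perp$, where $M := \cale(\X_1/\Y) \otimes_\Y \cale(\X_2/\Y)$, must vanish, you assert that a fiberwise Fourier/Peter--Weyl decomposition ``makes the cross-fiber correlations factor, so that the component is a $\Y$-combination of $f_1 \otimes f_2$ with $f_i$ eigenfunctions of $\X_i$.'' That assertion \emph{is} the theorem; no mechanism is given for manufacturing generalized eigenfunctions of the individual systems $\X_1$, $\X_2$ out of $F_0$. This is exactly where Furstenberg's proof does real work: because the fiber measures of the relatively independent joining are the product measures $\mu_{1,y} \times \mu_{2,y}$, a function $F$ on $X_1 \times_\Y X_2$ defines fiberwise Hilbert--Schmidt (partial-integration) operators $L^2(X_2,\mu_{2,y}) \to L^2(X_1,\mu_{1,y})$, and the finite-rank unitary cocycle relation satisfied by the orbit of $F_0$ forces the images of these operators to lie in $\cale(\X_1/\Y)$ (and symmetrically in $\cale(\X_2/\Y)$), after which $F_0$ is recovered as a $\Y$-module combination of products $f_1 \otimes f_2$. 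None of this appears in your sketch, so as written the hard inclusion is deferred to ``as in \cite{diag}'' --- which is no more than the citation the paper itself gives. Two smaller points: the projection $F_0$ of $F$ onto $M^\perp$ is not ``invariant under the action''; what you need (and should justify) is that $M$, hence $M^\perp$, is an invariant $\Y$-module, so that the projection commutes with the $G$-action and with multiplication by bounded $Y$-measurable functions and therefore sends the finite-rank module spanned by the orbit of $F$ onto a finite-rank module containing the orbit of $F_0$; and the uniform-in-$y$ measurability/finite-rank bookkeeping you flag at the end is indeed needed, but flagging it is not the same as carrying it out.
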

\begin{proof} See \cite[Theorem 7.1]{diag} for a proof in the restricted context of $\Z$-systems, and see \cite[Theorem 9.21]{glasner} for a direct quotation in the context of $G$-systems.
\end{proof}
\begin{thm} \label{Diag77 Thm 7.4} Assuming the setup of Definition \ref{rel indep joining}, if moreover $\X_2$ is ergodic, then
	\be
	\cale(\X_1 \times_\Y \X_2/\X_2) \ = \ \cale(\X_1/\Y) \otimes L^2(X_2). 
	\ee
\end{thm}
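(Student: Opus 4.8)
The plan is to prove the two inclusions separately, after splitting $L^2$ along the $\Y$-module decomposition of $L^2(X_1)$. Write $\mathbf{W} := \X_1 \times_\Y \X_2$ and $\cale_1 := \cale(\X_1/\Y) \subseteq L^2(X_1)$, and let $\cale_1^\perp$ be the orthogonal complement of $\cale_1$ in $L^2(X_1)$. Both $\cale_1$ and $\cale_1^\perp$ are $\Y$-modules, and (testing against indicators pulled back from $Y$, and using $L^2(Y) \subseteq \cale_1$) one has $\E{b}{\Y} = 0$ and $\E{a\overline{b}}{\Y} = 0$ whenever $a \in \cale_1$ and $b \in \cale_1^\perp$. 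By the defining identity of the relatively independent joining, $\langle a \otimes h,\, b \otimes k\rangle_{L^2(\mathbf{W})} = \int_Y \E{a\overline{b}}{\Y}\,\E{h\overline{k}}{\Y}\,d\nu = 0$ for $a \in \cale_1$, $b \in \cale_1^\perp$, $h, k \in L^\infty(X_2)$; since the products $f_1 \otimes f_2$ (with $f_2 \in L^\infty(X_2)$) span $L^2(\mathbf{W})$, this yields the orthogonal decomposition
\begin{align*}
	L^2(\mathbf{W}) = \big(\cale_1 \otimes L^2(X_2)\big) \oplus \big(\cale_1^\perp \otimes L^2(X_2)\big),
\end{align*}
where $\mathcal{M} \otimes L^2(X_2)$ denotes the closed span of the products $a \otimes h$ with $a \in \mathcal{M}$ and $h \in L^\infty(X_2)$ — and $\cale_1 \otimes L^2(X_2)$ in this sense is the space appearing in the theorem. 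It then suffices to prove (i) $\cale_1 \otimes L^2(X_2) \subseteq \cale(\mathbf{W}/\X_2)$ and (ii) $\cale_1^\perp \otimes L^2(X_2) \perp \cale(\mathbf{W}/\X_2)$.

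For (i): if $a$ is a bounded $\Y$-eigenfunction of $\X_1$, then $a \otimes 1 \in \cale(\X_1/\Y) \otimes_\Y \cale(\X_2/\Y) = \cale(\mathbf{W}/\Y)$ by Theorem~\ref{Diag77 Thm 7.1} (using $1 \in \cale(\X_2/\Y)$). Since $\Y$ is a factor of $\X_2$, every $\Y$-eigenfunction of $\mathbf{W}$ is also an $\X_2$-eigenfunction: if $\phi_1, \dots, \phi_n$ is an orthonormal $\Y$-basis of the finite-rank $\Y$-module spanned by the orbit of $a \otimes 1$, then $T_g(a \otimes 1) = \sum_i \E{T_g(a \otimes 1)\,\overline{\phi_i}}{\Y}\,\phi_i$ has $\Y$-measurable — hence $\X_2$-measurable — coefficients, so the orbit of $a \otimes 1$ spans an $\X_2$-module of rank at most $n$. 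Thus $a \otimes 1 \in \cale(\mathbf{W}/\X_2)$, and since the latter is an $\X_2$-module, $a \otimes h \in \cale(\mathbf{W}/\X_2)$ for every $h \in L^\infty(X_2)$; passing to closed spans gives (i).

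For (ii) it suffices, by density (using that $\cale(\X_1/\Y)$ is the $L^2$-space of a factor, so that $\E{\cdot}{\cale_1}$ preserves boundedness), to show that a bounded simple tensor $F = b \otimes h$ with $b \in \cale_1^\perp \cap L^\infty(X_1)$ and $h \in L^\infty(X_2)$ is orthogonal to $\cale(\mathbf{W}/\X_2)$. The key computation is that for a bounded $F' = b' \otimes h'$, disintegrating $\mu_1 \times_\Y \mu_2$ over $\X_2$ — whose fibres carry the conditional measures $(\mu_1)_{\alpha_2(x_2)}$ of $\X_1$ over $\Y$ — gives
\begin{align*}
	\E{(T_g F)\,\overline{F'}}{\X_2} = \big(T_g^{(2)} h \cdot \overline{h'}\big)\cdot\Big(\E{(T_g^{(1)} b)\,\overline{b'}}{\Y} \circ \alpha_2\Big),
\end{align*}
so $\big\| \E{(T_g F)\,\overline{F'}}{\X_2} \big\|_{L^2(X_2)}^2 \le \|h\|_\infty^2 \|h'\|_\infty^2\, \big\| \E{(T_g^{(1)} b)\,\overline{b'}}{\Y} \big\|_{L^2(Y)}^2$ since $(\alpha_2)_* \mu_2 = \nu$. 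Applying the joining identity again, $\big\| \E{(T_g^{(1)} b)\,\overline{b'}}{\Y} \big\|_{L^2(Y)}^2 = \big\langle (T_g^{(1)} \times T_g^{(1)})(b \otimes \overline{b}),\, b' \otimes \overline{b'}\big\rangle_{L^2(\X_1 \times_\Y \X_1)}$, whose uniform Ces\`aro limit in $g$ equals $\big\langle \E{b \otimes \overline{b}}{\mathcal{I}},\, b' \otimes \overline{b'}\big\rangle$ by the ergodic theorem (Theorem~\ref{thm: ergodic theorem}), $\mathcal{I}$ denoting the invariant $\sigma$-algebra of $\X_1 \times_\Y \X_1$. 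Every invariant function of $\X_1 \times_\Y \X_1$ is a $\Y$-eigenfunction, so $L^2(\mathcal{I}) \subseteq \cale(\X_1 \times_\Y \X_1/\Y) = \cale_1 \otimes_\Y \cale_1$ by Theorem~\ref{Diag77 Thm 7.1}, while $b \in \cale_1^\perp$ makes $b \otimes \overline{b}$ orthogonal to $\cale_1 \otimes_\Y \cale_1$ (because $\E{b\overline{a}}{\Y} = 0$ for $a \in \cale_1$); hence $\E{b \otimes \overline{b}}{\mathcal{I}} = 0$ and $\UClim_g \big\| \E{(T_g F)\,\overline{F'}}{\X_2} \big\|_{L^2(X_2)}^2 = 0$, first for simple tensors $F'$ and then (by linearity and density) for all bounded $F' \in L^2(\mathbf{W})$. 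Finally, writing any bounded $\X_2$-eigenfunction $F_0$ in an orthonormal $\X_2$-basis of its finite-rank module — whose change-of-basis cocycle is unitary, hence bounded — one obtains $\langle F, F_0 \rangle = \UClim_g \langle T_g F,\, T_g F_0 \rangle = 0$ from the vanishing of these conditional correlations. So $F \perp \cale(\mathbf{W}/\X_2)$, and together with (i) and the orthogonal decomposition this gives $\cale(\mathbf{W}/\X_2) = \cale(\X_1/\Y) \otimes L^2(X_2)$.

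The main obstacle is step (ii): the passage from ``all conditional correlations vanish in uniform Ces\`aro mean'' to ``orthogonal to $\cale(\mathbf{W}/\X_2)$'', together with the attendant reductions (to bounded simple tensors, and from general $F'$ to simple tensors) and the closing unitary-cocycle argument; alternatively one may quote the standard description of the orthogonal complement of a maximal compact extension in terms of relative weak mixing, as in \cite{diag, glasner}. Ergodicity of $\X_2$ is used implicitly throughout — it is what makes $\cale(\mathbf{W}/\X_2)$ and the notion of $\X_2$-eigenfunction meaningful — and ergodicity of $\Y$ plays the same role for $\cale_1$ and for the applications of Theorem~\ref{Diag77 Thm 7.1}.
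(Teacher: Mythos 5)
Your proposal is correct in substance, and it necessarily takes a different route from the paper, because the paper does not prove this statement at all: it cites Furstenberg's Theorem 7.4 in \cite{diag} and asserts that the argument transfers to $G$-systems. What you supply is a self-contained proof in the classical spirit of that argument: split $L^2(X_1)$ as $\cale(\X_1/\Y)\oplus\cale(\X_1/\Y)^\perp$, transfer the splitting to the relative product via the defining identity of the joining, obtain the inclusion $\cale(\X_1/\Y)\otimes L^2(X_2)\subseteq\cale(\X_1\times_\Y\X_2/\X_2)$ from the observation that coefficients over an orthonormal $\Y$-basis are $\Y$-, hence $\X_2$-measurable, and obtain the reverse inclusion from the relative weak-mixing behaviour of $\cale(\X_1/\Y)^\perp$, which you derive from the mean ergodic theorem on the self-joining $\X_1\times_\Y\X_1$ together with Theorem~\ref{Diag77 Thm 7.1} applied to that self-joining. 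This is an economical use of exactly the tools the paper has on hand, and it makes visible where the hypotheses enter your argument: ergodicity of $\Y$ through Theorem~\ref{Diag77 Thm 7.1} and the definition of $\cale(\X_1/\Y)$, and ergodicity of $\X_2$ through the paper's definitional framework (generalized eigenfunctions and orthonormal module bases are set up relative to an ergodic factor), consistent with your closing remark.

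Two points to tighten, neither a genuine gap. First, in step (i) you conclude from Theorem~\ref{Diag77 Thm 7.1} that $a\otimes 1\in\cale(\mathbf{W}/\Y)$ and then speak of ``the finite-rank $\Y$-module spanned by the orbit of $a\otimes 1$''; membership in the closure $\cale(\mathbf{W}/\Y)$ does not by itself give a finite-rank orbit module. Either note directly that $a\otimes 1$ is a genuine $\Y$-eigenfunction of $\mathbf{W}$ (the map $f\mapsto f\otimes 1$ is an isometry of $L^2(X_1)$ into $L^2(\mathbf{W})$ intertwining the actions and the $\Y$-module structures, so it carries the finite-rank module of $a$ to one for $a\otimes 1$), or run the coefficient argument on $\Y$-eigenfunctions of $\mathbf{W}$ and pass to closed spans to get $\cale(\mathbf{W}/\Y)\subseteq\cale(\mathbf{W}/\X_2)$; with the first option Theorem~\ref{Diag77 Thm 7.1} is not even needed for (i). Second, in the closing step of (ii) the orthonormal $\X_2$-basis elements $\phi_i$ of the orbit module of $F_0$ need not be bounded, while you stated the vanishing of conditional correlations only for bounded $F'$; your own uniform Lipschitz bound $\|\E{(T_gF)\,\overline{F'}}{X_2}\|_{L^2(X_2)}\le\|F\|_\infty\|F'\|_{L^2(\mathbf{W})}$ extends the vanishing to every $F'\in L^2(\mathbf{W})$, which is the version you should invoke there. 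Stated that way you can also drop the restriction to bounded eigenfunctions $F_0$, bounding $\|c_i^{(g)}\|_{L^2(X_2)}\le\|F_0\|_{L^2(\mathbf{W})}$ by conditional Cauchy--Schwarz, and thereby avoid any appeal to density of bounded eigenfunctions in $\cale(\mathbf{W}/\X_2)$. Both repairs are available inside your own argument.
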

\begin{proof} See \cite[Theorem 7.4]{diag} for a proof in the context of $\Z$-systems. The argument in the context of $G$-systems is analogous.
\end{proof}

\subsection{A theorem on conditional products}\label{subsec: proof of thm 5.1}

In this subsection, we repeatedly use the following set of assumptions:
\begin{enumerate}
	\item Fix a positive integer $k$.
	\item For each $i \in \{1,\ldots, k\}$, let $\X_i = (X_i, \B_i,\mu_i,(T^{(i)}_g)_{g\in G})$ be a measure-preserving system with a factor $\Y_i = (Y_i,\mathcal{D}_i,\nu_i,(S^{(i)}_g)_{g\in G})$ and associated factor map $\alpha_i : X_i \to Y_i$.
	\item Let $(X,\B)$ denote the product $(\prodo i k X_i, \otimes_{i=1}^{k} \B_i)$, and let $(Y,\mathcal D)$ denote the product $(\prodo i k Y_i, \otimes_{i=1}^{k} \mathcal{D}_i)$.
	\item Let $\alpha : X \to Y$ be defined by $\alpha := (\alpha_1,\ldots, \alpha_k)$.
\end{enumerate}
\begin{definition} A measure $\mu$ on $(X,\B)$ is to said to have \emph{correct marginals} if its image on the $i$th coordinate is $\mu_i$.
\end{definition}
\begin{rem} By definition, a \emph{joining} on $X$ has correct marginals and is invariant under the diagonal action $T_g := T^{(1)}_g \times \cdots \times T^{(k)}_g$, and for some simpler lemmas here, we discuss joinings only when necessary. If $\mu$ is a measure on $X$ with correct marginals, then the pushforward $\alpha_*\mu = \mu \circ \alpha^{-1}$ is a measure on $Y$ with correct marginals.
\end{rem}
\begin{definition} Let $\mu$ be a measure on $(X,\B)$ with correct marginals, and write $\alpha_*\mu$ for its pushforward on $(Y,\mathcal{D})$. We say that $\mu$ is a conditional product measure relative to $Y$ if
	\be
	\mu = \int_Y \mu_{1,y_1} \times \cdots \times \mu_{k,y_k} \ d\alpha_*\mu(y_1,\ldots, y_k),
	\ee
	where $\mu_{i}$ is disintegrated as $\mu_i = \int_{Y_i} \mu_{i,y_i} \ d\nu_i(y_i)$.
	\begin{rem}
		The definition of a conditional product measure is sensible since the set of $(y_1,\ldots,y_k)$'s for which the integrand is defined has full $\alpha_*\mu$-measure since $\alpha_*\mu$ has correct marginals. A \emph{conditional product joining} relative to $Y$ is a conditional product measure relative to $Y$ that is also invariant under the diagonal action.
	\end{rem}
\end{definition}
Here is an equivalent characterization, proven by unfolding definitions.
\begin{lemma} \label{equivalent cpm} A measure $\mu$ on $(X,\B)$ with correct marginals is a conditional product measure relative to $(Y,\mathcal{D})$ if and only if for all $k$-tuples $(f_i)_{i=1}^k$ with $f_i \in L^{\infty}(X_i,\B_i,\mu_i)$, we have
	\be
	\int_X \bigotimes_{i=1}^k f_i \ d\mu \ = \ \int_Y \bigotimes_{i=1}^k \E{f_i}{Y_i} \ d\alpha_*\mu,
	\ee
	where, as usual, we use the notation $\bigotimes_{i=1}^k f_i$ for the function $(x_1,\ldots, x_k) \mapsto f_1(x_1) \cdots f_k(x_k)$.
\end{lemma}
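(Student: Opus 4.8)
The plan is to prove the equivalence by unwinding the definition of a conditional product measure against the disintegration identity $\E{f_i}{Y_i}(y_i) = \int_{X_i} f_i\,d\mu_{i,y_i}$ recorded in Section~\ref{sec: prelim}; essentially no new idea is needed beyond Fubini for product measures and a standard density argument.

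For the forward implication I would start from $\mu = \int_Y \mu_{1,y_1}\times\cdots\times\mu_{k,y_k}\,d\alpha_*\mu(y_1,\ldots,y_k)$, fix bounded $f_i \in L^\infty(X_i,\B_i,\mu_i)$, and evaluate the inner integral fiberwise: for $\alpha_*\mu$-a.e.\ $(y_1,\ldots,y_k)$,
\be
\int_X \bigotimes_{i=1}^k f_i \, d\bigl(\mu_{1,y_1}\times\cdots\times\mu_{k,y_k}\bigr)
= \prod_{i=1}^k \int_{X_i} f_i\,d\mu_{i,y_i}
= \prod_{i=1}^k \E{f_i}{Y_i}(y_i),
\ee
using Fubini in the first step and the disintegration identity in the second. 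Integrating against $\alpha_*\mu$ and recalling the defining formula for $\mu$ then yields the displayed identity $\int_X \bigotimes_i f_i\,d\mu = \int_Y \bigotimes_i \E{f_i}{Y_i}\,d\alpha_*\mu$. The only bookkeeping is that each $\E{f_i}{Y_i}$ is a bounded $\mathcal{D}_i$-measurable function, so the right-hand integrand is genuinely integrable against $\alpha_*\mu$.

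For the converse I would introduce the candidate measure $\tilde\mu := \int_Y \mu_{1,y_1}\times\cdots\times\mu_{k,y_k}\,d\alpha_*\mu(y_1,\ldots,y_k)$, which is a well-defined probability measure on $(X,\B)$ with correct marginals (the integrand is a probability measure for $\alpha_*\mu$-a.e.\ point, and the marginals of $\tilde\mu$ agree with those of $\mu$ since $\mu_i = \int_{Y_i}\mu_{i,y_i}\,d\nu_i$ and $\alpha_*\mu$ has correct marginals). By construction $\tilde\mu$ is a conditional product measure, so the forward implication applies to it; hence $\mu$ and $\tilde\mu$ assign the same integral to every function of the form $\bigotimes_{i=1}^k f_i$ with bounded $f_i$, and by linearity to every finite linear combination of such functions. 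Since the $X_i$ are separable, these finite linear combinations form an algebra generating $\B$, so a monotone-class (or functional-approximation) argument extends the equality to all bounded measurable functions on $X$, giving $\mu = \tilde\mu$. I expect no genuine obstacle here: the statement is a definitional reformulation, and the only steps requiring a little care are the a.e.\ well-definedness of the iterated product measures and the interchange of integrals — routine consequences of the standing separability assumptions and Fubini's theorem — together with the final density step identifying a Borel measure from its values on product functions.
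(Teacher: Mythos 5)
Your proof is correct and is essentially the "unfolding of definitions" argument the paper has in mind (the paper omits the details, stating only that the lemma is proven by unfolding definitions): the forward direction is Fubini plus the disintegration identity $\E{f_i}{Y_i}(y_i) = \int_{X_i} f_i\,d\mu_{i,y_i}$, and the converse is the standard comparison of $\mu$ with the candidate conditional product measure via a monotone-class argument on product functions. The only cosmetic remark is that separability is not actually needed for the final identification step — agreement on rectangles plus the $\pi$-$\lambda$ theorem already gives $\mu = \tilde\mu$ on $\bigotimes_{i=1}^k \B_i$ — but this does not affect the validity of your argument.
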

We will need to change the factor with respect to which a measure is a conditional product measure. The following lemma suffices for this purpose.
\begin{lemma}[\cite{diag}, Lemma 9.2] \label{Diag77 Lem 9.2} Suppose for each $i \in \{1,\ldots, k\}$, we have an intermediate factor $(Z_i,\mathcal{C}_i)$ with $X_i \to Z_i \to Y_i$ and factor maps $\beta_{i} : X_i \to Z_i$ and $\beta_{i}' : Z_i \to Y_i$. Suppose that $\mu$ is a measure on $X$ with correct marginals that is also a conditional product measure relative to $Y$. Then $\mu$ is a conditional product measure relative to $Z := \prodo i k Z_i$.
\end{lemma}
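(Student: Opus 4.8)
The plan is to reduce the statement to the equivalent characterization of conditional product measures provided by Lemma~\ref{equivalent cpm}, and then to exploit the fact that conditional expectation is transitive along towers of factors. First I would note that, since each $\beta_i' : Z_i \to Y_i$ is a factor map, the composition $\beta_i' \circ \beta_i = \alpha_i$, so that the pushforward $\beta_*\mu$ (where $\beta := (\beta_1,\ldots,\beta_k)$) has correct marginals on $Z := \prod_{i=1}^k Z_i$, and $\alpha_*\mu$ is in turn the pushforward of $\beta_*\mu$ under the map $\beta' := (\beta_1',\ldots,\beta_k') : Z \to Y$. The goal is then to verify, for arbitrary bounded $f_i \in L^\infty(X_i,\B_i,\mu_i)$, the identity
\be
	\int_X \bigotimes_{i=1}^k f_i \ d\mu \ = \ \int_Z \bigotimes_{i=1}^k \E{f_i}{Z_i} \ d\beta_*\mu.
\ee

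The key computation proceeds as follows. By hypothesis $\mu$ is a conditional product measure relative to $Y$, so Lemma~\ref{equivalent cpm} gives $\int_X \bigotimes_i f_i \, d\mu = \int_Y \bigotimes_i \E{f_i}{Y_i} \, d\alpha_*\mu$. I would then apply the tower property of conditional expectation inside each coordinate: since $X_i \to Z_i \to Y_i$, we have $\E{f_i}{Y_i} = \E{\E{f_i}{Z_i}}{Y_i}$, where on the right $\E{f_i}{Z_i}$ is regarded as a function on $Z_i$ and the outer conditional expectation is taken with respect to the sub-$\sigma$-algebra $(\beta_i')^{-1}(\mathcal{D}_i)$. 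Writing $h_i := \E{f_i}{Z_i} \in L^\infty(Z_i,\mathcal{C}_i,\beta_{i*}\mu_i)$, the right-hand side becomes $\int_Y \bigotimes_i \E{h_i}{Y_i} \, d\alpha_*\mu$. Now I observe that $\beta_*\mu$ is itself a conditional product measure relative to $Y$: this follows by unfolding Lemma~\ref{equivalent cpm} once more, testing against bounded functions $h_i$ on $Z_i$ and using that $\E{\beta_i^* h_i}{Y_i}$ (as a function on $X_i$) equals $\beta_i'^* \E{h_i}{Y_i}$, together with the defining identity for $\mu$. Applying the equivalent characterization in the reverse direction to $\beta_*\mu$ then yields $\int_Y \bigotimes_i \E{h_i}{Y_i}\, d\alpha_*\mu = \int_Z \bigotimes_i h_i \, d\beta_*\mu = \int_Z \bigotimes_i \E{f_i}{Z_i}\, d\beta_*\mu$, which is exactly what we wanted.

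I expect the main obstacle to be purely bookkeeping: keeping straight the three levels $X_i \to Z_i \to Y_i$ and the induced maps on measures, and justifying that conditional expectation commutes with pullback along factor maps (i.e.\ $\E{\beta_i^* h}{(\beta_i')^{-1}(\mathcal D_i)} = \beta_i'^*\E{h}{\mathcal D_i}$ after identifying $Z_i$-functions with their pullbacks). These are standard measure-theoretic facts about disintegrations and factor maps, so no genuinely new idea is required; the content of the lemma is entirely the transitivity of conditional expectation packaged through the "correct marginals" formalism. One mild subtlety worth a sentence in the writeup is that one should check the intermediate integrands are defined $\alpha_*\mu$-a.e.\ (resp.\ $\beta_*\mu$-a.e.), which is automatic since all the relevant pushforward measures have correct marginals, exactly as in the remark following the definition of conditional product measure.
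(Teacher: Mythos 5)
Your proposal is correct and uses essentially the same ingredients as the paper's proof: the reduction via Lemma~\ref{equivalent cpm}, an application of the conditional-product identity for $\mu$ relative to $Y$ to the functions $\E{f_i}{Z_i}$ (pulled back to $X_i$), and the tower property of conditional expectation along $X_i \to Z_i \to Y_i$. The paper merely chains these equalities in the opposite order (starting from the $Z$-side integral), whereas you package the middle step as the observation that $\beta_*\mu$ is itself a conditional product measure relative to $Y$; this is the same computation in different bookkeeping.
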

\begin{proof}  For each $i$, set $\pi_i := \beta_i' \circ \beta_i$, and write $\beta := (\beta_1,\ldots, \beta_k)$ and similarly for $\pi$. We show the lemma using Lemma~\ref{equivalent cpm}. Fix a $k$-tuple $(f_i)_{i=1}^k$ with $f_i \in L^\infty(X_i)$. We observe that
	\begin{align*}
		\int_Z \bigotimes_{i=1}^k \E{f_i}{Z_i} \ d\beta_*\mu \ & = \ \int_X \bigotimes_{i=1}^k \E{f_i}{\beta_i^{-1}\mathcal{C}_i} \ d\mu \ \overset{*}{=} \ \int_Y \bigotimes_{i=1}^k \E{\E{f_i}{\beta_i^{-1}\mathcal{C}_i}}{Y_i} \ d \pi_*\mu \\
		& = \ \int_X \bigotimes_{i=1}^k \E{\E{f_i}{\beta_i^{-1}\mathcal{C}_i}}{\pi_i^{-1}\mathcal{D}_i} \ d \mu \ \overset{**}{=} \ \int_X \bigotimes_{i=1}^k
		\E{f_i}{\pi_i^{-1}\mathcal{D}_i} \ d\mu \\
		& = \ \int_Y \bigotimes_{i=1}^k \E{f_i}{Y_i} \ d\pi_*\mu \ \overset{*}{=} \ \int_X \bigotimes_{i=1}^k f_i \ d\mu,
	\end{align*}
	where the single-starred equalities hold since $X$ is a conditional product measure relative to $Y$, and the double-starred equality holds since $\pi_i^{-1}\mathcal{D}_i \subset \beta_i^{-1} \mathcal{C}_i$.
\end{proof}
According to the next lemma, the presence of a conditional product measure on a product system $X$ ensures that $X$ is a relatively independent joining of certain relatively independent joinings, a fact which we promise to use.
\begin{lemma} \label{Diag77 Lem 9.3} Let $\X, \Y, \X_i$, and $\Y_i$ be as above. Put $\tilde{\X} := (\X_1 \times_{\Y_1} \Y) \times_\Y (\X_2 \times_{\Y_2} \Y) \times_\Y \cdots \times_\Y (\X_k \times_{\Y_k} \Y)$, and denote by $\tilde{\alpha}$ the function $X \to \tilde{X}$ that maps $x = (x_1,x_2,\ldots, x_k)$ to $\left( (x_1,\alpha(x)),(x_2,\alpha(x)),\ldots,\right.$ $\left.(x_k,\alpha(x)) \right)$. If $\mu$ is a conditional product measure relative to $Y$, then $\tilde{\alpha}$ is a measurable isomorphism of $\X$ with $\tilde{\X}$.
\end{lemma}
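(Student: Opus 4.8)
The plan is to verify directly the three properties that make up a measurable isomorphism: $\tilde\alpha$ is a measurable bijection of $X$ onto $\tilde X$ with measurable inverse, it intertwines the $G$-actions, and it is measure-preserving. The first two are formal, and the conditional-product hypothesis is used only — and decisively — for the third.

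For bijectivity, unwind the two layers of relatively independent joining in the definition of $\tilde\X$. A point of $\tilde X$ is a tuple $\big((x_1,y^{(1)}),\dots,(x_k,y^{(k)})\big)$ with each $(x_i,y^{(i)})\in X_i\times_{\Y_i}Y$ — so that the $i$th coordinate of $y^{(i)}$ is $\alpha_i(x_i)$ — and with $y^{(1)}=\dots=y^{(k)}=:y$, since the outermost joining is taken over $\Y$. But then $y=\big(\alpha_1(x_1),\dots,\alpha_k(x_k)\big)=\alpha(x_1,\dots,x_k)$ is determined by $x=(x_1,\dots,x_k)$, so the coordinate projection $\big((x_1,y),\dots,(x_k,y)\big)\mapsto(x_1,\dots,x_k)$ is a two-sided inverse to $\tilde\alpha$; both maps are plainly measurable (on standard Borel spaces this suffices). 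Equivariance is equally direct: both systems carry the coordinatewise $G$-action, and since each $\alpha_i$ intertwines $T^{(i)}_g$ with $S^{(i)}_g$, the map $\alpha$ intertwines the diagonal action $T_g=T^{(1)}_g\times\cdots\times T^{(k)}_g$ on $X$ with the diagonal action on $Y$, from which $\tilde\alpha$ visibly intertwines the $G$-action on $\X$ with the one on $\tilde\X$.

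For the measure-preserving property, let $\tilde\mu$ denote the measure of $\tilde\X$, where $\Y$ is equipped with the measure $\alpha_*\mu$; note this has correct marginals, so each coordinate projection $\rho_i:Y\to Y_i$ is a factor map $\Y\to\Y_i$. It suffices to match the integrals of $\tilde\alpha_*\mu$ and $\tilde\mu$ against the functions $\bigotimes_{i=1}^k(f_i\otimes h_i)$ with $f_i\in L^\infty(X_i)$, $h_i\in L^\infty(Y)$, since these are closed under products and generate the $\sigma$-algebra. The key sub-step is to compute, inside $\X_i\times_{\Y_i}\Y$, the conditional expectation of $f_i\otimes h_i$ onto the common factor $\Y$; unwinding Definition~\ref{rel indep joining} and using the tower property through $\rho_i^{-1}\mathcal{D}_i\subseteq\mathcal{D}$, one finds
\[
	\E{f_i\otimes h_i}{Y}\;=\;h_i\cdot\big(\E{f_i}{Y_i}\circ\rho_i\big).
\]
Substituting this into the defining formula of the relatively independent joining over $\Y$ gives
\[
	\int_{\tilde X}\bigotimes_{i=1}^k(f_i\otimes h_i)\,d\tilde\mu
	\;=\;\int_Y\Big(\prod_{i=1}^k h_i(y)\Big)\prod_{i=1}^k\E{f_i}{Y_i}(y_i)\,d\alpha_*\mu(y).
\]
On the other hand $\big(\bigotimes_i(f_i\otimes h_i)\big)\circ\tilde\alpha=\big(\bigotimes_i f_i\big)\cdot\big((\prod_i h_i)\circ\alpha\big)$, and integrating this against the conditional-product representation $\mu=\int_Y\mu_{1,y_1}\times\cdots\times\mu_{k,y_k}\,d\alpha_*\mu(y)$ — using that $\mu_{1,y_1}\times\cdots\times\mu_{k,y_k}$ lives on the fiber $\alpha^{-1}(y)$ and that $\int f_i\,d\mu_{i,y_i}=\E{f_i}{Y_i}(y_i)$ — produces the same integral. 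Hence $\tilde\alpha_*\mu=\tilde\mu$, and together with the previous two points this shows that $\tilde\alpha$ is a measurable isomorphism of $\X$ with $\tilde\X$.

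The one genuinely substantive point — the main obstacle — is the conditional-expectation identity displayed above: one must remember that the measure carried by $\Y$ is $\alpha_*\mu$ (not the product $\nu_1\times\cdots\times\nu_k$) and chase it carefully through the tower property of conditional expectation. Bijectivity, equivariance, and the reduction of the measure identity to product test functions are all routine. For $G=\Z$ this is Lemma~9.3 of \cite{diag}; the argument does not involve the acting group and transfers verbatim.
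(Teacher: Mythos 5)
Your proof is correct and takes essentially the same route as the paper, whose own proof just notes that $\X$ and $\tilde{\X}$ share the factor $\Y$ and that $\mu$ and the measure on $\tilde{X}$ disintegrate identically over $\Y$ (citing Furstenberg); your computation against the test functions $\bigotimes_{i=1}^k (f_i\otimes h_i)$, resting on the identity $\E{f_i\otimes h_i}{Y}=h_i\cdot\left(\E{f_i}{Y_i}\circ\rho_i\right)$ and on the fact that $\mu_{1,y_1}\times\cdots\times\mu_{k,y_k}$ is carried by the fiber $\alpha^{-1}(y)$, is exactly an explicit verification of that coincidence of disintegrations. The bijectivity and equivariance checks are also fine.
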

\begin{proof} Since $\X$ and $\tilde \X$ share the factor $\Y$, it suffices to show that $\mu$ and the measure on $\tilde X$ disintegrate into the same measures with respect to the factor $\Y$. For reference, see the proof in \cite{diag} of Lemma 9.3.
\end{proof}
There is one more basic result we need.
\begin{lemma}\label{Diag77 Lem 5.3} If each $(X_i,\B_i,\mu_i,(T^{(i)}_g)_{g\in G})$ is ergodic and $\mu$ is a joining on $X$, then almost every ergodic component of $\mu$ has correct marginals.
\end{lemma}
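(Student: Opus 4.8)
The plan is to pass to the ergodic decomposition of $\mu$ under the diagonal action, fix a coordinate $i$, and show that the $i$th marginal of almost every ergodic component is forced to equal $\mu_i$; since there are only $k$ coordinates, intersecting the $k$ resulting full-measure sets gives the lemma.

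Concretely, I would write the ergodic decomposition of $(X, \B, \mu, (T_g)_{g \in G})$ as $\mu = \int \mu_\omega \, d\mathbb{P}(\omega)$, i.e.\ the disintegration of $\mu$ over the $\sigma$-algebra $\I$ of $(T_g)_{g\in G}$-invariant sets, so that $\mu_\omega$ is ergodic for $\mathbb{P}$-a.e.\ $\omega$. Fix $i \in \{1, \dots, k\}$ and let $\pi_i : X \to X_i$ be the coordinate projection. Since $T_g = T^{(1)}_g \times \cdots \times T^{(k)}_g$, we have $\pi_i \circ T_g = T^{(i)}_g \circ \pi_i$, so each $(\pi_i)_*\mu_\omega$ is a $T^{(i)}$-invariant probability measure on $X_i$, and because $\mu$ has correct marginals, $\int (\pi_i)_*\mu_\omega \, d\mathbb{P}(\omega) = (\pi_i)_*\mu = \mu_i$.

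The key step is to compute, for $f \in L^\infty(X_i, \B_i, \mu_i)$, the conditional expectation $\E{f \circ \pi_i}{\I}$ in two ways. On one hand, the ergodic theorem (Theorem~\ref{thm: ergodic theorem}) in $(X, \B, \mu, (T_g)_{g \in G})$ identifies it with $\UClim_{g \in G} T_g (f \circ \pi_i)$. On the other hand, the intertwining relation gives $T_g(f \circ \pi_i) = (T^{(i)}_g f) \circ \pi_i$, and since $(X_i, \B_i, \mu_i, (T^{(i)}_g)_{g \in G})$ is ergodic, Theorem~\ref{thm: ergodic theorem} there yields $\UClim_{g \in G} T^{(i)}_g f = \int_{X_i} f \, d\mu_i$ in $L^2(\mu_i)$; as $(\pi_i)_*\mu = \mu_i$, the map $h \mapsto h \circ \pi_i$ is an isometry $L^2(\mu_i) \hookrightarrow L^2(\mu)$, so this limit transports to $\UClim_{g \in G} T_g(f \circ \pi_i) = \left( \int_{X_i} f \, d\mu_i \right) \cdot 1$ in $L^2(\mu)$. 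Hence $\E{f \circ \pi_i}{\I}$ is $\mu$-a.e.\ equal to the constant $\int_{X_i} f \, d\mu_i$; unwinding the disintegration over $\I$, this says $\int_{X_i} f \, d(\pi_i)_*\mu_\omega = \int_{X_i} f \, d\mu_i$ for $\mathbb{P}$-a.e.\ $\omega$. Taking a countable family $\{f_n\} \subseteq L^\infty(\mu_i)$ that separates Borel probability measures on the separable space $X_i$ and intersecting over $n$ produces a full-measure set $\Omega_i$ on which $(\pi_i)_*\mu_\omega = \mu_i$; then $\Omega := \bigcap_{i=1}^k \Omega_i$ has full measure and every $\mu_\omega$ with $\omega \in \Omega$ has correct marginals.

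I expect the only delicate point to be the bookkeeping in the last step: justifying that the value at $\omega$ of the disintegration of $\E{f \circ \pi_i}{\I}$ is exactly $\int f \, d(\pi_i)_*\mu_\omega$, and that a countable separating family of test functions suffices to pin down each marginal $\mathbb{P}$-almost everywhere. An alternative, slicker route avoids the ergodic theorem entirely: for a.e.\ $\omega$ the system $(X, \mu_\omega, (T_g)_{g\in G})$ is ergodic, hence so is its factor $(X_i, (\pi_i)_*\mu_\omega, (T^{(i)}_g)_{g\in G})$; thus $\mu_i = \int (\pi_i)_*\mu_\omega \, d\mathbb{P}(\omega)$ exhibits the ergodic measure $\mu_i$ as a mixture of ergodic measures, and uniqueness of the ergodic decomposition forces $(\pi_i)_*\mu_\omega = \mu_i$ for $\mathbb{P}$-a.e.\ $\omega$.
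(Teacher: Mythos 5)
Your main argument is correct, but it is a genuinely different (and heavier) route than the paper's. The paper's proof is a two-line extremality argument: push the ergodic decomposition $\mu = \int \mu_x\,d\mu(x)$ forward under the coordinate projection $\pi_i$, note that this writes $\mu_i = \int (\pi_i)_*\mu_x\,d\mu(x)$ as an average of $T^{(i)}$-invariant measures, and conclude from ergodicity of $\mu_i$ (i.e.\ its extremality in the simplex of invariant measures) that $(\pi_i)_*\mu_x = \mu_i$ almost surely — no ergodic theorem, no conditional expectations, no test functions. Your main route instead computes $\E{f\circ\pi_i}{\I}$ via the mean ergodic theorem on $X$ and on the ergodic factor $X_i$, which is perfectly valid but requires the extra bookkeeping you flag: the disintegration identity $\E{f\circ\pi_i}{\I}(x) = \int f\,d(\pi_i)_*\mu_x$ a.e.\ (which the paper's preliminaries do supply), and a countable family of functions pinning down the marginal measure. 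On that last point, be a little careful: a family dense in $L^1(\mu_i)$ is not quite the right object, since elements of $L^\infty(\mu_i)$ are only $\mu_i$-a.e.\ classes and the measures $(\pi_i)_*\mu_x$ need not be absolutely continuous with respect to $\mu_i$; you want genuine Borel functions separating Borel probability measures, which exist under the standard-space assumptions that already underlie the ergodic decomposition, so this is a cosmetic rather than substantive gap. Finally, the ``alternative, slicker route'' you sketch at the end is essentially the paper's proof — and in fact you can drop the observation that the factors $(X_i,(\pi_i)_*\mu_x,T^{(i)})$ are ergodic, since extremality of $\mu_i$ among \emph{invariant} measures is all that is needed. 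What your main approach buys is a more explicit, quantitative identity ($\E{f\circ\pi_i}{\I}$ is constant), which is in the spirit of the averaging computations used elsewhere in the paper; what the paper's approach buys is brevity and avoidance of any limit theorem.
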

\begin{proof} Using the coordinate projection onto $X_i$, push the ergodic decomposition of $\mu$ forward to get a decomposition of $\mu_i$ into invariant measures, each of which is a pushforward of an ergodic component of $\mu$. As $\mu_i$ is already ergodic, these invariant measures are almost always $\mu_i$. We know this for each $i$.
\end{proof}

The following is a generalized form of \cite[Theorem 9.4]{diag}.
\begin{thm} \label{pre-thm: joining compact ext}
	Assume each $(Y_i, \mathcal{D}_i,\nu_i,(T^{(i)}_g)_{g\in G})$ is ergodic. For each $i$, let $(\hat{Y}_i,\hat{\mathcal{D}}_i,\hat{\nu}_i,(T^{(i)}_g)_{g\in G})$ be the maximal compact extension of $(Y_i, \mathcal{D}_i, \nu_i,(S^{(i)}_g)_{g\in G})$ in $(X_i,\B_i, \mu_i,(T^{(i)}_g)_{g\in G})$. Let $G$ act on $X$ by $T_g := T^{(1)}_g \times \cdots \times T^{(k)}_g$. If $\mu$ is a conditional product joining on $X$ relative to $Y$, then almost all ergodic components of $\mu$ are conditional product measures relative to $\hat{Y} := \prodo i k \hat{Y}_i$.
\end{thm}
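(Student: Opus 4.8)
The plan is to reduce the statement to a single vanishing assertion for conditional expectations onto the invariant $\sigma$-algebra of the diagonal action, and then to prove that assertion by the van der Corput trick. Write $\mathcal I \subseteq \B$ for the $\sigma$-algebra of sets invariant under the diagonal action $(T_g)_{g \in G}$ on $(X,\B,\mu)$, so that the ergodic components of $\mu$ are the measures $\mu_\omega$ determined by $\E{F}{\mathcal I}(\omega) = \int F \, d\mu_\omega$. By Lemma~\ref{Diag77 Lem 5.3}, almost every $\mu_\omega$ has correct marginals. By Lemma~\ref{equivalent cpm} together with a routine density argument over a fixed countable $L^\infty$-dense family of tuples $(f_i)_{i=1}^k$, such a $\mu_\omega$ is a conditional product measure relative to $\hat Y = \prod_{i=1}^k \hat Y_i$ exactly when $\int_X \bigotimes_i f_i \, d\mu_\omega = \int_X \bigotimes_i \E{f_i}{\hat Y_i} \, d\mu_\omega$ for all such tuples. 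Expanding $\bigotimes_i f_i - \bigotimes_i \E{f_i}{\hat Y_i}$ by telescoping, it therefore suffices to prove: \emph{if $f_1, \dots, f_k \in L^\infty$ are bounded and $\E{f_j}{\hat Y_j} = 0$ for some $j$, then $\E{\bigotimes_{i=1}^k f_i}{\mathcal I} = 0$ in $L^2(\mu)$}. Intersecting over the countably many tuples then produces the desired full-measure set of good $\omega$; after relabeling I may assume $j = 1$.

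For the vanishing, set $u_g := \bigotimes_i T^{(i)}_g f_i \in L^2(\mu)$, so that $\E{\bigotimes_i f_i}{\mathcal I} = \UClim_{g \in G} u_g$ by the ergodic theorem (Theorem~\ref{thm: ergodic theorem}). Since $\mu$ is invariant under the diagonal action, pulling out $T_g$ shows that $\innprod{u_{g+h}}{u_g}_{L^2(\mu)}$ is independent of $g$ and equal to $\gamma_h := \int_X \bigotimes_{i=1}^k \big( (T^{(i)}_h f_i)\,\overline{f_i} \big) \, d\mu$; and since $\mu$ is a conditional product measure relative to $Y$, Lemma~\ref{equivalent cpm} rewrites this as $\gamma_h = \int_Y \bigotimes_i \E{(T^{(i)}_h f_i)\overline{f_i}}{Y_i} \, d\alpha_*\mu$. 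Bounding the factors with $i \ne 1$ by their sup norms and using that $\alpha_*\mu$ has $Y_1$-marginal $\nu_1$, this gives $|\gamma_h| \le \big( \prod_{i \ne 1} \|f_i\|_\infty^2 \big)\, \big\| \E{(T^{(1)}_h f_1)\overline{f_1}}{Y_1} \big\|_{L^2(\nu_1)}$.

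The crucial point is that $\E{f_1}{\hat Y_1} = 0$ places $f_1$ in the relatively weakly mixing part of $L^2(X_1)$ over $\Y_1$: indeed $\cale(\X_1/\Y_1)$ coincides with $L^2(\hat Y_1)$, and the relative Jacobs--de Leeuw--Glicksberg splitting identifies $\cale(\X_1/\Y_1)^\perp$ with $\{ f : \UClim_h \| \E{(T^{(1)}_h f)\overline{f}}{Y_1} \|_{L^2(\nu_1)}^2 = 0 \}$, so $\UClim_h \| \E{(T^{(1)}_h f_1)\overline{f_1}}{Y_1} \|_{L^2(\nu_1)}^2 = 0$. By Cauchy--Schwarz, $\UClim_h |\gamma_h| = 0$, hence $\UClim_h \gamma_h = 0$, and then Lemma~\ref{lem: vdC} yields $\UClim_g u_g = 0$, which is what we wanted.

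I expect the only genuinely structural step --- the one to handle with care --- to be this last identification: that $\cale(\X_1/\Y_1)^\perp$ consists of relatively weakly mixing functions over $\Y_1$ and that $\cale(\X_1/\Y_1) = L^2(\hat Y_1)$. This is the relative Hilbert-module input that also underlies Theorems~\ref{Diag77 Thm 7.1} and~\ref{Diag77 Thm 7.4} (see \cite{diag, glasner}); everything else --- the telescoping reduction, the correct-marginals bookkeeping (Lemmas~\ref{Diag77 Lem 5.3} and~\ref{Diag77 Lem 9.2}), and the van der Corput computation --- is routine. An alternative, closer to \cite[\S 9]{diag}, is to invoke Lemma~\ref{Diag77 Lem 9.3} to realize $\X$ as an iterated relatively independent joining over $Y$ and then to track eigenfunction spaces through the ergodic decomposition using Theorems~\ref{Diag77 Thm 7.1} and~\ref{Diag77 Thm 7.4}; the argument sketched above is a more self-contained shortcut.
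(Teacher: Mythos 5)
Your argument is correct, and it takes a genuinely different route from the paper's. The paper first reduces (WLOG) to the case where the pushforward $\alpha_*\mu$ is ergodic, proves the structural Claim that the $T$-invariant functions of $L^2(\mu)$ lie in $\bigotimes_{i=1}^k \cale(\X_i/\Y_i)$ --- via Lemma~\ref{Diag77 Lem 9.3}, which identifies $\X$ with the iterated relatively independent joining over $\Y$, together with Theorems~\ref{Diag77 Thm 7.1} and~\ref{Diag77 Thm 7.4} --- and then concludes by a duality computation using Lemma~\ref{Diag77 Lem 9.2} (so that $\mu$ is also a conditional product relative to $\hat{Y}$) evaluated along the ergodic decomposition. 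You instead telescope and run van der Corput, so the conditional-product structure of $\mu$ relative to $Y$ enters only through the factorization of $\gamma_h$ over $Y$, and the entire burden falls on the single-coordinate dichotomy: $\E{f_1}{\hat{Y}_1}=0$ forces $\UClim_h \norm{L^2(\nu_1)}{\E{(T^{(1)}_h f_1)\overline{f_1}}{Y_1}} = 0$. That input is legitimate and available from what the paper already quotes; note that only the one inclusion you actually use is needed, and it follows from Theorem~\ref{Diag77 Thm 7.1} applied to the self-joining $\X_1\times_{\Y_1}\X_1$ (with $\Y_1$ ergodic, as hypothesized): since $\cale(\X_1/\Y_1)$ is a $\Y_1$-module, $\E{f_1}{\hat{Y}_1}=0$ gives $\E{f_1\overline{g}}{Y_1}=0$ for bounded $g\in\cale(\X_1/\Y_1)$, hence $f_1\otimes\overline{f_1}$ is orthogonal to $\cale(\X_1/\Y_1)\otimes_{\Y_1}\cale(\X_1/\Y_1)$, which contains the invariant functions of the self-joining (they span rank-one modules), and the mean ergodic theorem converts this into the stated $\UClim$. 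As for what each approach buys: yours dispenses with Lemma~\ref{Diag77 Lem 9.3}, Theorem~\ref{Diag77 Thm 7.4}, Lemma~\ref{Diag77 Lem 9.2}, and the ergodicity reduction for $\alpha_*\mu$, and it anticipates the van der Corput scheme the paper itself uses later to deduce Theorem~\ref{thm: compact ext}; the paper's route yields the slightly stronger intermediate fact that the diagonal-invariant functions genuinely lie in $\bigotimes_i\cale(\X_i/\Y_i)$, which is the form echoed in the remark following Theorem~\ref{thm: joining compact ext}. Your countable-density and correct-marginals bookkeeping (via Lemmas~\ref{equivalent cpm} and~\ref{Diag77 Lem 5.3}) matches the paper's.
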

\begin{proof}\renewcommand{\qedsymbol}{} Let $\mu' = \alpha_*\mu$. Without loss of generality, $\mu'$ is ergodic.
	First, we need the following:
	\begin{cla} The subspace of $T$-invariant functions $L^2(X, \I_T,\mu)$ is contained in $\bigotimes_{i=1}^k \cale (\X_i/\Y_i)$.
	\end{cla}
	\begin{proof}[Proof of Claim]\renewcommand{\qedsymbol}{} Applying Lemma~\ref{Diag77 Lem 9.3}, $\tilde{\alpha} : X \to \tilde{X}$ is an isomorphism of measure-preserving systems, where $\tilde{X} := (X_1 \times_{Y_1} Y) \times_Y (X_2 \times_{Y_2} Y) \times_Y \cdots \times_Y (X_k \times_{Y_k} Y)$. For each $i$ the base space of $X_i \times_{Y_i} Y$ is a subset of $X_i \times Y$; thus the base space of $\tilde{X}$ is $X \times Y^k$. Moreover, all of the coordinates that live in $Y$ are the same, so we can even view $\tilde{X}$ as a subset of $X \times Y$.
		
		Now, the invariant subspace $L^2(\tilde{X}, \tilde{\B}_T,\tilde{\mu})$ is a subspace of $\cale(\tilde{X}/Y)$. By Theorem \ref{Diag77 Thm 7.1}, \be
		\cale(\tilde{\X}/\Y) \ = \ \cale(\X_1 \times_{\Y_1} \Y/\Y) \otimes_Y \cdots \otimes_Y \cale(\X_k \times_{\Y_k} \Y / \Y).
		\ee
		Then, for each $i$, by Theorem \ref{Diag77 Thm 7.4} (valid since $\mu'$ is ergodic), we have $\cale(\X_i \times_{\Y_i} \Y/\Y) = \cale(\X_1/\Y_i) \otimes L^2(Y)$, which implies that 
		\be \label{to be pulled back}
		L^2(\tilde{X}, \tilde{\B}_T, \mu) \ \subset \ \left( \bigotimes_{i=1}^k \cale (\X_i/\Y_i) \right) \otimes L^2(Y). 
		\ee
		Viewing $\tilde{X}$ as a subset of $X \times Y$, we pull back to transform Equation \eqref{to be pulled back} into the desired containment
		\be
		L^2(X, \I_T,\mu) \ \subset \ \bigotimes_{i=1}^k \cale (\X_i/\Y_i).
		\ee
	\end{proof}
	
	Having proved the claim, we proceed as follows. We have the ergodic decomposition $\mu = \int \mu_x d\mu(x)$; every $f \in L^1(X)$ satisfies $\E{f}{\I_T}(x) = \int f \ d \mu_x$ for $\mu$-a.e. $x \in X$. Since $\mu$ is a joining, Lemma~\ref{Diag77 Lem 5.3} implies that almost every $\mu_x$ has correct marginals.
	
	Via the equivalent formulation in Lemma \ref{equivalent cpm}, we will show that almost every component $\mu_x$ is a conditional product measure with respect to $\hat{Y}$. Thus, we need to show that for every $k$-tuple $(f_i)_{i=1}^k$ with $f_i \in L^\infty(X_i,\mu_i)$, the equation
	\be \label{cpm in 9.4}
	\int_X \bigotimes_{i=1}^k f_i \ d\mu_x \ = \ \int_{\hat{Y}} \bigotimes_{i=1}^k \E{f_i}{\hat{Y}_i} \ d \beta_*\mu_x,
	\ee
	holds for a.e. $x$, where $\beta : X \to \hat{Y}$ is the composition $\beta := (\beta_1,\ldots,\beta_k)$ of the factor maps $\beta_i : X_i \to \hat{Y}_i$.
	This follows from an easy approximation argument whose ingredients are the following four facts, the last of which we prove.
	\begin{lemma} For each $i$, let $f_i, f_i' \in L^\infty(X_i,\mu_i)$ be functions, and let $M$ bound all of $||f_1||_{L^\infty(X_1,\mu_1)}, \ \ldots,$ $\ ||f_k'||_{L^\infty(X_k,\mu_k)}$. Then for any measure $\theta$ on $X$ with correct marginals, we have
		\be
		\left\vert\left\vert \bigotimes_{i=1}^k f_i - \bigotimes_{i=1}^k f_i'\right\vert\right\vert_{L^1(X,\theta)} \ \leq \ M^{k-1} \sum_{i=1}^k || f_i - f_i'||_{L^1(X_i,\mu_i)}.
		\ee
	\end{lemma}
	\begin{lemma} The operator $\E{\cdot}{\beta_i^{-1} \hat{\mathcal{D}}_i}$ is a contraction $L^1(X_i, \B_i,\mu_i) \to L^1(X_i, \beta_i^{-1} \hat{\mathcal{D}}_i,\mu_i)$.
	\end{lemma}
	\begin{lemma} There is a countable subset $\mathcal{F}_i \subset L^\infty(X_i,\mu_i)$ that is dense in $L^1(X_i,\mu_i)$.
	\end{lemma}
	\begin{cla} For a.e $x$, for all $k$-tuples $(f_i)_{i=1}^k$ with $f_i \in \mathcal{F}_i$, we have
		\be \int_X \bigotimes_{i=1}^k f_i \ d\mu_x \ = \ \int_{\hat{Y}} \bigotimes_{i=1}^k \E{f_i}{\hat{Y}_i} \ d\beta_*\mu_x.
		\ee
	\end{cla}
	\begin{proof}[Proof of Claim]
		Fix a $k$-tuple $(f_i)_{i=1}^k$ with $f_i \in \mathcal{F}_i \subset L^\infty(X_i)$. By Lemma \ref{Diag77 Lem 9.2}, since $\mu$ is a conditional product measure relative to $Y$, it is also a conditional product measure relative to $\hat{Y}$. Thus, if $(f_i')_{i=1}^k$ is a $k$-tuple of functions with $f_i' \in \cale(\X_i/\Y_i)$, it follows that
		\be
		\int_X \bigotimes_{i=1}^k f_i f_i' \ d\mu \ = \ \int_{\hat{Y}} \bigotimes_{i=1}^k \E{f_if_i'}{\hat{Y}_i} \ d\beta_*\mu \ = \ \int_X \bigotimes_{i=1}^k \E{f_i f_i'}{\beta_i^{-1}\hat{\mathcal{D}}_i} \ d\mu \ = \ \int_X \bigotimes_{i=1}^k f_i' \E{f_i}{\beta_i^{-1}\hat{\mathcal{D}}_i} \ d\mu,
		\ee 
		which implies that for any $f' \in \bigotimes_{i=1}^k \cale(\X_i/\Y_i)$, we have
		\be
		\int_X f' \cdot \bigotimes_{i=1}^k f_i \ d\mu \ = \ \int_X f' \cdot \bigotimes_{i=1}^k \E{f_i}{\beta_i^{-1}\hat{\mathcal{D}}_i} \ d\mu.
		\ee
		Therefore, by the previous claim that $\bigotimes_{i=1}^k \cale(\X_i/\Y_i)$ contains the invariant subspace $L^2(X,\I_T,\mu)$, we conclude
		\be \label{the previous equation}
		\mathbb{E}\left.\left[ \bigotimes_{i=1}^k f_i \ \right\vert \ \I_T \right] \ = \ \mathbb{E} \left. \left[\bigotimes_{i=1}^k \E{f_i}{\beta_i^{-1}\hat{\mathcal{D}}_i} \ \right\vert \ \I_T\right].
		\ee
		Finally, since every $f \in L^2(X)$ satisfies $\int f \ d\mu_x = \E{f}{\I_T}(x)$ for a.e. $x$, it follows from Equation \eqref{the previous equation} that the equation
		\be
		\int_X \bigotimes_{i=1}^k f_i \ d\mu_x \ = \ \int_X \bigotimes_{i=1}^k \E{f_i}{\beta_i^{-1}\hat{\mathcal{D}}_i} \ d\mu_x \ = \ \int_{\hat{Y}} \bigotimes_{i=1}^k \E{f_i}{\hat{Y}_i} \ d\beta_*\mu_x
		\ee
		holds for a.e. $x$. Since the collections $\mathcal{F}_i$ are countable, the claim holds as stated. $\square$
	\end{proof}
\end{proof}
\vspace{-.3in}We can slightly strengthen the previous theorem to get the following theorem, which generalizes \cite[Theorem 5.1]{fw}. This theorem is stated with the assumptions of this subsection made explicit, for ease of use elsewhere.
\begin{thm} \label{thm: joining compact ext}
	For $1 \le i \le k$, let $\Y_i = \left( Y_i, \D_i, \nu_i, \left( S^{(i)}_g \right)_{g \in G} \right)$
	be a factor of $\X_i = \left( X_i, \B_i, \mu_i, \left( T^{(i)}_g \right)_{g \in G} \right)$
	with maximal compact extension $\hat{\Y_i}$ in $\X_i$.
	Suppose each $\X_i$ has finitely many ergodic components.
	If $\mu$ is a conditional product joining on $\prod_{i=1}^k{X_i}$ relative to $\prod_{i=1}^k{Y_i}$,
	then almost every ergodic component of $\mu$ is a conditional product relative to $\prod_{i=1}^k{\hat{Y_i}}$.
\end{thm}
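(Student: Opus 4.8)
The plan is to reduce to Theorem~\ref{pre-thm: joining compact ext} by decomposing along the finitely many ergodic components of the \emph{factors} $\Y_i$. First note that since $\X_i$ has finitely many ergodic components, so does $\Y_i$: the invariant $\sigma$-algebra of $\Y_i$ pulls back (via $\alpha_i$) into that of $\X_i$, which is atomic with finitely many atoms. Write $Y_i^{(1)}, \dots, Y_i^{(m_i)}$ for the atoms of the invariant $\sigma$-algebra of $\Y_i$ (the ergodic components of $\Y_i$), set $X_i^{[b]} := \alpha_i^{-1}\bigl(Y_i^{(b)}\bigr)$, an invariant subset of $X_i$, and let $\X_i^{[b]}$ and $\Y_i^{(b)}$ denote the corresponding normalized restrictions; then $\Y_i^{(b)}$ is ergodic and is a factor of $\X_i^{[b]}$ via the restriction of $\alpha_i$. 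I would also record the routine fact that restriction to an invariant set commutes with passage to the maximal compact extension: the maximal compact extension of $\Y_i^{(b)}$ in $\X_i^{[b]}$ is the restriction of $\hat{Y_i}$ to $X_i^{[b]}$, since a $\Y_i$-eigenfunction restricts to a $\Y_i^{(b)}$-eigenfunction and, conversely, a $\Y_i^{(b)}$-eigenfunction of $\X_i^{[b]}$ extended by zero is a $\Y_i$-eigenfunction of $\X_i$ (the associated modules of generalized eigenfunctions have the same finite rank).

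Next, for a tuple $\vec{b} = (b_1, \dots, b_k)$ put $Y^{\vec b} := \prodo{i}{k} Y_i^{(b_i)}$ and $X^{\vec b} := \alpha^{-1}\bigl(Y^{\vec b}\bigr) = \prodo{i}{k} X_i^{[b_i]}$; these are invariant under the diagonal action, and since the $Y_i^{(b)}$ partition $Y_i$ and $\mu$ is a joining, the sets $X^{\vec b}$ partition $X$ modulo $\mu$-null sets. Let $\mu^{\vec b}$ be the normalized restriction of $\mu$ to $X^{\vec b}$ (when $\mu(X^{\vec b})>0$), so $\mu = \sum_{\vec b}\mu(X^{\vec b})\,\mu^{\vec b}$. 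The crux is to check that each $\mu^{\vec b}$ is again a conditional product joining, now on $\prodo{i}{k}X_i^{[b_i]}$ relative to $\prodo{i}{k}Y_i^{(b_i)}$. Diagonal invariance is immediate since $X^{\vec b}$ is invariant. For the conditional product identity I would invoke Lemma~\ref{equivalent cpm}: given $h_i \in L^\infty\bigl(X_i^{[b_i]}\bigr)$, extend each $h_i$ by zero to $X_i$ and apply Lemma~\ref{equivalent cpm} to $\mu$; since $\mathbf{1}_{X_i^{[b_i]}} = \mathbf{1}_{Y_i^{(b_i)}}\circ\alpha_i$ is $\alpha_i^{-1}(\D_i)$-measurable it pulls out of $\E{\cdot}{Y_i}$, and since for $y_i \in Y_i^{(b_i)}$ the conditional measure $\mu_{i,y_i}$ is exactly the disintegration of $\mu_i^{[b_i]}$ over $\Y_i^{(b_i)}$, one obtains
\begin{align*}
	\int_{X^{\vec b}} \bigotimes_{i=1}^k h_i \ d\mu^{\vec b} \ = \ \int_{Y^{\vec b}} \bigotimes_{i=1}^k \E{h_i}{Y_i^{(b_i)}} \ d\bigl(\alpha_*\mu^{\vec b}\bigr),
\end{align*}
with $\alpha_*\mu^{\vec b}$ the normalized restriction of $\alpha_*\mu$ to $Y^{\vec b}$. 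Taking $h_j \equiv 1$ for $j \ne i$ identifies the marginals, $(\pi_i)_*\mu^{\vec b} = \mu_i^{[b_i]}$ (here one also uses that an invariant function of a single coordinate, conditioned on that coordinate, stays invariant and hence is constant on the invariant atom $Y_i^{(b_i)}$), so Lemma~\ref{equivalent cpm} applies and $\mu^{\vec b}$ is indeed a conditional product joining relative to $\prodo{i}{k}Y_i^{(b_i)}$.

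With this in place the argument concludes quickly. Theorem~\ref{pre-thm: joining compact ext} applies to each $\mu^{\vec b}$ (its hypotheses hold, as the $\Y_i^{(b_i)}$ are ergodic), so almost every ergodic component of $\mu^{\vec b}$ is a conditional product measure relative to the product of the maximal compact extensions of the $\Y_i^{(b_i)}$ in the $\X_i^{[b_i]}$; by the first paragraph this product equals $\prodo{i}{k}\hat{Y_i}$ restricted to $X^{\vec b}$. Since each $X^{\vec b}$ is invariant and they partition $X$, the ergodic decomposition of $\mu$ is the ($\mu(X^{\vec b})$-weighted) union of the ergodic decompositions of the $\mu^{\vec b}$; hence almost every ergodic component of $\mu$ is an ergodic component of some $\mu^{\vec b}$, and therefore a conditional product measure relative to $\prodo{i}{k}\hat{Y_i} = \hat{Y}$. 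The step I expect to be the main obstacle is the verification in the second paragraph that $\mu^{\vec b}$ is a conditional product joining over the restricted factors, in particular the bookkeeping of disintegrations and invariant $\sigma$-algebras needed to pin down the correct marginals; the remaining points (behavior of $\cale(\cdot/\cdot)$ under restriction to invariant sets, and matching the restricted maximal compact extensions with $\hat{Y_i}$) are routine but worth stating with care.
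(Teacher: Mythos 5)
Your argument is correct and is essentially the route the paper takes: the paper's proof of this theorem simply delegates the reduction (decompose along the finitely many ergodic components of the $\Y_i$, invoke the ergodic-base case, Theorem \ref{pre-thm: joining compact ext}, blockwise, and patch) to the corresponding passage in \cite{diag}, which is exactly what you have spelled out. The two checkpoints you isolate are indeed the only nontrivial ones and you treat them correctly: the marginal identification $(\pi_i)_*\mu^{\vec b} = \mu_i^{[b_i]}$ follows, as you indicate, from invariance of $\alpha_*\mu$ together with ergodicity of the components $Y_i^{(b_i)}$ (the relevant invariant density/conditional expectation is constant on each such atom), and the compatibility of maximal compact extensions with restriction to invariant sets of the form $\alpha_i^{-1}\bigl(Y_i^{(b_i)}\bigr)$ is the routine eigenfunction bookkeeping you describe.
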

\begin{proof} Since each $X_i$ has finitely many components, it follows that each $Y_i$ has finitely many components. The conclusion follows from the argument in Theorem \ref{pre-thm: joining compact ext} in the same way that the proof of Theorem 9.5 follows from the proof of Theorem 9.4 in \cite{diag}.
\end{proof}
\begin{rem} The conclusion of this theorem can also be stated as follows: if a function $F \in L^2(\prodo i k X_i)$ is invariant under $T_1 \times \cdots \times T_k$, then there is a function $D \in L^2(\prodo i k \hat{Y_i})$ such that $F(x) = D(k)$, where $x \mapsto k$ is the projection from $\prodo i k X_i$ to $\prodo i k \hat{Y_i}$.
\end{rem}

\subsection{A lemma for the diagonal measure}
We now state and prove a lemma for the diagonal measure, which is needed to apply Theorem \ref{thm: joining compact ext} as a step in the proof of Theorem \ref{thm: compact ext}.
\begin{lem} \label{lem: diag meas}
	Let $\X = (X, \B, \mu, (T_g)_{g \in G})$ be an ergodic measure-preserving system,
	and suppose $(Z_k, \nu)$ is characteristic for averages \eqref{eq: multiple average}.
	Let $\{\varphi_1, \dots, \varphi_{k+1}\}$ be an admissible family.
	Let $S_g := T_{\varphi_1(g)} \times \cdots \times T_{\varphi_{k+1}(g)}$
	be the corresponding diagonal action on $X^{k+1}$.
	Then there are measures $\tilde{\mu}$ on $X^{k+1}$ and $\tilde{\nu}$ on $Z_k^{k+1}$ such that
	\begin{enumerate}[1.]
		\item	$\tilde{\mu}$ is an $S$-invariant joining of $\mu$ with itself;
		\item	$\tilde{\nu}$ is an $S$-invariant joining of $\nu$ with itself;
		\item	$\tilde{\mu}$ is a conditional product joining relative to $(Z_k^{k+1}, \tilde{\nu})$; and
		\item	for all $f_1, f_2, \dots, f_{k+1} \in L^{\infty}(\mu)$,
		\begin{align*}
			\UClim_{g \in G}{\int_X{\prod_{i=1}^{k+1}{T_{\varphi_i(g)}f_i}~d\mu}}
			& = \int_{X^{k+1}}{\bigotimes_{i=1}^{k+1}{f_i}~d\tilde{\mu}}, \\
			\UClim_{g \in G}{\int_{Z_k}{\prod_{i=1}^{k+1}{T_{\varphi_i(g)}\pi_kf_i}~d\nu}}
			& = \int_{Z_k^{k+1}}{\bigotimes_{i=1}^{k+1}{\pi_k f_i}~d\tilde{\nu}}, \\
		\end{align*}
		where $\pi_k$ is the projection $\pi_k : L^2(X) \to L^2(Z_k)$.
	\end{enumerate}
\end{lem}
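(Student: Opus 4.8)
\emph{Proof plan.}
The plan is to produce $\tilde{\mu}$ and $\tilde{\nu}$ as uniform Ces\`{a}ro limits of the images of the diagonals of $X^{k+1}$ and $Z_k^{k+1}$ under the action $(S_g)_{g \in G}$, and then to extract properties 1--3 from the conditional--product formalism developed above.
Realize $\X$ on a compact metric space (legitimate since the system is separable); then $\mathbf{Z}_k$ is compact metric too, and write $\pi_k \colon X \to Z_k$ for the factor map as well as for the pushforward $L^2(X) \to L^2(Z_k)$, and $\alpha := \pi_k \times \cdots \times \pi_k \colon X^{k+1} \to Z_k^{k+1}$.
Let $\Delta_X$ (resp.\ $\Delta_{Z_k}$) be the image of $\mu$ (resp.\ $\nu$) under the diagonal embedding $x \mapsto (x, \dots, x)$; note $\alpha_*\Delta_X = \Delta_{Z_k}$ and that $\alpha$ intertwines the two diagonal actions.
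Because every $T_{\varphi_i(g)}$ is measure preserving, each $(S_g)_*\Delta_X$ is a coupling of $k+1$ copies of $\mu$, and for $f_1, \dots, f_{k+1} \in L^{\infty}(\mu)$ one has
\[
	\int_{X^{k+1}}{\bigotimes_{i=1}^{k+1}{f_i}~d\big((S_g)_*\Delta_X\big)} = \int_X{\prod_{i=1}^{k+1}{T_{\varphi_i(g)}f_i}~d\mu},
\]
and likewise on $Z_k$.

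\emph{Existence of the limits.}
The crux is to show that $\UClim_{g \in G}{\int_X{\prod_{i=1}^{k+1}{T_{\varphi_i(g)}f_i}~d\mu}}$ exists for all $f_i \in L^{\infty}(\mu)$, and similarly on $Z_k$.
Since $T_{\varphi_1(g)}$ is measure preserving, this integral equals $\int_X{f_1 \cdot \prod_{i=2}^{k+1}{T_{(\varphi_i - \varphi_1)(g)}f_i}~d\mu}$, and the family $\{\varphi_2 - \varphi_1, \dots, \varphi_{k+1} - \varphi_1\}$ has size $k$ and is admissible (its images and pairwise differences coincide, up to sign, with those of the original family).
As $\mathbf{Z}_k$ is characteristic for it by \eqref{eq: multiple average}, the $L^2$-Ces\`{a}ro averages of $\prod_{i=2}^{k+1}{T_{(\varphi_i-\varphi_1)(g)}f_i} - \prod_{i=2}^{k+1}{T_{(\varphi_i-\varphi_1)(g)}\E{f_i}{Z_k}}$ tend to $0$; pairing against $f_1$ and using Cauchy--Schwarz shows the scalar Ces\`{a}ro averages are unchanged when each $f_i$ ($i\ge 2$) is replaced by $\E{f_i}{Z_k}$.
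Replacing $f_1$ by $\E{f_1}{Z_k}$ as well (the remaining factors being $\mathbf{Z}_k$-measurable), undoing the shift by $\varphi_1$, and pushing the now $\mathbf{Z}_k$-measurable integrand down to $\mathbf{Z}_k$ yields
\[
	\UClim_{g \in G}{\int_X{\prod_{i=1}^{k+1}{T_{\varphi_i(g)}f_i}~d\mu}} = \UClim_{g \in G}{\int_{Z_k}{\prod_{i=1}^{k+1}{T_{\varphi_i(g)}\pi_kf_i}~d\nu}}
\]
in the sense that either side exists as soon as the other does.
It remains to prove convergence, independent of the F{\o}lner sequence, of these correlation averages on $\mathbf{Z}_k$: iterating the same characteristicity reduction inside $\mathbf{Z}_k$ brings one to a tower of compact group extensions over the profinite factor $\mathbf{Z}_1$ — available from the inductive structure of the characteristic factors established in the proof of Theorem~\ref{thm: compact ext} — on which such averages converge by a relative van der Corput argument.

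\emph{Construction and the joining properties.}
With the limits in hand, set $\tilde{\mu} := \UClim_{g \in G}{(S_g)_*\Delta_X}$ and $\tilde{\nu} := \UClim_{g \in G}{(S_g)_*\Delta_{Z_k}}$.
These weak-$*$ limits of probability measures on the compact spaces $X^{k+1}$ and $Z_k^{k+1}$ exist because, by the previous step, every subsequential weak-$*$ limit of the corresponding Ces\`{a}ro averages integrates each tensor product of continuous functions to the same value, and such tensor products are dense by Stone--Weierstrass.
Each $(S_g)_*\Delta_X$ has all marginals equal to $\mu$, hence so does $\tilde{\mu}$; and $\tilde{\mu}$ is $S$-invariant, since $(S_h)_*\tilde{\mu} - \tilde{\mu}$ is the uniform Ces\`{a}ro limit of $(S_{g+h})_*\Delta_X - (S_g)_*\Delta_X$, which vanishes by the F{\o}lner property (cf.\ Lemma~\ref{lem: UC uniform shifts}).
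Thus $\tilde{\mu}$ is an $S$-invariant joining of $\mu$ with itself, which is 1, and the same argument gives 2 for $\tilde{\nu}$; part 4 is immediate from the definitions and the existence of the limits (approximating $\pi_k f_i$ by continuous functions on $Z_k$ in the second formula).

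\emph{The conditional product structure, and the main obstacle.}
For 3, first note $\alpha_*\tilde{\mu} = \tilde{\nu}$: since $\alpha$ acts weak-$*$ continuously on measures, intertwines the diagonal actions, and sends $\Delta_X$ to $\Delta_{Z_k}$, pushforward by $\alpha$ commutes with the Ces\`{a}ro limit.
By the criterion of Lemma~\ref{equivalent cpm} (applied with every factor $\X_i$ equal to $\X$ and every $\Y_i$ equal to $\mathbf{Z}_k$), it then suffices to verify that $\int_{X^{k+1}}{\bigotimes_i f_i~d\tilde{\mu}} = \int_{Z_k^{k+1}}{\bigotimes_i \E{f_i}{Z_k}~d\tilde{\nu}}$ for all $f_i \in L^{\infty}(\mu)$; but this is exactly the chain
\[
	\int_{X^{k+1}}{\bigotimes_i f_i~d\tilde{\mu}} = \UClim_{g}{\int_X{\prod_i T_{\varphi_i(g)}f_i~d\mu}} = \UClim_{g}{\int_{Z_k}{\prod_i T_{\varphi_i(g)}\pi_kf_i~d\nu}} = \int_{Z_k^{k+1}}{\bigotimes_i \pi_kf_i~d\tilde{\nu}}
\]
from the existence step together with the definition of $\tilde{\nu}$.
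Since $\tilde{\mu}$ is a joining (part 1), it is a conditional product joining relative to $(Z_k^{k+1}, \tilde{\nu})$, establishing 3.
The only genuinely hard point is the existence of the uniform Ces\`{a}ro limits; everything after that is bookkeeping with the conditional--product machinery, and I expect that difficulty to be localized in the convergence of correlation averages on the tower of compact extensions $\mathbf{Z}_k$, where the relative Conze--Lesigne/van der Corput analysis enters.
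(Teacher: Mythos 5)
Your construction is essentially the paper's: the paper defines the functional $\Phi(F) = \UClim_{g \in G} \int_{X^{k+1}} S_g F \, d\sigma$ on $C(X^{k+1})$ (with $\sigma$ the diagonal measure) and applies the Riesz representation theorem, which is the same as your weak-$*$ Ces\`{a}ro limit of $(S_g)_*\Delta_X$; the invariance of $\tilde{\mu}$ via the F{\o}lner property, the correct marginals, the repetition of the construction on $\mathbf{Z}_k$, the observation that $\{\varphi_2-\varphi_1,\dots,\varphi_{k+1}-\varphi_1\}$ is an admissible $k$-element family, and the verification of the conditional product property through the chain of identities using characteristicity of $\mathbf{Z}_k$ all match the paper's proof.

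The gap is precisely at the point you single out as the crux: the existence of the uniform Ces\`{a}ro limits. The paper does not prove this; it invokes the general norm-convergence theorems for multiple ergodic averages of commuting amenable group actions (Austin, Zorin-Kranich), cited exactly where the limit in the defining functional is asserted to exist. Your substitute --- iterating the characteristicity reduction inside $\mathbf{Z}_k$ to reach a tower of compact group extensions over $\mathbf{Z}_1$ and then concluding convergence ``by a relative van der Corput argument'' --- is not a proof. The van der Corput trick (Lemma~\ref{lem: vdC}) yields criteria for averages to vanish; it does not by itself give convergence of $(k{+}1)$-fold correlation sequences on a tower of compact extensions, and making that convergence rigorous is essentially the content of the Conze--Lesigne/Host--Kra/Austin machinery rather than a routine step one can wave at. Worse, appealing to ``the inductive structure of the characteristic factors established in the proof of Theorem~\ref{thm: compact ext}'' is circular in the paper's logical order, since the proof of Theorem~\ref{thm: compact ext} uses Lemma~\ref{lem: diag meas}. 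Unless you set up a careful joint induction and actually carry out the convergence argument at each level, this step should be discharged by citing the known convergence results, as the paper does; with that citation in place, the rest of your argument goes through.
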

\begin{proof}
	Define a functional $\Phi : C(X^{k+1}) \to \C$ by
	\begin{align} \label{eq: diag functional} 
		\Phi(F) := \UClim_{g \in G}{\int_{X^{k+1}}{S_g F~d\sigma}},
	\end{align}
	where $\sigma$ is the diagonal measure
	\begin{align}
		\int_{X^{k+1}}{F(x_1, x_2, \dots, x_{k+1})~d\sigma(x_1, x_2, \dots, x_{k+1})}
		:= \int_{X}{F(x, x, \dots, x)~d\mu(x)}.
	\end{align}
	The limit in \eqref{eq: diag functional} exists (see \cite{austin, zorin}), and it is controlled by the factor $\mathbf{Z}_k$.\footnote{This is because $T$ is measure-preserving, so $\Phi(F) = \UClim_{g \in G}{\int_X{F(x, T_{(\varphi_2-\varphi_1)(g)}x, \dots, T_{(\varphi_{k+1}-\varphi_1)(g)}x)~d\mu(x)}}$. Approximating $F$ by linear combinations of simple tensors $\otimes_{i=1}^{k+1}{f_i}$ shows that this limit is controlled by the characteristic factor for the family $\{\varphi_2 - \varphi_1, \dots, \varphi_{k+1} - \varphi_1\}$, which is an admissible family with $k$ elements.}
	Observe that $\Phi$ is a positive linear functional and $|\Phi(F)| \le \|F\|_{\sup}$.
	Thus, by the Riesz representation theorem, there is a (positive) Borel measure
	$\tilde{\mu}$ on $X^{k+1}$ such that $\Phi(F) = \int_{X^{k+1}}{F~d\tilde{\mu}}$.
	Moreover, since $\Phi(\ind) = 1$, $\tilde{\mu}$ is a probability measure.
	
	Note that $\tilde{\mu}$ is invariant with respect to the diagonal action $S$ by the F{\o}lner property:
	for any F{\o}lner sequence $(F_N)_{N \in \N}$, we have
	\begin{align*}
		\left| \int_{X^{k+1}}{ S_g F~d\tilde{\mu}} - \int_{X^{k+1}}{F~d\tilde{\mu}} \right|
		= & \left| \lim_{M \to \infty}{\frac{1}{|F_M|} \sum_{h \in F_M}{\int_X{S_{g+h}F~d\sigma}}}
		- \lim_{M \to \infty}{\frac{1}{|F_M|} \sum_{h \in F_M}{\int_X{S_h F~d\sigma}}} \right| \\
		= & \left| \lim_{M \to \infty}{\frac{1}{|F_M|} \sum_{h \in F_M+g}{\int_X{S_h F~d\sigma}}}
		- \lim_{M \to \infty}{\frac{1}{|F_M|} \sum_{h \in F_M}{\int_X{S_h F~d\sigma}}} \right| \\
		\le & \lim_{M \to \infty}{\frac{1}{|F_M|} \sum_{h \in (F_M+g) \triangle F_M}{
				\int_X{|S_h F|~d\sigma}}} \\
		\le & \lim_{M \to \infty}{\frac{ \left| (F_M+g) \triangle F_M \right|}{|F_M|}} \|F\|_{\sup} = 0.
	\end{align*}
	Moreover, it is easy to check from the definition that $\tilde{\mu}$ is a joining of $\mu$ with itself,
	since $\mu$ is $T$-invariant.
	
	By repeating this construction on the factor $\mathbf{Z}_k$,
	we may define an invariant probability measure $\tilde{\nu}$ on $Z_k^{k+1}$ satisfying
	\begin{align*}
		\int_{Z_k^{k+1}}{F~d\tilde{\nu}}
		= \UClim_{g \in G}{\int_{Z_k}{F \left( S_g(z, z, \dots, z) \right)~d\nu(z)}}.
	\end{align*}
	
	Now, we claim that $\tilde{\mu}$ is a conditional product joining
	relative to the measure $\tilde{\nu}$ on $Z_k^{k+1}$.
	Let $\pi_k : L^2(X) \to L^2(Z_k)$ be the projection onto the factor $\mathbf{Z}_k$.
	Since $\mu$ is $T$-invariant and $Z_k$ is a $T$-invariant sub-$\sigma$-algebra,
	we can use the fact that $\mathbf{Z}_k$ is characteristic for length $k$ averages
	to compute the integral with respect to $\tilde{\mu}$:
	\begin{align*}
		\int_{X^{k+1}}{\bigotimes_{i=1}^{k+1}{f_i}~d\tilde{\mu}}
		& = \UClim_{g \in G}{\int_X{\prod_{i=1}^{k+1}{f_i(T_{\varphi_i(g)}x)}~d\mu(x)}} \\
		& = \UClim_{g \in G}{\int_X{f_1(x) \prod_{i=2}^{k+1}{f_i(T_{(\varphi_i-\varphi_1)(g)}x)}~d\mu(x)}} \\
		& = \UClim_{g \in G}{
			\int_X{f_1(x) \prod_{i=2}^{k+1}{\E{f_i}{Z_k}(T_{(\varphi_i-\varphi_1)(g)}x)}~d\mu(x)}} \\
		& = \UClim_{g \in G}{
			\int_X{f_1(T_{\varphi_1(g)}x) \prod_{i=2}^{k+1}{\E{f_i}{Z_k}(T_{\varphi_i(g)}x)}~d\mu(x)}} \\
		& = \UClim_{g \in G}{\int_{Z_k}{\prod_{i=1}^{k+1}{\pi_k f_i(T_{\varphi_i(g)}z)}~d\nu(z)}} \\
		& = \int_{Z_k^{k+1}}{\bigotimes_{i=1}^{k+1}{\pi_kf_i}~d\tilde{\nu}}.
	\end{align*}
\end{proof}

\begin{rem} \label{random remark on lem: diag meas} 
	When $k=2$, so that $\mathbf{Z}_2 = Z$ is the Kronecker factor, we can see the measure $\tilde{\nu}$
	as the Haar measure on the subgroup
	$W_{\varphi,\psi,\theta} := \overline{\{(z + \hat{\varphi(g)}, z + \hat{\psi(g)}, z + \hat{\theta(g)})
		: z \in Z, g \in G\}} \subseteq Z^3$.
	If $\varphi \Lambda + \psi \Lambda + \theta \Lambda \subseteq \Lambda$,
	then $W_{\varphi, \psi, \theta}$ has the simpler description
	$W_{\varphi, \psi, \theta} = \{(z + \varphi w, z + \psi w, z + \theta w) : z, w \in Z\}
	= Z_{id, id, id} + Z_{\varphi, \psi, \theta}$.
\end{rem}

\subsection{Proof of Theorem~\ref{thm: compact ext}}
With all of this machinery, we now prove Theorem \ref{thm: compact ext}.
Let $\hat{\mathbf{Z}_k}$ be the maximal compact extension of $\mathbf{Z}_k$.
Let $u_g := \prod_{i=1}^{k+1}T_{\varphi_i(g)} f_i$.
By symmetry, it suffices to show that
\begin{align} \label{eq: multi-avg=0}
	\UClim_{g \in G}{u_g} = 0
\end{align}
when $\E{f_1}{\hat{Z_k}} = 0$.

We will assume $\E{f_1}{\hat{Z_k}} = 0$ and show that \eqref{eq: multi-avg=0}
holds using the van der Corput trick (Lemma \ref{lem: vdC}).
We can use the diagonal measure from Lemma \ref{lem: diag meas} to compute the limit
\begin{align*}
	\gamma_h & := \UClim_{g \in G}{\innprod{u_{g+h}}{u_g}} \\
	& = \int_{X^{k+1}}{\bigotimes_{i=1}^{k+1}{\left( \overline{f}_iT_{\varphi_i(h)}f_i \right)}~d\tilde{\mu}} \\
	& = \int_{X^{k+1}}{\left( \bigotimes_{i=1}^{k+1}{\overline{f}_i} \right)
		\left( \bigotimes_{i=1}^{k+1}{T_{\varphi_i(h)}f_i} \right)~d\tilde{\mu}}.
\end{align*}
Thus, letting $F = \bigotimes_{i=1}^{k+1}{f_i}$
and $S_h = T_{\varphi_1(h)} \times T_{\varphi_2(h)} \times \cdots \times T_{\varphi_{k+1}(h)}$
the diagonal action, we have by the ergodic theorem that
\begin{align*}
	\UClim_{h \in G}{\gamma_h}
	& = \UClim_{h \in G}{\int_{X^{k+1}}{\overline{F}~S_hF~d\tilde{\mu}}}. \\
	& = \int_{X^{k+1}}{\overline{F}~\E{F}{\I_S}~d\tilde{\mu}},
\end{align*}
where $\I_S$ is the $\sigma$-algebra of $S$-invariant sets.
Now, Lemma \ref{lem: diag meas} showed that the measure $\tilde{\mu}$ is a conditional product joining.
Furthermore, since $\{\varphi_1, \dots, \varphi_{k+1}\}$ is an admissible family,
the actions $\left( Z_k, (T_{\varphi_i(h)})_{h \in G} \right)$ each have finitely many ergodic components.
So Theorem \ref{thm: joining compact ext} applies, and we can write
$\E{F}{\I_S}(x) = D(k)$ for some function
$D \in L^2(\hat{Z_k}^{k+1})$, where $x \mapsto k$ is the projection $X^{k+1} \to \hat{Z_k}^{k+1}$.
Thus,
\begin{align} \label{eq: lim}
	\UClim_{h \in G}{\gamma_h} = \int_{X^{k+1}}{\overline{F} D~d\tilde{\mu}}.
\end{align}
It remains to show that the quantity in \eqref{eq: lim} is in fact equal to zero.

To do so, we define a functional $\Psi : L^2(\hat{Z_k}^{k+1}) \to \C$ by
\begin{align*}
	\Psi(H) := \int_{X^{k+1}}{\overline{F} H~d\tilde{\mu}}
\end{align*}
That is, $\Psi = \innprod{\cdot}{F}_{\tilde{\mu}}$.
We want to show that $\Psi \equiv 0$.
Since $\Psi$ is continuous and linear and $L^2(\hat{Z_k}^{k+1}) = \bigotimes_{i=1}^{k+1} L^2(\hat{Z_k})$,
it suffices to show that $\Psi(H) = 0$ for $H$ of the form
$H = \bigotimes_{i=1}^{k+1}{h_i}$ with $h_i \in L^2(\hat{Z_k})$.
By Lemma \ref{lem: diag meas}, $\tilde{\mu}$ is a conditional product joining
with respect to the measure $\tilde{\nu}$ on $Z_k^{k+1}$, so
\begin{align*}
	\Psi \left( \bigotimes_{i=1}^{k+1}{h_i} \right)
	& = \int_{X^{k+1}}{\bigotimes_{i=1}^{k+1}{\left( \overline{f}_i h_i \right)}~d\tilde{\mu}} \\
	& = \int_{Z_k^{k+1}}{\bigotimes_{i=1}^{k+1}{\E{\overline{f}_i h_i}{Z_k}}~d\tilde{\nu}}.
\end{align*}
Now, since $\hat{\mathbf{Z}_k}$ is an extension of $\mathbf{Z}_k$,
\begin{align*}
	\E{\overline{f}_1 h_1}{Z_k}
	= \E{\E{\overline{f}_1 h_1}{\hat{Z_k}}}{Z_k}
	= \E{h_1 \overline{\E{f_1}{\hat{Z_k}}}}{Z_k} = 0,
\end{align*}
since we assumed that $\E{f_1}{\hat{Z_k}} = 0$.
Thus, $\Psi \left( \bigotimes_{i=1}^{k+1}{h_i} \right) = 0$.
This proves that $\Psi \equiv 0$, so in particular $\Psi(D) = 0$.
Therefore, by Lemma \ref{lem: vdC}, equation \eqref{eq: multi-avg=0} holds.
We have shown that $\hat{\mathbf{Z}_k}$ is an extension of $\mathbf{Z}_{k+1}$.

Since $\hat{\mathbf{Z}_k}$ is an extension of $\mathbf{Z}_{k+1}$, which is an extension of $\mathbf{Z}_k$, and $\hat{\mathbf{Z}_k}$ is a compact extension of $\mathbf{Z}_k$, it follows by, e.g., \cite[Lemma 9.12]{glasner} that $\mathbf{Z}_{k+1}$ is a compact extension of $\mathbf{Z}_k$. \qed


\section{From compact extensions to abelian group extensions} \label{sec: abelian ext}

We now move to analyzing the multiple ergodic averages
\begin{align} \label{eq: triple avg}
	\UClim_{g \in G}{T_{\varphi(g)}f_1 \cdot T_{\psi(g)}f_2 \cdot T_{\theta(g)}f_3}.
\end{align}
Later, we will restrict to the case $\theta = \varphi + \psi$.
By Theorem \ref{thm: compact ext}, the average \eqref{eq: triple avg} is controlled
by a compact extension of the Kronecker factor.
Utilizing a technique from \cite{cl} and \cite{fw}, we can obtain a more refined description of this factor.

First, a general compact extension of the Kronecker factor is a skew product of the form $Z \times_{\rho} K/L$,
where $K$ is a compact group, $L$ a closed subgroup, and $\rho_g : Z \to K$ a cocycle
(see \cite[Theorem 9.14]{glasner}).
That is, $(\rho_g)_{g \in G}$ satisfies the cocycle equation
\begin{align}
	\rho_{g+h}(z) = \rho_g(z + \hat{h})\rho_h(z),
\end{align}
and $T$ acts on $Z \times K/L$ by $T_g(z,kL) = \left( z + \hat{g}, \rho_g(z) k L \right)$.

It is easiest to deal with the case that $L$ is the trivial group, so that the compact extension
is just a group extension.
Following \cite{fw}, we call a system $\X$ \emph{normal} if
the maximal compact extension of the Kronecker factor is a group extension.
Fortunately, it is enough to study normal systems:
\begin{prop}[\cite{fw}, Theorem 8.8] \label{prop: normal ext}
	Every ergodic system is a factor of a normal ergodic system.
\end{prop}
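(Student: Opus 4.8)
The plan is to ``un-quotient'' the homogeneous-space fibre in the skew-product description of the maximal compact extension of the Kronecker factor, turning it into a genuine compact group, and then to glue the resulting system back onto $\X$ via a relatively independent joining. Write $\mathbf{Z}$ for the Kronecker factor of $\X$ and $\hat{\mathbf{Z}}$ for its maximal compact extension in $\X$. Using the structure theorem for compact extensions of an ergodic base (\cite[Theorem 9.14]{glasner}), realize $\hat{\mathbf{Z}} = \mathbf{Z} \times_{\rho} K/L$, with $K$ a compact metrizable group, $L \le K$ closed, and $\rho = (\rho_g)_{g \in G}$ a $K$-valued cocycle over $\mathbf{Z}$. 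Now pass to the group extension $\mathbf{Z} \times_{\rho} K$: it need not be ergodic, but by the usual structure theory for compact group extensions of ergodic systems its ergodic components are mutually isomorphic, and each is again a group extension $\mathbf{W} = \mathbf{Z} \times_{\rho'} K'$ of $\mathbf{Z}$ by a closed subgroup $K' \le K$, with $\rho'$ cohomologous to $\rho$. Since $\hat{\mathbf{Z}}$ is ergodic, the restriction to $\mathbf{W}$ of the coordinate map $\mathbf{Z} \times_{\rho} K \to \mathbf{Z} \times_{\rho} K/L = \hat{\mathbf{Z}}$ is onto, yielding a factor map $q : \mathbf{W} \to \hat{\mathbf{Z}}$.

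Next, set $\X' := \X \times_{\hat{\mathbf{Z}}} \mathbf{W}$, the relatively independent joining of $\X$ and $\mathbf{W}$ over their common factor $\hat{\mathbf{Z}}$. Its measure is a joining of $\X$ and $\mathbf{W}$, both ergodic, so by Lemma~\ref{Diag77 Lem 5.3} we may choose an ergodic component $\X''$ of $\X'$ whose marginal on the $\X$-coordinate is again $\mu$; then $\X$ is a factor of $\X''$. It remains to prove that $\X''$ is normal.

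For this the plan is to identify the Kronecker factor of $\X''$ and its maximal compact extension. Since $\mathbf{W} = \mathbf{Z} \times_{\rho'} K'$ is a group extension of $\mathbf{Z}$, its own Kronecker factor $\mathbf{Z}_{\mathbf{W}}$ is an intermediate factor $\mathbf{Z} \to \mathbf{Z}_{\mathbf{W}} \to \mathbf{W}$ of the form $\mathbf{Z} \times_{\rho'} (K'/N)$ with $N \trianglelefteq K'$ (this $N$ contains $\overline{[K',K']}$), so $\mathbf{Z}_{\mathbf{W}}$ is itself a group extension of $\mathbf{Z}$ and $\mathbf{W}$ is a group extension of $\mathbf{Z}_{\mathbf{W}}$. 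I then claim: (i) the Kronecker factor of $\X''$ is $\mathbf{Z}_{\mathbf{W}}$, and (ii) the maximal compact extension of $\mathbf{Z}_{\mathbf{W}}$ in $\X''$ is $\mathbf{W}$. Granting these, the maximal compact extension of the Kronecker factor of $\X''$ is the group extension $\mathbf{W}$ of $\mathbf{Z}_{\mathbf{W}}$, so $\X''$ is normal and the proposition follows. Both claims should be extracted from the relative-independence machinery of Section~\ref{sec: compact ext}: the crucial point is that $\mathbf{W}$, being a compact extension of $\mathbf{Z}$ that contains $\hat{\mathbf{Z}}$, is also a compact extension of $\hat{\mathbf{Z}}$, so $\cale(\mathbf{W}/\hat{\mathbf{Z}}) = L^2(\mathbf{W})$; feeding this into Theorems~\ref{Diag77 Thm 7.1} and~\ref{Diag77 Thm 7.4} (the latter using that $\mathbf{W}$ is ergodic) identifies the $\hat{\mathbf{Z}}$- and $\mathbf{W}$-eigenfunction spaces of $\X''$ and transfers the compact-extension structure of $\X$ over $\hat{\mathbf{Z}}$ to that of $\X''$ over $\mathbf{W}$ without introducing any new compact extension below $\mathbf{W}$.

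The main obstacle is precisely this bookkeeping. One must check that the joining produces no new eigenfunctions, so that the Kronecker factor of $\X''$ does not grow past $\mathbf{Z}_{\mathbf{W}}$; and one must rule out a factor strictly between $\mathbf{W}$ and $\X''$ being a compact extension of $\mathbf{Z}_{\mathbf{W}}$, the point being that, via the relative independence over $\hat{\mathbf{Z}}$, any such factor would correspond to an extension of $\hat{\mathbf{Z}}$ inside $\X$ whose compact part is distal of order at least three over the trivial factor and hence cannot be compact over the Kronecker $\mathbf{Z}_{\mathbf{W}}$. Keeping straight which extension is compact over which base --- rather than over the trivial factor --- is the delicate part. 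A secondary, more routine point is the structure theory invoked above for the ergodic components of $\mathbf{Z} \times_{\rho} K$ and for the Kronecker factor of the group extension $\mathbf{W}$, which is transparent when $K$ is abelian and needs the usual care with normal subgroups otherwise. (The argument is in essence that of \cite[Theorem 8.8]{fw}.)
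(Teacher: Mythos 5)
Your construction is the right skeleton---it is essentially the construction Furstenberg and Weiss use for \cite[Theorem 8.8]{fw}, which is also all the paper relies on: the paper does not reprove Proposition \ref{prop: normal ext}, it observes that the arguments of \cite[Sections 7--8]{fw} are group-theoretic/categorical and carry over to general countable abelian $G$, and it supplies only the one ingredient that does not transfer verbatim, namely the analogue of \cite[Lemma 8.3]{fw} (factor maps respect Kronecker factors). Measured against that, your proposal has a genuine gap: the two claims carrying all the weight, (i) that the Kronecker factor of the ergodic component $\X''$ is exactly $\mathbf{Z}_{\mathbf{W}}$, and (ii) that the maximal compact extension of $\mathbf{Z}_{\mathbf{W}}$ inside $\X''$ is exactly $\mathbf{W}$, are asserted rather than proved, and the tools you point to do not yield them. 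Theorems \ref{Diag77 Thm 7.1} and \ref{Diag77 Thm 7.4} compute $\cale(\X \times_{\hat{\mathbf{Z}}} \mathbf{W}/\hat{\mathbf{Z}})$ and $\cale(\X \times_{\hat{\mathbf{Z}}} \mathbf{W}/\mathbf{W})$, i.e.\ eigenfunction spaces relative to the base $\hat{\mathbf{Z}}$ over which the joining is relatively independent, and for the full (possibly non-ergodic) joining. What you need is control of $\cale(\X''/\mathbf{Z}_{\mathbf{W}})$ and of the honest eigenfunctions of the ergodic component $\X''$; relative independence over the larger factor $\hat{\mathbf{Z}}$ says nothing directly about compactness over the smaller factor $\mathbf{Z}_{\mathbf{W}}$ (note that $\hat{\mathbf{Z}} \not\subseteq \mathbf{Z}_{\mathbf{W}}$ in general, since $\hat{\mathbf{Z}}$ is typically not a Kronecker system). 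Even the preliminary statement that passing to an ergodic component of the joining produces no eigenfunctions beyond those of $\mathbf{Z}_{\mathbf{W}}$ is a nontrivial lemma, and it is exactly for statements of this kind that the homomorphism-of-pairs lemma is needed.

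The argument you offer to rule out an intermediate compact extension also does not stand as stated: a factor of $\X''$ strictly between $\mathbf{W}$ and $\X''$ need not ``correspond to an extension of $\hat{\mathbf{Z}}$ inside $\X$''---factors of a relatively independent joining do not in general split as joins of factors of the two components---and the conclusion that such a factor is ``distal of order at least three and hence cannot be compact over $\mathbf{Z}_{\mathbf{W}}$'' begs the question, since establishing that it is genuinely three-step is precisely what must be proved. These verifications are the content of the lemmas of \cite[Section 8]{fw}; to complete your proof in the present generality you would either have to redo them for countable abelian $G$ (as the paper does for \cite[Lemma 8.3]{fw}, the one step whose $\Z$-proof does not carry over unchanged) or check that they transfer and cite them, which is exactly the route the paper takes.
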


Furstenberg and Weiss prove this in the case $G = \Z$,
but their proof relies only on general facts from group theory
and categorical arguments with commutative diagrams
that can be interpreted in the category of $G$-systems with no added difficulty
(see \cite[Sections 7 and 8]{fw}).
The one exception is \cite[Lemma 8.3]{fw}, which states that a homomorphism $\theta : \X \to \X'$
between measure-preserving systems induces a homomorphism of pairs
$(\X, \mathbf{Z}) \to (\X, \mathbf{Z}')$,
where $\mathbf{Z}$ and $\mathbf{Z}'$ are the Kronecker factors of $\X$ and $\X'$ respectively.
To prove this in the setting of general countable abelian groups requires a few modifications.
For completeness, we provide a proof of this fact in our setting:

\begin{lem}[c.f. \cite{fw}, Lemma 8.3]
	Let $\X = \left( X, \B, \mu, (T_g) \right)$ be an ergodic $G$-system with Kronecker factor $\mathbf{Z}$.
	Let $\X'$ be another $G$-system with Kronecker factor $\mathbf{Z}'$.
	Suppose $\theta : \X \to \X'$ is a homomorphism.
	Then $\theta$ defines a homomorphism of pairs\footnote{This means that
		$\theta$ induces a homomorphism $\mathbf{Z} \to \mathbf{Z}'$.}
	$(\X, \mathbf{Z}) \to (\X', \mathbf{Z}')$.
\end{lem}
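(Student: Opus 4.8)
The plan is to show that $\theta$ carries eigenfunctions of $\X'$ to compact functions of $\X$, deduce that the invariant sub-$\sigma$-algebra associated to the Kronecker factor of $\X'$ pulls back into the one associated to the Kronecker factor of $\X$, and then invoke the standard dictionary between invariant sub-$\sigma$-algebras and factors to produce the induced homomorphism $\mathbf{Z} \to \mathbf{Z}'$. First note that, since $\theta$ is measure-preserving and equivariant and $\X$ is ergodic, $\X'$ is automatically ergodic (an invariant set in $\X'$ pulls back to an invariant set in $\X$), so its Kronecker factor is the one described in Section~\ref{sec: factors}. Write $\X' = (X', \B', \mu', (T'_g))$, let $\pi : \X \to \mathbf{Z}$ and $\pi' : \X' \to \mathbf{Z}'$ be the factor maps with $Z = \hat{\Lambda}$, $Z' = \hat{\Lambda'}$ and $(S_g)$, $(S'_g)$ the rotation actions, and recall that under the pullbacks one has $\pi^{\ast}L^2(\mathbf{Z}) = L^2(\mu)_c$ and $(\pi')^{\ast}L^2(\mathbf{Z}') = L^2(\mu')_c$, with the characters $e_\lambda$ corresponding to the normalized eigenfunctions $f_\lambda$.

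The key observation is that $\theta^{\ast} : L^2(\mu') \to L^2(\mu)$, $h \mapsto h \circ \theta$, is a linear isometry which intertwines the Koopman operators, i.e. $\theta^{\ast} T'_g = T_g \theta^{\ast}$. Consequently, if $f' \in L^2(\mu')_c$, so that $\overline{\{T'_g f' : g \in G\}}$ is norm-compact, then $\overline{\{T_g(\theta^{\ast} f') : g \in G\}} = \overline{\{\theta^{\ast}(T'_g f') : g \in G\}}$ is contained in the compact set $\theta^{\ast}\bigl(\overline{\{T'_g f' : g \in G\}}\bigr)$ and is therefore compact; hence $\theta^{\ast} f' \in L^2(\mu)_c$. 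In particular, for every $\lambda' \in \Lambda'$ the function $e_{\lambda'} \circ \pi' \circ \theta = \theta^{\ast}\bigl((\pi')^{\ast} e_{\lambda'}\bigr)$ lies in $L^2(\mu)_c = \pi^{\ast} L^2(\mathbf{Z})$, i.e. is $\pi^{-1}(\mathrm{Borel}(Z))$-measurable modulo $\mu$-null sets.

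Since $Z'$ is a compact metrizable abelian group, its Borel $\sigma$-algebra is generated by the characters $\{e_{\lambda'} : \lambda' \in \Lambda'\}$, so the previous step shows that $\pi' \circ \theta : X \to Z'$ is measurable with respect to $\pi^{-1}(\mathrm{Borel}(Z))$ modulo $\mu$-null sets. As all spaces involved are standard Borel, the Doob--Dynkin lemma produces a Borel map $\bar{\theta} : Z \to Z'$ with $\pi' \circ \theta = \bar{\theta} \circ \pi$ $\mu$-almost everywhere. Pushing $\mu$ forward, $\bar{\theta}_{\ast}(\pi_{\ast}\mu) = (\pi' \circ \theta)_{\ast}\mu = \pi'_{\ast}\mu'$, i.e. $\bar{\theta}$ pushes Haar measure on $Z$ to Haar measure on $Z'$, so $\bar{\theta}$ is measure-preserving; equivariance $S'_g \circ \bar{\theta} = \bar{\theta} \circ S_g$ (a.e.) then follows from the equivariance of $\pi$, $\pi'$, $\theta$ together with the essential uniqueness of this factorization. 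Thus $\bar{\theta} : \mathbf{Z} \to \mathbf{Z}'$ is a homomorphism of $G$-systems compatible with $\theta$, which is exactly the assertion that $\theta$ is a homomorphism of pairs $(\X, \mathbf{Z}) \to (\X', \mathbf{Z}')$.

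I expect the only genuinely delicate point to be the passage from the $L^2$-level containment $\theta^{\ast} L^2(\mu')_c \subseteq L^2(\mu)_c$ to an honest Borel factor map $\bar{\theta}$ between the compact group models --- that is, the careful handling of null sets and the invocation of the factorization lemma for standard Borel spaces. The conceptual content, that pulling back a function with precompact orbit along a measure-preserving equivariant map again yields a function with precompact orbit, is immediate from $\theta^{\ast}$ being an intertwining isometry.
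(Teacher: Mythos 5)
Your proof is correct, but it takes a genuinely different route from the paper's. You establish the ``easy'' inclusion: since $\theta^{*}$ is an intertwining isometry, compact functions (equivalently, eigenfunctions) of $\X'$ pull back to compact functions of $\X$, so $\theta^{*}(\pi')^{*}L^2(\mathbf{Z}') \subseteq \pi^{*}L^2(\mathbf{Z})$, and you then do the measure-theoretic bookkeeping (Doob--Dynkin factorization, pushforward of Haar to Haar, a.e.\ equivariance) to manufacture the induced map $\bar\theta : \mathbf{Z} \to \mathbf{Z}'$; this delivers exactly the literal content of the footnote, and the one delicate point you flag (passing from the $L^2$-containment to an honest Borel map modulo null sets) is standard since $\Lambda'$ is countable and the characters separate points of the compact metrizable group $Z'$. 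The paper instead works entirely inside $L^2(X)$, viewing $X'$, $Z$, $Z'$ as sub-$\sigma$-algebras, and proves the complementary, less formal inclusion: for $f \in L^2(X')$ one has $\E{f}{Z} \in L^2(Z')$, obtained by extracting, for each $\lambda$ with $\innprod{f}{e_\lambda} \ne 0$, the eigenfunction $h = \UClim_{g}\,\overline{\lambda(g)}T_g f \in L^2(X')$ and noting it lies in $L^2(Z')$. That version encodes more than bare existence of the induced map: it says the Kronecker factor of the factor is the ``restriction'' of the Kronecker factor (equivalently $\E{f}{Z} = \E{f}{Z'}$ on $L^2(X')$), which is the compatibility actually exploited when adapting the normality constructions of \cite{fw}. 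If you ever need that stronger form, your argument must be supplemented, e.g.\ by the paper's eigenfunction extraction, or by the Jacobs--de Leeuw--Glicksberg splitting of $f \in L^2(X')$ into $\E{f}{Z'}$ plus a function that is weakly mixing already in $\X$ and hence killed by $\E{\cdot}{Z}$. Also note your preliminary observation that $\X'$ is ergodic (so that $\mathbf{Z}'$ has the description of Section~\ref{sec: factors}) uses that ``homomorphism'' means a factor map with $\theta_{*}\mu = \mu'$, which is indeed the paper's convention.
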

\begin{proof}
	Since all of the systems involved here are homomorphic images of $\X$,
	we deal with the space $L^2(X)$ and view the other systems as sub-$\sigma$-algebras of $\B$
	(or equivalently, as $L^2$-closures of sub-algebras of $L^{\infty}(X) \subseteq L^2(X)$).
	Let $f \in L^2(X')$.
	We want to show $\E{f}{Z} \in L^2(Z')$.
	
	We know $Z$ is a compact abelian group and $L^2(Z)$ is spanned by characters on $Z$.
	Let $\Lambda = \hat{Z}$.
	We may think of $\Lambda$ as a subgroup of $\hat{G}$.
	Hence, $\E{f}{Z} = \sum_{\lambda \in \Lambda}{\innprod{f}{e_{\lambda}} e_{\lambda}}$.
	Suppose $\innprod{f}{e_{\lambda}} \ne 0$.
	Then
	\begin{align*}
		\innprod{\overline{\lambda(g)} T_gf}{e_{\lambda}}
		= \innprod{T_gf}{T_ge_{\lambda}}
		= \innprod{f}{e_{\lambda}} \ne 0.
	\end{align*}
	Thus, $h := \UClim_{g \in G}{\overline{\lambda}(g) T_gf} \ne 0$.
	But $h$ is an eigenfunction in $L^2(X')$ with eigenvalue $\lambda$.
	Therefore, $h \in L^2(Z')$.
	It follows that $\E{f}{Z} \in L^2(Z')$ as desired.
\end{proof}

The lemmas in \cite[Section 9]{fw} are again of a general nature and allow for further reduction
to abelian group extensions (assuming $\X$ is normal).

\begin{thm} \label{thm: abelian ext}
	Let $\X$ be a normal ergodic system.
	There is a factor $\Y$ of $\X$ that is an abelian group extension of the Kronecker factor,
	$Y = Z \times_{\rho} H$, such that $\Y$ is characteristic for the averages \eqref{eq: triple avg}.
\end{thm}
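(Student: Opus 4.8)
The plan is to start from Theorem~\ref{thm: compact ext} and then carry out the reduction of \cite[Section 9]{fw}. Applying Theorem~\ref{thm: compact ext} with $k = 2$, the average \eqref{eq: triple avg} is controlled by the factor $\mathbf{Z}_3$, which is a compact extension of the Kronecker factor $\mathbf{Z}$; in particular $\mathbf{Z}_3$ is a factor of the maximal compact extension $\hat{\mathbf{Z}}$ of $\mathbf{Z}$ in $\X$, so if any $f_i$ is orthogonal to $L^2(\hat{\mathbf{Z}})$ then \eqref{eq: triple avg} vanishes. Since $\X$ is normal, we may write $\hat{\mathbf{Z}} = Z \times_\rho K$ for a compact group $K$ and a cocycle $(\rho_g)_{g \in G}$ with values in $K$. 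By the telescoping argument used in the proof of Theorem~\ref{thm: Kronecker}, to show that a factor $\Y$ of $\hat{\mathbf{Z}}$ is characteristic for \eqref{eq: triple avg} it is enough to prove that
\begin{align*}
	\UClim_{g \in G}{T_{\varphi(g)}f_1 \cdot T_{\psi(g)}f_2 \cdot T_{\theta(g)}f_3} = 0
\end{align*}
whenever one of $f_1, f_2, f_3 \in L^{\infty}(\hat{\mathbf{Z}})$ is orthogonal to $L^2(\Y)$; by relabeling the roles of $\varphi, \psi, \theta$ it suffices to treat the case that $f_2 \perp L^2(\Y)$.

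The candidate will be $\Y := Z \times_{\bar\rho} K/\overline{[K,K]}$, where $\bar\rho_g$ is the composition of $\rho_g$ with the quotient homomorphism $K \to K/\overline{[K,K]}$. Since $\overline{[K,K]}$ is a closed normal subgroup of $K$, this is a genuine group extension of $\mathbf{Z}$ with abelian fiber $H := K/\overline{[K,K]}$, and it is a factor of $\hat{\mathbf{Z}} = Z \times_\rho K$. Decompose $L^2(\hat{\mathbf{Z}})$ by the Peter--Weyl theorem into $K$-isotypic components $L^2(\hat{\mathbf{Z}}) = \bigoplus_{\pi \in \hat{K}}{M_\pi}$, where $\hat{K}$ runs over equivalence classes of irreducible unitary representations of $K$ and the $K$-action is right translation in the fiber; this action commutes with $(T_g)_{g \in G}$, so each $M_\pi$ is $T$-invariant. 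Then $L^2(\Y) = \bigoplus_{\dim \pi = 1}{M_\pi}$, since a character of $K$ is trivial on $\overline{[K,K]}$ and hence factors through $H$. Therefore, by linearity and density, the theorem reduces to the assertion that \eqref{eq: triple avg} vanishes whenever $f_2 \in M_\pi$ for some irreducible representation $\pi$ of dimension at least $2$.

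This last vanishing statement is the heart of the matter, and it is exactly what the lemmas of \cite[Section 9]{fw} provide. The argument will be to apply the van der Corput trick (Lemma~\ref{lem: vdC}), rewrite the correlation sequence $\gamma_h = \UClim_{g \in G}{\innprod{u_{g+h}}{u_g}}$ (with $u_g = T_{\varphi(g)}f_1 \cdot T_{\psi(g)}f_2 \cdot T_{\theta(g)}f_3$) as an integral against the diagonal joining furnished by Lemma~\ref{lem: diag meas}, invoking Theorem~\ref{thm: joining compact ext} and the conditional-product structure exactly as in the proof of Theorem~\ref{thm: compact ext}, and then track how $\pi$ acts on the products of cocycle values $\rho_{\varphi(h)}, \rho_{\psi(h)}, \rho_{\theta(h)}$; the orthogonality relations for the matrix coefficients of $K$ then force the contribution to disappear unless $\pi$ is one-dimensional. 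As in Sections~\ref{sec: Kronecker} and \ref{sec: compact ext}, one uses admissibility of $\{\varphi, \psi, \theta\}$ and of the difference family $\{\psi - \varphi, \theta - \varphi\}$ so that the relevant subactions on $\mathbf{Z}$ and on $\hat{\mathbf{Z}}$ have only finitely many ergodic components and Theorem~\ref{thm: ergodic theorem} applies. The main obstacle is precisely this step: \cite{fw} establish it for $G = \Z$, and one must verify that their manipulations use nothing about $\Z$ beyond the structural facts just recalled, so that they transplant to an arbitrary countable abelian group $G$. Granting this, $\Y = Z \times_\rho H$ (renaming $\bar\rho$ as $\rho$) is the desired abelian group extension characteristic for \eqref{eq: triple avg}.
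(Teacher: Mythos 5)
Your proposal is correct and follows essentially the same route as the paper: after using Theorem \ref{thm: compact ext} and normality to place the relevant characteristic factor inside a group extension $Z \times_\rho K$ of the Kronecker factor, both you and the paper defer the reduction of the fiber to an abelian group to the lemmas of \cite[Section 9]{fw}, the only remaining task being to check that those arguments use nothing specific to $G = \Z$. Your explicit candidate $Z \times_{\bar\rho} K/\overline{[K,K]}$ and the isotypic-component framing are a reasonable unpacking of that reduction, consistent with the paper's later description of the factor through Conze--Lesigne functions (which transform under the fiber action by characters of $K$), so your treatment is no less complete than the paper's own, which consists of the citation.
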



\section{Conze--Lesigne factors} \label{sec: CL}

Now we restrict to the case $\theta = \varphi + \psi$ in \eqref{eq: triple avg}.
Instead of handling the cocycle $\rho_g : Z \to H$ appearing in Theorem \ref{thm: abelian ext} directly,
we will deal with the family of cocycles $\sigma_g = \chi \circ \rho_g : Z \to S^1$, where $\chi \in \hat{H}$ is a character.

The next step in \cite{fw} is to use a clever argument involving the notion of Mackey groups to reduce to an identity
for the resulting cocycles $\sigma_g : Z \to S^1$:
\begin{align} \label{eq: cocycle}
	\sigma_{\varphi(g)}(z_1) p_g(z_2) q_g(z_3)
	= \frac{F\left( z_1 + \hat{\varphi(g)}, z_2 + \hat{\psi(g)}, z_3 + \hat{(\varphi+\psi)(g)} \right)}
	{F(z_1, z_2, z_3)},
\end{align}
where $p_g, q_g : Z \to S^1$, $z_1, z_2, z_3 \in W_{\varphi, \psi, \varphi + \psi}$,
and $F : W_{\varphi, \psi, \varphi + \psi} \to S^1$.
This argument is contained in \cite[Section 10]{fw} for $\Z$-systems.
The required technical results are all of a general group-theoretic nature,
so the same argument works for our setting.
We proceed from this identity following the approach in \cite{fw}.
Several modifications are required to push the results from $\Z$-systems to general $G$-systems,
but the skeleton of the argument is the same.

For $\varphi, \psi : G \to G$, recall
$Z_{\varphi,\psi} := \overline{\left\{ \left( \hat{\varphi(g)}, \hat{\psi(g)} \right) : g \in G \right\}}$.
Let $\Delta = Z_{id,id} \cap Z_{\varphi,\psi}$ and $W_{\varphi,\psi} = Z_{id,id} + Z_{\varphi,\psi}$.
By the second isomorphism theorem, $W_{\varphi,\psi} / Z_{\varphi,\psi} \cong Z_{id,id} / \Delta$.

Let $\lambda_0 : Z_{id,id} / \Delta \to Z_{id,id}$ be a measurable cross-section of the quotient homomorphism.
Using $W_{\varphi,\psi} / Z_{\varphi,\psi} \cong Z_{id,id} / \Delta$ and $Z_{id,id} \cong Z$,
lift this to a measurable map $\lambda : W_{\varphi,\psi} \to Z$.
Since this comes from the cross-section, we have
$\left( z_1 - \lambda(z_1, z_2), z_2 - \lambda(z_1, z_2) \right) \in Z_{\varphi,\psi}$
for every $(z_1, z_2) \in W_{\varphi,\psi}$,
and $\lambda(z_1 + u, z_2+ v) = \lambda(z_1, z_2)$ for $(u,v) \in Z_{\varphi,\psi}$.

For $(z_1, z_2) \in W_{\varphi,\psi}$, define $u(z_1, z_2) := z_1 + z_2 - \lambda(z_1, z_2)$.
\begin{lem} \label{lem: map from r,s to r,s,t}
	For $(z_1, z_2) \in W_{\varphi,\psi}$, $(z_1, z_2, u(z_1, z_2)) \in W_{\varphi,\psi,\theta}$.
\end{lem}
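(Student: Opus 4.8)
The plan is to reduce the asserted membership to two structural identities for the compact groups at hand and then combine them in a single line. Throughout, write $Z_{id,id,id} := \{(z,z,z) : z \in Z\} \subseteq Z^3$, and recall that, since $\theta = \varphi+\psi$, we have $\hat{\theta(g)} = \hat{\varphi(g)} + \hat{\psi(g)}$ for all $g \in G$, because $g \mapsto \hat{g}$ is a homomorphism $G \to Z$.

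The first identity I would establish is $W_{\varphi,\psi,\theta} = Z_{id,id,id} + Z_{\varphi,\psi,\theta}$. Indeed, by the defining formula in Remark \ref{random remark on lem: diag meas}, $W_{\varphi,\psi,\theta}$ is the closure of $Z_{id,id,id} + \{(\hat{\varphi(g)},\hat{\psi(g)},\hat{\theta(g)}) : g \in G\}$; since $Z_{id,id,id}$ is compact, one has $\overline{Z_{id,id,id} + B} = Z_{id,id,id} + \overline{B}$, and here $\overline{B} = Z_{\varphi,\psi,\theta}$. (The identity $W_{\varphi,\psi} = Z_{id,id} + Z_{\varphi,\psi}$ used in the setup is justified the same way.) The second identity I would establish is $Z_{\varphi,\psi,\theta} = \{(a,b,a+b) : (a,b) \in Z_{\varphi,\psi}\}$: the continuous homomorphism $\Phi : Z^2 \to Z^3$ given by $\Phi(a,b) := (a,b,a+b)$ sends each element $(\hat{\varphi(g)},\hat{\psi(g)})$ of the dense subset $\{(\hat{\varphi(g)},\hat{\psi(g)}) : g \in G\} \subseteq Z_{\varphi,\psi}$ to $(\hat{\varphi(g)},\hat{\psi(g)},\hat{\theta(g)})$, so $\Phi(Z_{\varphi,\psi})$ is a compact (hence closed) subgroup of $Z^3$ containing a dense subset of $Z_{\varphi,\psi,\theta}$, whence $\Phi(Z_{\varphi,\psi}) = Z_{\varphi,\psi,\theta}$.

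Granting these, the proof concludes quickly. Put $w := \lambda(z_1,z_2)$. The construction of $\lambda$ from the cross-section gives $(z_1 - w,\, z_2 - w) \in Z_{\varphi,\psi}$, so by the second identity $(z_1 - w,\, z_2 - w,\, (z_1 - w) + (z_2 - w)) \in Z_{\varphi,\psi,\theta}$. Adding $(w,w,w) \in Z_{id,id,id}$ and invoking the first identity yields $(z_1,\, z_2,\, z_1 + z_2 - w) \in Z_{id,id,id} + Z_{\varphi,\psi,\theta} = W_{\varphi,\psi,\theta}$; since $z_1 + z_2 - w = u(z_1,z_2)$ by definition, this is exactly the assertion. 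I do not anticipate a real obstacle: the only point requiring mild care is interchanging closures with the group operations in the two structural identities, and this is harmless precisely because $Z_{id,id,id}$ and $Z_{\varphi,\psi}$ are compact.
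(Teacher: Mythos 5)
Your proof is correct and is essentially the paper's argument in structural form: the paper approximates $(z_1-\lambda(z_1,z_2),\,z_2-\lambda(z_1,z_2))$ by points $(\hat{\varphi(g_k)},\hat{\psi(g_k)})$, uses $\hat{\theta(g_k)}=\hat{\varphi(g_k)}+\hat{\psi(g_k)}$, and adds the diagonal point $(\lambda,\lambda,\lambda)$, which is exactly what your identities $Z_{\varphi,\psi,\theta}=\Phi(Z_{\varphi,\psi})$ and $W_{\varphi,\psi,\theta}=Z_{id,id,id}+Z_{\varphi,\psi,\theta}$ package abstractly. The only nitpick is that the inclusion you actually invoke, $\Phi(Z_{\varphi,\psi})\subseteq Z_{\varphi,\psi,\theta}$, is the one your justification of the second identity leaves implicit (it argues the reverse containment); it is immediate, though, since a continuous map sends the closure of a set into the closure of its image.
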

\begin{proof}
	Since $\left( z_1 - \lambda(z_1, z_2), z_2 - \lambda(z_1, z_2) \right) \in Z_{\varphi,\psi}$,
	we can find a sequence $(g_k)_{k \in \N}$ in $G$ so that
	\begin{align*}
		z_1 - \lambda(z_1, z_2) = \lim_{k \to \infty}{\hat{\varphi(g_k)}}; \\
		z_2 - \lambda(z_1, z_2) = \lim_{k \to \infty}{\hat{\psi(g_k)}}.
	\end{align*}
	Thus, since $\varphi + \psi = \theta$, we have
	\begin{align*}
		u(z_1, z_2) - \lambda(z_1, z_2)
		& = \left( z_1 - \lambda(z_1, z_2) \right) + \left( z_2 - \lambda(z_1, z_2) \right) \\
		& = \lim_{k \to \infty}{\hat{\varphi(g_k)}} + \lim_{k \to \infty}{\hat{\psi(g_k)}} \\
		& = \lim_{k \to \infty}{\left( \hat{\varphi(g_k)} + \hat{\psi(g_k)} \right)} \\
		& = \lim_{k \to \infty}{\hat{\theta(g_k)}}.
	\end{align*}
	Therefore,
	\begin{align*}
		(z_1, z_2, u(z_1, z_2))
		= (\lambda(z_1, z_2), \lambda(z_1, z_2), \lambda(z_1, z_2))
		+ \lim_{k \to \infty}{ \left( \hat{\varphi(g_k)}, \hat{\psi(g_k)}, \hat{\theta(g_k)} \right)}
		\in W_{\varphi, \psi, \theta}.
	\end{align*}
\end{proof}

Now let $\Delta' := \{\delta \in Z : (\delta, \delta) \in \Delta\}$.
For $\delta \in \Delta'$, the map $\lambda_{\delta}(z_1, z_2) := \lambda(z_1, z_2) + \delta$
is another cross-section, so Lemma \ref{lem: map from r,s to r,s,t} also holds for
$u_{\delta}(z_1, z_2) := z_1 + z_2 - \lambda_{\delta}(z_1, z_2)$.

Now define a map $\tilde{u} : W_{\varphi,\psi} \times \Delta' \to W_{\varphi,\psi,\theta}$ by
$\tilde{u}(z_1, z_2, \delta) := \left( z_1, z_2, u_{\delta}(z_1, z_2) \right)$.
\begin{lem}
	The map $\tilde{u} : W_{\varphi,\psi} \times \Delta' \to W_{\varphi,\psi,\theta}$
	is onto and preserves the Haar measure.
\end{lem}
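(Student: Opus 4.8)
The plan is to recognize $\tilde{u}$ as the standard ``trivialization'' map of a short exact sequence of compact abelian groups, induced by a measurable section, and then invoke a general fact about Haar measure on such extensions.

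First I would pin down the group structure. Since $\theta = \varphi + \psi$ and $g \mapsto \hat{g}$ is a homomorphism, $\hat{\theta(g)} = \hat{\varphi(g)} + \hat{\psi(g)}$, so $Z_{\varphi,\psi,\theta} = \{(a,b,a+b) : (a,b) \in Z_{\varphi,\psi}\}$: the right-hand side is the image of the compact group $Z_{\varphi,\psi}$ under a continuous homomorphism, hence a closed subgroup, and it contains the dense generating set of $Z_{\varphi,\psi,\theta}$ and is contained in it. The same compactness argument gives $W_{\varphi,\psi,\theta} = Z_{id,id,id} + Z_{\varphi,\psi,\theta}$ as a genuine (closed) sum of subgroups. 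Now consider the continuous homomorphism $\pi_{12} : W_{\varphi,\psi,\theta} \to Z^2$ given by projection onto the first two coordinates. Its image is $\{(z,z) + (a,b) : z \in Z,\ (a,b) \in Z_{\varphi,\psi}\} = W_{\varphi,\psi}$. A short computation with the decomposition $W_{\varphi,\psi,\theta} = Z_{id,id,id} + Z_{\varphi,\psi,\theta}$ identifies the kernel: if $(0,0,c) = (z,z,z) + (a,b,a+b)$, then $a = b = -z$ and $c = z + a + b = -z$, so $(c,c) = (a,b) \in Z_{\varphi,\psi}$, i.e. $(c,c) \in Z_{id,id} \cap Z_{\varphi,\psi} = \Delta$; conversely every such $c$ works. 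Hence $\ker \pi_{12} = \{(0,0,\delta) : \delta \in \Delta'\}$, which is topologically isomorphic to $\Delta'$ via $\iota(\delta) := (0,0,\delta)$, and we obtain a short exact sequence $0 \to \Delta' \xrightarrow{\iota} W_{\varphi,\psi,\theta} \xrightarrow{\pi_{12}} W_{\varphi,\psi} \to 0$.

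Next I would observe that $s(z_1,z_2) := (z_1,z_2,u(z_1,z_2))$ is a measurable section of $\pi_{12}$: it lands in $W_{\varphi,\psi,\theta}$ by Lemma~\ref{lem: map from r,s to r,s,t}, it is measurable since $\lambda$ is, and clearly $\pi_{12} \circ s = \mathrm{id}$. Since $u_\delta = u - \delta$ by definition of $\lambda_\delta$, we get $\tilde{u}(z_1,z_2,\delta) = (z_1,z_2,u(z_1,z_2)-\delta) = s(z_1,z_2) - \iota(\delta)$. So $\tilde{u}$ is exactly the map $W_{\varphi,\psi} \times \Delta' \to W_{\varphi,\psi,\theta}$, $(b,\delta) \mapsto s(b) - \iota(\delta)$ attached to the above extension and the section $s$.

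The result then follows from a general fact: if $0 \to N \to A \xrightarrow{q} B \to 0$ is exact with $N$ closed and $\sigma : B \to A$ a measurable section of $q$, then $(b,n) \mapsto \sigma(b) + n$ is a measurable bijection $B \times N \to A$ carrying $m_B \times m_N$ to $m_A$ (replacing $n$ by $-n$ changes nothing, as negation preserves $m_N$). Surjectivity is immediate: for $a \in A$ put $b = q(a)$, so $\sigma(b) - a \in N$. For the measure claim, let $m$ be the pushforward; it is a Borel probability measure on $A$, so by uniqueness of Haar measure it suffices to show $m$ is translation-invariant. Given $a_0 \in A$, write $\sigma(q(a_0)+b) = a_0 + \sigma(b) - n(b)$, where $n(b) := a_0 + \sigma(b) - \sigma(q(a_0)+b) \in N$ (it lies in $N$ since $q$ kills it); then $a_0 + \bigl(\sigma(b) + n\bigr) = \sigma(q(a_0)+b) + \bigl(n(b) + n\bigr)$, and integrating against $m_B \times m_N$ one absorbs $n(b)$ into the $n$-integral by invariance of $m_N$ and the substitution $b \mapsto b - q(a_0)$ into the $B$-integral by invariance of $m_B$, yielding $\int f(a_0 + \cdot)\,dm = \int f\,dm$. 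Applying this with $A = W_{\varphi,\psi,\theta}$, $B = W_{\varphi,\psi}$, $N \cong \Delta'$, $q = \pi_{12}$, $\sigma = s$ finishes the proof.

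I expect the main obstacle to be the kernel identification in the first paragraph: getting $\ker \pi_{12}$ to be \emph{exactly} $\Delta'$ is where $\theta = \varphi + \psi$ and the definitions of $\Delta$ and $\Delta'$ all enter, and it also requires the small but necessary care that the ``algebraic sum'' descriptions of $W_{\varphi,\psi,\theta}$ and $Z_{\varphi,\psi,\theta}$ are already closed subgroups. The measure-theoretic step is routine once the exact sequence and the section are in hand.
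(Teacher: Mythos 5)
Your proof is correct and rests on the same structural fact as the paper's: $\tilde{u}$ is the trivialization of the quotient homomorphism $W_{\varphi,\psi,\theta} \to W_{\varphi,\psi}$ (whose kernel you correctly identify with $\Delta'$) induced by the measurable cross-sections $(z_1,z_2) \mapsto (z_1,z_2,u_{\delta}(z_1,z_2))$, which is precisely what the paper invokes for measure preservation. The only cosmetic difference is that the paper proves surjectivity by an explicit computation (writing $(z_1,z_2,z_3) = (z,z,z) + \lim_k (\hat{\varphi(g_k)}, \hat{\psi(g_k)}, \hat{\theta(g_k)})$ and taking $\delta = z - \lambda(z,z)$), whereas you deduce it from the kernel identification, and you additionally write out the Haar-measure-via-section argument that the paper states in one line.
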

\begin{proof}
	Suppose $(z_1, z_2, z_3) \in W_{\varphi,\psi,\theta}$.
	Then there is a sequence $(g_k)_{k \in \N}$ in $G$ and an element $z \in Z$ such that
	\begin{align*}
		z_1 = z + \lim_{k \to \infty}{\hat{\varphi(g_k)}}; \\
		z_2 = z + \lim_{k \to \infty}{\hat{\psi(g_k)}}; \\
		z_3 = z + \lim_{k \to \infty}{\hat{\theta(g_k)}}.
	\end{align*}
	Let $\delta := z - \lambda(z,z) \in \Delta'$. 
	We have:
	\begin{align*}
		u_{\delta}(z_1, z_2) & = z_1 + z_2 - \lambda(z_1, z_2) - \delta \\
		& = 2z + \lim_{k \to \infty}{\left( \hat{\varphi(g_k)} + \hat{\psi(g_k)} \right)} - \lambda(z, z) - \delta \\
		& = 2z + (z_3 - z) - \lambda(z, z) - \left( z - \lambda(z, z) \right) \\
		& = z_3.
	\end{align*}
	Hence $\tilde{u}(z_1, z_2, \delta) = (z_1, z_2, z_3)$, so $\tilde{u}$ is onto.
	
	Moreover, for each $\delta \in \Delta'$, the map
	$(z_1, z_2) \mapsto \left( z_1, z_2, u_{\delta}(z_1, z_2) \right)$ is a cross-section
	of the homomorphism $W_{\varphi,\psi,\theta} \to W_{\varphi,\psi}$,
	so $\tilde{u}$ preserves the Haar measure.
\end{proof}

\begin{lem} \label{lem: u compatibility}
	For $(z_1, z_2) \in W_{\varphi,\psi}$, $g \in G$, and $\delta \in \Delta'$:
	\begin{align*}
		u_{\delta}(z_1 + \hat{\varphi(g)}, z_2 + \hat{\psi(g)}) = u_{\delta}(z_1, z_2) + \hat{\theta(g)}.
	\end{align*}
\end{lem}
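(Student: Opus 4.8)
The plan is to simply unwind the definition of $u_\delta$ and use the two structural properties of $\lambda$ recorded just before Lemma~\ref{lem: map from r,s to r,s,t}. Recall that $u_\delta(z_1,z_2) = z_1 + z_2 - \lambda_\delta(z_1,z_2) = z_1 + z_2 - \lambda(z_1,z_2) - \delta$, that $\lambda : W_{\varphi,\psi} \to Z$ satisfies $\lambda(z_1 + u, z_2 + v) = \lambda(z_1,z_2)$ whenever $(u,v) \in Z_{\varphi,\psi}$, and that $\left( \hat{\varphi(g)}, \hat{\psi(g)} \right) \in Z_{\varphi,\psi}$ by the very definition of $Z_{\varphi,\psi}$.

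First I would apply the defining formula for $u_\delta$ at the point $\left( z_1 + \hat{\varphi(g)}, z_2 + \hat{\psi(g)} \right)$, which lies in $W_{\varphi,\psi}$ since $Z_{\varphi,\psi} \subseteq W_{\varphi,\psi}$. This gives
\begin{align*}
	u_\delta\left( z_1 + \hat{\varphi(g)}, z_2 + \hat{\psi(g)} \right)
	= z_1 + z_2 + \hat{\varphi(g)} + \hat{\psi(g)} - \lambda\left( z_1 + \hat{\varphi(g)}, z_2 + \hat{\psi(g)} \right) - \delta.
\end{align*}
Next I would invoke the $Z_{\varphi,\psi}$-invariance of $\lambda$ to replace $\lambda\left( z_1 + \hat{\varphi(g)}, z_2 + \hat{\psi(g)} \right)$ by $\lambda(z_1,z_2)$, so the right-hand side becomes $\left( z_1 + z_2 - \lambda(z_1,z_2) - \delta \right) + \hat{\varphi(g)} + \hat{\psi(g)} = u_\delta(z_1,z_2) + \hat{\varphi(g)} + \hat{\psi(g)}$. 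Finally, since $g \mapsto \hat{g}$ is a homomorphism and $\theta = \varphi + \psi$, we have $\hat{\varphi(g)} + \hat{\psi(g)} = \widehat{\varphi(g) + \psi(g)} = \widehat{\theta(g)}$, which yields exactly $u_\delta(z_1,z_2) + \hat{\theta(g)}$ as claimed.

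There is essentially no obstacle here: the statement is a bookkeeping identity whose only substantive inputs are the construction of $\lambda$ from the measurable cross-section (giving the translation-invariance along $Z_{\varphi,\psi}$) and the elementary fact that $\widehat{\cdot}$ turns the relation $\theta = \varphi + \psi$ into the corresponding relation among the evaluation elements. If anything needs a word of care, it is only the remark that the arguments $\left( z_1 + \hat{\varphi(g)}, z_2 + \hat{\psi(g)} \right)$ and $(z_1,z_2)$ indeed belong to the domain $W_{\varphi,\psi}$ of $\lambda$, which is immediate from $\hat{\varphi(g)} + \hat{\psi(g)} \in Z_{\varphi,\psi} + Z_{\varphi,\psi} = Z_{\varphi,\psi} \subseteq W_{\varphi,\psi}$ and closure of $W_{\varphi,\psi}$ under the relevant translations.
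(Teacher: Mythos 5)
Your proof is correct and matches the paper's argument: both simply expand the definition of $u_\delta$, use the $Z_{\varphi,\psi}$-invariance of $\lambda_\delta$ (noting $(\hat{\varphi(g)}, \hat{\psi(g)}) \in Z_{\varphi,\psi}$), and combine $\hat{\varphi(g)} + \hat{\psi(g)} = \hat{\theta(g)}$ from $\theta = \varphi + \psi$. Your extra remark about the arguments lying in the domain $W_{\varphi,\psi}$ is a harmless (and correct) bit of bookkeeping the paper leaves implicit.
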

\begin{proof}
	\begin{align*}
		u_{\delta}(z_1 + \hat{\varphi(g)}, z_2 + \hat{\psi(g)})
		& = z_1 + \hat{\varphi(g)} + z_2 + \hat{\psi(g)}
		- \lambda_{\delta}(z_1 + \hat{\varphi(g)}, z_2 + \hat{\psi(g)}) \\
		& = z_1 + z_2 + \hat{\theta(g)} - \lambda_{\delta}(z_1, z_2) \\
		& = u_\delta(z_1, z_2) + \hat{\theta(g)}.
	\end{align*}
\end{proof}

Recall equation \eqref{eq: cocycle}:
\begin{align*}
	\sigma_{\varphi(g)}(z_1) p_g(z_2) q_g(z_3)
	= \frac{F\left( z_1 + \hat{\varphi(g)}, z_2 + \hat{\psi(g)}, z_3 + \hat{(\varphi+\psi)(g)} \right)}
	{F(z_1, z_2, z_3)}.
\end{align*}
We now wish to eliminate the variable $z_3$.

First, for $(u,v,w) \in Z_{\varphi,\psi,\theta}$, we replace $(z_1, z_2, z_3)$ by
\begin{align*}
	(z_1 + (u-w), z_2 + (v-w), z_3) = (z_1 + u, z_2 + v, z_3 + w) - (w, w, w) \in W_{\varphi,\psi,\theta}.
\end{align*}
Substituting into \eqref{eq: cocycle}, we get
\begin{align*}
	\sigma_{\varphi(g)}(z_1 + (u-w)) p_g(z_2 + (v-w)) q_g(z_3)
	= \frac{F\left( z_1 + (u-w) + \hat{\varphi(g)}, z_2 + (v-w) + \hat{\psi(g)}, z_3 + \hat{\theta(g)} \right)}
	{F(z_1 + (u-w), z_2+(v-w), z_3)}.
\end{align*}
Dividing by the original expression gives
\begin{align} \label{eq: cocycle2}
	\frac{\sigma_{\varphi(g)}(z_1 + (u-w))}{\sigma_{\varphi(g)}(z_1)} \frac{p_g(z_2 + (v-w))}{p_g(z_2)}
	= \frac{F_{u,v,w}\left( z_1 + \hat{\varphi(g)}, z_2 + \hat{\psi(g)}, z_3 + \hat{\theta(g)} \right)}
	{F_{u,v,w}(z_1, z_2, z_3)},
\end{align}
where
\begin{align*}
	F_{u,v,w}(z_1, z_2, z_3) := \frac{F\left( z_1 + u-w, z_2 + v-w, z_3 \right)}{F(z_1, z_2, z_3)}.
\end{align*}

We now want to replace $z_3$ by $u_{\delta}(z_1, z_2)$.
The equation \eqref{eq: cocycle2} holds for almost every $(z_1, z_2, z_3) \in W_{\varphi, \psi, \theta}$
and $(u,v,w) \in Z_{\varphi,\psi,\theta}$.
By Lemma 5.2, we can decompose $W_{\varphi,\psi,\theta}$ (up to isomorphism)
as the Cartesian product $W_{\varphi,\psi} \times \Delta'$.
By Fubini's theorem, it follows that \eqref{eq: cocycle2} holds on almost every slice
$(z_1, z_2, z_3) \in \tilde{u}\left( W_{\varphi,\psi} \times \{\delta\} \right)$.
That is, for almost every $\delta \in \Delta'$, \eqref{eq: cocycle2} holds almost everywhere upon replacing
$z_3$ by $u_{\delta}(z_1, z_2)$.
Fix any such $\delta$, and let $H_{u,v,w}(z_1, z_2) := F_{u,v,w} \left( z_1, z_2, u_{\delta}(z_1, z_2) \right)$.
Applying Lemma \ref{lem: u compatibility}, we now have
\begin{align} \label{eq: cocycle 2-var}
	\frac{\sigma_{\varphi(g)}(z_1 + (u-w))}{\sigma_{\varphi(g)}(z_1)} \frac{p_g(z_2 + (v-w))}{p_g(z_2)}
	= \frac{H_{u,v,w}\left( z_1 + \hat{\varphi(g)}, z_2 + \hat{\psi(g)} \right)}{H_{u,v,w}(z_1, z_2)}.
\end{align}

We want to use this identity to get a similar identity for the cocycle $\sigma$ on its own.
To do so, we need the following lemma:
\begin{lem} \label{lem: separate variables}
	Let $\X = \left( X, \B, \mu, (T_g)_{g \in G} \right)$ and $\Y = \left( Y, \D, \nu, (S_g)_{g \in G} \right)$
	be ergodic systems.
	Suppose $(\alpha_g)_{g \in G}$ and $(\beta_g)_{g \in G}$ are cocycles for $\X$ and $\Y$ respectively,
	each taking values in $S^1$.
	Suppose $L : X \times Y \to \C$ is not 0 a.e. and satisfies for all $g \in G$
	\begin{align}
		\alpha_g(x) \beta_g(y) L(x, y) = L(T_g x, S_g y).
	\end{align}
	Then there are functions $M : X \to S^1$ and $N : Y \to S^1$ and characters $c', c'' \in \hat{G}$ so that
	\begin{align} \label{eq: separate variables}
		\alpha_g(x) = c'(g) \frac{M(T_g x)}{M(x)}, \quad
		\beta_g(y) = c''(g) \frac{N(S_g y)}{N(y)}.
	\end{align}
\end{lem}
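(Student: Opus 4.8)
The plan is to transport the functional equation off the product system $\X\times\Y$ (which need not be ergodic) onto the ergodic base $\Y$, by encoding $L$ as a Hilbert--Schmidt operator and extracting an eigenvector there --- this will make $\beta$ a quasi-coboundary --- and then to feed the resulting formula back into the equation to handle $\alpha$. A preliminary normalization: since $\alpha_g,\beta_g$ are $S^1$-valued, $|L(x,y)|$ is $T_g\times S_g$-invariant, so each level set $\{|L|\le R\}$ is invariant and $L\cdot\ind_{\{|L|\le R\}}$ solves the same equation; for $R$ large I may thus assume $L\in L^\infty(X\times Y)$ while $L$ is still not $0$ a.e. Also $y\mapsto\int_X|L(x,y)|^2\,d\mu(x)$ is $(S_g)$-invariant, hence a.e.\ equal to a constant $C>0$ by ergodicity of $\Y$; in particular $L\in L^2(X\times Y)$.

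Regard $L$ as the kernel of the Hilbert--Schmidt operator $\mathcal{L}\colon L^2(Y)\to L^2(X)$, $(\mathcal{L}\psi)(x):=\int_Y L(x,y)\psi(y)\,d\nu(y)$, so $\mathcal{L}\ne 0$ because $L$ is not $0$ a.e. Putting $Q(y,y'):=\innprod{L(\cdot,y)}{L(\cdot,y')}_{L^2(X)}$, a substitution in the functional equation --- using $|\alpha_g|\equiv 1$ to cancel the $X$-integration --- gives the transformation law $Q(S_gy,S_gy')=\beta_g(y)\,\overline{\beta_g(y')}\,Q(y,y')$. Hence $A:=\mathcal{L}^*\mathcal{L}$, a nonzero positive compact operator on $L^2(Y)$ (its kernel being $\overline{Q}$), commutes with the unitary operators $B_g\colon\psi\mapsto\beta_g\cdot(\psi\circ S_g)$ on $L^2(Y)$ --- these are the Koopman operators of the $S^1$-extension $\Y\times_{\beta}S^1$ restricted to the first isotypic component of the $S^1$-action, and they form a unitary representation of $G$ by the cocycle identity for $\beta$.

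Pick an eigenvalue $t>0$ of $A$, which exists since $A$ is nonzero, positive, and compact; its eigenspace is finite-dimensional and $(B_g)$-invariant, so the abelian group $G$ acts on it by commuting unitaries and it contains a nonzero common eigenvector $\phi$, say $B_g\phi=c''(g)\,\phi$ with $c''\in\hat{G}$. This reads $\beta_g(y)\,\phi(S_gy)=c''(g)\,\phi(y)$; then $|\phi|$ is $(S_g)$-invariant, hence a.e.\ a constant $c_0>0$ by ergodicity of $\Y$, so $N:=c_0/\phi$ is $S^1$-valued and $\beta_g(y)=c''(g)\,N(S_gy)/N(y)$ --- the assertion for $\beta$. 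Feeding this back, set $L'(x,y):=L(x,y)\,\overline{N(y)}$; the functional equation and $|N|\equiv1$ give $L'(T_gx,S_gy)=\alpha_g(x)\,c''(g)\,L'(x,y)$, so $m(x):=\int_Y L'(x,y)\,d\nu(y)$ satisfies $m(T_gx)=\alpha_g(x)\,c''(g)\,m(x)$. The point is that $m$ is not $0$ a.e.: since $|\phi|\equiv c_0$ we have $\overline{N}=\phi/c_0$, hence $m=\frac{1}{c_0}\,\mathcal{L}\phi$ and $\|\mathcal{L}\phi\|^2=\innprod{A\phi}{\phi}=t\,\|\phi\|^2>0$. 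Then $|m|$ is $(T_g)$-invariant, hence a positive constant by ergodicity of $\X$, and $M:=m/|m|$ is $S^1$-valued with $\alpha_g(x)=\overline{c''(g)}\,M(T_gx)/M(x)$; taking $c':=\overline{c''}$ completes the proof.

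The load-bearing step is the passage through $A=\mathcal{L}^*\mathcal{L}$: one cannot argue with $L$ on $\X\times\Y$ directly because that system is not ergodic, and the whole purpose of the construction is to push the information onto $\Y$, where ergodicity is available to trivialize the moduli of $\int_X|L(\cdot,y)|^2$ and of $\phi$. Verifying that $A$ commutes with the correctly twisted Koopman operators $B_g$, and that the eigenvector drawn from the finite-dimensional eigenspace simultaneously diagonalizes all the $B_g$ (so that $c''$ is genuinely a character), is the real work, requiring care with conjugates and with the cocycle convention; everything afterward --- in particular the transfer from $\beta$ back to $\alpha$ --- is formal.
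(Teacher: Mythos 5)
Your proof is correct, but it takes a genuinely different route from the paper. The paper passes to the circle extensions $\tilde{T}_g(x,\zeta)=(T_gx,\alpha_g(x)^{-1}\zeta)$ and $\tilde{S}_g(y,\eta)=(S_gy,\beta_g(y)^{-1}\eta)$, notes that $\tilde{L}(x,\zeta;y,\eta)=\zeta\eta L(x,y)$ is invariant under the product action, expands it as a sum of products of eigenfunctions of the two extensions (i.e.\ invokes the Kronecker-factor description of invariant functions on the product), and then matches Fourier frequencies in $\zeta$ and $\eta$ to isolate the weight-one coefficients $h_{i1},k_{i1}$, whose moduli are constant by ergodicity of the bases; these give $M$ and $N$ simultaneously. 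You never leave the base systems: you encode $L$ as a Hilbert--Schmidt operator $\mathcal{L}:L^2(Y)\to L^2(X)$, check (correctly, via the transformation law for $Q$) that the compact positive operator $A=\mathcal{L}^*\mathcal{L}$ commutes with the $\beta$-twisted Koopman representation $B_g$, and pull a common eigenvector $\phi$ out of a finite-dimensional eigenspace; $\phi$ plays the role of the weight-one component on the $Y$ side, $\mathcal{L}\phi$ plays that role on the $X$ side, and the identity $\|\mathcal{L}\phi\|^2=t\|\phi\|^2>0$ supplies the nonvanishing needed to transfer from $\beta$ to $\alpha$ by integrating $L\overline{N}$ over $Y$. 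What each approach buys: yours is self-contained (spectral theorem for compact operators plus simultaneous diagonalization of commuting unitaries on a finite-dimensional space), it sidesteps the structural assertion that invariant functions of the --- possibly non-ergodic --- product of the two extensions are spanned by products of eigenfunctions, and it treats the integrability of $L$ explicitly via the truncation $L\ind_{\{|L|\le R\}}$, a point the paper leaves implicit; the paper's argument is shorter once the Kronecker machinery already developed there is taken for granted and yields both quasi-coboundary identities symmetrically in one step, whereas you must obtain $\beta$ first and then verify $\mathcal{L}\phi\neq 0$ to handle $\alpha$, which you do. (Incidentally, your argument gives the extra relation $c'=\overline{c''}$, which is consistent with, though not demanded by, the statement.)
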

\begin{proof}
	Form $S^1$-extensions by defining
	\begin{align*}
		\tilde{T}_g(x, \zeta) = \left( T_g x, \alpha_g(x)^{-1} \zeta \right), \quad
		\tilde{S}_g(y, \eta) = \left( S_g y, \beta_g(y)^{-1} \eta \right).
	\end{align*}
	These are actions because $\alpha$ and $\beta$ are cocycles for $T$ and $S$ respectively.
	Denote by $\tilde{X}$ and $\tilde{Y}$ the product spaces
	$\tilde{X} = X \times S^1$ and $\tilde{Y} = Y \times S^1$.
	Define $\tilde{L} : \tilde{X} \times \tilde{Y} \to \C$ by
	\begin{align*}
		\tilde{L}(x, \zeta; y, \eta) := \zeta \eta L(x,y).
	\end{align*}
	By construction, $\tilde{L}$ is $\tilde{T} \times \tilde{S}$ invariant:
	\begin{align*}
		\left( \tilde{T}_g \times \tilde{S}_g \right) \tilde{L}(x, \zeta; y, \eta)
		& = \left( \alpha_g(x)^{-1}\zeta \right) \left( \beta_g(y)^{-1}\eta \right) L(T_g x, S_g y) \\
		& = \alpha_g(x)^{-1} \zeta \beta_g(y)^{-1} \eta \alpha_g(x) \beta_g(y) L(x,y) \\
		& = \zeta \eta L(x,y) \\
		& = \tilde{L}(x, \zeta; y, \eta).
	\end{align*}
	It follows that $\tilde{L}$ comes from the product of the Kronecker factors for
	$\tilde{T}$ and $\tilde{S}$, so we can write $\tilde{L}$ as a sum of products of eigenfunctions:
	\begin{align} \label{eq: decomp}
		\zeta \eta L(x,y) = \sum_i{H_i(x, \zeta) K_i(y, \eta)},
	\end{align}
	where $\tilde{T}_g H_i = c'_i(g) H_i$ and $\tilde{S}_g K_i = c'_i(g)^{-1} K_i$
	for some character $c'_i \in \hat{G}$.
	Now we can write
	\begin{align*}
		H_i(x, \zeta) = \sum_j{h_{ij}(x) \zeta^j}, \quad K_i(y, \eta) = \sum_j{k_{ij}(y) \eta^j}.
	\end{align*}
	Substituting back into \eqref{eq: decomp} gives
	\begin{align*}
		\zeta \eta L(x,y) = \sum_{i,j,j'}{\zeta^j \eta^{j'} h_{ij}(x) k_{ij'}(y)}
	\end{align*}
	But $L$ does not depend on $\zeta$ or $\eta$, so only the $j=j'=1$ term contributes,
	and evaluating at $\zeta = \eta = 1$, we have
	\begin{align*}
		L(x,y) = \sum_i{h_{i1}(x) k_{i1}(y)}
	\end{align*}
	Therefore, for some $i$, $h_{i1} \not\equiv 0$ and $k_{i1} \not\equiv 0$.
	
	But
	\begin{align*}
		\tilde{T}_g \left( \sum_j{h_{ij}(x)\zeta^j} \right) = c'_i(g) \sum_j{h_{ij}(x)\zeta^j},
	\end{align*}
	so
	\begin{align*}
		\sum_j{h_{ij}(T_gx) \alpha_g(x)^{-j} \zeta^j} = c'_i(g) \sum_j{h_{ij}(x)\zeta^j}.
	\end{align*}
	Matching powers of $\zeta$, we get $h_{i1}(T_gx) \alpha_g(x)^{-1} = c'_i(g) h_{i1}(x)$.
	But $T$ is ergodic, so taking the absolute value of both sides,
	we find that $|h_{i1}|$ is constant, say $|h_{i1}| = \lambda \ne 0$.
	Let $M(x) = \lambda^{-1} h_{i1}(x)$ and $c'(g) = c'_i(g)^{-1}$.
	Then $\alpha_g(x)$ has the desired form \eqref{eq: separate variables}.
	This process can be repeated to show that $\beta_g(y)$ also has this form.
\end{proof}

For $(u,v,w) \in Z_{\varphi,\psi,\theta}$, we can define a cocycle
\begin{align*}
	\alpha_g(x) := \frac{\sigma_{\varphi(g)}(x + (u-w))}{\sigma_{\varphi(g)}(x)}.
\end{align*}
However, Lemma \ref{lem: separate variables} does not immediately apply, since the maps
$z_1 \mapsto z_1 + \hat{\varphi(g)}$ and $z_2 \mapsto z_2 + \hat{\psi(g)}$ may not be ergodic,
and \eqref{eq: cocycle 2-var} only holds on $W_{\varphi,\psi} \subseteq Z \times Z$.
We solve the first problem by taking ergodic components $\hat{j} + Z_\varphi$ and $\hat{j} + Z_\psi$.
For the latter issue, we need the following lemma
(note that $\xi$ will have finite index image, since $(\varphi, \psi)$ is an admissible pair):
\begin{lem}
	Let $\xi = \psi - \varphi$.
	Then $Z_\xi \times Z_\xi \subseteq W_{\varphi,\psi}$.
\end{lem}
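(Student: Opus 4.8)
The plan is to reduce the two-dimensional inclusion to two one-dimensional ones and then add them using that $W_{\varphi,\psi}$ is a subgroup of $Z^2$. Concretely, I would prove that $Z_\xi \times \{0\} \subseteq W_{\varphi,\psi}$ and $\{0\} \times Z_\xi \subseteq W_{\varphi,\psi}$; since $W_{\varphi,\psi}$ is a group and $(a,0)+(0,b)=(a,b)$, this immediately gives
\[
	Z_\xi \times Z_\xi = \left( Z_\xi \times \{0\} \right) + \left( \{0\} \times Z_\xi \right) \subseteq W_{\varphi,\psi}.
\]
A preliminary observation to make is that $W_{\varphi,\psi} = Z_{id,id} + Z_{\varphi,\psi}$ is a \emph{closed} subgroup of $Z^2$: it is a subgroup because it is a sum of subgroups of the abelian group $Z^2$, and it is closed (indeed compact) because it is the image of the compact set $Z_{id,id} \times Z_{\varphi,\psi}$ under the continuous addition map $Z^2 \times Z^2 \to Z^2$. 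Consequently, to show a closed subset of $Z^2$ lies in $W_{\varphi,\psi}$ it suffices to exhibit a dense subset of it inside $W_{\varphi,\psi}$.

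For the first inclusion, I would use that $\hat{\cdot}$ is additive, so that $\hat{\xi(g)} = \hat{\psi(g)} - \hat{\varphi(g)}$, together with the identity, valid for every $g \in G$,
\[
	\left( \hat{\xi(g)}, 0 \right) = \left( \hat{\psi(g)}, \hat{\psi(g)} \right) + \left( \hat{\varphi(-g)}, \hat{\psi(-g)} \right).
\]
The first summand lies in $Z_{id,id}$ and the second in $Z_{\varphi,\psi}$, so $\{(\hat{\xi(g)},0) : g \in G\} \subseteq W_{\varphi,\psi}$; taking closures and using that $W_{\varphi,\psi}$ is closed, we get $Z_\xi \times \{0\} = \overline{\{\hat{\xi(g)} : g \in G\}} \times \{0\} \subseteq W_{\varphi,\psi}$. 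The second inclusion is entirely analogous, this time via
\[
	\left( 0, \hat{\xi(g)} \right) = \left( -\hat{\varphi(g)}, -\hat{\varphi(g)} \right) + \left( \hat{\varphi(g)}, \hat{\psi(g)} \right),
\]
whose first summand is in $Z_{id,id}$ and second in $Z_{\varphi,\psi}$. Combining the two inclusions as in the first paragraph completes the argument.

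There is no genuinely hard step here: the whole content is the bookkeeping identity showing that a point of $Z_\xi$ placed in one coordinate can be written as a diagonal shift plus a point of $Z_{\varphi,\psi}$, and the only points requiring any care are that $\hat{\cdot}$ is a homomorphism (so the $\xi$-orbit decomposes as claimed) and that $W_{\varphi,\psi}$ is closed, so that one may pass from the dense orbit $\{\hat{\xi(g)} : g \in G\}$ to its closure $Z_\xi$.
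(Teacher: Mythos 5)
Your proof is correct and follows essentially the same route as the paper: both arguments write a dense family of points as (diagonal element of $Z_{id,id}$) $+$ (orbit point of $Z_{\varphi,\psi}$) and then invoke that $W_{\varphi,\psi}$ is a closed subgroup of $Z^2$. The only cosmetic difference is that the paper treats a general pair $\left( \hat{\xi(g)}, \hat{\xi(h)} \right)$ in one step via the choice $k = h-g$, $z = \hat{\psi(g)} - \hat{\varphi(h)}$, whereas you handle the two coordinate axes $Z_\xi \times \{0\}$ and $\{0\} \times Z_\xi$ separately and add them using the group law.
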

\begin{proof}
	It suffices to show $(\hat{\xi(g)}, \hat{\xi(h)}) \in W_{\varphi,\psi}$ for all $g, h \in G$,
	since $W_{\varphi,\psi}$ is a closed subgroup of $Z \times Z$.
	Suppose $g, h \in G$.
	Then set $k := h - g$ and $z := \hat{\psi(g)} - \hat{\varphi(h)}$.
	Then
	\begin{align*}
		(\hat{\xi(g)}, \hat{\xi(h)}) & = (\hat{\psi(g)} - \hat{\varphi(g)}, \hat{\psi(h)} - \hat{\varphi(h)}) \\
		& = (\hat{\psi(g)} - \hat{\varphi(h)} + \hat{\varphi(h)} - \hat{\varphi(g)},
		\hat{\psi(h)} - \hat{\psi(g)} + \hat{\psi(g)} -\hat{\varphi(h)})\\
		& = (z + \hat{\varphi(k)}, z + \hat{\psi(k)}) \in W_{\varphi,\psi}.
	\end{align*}
\end{proof}

From this lemma, we can conclude that
\begin{align*}
	\left( \hat{j} + Z_{\xi \circ \varphi} \right) \times \left( \hat{j} + Z_{\xi \circ \psi} \right)
	\subseteq W_{\varphi,\psi} \cap \left( \left( \hat{j} + Z_\varphi \right)
	\times \left( \hat{j} + Z_\psi \right) \right).
\end{align*}
But $Z_{\xi \circ \varphi}$ and $Z_{\xi \circ \psi}$ have finite index in $Z$,
so $W_{\varphi,\psi}$ has positive measure intersection with the set
$\left( \hat{j} + Z_\varphi \right) \times \left( \hat{j} + Z_\psi \right)$.
Thus, if we define $H'_{u,v,w}$ on $\left( \hat{j} + Z_\varphi \right) \times \left( \hat{j} + Z_\psi \right)$ by
\begin{align*}
	H'_{u,v,w}(z_1, z_2) = \begin{cases}
		H_{u,v,w}(z_1, z_2), & (z_1, z_2) \in W_{\varphi,\psi}; \\
		0, & (z_1, z_2) \notin W_{\varphi,\psi},
	\end{cases}
\end{align*}
then $H'_{u,v,w}$ is not zero almost everywhere.
Moreover,
\begin{align*}
	H'_{u,v,w}(z_1 + \hat{\varphi(g)}, z_2 + \hat{\psi(g)})
	= \alpha_g(z_1) \beta_g(z_2) H'_{u,v,w}(z_1, z_2)
\end{align*}
for $(z_1, z_2) \in \left( \hat{j} + Z_\varphi \right) \times \left( \hat{j} + Z_\psi \right)$,
where $\alpha$ and $\beta$ are the cocycles given by
\begin{align*}
	\alpha_g(x) := \frac{\sigma_{\varphi(g)}(x + (u-w))}{\sigma_{\varphi(g)}(x)}, \quad
	\beta_g(x) := \frac{p_{g}(x + (v-w))}{p_{g}(x)}.
\end{align*}
Hence, applying Lemma \ref{lem: separate variables}, we have
\begin{align*}
	\frac{\sigma_{\varphi(g)}(z + (u-w))}{\sigma_{\varphi(g)}(z)}
	= c_j(g) \frac{K_j(u,v,w; z + \hat{\varphi(g)})}{K_j(u,v,w; z)}
\end{align*}
for each $j \in G$ and $z \in \hat{j} + Z_\varphi$.
Observe that the left-hand side is parametrized by $u-w$ with $u \in Z_{\varphi}$ and $w \in Z_{\theta}$.
Since $\theta = \varphi + \psi$, we have $Z_{\varphi} - Z_{\theta} \supseteq Z_{-\psi} = Z_{\psi}$.
Thus, we can restrict to $u \in Z_{\psi}$, and choose the functions $K_j$ so that
\begin{align*}
	\frac{\sigma_{\varphi(g)}(z + u)}{\sigma_{\varphi(g)}(z)}
	= c_j(g) \frac{K_j(u, z + \hat{\varphi(g)})}{K_j(u, z)}
\end{align*}
for $g \in G$, $z \in \hat{j} + Z_\varphi$, and $u \in Z_{\psi}$.
Now since cosets are disjoint and cover all of $Z$, we can patch together $c_j$ and $K_j$
for different values of $j$ to get a new expression
\begin{align}
	\frac{\sigma_{\varphi(g)}(z + u)}{\sigma_{\varphi(g)}(z)}
	= \Lambda_u(z + Z_\varphi)(g) \frac{K_u(z + \hat{\varphi(g)})}{K_u(z)}
\end{align}
that holds for $z \in Z$, $u \in Z_{\psi}$, and $g \in G$.
The functions $\Lambda_u$ and $K_u$ can be chosen to depend measurably on $u$
(see \cite[Proposition 2]{lesigne} and \cite[Proposition 10.5]{fw}).

The foregoing discussion motivates the following definition (see \cite{cl} and \cite{fw} for the case $G = \Z$):
\begin{defn} \label{defn: CL cocycle}
	Let $\X$ be an ergodic system with Kronecker factor $Z$.
	A cocycle $\rho_g : Z \to S^1$ is a \emph{$(\varphi,\psi)$-Conze--Lesigne cocycle}
	($(\varphi,\psi)$-CL cocycle for short) if there are measurable functions
	$\Lambda : Z_{\psi} \times \left( Z/Z_{\varphi} \right) \to \hat{G}$
	and $K : Z_{\psi} \times Z \to S^1$ such that
	\begin{align} \label{eq: CL}
		\frac{\rho_{\varphi(g)}(z + u)}{\rho_{\varphi(g)}(z)}
		= \Lambda_u(z + Z_{\varphi})(g) \frac{K_u(z + \hat{\varphi(g)})}{K_u(z)}
	\end{align}
	for almost all $z \in Z$, $u \in Z_{\psi}$, and all $g \in G$.
	The set of all $(\varphi,\psi)$-CL cocycles is denoted by $CL_{\X}(\varphi,\psi)$.
\end{defn}

If $\varphi(g) = rg$ and $\psi(g) = sg$ with $r, s \in \Z$, we will denote $CL_{\X}(\varphi, \psi)$ by $CL_{\X}(r,s)$.

\begin{defn}
	Let $\X$ be an ergodic system with Kronecker factor $Z$.
	We call a function $f : X \to S^1$ a \emph{$(\varphi,\psi)$-CL function}
	if there is a cocycle $\rho \in CL_{\X}(\varphi,\psi)$ such that
	\begin{align*}
		f(T_gx) = \rho_g(z) f(x),
	\end{align*}
	where $z = \pi(x)$ for $\pi : X \to Z$ the projection onto the Kronecker factor.
\end{defn}
Since $CL_{\X}(\varphi,\psi)$ is a group, it follows immediately that the span of $(\varphi,\psi)$-CL functions
forms a subalgebra of $L^{\infty}(\mu)$.
We call the corresponding factor the \emph{$(\varphi,\psi)$-CL factor}, denoted by $\B_{CL(\varphi,\psi)}$.
By the computations in this section, we have reduced the general cocycle in Theorem \ref{thm: abelian ext}
to cocycles satisfying the Conze--Lesigne equation \eqref{eq: CL}.
We can therefore summarize the results of this section by the following theorem:
\begin{thm} \label{thm: CL characteristic}
	Let $\X$ be a normal ergodic system, and let $\{\varphi,\psi\}$ be an admissible pair of homomorphisms.
	Then
	\begin{align*}
		\UClim_{g \in G} & \ {T_{\varphi(g)} f_1 \cdot T_{\psi(g)} f_2 \cdot T_{(\varphi+\psi)(g)} f_3} \\
		= &~\UClim_{g \in G}{T_{\varphi(g)} \E{f_1}{\B_{CL(\varphi,\psi)}}
			\cdot T_{\psi(g)} \E{f_2}{\B_{CL(\varphi,\psi)}}
			\cdot T_{(\varphi+\psi)(g)} \E{f_3}{\B_{CL(\varphi,\psi)}}}
	\end{align*}
	in $L^2(\mu)$.
\end{thm}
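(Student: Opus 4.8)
The plan is to package the structural reductions of Sections~\ref{sec: compact ext}--\ref{sec: CL} into a proof that the factor $\B_{CL(\varphi,\psi)}$ is characteristic. First, since $\X$ is normal, the maximal compact extension of its Kronecker factor $Z$ is an abelian group extension $\Y = Z \times_\rho H$; by Theorem~\ref{thm: abelian ext} it is characteristic for the averages \eqref{eq: triple avg}, and enlarging $H$ if necessary we may assume $\B_{CL(\varphi,\psi)} \subseteq \Y$ (every $(\varphi,\psi)$-CL function generates a compact group extension of $Z$, so lies in the maximal such). Thus we may assume $f_1, f_2, f_3 \in L^\infty(\Y)$. Writing $f_i = \E{f_i}{\B_{CL(\varphi,\psi)}} + f_i'$ and expanding the product multilinearly, it then suffices to prove the vanishing statement: \emph{if one of $f_1, f_2, f_3$ lies in $L^\infty(\Y)$ and has zero conditional expectation onto $\B_{CL(\varphi,\psi)}$, then $\UClim_{g \in G} T_{\varphi(g)} f_1 \cdot T_{\psi(g)} f_2 \cdot T_{(\varphi+\psi)(g)} f_3 = 0$ in $L^2(\mu)$.}

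By interchanging $\varphi$ and $\psi$ (which swaps the roles of $f_1$ and $f_2$) and by the parallelogram relation $\varphi + \psi = (\varphi+\psi)$ (which, together with measure-preservation of $T_{(\varphi+\psi)(g)}$ and the substitution $g \mapsto -g$, symmetrizes the distinguished slot), it is enough to treat the case $\E{f_1}{\B_{CL(\varphi,\psi)}} = 0$. Decompose $f_1$ along the fiber group: $f_1 = \sum_{\chi \in \hat H} g_\chi \cdot \chi$ with $g_\chi \in L^\infty(Z)$ and $\chi$ viewed as a function on $\Y$. A short cohomology computation (using that coboundaries are $(\varphi,\psi)$-CL cocycles and that $CL_\X(\varphi,\psi)$ is a group) identifies $\E{f_1}{\B_{CL(\varphi,\psi)}}$ with the partial sum over those $\chi$ for which $\chi \circ \rho \in CL_\X(\varphi,\psi)$. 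Hence, after an $L^2$-approximation reducing the sum to finitely many terms and using multilinearity once more, it is enough to show: if $f_i = g_i \chi_i$ for $i = 1,2,3$ and $\chi_1 \circ \rho \notin CL_\X(\varphi,\psi)$, then the above uniform Ces\`{a}ro limit vanishes.

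To prove this, I would run the van der Corput trick (Lemma~\ref{lem: vdC}) on $u_g = T_{\varphi(g)} f_1 \cdot T_{\psi(g)} f_2 \cdot T_{(\varphi+\psi)(g)} f_3$. Since $\Y$ is a group extension, the $\chi_i$ transform under $T_g$ by the cocycles $\chi_i \circ \rho_g : Z \to S^1$, and following the Mackey-group argument of \cite[Section~10]{fw} --- whose ingredients are purely group-theoretic and so carry over to a general countable abelian $G$ --- the non-vanishing of $\UClim_g u_g$ forces the cocycle identity \eqref{eq: cocycle} for $\sigma = \chi_1 \circ \rho$, with some auxiliary cocycles $p_g, q_g$ and a transfer function $F$ on $W_{\varphi,\psi,\theta}$. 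From here I would follow the manipulations already carried out in this section: eliminate $z_3$ via the substitution $z_3 = u_\delta(z_1, z_2)$ (Lemmas~\ref{lem: map from r,s to r,s,t}--\ref{lem: u compatibility}), descend to the cosets $\hat{j} + Z_\varphi$ and $\hat{j} + Z_\psi$ on which the relevant rotations are ergodic --- the inclusion $Z_\xi \times Z_\xi \subseteq W_{\varphi,\psi}$, with $\xi = \psi - \varphi$ of finite index since $\{\varphi,\psi\}$ is admissible, guarantees positive-measure overlap --- apply Lemma~\ref{lem: separate variables}, and patch the coset data to obtain the Conze--Lesigne equation \eqref{eq: CL} for $\chi_1 \circ \rho$. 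This contradicts $\chi_1 \circ \rho \notin CL_\X(\varphi,\psi)$; hence $\UClim_g u_g = 0$, completing the vanishing statement and therefore the theorem.

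The main obstacle, I expect, is the Mackey-group step that produces \eqref{eq: cocycle}: one must verify that the identification of the Mackey group of the relevant diagonal cocycle as a graph subgroup, and the descent from $H$-valued to $S^1$-valued cocycles, go through verbatim over an arbitrary countable abelian $G$. A secondary --- but genuinely necessary --- technical point is that the translations $z \mapsto z + \hat{\varphi(g)}$ need not be ergodic (they have only finite-index image), which is exactly why the argument must first pass to cosets before Lemma~\ref{lem: separate variables} can be invoked; admissibility of $\{\varphi,\psi\}$ is what makes those cosets available and of finite index.
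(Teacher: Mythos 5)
Your overall route is the paper's own: normality plus Theorem~\ref{thm: abelian ext} reduces the averages to an abelian group extension $Z \times_{\rho} H$ of the Kronecker factor, a Fourier decomposition along the fiber reduces to functions of the form $g_\chi \cdot \chi$, the Furstenberg--Weiss Mackey-group argument produces the identity \eqref{eq: cocycle} for the surviving characters, and the elimination-of-$z_3$/coset/patching steps (Lemmas~\ref{lem: map from r,s to r,s,t}--\ref{lem: separate variables} together with the inclusion $Z_\xi \times Z_\xi \subseteq W_{\varphi,\psi}$) convert \eqref{eq: cocycle} into the Conze--Lesigne equation \eqref{eq: CL}. This is exactly the chain of reductions the paper summarizes as Theorem~\ref{thm: CL characteristic}, and you correctly isolate the same two delicate points (the Mackey step over a general countable abelian $G$, and the non-ergodicity of $z \mapsto z + \hat{\varphi(g)}$ forcing the passage to cosets). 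Incidentally, the ``enlarge $H$'' step is not needed: for the vanishing statement you only use that every function $g(z)\chi(h)$ with $\chi \circ \rho \in CL_{\X}(\varphi,\psi)$ is measurable with respect to $\B_{CL(\varphi,\psi)}$, so $\E{f_1}{\B_{CL(\varphi,\psi)}} = 0$ already kills the relevant Fourier components of $\E{f_1}{Y}$.

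The step I would not accept as written is the symmetrization reducing everything to the case $\E{f_1}{\B_{CL(\varphi,\psi)}} = 0$. First, interchanging $\varphi$ and $\psi$ controls the second slot by $\B_{CL(\psi,\varphi)}$, and the equation \eqref{eq: CL} is not symmetric in its parameters (the cocycle is evaluated along $\varphi(g)$, the translation $u$ ranges over $Z_\psi$, and the character depends on the coset $z + Z_\varphi$), so identifying this with $\B_{CL(\varphi,\psi)}$ requires an argument. Second, ``measure-preservation of $T_{(\varphi+\psi)(g)}$ and the substitution $g \mapsto -g$'' is a trick for scalar averages of integrals; applying the $g$-dependent unitary $T_{-(\varphi+\psi)(g)}$ to $u_g$ does not commute with the uniform Ces\`{a}ro average, so it does not transfer a norm-limit vanishing statement from one slot to another. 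To justify the theorem as stated for the second and third slots one should instead rerun the Mackey/elimination computation isolating $p_g$ (resp.\ $q_g$) in place of $\sigma_{\varphi(g)}$ and eliminating a different variable of $W_{\varphi,\psi,\varphi+\psi}$, and then reconcile the resulting CL-type conditions with the factor $\B_{CL(\varphi,\psi)}$ (in the integer case this reconciliation is what Lemma~\ref{lem: CL product} effectively provides); the paper itself only carries out the first-coordinate derivation, so this is a point where your write-up should do more rather than less.
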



\section{Limit formula for triple averages} \label{sec: limit formula 3}

The goal of this section is to prove a formula for the limit of the multiple ergodic averages
\begin{align*}
	\UClim_{g \in G}{f_1(T_{rg}x) f_2(T_{sg}x) f_3(T_{(r+s)g}x)},
\end{align*}
where $T$ is an ergodic action and $\{r, s, r+s\}$ is an admissible triple.
Our approach follows closely the method used by Host and Kra in \cite{hk} in establishing a formula for $\Z$-actions (see \cite[Theorem 12]{hk}).
In order to extend this method to our more general setting of countable discrete abelian groups, we need to restrict our attention to a class of systems called \emph{quasi-affine systems} (defined in Section \ref{sec: QA}).

\begin{restatable}{thm}{LimitFormula} \label{thm: limit formula}
	Let $G$ be a countable discrete abelian group.
	Let $r, s \in \Z$ such that $rG$, $sG$, and $(r \pm s)G$ have finite index in $G$.
	Let $k'_1 = - rs(r+s)$, $k'_2 = rs(r+s)$, and $k'_3 = -rs(s-r)$.
	Set $D := \gcd(k'_1, k'_2, k'_3) = rs \gcd(r+s, s-r)$ and $k_i = \frac{k'_i}{D}$.
	Let $b_1, b_2, b_3 \in \Z$ so that $\sum_{i=1}^3{k_ib_i} = 1$.
	
	Let $\X = \mathbf{Z} \times_{\sigma} H$ be an ergodic quasi-affine system.
	There is a function $\psi : Z \times Z \to H$ such that $\psi(0,z) = 0$, $t \mapsto \psi(t,\cdot)$
	is a continuous map from $Z$ to $\M(Z,H)$,\footnote{We denote by $\M(Z,H)$ the space of measurable functions
		$Z \to H$ in the topology of convergence in measure.}
	and for every $f_1, f_2, f_3 \in L^{\infty}(\mu)$,
	\begin{align} \label{eq: limit formula}
		\UClim_{g \in G}{f_1(T_{rg}x) f_2(T_{sg}x) f_3(T_{(r+s)g}x)}
		= \int_{Z \times H^2}{\prod_{i=1}^3{f_i(z + a_it, h + a_iu + a_i^2v + b_i \psi(t,z))}~du~dv~dt},
	\end{align}
	in $L^2(\mu)$, where $x = (z,h) \in Z \times H$,
	and $a_1 = r, a_2 =s, a_3 = r+s$.
\end{restatable}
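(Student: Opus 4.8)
The plan is to adapt the proof of \cite[Theorem 12]{hk}, which establishes the analogous formula for $\Z$-actions via the structure theory of $2$-step nilsystems, by working directly inside the quasi-affine system $\X = \mathbf{Z}\times_{\sigma} H$. Writing the action additively in the fibre, $T_g(z,h) = (z + \hat g,\ h + \sigma_g(z))$, one has $T_{a_i g}(z,h) = (z + \widehat{a_i g},\ h + \sigma_{a_i g}(z))$, where for $a_i \ge 1$ the cocycle identity gives $\sigma_{a_i g}(z) = \sum_{k=0}^{a_i-1}\sigma_g(z + k\hat g)$. The triple product in \eqref{eq: limit formula} then equals $F\big(\Theta_g(z,h)\big)$, where $F := f_1\otimes f_2\otimes f_3$ on $(Z\times H)^3$ and $\Theta_g(z,h) := \big((z + \widehat{a_i g},\ h + \sigma_{a_i g}(z))\big)_{i=1,2,3}$. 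So it suffices to show that, for a.e.\ $x = (z,h)$, the uniform Ces\`{a}ro averages of $\delta_{\Theta_g(z,h)}$ converge weak-$*$ to the image of Haar measure $dt\,du\,dv$ on $Z\times H^2$ under $(t,u,v)\mapsto \big((z + a_i t,\ h + a_i u + a_i^2 v + b_i\psi(t,z))\big)_i$, with enough uniformity to give $L^2$-convergence of \eqref{eq: limit formula}.

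The core of the matter is to identify the orbit closure of $\{\Theta_g(z,h) : g\in G\}$. First I would use the quasi-affine structure of $\sigma$ (as made precise in Section \ref{sec: QA}) together with iteration of the cocycle identity to produce, for each fixed $a_i$, a decomposition $\sigma_{a_i g}(z) = a_i\,\eta_g(z) + a_i^2\,\zeta_g + \Phi_{a_i}(g,z) + \big(b_g(z + \widehat{a_i g}) - b_g(z)\big)$, in which the part linear in $g$ carries the coefficient $a_i$, the part quadratic in $g$ carries $a_i^2$, the last term is an $L^\infty$-coboundary in the base, and $\Phi_{a_i}$ is the residual term. Averaging $\eta_g(z)$ and $\zeta_g$ over $g$ (using ergodicity of $\X$ and of the relevant subsystems) produces the free parameters $u$ and $v$. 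For the residual part, the two identities $\sum_{i=1}^3 k_i a_i = 0$ and $\sum_{i=1}^3 k_i a_i^2 = 0$ — which is exactly how $(k_1,k_2,k_3)$ was chosen — force $\sum_{i=1}^3 k_i\,\sigma_{a_i g}(z)$ to depend, modulo the $u,v$-directions, only on $z$ and the single underlying base displacement $\hat g$, hence in the limit on $(t,z)$; call its limit $\psi(t,z)$, normalised so that $\psi(0,z) = 0$. Since $(a_i)_i$ and $(a_i^2)_i$ are linearly independent and both lie in the kernel of $(k_i)_i$, they span the solution space of $\sum_i k_i x_i = 0$, while the Bézout relation $\sum_i k_i b_i = 1$ exhibits $(b_i\psi(t,z))_i$ as a particular solution of $\sum_i k_i x_i = \psi(t,z)$; together these give the claimed parametrisation of the fibre over $t$. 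Ergodicity of $\X$ makes the underlying base rotation minimal, hence uniquely ergodic by Lemma \ref{lem: compact group rotations}; after decomposing the finitely many ergodic components of each subaction $(T_{a_i g})_{g\in G}$ (which exist because $\{r,s,r+s\}$ is admissible) and invoking equidistribution of polynomial sequences of degree at most $2$ in compact abelian groups (as in \cite{hk}, e.g.\ by realising the relevant sequence as a group rotation on an extended compact group and applying Lemma \ref{lem: compact group rotations}), one identifies the orbit closure of $\{\Theta_g(z,h) : g\in G\}$ with exactly the set above, and unique ergodicity converts the Ces\`{a}ro limit into the asserted integral. Finally, approximating $f_1,f_2,f_3$ by characters of $Z\times H$ and decomposing $H$-valued data over $\hat H$ upgrades the conclusion from continuous functions to general $f_i\in L^\infty(\mu)$ and yields the $L^2$-convergence.

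I expect the main obstacle to be the explicit cocycle computation producing the closed form for $\sigma_{a_i g}$ modulo coboundaries and, above all, the construction of $\psi$ together with the verification of its regularity — that $\psi(0,z) = 0$ and that $t\mapsto\psi(t,\cdot)$ is continuous from $Z$ into $\M(Z,H)$ — which will require measurable-selection and continuity arguments in the spirit of \cite[Proposition 2]{lesigne} and \cite[Proposition 10.5]{fw}. A secondary difficulty is ruling out hidden algebraic relations among the three fibre coordinates beyond the ``linear'' ones recorded by $(k_1,k_2,k_3)$; this is precisely where the finite-index hypotheses on $rG$, $sG$, $(r\pm s)G$ enter, through control of the ergodic decompositions of the subactions $(T_{rg})$, $(T_{sg})$, and $(T_{(r\pm s)g})$.
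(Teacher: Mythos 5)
Your outline founders at its central step. The engine of your argument is the claimed pointwise decomposition $\sigma_{a_i g}(z) = a_i\,\eta_g(z) + a_i^2\,\zeta_g + \Phi_{a_i}(g,z) + \bigl(b_g(z+\widehat{a_i g}) - b_g(z)\bigr)$, but quasi-affineness supplies no such structure: it is an asymptotic, cohomological condition (Proposition~\ref{prop: HK P6}) asserting that $\rho_{g_n}$ is close to an affine function of $z$ when $\hat g_n \to 0$, not that $\sigma_g$ itself splits into linear-in-$a_i$, quadratic-in-$a_i$, coboundary and residual pieces. Iterating the cocycle identity gives $\sigma_{a g}(z) = \sum_{k=0}^{a-1}\sigma_g(z+k\hat g)$ and nothing more. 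Without the decomposition, your ``averaging $\eta_g$ and $\zeta_g$ produces the free parameters $u,v$'' step --- i.e.\ the assertion that the fibre components equidistribute over the full coset of $M = \{(a_1u+a_1^2v,\,a_2u+a_2^2v,\,a_3u+a_3^2v) : u,v\in H\}$ and over nothing smaller or larger --- is unsupported, and that assertion is the heart of the theorem. In the paper it is exactly the Mackey-group computation of Theorem~\ref{thm: HK T10}, namely $M^{\perp} = \{(\chi^{k_1},\chi^{k_2},\chi^{k_3}) : \chi\in\hat H\}$, whose proof needs weak mixing of $\sigma$ (Proposition~\ref{prop: HK P4}), the analytic characterizations of coboundaries and of cocycles cohomologous to characters (Propositions~\ref{prop: HK P2} and~\ref{prop: HK P3}), and --- this is where the finite-index hypotheses genuinely enter --- the absence of $D$-torsion in $\hat H$ (Lemma~\ref{lem: H divisibility}), not ``control of the ergodic decompositions of the subactions'' as you suggest. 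Your fallback, ``equidistribution of polynomial sequences of degree at most $2$ in compact abelian groups,'' is also unavailable: since $\X$ is only quasi-affine, $g\mapsto\sigma_{a_ig}(z)$ is not a polynomial sequence realizable as a rotation on an extended compact group, so Lemma~\ref{lem: compact group rotations} has nothing to act on.

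There is a second gap in the framing: you ask for weak-$*$ convergence of the empirical measures of $\{\Theta_g(z,h)\}$ for a.e.\ $x$, but the skew product over $W$ is not uniquely ergodic and no pointwise equidistribution of this kind is established (or needed). The paper instead expands $f_i$ in characters of $H$, kills every term with $(\chi_1,\chi_2,\chi_3)\notin M^{\perp}$ via Proposition~\ref{prop: Mackey ergodicity} --- a van der Corput argument resting on the diagonal-measure Lemma~\ref{lem: diag meas}, Proposition~\ref{prop: constant Mackey}, and the decomposition $m_W=\int_Z m_z\,dz$ --- and for $(\chi^{k_1},\chi^{k_2},\chi^{k_3})$ reduces the average to the continuous family $t\mapsto\varphi_t$ built from $\psi$, concluded by unique ergodicity of the base rotation alone and by quoting \cite{austin, zorin} for norm convergence. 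The one component of your proposal that does match the paper is the definition of $\psi$ as the limit of $\sum_{i}k_i\sigma_{a_ig_n}$ using $\sum_i k_ia_i=\sum_i k_ia_i^2=0$; but even there the convergence in $\M(Z,H)$ and the continuity of $t\mapsto\psi(t,\cdot)$ require the quantitative quasi-affine lemmas (Lemmas~\ref{lem: HK L2} and~\ref{lem: HK L3} feeding Proposition~\ref{prop: HK P14}), not the unproved structural decomposition you lean on. As written, the proposal does not close.
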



\subsection{Cohomology for abelian group extensions}

To enable our usage of cocycles, we introduce cohomology for abelian group extensions.

\begin{defn}
	Let $\X = \left( X, \B, \mu, (T_g)_{g \in G} \right)$ be a measure-preserving system
	and $(H, +)$ a compact abelian group.
	\begin{enumerate}[1.]
		\item	Denote by $Z^1_G(\X, H)$ the set of \emph{cocycles},
		i.e. measurable functions $\sigma : G \times X \to H$ satisfying the cocycle equation
		\begin{align*}
			\sigma_{g+h}(x) = \sigma_g(T_h x) + \sigma_h(x).
		\end{align*}
		\item	A cocycle $\sigma \in Z^1_G(\X,H)$ is a \emph{coboundary}
		if there is a measurable function $F : X \to H$ such that
		\begin{align*}
			\sigma_g(x) = F \left( T_gx \right) - F(x).
		\end{align*}
		We denote the set of all coboundaries by $B^1_G(\X,H)$.
		\item	Two cocycles $\sigma$ and $\sigma'$ are \emph{cohomologous}, denoted $\sigma \sim \sigma'$,
		if $\sigma - \sigma' \in B^1_G(\X,H)$.
	\end{enumerate}
\end{defn}

Recall that, given a cocycle $\sigma \in Z^1_G(\X,H)$, we denote by $\X \times_{\sigma} H$ the $G$-system
\begin{align*}
	T^{\sigma}_g(x,y) = (T_gx, y+\sigma_g(x))
\end{align*}
for $(x,y) \in X \times H$ and $g \in G$.

The set of cocycles $Z^1_G(\X,H)$ and the set of coboundaries $B^1_G(\X,H)$ both form groups
under pointwise addition $(\sigma + \sigma')(g,x) = \sigma(g,x) + \sigma'(g,x)$.
Moreover, $B^1_G(\X, H)$ is a subgroup of $Z^1_G(\X, H)$, and if $\sigma \sim \sigma'$,
then $\X \times_{\sigma} H \cong \X \times_{\sigma'} H$.
As a result, the isomorphism class of the extension $\X \times_{\sigma} H$ depends only
on the congruence class of $\sigma$ in the \emph{cohomology group} $H^1_G(\X, H) := Z^1_G(\X,H)/B^1_G(\X,H)$.

Since we are ultimately interested in studying the Conze--Lesigne factor,
which is an extension of the Kronecker factor by a compact abelian group,
we need the following notions for cocycles defined on the Kronecker factor:

\begin{defn}
	Suppose $\mathbf{Z}$ is an ergodic Kronecker system.\footnote{By this, we mean that $\mathbf{Z}$
		is measurably isomorphic to an ergodic action of $G$ by rotations on a compact abelian group.}
	Let $H$ be a compact abelian group, and let $\sigma : G \times Z \to H$ be a cocycle.
	\begin{enumerate}[1.]
		\item	$\sigma$ is \emph{ergodic} if $\mathbf{Z} \times_{\sigma} H$ is ergodic.
		\item $\sigma$ is \emph{weakly mixing} if the extension by $H$ over $\sigma$ is relatively weakly mixing.
		That is, $\mathbf{Z} \times_{\sigma} H$ is ergodic with Kronecker factor equal to $\mathbf{Z}$.
	\end{enumerate}
\end{defn}

It is helpful to study cocycles $\sigma \in Z^1_G(\X,H)$ by considering the family of cocycles
$\chi \circ \sigma : G \times X \to S^1$ for $\chi \in \hat{H}$.
Here we will use multiplicative notation, rather than the additive notation above.
For $S^1$-valued cocycles, we introduce an additional definition:

\begin{defn}
	Let $\X = \left( X, \B, \mu, (T_g)_{g \in G} \right)$ be a measure-preserving system.
	A cocycle $\rho$ is \emph{cohomologous to a character} if there is a character $\gamma \in \hat{G}$
	such that $\rho_g(x) \sim \gamma(g)$.
	That is, for some measurable function $F : X \to S^1$,
	\begin{align*}
		\rho_g(x) = \gamma(g) \frac{F \left( T_gx \right)}{F(x)}
	\end{align*}
	for every $g \in G$ and almost every $x \in X$.
\end{defn}

The following proposition characterizes dynamical properties of a cocycle $\sigma : G \times Z \to H$
in terms of the behavior of the cocycles $\chi \circ \sigma$ for $\chi \in \hat{H}$:

\begin{prop} \label{prop: HK P4}
	Suppose $\mathbf{Z}$ is an ergodic Kronecker system.
	Let $H$ be a compact abelian group, and let $\sigma : G \times Z \to H$ be a cocycle.
	\begin{enumerate}[(1)]
		\item	$\sigma$ is ergodic if and only if, for every $\chi \in \hat{H} \setminus \{1\}$,
		$\chi \circ \sigma$ is not a coboundary.
		\item	$\sigma$ is weakly mixing if and only if, for every $\chi \in \hat{H} \setminus \{1\}$,
		$\chi \circ \sigma$ is not cohomologous to a character.
	\end{enumerate}
\end{prop}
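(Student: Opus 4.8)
The plan is to analyze the extension $\mathbf{Z} \times_{\sigma} H$ by Fourier analysis in the fibre. Since $H$ is compact abelian, its dual $\hat{H}$ is discrete and the characters form an orthonormal basis of $L^2(H)$, giving an orthogonal decomposition $L^2(Z \times H) = \bigoplus_{\chi \in \hat{H}} \bigl( L^2(Z) \otimes \C\chi \bigr)$, where $f \in L^2(Z \times H)$ is written $f(z,h) = \sum_{\chi} \phi_\chi(z)\,\chi(h)$ with $\phi_\chi \in L^2(Z)$. Because $T^{\sigma}_g$ acts on the fibre over $z$ by the translation $h \mapsto h + \sigma_g(z)$, and translation by a fixed element scales each character, the Koopman operator $U_g \colon f \mapsto f \circ T^{\sigma}_g$ preserves each summand: on $L^2(Z) \otimes \C\chi$ it acts by $\phi \mapsto (\chi \circ \sigma)_g \cdot (\phi \circ T_g)$, a unitary weighted composition operator on $L^2(Z)$. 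Consequently $U_g$ commutes with the orthogonal projections $P_\chi$ onto the summands. The proof then reads off invariant functions (for (1)) and eigenfunctions (for (2)) of $\mathbf{Z} \times_{\sigma} H$ one Fourier mode at a time, using ergodicity of $\mathbf{Z}$.

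For (1): $\sigma$ is ergodic iff $\mathbf{Z} \times_{\sigma} H$ has no nonconstant invariant function. If $f = \sum_\chi \phi_\chi \chi$ is invariant, applying $P_\chi$ to $U_g f = f$ shows each $\phi_\chi \chi$ is invariant, i.e. $(\chi \circ \sigma)_g \cdot (\phi_\chi \circ T_g) = \phi_\chi$ for all $g$. For $\chi = 1$ this says $\phi_1$ is $(T_g)$-invariant, hence a.e.\ constant since $\mathbf{Z}$ is ergodic. For $\chi \ne 1$, if $\phi_\chi \not\equiv 0$ then $|\phi_\chi|$ is $(T_g)$-invariant, hence a.e.\ equal to a nonzero constant; after rescaling $|\phi_\chi| \equiv 1$, and the invariance relation rearranges to $(\chi \circ \sigma)_g(z) = \overline{\phi_\chi}(T_g z) \big/ \overline{\phi_\chi}(z)$, so $\chi \circ \sigma$ is a coboundary. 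Thus a nonconstant invariant function exists iff $\chi \circ \sigma$ is a coboundary for some $\chi \ne 1$. Conversely, if $(\chi \circ \sigma)_g(z) = F(T_g z)/F(z)$ for some measurable $F \colon Z \to S^1$ and some $\chi \ne 1$, one checks directly that $(z,h) \mapsto \overline{F(z)}\,\chi(h)$ is invariant and nonconstant. This gives (1).

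For (2): $\sigma$ is weakly mixing iff $\mathbf{Z} \times_{\sigma} H$ is ergodic and its Kronecker factor is $\mathbf{Z}$; recall that, for ergodic systems, the Kronecker factor is the closed span of eigenfunctions. Suppose $\sigma$ is not weakly mixing. If $\mathbf{Z} \times_{\sigma} H$ is not ergodic, then by (1) some $\chi \circ \sigma$ with $\chi \ne 1$ is a coboundary, hence cohomologous to the trivial character. Otherwise $\mathbf{Z} \times_{\sigma} H$ is ergodic but its Kronecker factor strictly contains $\mathbf{Z}$, so there is an eigenfunction $f \notin L^2(Z)$, say $U_g f = \gamma(g) f$ with $\gamma \in \hat{G}$; applying $P_\chi$ and using $f \notin L^2(Z)$ yields some $\chi \ne 1$ with $\phi_\chi \not\equiv 0$ and $(\chi \circ \sigma)_g \cdot (\phi_\chi \circ T_g) = \gamma(g)\,\phi_\chi$, and normalizing $|\phi_\chi| \equiv 1$ as before gives $(\chi \circ \sigma)_g(z) = \gamma(g)\,\overline{\phi_\chi}(T_g z) \big/ \overline{\phi_\chi}(z)$, i.e.\ $\chi \circ \sigma$ is cohomologous to the character $\gamma$. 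Conversely, if $(\chi \circ \sigma)_g(z) = \gamma(g)\,F(T_g z)/F(z)$ with $F \colon Z \to S^1$ measurable and $\chi \ne 1$, then $(z,h) \mapsto \overline{F(z)}\,\chi(h)$ is an eigenfunction of $\mathbf{Z} \times_{\sigma} H$ with eigenvalue $\gamma$ that is not a.e.\ a function of $z$ alone; hence $\mathbf{Z} \times_{\sigma} H$ cannot be ergodic with Kronecker factor $\mathbf{Z}$, so $\sigma$ is not weakly mixing. This gives (2).

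The argument is largely routine once the fibrewise decomposition is set up; the only points needing genuine care are the verification that $U_g$ respects $\bigoplus_\chi L^2(Z) \otimes \C\chi$ (immediate because $T^{\sigma}_g$ translates fibres) and the measurable normalizations $|\phi_\chi| \equiv 1$, each of which invokes ergodicity of $\mathbf{Z}$ exactly once. The main bookkeeping subtlety is that, in (2), ``$\sigma$ not weakly mixing'' must be split into the non-ergodic case (disposed of via (1), since a coboundary is cohomologous to the trivial character) and the ergodic-but-not-relatively-weakly-mixing case; beyond that I do not anticipate any substantive obstacle.
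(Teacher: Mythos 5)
Your proof is correct and follows essentially the same route as the paper: Fourier expansion in the fibre variable, matching coefficients of each $\chi \in \hat{H}$, using ergodicity of $\mathbf{Z}$ to normalize $|\phi_\chi|$, and constructing the explicit invariant function or eigenfunction $\overline{F}(z)\chi(h)$ for the converse directions. The only difference is cosmetic: you phrase part (2) contrapositively and explicitly split ``not weakly mixing'' into the non-ergodic and ergodic-with-larger-Kronecker cases, a bookkeeping point the paper leaves implicit (via part (1), since a coboundary is cohomologous to the trivial character).
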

\begin{proof}
	(1).
	Suppose $\sigma$ is ergodic and $\chi \circ \sigma$ is a coboundary for some $\chi \in \hat{H}$.
	We want to show $\chi = 1$.
	Write
	\begin{align} \label{eq: chi sigma coboundary}
		\chi \left( \sigma_g(z) \right) = \frac{F \left( z + \hat{g} \right)}{F(z)}
	\end{align}
	for some function $F : Z \to S^1$.
	Define $f : Z \times H \to S^1$ by $f(z,x) := \overline{F}(z) \chi(x)$.
	The coboundary identity \eqref{eq: chi sigma coboundary} implies that $f$ is invariant for
	$\mathbf{Z} \times_{\sigma} H$.
	Since $\sigma$ is ergodic, it follows that $f$ is constant.
	Therefore, $\chi = 1$.
	
	Conversely, suppose that for every $\chi \in \hat{H} \setminus \{1\}$, $\chi \circ \sigma$
	is not a coboundary.
	Suppose $f$ is invariant for $\mathbf{Z} \times_{\sigma} H$.
	We will show $f$ is constant.
	Write $f(z,x) = \sum_{\chi \in \hat{H}}{c_{\chi}(z) \chi(x)}$.
	Invariance means
	\begin{align*}
		\sum_{\chi \in \hat{H}}{c_{\chi} \left( z + \hat{g} \right) \chi(\sigma_g(z)) \chi(x)}
		= \sum_{\chi \in \hat{H}}{c_{\chi}(z) \chi(x)}.
	\end{align*}
	So, for every $\chi \in \hat{H}$,
	\begin{align*}
		c_{\chi} \left( z + \hat{g} \right) \chi \left( \sigma_g(z) \right) = c_{\chi}(z).
	\end{align*}
	By ergodicity of $\mathbf{Z}$, $|c_{\chi}|$ is constant, say $|c_{\chi}| = C_{\chi}$.
	Thus, if $c_{\chi} \ne 0$, then
	\begin{align*}
		\chi \left( \sigma_g(z) \right)
		= \frac{C_{\chi}^{-1} \overline{c_{\chi}} \left( z+ \hat{g} \right)}{C_{\chi}^{-1} \overline{c_{\chi}}(z)},
	\end{align*}
	so $\chi \circ \sigma_g$ is a coboundary.
	It follows that $c_{\chi} = 0$ for $\chi \ne 1$, so $f(z,x) = c_1(z)$.
	But $\mathbf{Z}$ is ergodic, so $f$ is constant.
	
	\bigskip
	
	(2).
	Suppose $\sigma$ is weakly mixing.
	Let $\chi \in \hat{H}$, and assume $\chi \circ \sigma$ is cohomologous to a character.
	We will show $\chi = 1$.
	Let $F : Z \to S^1$ and $\gamma \in \hat{G}$ so that
	\begin{align*}
		\chi \left( \sigma_g(z) \right) = \gamma(g)\frac{F \left( z + \hat{g} \right)}{F(z)}.
	\end{align*}
	Then $f(z,x) := \overline{F}(z) \chi(x)$ is an eigenfunction of $\mathbf{Z} \times_{\sigma} H$
	with eigenvalue $\gamma$.
	Since $\sigma$ is weakly mixing, it follows that $f$ is measurable over $\mathbf{Z}$.
	Therefore, $\chi$ is constant, so $\chi = 1$.
	
	Conversely, suppose that for every $\chi \in \hat{H} \setminus \{1\}$,
	$\chi \circ \sigma$ is not cohomologous to a character.
	Suppose $f$ is an eigenfunction of $\mathbf{Z} \times_{\sigma} H$ with eigenvalue $\gamma \in \hat{G}$.
	We will show that $f$ is measurable over $Z$.
	Write $f(z,x) = \sum_{\chi \in \hat{H}}{c_{\chi}(z) \chi(x)}$.
	Then
	\begin{align*}
		\sum_{\chi \in \hat{H}}{c_{\chi} \left( z + \hat{g} \right) \chi(\sigma_g(z)) \chi(x)}
		= \gamma(g) \sum_{\chi \in \hat{H}}{c_{\chi}(z) \chi(x)}.
	\end{align*}
	Hence, for every $\chi \in \hat{H}$,
	\begin{align*}
		c_{\chi} \left( z + \hat{g} \right) \chi \left( \sigma_g(z) \right) = \gamma(g) c_{\chi}(z),
	\end{align*}
	so $\chi \circ \sigma \sim \gamma$ for any $\chi \in \hat{H}$ with $c_{\chi} \ne 0$.
	It follows that $f(z,x) = c_1(z)$.
\end{proof}

The characterization of ergodicity in Proposition \ref{prop: HK P4} motivates the following definition:
\begin{defn}
	Let $\mathbf{Z}$ be an ergodic Kronecker system, $H$ a compact abelian group,
	and $\sigma : G \times Z \to H$ a cocycle.
	The \emph{Mackey group associated to $\sigma$} is the subgroup
	\begin{align*}
		M := \left\{ x \in H : \chi(x) = 1~\text{for all}~\chi \in \hat{H}
		~\text{such that}~\chi \circ \sigma~\text{is a coboundary} \right\}
	\end{align*}
\end{defn}

Note that the annihilator\footnote{The \emph{annihilator} of a set $A$ in a group $H$ is the set
	$A^{\perp} := \left\{ \chi \in \hat{H} : \chi(a) = 1~\text{for every}~a \in A \right\}$.} of $M$
is given by $M^{\perp} = \left\{ \chi \in \hat{H} : \chi \circ \sigma~\text{is a coboundary} \right\}$.
Using the isomorphism $M^{\perp} = \hat{H/M}$,
Proposition \ref{prop: HK P4} says that $\sigma$ is ergodic if and only if $M = H$ if and only if $M^{\perp} = \{1\}$.
The following proposition refines this criterion to describe the splitting of $L^2(Z \times H)$ into invariant and ergodic functions:

\begin{prop} \label{prop: HK P5}
	Let $\mathbf{Z}$ be an ergodic Kronecker system, $H$ a compact abelian group,
	and $\sigma : G \times Z \to H$ a cocycle.
	Let $M$ be the Mackey group associated to $\sigma$.
	Let $f \in L^2(Z \times H)$.
	Suppose for every $\chi \in M^{\perp}$,
	\begin{align*}
		\int_H{f(z,x) \overline{\chi(x)}~dx} = 0
	\end{align*}
	for almost every $z \in Z$.
	Then
	\begin{align*}
		\UClim_{g \in G}{f \left( z + \hat{g}, x + \sigma_g(z) \right)} = 0
	\end{align*}
	in $L^2(Z \times H)$.
\end{prop}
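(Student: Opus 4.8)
The plan is to recognize the displayed average as a mean ergodic average for the skew-product system $\mathbf{Z}\times_\sigma H$ and then identify its $L^2$-limit via the fiberwise Fourier expansion, in the spirit of the proof of Proposition~\ref{prop: HK P4}(1). Since the transformations $T^{\sigma}_g(z,x)=(z+\hat g,\,x+\sigma_g(z))$ preserve the product of Haar measures on $Z\times H$, the operators $U_gf:=f\circ T^{\sigma}_g$ form the Koopman representation of the $G$-system $\mathbf{Z}\times_\sigma H$, so the ergodic theorem (Theorem~\ref{thm: ergodic theorem}) gives $\UClim_{g\in G}U_gf=\E{f}{\I}$, where $\I$ is the $\sigma$-algebra of $T^{\sigma}$-invariant sets. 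Hence it suffices to show that $f$ is orthogonal in $L^2(Z\times H)$ to every $T^{\sigma}$-invariant function.

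To that end I would describe the $T^{\sigma}$-invariant functions. Expanding an invariant $\phi\in L^2(Z\times H)$ fiberwise as $\phi(z,x)=\sum_{\chi\in\hat H}d_\chi(z)\chi(x)$ (only countably many terms are nonzero), invariance of $\phi$ is equivalent to $d_\chi(z+\hat g)\,\chi(\sigma_g(z))=d_\chi(z)$ for every $g$ and a.e.\ $z$, for each $\chi$. Taking absolute values and using ergodicity of $\mathbf{Z}$, each $|d_\chi|$ is constant; and whenever it is nonzero, the unimodular function $F:=|d_\chi|\,d_\chi^{-1}$ satisfies $\chi(\sigma_g(z))=F(z+\hat g)/F(z)$, exhibiting $\chi\circ\sigma$ as a coboundary, so $\chi\in M^{\perp}$ by the description of the annihilator of the Mackey group recorded after its definition. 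Consequently every $T^{\sigma}$-invariant $\phi$ lies in the closed linear span in $L^2(Z\times H)$ of the functions $c(z)\chi(x)$ with $c\in L^2(Z)$ and $\chi\in M^{\perp}$.

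The hypothesis then finishes the argument: for $\chi\in M^{\perp}$ and any $c\in L^2(Z)$,
\[
	\innprod{f}{c\otimes\chi}_{L^2(Z\times H)}
	=\int_Z\overline{c(z)}\left(\int_H f(z,x)\overline{\chi(x)}\,dx\right)dz=0,
\]
because the inner integral vanishes for a.e.\ $z$ by assumption. Combined with the previous paragraph this gives $f\perp\phi$ for every invariant $\phi$, so $\E{f}{\I}=0$, and therefore $\UClim_{g\in G}f(z+\hat g,\,x+\sigma_g(z))=0$ in $L^2(Z\times H)$, as claimed.

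I do not expect a genuine obstacle: this is a refinement of Proposition~\ref{prop: HK P4}(1) in which the trivial Fourier mode is replaced by the modes indexed by $M^{\perp}$. The only points that need a little care are the termwise manipulation of the fiber Fourier series (legitimate because every $L^2$ function has only countably many nonzero Fourier coefficients, so nothing beyond the standing separability assumptions is required) and the precise bookkeeping of which characters $\chi$ make $\chi\circ\sigma$ a coboundary, which is exactly the defining property of $M^{\perp}$.
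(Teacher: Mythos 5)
Your proposal is correct and follows essentially the same route as the paper: both arguments reduce to the mean ergodic theorem for the skew product $T^{\sigma}$, expand fiberwise in characters of $H$, use ergodicity of $\mathbf{Z}$ to make the moduli of the coefficients constant, and invoke the coboundary description of $M^{\perp}$ to see that only characters in $M^{\perp}$ can contribute to the invariant part, which the hypothesis then kills. The only (harmless) difference is organizational: the paper decomposes $f$ into its invariant and ergodic parts and shows the invariant part's Fourier coefficients vanish, whereas you characterize the whole invariant subspace and check that $f$ is orthogonal to it — a slightly cleaner way to reach the same conclusion.
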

\begin{proof}
	Let $T^{\sigma}_g(z,x) := \left( z + \hat{g}, x + \sigma_g(z) \right)$.
	Write $f = f_1 + f_2$, where $f_1$ is $T^{\sigma}$-invariant and $f_2$ is in the ergodic subspace for $T^{\sigma}$.
	We want to show $f_1 = 0$.
	We can expand $f_1(z,x) = \sum_{\chi \in \hat{H}}{c_{\chi}(z) \chi(x)}$.
	Using $T^{\sigma}$-invariance, we have
	\begin{align*}
		\sum_{\chi \in \hat{H}}{c_{\chi}\left( z + \hat{g} \right) \chi \left( \sigma_g(z) \right) \chi(x)}
		= \sum_{\chi \in \hat{H}}{c_{\chi}(z) \chi(x)}.
	\end{align*}
	Thus,
	\begin{align*}
		c_{\chi}\left( z + \hat{g} \right) \chi \left( \sigma_g(z) \right) 
		= c_{\chi}(z)
	\end{align*}
	for every $\chi \in \hat{H}$.
	If $c_{\chi} \ne 0$, this implies $\chi \circ \sigma$ is a coboundary, so $\chi \in M^{\perp}$.
	But for $\chi \in M^{\perp}$,
	\begin{align*}
		c_{\chi}(z) = \int_H{f(z,x) \overline{\chi(x)}~dx} = 0.
	\end{align*}
	Therefore, $f_1 = 0$.
\end{proof}


\subsection{Mackey groups for admissible triples} \label{sec: Mackey triples}

When dealing with multiple recurrence, one needs a modified version of the Mackey group.
First we define several notations.
Let $\mathbf{Z}$ be an ergodic Kronecker system and $H$ a compact abelian group.
Fix an admissible triple $\{a_1, a_2, a_3\} \subseteq \Z$ and a cocycle $\sigma : G \times Z \to H$.
Let $W \subseteq Z^3$ be the subgroup
\begin{align*}
	W := W_{a_1,a_2,a_3} = \left\{ (z + a_1t, z + a_2t, z + a_3t) : z, t \in Z \right\}.
\end{align*}
For $g \in G$, let $\alpha_g := (a_1\hat{g}, a_2\hat{g}, a_3\hat{g}) \in W$,
and define $\tilde{S}_g : W \to W$ by $\tilde{S}_g(w) = w + \alpha_g$.
Let $\tilde{\sigma} : G \times W \to H^3$ be the cocycle
\begin{align*}
	\tilde{\sigma}_g(w_1,w_2,w_3) = \left( \sigma_{a_1g}(w_1), \sigma_{a_2g}(w_2), \sigma_{a_3g}(w_3) \right).
\end{align*}
Finally, let $M = M(a_1, a_2, a_3) \subseteq H^3$ be the Mackey group associated to the cocycle $\tilde{\sigma}$.

The action $\tilde{S}$ is not ergodic, but its ergodic decomposition is easy to describe.
For each $z \in Z$, let $W_z := \left\{ (z + a_1t, z + a_2t, z + a_3t) : t \in Z \right\}$.
Note that $W_0$ is an $\tilde{S}$-invariant subgroup of $Z^3$,
and $(W_0, \tilde{S})$ is uniquely ergodic by Lemma \ref{lem: compact group rotations}.
Now, for each $z \in Z$, $(W_z, \tilde{S})$ is an isomorphic topological dynamical system
supporting a unique invariant measure $m_z$, which is a shift of the Haar measure $m_0$ on $W_0$.
We claim that the Haar measure $m_W$ on $W$ can be decomposed as
\begin{align*}
	m_W = \int_Z{m_z~dz}.
\end{align*}
To see this, let $f : W \to \C$ be a continuous function and $w_0 = (u + a_1v, u + a_2v, u + a_3v) \in W$.
Then
\begin{align*}
	\int_Z{\int_W{f(w+w_0)~dm_z(w)}~dz}
	& = \int_Z{\int_W{f \left( w + (u,u,u) \right)~dm_z(w)}~dz} & (m_z~\text{is}~W_0\text{-invariant}) \\
	& = \int_Z{\int_W{f(w)~dm_{z+u}(w)}~dz} & (W_z + (u,u,u) = W_{z+u}) \\
	& = \int_Z{\int_W{f(w)~dm_z(w)}~dz}. & (\text{Haar measure on}~Z~\text{is shift-invariant})
\end{align*}
Thus, the measure $\int_Z{m_z~dz}$ is shift-invariant on $W$.
By the uniqueness of Haar measure, $m_W = \int_Z{m_z~dz}$ as claimed.

\begin{rem}
	The decomposition given here is not the standard ergodic decomposition,
	since the cosets $W_z$ may coincide for different values of $z \in Z$.
	However, this form will be more convenient for our purposes.
\end{rem}

For each $z \in Z$, $\tilde{\sigma}$ is defined $m_z$-a.e., so we can define the Mackey groups
\begin{align*}
	M_z^{\perp}
	:= \left\{ \tilde{\chi} \in \hat{H}^3 : \tilde{\chi} \circ \tilde{\sigma}
	~\text{is a coboundary for}~(W_z, m_z, \tilde{S}) \right\}.
\end{align*}
Ergodicity of the base system $\mathbf{Z}$ allows us to prove the following:

\begin{prop} \label{prop: constant Mackey}
	For almost every $z \in Z$, $M_z = M$.
\end{prop}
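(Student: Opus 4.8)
The plan is to fix a character $\tilde\chi \in \hat{H}^3$ and study the set
\[
Z_{\tilde\chi} := \left\{ z \in Z : \tilde\chi \in M_z^{\perp} \right\}
= \left\{ z \in Z : \tilde\chi\circ\tilde\sigma \text{ is a coboundary over } (W_z, m_z, \tilde S) \right\}.
\]
I will show that $Z_{\tilde\chi}$ is measurable and invariant under the rotation action of $\mathbf Z$; ergodicity of $\mathbf Z$ then forces $Z_{\tilde\chi}$ to be null or conull. Since the systems are separable, $\hat H$ — hence $\hat{H}^3$ — is countable, so intersecting the (co)null sets $Z_{\tilde\chi}$ over all $\tilde\chi$ gives a fixed subgroup $M^{\flat} := \{\tilde\chi : Z_{\tilde\chi} \text{ conull}\}$ of $\hat{H}^3$ with $M_z^{\perp} = M^{\flat}$ for a.e.\ $z$, and a short argument will identify $M^{\flat}$ with $M^{\perp} := \{\tilde\chi : \tilde\chi\circ\tilde\sigma \text{ is a coboundary over } (W, m_W, \tilde S)\}$.

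For measurability I would invoke the standard fact that an $S^1$-valued cocycle $c$ over an ergodic system $(Y,\nu,(R_g)_{g\in G})$ is a coboundary if and only if $\UClim_{g\in G}\int_Y c_g\,d\nu \ne 0$ (decompose the Koopman operator of the $S^1$-extension $Y\times_c S^1$ into Fourier modes: the function $(y,\zeta)\mapsto\zeta$ has nonzero invariant projection precisely when $c$ is a coboundary, and by the ergodic theorem that projection has squared norm $\UClim_g\int_Y c_g\,d\nu$). Each $(W_z, m_z, \tilde S)$ is ergodic — it is topologically isomorphic, via translation by $(z,z,z)$, to the uniquely ergodic system $(W_0, m_0, \tilde S)$ (unique ergodicity of the latter follows from Lemma~\ref{lem: compact group rotations}, since $\{\alpha_g : g\in G\}$ is dense in $W_0$). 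Writing $\int_{W_z}\tilde\chi(\tilde\sigma_g(w))\,dm_z(w) = \int_{W_0}\tilde\chi\bigl(\sigma_{a_1g}(w_1+z),\sigma_{a_2g}(w_2+z),\sigma_{a_3g}(w_3+z)\bigr)\,dm_0(w)$ and using Fubini, this is jointly measurable in $(z,g)$, so $z\mapsto \ind[\tilde\chi\in M_z^{\perp}]$ is measurable.

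The crux is the invariance of $Z_{\tilde\chi}$. Fix $h \in G$. Translation by $(\hat h,\hat h,\hat h)$ carries $(W_z, m_z, \tilde S)$ isomorphically onto $(W_{z+\hat h}, m_{z+\hat h}, \tilde S)$ and intertwines the shifts, pulling the cocycle $\tilde\sigma$ on $W_{z+\hat h}$ back to $w\mapsto \tilde\sigma_g\bigl(w+(\hat h,\hat h,\hat h)\bigr)$ on $W_z$; hence $z+\hat h\in Z_{\tilde\chi}$ iff $\tilde\chi\bigl(\tilde\sigma_g(\,\cdot+(\hat h,\hat h,\hat h))\bigr)$ is a coboundary over $(W_z,m_z,\tilde S)$. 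Now applying the cocycle equation $\sigma_{p+q}(x)=\sigma_p(x+\hat q)+\sigma_q(x)$ once with $(p,q)=(a_ig,h)$ and once with $(p,q)=(h,a_ig)$ and subtracting gives, for $w=(w_1,w_2,w_3)\in W_z$ and $i=1,2,3$,
\begin{align*}
\sigma_{a_ig}(w_i+\hat h)-\sigma_{a_ig}(w_i)=\sigma_h(w_i+a_i\hat g)-\sigma_h(w_i)=\sigma_h\bigl((\tilde S_g w)_i\bigr)-\sigma_h(w_i),
\end{align*}
so that $\tilde\sigma_g(\,\cdot+(\hat h,\hat h,\hat h))-\tilde\sigma_g = b\circ\tilde S_g - b$ with $b(w):=(\sigma_h(w_1),\sigma_h(w_2),\sigma_h(w_3))$. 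Composing with $\tilde\chi$, the cocycles $\tilde\chi(\tilde\sigma_g(\,\cdot+(\hat h,\hat h,\hat h)))$ and $\tilde\chi\circ\tilde\sigma_g$ are cohomologous over $(W_z,m_z,\tilde S)$, with transfer function $\tilde\chi\circ b$; since these identities hold, for all $g\in G$ simultaneously, $m_z$-a.e.\ on $W_z$ (for every $z$), it follows that $z+\hat h\in Z_{\tilde\chi}$ iff $z\in Z_{\tilde\chi}$. Thus $T_h^{-1}Z_{\tilde\chi}=Z_{\tilde\chi}$ for all $h\in G$, and since $Z_{\tilde\chi}$ is measurable, ergodicity of $\mathbf Z$ gives $\mu(Z_{\tilde\chi})\in\{0,1\}$.

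Finally, to identify $M^{\flat}$ with $M^{\perp}$: the inclusion $M^{\perp}\subseteq M^{\flat}$ is immediate from $m_W=\int_Z m_z\,dz$ (a transfer function on $(W,m_W)$ restricts to a transfer function on $(W_z,m_z)$ for a.e.\ $z$). Conversely, if $\tilde\chi\in M^{\flat}$ then there is, for a.e.\ $z$, a transfer function $F_z$ on $W_z$, unique up to a multiplicative constant by ergodicity of $(W_z,m_z,\tilde S)$; a measurable selection argument (as in \cite[Proposition~2]{lesigne}; cf.\ \cite[Proposition~10.5]{fw}) normalizes and glues the $F_z$ into a single measurable $F$ on $W$ witnessing $\tilde\chi\in M^{\perp}$. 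Combining, $M_z^{\perp}=M^{\flat}=M^{\perp}$ for a.e.\ $z$, i.e.\ $M_z=M$ for a.e.\ $z$. I expect the cocycle-identity computation establishing the shift-invariance of $Z_{\tilde\chi}$ to be the one genuinely new ingredient; the measurable gluing in the last step, though routine, is the main technical point requiring care.
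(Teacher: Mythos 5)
Your argument follows the same route as the paper's: you prove that the fiberwise Mackey groups are invariant under the rotation by $(\hat{h},\hat{h},\hat{h})$ via exactly the cocycle identity the paper uses (namely $\tilde{\sigma}_g(w+(\hat{h},\hat{h},\hat{h})) - \tilde{\sigma}_g(w) = \stackrel{\triangle}{\sigma}_h(\tilde{S}_gw) - \stackrel{\triangle}{\sigma}_h(w)$), then combine measurability, ergodicity of $\mathbf{Z}$, and countability of $\hat{H}^3$ to get an a.e.\ constant value, and finally identify that value with $M$ by Fubini in one direction and a measurable gluing of the slice transfer functions (\`a la \cite[Proposition 2]{lesigne}) in the other. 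All of that matches the paper's proof.

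There is, however, one step that fails as written: your measurability criterion. It is not true that an $S^1$-valued cocycle $c$ over an ergodic system $(Y,\nu,(R_g)_{g\in G})$ is a coboundary if and only if $\UClim_{g\in G}\int_Y c_g\,d\nu \ne 0$. Only the ``if'' direction holds. For a counterexample, let $Y$ be an ergodic group rotation, let $F$ be a nontrivial character of $Y$, and set $c_g(y) := F(y+\hat{g})\,\overline{F(y)}$. This is a coboundary by construction, yet $\int_Y c_g\,d\nu = F(\hat{g})$ and $\UClim_{g\in G}F(\hat{g}) = 0$ since $g \mapsto F(\hat{g})$ is a nontrivial character of $G$. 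The underlying issue is that the invariant subspace of the twisted Koopman operator $U_g f = c_g\cdot(f\circ R_g)$ is one-dimensional and spanned by $\overline{F}$, which may be orthogonal to the constant function; your quantity $\UClim_g\int c_g\,d\nu$ is the squared norm of the invariant projection of the constant $1$ in that mode, and it can vanish even when the invariant subspace is nontrivial. The fix is local: either test against a countable dense family $(f_k)$ in the unit ball of $L^2(W_0)$ (so that $\tilde{\chi}\in M_z^{\perp}$ iff $\sup_k \UClim_{g}\int_{W_z} \tilde{\chi}(\tilde{\sigma}_g(w))\,f_k(\tilde{S}_gw)\,\overline{f_k(w)}\,dm_z(w) > 0$, which is manifestly measurable in $z$), or simply invoke \cite[Proposition 2]{lesigne} for the measurability of $\{z : \tilde{\chi}\in M_z^{\perp}\}$, as the paper does and as you already do for the gluing step. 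With that repair your proof is complete and coincides with the paper's.
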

\begin{proof}
	Let $\stackrel{\triangle}{g} = \left( \hat{g}, \hat{g}, \hat{g} \right)$,
	and let $\stackrel{\triangle}{S}_g(w) = w + \stackrel{\triangle}{g}$.
	Then for each $g \in G$, $\stackrel{\triangle}{S}_g$ gives an isomorphism
	$(W_z, m_z, \tilde{S}) \cong (W_{z+\hat{g}}, m_{z+\hat{g}}, \tilde{S})$.
	Moreover, by the cocycle equation,
	\begin{align*}
		\sigma_{a_ig} \left( w_i + \hat{h} \right)
		= \sigma_{a_ig}(w_i) + \sigma_h \left( w_i + a_i\hat{g} \right) - \sigma_h(w_i),
	\end{align*}
	so
	\begin{align} \label{eq: shift cohomologous}
		\tilde{\sigma}_g \left( \stackrel{\triangle}{S}_hw \right)
		= \tilde{\sigma}_g(w) + \stackrel{\triangle}{\sigma}_h \left( \tilde{S}_gw \right)
		- \stackrel{\triangle}{\sigma}_h(w),
	\end{align}
	where $\stackrel{\triangle}{\sigma}_h(w) = \left( \sigma_h(w_1), \sigma_h(w_2), \sigma_h(w_3) \right)$.
	
	Now suppose $\tilde{\chi} \in M_z^{\perp}$.
	Then there is a function $F : W_z \to S^1$ such that
	\begin{align*}
		\tilde{\chi} \left( \tilde{\sigma}_g(w) \right) = \frac{F \left( \tilde{S}_gw \right)}{F(w)}
	\end{align*}
	for every $g \in G$ and almost every $w \in W_z$.
	Then by \eqref{eq: shift cohomologous}, we have
	\begin{align*}
		\tilde{\chi} \left( \tilde{\sigma}_g \left( \stackrel{\triangle}{S}_h w \right) \right)
		& = \frac{F \left( \tilde{S}_gw \right)}{F(w)}
		\frac{\chi \left( \stackrel{\triangle}{\sigma}_h \left( \tilde{S}_gw \right) \right)}
		{\chi \left( \stackrel{\triangle}{\sigma}_h(w) \right)}.
	\end{align*}
	Letting $\Phi(w) := F(w) \chi \left( \stackrel{\triangle}{\sigma}_h(w) \right)$, this simplifies to
	\begin{align*}
		\tilde{\chi} \left( \tilde{\sigma}_g \left( \stackrel{\triangle}{S}_h w \right) \right)
		= \frac{\Phi \left( \tilde{S}_gw \right)}{\Phi(w)},
	\end{align*}
	so $\tilde{\chi} \in M_{z + \hat{h}}^{\perp}$.
	We can repeat this argument to show that
	$M_{z + \hat{h}}^{\perp} \subseteq M_{z + \hat{h} - \hat{h}}^{\perp} = M_z^{\perp}$.
	Thus, $M_z = M_{z + \hat{h}}$ for every $h \in G$.
	
	Now, for each $\tilde{\chi} \in \hat{H}^3$, $\{z \in Z : \tilde{\chi} \in M_z^{\perp}\}$ is measurable
	(see \cite[Proposition 2]{lesigne})
	and therefore has measure either $0$ or $1$ by ergodicity of $\mathbf{Z}$.
	Since $\hat{H}^3$ is countable, it follows that $M_z$ is constant a.e., say $M_z = N$.
	It remains to check $M = N$.
	
	Let $\tilde{\chi} \in N^{\perp}$. Then $\tilde{\chi} \circ \tilde{\sigma}$ is a coboundary for $(W_z, m_z, \tilde{S})$
	for almost every $z \in Z$.
	That is, for almost every $z \in Z$, there is a function $F_z : W_z \to S^1$ such that
	\begin{align*}
		\tilde{\chi} \left( \tilde{\sigma}_g(w) \right) = \frac{F_z \left( \tilde{S}_gw \right)}{F_z(w)}
	\end{align*}
	for $m_z$-almost every $w \in W_z$.
	Define $F \left( z + a_1t, z + a_2t, z + a_3t \right) := F_z(z + a_1t, z + a_2t, z + a_3t)$.
	We can ensure this is well-defined by taking $F_z = F_{z'}$ whenever $W_z = W_{z'}$.
	Moreover, we can ensure that the map $z \mapsto F_z$ is measurable (see \cite[Proposition 2]{lesigne})
	so that $F$ is a measurable function.
	Then
	\begin{align*}
		\tilde{\chi} \left( \tilde{\sigma}_g(w) \right) = \frac{F \left( \tilde{S}_gw \right)}{F(w)}
	\end{align*}
	for almost every $w \in W$.
	Hence, $\tilde{\chi} \in M^{\perp}$.
	
	Now suppose $\tilde{\chi} \in M^{\perp}$.
	Then there is a function $F : W \to S^1$ such that
	\begin{align*}
		\tilde{\chi} \left( \tilde{\sigma}_g(w) \right) = \frac{F \left( \tilde{S}_gw \right)}{F(w)}
	\end{align*}
	for almost every $w \in W$.
	By Fubini's theorem, it follows that this equation holds for $m_z$-almost every $w \in W_z$
	for almost every $z \in Z$.
	That is, $\tilde{\chi} \in M_z^{\perp}$ a.e., so $\tilde{\chi} \in N^{\perp}$.
\end{proof}

We use this to prove a version of Proposition \ref{prop: HK P5} for $M(a_1, a_2, a_3)$:
\begin{prop} \label{prop: Mackey ergodicity}
	Assume $\sigma$ is weakly mixing.
	Let $M = M(a_1, a_2, a_3)$ be the Mackey group associated to $\tilde{\sigma}$.
	Let $f_1, f_2, f_3 \in L^{\infty}(Z \times H)$.
	Suppose for every $\tilde{\chi} \in M^{\perp}$,
	\begin{align} \label{eq: orthogonal to annihilators}
		\int_{H^3}{\prod_{i=1}^3{f_i(w_i, x_i)} \overline{\tilde{\chi}(x)}~dx} = 0
	\end{align}
	for almost every $w = (w_1, w_2, w_3) \in W$.
	Then
	\begin{align*}
		\UClim_{g \in G}{\prod_{i=1}^3{f_i \left( z + a_i\hat{g}, x + \sigma_{a_ig}(z) \right)}} = 0
	\end{align*}
	in $L^2(Z \times H)$.
\end{prop}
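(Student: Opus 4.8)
The plan is to establish the vanishing of $\UClim_{g \in G}\prod_{i=1}^3 f_i(z + a_i\hat{g}, x + \sigma_{a_ig}(z))$ by the van der Corput trick (Lemma~\ref{lem: vdC}) applied in $L^2(Z \times H)$, and then to kill the resulting second-moment quantity by applying Proposition~\ref{prop: HK P5} \emph{fiberwise} over the decomposition $m_W = \int_Z m_z \, dz$ established above. Throughout write $\X := \mathbf{Z} \times_{\sigma} H$, which is ergodic with Kronecker factor $\mathbf{Z}$ since $\sigma$ is weakly mixing, and $u_g(z,x) := \prod_{i=1}^3 f_i(z + a_i\hat{g}, x + \sigma_{a_ig}(z))$; by Lemma~\ref{lem: vdC} it suffices to show that $\gamma_h := \UClim_{g \in G}\innprod{u_{g+h}}{u_g}$ exists for every $h$ and that $\UClim_{h \in G}{\gamma_h} = 0$.

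First I would compute the correlations $\innprod{u_{g+h}}{u_g}_{L^2(Z \times H)}$. Using the cocycle identity for $\sigma$ and the fact that the skew action $T^{\sigma}$ is measure-preserving, these rewrite (after a substitution sending $(z,x)$ to $T^{\sigma}_{a_1 g}(z,x)$) as $\int_{Z \times H} \prod_{i=1}^3 T^{\sigma}_{(a_i - a_1)g}\bigl((f_i \circ T^{\sigma}_{a_ih})\overline{f_i}\bigr) \, d\mu$; the $i = 1$ term is stationary in $g$, and $\{(a_2 - a_1)g, (a_3 - a_1)g\}$ is an admissible pair because admissibility of $\{a_1, a_2, a_3\}$ forces all differences to have finite index image. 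Applying Theorem~\ref{thm: Kronecker} to $\X$ (whose limit formula involves only $\mathbf{Z}$, precisely because $\sigma$ is weakly mixing, so $\E{\cdot}{\text{Kronecker of }\X} = \E{\cdot}{Z}$), and then unwinding the integral against $\nu_{(a_2-a_1)\cdot,(a_3-a_1)\cdot}$ via the substitution identifying that Haar measure with $m_W$, I obtain that $\gamma_h$ exists and equals $\int_W \prod_{i=1}^3 \E{(f_i \circ T^{\sigma}_{a_ih})\overline{f_i}}{Z}(w_i) \, dm_W(w)$. A short Fubini computation in the $H^3$-variable then identifies this with $\innprod{R_h F}{F}_{L^2(\mathbf{V})}$, where $\mathbf{V} := W \times_{\tilde{\sigma}} H^3$, $R$ is the skew $\tilde{\sigma}$-action $R_g(w,y) = (w + \alpha_g, y + \tilde{\sigma}_g(w))$, and $F := \bigotimes_{i=1}^3 f_i$.

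It remains to show $\UClim_{h \in G}{\innprod{R_h F}{F}_{L^2(\mathbf{V})}} = 0$. Decompose $m_W = \int_Z m_z \, dz$; since each $\alpha_g$ lies in $W_0$, the action $R$ preserves every coset $W_z \times H^3$, so $\innprod{R_h F}{F} = \int_Z \gamma_h^{(z)} \, dz$ with $\gamma_h^{(z)} := \innprod{R_h(F|_{W_z \times H^3})}{F|_{W_z \times H^3}}_{L^2(W_z \times H^3, \, m_z \times \mathrm{Haar})}$. Now $(W_z, m_z, \tilde{S})$ is an ergodic Kronecker system (a coset of $W_0$ under a rotation with dense orbit, by Lemma~\ref{lem: compact group rotations}), the Mackey group of $\tilde{\sigma}$ on it equals the constant $M$ for a.e.\ $z$ by Proposition~\ref{prop: constant Mackey}, and hypothesis~\eqref{eq: orthogonal to annihilators} says exactly that the fiberwise integral of $F|_{W_z \times H^3}$ against $\overline{\tilde{\chi}}$ vanishes for every $\tilde{\chi} \in M^{\perp}$, for $m_z$-a.e.\ $w$ and a.e.\ $z$. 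Hence Proposition~\ref{prop: HK P5} applies on each such fiber, giving $\UClim_{h \in G}{R_h(F|_{W_z \times H^3})} = 0$ in $L^2(W_z \times H^3)$, so $\UClim_{h \in G}{\gamma_h^{(z)}} = 0$ for a.e.\ $z$. Since $|\gamma_h^{(z)}| \le \norm{\infty}{F}^2$ uniformly, dominated convergence (applied to the Cesàro averages along an arbitrary F\o lner sequence) yields $\UClim_{h \in G}{\innprod{R_h F}{F}} = \int_Z \UClim_{h \in G}{\gamma_h^{(z)}} \, dz = 0$, and Lemma~\ref{lem: vdC} concludes the proof.

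The step I expect to be the main obstacle is the fiberwise bookkeeping in the last paragraph: one must check carefully that $(W_z, m_z, \tilde{S})$ genuinely satisfies the hypotheses of Proposition~\ref{prop: HK P5} (ergodic Kronecker base, constant Mackey group $M$ via Proposition~\ref{prop: constant Mackey}), that hypothesis~\eqref{eq: orthogonal to annihilators} really descends to a.e.\ fiber, and that the per-fiber vanishing reassembles into the global statement through the identity $\bigl\|\tfrac{1}{|F_N|}\sum_{g \in F_N} R_g F\bigr\|_{L^2(\mathbf{V})}^2 = \int_Z \bigl\|\tfrac{1}{|F_N|}\sum_{g \in F_N} R_g(F|_{W_z \times H^3})\bigr\|_{L^2(W_z \times H^3)}^2 \, dz$ together with dominated convergence; the van der Corput computation itself (the cocycle manipulations, the invocation of Theorem~\ref{thm: Kronecker}, and the change of variables to $m_W$) is routine but must be handled with attention to the use of weak mixing of $\sigma$.
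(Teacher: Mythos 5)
Your argument is correct and follows essentially the same route as the paper: van der Corput, identification of $\gamma_h$ with $\innprod{F}{\tilde{T}_hF}$ on $W \times H^3$, and then vanishing of the uniform Ces\`{a}ro limit via a fiberwise application of Proposition~\ref{prop: HK P5} over the decomposition $m_W = \int_Z m_z\,dz$, using Proposition~\ref{prop: constant Mackey} and dominated convergence. The only cosmetic difference is that you re-derive the correlation formula by shifting by $T^{\sigma}_{a_1g}$ and invoking Theorem~\ref{thm: Kronecker} (with the explicit identification of the $\nu_{\varphi,\psi}$-integral with $m_W$), whereas the paper simply cites Lemma~\ref{lem: diag meas} and Remark~\ref{random remark on lem: diag meas}, which encapsulate the same computation.
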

\begin{proof}
	Let $T^{\sigma}_g(z,h) := \left( z + \hat{g}, h + \sigma_g(z) \right)$,
	and let $\tilde{T}_g(w,x) = \left( \tilde{S}_gw, x + \tilde{\sigma}_g(w) \right)$.
	Let $F(w,x) := \prod_{i=1}^3{f_i(w_i, x_i)}$.
	First, we will use Propositions \ref{prop: HK P5} and \ref{prop: constant Mackey} together to show that
	$F$ is in the ergodic subspace of $L^2(W \times H^3)$ for $\tilde{T}$.
	
	Let $E_1 := \{z \in Z : M_z = M\}$.
	Let $E_2 := \left\{ z \in Z : \text{\eqref{eq: orthogonal to annihilators} holds for}~m_z\text{-a.e.}~w \in W_z \right\}$.
	Finally, let $E_3 := \left\{ z \in Z : F \in L^{\infty}(W_z \times H^3) \right\}$.
	By Proposition \ref{prop: constant Mackey}, $E_1$ has full measure in $Z$.
	By Fubini's theorem and the hypothesis, $E_2$ has full measure.
	Also by Fubini's theorem, $E_3$ has full measure.
	Hence, the set $E := E_1 \cap E_2 \cap E_3 \subseteq Z$ has full measure.
	
	Now let $z \in E$.
	Since $z \in E_3$, one has $F \in L^{\infty}(W_z \times H^3) \subseteq L^2(W_z \times H^3)$.
	Let $\tilde{\chi} \in M_z^{\perp}$.
	Since $z \in E_1$, we have $\tilde{\chi} \in M^{\perp}$.
	Therefore, since $z \in E_2$, \eqref{eq: orthogonal to annihilators} holds
	for $\tilde{\chi}$ and $m_z$-a.e. $w \in W_z$.
	Thus, $F$ satisfies the hypotheses of Proposition \ref{prop: HK P5}, so
	\begin{align} \label{eq: ergodic slices}
		\UClim_{g \in G}{\tilde{T}_gF} = 0
	\end{align}
	in $L^2(W_z \times H^3)$.
	
	Fix a F{\o}lner sequence $(F_N)_{N \in \N}$ in $G$,
	and set $A_N := \frac{1}{|F_N|} \sum_{g \in F_N}{\tilde{T}_gF} \in L^2(W \times H^3)$.
	We want to show $\norm{L^2(W \times H^3)}{A_N} \to 0$.
	By decomposing $m_W = \int_Z{m_z~dz}$, we have
	\begin{align*}
		\norm{L^2(W \times H^3)}{A_N}^2
		= \int_W{\int_{H^3}{\left| A_N(w,x) \right|^2~dx}~dw}
		= \int_Z{\int_{W_z}{\int_{H^3}{\left| A_N(w,x) \right|^2~dx}~dm_z(w)}~dz}.
	\end{align*}
	Now by \eqref{eq: ergodic slices},
	\begin{align*}
		\int_{W_z}{\int_{H^3}{\left| A_N(w,x) \right|^2~dx}~dm_z(w)} \to 0
	\end{align*}
	for every $z \in E$ (so for $m_Z$-a.e. $z \in Z$).
	By the dominated convergence theorem, it follows that $\norm{L^2(W \times H^3)}{A_N} \to 0$ as desired.
	
	\bigskip
	
	We will now use the van der Corput trick (Lemma \ref{lem: vdC}).
	Let $u_g := \prod_{i=1}^3{T^{\sigma}_{a_ig}f_i}$.
	Observe that $\mathbf{Z} \times_{\sigma} H$ has Kronecker factor $\mathbf{Z}$
	because $\sigma$ is weakly mixing.
	Therefore, we can use Lemma \ref{lem: diag meas} (see also Remark \ref{random remark on lem: diag meas}) to compute
	\begin{align*}
		\gamma_h := \UClim_{g \in G}{\innprod{u_{g+h}}{u_g}}
		= \int_{W}{\prod_{i=1}^3{\E{\overline{f_i} \cdot T^{\sigma}_{a_ih}f_i}{Z}(w_i)}~dm_W(w_1,w_2,w_3)}
	\end{align*}
	Now, by Fubini's theorem, we have
	\begin{align*}
		\E{\overline{F} \cdot \tilde{T}_hF}{W}(w) & = \int_{H^3}{\left( \overline{F} \cdot \tilde{T}_hF \right)(w,x)~dx}
		= \int_{H^3}{\prod_{i=1}^3{\left( \overline{f_i} \cdot T^{\sigma}_{a_ih}f_i \right)(w_i,x_i)}~dx} \\
		& = \prod_{i=1}^3{\left( \int_H{\left( \overline{f_i} \cdot T^{\sigma}_{a_ih}f_i \right) (w_i, x_i)~dx_i} \right)}
		= \prod_{i=1}^3{\E{\overline{f_i} \cdot T^{\sigma}_{a_ih}f_i}{Z}}(w_i),
	\end{align*}
	so
	\begin{align*}
		\gamma_h = \int_W{\E{\overline{F} \cdot \tilde{T}_hF}{W}~dm_W}
		= \int_{W \times H^3}{\overline{F} \cdot \tilde{T}_hF~dm_{W \times H^3}}
		= \innprod{F}{\tilde{T}_hF}.
	\end{align*}
	But we showed that $F$ is in the ergodic subspace of $L^2(W \times H^3)$, so
	$\UClim_{h \in G}{\tilde{T}_hF} = 0$ in $L^2(W \times H^3)$.
	Thus, $\UClim_{h \in G}{\gamma_h} = 0$.
	By Lemma \ref{lem: vdC}, $\UClim_{g \in G}{u_g} = 0$ in $L^2(Z \times H)$ as claimed.
\end{proof}


\subsection{Analytic characterizations of cocycles}

In this section, we characterize classes of cocycles in terms of various analytic properties.
These characterizations will be useful in proving the limit formula in Theorem \ref{thm: limit formula}.
As usual, throughout this section we let $G$ be a countable discrete abelian group.
For a Kronecker system $\mathbf{Z} = \left( Z, \B, m_Z, (T_g)_{g \in G} \right)$, we let $\hat{g} \in Z$ denote the group element such that $T_gz = z + \hat{g}$.
The $G = \Z$ case of the results in the section are proved in \cite{hk}.
We follow their approach, making several modifications in order to extend the results to our more general setting.

\begin{lem} \label{lem: HK L1}
	Let $\mathbf{Z}$ be a Kronecker system and $\rho : G \times Z \to S^1$ a cocycle.
	Suppose $(g_n)_{n \in \N}$ is a sequence in $G$ such that $\hat{g}_n \to 0$ in $Z$.
	Then for every $h \in G$,
	\begin{align*}
		\norm{L^2(Z)}{\rho_{g_n} \left( z + \hat{h} \right) - \rho_{g_n}(z)} \to 0.
	\end{align*}
\end{lem}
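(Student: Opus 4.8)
The key identity is the cocycle equation, which lets us express $\rho_{g_n}(z+\hat h)$ in terms of $\rho_{g_n}(z)$ up to a correction term depending only on $h$. Concretely, for any $g,h \in G$ and $z \in Z$,
\begin{align*}
	\rho_{g+h}(z) = \rho_g(z + \hat h)\rho_h(z) = \rho_h(z + \hat g)\rho_g(z),
\end{align*}
using both orders of summation in the cocycle equation (valid since $G$ is abelian and the action is by rotations). Equating the two expressions and solving for $\rho_g(z+\hat h)$ gives
\begin{align} \label{eq: cocycle swap}
	\rho_{g}(z + \hat h) = \rho_g(z) \cdot \frac{\rho_h(z + \hat g)}{\rho_h(z)}.
\end{align}
So the plan is: apply \eqref{eq: cocycle swap} with $g = g_n$, and show that the multiplicative error factor $\rho_h(z + \hat{g}_n)/\rho_h(z)$ tends to $1$ in $L^2(Z)$ (equivalently, $\rho_h(z+\hat g_n) \to \rho_h(z)$ in $L^2(Z)$) as $n \to \infty$. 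Since $|\rho_{g_n}(z)| = 1$, we then get
\begin{align*}
	\norm{L^2(Z)}{\rho_{g_n}(z + \hat h) - \rho_{g_n}(z)}
	= \norm{L^2(Z)}{\rho_{g_n}(z)\left( \frac{\rho_h(z + \hat g_n)}{\rho_h(z)} - 1 \right)}
	\le \norm{L^2(Z)}{\rho_h(z + \hat g_n) - \rho_h(z)},
\end{align*}
where the last step uses that dividing by the unit-modulus function $\rho_h(z)$ is an isometry and $|\rho_h(z+\hat g_n)| = 1$. This reduces everything to the continuity statement for the fixed function $\rho_h \in L^2(Z)$.

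\textbf{The remaining step: continuity of translation.} We need: if $\hat g_n \to 0$ in the compact group $Z$, then $\rho_h(\cdot + \hat g_n) \to \rho_h(\cdot)$ in $L^2(Z)$. This is the standard fact that translation is strongly continuous on $L^2$ of a compact group with Haar measure. For $\rho_h$ continuous it is immediate from uniform continuity; for general $\rho_h \in L^2(Z)$ it follows by approximating $\rho_h$ in $L^2$-norm by a continuous function $\phi$ (density of $C(Z)$ in $L^2(Z)$), using that translation by $\hat g_n$ preserves the $L^2$-norm (Haar measure is translation-invariant), and applying a three-$\eps$ argument:
\begin{align*}
	\norm{L^2(Z)}{\rho_h(\cdot + \hat g_n) - \rho_h(\cdot)}
	\le \norm{L^2(Z)}{(\rho_h - \phi)(\cdot + \hat g_n)} + \norm{L^2(Z)}{\phi(\cdot + \hat g_n) - \phi(\cdot)} + \norm{L^2(Z)}{\phi - \rho_h},
\end{align*}
in which the first and third terms are each $\norm{L^2(Z)}{\rho_h - \phi} < \eps$ and the middle term is $< \eps$ for $n$ large by uniform continuity of $\phi$ on the compact group $Z$.

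\textbf{Main obstacle.} There is no serious obstacle here — the lemma is essentially a bookkeeping consequence of the cocycle equation plus continuity of translation on $L^2$ of a compact group. The only point requiring a modicum of care is the measurability/regularity of $\rho_h$ as an element of $L^2(Z)$ (so that the density argument applies): one uses that $Z$ is a compact metrizable group, its Borel $\sigma$-algebra is countably generated, and $\rho_h \in L^2(Z)$ since $|\rho_h| = 1$. One should also note that \eqref{eq: cocycle swap} holds for a fixed $h$ and for $m_Z$-a.e.\ $z$ simultaneously for all $n$ (only countably many $g_n$ are involved), which is harmless. I would present the reduction via \eqref{eq: cocycle swap} as the heart of the argument and dispatch the translation-continuity step in a sentence, citing it as a standard fact.
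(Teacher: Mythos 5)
Your proof is correct and follows essentially the same route as the paper: both expand the cocycle equation in the two orders $g_n+h = h+g_n$ to obtain the identity $\rho_{g_n}(z+\hat h)/\rho_{g_n}(z) = \rho_h(z+\hat g_n)/\rho_h(z)$, and then conclude from strong continuity of the translation action of $Z$ on $L^2(Z)$ (which the paper simply cites as a known fact, while you spell out the standard density-of-$C(Z)$ argument). The unimodularity step you use to pass from the multiplicative identity to the norm estimate is exactly the "result follows immediately" in the paper's version.
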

\begin{proof}
	Expand the cocycle equation two ways:
	\begin{align*}
		\rho_{g_n+h}(z) & = \rho_{g_n}(z) \rho_h \left( z + \hat{g}_n \right),
		\intertext{and}
		\rho_{g_n+h}(z) & = \rho_h(z) \rho_{g_n} \left( z + \hat{h} \right).
	\end{align*}
	Dividing, we get
	\begin{align*}
		\frac{\rho_{g_n} \left( z + \hat{h} \right)}{\rho_{g_n}(z)}
		= \frac{\rho_h \left( z + \hat{g}_n \right)}{\rho_h(z)}.
	\end{align*}
	Since $\hat{g}_n \to 0$ in $Z$ and the operators $U_tf(z) = f(z+t)$ define a continuous action of $Z$ on $L^2(Z)$, we have
	\begin{align*}
		\frac{\rho_h \left( z + \hat{g}_n \right)}{\rho_h(z)} \to 1
	\end{align*}
	in $L^2(Z)$.
	The result follows immediately.
\end{proof}

\begin{prop} \label{prop: HK P2}
	Let $\mathbf{Z}$ be a Kronecker system and $\rho : G \times Z \to S^1$ a cocycle.
	The following are equivalent:
	\begin{enumerate}[(i)]
		\item	$\rho$ is a coboundary;
		\item	for any sequence $(g_n)_{n \in \N}$ in $G$ with $\hat{g}_n \to 0$ in $Z$,
		we have $\rho_{g_n}(z) \to 1$ in $L^2(Z)$.
	\end{enumerate}
\end{prop}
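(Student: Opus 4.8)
The plan is to treat the two implications separately; the direction (i) $\Rightarrow$ (ii) is the short one. If $\rho$ is a coboundary, say $\rho_g(z) = F(T_g z)/F(z)$ for a measurable $F : Z \to S^1$, then $|\rho_{g_n}(z) - 1| = |F(z + \hat g_n) - F(z)|$ for a.e.\ $z$, so $\norm{L^2(Z)}{\rho_{g_n} - 1} = \norm{L^2(Z)}{F(\,\cdot + \hat g_n) - F} \to 0$ whenever $\hat g_n \to 0$, by strong continuity of the translation action of $Z$ on $L^2(Z)$ (used already in the proof of Lemma~\ref{lem: HK L1}).

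For (ii) $\Rightarrow$ (i) I would encode $\rho$ as a unitary representation. On $\Hil := L^2(Z)$, write $M_\phi$ for multiplication by an $S^1$-valued function $\phi$ and $R_t f := f(\,\cdot + t)$, and set $V_g := M_{\rho_g} R_{\hat g}$; the cocycle identity says exactly that $g \mapsto V_g$ is a unitary representation of $G$ on $\Hil$. Hypothesis (ii) says precisely that $V_{g_n} \to \mathrm{id}$ in the strong operator topology whenever $\hat g_n \to 0$: the factor $R_{\hat g_n} \to \mathrm{id}$ by continuity of translation, and $M_{\rho_{g_n}} \to \mathrm{id}$ because $\rho_{g_n} \to 1$ in $L^2$ forces $\rho_{g_n} \to 1$ in measure, so $\norm{2}{\rho_{g_n} f - f} \to 0$ for $f \in L^\infty$ by dominated convergence (using $|\rho_{g_n} - 1|^2 \le 4$), and $L^\infty$ is dense while $\|M_\phi\| = 1$. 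Since $\mathbf Z$ is an ergodic rotation, $\{\hat g : g \in G\}$ is dense in the compact metrizable group $Z$ (Lemma~\ref{lem: compact group rotations}), and the estimate $\| V_{g_n} f - V_{g_m} f \| = \| V_{g_n - g_m} f - f \|$ together with $\hat g_n - \hat g_m \to 0$ shows $(V_{g_n} f)_n$ is Cauchy whenever $(\hat g_n)_n$ converges. The standard argument for extending a uniformly continuous homomorphism from a dense subgroup to its (here compact) completion then produces a strongly continuous unitary representation $W : Z \to \mathcal{U}(\Hil)$ with $W_{\hat g} = V_g$ for every $g$. This extension step is the crux of the argument: it is where (ii) is genuinely used, and it is the part requiring the most care (well-definedness of $W$, unitarity, strong continuity, and completeness of the unitary group in a suitable two-sided metric).

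Once $W$ is in hand, the rest is soft. As $W$ is a strongly continuous unitary representation of the compact \emph{abelian} group $Z$ on the nonzero space $\Hil$, the Peter--Weyl theorem gives a one-dimensional $W$-invariant subspace: there are $\xi \in \Hil \setminus \{0\}$ and a character $\eta \in \hat Z$ with $W_y \xi = \eta(y)\xi$ for all $y \in Z$. Taking $y = \hat g$ yields $\rho_g(z)\,\xi(z+\hat g) = \eta(\hat g)\,\xi(z)$ for a.e.\ $z$ and every $g$; taking absolute values shows $|\xi|$ is $(T_g)_{g \in G}$-invariant, hence a.e.\ equal to a positive constant by ergodicity, so after rescaling we may assume $|\xi| = 1$ a.e. Setting $F(z) := \overline{\xi(z)}\,\eta(z)$, a measurable $S^1$-valued function, and using $\overline{\xi} = \xi^{-1}$, the identity $\eta(z+\hat g) = \eta(\hat g)\eta(z)$, and the rearrangement $\rho_g(z) = \eta(\hat g)\,\xi(z)/\xi(z+\hat g)$ of the eigenvalue equation, one computes directly that $F(T_g z)/F(z) = \rho_g(z)$ for a.e.\ $z$ and every $g$, so $\rho$ is a coboundary. (One can instead bypass constructing $W$ on all of $Z$ by arguing that (ii) makes $V$ an almost periodic representation of $G$, hence a direct sum of characters, and that (ii) forces the resulting character of $G$ to factor through $Z$; but the extension-to-$Z$ route seems cleanest.)
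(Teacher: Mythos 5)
Your (i)$\implies$(ii) direction is the same as the paper's. For (ii)$\implies$(i), the route you take — the unitary cocycle representation $V_g = M_{\rho_g}R_{\hat{g}}$, its extension to a strongly continuous representation of the closure of the rotation orbit, and the Peter--Weyl eigenvector trick (absorbing the eigenvalue $\eta(\hat{g})$ as the coboundary of the character $\eta$ of $Z$) — is correct and genuinely different from the paper's argument, but only under an ergodicity hypothesis that the proposition does not make. The statement is for an arbitrary Kronecker system; ergodicity is deliberately added only in Proposition \ref{prop: HK P3} and later. You invoke ergodicity twice: to get density of $\{\hat{g} : g \in G\}$ in $Z$ (so that $V$ extends to a representation of all of $Z$), and to conclude that $|\xi|$ is a.e.\ constant. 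This is not a cosmetic omission: the paper applies Proposition \ref{prop: HK P2} to non-ergodic rotations, e.g.\ the diagonal action $\tilde{S}_g(w) = w + (a_1\hat{g}, a_2\hat{g}, a_3\hat{g})$ on $W$ in the proofs of Proposition \ref{prop: constant Mackey} and Theorem \ref{thm: HK T10}, where the orbit closure $K = \overline{\{\hat{g} : g \in G\}}$ is a proper closed subgroup and the ergodic components are the cosets of $K$.

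In the non-ergodic case your construction still yields a strongly continuous unitary representation of $K$ on $L^2(Z)$, but a single Peter--Weyl eigenvector $\xi$ no longer produces a transfer function: $|\xi|$ is only invariant under translation by $K$ (constant on each coset $z+K$, not on $Z$), it may vanish on a positive-measure union of components, and the relevant character generally changes from component to component. One is therefore forced to argue on each ergodic component $z + K$ separately and then choose the transfer functions measurably in the component — which is exactly the extra step the paper supplies: it builds the continuous map $K \ni t \mapsto \varphi_t \in L^2(Z)$ with $\varphi_{\hat{g}} = \rho_g$, derives the extended cocycle identity $\varphi_{t+\hat{g}}(z) = \varphi_t(z)\rho_g(z+t)$, applies Fubini to obtain, for a.e.\ $z$, the coboundary relation on $z+K$ with $F_z(t) := \varphi_t(z)$, and invokes \cite[Proposition 2]{lesigne} for measurable dependence on $z$. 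So your proof is fine (and arguably slicker than the paper's) when $\mathbf{Z}$ is ergodic; to prove the proposition as stated you must carry out the extension only along $K$ and replace the single-eigenvector step by a fibered, measurably glued version of this argument.
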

\begin{proof}
	Suppose (i) holds.
	Let $F : Z \to S^1$ so that
	\begin{align*}
		\rho_g(z) = \frac{F \left( z + \hat{g} \right)}{F(z)}.
	\end{align*}
	Suppose $\hat{g}_n \to 0$.
	Then, since the operators $U_tf(z) = f(z+t)$ define a continuous action of $Z$ on $L^2(Z)$, we have
	\begin{align*}
		\rho_{g_n}(z) = \frac{F \left( z + \hat{g}_n \right)}{F(z)} \to 1.
	\end{align*}
	That is, (ii) holds.
	
	Conversely, suppose (ii) holds.
	Let $K := \overline{\left\{ \hat{g} : g \in G \right\}}$.
	Fix a translation-invariant metric $d$ on $K$.
	Let $\eps > 0$.
	By (ii), there exists $\delta > 0$ such that, if $d \left( \hat{g}, 0 \right) < \delta$,
	then $\left\| \rho_g - 1 \right\|_{L^2(Z)} < \eps$.
	Let $g \in G$ with $d \left( \hat{g}, 0 \right) < \delta$.
	Then for $h \in G$, we have
	\begin{align} \label{eq: Cauchy condition}
		\norm{L^2(Z)}{\rho_{g+h}(z) - \rho_h(z)}
		& = \norm{L^2(Z)}{\rho_g \left( z + \hat{h} \right) - 1} & (\text{cocycle equation}) \nonumber \\
		& = \norm{L^2(Z)}{\rho_g(z) - 1} & (\text{translation-invariance of Haar measure}) \\
		& < \eps. \nonumber
	\end{align}
	
	Suppose $(g_n)_{n \in \N}$ is a sequence in $G$ such that $\hat{g}_n$ converges in $Z$.
	By \eqref{eq: Cauchy condition}, the sequence $\left( \rho_{g_n} \right)_{n \in \N}$ is Cauchy in $L^2(Z)$,
	so $\rho_{g_n}$ converges.
	Thus, there is a unique continuous function map $K \ni t \mapsto \varphi_t \in L^2(Z)$
	such that $\varphi_{\hat{g}} = \rho_g$ a.e.
	Applying the cocycle equation and using continuity of this map, for every $g \in G$ and $t \in K$, we have
	\begin{align} \label{eq: extended cocycle eq}
		\varphi_{t+\hat{g}}(z) = \varphi_t(z) \rho_g \left( z + t \right)
	\end{align}
	for almost every $z \in Z$.
	
	By Fubini's theorem, for almost every $z \in Z$,
	equation \eqref{eq: extended cocycle eq} holds for almost every $t \in K$.
	For any such $z \in Z$, set $F_z(t) := \varphi_t(z)$.
	Then by \eqref{eq: extended cocycle eq}, we have
	\begin{align*}
		\frac{F_z \left( t+\hat{g} \right)}{F_z(t)}
		= \frac{\varphi_{t+\hat{g}}(z)}{\varphi_t(z)}
		= \rho_g \left( z + t \right)
	\end{align*}
	for almost every $t \in K$.
	That is, for almost every $z \in Z$, $\rho$ is a coboundary on the ergodic component $z + K$.
	By choosing the functions $F_z$ to depend measurably on $z$ (see \cite[Proposition 2]{lesigne}),
	it follows that $\rho$ is a coboundary for $\mathbf{Z}$.
\end{proof}

\begin{prop} \label{prop: HK P3}
	Let $\mathbf{Z}$ be an ergodic Kronecker system and $\rho : G \times Z \to S^1$ a cocycle.
	The following are equivalent:
	\begin{enumerate}[(i)]
		\item	$\rho$ is cohomologous to a character;
		\item	for any sequence $(g_n)_{n \in \N}$ in $G$ with $\hat{g}_n \to 0$ in $Z$,
		there is a sequence $(c_n)_{n \in \N}$ in $S^1$ such that
		$c_n\rho_{g_n}(z) \to 1$ in $L^2(Z)$;
		\item	for every $t \in Z$,
		\begin{align*}
			\frac{\rho_g(z+t)}{\rho_g(z)}
		\end{align*}
		is a coboundary;
		\item	there is a Borel set $A \subseteq Z$ with $m_Z(A) > 0$ such that
		\begin{align*}
			\frac{\rho_g(z+t)}{\rho_g(z)}
		\end{align*}
		is a coboundary for every $t \in A$.
	\end{enumerate}
\end{prop}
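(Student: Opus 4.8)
The plan is to establish the cycle (i)$\Rightarrow$(ii)$\Rightarrow$(iii)$\Rightarrow$(i) together with the equivalence (iii)$\Leftrightarrow$(iv). The implication (iii)$\Rightarrow$(iv) is the trivial case $A=Z$, and (i)$\Rightarrow$(ii) is easy: if $\rho_g(z)=\gamma(g)\,F(z+\hat g)\,\overline{F(z)}$ with $\gamma\in\hat G$ and $F:Z\to S^1$, then taking $c_n:=\overline{\gamma(g_n)}$ reduces the claim to $\bigl\|F(\cdot+\hat g_n)-F\bigr\|_{L^2(Z)}\to 0$, which holds because $t\mapsto F(\cdot+t)$ is continuous from $Z$ into $L^2(Z)$ and $\hat g_n\to 0$. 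For (ii)$\Rightarrow$(iii) I would use the \emph{derived cocycle} $d_t\rho$, defined for $t\in Z$ by $(d_t\rho)_g(z):=\rho_g(z+t)\,\overline{\rho_g(z)}$; the cocycle equation for $\rho$ shows $d_t\rho\in Z^1_G(\mathbf Z,S^1)$. Given a sequence with $\hat g_n\to 0$, choose $c_n$ as in (ii); then $(d_t\rho)_{g_n}=\bigl(c_n\rho_{g_n}\bigr)(\cdot+t)\cdot\overline{c_n\rho_{g_n}}$ is a product of two functions bounded by $1$ and each converging to $1$ in $L^2(Z)$ (translation being an $L^2$-isometry), so $(d_t\rho)_{g_n}\to 1$ in $L^2(Z)$; by Proposition~\ref{prop: HK P2} applied to $d_t\rho$, the cocycle $d_t\rho$ is a coboundary, for every $t\in Z$, which is (iii).

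For (iv)$\Rightarrow$(iii), consider $B:=\{t\in Z:\ d_t\rho\text{ is a coboundary}\}$. The identity $d_{t+s}\rho=(U_s d_t\rho)\cdot d_s\rho$, where $U_s$ denotes the translation $\tau\mapsto\tau(\cdot+s)$ of the $z$-variable, the observation that $U_s$ carries coboundaries to coboundaries, and $d_0\rho\equiv 1$ together show that $B$ is a subgroup of $Z$. By hypothesis $A\subseteq B$, so $B\supseteq A-A$, which contains an open neighbourhood of $0$ by Steinhaus's theorem (applicable since $Z$ is compact metrizable); hence $B$ is an open, and therefore closed, subgroup of $Z$. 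Moreover, since $G$ is abelian, the cocycle equation yields $d_{\hat g}\rho=\partial\rho_g$ (the coboundary of the single function $\rho_g:Z\to S^1$), so $\hat g\in B$ for all $g\in G$; since $\{\hat g:g\in G\}$ is dense in $Z$ (ergodicity of $\mathbf Z$, via Lemma~\ref{lem: compact group rotations}) and $B$ is closed, $B=Z$. Thus $d_t\rho$ is a coboundary for every $t\in Z$.

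It remains to prove (iii)$\Rightarrow$(i). Choose, depending measurably on $t$ (measurable selection as in \cite[Proposition 2]{lesigne}), functions $\phi_t:Z\to S^1$ with $\rho_g(z+t)\,\overline{\rho_g(z)}=\phi_t(z+\hat g)\,\overline{\phi_t(z)}$ for all $g$ and a.e.\ $z$, for a.e.\ $t$. Comparing this relation for $t$, $s$, and $t+s$ and using ergodicity of $\mathbf Z$ to collapse a $\{T_g\}$-invariant function to a constant, one obtains a measurable $c:Z\times Z\to S^1$ with $\phi_{t+s}(z)=c(t,s)\,\phi_t(z+s)\,\phi_s(z)$ a.e., where $c$ satisfies the $2$-cocycle identity. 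Following the method of Host and Kra \cite{hk} for handling this twisted cocycle on the compact group $Z$, one produces a measurable $L:Z\to S^1$ (and some measurable $\eta:Z\to S^1$) with $\phi_t(z)=\eta(t)\,L(z+t)\,\overline{L(z)}$ for a.e.\ $(t,z)$. Substituting this into the defining relation for $\phi_t$, the factor $\eta(t)$ cancels and the function $z\mapsto\rho_g(z)\,\overline{L(z+\hat g)}\,L(z)$ becomes invariant under translation by a.e.\ $t\in Z$; by ergodicity of $\mathbf Z$ it is a.e.\ equal to a constant $\gamma(g)\in S^1$, and the cocycle identity for $\rho$ forces $g\mapsto\gamma(g)$ to be a character. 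Hence $\rho_g(z)=\gamma(g)\,L(z+\hat g)\,\overline{L(z)}$, which is (i).

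I expect the step (iii)$\Rightarrow$(i) --- extracting a single transfer function $L$ from the measurable family $(\phi_t)_{t\in Z}$ and from the $2$-cocycle $c$ --- to be the main obstacle. It is where the measurable-selection subtleties are concentrated and where one must untwist (or absorb) $c$, and it is precisely the point at which the argument relies on the Host--Kra machinery \cite{hk} rather than on the soft Hilbert-space arguments used elsewhere; porting that machinery from $\Z$-actions to general countable abelian $G$ is the delicate part, whereas the cycle skeleton and the implication (iv)$\Rightarrow$(iii) are clean.
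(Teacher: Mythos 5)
Your treatment of (i)$\Rightarrow$(ii), (ii)$\Rightarrow$(iii) and (iii)$\Rightarrow$(iv) coincides with the paper's, and your Steinhaus argument for (iv)$\Rightarrow$(iii) (the set $B=\{t: d_t\rho \text{ is a coboundary}\}$ is a subgroup containing $A$, hence $A-A$, hence open and closed, and it contains the dense orbit $\{\hat g\}$) is correct and is a legitimate alternative reduction; the paper never passes back through (iii) but instead proves (iv)$\Rightarrow$(i) directly. The genuine gap is in your closing step (iii)$\Rightarrow$(i), which is where all the difficulty sits. From a measurable choice of $\phi_t$ with $\rho_g(\cdot+t)\overline{\rho_g} = \phi_t(\cdot+\hat g)\overline{\phi_t}$ you correctly deduce, using ergodicity of the $G$-rotation, a relation $\phi_{t+s}(z)=c(t,s)\,\phi_t(z+s)\,\phi_s(z)$ with a measurable scalar $2$-cocycle $c$ on $Z$. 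But the assertion that ``following the method of Host and Kra'' one can write $\phi_t(z)=\eta(t)\,L(z+t)\,\overline{L(z)}$ is exactly the claim that $c$ is a measurable $2$-coboundary (together with an a.e.-to-strict upgrade of the cocycle identity), and this cannot simply be cited: measurable $S^1$-valued $2$-cocycles on compact abelian groups are not coboundaries in general (nontrivial antisymmetric bicharacters already give nontrivial classes), so one would have to prove that this particular $c$ can be trivialized — which is essentially the entire content of the implication, of Conze--Lesigne type difficulty. Moreover, this is not what Host--Kra actually do for the $\Z$-case, so the appeal to their ``machinery'' does not close the step.

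For comparison, the paper's proof of the hard direction goes (iv)$\Rightarrow$(i) and sidesteps the $2$-cocycle problem entirely: choosing $F_t$ measurably in $t\in A$ with $\rho_g(z+t)\overline{\rho_g(z)}=F_t(z+\hat g)\overline{F_t(z)}$, it defines $\Phi(z,\zeta;w,\eta):=\ind_A(z-w)\,F_{z-w}(w)\,\overline{\zeta}\,\eta$ on $(Z\times S^1)^2$, notes that $\Phi$ is invariant under $T^{\rho}\times T^{\rho}$ where $T^{\rho}_g(z,\zeta)=(z+\hat g,\rho_g(z)\zeta)$, expands this invariant function as $\sum_j c_j\, f_j\otimes\overline{f_j}$ with $f_j$ eigenfunctions of $T^{\rho}$, and then takes Fourier expansions in the fiber variable; the hypothesis $m_Z(A)>0$ forces some first Fourier coefficient $a_{j,1}$ to be nonzero, ergodicity makes $|a_{j,1}|$ constant, and the eigenvalue equation $a_{j,1}(z+\hat g)\rho_g^{-1}(z)=\gamma_j(g)a_{j,1}(z)$ yields $\rho\sim\overline{\gamma_j}$. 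Your final deduction (once one has $L$ and $\eta$, the function $\rho_g\,\overline{L(\cdot+\hat g)}\,L$ is a.e.\ translation-invariant, hence a constant $\gamma(g)$, and $\gamma$ is a character) is fine, but as written the existence of $L$ is unproved; either supply a genuine trivialization argument for $c$, or replace the step by the product-system/eigenfunction argument above.
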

\begin{proof}
	(i)$\implies$(ii).
	Let $F : Z \to S^1$ such that
	\begin{align*}
		\rho_g(z) = \gamma(g) \frac{F \left( z + \hat{g} \right)}{F(z)}.
	\end{align*}
	Suppose $\hat{g}_n \to 0$ in $Z$.
	Let $c_n = \overline{\gamma(g_n)} \in S^1$.
	Then
	\begin{align*}
		c_n \rho_{g_n}(z) = \frac{F \left( z + \hat{g}_n \right)}{F(z)} \to 1
	\end{align*}
	since translation is continuous on $L^2(Z)$.
	
	\bigskip
	
	(ii)$\implies$(iii).
	Fix $t \in Z$, and define $\sigma : G \times Z \to S^1$ by
	\begin{align*}
		\sigma_g(z) := \frac{\rho_g(z+t)}{\rho_g(z)}.
	\end{align*}
	Let $\hat{g}_n \to 0$ in $Z$.
	By Proposition \ref{prop: HK P2}, it suffices to show $\sigma_{g_n}(z) \to 1$ in $L^2(Z)$.
	Using property (ii), let $(c_n)_{n \in \N}$ be a sequence in $S^1$ such that $c_n\rho_{g_n}(z) \to 1$ in $L^2(Z)$.
	Since the Haar measure on $Z$ is translation-invariant, we also have $c_n\rho_{g_n}(z+t) \to 1$ in $L^2(Z)$.
	Hence,
	\begin{align*}
		\sigma_{g_n}(z) = \frac{c_n\rho_{g_n}(z+t)}{c_n\rho_{g_n}(z)} \to 1.
	\end{align*}
	
	\bigskip
	
	(iii)$\implies$(iv).
	This implication is trivial: take $A = Z$.
	
	\bigskip
	
	(iv)$\implies$(i).
	For each $t \in A$, condition (iv) says that there is a function $F_t : Z \to S^1$ such that
	\begin{align} \label{eq: derivative coboundary}
		\frac{\rho_g(z+t)}{\rho_g(z)} = \frac{F_t \left( z + \hat{g} \right)}{F_t(z)}.
	\end{align}
	The functions $F_t$ can be chosen so that $t \mapsto F_t$ is measurable (see \cite[Proposition 2]{lesigne}).
	
	Define $\Phi : (Z \times S^1)^2 \to \C$ by
	\begin{align*}
		\Phi(z, \zeta; w, \eta) := \ind_A(z-w) F_{z-w}(w) \overline{\zeta}\eta.
	\end{align*}
	Then for the action $T^{\rho}_g(z,\zeta) := (z + \hat{g}, \rho_g(z)\zeta)$,
	the identity \eqref{eq: derivative coboundary} ensures that $\Phi$ is $\left( T^{\rho} \times T^{\rho} \right)$-invariant.
	Since $z \mapsto z + \hat{g}$ is ergodic, we can therefore express $\Phi$ as a sum
	\begin{align*}
		\Phi(z, \zeta; w, \eta) = \sum_j{c_j f_j(z, \zeta) \overline{f_j(w, \eta)}},
	\end{align*}
	where $f_j : Z \times S^1 \to S^1$ are eigenfunctions of $T^{\rho}$.
	
	Now take the Fourier expansion of $f_j$ in $\zeta$:
	\begin{align*}
		f_j(z,\zeta) = \sum_{n \in \Z}{a_{j,n}(z) \zeta^{-n}}.
	\end{align*}
	We obtain
	\begin{align*}
		\ind_A(z-w) F_{z-w}(w) \overline{\zeta}\eta
		= \sum_{j,n,m}{c_j a_{j,n}(z) \overline{a_{j,m}(w)} \zeta^{-n} \eta^m}.
	\end{align*}
	Matching coefficients for $n = m = 1$,
	\begin{align*}
		\ind_A(z-w) F_{z-w}(w) = \sum_j{c_j a_{j,1}(z) \overline{a_{j,1}(w)}}.
	\end{align*}
	Since $m_Z(A) > 0$, the left-hand size is nonzero, so $c_ja_{j,1}(z) \ne 0$ for some $j$.
	
	By assumption, $f_j$ is an eigenfunction of $T^{\rho}$, so there is a character $\gamma_j \in \hat{G}$ such that
	\begin{align*}
		T^{\rho}_gf_j = \gamma_j(g)f_j.
	\end{align*}
	Using the Fourier expansion, this gives
	\begin{align*}
		\sum_{n \in \Z}{a_{j,n} \left( z + \hat{g} \right) \rho_g^{-n}(z) \zeta^{-n}}
		= \gamma_j(g) \sum_{n \in \Z}{a_{j,n}(z) \zeta^{-n}}.
	\end{align*}
	Matching coefficients for $n = 1$, we have
	\begin{align*}
		a_{j,1} \left( z + \hat{g} \right) \rho_g^{-1}(z) = \gamma_j(g) a_{j,1}(z).
	\end{align*}
	One consequence of this identity is that $\left| a_{j,1} \left( z + \hat{g} \right) \right| = \left| a_{j,1}(z) \right|$.
	Since the action $z \mapsto z + \hat{g}$ is ergodic, it follows that $|a_{j,1}|$ is constant, say $|a_{j,1}| = C \ne 0$.
	Thus,
	\begin{align*}
		\rho_g(z) = \overline{\gamma_j}(g) \frac{C^{-1} a_{j,1} \left( z + \hat{g} \right)}{C^{-1} a_{j,1}(z)}.
	\end{align*}
	That is, $\rho_g \sim \overline{\gamma_j}$.
\end{proof}


\subsection{Quasi-affine cocycles} \label{sec: QA}

We now seek to give a characterization of CL cocycles with a similar flavor to the previous section.

\begin{defn}
	Let $Z$ be a compact abelian group.
	A function $\omega : Z \to S^1$ is \emph{affine} if there is a constant $c \in S^1$
	and a character $\lambda \in \hat{Z}$ such that
	$\omega(z) = c\lambda(z)$.
\end{defn}

\begin{prop} \label{prop: HK P6}
	Let $\mathbf{Z}$ be an ergodic Kronecker system and $\rho : G \times Z \to S^1$ a cocycle.
	The following are equivalent:
	\begin{enumerate}[(i)]
		\item	for any sequence $(g_n)_{n \in \N}$ in $G$ with $\hat{g}_n \to 0$ in $Z$,
		there is a sequence $(\omega_n)_{n \in \N}$ of affine functions such that
		$\omega_n\rho_{g_n}(z) \to 1$ in $L^2(Z)$;
		\item	for every $t \in Z$,
		\begin{align*}
			\frac{\rho_g(z+t)}{\rho_g(z)}
		\end{align*}
		is cohomologous to a character;
		\item	there is a Borel set $A \subseteq Z$ with $m_Z(A) > 0$ such that
		\begin{align*}
			\frac{\rho_g(z+t)}{\rho_g(z)}
		\end{align*}
		is cohomologous to a character for every $t \in A$.
	\end{enumerate}
\end{prop}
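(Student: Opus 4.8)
The plan is to establish $(i)\Rightarrow(ii)$, the equivalence $(ii)\Leftrightarrow(iii)$, and $(ii)\Rightarrow(i)$. Write $\partial_t\rho$ for the cocycle $(\partial_t\rho)_g(z)=\rho_g(z+t)/\rho_g(z)$. Two elementary identities are used repeatedly: the symmetric second difference $(\partial_s\partial_t\rho)_g(z)=\rho_g(z+s+t)\rho_g(z)/\big(\rho_g(z+s)\rho_g(z+t)\big)=(\partial_t\partial_s\rho)_g(z)$; and, for an affine function $\omega=c\lambda$ with $c\in S^1$, $\lambda\in\hat Z$, the quotient $\omega(z+t)/\omega(z)=\lambda(t)$ is constant in $z$. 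We also use that $U_tf(z):=f(z+t)$ defines a strongly continuous isometric action of $Z$ on $L^2(Z)$.

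For $(i)\Rightarrow(ii)$, fix $t\in Z$; by the equivalence $(i)\Leftrightarrow(ii)$ of Proposition~\ref{prop: HK P3} applied to the cocycle $\partial_t\rho$, it suffices to show that whenever $\hat g_n\to 0$ there exist $c_n\in S^1$ with $c_n(\partial_t\rho)_{g_n}\to 1$ in $L^2(Z)$. Choosing affine $\omega_n=b_n\lambda_n$ with $\omega_n\rho_{g_n}\to 1$, we get $(\partial_t\rho)_{g_n}=\overline{\lambda_n(t)}\cdot\big(U_t(\omega_n\rho_{g_n})\big)/\big(\omega_n\rho_{g_n}\big)$, and since $U_t$ is an isometry both factors tend to $1$ in $L^2(Z)$ and are unimodular, so the quotient tends to $1$; take $c_n=\lambda_n(t)$. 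The implication $(ii)\Rightarrow(iii)$ is immediate (take $A=Z$). For $(iii)\Rightarrow(ii)$, write $(\partial_t\rho)_g(z)=\gamma_t(g)F_t(z+\hat g)/F_t(z)$ for $t\in A$; then for every $s\in Z$ the cocycle $\partial_s\partial_t\rho=\partial_t\partial_s\rho$ is a coboundary, with transfer function $z\mapsto F_t(z+s)/F_t(z)$ (the character $\gamma_t$ is annihilated by $\partial_s$). Hence, fixing $s$, the cocycle $\partial_s\rho$ has $\partial_t(\partial_s\rho)$ a coboundary for every $t$ in the positive-measure set $A$, so by $(iv)\Rightarrow(i)$ of Proposition~\ref{prop: HK P3} applied to $\partial_s\rho$, the cocycle $\partial_s\rho$ is cohomologous to a character; as $s\in Z$ was arbitrary, $(ii)$ holds.

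The main step is $(ii)\Rightarrow(i)$. Using $(ii)$ and a measurable selection (\cite[Proposition~2]{lesigne}) fix measurable maps $t\mapsto\gamma_t\in\hat G$ and $t\mapsto F_t\in\M(Z,S^1)$ with $(\partial_t\rho)_g(z)=\gamma_t(g)F_t(z+\hat g)/F_t(z)$ for all $g\in G$ and a.e. $(t,z)$. Given $\hat g_n\to 0$, let $\beta_n(t):=\innprod{U_t\rho_{g_n}}{\rho_{g_n}}=\int_Z\rho_{g_n}(z+t)\overline{\rho_{g_n}(z)}\,dz$ be the autocorrelation of $\rho_{g_n}$. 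Integrating the selected identity at $g=g_n$ over $z$ yields $\beta_n(t)=\gamma_t(g_n)\innprod{U_{\hat g_n}F_t}{F_t}$, so $|\beta_n(t)|\le 1$ and, since $U_{\hat g_n}F_t\to F_t$ in $L^2(Z)$, $|\beta_n(t)|\to 1$ for a.e. $t$. Expanding $\rho_{g_n}=\sum_{\lambda\in\hat Z}a_{n,\lambda}\lambda$ gives $\beta_n(t)=\sum_\lambda|a_{n,\lambda}|^2\lambda(t)$ with $\sum_\lambda|a_{n,\lambda}|^2=1$, hence $\int_Z|\beta_n(t)|^2\,dt=\sum_\lambda|a_{n,\lambda}|^4\to 1$ by dominated convergence; since $\sum_\lambda|a_{n,\lambda}|^4\le\sup_\lambda|a_{n,\lambda}|^2\le 1$, picking $\lambda_n$ with $|a_{n,\lambda_n}|^2$ (nearly) maximal forces $|a_{n,\lambda_n}|\to 1$, so $\|\rho_{g_n}-a_{n,\lambda_n}\lambda_n\|_{L^2(Z)}^2=1-|a_{n,\lambda_n}|^2\to 0$. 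Then $\omega_n:=\overline{a_{n,\lambda_n}/|a_{n,\lambda_n}|}\cdot\overline{\lambda_n}$ is affine and $\omega_n\rho_{g_n}\to 1$ in $L^2(Z)$, which is $(i)$. The one point requiring real care over a general countable abelian group $G$ is the measurable selection of $(\gamma_t,F_t)$ and the validity of the identity $\beta_n(t)=\gamma_t(g_n)\innprod{U_{\hat g_n}F_t}{F_t}$ for a.e. $t$ simultaneously, which replaces the ``time-$n$'' computations available for $G=\Z$ in \cite{hk}; once this is in place the concentration argument for the Fourier mass of $\rho_{g_n}$ proceeds verbatim, and the remaining implications are formal.
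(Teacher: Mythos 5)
Your proof is correct, and while the overall skeleton (reduce to Propositions~\ref{prop: HK P2} and \ref{prop: HK P3}, then run a Fourier concentration argument) matches the paper, two of your sub-arguments genuinely differ. For (iii)$\Rightarrow$(ii), the paper shows that the set $K$ of $t$ for which $\rho_g(z+t)/\rho_g(z)$ is cohomologous to a character is a measurable subgroup of positive measure, hence clopen, and then uses the cocycle identity plus density of $\{\hat{g} : g \in G\}$ to get $K = Z$; you instead difference twice, noting $\partial_s\partial_t\rho = \partial_t\partial_s\rho$ is a coboundary for every $t$ in the positive-measure set $A$, and apply (iv)$\Rightarrow$(i) of Proposition~\ref{prop: HK P3} to the cocycle $\partial_s\rho$ — equally valid, and it trades the Steinhaus-type topological step for one more invocation of P3. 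For the main implication (ii)$\Rightarrow$(i), the paper works with $\varphi_{t,n}(z) = \rho_{g_n}(z+t) - \gamma_t(g_n)\rho_{g_n}(z)$, integrates its $L^2$-norm over $t$, and analyzes the Fourier coefficients $\hat{c}_n(\lambda)$ of $t \mapsto \gamma_t(g_n)$ to isolate a dominant character; you instead pass to the autocorrelation $\beta_n(t) = \innprod{U_t\rho_{g_n}}{\rho_{g_n}}$, use the Conze--Lesigne relation only through $|\beta_n(t)| = \left| \innprod{U_{\hat{g}_n}F_t}{F_t} \right| \to 1$, and then deduce concentration of the Fourier mass of $\rho_{g_n}$ from $\int_Z |\beta_n|^2 = \sum_\lambda |a_{n,\lambda}|^4 \to 1$. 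This buys a real simplification: since $|\beta_n(t)|$ kills the factor $\gamma_t(g_n)$ and $\beta_n$ is automatically continuous in $t$, you never need the quantities $(\gamma_t, F_t)$ to depend measurably on $t$ — so the measurable-selection caveat in your final sentence is actually unnecessary in your own argument (hypothesis (ii) pointwise in $t$ suffices), whereas the paper's route does implicitly need measurability of $t \mapsto \gamma_t(g_n)$ to form $\hat{c}_n$. All the individual verifications (the quotient trick with unimodular functions in (i)$\Rightarrow$(ii), Parseval for $\beta_n$, and the passage from $|a_{n,\lambda_n}| \to 1$ to $\omega_n\rho_{g_n} \to 1$) are sound.
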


\begin{defn}
	Let $\mathbf{Z}$ be an ergodic Kronecker system.
	A cocycle $\rho : G \times Z \to S^1$ is \emph{quasi-affine}
	if it satisfies any (all) of the conditions of Proposition \ref{prop: HK P6}.
\end{defn}

\begin{rem}
	Condition (ii) is equivalent to satisfying the Conze--Lesigne equation for the parameters $(1,1)$.
	Indeed, writing out the cohomology relation explicitly, there are characters $\gamma_t \in \hat{G}$
	and functions $F_t : Z \to S^1$ such that
	\begin{align*}
		\frac{\rho_g(z+t)}{\rho_g(z)} = \gamma_t(g) \frac{F_t \left( z + \hat{g} \right)}{F_t(z)}.
	\end{align*}
\end{rem}

\begin{proof}[Proof of Proposition \ref{prop: HK P6}]
	(i) $\implies$ (ii).
	Suppose (i) holds.
	Let $t \in Z$, and consider the cocycle
	\begin{align*}
		\sigma_g(z) := \frac{\rho_g(z + t)}{\rho_g(z)}.
	\end{align*}
	We want to show that $\sigma$ is cohomologous to a character.
	By Proposition \ref{prop: HK P3}, it suffices to show that for any $(g_n)_{n \in \N}$ in $G$
	with $\hat{g}_n \to 0$ in $Z$, there is a sequence $(c_n)_{n \in \N}$ in $S^1$ such that
	$c_n\sigma_{g_n}(z) \to 1$ in $L^2(Z)$.
	Let $\hat{g}_n \to 0$.
	By (i), let $\omega_n = c_n\lambda_n$ be affine functions such that
	$c_n\lambda_n(z)\rho_{g_n}(z) \to 1$ in $L^2(Z)$.
	Since Haar measure is shift-invariant, we also have
	$c_n\lambda_n(z+t)\rho_{g_n}(z+t) \to 1$ in $L^2(Z)$.
	Let $d_n = \lambda_n(t) \in S^1$.
	Then
	\begin{align*}
		d_n\sigma_{g_n}(z) = \frac{\lambda_n(t)\rho_{g_n}(z + t)}{\rho_{g_n}(z)}
		= \frac{c_n\lambda_n(z+t)\rho_{g_n}(z + t)}{c_n\lambda_n(z)\rho_{g_n}(z)} \to 1
	\end{align*}
	in $L^2(Z)$.
	Thus, (ii) holds.
	
	\bigskip
	
	(ii) $\implies$ (i).
	Conversely, suppose (ii) holds.
	Write
	\begin{align*}
		\frac{\rho_g(z+t)}{\rho_g(z)} = \gamma_t(g) \frac{F_t \left( z + \hat{g} \right)}{F_t(z)}.
	\end{align*}
	Let $\hat{g}_n \to 0$.
	For each $t \in Z$,
	\begin{align*}
		\frac{\rho_g(z+t)}{\gamma_t(g)\rho_g(z)} = \frac{F_t \left( z + \hat{g} \right)}{F_t(z)}
	\end{align*}
	is a coboundary, so by Proposition \ref{prop: HK P2},
	\begin{align} \label{eq: rho gamma}
		\frac{\rho_{g_n}(z+t)}{\gamma_t(g_n)\rho_{g_n}(z)} \to 1
	\end{align}
	in $L^2(Z)$.
	For ease of notation, let $f_n : Z \to S^1$ be given by $f_n(z) := \rho_{g_n}(z)$
	and $c_n : Z \to S^1$ by $c_n(t) := \gamma_t(g_n)$.
	Finally, set $\varphi_{t,n}(z) := f_n(z + t) - c_n(t) f_n(z)$.
	Then equation \eqref{eq: rho gamma} can be rewritten as:
	\begin{align} \label{eq: norm phi to 0}
		\norm{L^2(Z)}{\varphi_{t,n}} \to 0
	\end{align}
	for every $t \in Z$.
	
	Now we take the Fourier transform of $\varphi_{t,n}$:
	\begin{align*}
		\hat{\varphi}_{t,n}(\lambda)
		= \int_Z{f_n(z+t)\overline{\lambda(z)}~dz} - c_n(t) \int_Z{f_n(z)\overline{\lambda(z)}~dz}
		= \hat{f}_n(\lambda) \left( \lambda(t) - c_n(t) \right).
	\end{align*}
	By Parseval's identity and \eqref{eq: norm phi to 0}, we have
	\begin{align*}
		\sum_{\lambda \in \hat{Z}}{\left| \hat{f}_n(\lambda) \right|^2 \left(2 - 2 \Re{\overline{\lambda(t)}c_n(t)} \right)}
		= \sum_{\lambda \in \hat{Z}}{\left| \hat{\varphi}_{t,n}(\lambda) \right|^2}
		= \norm{L^2(Z)}{\varphi_{t,n}}^2 \to 0.
	\end{align*}
	for every $t \in Z$.
	
	Integrating over $t \in Z$ and applying the dominated convergence theorem, we get
	\begin{align*}
		\eps_n := \sum_{\lambda \in \hat{Z}}{\left| \hat{f}_n(\lambda) \right|^2 \left(1 - \Re{\hat{c_n}(\lambda)} \right)}
		\to 0.
	\end{align*}
	The weights $\left| \hat{f}_n(\lambda) \right|^2$ give a probability density on $\hat{Z}$ for every $n \in \N$.
	Indeed, by Parseval's identity,
	\begin{align} \label{eq: Parseval}
		\sum_{\lambda \in \hat{Z}}{\left| \hat{f}_n(\lambda) \right|^2} = \norm{L^2(Z)}{f_n}^2
		= \int_Z{\left| \rho_{g_n}(z) \right|^2~dz} = \int_Z{1~dz} = 1.
	\end{align}
	Therefore, for some $\lambda_n \in \hat{Z}$, we have
	$1 - \Re{\hat{c}_n(\lambda_n)} \le \eps_n$.
	Then for $\lambda \ne \lambda_n$, we can use orthogonality of characters to get the bound
	\begin{align*}
		\left| \hat{c}_n(\lambda) \right| & = \left| \int_Z{\left( c_n(t) - \lambda_n(t) \right) \overline{\lambda(t)}~dt} \right| \\
		& \le \int_Z{\left| c_n(t) - \lambda_n(t) \right|~dt} \\
		& = \int_Z{\left| c_n(t)\overline{\lambda_n(t)} - 1 \right|~dt} \\
		& \le \left( \int_Z{\left| c_n(t)\overline{\lambda_n(t)} - 1 \right|^2~dt} \right)^{1/2} \\
		& = \left( 2 \Re{\int_Z{\left( 1 - c_n(t)\overline{\lambda_n(t)} \right)~dt}}\right)^{1/2} \\
		& = \sqrt{2} \left( 1 - \Re{\hat{c}_n(\lambda_n)} \right)^{1/2} \\
		& \le \sqrt{2 \eps_n}.
	\end{align*}
	So,
	\begin{align*}
		\sum_{\lambda \ne \lambda_n}{\left| \hat{f}_n(\lambda) \right|^2}
		\le \sum_{\lambda \ne \lambda_n}{\left| \hat{f}_n(\lambda) \right|^2
			\left( \frac{1 - \Re{\hat{c_n}(\lambda)}}{{1 - \sqrt{2\eps_n}}} \right)}
		\le \frac{\eps_n}{1 - \sqrt{2 \eps_n}} \to 0.
	\end{align*}
	Comparing with \eqref{eq: Parseval}, this implies $\left| \hat{f}_n(\lambda_n) \right| \to 1$.
	
	Let $\omega_n : Z \to S^1$ be the affine function
	\begin{align*}
		\omega_n(z) := \frac{\hat{f}_n(\lambda_n)}{\left| \hat{f}_n(\lambda_n) \right|} \lambda_n(z).
	\end{align*}
	Then
	\begin{align*}
		\norm{L^2(Z)}{\overline{w}_n \rho_{g_n}(z) - 1}
		& = \norm{L^2(Z)}{f_n(z) - \frac{\hat{f}_n(\lambda_n)}{\left| \hat{f}_n(\lambda_n) \right|} \lambda_n(z)} \\
		& \le \norm{L^2(Z)}{f_n(z) - \hat{f}_n(\lambda_n) \lambda_n(z)}
		+ \left| \left| \hat{f}_n(\lambda_n) \right| - 1 \right| \\
		& = \left( \sum_{\lambda \ne \lambda_n}{\left| \hat{f}_n(\lambda) \right|^2} \right)^{1/2}
		+ \left| \left| \hat{f}_n(\lambda_n) \right| - 1 \right| \to 0.
	\end{align*}
	Therefore, (i) holds.
	
	\bigskip
	
	(ii) $\implies$ (iii).
	This implication is obvious: take $A = Z$.
	
	\bigskip
	
	(iii) $\implies$ (ii).
	Suppose (iii) holds.
	Define
	\begin{align*}
		K := \left\{ t \in Z : \frac{\rho_g(z+t)}{\rho_g(z)}~\text{is cohomologous to a character} \right\}.
	\end{align*}
	It is easy to check that $K$ is a subgroup of $Z$.
	By assumption, $m_Z(K) > 0$, so $K$ must be a clopen set.
	Using the cocycle equation, we have
	\begin{align*}
		\frac{\rho_g \left( z + \hat{h} \right)}{\rho_g(z)}
		= \frac{\rho_h \left( z + \hat{g} \right)}{\rho_h(z)},
	\end{align*}
	for every $g, h \in G$.
	Therefore, $\hat{h} \in K$ for every $h \in G$.
	But $\left\{ \hat{h} : h \in G \right\}$ is dense in $Z$, and $K$ is closed, so $K = Z$.
	This proves (ii).
\end{proof}

\begin{lem} \label{lem: HK C1}
	Let $\mathbf{Z}$ be an ergodic Kronecker system and $\rho : G \times Z \to S^1$ a cocycle.
	Suppose $(g_n)_{n \in \N}$ is a sequence in $G$ such that $\hat{g}_n \to 0$ in $Z$
	and $\omega_n(z) = c_n\lambda_n(z)$ are affine functions such that
	$\omega_n \rho_{g_n} \to 1$ in $L^2(Z)$.
	Then, for every $h \in G$, $\lambda_n \left( \hat{h} \right) \to 1$.
\end{lem}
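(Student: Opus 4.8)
The plan is to deduce the assertion directly from the hypothesis $\omega_n\rho_{g_n}\to 1$ and the cocycle equation, exploiting throughout that every function in sight is unimodular. Fix $h\in G$.

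First I would rewrite the hypothesis in a more convenient form. Since $|\omega_n(z)|=|\rho_{g_n}(z)|=1$ for a.e.\ $z$, the condition $\norm{L^2(Z)}{\omega_n\rho_{g_n}-1}\to 0$ is the same as $\norm{L^2(Z)}{\rho_{g_n}-\overline{\omega_n}}\to 0$. Because Haar measure on $Z$ is translation-invariant, the translation operator $U_t f(z)=f(z+\hat t\,)$ (here $U_{\hat h}f(z)=f(z+\hat h)$) is unitary on $L^2(Z)$, so applying it also gives $\norm{L^2(Z)}{\rho_{g_n}(\cdot+\hat h)-\overline{\omega_n}(\cdot+\hat h)}\to 0$. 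The point of this step is that one gets $L^2$-control of $\rho_{g_n}$ simultaneously at the base point $z$ and at the shifted point $z+\hat h$.

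Next I would compare the multiplicative derivative $\rho_{g_n}(z+\hat h)/\rho_{g_n}(z)$ with a \emph{constant}. Since $\lambda_n\in\hat Z$ is a character, $\omega_n(z+\hat h)=c_n\lambda_n(z)\lambda_n(\hat h)=\lambda_n(\hat h)\,\omega_n(z)$, hence $\overline{\omega_n(z+\hat h)}\,\omega_n(z)=\overline{\lambda_n(\hat h)}$, a point of $S^1$. Writing
\[
\rho_{g_n}(z+\hat h)\overline{\rho_{g_n}(z)}-\overline{\lambda_n(\hat h)}
=\bigl(\rho_{g_n}(z+\hat h)-\overline{\omega_n(z+\hat h)}\bigr)\overline{\rho_{g_n}(z)}
+\overline{\omega_n(z+\hat h)}\bigl(\overline{\rho_{g_n}(z)}-\omega_n(z)\bigr),
\]
and using unimodularity of $\overline{\rho_{g_n}(z)}$ and $\overline{\omega_n(z+\hat h)}$ to pass to $L^2$-norms via the triangle inequality, both summands tend to $0$ by the previous paragraph (note $\norm{L^2(Z)}{\overline{\rho_{g_n}}-\omega_n}=\norm{L^2(Z)}{\rho_{g_n}-\overline{\omega_n}}$). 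Therefore $\norm{L^2(Z)}{\rho_{g_n}(\cdot+\hat h)/\rho_{g_n}(\cdot)-\overline{\lambda_n(\hat h)}}\to 0$.

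Finally, Lemma~\ref{lem: HK L1} (equivalently, the cocycle equation rewritten as $\rho_{g_n}(z+\hat h)/\rho_{g_n}(z)=\rho_h(z+\hat g_n)/\rho_h(z)$ together with strong continuity of translation, applied with $\hat g_n\to 0$) shows that $\rho_{g_n}(\cdot+\hat h)/\rho_{g_n}(\cdot)\to 1$ in $L^2(Z)$. Combining the last two statements by the triangle inequality gives $\norm{L^2(Z)}{\overline{\lambda_n(\hat h)}-1}\to 0$; but $\overline{\lambda_n(\hat h)}-1$ is a constant function on the probability space $Z$, so its $L^2$-norm equals $|\lambda_n(\hat h)-1|$. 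Hence $\lambda_n(\hat h)\to 1$, as required. There is no serious obstacle here: the only things requiring care are obtaining $L^2$-control of $\rho_{g_n}$ at both $z$ and $z+\hat h$ (supplied by translation-invariance of Haar measure) and keeping track of the fact that the $\omega_n$ are merely affine, so that the constant surviving the ratio is $\overline{\lambda_n(\hat h)}$ rather than $1$.
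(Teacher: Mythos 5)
Your proof is correct and is essentially the paper's argument: the paper likewise divides the shifted expression $\omega_n(z+\hat h)\rho_{g_n}(z+\hat h)$ by $\omega_n(z)\rho_{g_n}(z)$, identifies the surviving constant $\lambda_n(\hat h)$, and invokes Lemma~\ref{lem: HK L1} to conclude; you have merely made the translation-invariance and unimodularity bookkeeping explicit.
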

\begin{proof}
	Let $h \in G$.
	We have
	\begin{align*}
		\frac{c_n \lambda_n \left( z + \hat{h} \right) \rho_{g_n} \left( z + \hat{h} \right)}
		{c_n \lambda_n(z) \rho_{g_n}(z)}
		= \lambda_n \left( \hat{h} \right) \frac{\rho_{g_n} \left( z + \hat{h} \right)}{\rho_{g_n}(z)}.
	\end{align*}
	By Lemma \ref{lem: HK L1},
	\begin{align*}
		\frac{\rho_{g_n} \left( z + \hat{h} \right)}{\rho_{g_n}(z)} \to 1
	\end{align*}
	in $L^2(Z)$, so $\lambda_n \left( \hat{h} \right) \to 1$.
\end{proof}

\begin{lem} \label{lem: HK L2}
	Let $\mathbf{Z}$ be an ergodic Kronecker system and $\rho : G \times Z \to S^1$ a cocycle.
	Suppose $\left( \hat{g}_n \right)$ converges (to $0$) in $Z$,
	and $\omega_n(z) = c_n\lambda_n(z)$ are affine functions such that
	$\left( \omega_n\rho_{g_n} \right)$ converges (to $1$) in $L^2(Z)$.
	Then for every $a \in \N$,
	\begin{align*}
		c_n^a \lambda_n \left( \binom{a}{2} \hat{g}_n \right) \lambda_n^a(z) \rho_{ag_n}(z)
	\end{align*}
	converges (to $1$) in $L^2(Z)$.
\end{lem}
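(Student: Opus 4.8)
The plan is to induct on $a \in \N$. The base case $a = 1$ is precisely the hypothesis, since $\binom{1}{2} = 0$ and $c_n\lambda_n(z)\rho_{g_n}(z) = \omega_n(z)\rho_{g_n}(z) \to 1$ in $L^2(Z)$. For the inductive step, abbreviate $\Omega^{(a)}_n(z) := c_n^a\lambda_n\!\left(\binom{a}{2}\hat{g}_n\right)\lambda_n^a(z)\,\rho_{ag_n}(z)$ and suppose $\Omega^{(a)}_n \to 1$ in $L^2(Z)$. Using the cocycle equation in the form $\rho_{(a+1)g_n}(z) = \rho_{g_n}\!\left(z + a\hat{g}_n\right)\rho_{ag_n}(z)$ (note $\widehat{ag_n} = a\hat{g}_n$ since $g \mapsto \hat{g}$ is a homomorphism) together with the elementary identities $\binom{a+1}{2} = \binom{a}{2} + a$ and $\lambda_n^{a+1}(z) = \lambda_n^a(z)\lambda_n(z)$, I would factor
\[
\Omega^{(a+1)}_n(z) = \Omega^{(a)}_n(z)\cdot c_n\lambda_n\!\left(a\hat{g}_n\right)\lambda_n(z)\,\rho_{g_n}\!\left(z + a\hat{g}_n\right).
\]
Since $\lambda_n$ is a character, $c_n\lambda_n(a\hat{g}_n)\lambda_n(z) = c_n\lambda_n(z + a\hat{g}_n) = \omega_n(z + a\hat{g}_n)$, so the second factor is exactly the translate $(\omega_n\rho_{g_n})(z + a\hat{g}_n)$.

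Now, by translation-invariance of Haar measure on $Z$, $\norm{L^2(Z)}{(\omega_n\rho_{g_n})(\,\cdot\, + a\hat{g}_n) - 1} = \norm{L^2(Z)}{\omega_n\rho_{g_n} - 1} \to 0$. Thus $\Omega^{(a+1)}_n$ is a product of two sequences of unimodular functions, each converging to $1$ in $L^2(Z)$. Writing $u_nv_n - 1 = u_n(v_n - 1) + (u_n - 1)$ and using the pointwise bound $|u_n| = 1$ gives $\norm{L^2(Z)}{\Omega^{(a+1)}_n - 1} \le \norm{L^2(Z)}{(\omega_n\rho_{g_n})(\,\cdot\, + a\hat{g}_n) - 1} + \norm{L^2(Z)}{\Omega^{(a)}_n - 1} \to 0$, which closes the induction.

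There is no serious obstacle here: once the cocycle equation is unwound and the binomial bookkeeping $\binom{a+1}{2} - \binom{a}{2} = a$ is noticed, the argument is entirely formal. The only point requiring any care is that $L^2$-convergence is not multiplicative in general, which is why one exploits the uniform pointwise bound $|\Omega^{(a)}_n| = |\omega_n\rho_{g_n}| = 1$ to handle the cross term. (Incidentally, the hypothesis $\hat{g}_n \to 0$ is not actually used in this lemma; it is carried along only for uniformity with the neighboring statements, where it genuinely matters.)
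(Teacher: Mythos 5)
Your proof is correct and is essentially the paper's argument: the paper forms the product of the translates $\omega_n\left(z+j\hat{g}_n\right)\rho_{g_n}\left(z+j\hat{g}_n\right)$ over $j=0,\dots,a-1$ in one step, which your induction reproduces factor by factor, using the same ingredients (the cocycle equation, translation invariance of Haar measure, and unimodularity to multiply $L^2$-convergent sequences). One small caveat on your closing remark: the parenthetical phrasing is also intended in the form ``if $\hat{g}_n$ converges and $\omega_n\rho_{g_n}$ converges, then the displayed expression converges,'' which is how the lemma is invoked in Proposition \ref{prop: HK P14}, and in that version the convergence of $\hat{g}_n$ is genuinely used (via continuity of translation on $L^2(Z)$, since the limit function need not be constant), so the hypothesis is not purely decorative.
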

\begin{proof}
	Let $j \in \Z$.
	Then
	\begin{align*}
		c_n \lambda_n\left( j\hat{g}_n \right) \lambda_n(z) \rho_{g_n} \left( z + j\hat{g}_n \right)
		= \omega_n\left( z + j\hat{g}_n \right) \rho_{g_n}\left( z + j\hat{g_n} \right)
	\end{align*}
	converges (to $1$) in $L^2(Z)$.
	Take the product over $j = 0, \dots, a - 1$:
	\begin{align*}
		\prod_{j=0}^{a-1}{c_n \lambda_n\left( j\hat{g}_n \right) \lambda_n(z) \rho_{g_n} \left( z + j\hat{g}_n \right)}
		& = c_n^a \lambda_n \left( \sum_{j=0}^{a-1}{j \hat{g}_n} \right)
		\lambda_n^a(z) \prod_{j=0}^{a-1}{\rho_{g_n} \left( z + j\hat{g}_n \right)} \\
		& = c_n^a \lambda_n \left( \binom{a}{2} \hat{g}_n \right) \lambda^a_n(z) \rho_{ag_n}(z)
	\end{align*}
	converges (to $1$) in $L^2(Z)$.
\end{proof}

\begin{lem} \label{lem: HK L3}
	Let $\mathbf{Z}$ be an ergodic Kronecker system and $\rho : G \times Z \to S^1$ a quasi-affine cocycle.
	If $\left( \hat{g}_n \right)_{n \in \N}$ converges in $Z$, then there is a sequence of affine functions
	$\omega_n(z) = c_n\lambda_n(z)$ such that $\left( \omega_n \rho_{g_n} \right)$ converges in $L^2(Z)$.
\end{lem}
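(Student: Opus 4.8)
The plan is to reduce the statement to a uniform, Cauchy-type reformulation of quasi-affinity, and then to construct the $\omega_n$ by a telescoping argument. First I would record the following consequence of quasi-affinity in the guise of condition (i) of Proposition \ref{prop: HK P6}: for every $\eps > 0$ there is an $N \in \N$ such that for all $n, m \ge N$ there exists an affine function $\eta$ with $\norm{L^2(Z)}{\eta \rho_{g_n - g_m} - 1} < \eps$. Indeed, if this failed there would be an $\eps_0 > 0$ and, for each $N$, indices $n_N, m_N \ge N$ such that $\norm{L^2(Z)}{\eta\rho_{g_{n_N} - g_{m_N}} - 1} \ge \eps_0$ for every affine $\eta$; but $\hat{g}_{n_N} - \hat{g}_{m_N} \to 0$ since $(\hat{g}_n)$ is Cauchy, so applying condition (i) of Proposition \ref{prop: HK P6} to the sequence $\left( g_{n_N} - g_{m_N} \right)_{N \in \N}$ produces affine functions $\omega_N$ with $\omega_N \rho_{g_{n_N} - g_{m_N}} \to 1$ in $L^2(Z)$, a contradiction.

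Next I would choose $N_1 < N_2 < \cdots$ so that the statement above holds with $\eps = 2^{-k}$ and $N = N_k$. For $n \ge N_k$, the cocycle equation gives $\rho_{g_n}(z) = \rho_{g_{N_k}}(z)\, \rho_{g_n - g_{N_k}}\!\left( z + \hat{g}_{N_k} \right)$; combining this with translation-invariance of Haar measure on $Z$ and the fact that if $\omega$ is affine then so is $z \mapsto \omega\!\left( z + \hat{g}_{N_k} \right)$ (since $\lambda(z+t) = \lambda(t)\lambda(z)$), and that products and conjugates of affine functions are affine, one converts the chosen affine approximant of $\rho_{g_n - g_{N_k}}$ into an affine function $\beta_n$ with $\norm{L^2(Z)}{\beta_n \rho_{g_n} - \rho_{g_{N_k}}} < 2^{-k}$. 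Specializing to $n = N_{k+1}$ yields affine functions $\theta_k$ with $\norm{L^2(Z)}{\theta_k \rho_{g_{N_{k+1}}} - \rho_{g_{N_k}}} < 2^{-k}$.

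Now set $\Theta_k := \theta_1 \theta_2 \cdots \theta_{k-1}$, with $\Theta_1 \equiv 1$. Since $|\Theta_k| \equiv 1$, we get $\norm{L^2(Z)}{\Theta_{k+1}\rho_{g_{N_{k+1}}} - \Theta_k \rho_{g_{N_k}}} = \norm{L^2(Z)}{\theta_k \rho_{g_{N_{k+1}}} - \rho_{g_{N_k}}} < 2^{-k}$, so the sequence $\left( \Theta_k \rho_{g_{N_k}} \right)_{k \in \N}$ is Cauchy in $L^2(Z)$ and converges to some $R \in L^2(Z)$. For general $n$, letting $k = k(n)$ be determined by $N_k \le n < N_{k+1}$ (and defining $\omega_n \equiv 1$ for the finitely many $n < N_1$, which does not affect convergence), set $\omega_n := \Theta_{k} \beta_n$, which is affine as a product of affine functions. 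Then $\norm{L^2(Z)}{\omega_n \rho_{g_n} - R} \le \norm{L^2(Z)}{\Theta_k\!\left( \beta_n \rho_{g_n} - \rho_{g_{N_k}} \right)} + \norm{L^2(Z)}{\Theta_k \rho_{g_{N_k}} - R} < 2^{-k(n)} + \norm{L^2(Z)}{\Theta_{k(n)} \rho_{g_{N_{k(n)}}} - R} \to 0$ as $n \to \infty$, so $\left( \omega_n \rho_{g_n} \right)$ converges, as desired.

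I expect the main obstacle to be the first step: upgrading quasi-affinity from a property of null sequences $\hat{g}_n \to 0$ to the uniform ``Cauchy'' form, which is exactly what makes a telescoping construction of the $\omega_n$ feasible. The remaining work is essentially bookkeeping — arranging the errors to be summable (hence the choice $2^{-k}$), carefully tracking the shift by $\hat{g}_{N_k}$ introduced by the cocycle equation, and verifying at each stage that translations, products, and conjugates keep us inside the class of affine functions.
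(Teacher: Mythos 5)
Your proposal is correct and follows essentially the same route as the paper: both extract from Proposition \ref{prop: HK P6}(i) a uniform Cauchy-type approximation statement, use the cocycle equation together with translation-invariance of Haar measure (and the fact that shifts and products of affine functions are affine) to compare $\rho_{g_n}$ with $\rho_{g_{N_k}}$, and then run a telescoping construction with errors $2^{-k}$ to produce affine $\omega_n$ making $(\omega_n\rho_{g_n})$ Cauchy in $L^2(Z)$. The only difference is organizational (you build a convergent spine along the subsequence $N_k$ and attach each $n$ to its block, while the paper chooses the $\omega_n$ inductively block by block and verifies Cauchyness directly), which is not a substantive departure.
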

\begin{proof}
	Let $d$ be a translation-invariant metric on $Z$.
	By Proposition \ref{prop: HK P6}(i), given $\eps > 0$, there exists $\delta = \delta(\eps) > 0$
	such that, if $d \left( \hat{g}, 0 \right) < \delta$, then there is an affine function $\omega : Z \to S^1$ such that
	$\norm{L^2(Z)}{\omega\rho_g - 1} < \eps$.
	
	Assume $\left( \hat{g}_n \right)$ converges in $Z$.
	Let $\eps > 0$.
	Then there is an $N = N(\eps) \in \N$ such that, for all $i, j \ge N$,
	we have $d \left( \hat{g}_i, \hat{g}_j \right) < \delta$.
	Therefore, for $i,j \ge N$, there is an affine function $\omega: Z \to S^1$ such that
	\begin{align*}
		\norm{L^2(Z)}{\omega \rho_{g_i - g_j} - 1} < \eps.
	\end{align*}
	Now, using the cocycle equation
	\begin{align*}
		\frac{\rho_{g_i}(z)}{\rho_{g_j}(z)} = \rho_{g_i-g_j} \left( z + \hat{g}_j \right),
	\end{align*}
	so
	\begin{align} \label{eq: affine Cauchy}
		\norm{L^2(Z)}{\omega \left( z + \hat{g}_j \right) \rho_{g_i}(z) - \rho_{g_j}(z)}
		= \norm{L^2(Z)}{\omega(z) \rho_{g_i - g_j}(z) - 1} < \eps.
	\end{align}
	Note that the function $z \mapsto \omega \left( z + \hat{g}_j \right)$ is affine.
	We will use this to inductively construct a sequence of affine functions $(\omega_n)_{n \in \N}$
	so that $\left( \omega_n \rho_{g_n} \right)_{n \in \N}$ is Cauchy in $L^2(Z)$ and hence convergent.
	
	For each $k \in \N$, let $\eps_k = 2^{-k}$ and $N_k = N \left( \eps_k \right)$.
	For $n \le N_1$, let $\omega_n = 1$.
	Suppose we have chosen $\omega_n$ for $n \le N_k$ for some $k \in \N$.
	For $N_k < n \le N_{k+1}$, use \eqref{eq: affine Cauchy} to choose an affine function $\omega_n : Z \to S^1$
	such that
	\begin{align*}
		\norm{L^2(Z)}{\omega_n \rho_{g_n} - \omega_{N_k} \rho_{g_{N_k}}} < 2^{-k}.
	\end{align*}
	
	Now suppose $i, j > N_k$, $k \in \N$.
	Let $r, s \ge k$ such that $N_r < i \le N_{r+1}$ and $N_s < j \le N_{s+1}$.
	Without loss of generality, $s \ge r$.
	By the triangle inequality,
	\begin{align*}
		& \norm{L^2(Z)}{\omega_j\rho_{g_j} - \omega_i\rho_{g_i}} \\
		& \le \norm{L^2(Z)}{\omega_j\rho_{g_j} - \omega_{N_s}\rho_{g_{N_s}}}
		+ \sum_{t=r}^{s-1}{\norm{L^2(Z)}{w_{N_{t+1}}\rho_{g_{N_{t+1}}} - \omega_{N_t}\rho_{g_{N_t}}}}
		+ \norm{L^2(Z)}{\omega_{N_r}\rho_{g_{N_r}} - \omega_i\rho_{g_i}} \\
		& < 2^{-s} + \sum_{t=r}^{s-1}{2^{-t}} + 2^{-r}
		< 2^{-(r-1)} + 2^{-r} < 2^{-(r-2)} \le 2^{-(k-2)}.
	\end{align*}
	Thus, $\left( \omega_n \rho_{g_n} \right)$ is Cauchy in $L^2(Z)$.
\end{proof}

\begin{defn}
	Let $\mathbf{Z}$ be an ergodic Kronecker system and $H$ a compact abelian group.
	A cocycle $\sigma : G \times Z \to H$ is \emph{quasi-affine} if $\chi \circ \sigma$
	is quasi-affine for all $\chi \in \hat{H}$.
\end{defn}

\begin{defn}
	An ergodic system $\X$ is \emph{quasi-affine} if it is an extension of its Kronecker factor by a quasi-affine cocycle.
	That is, there is a compact abelian group $H$ and a (weakly mixing) quasi-affine cocycle $\sigma : G \times Z \to H$
	such that $\X \cong \mathbf{Z} \times_{\sigma} H$.
\end{defn}

\begin{lem} \label{lem: H divisibility}
	Let $\mathbf{Z} \times_{\sigma} H$ be an ergodic quasi-affine $G$-system.
	Suppose $a \in \Z$ and $aG$ has finite index in $G$.
	Then $aH = H$.
\end{lem}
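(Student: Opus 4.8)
The plan is to recast the conclusion $aH=H$ as a statement about torsion in $\hat H$ and then contradict it using the weak mixing and quasi-affinity of the defining cocycle (recall that, by definition, an ergodic quasi-affine system comes with a \emph{weakly mixing} quasi-affine cocycle $\sigma : G\times Z\to H$, so that $\X=\mathbf Z\times_\sigma H$). Since $H$ is a compact abelian group, the homomorphism $h\mapsto ah$ has closed image, and $aH$ is all of $H$ precisely when its annihilator $(aH)^\perp=\{\chi\in\hat H:\chi^a\equiv 1\}$ is trivial. So it suffices to show that $\hat H$ has no $a$-torsion character other than $1$. (We may assume $a\ge 1$: replacing $a$ by $-a$ changes neither $aG$ nor $aH$, and if $a=0$ then $aG=\{0\}$ has finite index only when $G$ is finite, in which case $\mathbf Z$ is finite and weak mixing of $\sigma$ forces $H$ to be trivial.)

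So suppose toward a contradiction that $\chi\in\hat H$ with $\chi\ne 1$ and $\chi^a=1$, and set $\rho:=\chi\circ\sigma:G\times Z\to S^1$. Then $\rho$ is a cocycle over the ergodic Kronecker system $\mathbf Z$; it is quasi-affine because $\sigma$ is a quasi-affine cocycle; and $\rho_g(z)^a=\chi^a(\sigma_g(z))=1$ for all $g,z$. I will show that $\rho$ is cohomologous to a character, which contradicts Proposition~\ref{prop: HK P4}(2), since weak mixing of $\sigma$ says $\chi\circ\sigma$ is not cohomologous to a character whenever $\chi\ne1$. By the implication (ii)$\implies$(i) of Proposition~\ref{prop: HK P3}, it is enough to prove: for every sequence $(g_n)$ in $G$ with $\hat g_n\to 0$ in $Z$, there exist $c_n\in S^1$ with $c_n\rho_{g_n}\to 1$ in $L^2(Z)$.

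Fix such a sequence. Since $\rho$ is quasi-affine, Proposition~\ref{prop: HK P6}(i) provides affine functions $\omega_n(z)=c_n\lambda_n(z)$, with $c_n\in S^1$ and $\lambda_n\in\hat Z$, such that $\omega_n\rho_{g_n}\to 1$ in $L^2(Z)$. Raising to the $a$-th power and using $|\zeta^a-1|\le a|\zeta-1|$ on the unit circle together with $\rho_{g_n}^a\equiv 1$, we get $\omega_n^a=c_n^a\lambda_n^a\to 1$ in $L^2(Z)$; since $\|c\lambda-1\|_{L^2(Z)}^2=2$ for any $c\in S^1$ and any nontrivial character $\lambda$, this forces $\lambda_n^a=1$ for all large $n$. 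The key point is now that the set of characters $\lambda\in\hat Z$ with $\lambda^a=1$ is \emph{finite}: under the identification $\hat Z\cong\Lambda\subseteq\hat G$ (with $e_\lambda e_\mu=e_{\lambda+\mu}$, hence $e_\lambda^a=e_{a\lambda}$ trivial iff $a\lambda=0$), this set embeds into the $a$-torsion of $\hat G$, which equals $\widehat{G/aG}$ and is finite because $aG$ has finite index in $G$. Hence $\lambda_n$ takes values in a fixed finite subset of $\hat Z$ for $n$ large. On the other hand, Lemma~\ref{lem: HK C1} gives $\lambda_n(\hat h)\to 1$ for every $h\in G$. Combining the two: any character occurring infinitely often among the $\lambda_n$ must satisfy $\lambda(\hat h)=1$ for all $h\in G$, hence must be trivial, by density of $\{\hat h:h\in G\}$ in $Z$ (recall $\mathbf Z$ is ergodic) and continuity. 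Therefore $\lambda_n=1$ for all large $n$, so $\omega_n=c_n$ is a constant and $c_n\rho_{g_n}=\omega_n\rho_{g_n}\to 1$ in $L^2(Z)$ (adjusting the finitely many remaining $c_n$ arbitrarily). This produces the desired contradiction, so $\hat H$ has no nontrivial $a$-torsion and $aH=H$.

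The main obstacle — and the only place the finite-index hypothesis is genuinely used — is upgrading the affine correctors $\omega_n=c_n\lambda_n$ supplied by quasi-affinity to honest constants $c_n$. This rests on two facts working together: $\rho^a\equiv 1$ pins $\lambda_n$ into the $a$-torsion of $\hat Z$, which finiteness of $G/aG$ turns into a \emph{finite} group, and Lemma~\ref{lem: HK C1} then forces the surviving values of $\lambda_n$ to be trivial. Everything else is a routine application of the cocycle calculus (Propositions~\ref{prop: HK P3}, \ref{prop: HK P4}, \ref{prop: HK P6}, Lemma~\ref{lem: HK C1}) developed earlier in this section.
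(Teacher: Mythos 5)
Your proof is correct and follows essentially the same route as the paper's: reduce to showing $\hat H$ has no nontrivial $a$-torsion, use quasi-affinity to produce affine correctors $c_n\lambda_n$, raise to the $a$-th power and invoke the $\sqrt2$-orthogonality bound to force $\lambda_n^a=1$, use the finite-index hypothesis to make the relevant torsion group of characters finite, apply Lemma~\ref{lem: HK C1} to conclude $\lambda_n=1$ eventually, and finish with Propositions~\ref{prop: HK P3} and~\ref{prop: HK P4}(2) via weak mixing. The only deviations (embedding the $a$-torsion of $\hat Z$ into $\widehat{G/aG}$ rather than bounding $[Z:aZ]$, and a pigeonhole on the finite character set instead of choosing distinguishing elements $h_1,\dots,h_m$) are cosmetic.
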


\begin{rem}
	In the case $G = \Z$, the condition on $a$ in Lemma \ref{lem: H divisibility} is satisfied for every $a \ne 0$.
	Lemma \ref{lem: H divisibility} therefore has a simpler statement in this setting.
	Namely, if $\mathbf{Z} \times_{\sigma} H$ is an ergodic quasi-affine $\Z$-system, then $H$ is divisible.
	This special case is established in \cite[Lemma 5]{hk}.
	
	For our setting of general countable discrete abelian groups,
	the admissibility condition on $a$ cannot be dropped.
	Indeed, in \cite{btz1}, it was shown that for an ergodic quasi-affine $\bigoplus_{n=1}^{\infty}{\F_p}$-system,
	the group $H$ consists of $p$-torsion elements (see \cite[Lemma 4.7]{btz1}).
\end{rem}

In order to prove Lemma \ref{lem: H divisibility}, we need the following fact, which will be used again in the next section:
\begin{lem} \label{lem: sqrt2 bound}
	Let $Z$ be a compact abelian group.
	Let $c_1, c_2 \in S^1$ and $\lambda_1, \lambda_2 \in \hat{Z}$.
	If $\lambda_1 \ne \lambda_2$, then
	\begin{align*}
		\norm{L^2(Z)}{c_1\lambda_1 - c_2\lambda_2} = \sqrt{2}.
	\end{align*}
\end{lem}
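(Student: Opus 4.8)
The plan is to expand the squared $L^2$-norm and invoke orthogonality of characters. First I would compute
\[
\norm{L^2(Z)}{c_1\lambda_1 - c_2\lambda_2}^2 = \int_Z \left| c_1\lambda_1(z) - c_2\lambda_2(z) \right|^2 dz = \int_Z \left( |c_1\lambda_1(z)|^2 + |c_2\lambda_2(z)|^2 - 2\Re{c_1\overline{c_2}\lambda_1(z)\overline{\lambda_2(z)}} \right) dz.
\]
Since $|c_1| = |c_2| = 1$ and $|\lambda_1(z)| = |\lambda_2(z)| = 1$ for every $z \in Z$, and Haar measure on the compact group $Z$ is a probability measure, the first two terms each integrate to $1$. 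For the cross term, observe that $\lambda_1\overline{\lambda_2} = \lambda_1 - \lambda_2 \in \hat Z$ is a character which is nontrivial precisely because $\lambda_1 \neq \lambda_2$; hence $\int_Z \lambda_1(z)\overline{\lambda_2(z)}\, dz = 0$. (This orthogonality is the usual consequence of shift-invariance of Haar measure: for a nontrivial $\chi \in \hat Z$ pick $z_0$ with $\chi(z_0) \neq 1$, and then $\int_Z \chi(z)\, dz = \int_Z \chi(z + z_0)\, dz = \chi(z_0)\int_Z \chi(z)\, dz$, forcing the integral to vanish.) Therefore the cross term is $0$, the norm squared equals $1 + 1 = 2$, and $\norm{L^2(Z)}{c_1\lambda_1 - c_2\lambda_2} = \sqrt{2}$.

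There is no real obstacle here: the only substantive input is the standard orthogonality of distinct characters of a compact abelian group. The one bookkeeping point worth noting is the role of the unimodular constants $c_1, c_2$: they contribute only the unit-modulus scalar $c_1\overline{c_2}$ to the cross term — which is annihilated in any case once the integral of the character is seen to vanish — and they leave the two diagonal terms unchanged since $|c_i\lambda_i(z)|^2 = 1$ pointwise.
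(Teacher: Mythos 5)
Your proof is correct and follows essentially the same route as the paper: expand the squared $L^2$-norm, use orthogonality of the distinct characters $\lambda_1 \ne \lambda_2$ to kill the cross term, and note that the unimodular constants leave the two diagonal terms equal to $1$. The paper simply phrases this via the inner product $\innprod{c_1\lambda_1 - c_2\lambda_2}{c_1\lambda_1 - c_2\lambda_2}$ without spelling out the standard shift-invariance argument for orthogonality, which you included as a harmless extra.
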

\begin{proof}
	This is a simple consequence of orthogonality of characters:
	\begin{align*}
		\norm{L^2(Z)}{c_1\lambda_1 - c_2\lambda_2}^2
		= \innprod{c_1\lambda_1 - c_2\lambda_2}{c_1\lambda_1 - c_2\lambda_2}
		= |c_1|^2 \norm{L^2(Z)}{\lambda_1}^2 + |c_2|^2 \norm{L^2(Z)}{\lambda_2}^2 = 2.
	\end{align*}
\end{proof}

Now we prove Lemma \ref{lem: H divisibility}:
\begin{proof}[Proof of Lemma \ref{lem: H divisibility}]
	Since the set of $a$-torsion elements is the annihilator of $aH$,
	there is an isomorphism between the $a$-torsion elements of $\hat{H}$
	and the group $\hat{H/aH}$
	(see, e.g., \cite[Theorem 2.1.2]{rudin}).
	It therefore suffices to show that $\hat{H}$ contains no nontrivial $a$-torsion elements.
	
	Let $\chi \in \hat{H}$ and suppose $\chi^a = 1$.
	Let $\hat{g}_n \to 0$ in $Z$.
	The cocycle $\chi \circ \sigma$ is quasi-affine, so by Proposition \ref{prop: HK P6}(i),
	there is a sequence of affine functions $\omega_n(z) = c_n \lambda_n(z)$ such that
	\begin{align} \label{eq: QA convergence}
		c_n \lambda_n(z) \chi \left( \sigma_{g_n}(z) \right) \to 1
	\end{align}
	in $L^2(Z)$.
	Taking this expression to the $a$th power and using $\chi^a = 1$, we have
	\begin{align*}
		c_n^a \lambda_n^a(z) \to 1
	\end{align*}
	in $L^2(Z)$.
	In particular, for $n$ sufficiently large,
	\begin{align*}
		\norm{L^2(Z)}{c_n^a\lambda_n^a(z) - 1} < \sqrt{2}.
	\end{align*}
	By Lemma \ref{lem: sqrt2 bound}, this implies $\lambda_n^a = 1$ for all large $n$.
	
	Now we use admissibility of $a$.
	Let $\Lambda_a := \left\{ \lambda \in \hat{Z} : \lambda^a = 1 \right\}$.
	Clearly, $\Lambda_a = \left( aZ \right)^{\perp}$, so
	$\Lambda_a \cong \hat{Z/aZ}$.
	Since $\left\{ \hat{g} : g \in G \right\}$ is dense in $Z$, it is easy to check that\footnote{
		Here is a sketch of the proof.
		Let $x_1, \dots, x_k \in G$ such that $aG + \{x_1, \dots, x_k\} = G$.
		Now let $z \in Z$.
		Then there is a sequence $(g_n)_{n \in \N}$ in $G$ such that $\hat{g}_n \to z$.
		By the pigeonhole principle, we may assume (taking a subsequence if necessary)
		that $g_n = ag'_n + x_i$ for some fixed $1 \le i \le k$.
		Now by compactness of $Z$, we may further assume (taking yet another subsequence if necessary)
		that $\hat{g'_n} \to z' \in Z$.
		Then $z = az' + \hat{x}_i$.
		Hence, $Z = aZ + \{\hat{x}_1, \dots, \hat{x}_k\}$.}
	$[Z : aZ] \le [G : aG] < \infty$.
	Thus, $\Lambda_a$ is a finite group.
	Now, for any pair of distinct characters $\lambda, \lambda' \in \hat{Z}$, there is an element $g \in G$
	such that $\lambda\left( \hat{g} \right) \ne \lambda'\left( \hat{g} \right)$.
	It follows that there is a finite set $h_1, \dots, h_m \in G$ that distinguishes elements of $\Lambda_a$.
	That is, if $\lambda, \lambda' \in \Lambda_a$
	and $\lambda \left( \hat{h}_i \right) = \lambda' \left( \hat{h}_i \right)$ for all $i = 1, \dots, m$,
	then $\lambda = \lambda'$.
	
	By Lemma \ref{lem: HK C1}, $\lambda_n \left( \hat{h}_i \right) \to 1$ for each $i = 1, \dots, m$.
	Since $\lambda_n^a = 1$ for all large $n$, it follows that we may choose $N_i \in \N$ such that
	$\lambda_n \left( \hat{h}_i \right) = 1$ for all $n \ge N_i$.
	Let $N_0 \in \N$ so that $\lambda_n \in \Lambda_a$ for $n \ge N_0$, and set $N := \max_{0 \le i \le m}{N_i}$.
	For $n \ge N$, we have $\lambda_n \in \Lambda_a$ and $\lambda_n \left( \hat{h}_i \right) = 1$
	for every $i = 1, \dots, m$.
	By the choice of $h_i$, this implies $\lambda_n = 1$.
	
	Returning to \eqref{eq: QA convergence} and using the fact that $\lambda_n = 1$ for all large $n$, we have
	\begin{align*}
		c_n \chi \left( \sigma_{g_n}(z) \right) \to 1
	\end{align*}
	in $L^2(Z)$.
	By Proposition \ref{prop: HK P3}, $\chi \circ \sigma$ is cohomologous to a character.
	But $\sigma$ is weakly mixing, so by Proposition \ref{prop: HK P4}(2), $\chi = 1$.
\end{proof}


\subsection{Mackey groups for quasi-affine systems}

We now return to analyzing the Mackey group $M(a_1,a_2,a_3)$, defined in Section \ref{sec: Mackey triples},
in the special case that the cocycle $\sigma$ is quasi-affine.
We begin by fixing notation for the section.
Let $G$ be a countable discrete abelian group.
Let $\{a_1, a_2, a_3\} \subseteq \Z$ be an admissible triple
(meaning that $a_iG$ and $(a_j-a_i)G$ have finite index in $G$ for $1 \le i \ne j \le 3$).
Define
\begin{align*}
	k'_1 & := a_2a_3(a_2-a_3) \\
	k'_2 & := a_3a_1(a_3-a_1) \\
	k'_3 & := a_1a_2(a_1-a_2)
\end{align*}
Let $D := \gcd(k'_1, k'_2, k'_3)$, and define $k_i := \frac{k'_i}{D}$.
Then the numbers $k_i$ satisfy:
\begin{align}
	\sum_{i=1}^3{k_ia_i} = \sum_{i=1}^3{k_ia_i^2} & = 0, \label{eq: k identity 1} \\
	\gcd(k_1,k_2,k_3) & = 1. \label{eq: k identity 2}
\end{align}

Note that admissibility of $\{a_1, a_2, a_3\}$ implies that $k_i'G$ has finite index in $G$ for each $i = 1, 2, 3$.
Hence, $DG$ also has finite index in $G$.

The goal of this section is to prove the following concrete description of the Mackey group:
\begin{thm} \label{thm: HK T10}
	Let $\mathbf{Z} \times_{\sigma} H$ be an ergodic quasi-affine $G$-system.
	Let $M(a_1, a_2, a_3)$ be the Mackey group associated to
	$\tilde{\sigma}_g := \left( \sigma_{a_1g}, \sigma_{a_2g}, \sigma_{a_3g} \right)$.
	Then
	\begin{align*}
		M^{\perp} = \left\{ \left( \chi^{k_1}, \chi^{k_2}, \chi^{k_3} \right) : \chi \in \hat{H} \right\}
		= \left\{ (\chi_1,\chi_2,\chi_3) \in \hat{H}^3 : \prod_{i=1}^3{\chi_i^{a_i}} = \prod_{i=1}^3{\chi_i^{a_i^2}} = 1 \right\}.
	\end{align*}
\end{thm}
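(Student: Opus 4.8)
The plan is to prove that the three sets in the statement — write $A := \{(\chi^{k_1},\chi^{k_2},\chi^{k_3}) : \chi \in \hat{H}\}$, $B := M^{\perp}$, and $C := \{(\chi_1,\chi_2,\chi_3) \in \hat{H}^3 : \prod_{i=1}^{3}\chi_i^{a_i} = \prod_{i=1}^{3}\chi_i^{a_i^2} = 1\}$ — satisfy the cycle $A \subseteq B \subseteq C \subseteq A$; the inclusion $A \subseteq C$ is immediate from \eqref{eq: k identity 1}, so any one of the remaining inclusions already closes the loop, but all three come out cleanly. The analytic steps $A \subseteq B$ and $B \subseteq C$ rely on the quasi-affine expansion of $S^1$-cocycles together with the vanishing $\sum_i k_i a_i = \sum_i k_i a_i^2 = 0$, while $C \subseteq A$ is linear algebra over $\Z$ fed by Lemma~\ref{lem: H divisibility}. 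For $\chi\in\hat{H}$ I abbreviate $\rho := \chi\circ\sigma$, which is quasi-affine since $\sigma$ is; the computational input — obtained by extending Lemma~\ref{lem: HK L2} to all $a\in\Z$ via $\rho_{-g}(z) = \overline{\rho_g(z-\hat{g})}$ — is that for $\hat{g}_n\to 0$ in $Z$ and affine $\omega_n(z) = c_n\lambda_n(z)$ with $\omega_n\rho_{g_n}\to 1$ in $L^2(Z)$ one has $\rho_{ag_n}\approx e_{n,a}\lambda_n^{-a}$, where $e_{n,a} := \overline{c_n^{a}\lambda_n\!\left(\binom{a}{2}\hat{g}_n\right)}\in S^1$, $\binom{a}{2} := a(a-1)/2$, and $\approx$ denotes equality up to an error vanishing in the relevant $L^2$ norm.

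For $A\subseteq B$, fix $\chi\in\hat{H}$ and set $\tau_g(w) := \prod_{i=1}^{3}\chi^{k_i}\!\left(\sigma_{a_ig}(w_i)\right) = \prod_{i=1}^{3}\rho_{a_ig}(w_i)^{k_i}$; I must realize $\tau$ as a coboundary for $(W, m_W, \tilde{S})$. Writing $w = (z+a_1t, z+a_2t, z+a_3t)$ and pushing Haar measure forward along the surjective homomorphism $(z,t)\mapsto w$, and using $[Z : a_iZ]\le[G : a_iG]<\infty$ to control the substitution $z'\mapsto z+a_it$ in $L^2$, the expansion gives $\rho_{a_ig_n}(z+a_it)\approx e_{n,a_i}\lambda_n(z)^{-a_i}\lambda_n(t)^{-a_i^2}$, so $\tau_{g_n}(w)\approx\left(\prod_i e_{n,a_i}^{k_i}\right)\lambda_n(z)^{-\sum_i k_ia_i}\lambda_n(t)^{-\sum_i k_ia_i^2}$ in $L^2(W,m_W)$. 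Since $\sum_i k_ia_i = \sum_i k_ia_i^2 = 0$ and $\sum_i k_i\binom{a_i}{2} = \tfrac12\!\left(\sum_i k_ia_i^2 - \sum_i k_ia_i\right) = 0$, this collapses to $\prod_i e_{n,a_i}^{k_i} = \overline{c_n^{\sum_i k_ia_i}\lambda_n\!\left(\sum_i k_i\binom{a_i}{2}\hat{g}_n\right)} = 1$, so $\tau_{g_n}\to 1$ in $L^2(W,m_W)$ for every $\hat{g}_n\to 0$ in $Z$. Fibering over the ergodic decomposition $m_W = \int_Z m_z\,dz$ into Kronecker systems $(W_z,m_z,\tilde{S})$ (Lemma~\ref{lem: compact group rotations}), applying Proposition~\ref{prop: HK P2} componentwise, and assembling the transfer functions measurably in $z$ exactly as in the proof of Proposition~\ref{prop: constant Mackey} (and \cite[Proposition~2]{lesigne}) then yields that $\tau$ is a coboundary, i.e.\ $(\chi^{k_1},\chi^{k_2},\chi^{k_3})\in M^{\perp}$.

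For $B\subseteq C$, take $(\chi_1,\chi_2,\chi_3)\in M^{\perp}$ with transfer function $F : W\to S^1$, write $\rho^{(i)} := \chi_i\circ\sigma$, and pull the relation $\prod_i\rho^{(i)}_{a_ig}(w_i) = F(\tilde{S}_gw)/F(w)$ back along $(z,t)\mapsto w$ (so $\tilde{S}_g$ becomes $(z,t)\mapsto(z,t+\hat{g})$): for a.e.\ fixed $z$ the function $t\mapsto\prod_i\rho^{(i)}_{a_ig}(z+a_it)$ is a coboundary for $(Z,t\mapsto t+\hat{g})$, so Proposition~\ref{prop: HK P2} and dominated convergence give $\prod_i\rho^{(i)}_{a_ig_n}(z+a_it)\to 1$ in $L^2_{(z,t)}(Z^2)$ for every $\hat{g}_n\to 0$. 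Choosing affine $\omega_n^{(i)}(z) = c_n^{(i)}\lambda_n^{(i)}(z)$ with $\omega_n^{(i)}\rho^{(i)}_{g_n}\to 1$ by Proposition~\ref{prop: HK P6}(i), the expansion gives $\prod_i\rho^{(i)}_{a_ig_n}(z+a_it)\approx\left(\prod_i e^{(i)}_{n,a_i}\right)\left(\prod_i(\lambda_n^{(i)})^{a_i}\right)\!(z)^{-1}\left(\prod_i(\lambda_n^{(i)})^{a_i^2}\right)\!(t)^{-1}$, a constant times a character of $Z^2$; since a constant times a nontrivial character has $L^2(Z^2)$-distance $\sqrt 2$ from the function $1$ (Lemma~\ref{lem: sqrt2 bound}), for $n$ large that character is trivial, i.e.\ $\prod_i(\lambda_n^{(i)})^{a_i} = \prod_i(\lambda_n^{(i)})^{a_i^2} = 1$ in $\hat{Z}$. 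Then $\prod_i(\omega_n^{(i)})^{a_i}$ and $\prod_i(\omega_n^{(i)})^{a_i^2}$ are eventually constant, so raising $\omega_n^{(i)}\rho^{(i)}_{g_n}\to 1$ to the power $a_i$ and multiplying over $i$ gives $d_n(\eta\circ\sigma)_{g_n}\to 1$ for $d_n := \prod_i(c_n^{(i)})^{a_i}$, where $\eta := \prod_i\chi_i^{a_i}$; likewise $d_n'(\eta'\circ\sigma)_{g_n}\to 1$ with $\eta' := \prod_i\chi_i^{a_i^2}$. As this holds for all $\hat{g}_n\to 0$, Proposition~\ref{prop: HK P3} shows $\eta\circ\sigma$ and $\eta'\circ\sigma$ are cohomologous to characters, so $\eta = \eta' = 1$ by weak mixing of $\sigma$ and Proposition~\ref{prop: HK P4}(2); hence $(\chi_1,\chi_2,\chi_3)\in C$.

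Finally, for $C\subseteq A$, given $(\chi_1,\chi_2,\chi_3)\in C$ choose $b_1,b_2,b_3\in\Z$ with $\sum_i k_ib_i = 1$ (possible by \eqref{eq: k identity 2}) and set $\chi := \prod_i\chi_i^{b_i}$. For each $j$, the vector $\mathbf{v}_j := (b_1k_j, b_2k_j, b_3k_j) - \mathbf{e}_j\in\Z^3$ satisfies $\sum_i k_i(\mathbf{v}_j)_i = k_j\sum_i k_ib_i - k_j = 0$, so $\mathbf{v}_j$ lies in $K := \ker(\Z^3\to\Z,\ \mathbf{n}\mapsto\sum_i k_in_i)$, while the homomorphism $\phi:\Z^3\to\hat{H}$, $\mathbf{n}\mapsto\prod_i\chi_i^{n_i}$, annihilates $L := \Z(a_1,a_2,a_3) + \Z(a_1^2,a_2^2,a_3^2)\subseteq K$ and maps $\mathbf{v}_j$ to $\chi^{k_j}\chi_j^{-1}$. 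A Smith normal form computation for the $3\times 2$ integer matrix with columns $(a_1,a_2,a_3)$ and $(a_1^2,a_2^2,a_3^2)$ — whose $2\times 2$ minors are, up to sign, $k_1',k_2',k_3'$ (gcd $D$) and whose entries have gcd $d_1 := \gcd(a_1,a_2,a_3)$ — gives $\Z^3/L\cong\Z\oplus\Z/d_1\oplus\Z/(D/d_1)$, so the torsion subgroup $K/L$ of $\Z^3/L$ has exponent dividing $D$; hence $D\mathbf{v}_j\in L$ and $(\chi^{k_j}\chi_j^{-1})^{D} = \phi(D\mathbf{v}_j) = 1$. Since every prime $p\mid D = rs\gcd(r+s,s-r)$ divides one of $r$, $s$, $r+s$, and Lemma~\ref{lem: H divisibility} (applicable by admissibility) gives $rH = sH = (r+s)H = H$, the group $\hat{H}$ has no $D$-torsion, so $\chi^{k_j} = \chi_j$ for all $j$ and $(\chi_1,\chi_2,\chi_3)\in A$. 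The main obstacle is the bookkeeping in the first two steps — transferring coboundary statements among $(W, m_W, \tilde{S})$, its ergodic components, and the base $\mathbf{Z}$, all while tracking the finite-index subgroups $a_iZ\le Z$ appearing in the quasi-affine expansion — together with the divisibility input of the last step, which replaces the fact, automatic when $G = \Z$ as in \cite{hk}, that $H$ is divisible.
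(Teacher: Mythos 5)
Your three inclusions are the same ones the paper proves, and two of them are essentially sound: your argument for $M^{\perp}\subseteq\{(\chi_1,\chi_2,\chi_3):\prod_i\chi_i^{a_i}=\prod_i\chi_i^{a_i^2}=1\}$ parallels the paper's, and your Smith-normal-form treatment of the step from that set to $\{(\chi^{k_1},\chi^{k_2},\chi^{k_3}):\chi\in\hat H\}$ is a legitimate variant of the paper's direct manipulation $\chi_i^{k_j'}=\chi_j^{k_i'}$ followed by removal of $D$-torsion via Lemma~\ref{lem: H divisibility}. The genuine gap is in the remaining inclusion $\{(\chi^{k_1},\chi^{k_2},\chi^{k_3}):\chi\in\hat H\}\subseteq M^{\perp}$. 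To conclude from Proposition~\ref{prop: HK P2} that $(\chi^{k_1},\chi^{k_2},\chi^{k_3})\circ\tilde\sigma$ is a coboundary for $(W,m_W,\tilde S)$ (or for almost every component $(W_z,m_z,\tilde S)$), you must verify condition (ii) of that proposition for the Kronecker system whose rotation is $w\mapsto w+\alpha_g$ with $\alpha_g=(a_1\hat g,a_2\hat g,a_3\hat g)$: the convergence $\tau_{g_n}\to1$ has to hold for \emph{every} sequence $(g_n)$ with $\alpha_{g_n}\to0$ in $W$, which is equivalent to $a\hat g_n\to0$ in $Z$ where $a=\gcd(a_1,a_2,a_3)$, and not merely for sequences with $\hat g_n\to0$ in $Z$. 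These two classes differ in general (take $G=\Z$, $Z=\T$, all $a_i$ even, and $\hat g_n\to\tfrac12$), so your verification covers only a proper subclass of the required sequences and Proposition~\ref{prop: HK P2} cannot be invoked as you do.

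The omission is not merely notational: your expansion $\rho_{a_ig_n}\approx e_{n,a_i}\lambda_n^{-a_i}$ is manufactured from affine approximations of $\rho_{g_n}$ itself via Proposition~\ref{prop: HK P6}(i), which is available only when $\hat g_n\to0$; under the weaker hypothesis $a\hat g_n\to0$ there is no such approximation of $\rho_{g_n}$, so the computation cannot be repeated verbatim. The paper's fix at exactly this point is the gcd reduction your write-up is missing: since $a\hat g_n=\widehat{ag_n}\to0$, apply quasi-affineness along the sequence $(ag_n)$ to get affine $\omega_n$ with $\omega_n\,\chi(\sigma_{ag_n})\to1$, pass to $\sigma_{a_ig_n}$ by Lemma~\ref{lem: HK L2} with the integer exponents $a_i'=a_i/a$, and close the computation with $\sum_ik_ia_i'=\sum_ik_ia_i'a_i=\sum_ik_i\binom{a_i'}{2}=0$, all consequences of \eqref{eq: k identity 1}. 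Once that is done, Proposition~\ref{prop: HK P2} applies directly to the (non-ergodic) system $(W,\tilde S)$, whose proof already contains the measurable assembly over ergodic components that you redo by hand; your fibering also has the minor technical defect that $L^2(W,m_W)$-convergence of $\tau_{g_n}$ yields $L^2(W_z,m_z)$-convergence for a.e.\ $z$ only along a subsequence, but that is secondary to the sequence-class issue above.
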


Theorem \ref{thm: HK T10} was proved for $\Z$-systems in \cite[Theorem 10]{hk}.
As an immediate consequence, we have:

\begin{cor} \label{cor: HK C4}
	In the setup of Theorem \ref{thm: HK T10},
	\begin{align*}
		M = \left\{ \left( a_1u + a_1^2v, a_2u + a_2^2v, a_3u + a_3^2v \right) : u, v \in H \right\}
		= \left\{ (h_1,h_2,h_3) \in H^3 : \sum_{i=1}^3{k_ih_i} = 0 \right\}.
	\end{align*}
\end{cor}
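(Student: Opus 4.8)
The plan is to obtain this as a purely formal consequence of Theorem~\ref{thm: HK T10} via Pontryagin duality. Recall that, by the very definition of the Mackey group, $M = M(a_1,a_2,a_3)$ is the annihilator in $H^3$ of $M^{\perp} = \{\tilde{\chi} \in \hat{H}^3 : \tilde{\chi}\circ\tilde{\sigma}~\text{is a coboundary}\}$; thus $M$ is a closed subgroup of $H^3$ and $M = (M^{\perp})^{\perp}$ for free. Since $H$ is compact, $\hat{H}$ (hence $\hat{H}^3 = \widehat{H^3}$) is discrete, so annihilators are well behaved and the biduality $N = (N^{\perp})^{\perp}$ holds for every closed subgroup $N$ of $H^3$. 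Theorem~\ref{thm: HK T10} provides two descriptions of $M^{\perp}$, and I would match each one to the annihilator of one of the two target sets.

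For the second set, I would use the description $M^{\perp} = \{(\chi^{k_1},\chi^{k_2},\chi^{k_3}) : \chi \in \hat{H}\}$. Then $(h_1,h_2,h_3) \in M = (M^{\perp})^{\perp}$ if and only if $\chi^{k_1}(h_1)\,\chi^{k_2}(h_2)\,\chi^{k_3}(h_3) = 1$ for all $\chi \in \hat{H}$. Rewriting $\chi^{k_i}(h_i) = \chi(k_ih_i)$, this says $\chi\bigl(\sum_{i=1}^3 k_ih_i\bigr) = 1$ for every $\chi \in \hat{H}$, which — since $\hat{H}$ separates points of $H$ — is equivalent to $\sum_{i=1}^3 k_ih_i = 0$. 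Hence $M = \{(h_1,h_2,h_3) \in H^3 : \sum_{i=1}^3 k_ih_i = 0\}$.

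For the first set, set $N := \{(a_1u+a_1^2v,\,a_2u+a_2^2v,\,a_3u+a_3^2v) : u,v \in H\}$, which is a closed subgroup of $H^3$ because it is the continuous homomorphic image of the compact group $H \times H$. Expanding, $\prod_{i=1}^3 \chi_i(a_iu+a_i^2v) = \bigl(\prod_{i=1}^3 \chi_i^{a_i}\bigr)(u)\cdot\bigl(\prod_{i=1}^3 \chi_i^{a_i^2}\bigr)(v)$, so $(\chi_1,\chi_2,\chi_3) \in N^{\perp}$ if and only if $\prod_{i=1}^3 \chi_i^{a_i} = 1$ and $\prod_{i=1}^3 \chi_i^{a_i^2} = 1$; by the second description in Theorem~\ref{thm: HK T10}, this is precisely the condition $(\chi_1,\chi_2,\chi_3) \in M^{\perp}$. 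Therefore $N^{\perp} = M^{\perp}$, and dualizing gives $N = (N^{\perp})^{\perp} = (M^{\perp})^{\perp} = M$. Combining the last two paragraphs yields the asserted chain of equalities for all three sets.

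I do not anticipate a genuine obstacle: the analytic content is entirely in Theorem~\ref{thm: HK T10}, and what remains is bookkeeping with annihilators. The only points needing a little care are that $M$ is literally an annihilator (so biduality applies to it with no hypothesis) and that $N$ is closed (so that $(N^{\perp})^{\perp} = N$); as a consistency check, the easy inclusion $N \subseteq \{(h_i) : \sum_{i=1}^3 k_ih_i = 0\}$ is immediate from the identities $\sum_{i=1}^3 k_ia_i = \sum_{i=1}^3 k_ia_i^2 = 0$ recorded in \eqref{eq: k identity 1}. Alternatively, one could bypass the second description of $M^{\perp}$ and prove $\{(h_i):\sum_{i=1}^3 k_ih_i=0\} \subseteq N$ by hand — solving the system $h_i = a_iu + a_i^2v$ directly using the divisibility of $H$ furnished by Lemma~\ref{lem: H divisibility} — but routing through Theorem~\ref{thm: HK T10} keeps the argument short.
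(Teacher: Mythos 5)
Your argument is correct, and it is exactly the (omitted) argument the paper intends: the corollary is stated as an immediate consequence of Theorem \ref{thm: HK T10}, and the intended content is precisely your Pontryagin-duality bookkeeping, computing $(M^{\perp})^{\perp}$ from the two descriptions of $M^{\perp}$ and checking that the set $N$ is closed so that $N=(N^{\perp})^{\perp}$. No gaps; your consistency remarks (biduality for $M$, closedness of $N$, and the easy inclusion via \eqref{eq: k identity 1}) cover the only points needing care.
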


\begin{proof}[Proof of Theorem \ref{thm: HK T10}]
	First we show $M^{\perp} \subseteq
	\left\{ (\chi_1,\chi_2,\chi_3) \in \hat{H}^3 : \prod_{i=1}^3{\chi_i^{a_i}} = \prod_{i=1}^3{\chi_i^{a_i^2}} = 1 \right\}$.
	Let $\tilde{\chi} = (\chi_1, \chi_2, \chi_3) \in M^{\perp}$.
	Let $\hat{g}_n \to 0$ in $Z$.
	Since $\tilde{\chi} \in M^{\perp}$, $\tilde{\chi} \circ \tilde{\sigma}$ is a coboundary.
	Therefore, by Proposition \ref{prop: HK P2},
	\begin{align} \label{eq: coboundary condition}
		\prod_{i=1}^3{\left( \chi_i \circ \sigma_{a_ig_n} \right)(w_i)} \to 1
	\end{align}
	in $L^2(W)$.
	
	Now, $\chi_i \circ \sigma$ is quasi-affine for each $i = 1, 2, 3$, so there are constants $c_{i,n} \in S^1$
	and $\lambda_{i,n} \in \hat{Z}$ such that
	\begin{align*}
		c_{i,n} \lambda_{i,n}(z) \left( \chi_i \circ \sigma_{g_n} \right)(z) \to 1
	\end{align*}
	in $L^2(Z)$.
	By Lemma \ref{lem: HK L2},
	\begin{align*}
		d_{i,n} \lambda_{i,n}^{a_i}(z) \left( \chi \circ \sigma_{a_ig_n} \right) (z) \to 1
	\end{align*}
	in $L^2(Z)$, where
	\begin{align*}
		d_{i,n} = c_{i,n}^{a_i} \lambda_{i,n} \left( \binom{a_i}{2} \hat{g}_n \right).
	\end{align*}
	
	For notational convenience, let $f_{i,n}(z) := d_{i,n}\lambda_{i,n}^{a_i}(z) \left( \chi \circ \sigma_{a_ig_n} \right)(z)$.
	Note that $f_{i,n} : Z \to S^1$ and $f_{i,n} \to 1$ in $L^2(Z)$ for each $i = 1, 2, 3$.
	Taking the product over $i = 1, 2, 3$, we have
	\begin{align*}
		\norm{L^2(W)}{f_{1,n}(w_1)f_{2,n}(w_2)f_{3,n}(w_3) - 1}
		& \le \norm{L^2(W)}{f_{1,n}(w_1)f_{2,n}(w_2)f_{3,n}(w_3) - f_{2,n}(w_2)f_{3,n}(w_3)} \\
		& + \norm{L^2(W)}{f_{2,n}(w_2)f_{3,n}(w_3) - f_{3,n}(w_3)}
		+ \norm{L^2(W)}{f_{3,n}(w_3) - 1} \\
		& = \sum_{i=1}^3{\norm{L^2(W)}{f_{i,n}(w_i) - 1}}.
	\end{align*}
	Now, for each $i = 1, 2, 3$,
	\begin{align*}
		\norm{L^2(W)}{f_{i,n}(w_i) - 1}^2
		& = \int_Z{\int_Z{\left| f_{i,n}(z + a_it) - 1 \right|^2~dz}~dt}
		& (W = \left\{ (z + a_1t, z + a_2t, z + a_3t) : z, t \in Z \right\}) \\
		& = \int_Z{\int_Z{\left| f_{i,n}(z) - 1 \right|^2~dz}~dt}
		& (\text{Haar measure on}~Z~\text{is translation-invariant}) \\
		& = \norm{L^2(Z)}{f_{i,n} - 1}^2 \to 0.
	\end{align*}
	Thus,
	\begin{align*}
		\prod_{i=1}^3{d_{i,n}\lambda_{i,n}^{a_i}(z) \left( \chi \circ \sigma_{a_ig_n} \right)(w_i)} \to 1
	\end{align*}
	in $L^2(W)$.
	
	Combining with \eqref{eq: coboundary condition}, we have
	\begin{align*}
		\prod_{i=1}^3{d_{i,n} \lambda_{i,n}^{a_i}(w_i)} \to 1
	\end{align*}
	in $L^2(W)$.
	Since $W = \left\{ (z + a_1t, z + a_2t, z + a_3t) : z, t \in Z \right\}$, this is equivalent to
	\begin{align*}
		\int_Z{\int_Z{\left| 1 - \prod_{i=1}^3{d_{i,n} \lambda_{i,n}(a_iz + a_i^2t)} \right|^2~dz}~dt} \to 0.
	\end{align*}
	In particular, for all sufficiently large $n$,
	\begin{align*}
		\norm{L^2(Z \times Z)}{1 - \left( \prod_{i=1}^3{d_{i,n}} \right)
			\left( \prod_{i=1}^3{\lambda_{i,n}^{a_i} \otimes \lambda_{i,n}^{a_i^2}} \right)} < \sqrt{2}.
	\end{align*}
	By Lemma \ref{lem: sqrt2 bound},
	\begin{align*}
		\prod_{i=1}^3{\lambda_{i,n}^{a_i}} = \prod_{i=1}^3{\lambda_{i,n}^{a_i^2}} = 1.
	\end{align*}
	
	Set
	\begin{align*}
		u_n := \prod_{i=1}^3{c_{i,n}^{a_i}}, \quad u'_n = \prod_{i=1}^3{c_{i,n}^{a_i^2}}.
	\end{align*}
	For all large $n$, we have
	\begin{align*}
		u_n \prod_{i=1}^3{\left( \chi_i^{a_i} \circ \sigma_{g_n} \right)(z)}
		= \prod_{i=1}^3{c_{i,n}^{a_i} \lambda_{i,n}^{a_i}(z) \left( \chi \circ \sigma_{g_n} \right)^{a_i}(z)}
		\to 1
	\end{align*}
	in $L^2(Z)$.
	Similarly,
	\begin{align*}
		u'_n \prod_{i=1}^3{\left( \chi_i^{a_i^2} \circ \sigma_{g_n} \right)(z)}
		= \prod_{i=1}^3{c_{i,n}^{a_i^2} \lambda_{i,n}^{a_i^2}(z) \left( \chi \circ \sigma_{g_n} \right)^{a_i^2}(z)}
		\to 1
	\end{align*}
	in $L^2(Z)$.
	Thus, by Proposition \ref{prop: HK P3},
	\begin{align*}
		\left( \prod_{i=1}^3{\chi_i^{a_i}} \right) \circ \sigma \quad \text{and} \quad
		\left( \prod_{i=1}^3{\chi_i^{a_i^2}} \right) \circ \sigma
	\end{align*}
	are cohomologous to characters.
	Since $\sigma$ is weakly mixing, Proposition \ref{prop: HK P4}(2) implies
	\begin{align*}
		\prod_{i=1}^3{\chi_i^{a_i}} = \prod_{i=1}^3{\chi_i^{a_i^2}} = 1.
	\end{align*}
	
	\bigskip
	
	Now we show $\left\{ (\chi_1,\chi_2,\chi_3) \in \hat{H}^3 : \prod_{i=1}^3{\chi_i^{a_i}} = \prod_{i=1}^3{\chi_i^{a_i^2}} = 1 \right\}
	\subseteq \left\{ \left( \chi^{k_1}, \chi^{k_2}, \chi^{k_3} \right) : \chi \in \hat{H} \right\}$.
	Let \, $(\chi_1, \chi_2, \chi_3) \in \hat{H}^3$,
	and suppose $\prod_{i=1}^3{\chi_i^{a_i}} = \prod_{i=1}^3{\chi_i^{a_i^2}} = 1$.
	By direct calculation, $\chi_i^{k'_j} = \chi_j^{k'_i}$ for $1 \le i, j \le 3$.
	By Lemma \ref{lem: H divisibility}, since $DG$ has finite index in $G$,
	the group $\hat{H}$ has no $D$-torsion, so $\chi_i^{k_j} = \chi_j^{k_i}$.
	
	Since $\gcd(k_1, k_2, k_3) = 1$, let $b_1, b_2, b_3 \in \Z$ such that $\sum_{i=1}^3{b_ik_i} = 1$,
	and set $\chi = \prod_{i=1}^3{\chi_i^{b_i}}$.
	Then
	\begin{align*}
		\chi^{k_j} = \prod_{i=1}^3{\chi_i^{b_ik_j}}
		= \prod_{i=1}^3{\chi_j^{b_ik_i}}
		= \chi_j^{\sum_{i=1}^3{b_ik_i}} = \chi_j.
	\end{align*}
	
	\bigskip
	
	Finally, we check that $\left\{ \left( \chi^{k_1}, \chi^{k_2}, \chi^{k_3} \right) : \chi \in \hat{H} \right\}
	\subseteq M^{\perp}$.
	Let $\chi \in \hat{H}$.
	We will apply Proposition \ref{prop: HK P2} to check that
	$\left( \chi^{k_1}, \chi^{k_2}, \chi^{k_3} \right) \circ \tilde{\sigma}$ is a coboundary.
	Let $(g_n)_{n \in \N}$ be a sequence in $G$
	such that $\left( a_1\hat{g}_n, a_2\hat{g}_n, a_3\hat{g}_n \right) \to 0$ in $W$.
	Then $a_i\hat{g}_n \to 0$ in $Z$ for each $i = 1, 2, 3$, so $a\hat{g}_n \to 0$, where $a = \gcd(a_1,a_2,a_3)$.
	The cocycle $\chi \circ \sigma$ is quasi-affine, so there are affine functions $\omega_n(z) = c_n \lambda_n(z)$
	such that
	\begin{align*}
		c_n \lambda_n(z) \chi \left( \sigma_{ag_n}(z) \right) \to 1
	\end{align*}
	in $L^2(Z)$.
	
	Set $a'_i := \frac{a_i}{a}$ and
	\begin{align*}
		d_{i,n} := c_n^{a'_i} \lambda_n \left( \binom{a'_i}{2} a\hat{g}_n \right).
	\end{align*}
	By Lemma \ref{lem: HK L2},
	\begin{align*}
		d_{i,n} \lambda_n^{a'_i}(z) \chi \left( \sigma_{a_ig_n}(z) \right) \to 1
	\end{align*}
	in $L^2(Z)$.
	Therefore,
	\begin{align*}
		\prod_{i=1}^3{d_{i,n}^{k_i} \lambda_n^{k_ia'_i}(w_i) \chi^{k_i} \left( \sigma_{a_ig_n}(w_i) \right)} \to 1
	\end{align*}
	in $L^2(W)$.
	
	From the definition of $d_{i,n}$ and \eqref{eq: k identity 1}, $\prod_{i=1}^3{d_{i,n}^{k_i}} = 1$.
	Moreover, for $w = (z + a_1t, z + a_2t, z + a_3t) \in W$, we have by \eqref{eq: k identity 1},
	\begin{align*}
		\sum_{i=1}^3{k_ia'_iw_i} = \sum_{i=1}^3{k_ia'_iz} + \sum_{i=1}^3{k_ia'_ia_it} = 0,
	\end{align*}
	so $\prod_{i=1}^3{\lambda_n^{k_ia'_i}(w_i)} = 1$.
	Thus,
	\begin{align*}
		\prod_{i=1}^3{\chi^{k_i} \left( \sigma_{a_ig_n}(w_i) \right)} \to 1
	\end{align*}
	in $L^2(W)$.
	That is, $\left( \chi^{k_1}, \chi^{k_2}, \chi^{k_3} \right) \circ \tilde{\sigma}_{g_n}(w) \to 1$ in $L^2(W)$.
	By Proposition \ref{prop: HK P2}, $\left( \chi^{k_1}, \chi^{k_2}, \chi^{k_3} \right) \circ \tilde{\sigma}$
	is a coboundary, so $\left( \chi^{k_1}, \chi^{k_2}, \chi^{k_3} \right) \in M^{\perp}$.
\end{proof}


\subsection{Limit formula}

We now have all of the necessary tools to prove Theorem \ref{thm: limit formula},
which we restate here for the convenience of the reader.
Recall that $\M(Z,H)$ denotes the set of measurable functions $Z \to H$
equipped with the topology of convergence in measure.

{\renewcommand\footnote[1]{}\LimitFormula*}

We now turn to constructing the function $\psi$ appearing in Theorem \ref{thm: limit formula}.
To prove continuity of the map $t \mapsto \psi(t,\cdot)$, we will use the following
characterization of convergence in measure:

\begin{lem} \label{lem: conv in meas}
	Let $(f_n)_{n \in \N}$ be a sequence of functions in $\M(Z,H)$.
	Then $f_n \to f$ in $\M(Z,H)$ if and only if $\chi \circ f_n \to \chi \circ f$ in $L^2(Z)$
	for every character $\chi \in \hat{H}$.
\end{lem}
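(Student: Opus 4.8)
The plan is to fix a translation-invariant metric $d$ on $H$ (available since $H$ is a compact metrizable abelian group, by the paper's separability convention) and to pass back and forth between the metric description of convergence in measure and the $L^2$-behaviour of the composed functions $\chi \circ f_n$ for $\chi \in \hat{H}$.

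First I would handle the forward implication. Assume $f_n \to f$ in $\M(Z,H)$, and fix $\chi \in \hat{H}$. Since $\chi$ is continuous on the compact group $H$ it is uniformly continuous, so given $\eta > 0$ there is $\delta > 0$ with $|\chi(h) - \chi(h')| \le \eta$ whenever $d(h,h') \le \delta$. Splitting $Z$ into the set where $d(f_n,f) \le \delta$ and its complement, and using the trivial bound $|\chi\circ f_n - \chi\circ f| \le 2$ together with $m_Z(Z) = 1$, one gets $\norm{L^2(Z)}{\chi\circ f_n - \chi\circ f}^2 \le \eta^2 + 4\,m_Z(\{z : d(f_n(z),f(z)) > \delta\})$. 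Taking $\limsup$ in $n$ yields a bound $\le \eta^2$, and since $\eta > 0$ is arbitrary, $\chi\circ f_n \to \chi\circ f$ in $L^2(Z)$.

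For the converse I would first reduce to the identity of $H$. Put $g_n := f_n - f$. Since $\chi(g_n(z)) = \chi(f_n(z))\overline{\chi(f(z))}$ and $|\chi\circ f| \equiv 1$, multiplication by the unimodular function $\overline{\chi\circ f}$ is an $L^2$-isometry, so the hypothesis $\chi\circ f_n \to \chi\circ f$ in $L^2(Z)$ becomes $\chi\circ g_n \to 1$ in $L^2(Z)$ for every $\chi \in \hat{H}$. It remains to show $g_n \to 0$ in $\M(Z,H)$. Fix $\eps > 0$ and set $K := \{h \in H : d(h,0) \ge \eps\}$, a compact subset of $H$ avoiding the identity. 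Because the continuous characters of a compact abelian group separate points (Pontryagin duality / Peter--Weyl), for each $h \in K$ there is $\chi_h \in \hat{H}$ with $\chi_h(h) \ne 1$; by continuity and compactness of $K$ one can extract finitely many characters $\chi_1,\dots,\chi_m$ and a constant $\delta > 0$ such that $\sum_{i=1}^m |\chi_i(h) - 1|^2 \ge \delta^2$ for all $h \in K$. Integrating the pointwise inequality $\delta^2\,\ind_{\{g_n \in K\}}(z) \le \sum_{i=1}^m |\chi_i(g_n(z)) - 1|^2$ over $Z$ gives $\delta^2\,m_Z(\{z : g_n(z) \in K\}) \le \sum_{i=1}^m \norm{L^2(Z)}{\chi_i\circ g_n - 1}^2 \to 0$, i.e. $m_Z(\{z : d(g_n(z),0) \ge \eps\}) \to 0$, which is precisely convergence of $g_n$ to $0$ in $\M(Z,H)$.

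The two $\eps$-splitting estimates are routine. The one step with genuine content is the uniform separation of $K$ from the identity by finitely many characters, which combines Pontryagin duality for the compact abelian group $H$ with a covering argument; I expect that — mild as it is — to be the main point to get right.
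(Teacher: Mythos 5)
Your proof is correct. The forward direction is the same easy observation the paper makes (uniform continuity of $\chi$ plus boundedness), just written out quantitatively. In the converse your route differs from the paper's: the paper keeps both $f_n$ and $f$ in play and uses the Stone--Weierstrass theorem on $H \times H$ to approximate the translation-invariant metric $d_H(x,y)$ uniformly by a finite sum $\sum_k c_k\,\chi_k(x)\xi_k(y)$ of products of characters, then chases $\eps$'s to convert $L^2$-convergence of $\chi_k \circ f_n$ into convergence in measure of $f_n$ to $f$. You instead exploit translation invariance to reduce to $g_n := f_n - f \to 0$ (legitimate, since $\chi(g_n) = (\chi\circ f_n)\overline{\chi\circ f}$ and multiplication by a unimodular function is an $L^2$-isometry), and then separate the compact set $K = \{h : d(h,0)\ge\eps\}$ from the identity by finitely many characters: point separation from Pontryagin duality plus an open-cover argument gives $\delta>0$ with $\sum_{i=1}^m|\chi_i(h)-1|^2 \ge \delta^2$ on $K$, and a Chebyshev-type integration finishes. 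Both arguments ultimately rest on characters separating points of $H$ (which is also what drives Stone--Weierstrass here); yours avoids the two-variable approximation of the metric and is arguably a bit more elementary, while the paper's handles $f_n$ and $f$ symmetrically without the reduction step. Either proof serves the purpose of the lemma equally well.
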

\begin{proof}
	Denote by $m_Z$ the Haar measure on $Z$.
	Suppose $f_n \to f$ in $\M(Z,H)$.
	Let $\chi \in \hat{H}$.
	Since $\chi$ is (uniformly) continuous, it follows that $\chi \circ f_n \to \chi \circ f$ in measure.
	Moreover, $\chi$ is bounded, so $\chi \circ f_n \to \chi \circ f$ in $L^2(Z)$.
	
	Conversely, suppose $\chi \circ f_n \to \chi \circ f$ in $L^2(Z)$ for every $\chi \in \hat{H}$.
	Let $d_H$ be a translation-invariant metric on $H$.
	Then $d_H : H \times H \to [0, \infty)$ is a continuous function.
	Let $\eps > 0$.
	By the Stone--Weierstrass theorem, there are characters
	$\chi_k, \xi_k \in \hat{H}$ and coefficients $c_k \in \C$, $k = 1, \dots, K$, such that
	\begin{align*}
		\left| d_H(x,y) - \sum_{k=1}^K{c_k \chi_k(x) \xi_k(y)} \right| < \frac{\eps}{3}
	\end{align*}
	for every $x, y \in H$.
	Let $\Phi(x,y) := \sum_{k=1}^K{c_k\chi_k(x)\xi_k(y)}$.
	
	For each $k = 1, \dots, K$, $\chi_k \circ f_n \to \chi_k \circ f$ in $L^2(Z)$ and hence in measure.
	Therefore,
	\begin{align*}
		\Phi(f_n(z), f(z)) \to \Phi(f(z),f(z))
	\end{align*}
	in measure.
	For each $n \in \N$, set
	\begin{align*}
		B_n := \left\{ z \in Z : \left| \Phi(f_n(z), f(z)) - \Phi(f(z),f(z)) \right| \ge \frac{\eps}{3} \right\},
	\end{align*}
	and let $N \in \N$ so that, for $n \ge N$,
	\begin{align*}
		m_Z(B_n) < \eps.
	\end{align*}
	For $z \in Z \setminus B_n$, we have
	\begin{align*}
		d_H(f_n(z), f(z))
		\le & \left| d_H(f_n(z), f(z)) - \Phi(f_n(z), f(z)) \right| \\
		& + \left| \Phi(f_n(z), f(z)) - \Phi(f(z), f(z)) \right|
		+ \left| \Phi(f(z), f(z)) \right| \\
		& < \frac{\eps}{3} + \frac{\eps}{3} + \frac{\eps}{3} = \eps.
	\end{align*}
	Thus, for $n \ge N$,
	\begin{align*}
		m_Z \left( \left\{ z \in Z : d_H(f_n(z), f(z)) \ge \eps \right\} \right) \le m_Z(B_n) < \eps.
	\end{align*}
	That is, $f_n \to f$ in measure.
\end{proof}

\begin{prop} \label{prop: HK P14}
	Let $\mathbf{Z} \times_{\sigma} H$ be an ergodic quasi-affine system.
	There is a function $\psi : Z \times Z \to H$ such that
	\begin{enumerate}[(1)]
		\item	for every $g \in G$,
		\begin{align*}
			\psi \left( \hat{g}, z \right) = \sum_{i=1}^3{k_i \sigma_{a_ig}(z)},
		\end{align*}
		and
		\item	the map $Z \ni t \mapsto \psi(t, \cdot) \in \M(Z,H)$ is continuous.
	\end{enumerate}
\end{prop}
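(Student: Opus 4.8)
The plan is to build $\psi$ by extending, in its second argument, the family of functions $\phi_g \in \M(Z,H)$ given by $\phi_g(z) := \sum_{i=1}^3 k_i\,\sigma_{a_ig}(z)$ off the dense set $\{\hat g : g\in G\}\subseteq Z$. Everything reduces to the following claim: \emph{whenever $(g_n)_{n\in\N}$ is a sequence in $G$ such that $(\hat g_n)$ converges in $Z$, the sequence $(\phi_{g_n})$ converges in $\M(Z,H)$.} Granting the claim, the limit depends only on $\lim_n \hat g_n$ (interleave two sequences with the same image-limit to see that $(\phi_{g_n})$ is Cauchy-consistent), so we may define $\psi(t,\cdot):=\lim_n \phi_{g_n}$ for any $(g_n)$ with $\hat g_n\to t$; this is legitimate since $\M(Z,H)$ is a complete metric group. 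Property~(1) is then immediate by taking $g_n\equiv g$, and continuity of $t\mapsto\psi(t,\cdot)$ (property~(2)) follows from a routine diagonal/$2\eps$ argument: given $t_m\to t$, pick $g^{(m)}\in G$ with $\hat g^{(m)}$ within $1/m$ of $t_m$ and $\phi_{g^{(m)}}$ within $1/m$ of $\psi(t_m,\cdot)$; then $\hat g^{(m)}\to t$, so $\phi_{g^{(m)}}\to\psi(t,\cdot)$, whence $\psi(t_m,\cdot)\to\psi(t,\cdot)$. (The normalization $\psi(0,z)=0$ demanded in Theorem~\ref{thm: limit formula} also drops out, since $\phi_0\equiv 0$.)

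To prove the claim, fix a character $\chi\in\hat H$ and write $\rho:=\chi\circ\sigma$, which is a quasi-affine cocycle on the Kronecker factor $\mathbf{Z}$ by hypothesis. Since $(\hat g_n)$ converges, Lemma~\ref{lem: HK L3} furnishes affine functions $\omega_n = c_n\lambda_n$ (with $c_n\in S^1$, $\lambda_n\in\hat Z$) such that $\omega_n\rho_{g_n}$ converges in $L^2(Z)$. Applying Lemma~\ref{lem: HK L2} — in the evident version valid for arbitrary $a\in\Z$ and for an arbitrary $L^2$-limit, both of which its proof delivers verbatim — to each $a_i$, we obtain that
\begin{align*}
	d_{i,n}\,\lambda_n^{a_i}(z)\,\rho_{a_ig_n}(z),\qquad d_{i,n}:=c_n^{a_i}\,\lambda_n\!\left(\tbinom{a_i}{2}\hat g_n\right),
\end{align*}
converges in $L^2(Z)$ for $i=1,2,3$, and crucially the character part of the $i$-th affine correction is the \emph{same} character $\lambda_n$, raised to the power $a_i$. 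Raise the $i$-th sequence to the power $k_i$ and multiply over $i$. The scalar corrections contribute $\prod_i d_{i,n}^{k_i} = c_n^{\sum_i k_ia_i}\,\lambda_n\!\left(\sum_i k_i\tbinom{a_i}{2}\hat g_n\right)$ and the character corrections contribute $\lambda_n^{\sum_i k_ia_i}$. By \eqref{eq: k identity 1} we have $\sum_i k_ia_i=\sum_i k_ia_i^2=0$, hence also $\sum_i k_i\binom{a_i}{2}=\tfrac12\big(\sum_i k_ia_i^2-\sum_i k_ia_i\big)=0$, so both corrections are \emph{identically} trivial. Therefore
\begin{align*}
	\chi\circ\phi_{g_n} \;=\; \prod_{i=1}^3\big(\chi^{k_i}\circ\sigma_{a_ig_n}\big) \;=\; \prod_{i=1}^3\big(d_{i,n}\,\lambda_n^{a_i}\,\rho_{a_ig_n}\big)^{k_i}
\end{align*}
is a product of three uniformly bounded ($S^1$-valued), $L^2(Z)$-convergent sequences, hence converges in $L^2(Z)$. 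Since this holds for every $\chi\in\hat H$, Lemma~\ref{lem: conv in meas} shows $(\phi_{g_n})$ converges in $\M(Z,H)$, proving the claim.

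A few orienting remarks. The only structural input used is that $\sigma$ is quasi-affine, i.e. that each $\chi\circ\sigma$ satisfies Proposition~\ref{prop: HK P6}; we do \emph{not} need the Mackey-group computation of Theorem~\ref{thm: HK T10} for this proposition. The arithmetic of the admissible triple enters solely through the vanishing of $\sum_i k_ia_i$ and $\sum_i k_ia_i^2$ in \eqref{eq: k identity 1}, which is exactly what annihilates the affine indeterminacy that quasi-affineness leaves in the individual functions $\sigma_{a_ig_n}$; the complementary fact \eqref{eq: k identity 2} that $\gcd(k_1,k_2,k_3)=1$ is what will later tie $\psi$ to the correct factor. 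The step demanding the most care — and the one I expect to be the main obstacle — is the coupling of the three affine corrections: one must produce the corrections for $\sigma_{a_ig_n}$ from a \emph{single} correction for $\sigma_{g_n}$ (via Lemma~\ref{lem: HK L2}) so that they share the base character $\lambda_n$; only then does the $k_i$-weighted product telescope. Two minor technical points to dispatch along the way are that $a_1,a_2,a_3$ may be negative whereas Lemma~\ref{lem: HK L2} is stated for $a\in\N$ (one first records the $\Z$-version, using $\rho_{-ag}(z)=\overline{\rho_{ag}}(z-\widehat{ag})$ together with the polynomial extension of $\binom{a}{2}$), and that Lemma~\ref{lem: HK L2}, while phrased with limit $1$, is being invoked here with a general limit.
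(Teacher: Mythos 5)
Your proposal is correct and follows essentially the same route as the paper: reduce to $L^2(Z)$-convergence of $\chi\circ\phi_{g_n}$ for each $\chi\in\hat H$ via Lemma \ref{lem: conv in meas}, use Lemma \ref{lem: HK L3} to get a single affine correction $c_n\lambda_n$ for $\chi\circ\sigma_{g_n}$, transfer it to each $\sigma_{a_ig_n}$ by Lemma \ref{lem: HK L2} so the corrections share the base character $\lambda_n$, and observe that the identities $\sum_i k_ia_i=\sum_i k_ia_i^2=0$ from \eqref{eq: k identity 1} annihilate both the scalar and character corrections. Your added care about well-definedness of the extension, the continuity argument, negative $a_i$, and the general-limit form of Lemma \ref{lem: HK L2} fills in details the paper leaves implicit (the parenthetical ``(to $0$)''/``(to $1$)'' in that lemma already signals the general-limit version), but it is not a different argument.
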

\begin{proof}
	Suppose $\hat{g}_n \to t$.
	We want to show that $\sum_{i=1}^3{k_i \sigma_{a_ig_n}}$ converges in $\M(Z,H)$.
	Equivalently (see Lemma \ref{lem: conv in meas}), for every $\chi \in \hat{H}$,
	\begin{align*}
		\chi \left( \sum_{i=1}^3{k_i \sigma_{a_ig_n}(z)} \right)
	\end{align*}
	converges in $L^2(Z)$.
	
	The cocycle $\chi \circ \sigma$ is quasi-affine, so by Lemma \ref{lem: HK L3},
	there are affine functions $\omega_n(z) = c_n \lambda_n(z)$ such that
	$\left( \omega_n \chi \circ \sigma_{g_n} \right)_{n \in \N}$ converges in $L^2(Z)$.
	Now by Lemma \ref{lem: HK L2},
	\begin{align*}
		d_{i,n} \lambda_n^{a_i}(z) \chi \left( \sigma_{a_ig_n}(z) \right)
	\end{align*}
	converges, where
	\begin{align*}
		d_{i,n} = c_n^{a_i} \lambda_n \left( \binom{a_i}{2} \hat{g}_n \right).
	\end{align*}
	Therefore,
	\begin{align*}
		\prod_{i=1}^3{d_{i,n}^{k_i} \lambda_n^{k_ia_i}(z) \chi^{k_i} \left( \sigma_{a_ig_n}(z) \right)}
	\end{align*}
	converges in $L^2(Z)$.
	But, applying \eqref{eq: k identity 1},
	\begin{align*}
		\prod_{i=1}^3{d_{i,n}^{k_i}} = 1 \quad \text{and} \quad \prod_{i=1}^3{\lambda_n^{k_ia_i}(z)} = 1,
	\end{align*}
	so
	\begin{align*}
		\chi \left( \sum_{i=1}^3{k_i \sigma_{a_ig_n}(z)} \right)
		= \prod_{i=1}^3{\chi^{k_i} \left( \sigma_{a_ig_n}(z) \right)}
		= \prod_{i=1}^3{d_{i,n}^{k_i} \lambda_n^{k_ia_i}(z) \chi^{k_i} \left( \sigma_{a_ig_n}(z) \right)}
	\end{align*}
	converges as desired.
\end{proof}

\begin{rem}
	By the cocyle equation, $\sigma_0(z) = 0$ for $z \in Z$.
	Hence, by property (1), we have $\psi(0,z) = 0$.
\end{rem}

Now we prove Theorem \ref{thm: limit formula}:
\begin{proof}[Proof of Theorem \ref{thm: limit formula}]
	We will write elements of $X$ as $x = (z,h) \in Z \times H$.
	By a standard approximation argument, it suffices to consider $f_i = \omega_i \otimes \chi_i$
	with $\omega_i \in L^{\infty}(Z)$ and $\chi_i \in \hat{H}$.
	Write $\tilde{\chi} = (\chi_1, \chi_2, \chi_3)$.
	By Corollary \ref{cor: HK C4}, the right-hand side of \eqref{eq: limit formula} is equal to
	\begin{align*}
		\left( \int_Z{\prod_{i=1}^3{\omega_i(z + a_it)\chi_i(h)\chi_i(b_i\psi(t,z))}~dt} \right)
		\left( \int_M{\tilde{\chi}~dm_M} \right).
	\end{align*}
	
	\bigskip
	
	First consider the case $\tilde{\chi} \notin M^{\perp}$.
	Then $\int_M{\tilde{\chi}~dm_M} = 0$, so the right-hand side of \eqref{eq: limit formula} is $0$.
	On the other hand, by Proposition \ref{prop: Mackey ergodicity},
	the left-hand side of \eqref{eq: limit formula} is also $0$.
	
	\bigskip
	
	Now suppose $\tilde{\chi} \in M^{\perp}$.
	Then $\tilde{\chi}|_M = 1$, so $\int_M{\tilde{\chi}~dm_M} = 1$.
	By Theorem \ref{thm: HK T10}, there is a character $\chi \in \hat{H}$ such that $\chi_i = \chi^{k_i}$,
	so the right-hand side of \eqref{eq: limit formula} is equal to
	\begin{align*}
		\int_Z{\chi \left( \sum_{i=1}^3{k_ih} \right) \chi \left( \psi(t,z) \right) \prod_{i=1}^3{\omega_i(z + a_it)}~dt}.
	\end{align*}
	Here we have used $\sum_{i=1}^3{b_ik_i} = 1$ to simplify the expressions involving $\psi$.
	
	It remains to compute the left-hand side of \eqref{eq: limit formula}.
	We have
	\begin{align*}
		\prod_{i=1}^3{f_i \left( T_{a_ig}x \right)}
		& = \prod_{i=1}^3{\omega_i \left( z + a_i\hat{g} \right) \chi_i \left( h + \sigma_{a_ig}(z) \right)} \\
		& = \chi \left( \sum_{i=1}^3{k_ih} \right) \chi \left( \sum_{i=1}^3{k_i\sigma_{a_ig}(z)} \right)
		\prod_{i=1}^3{\omega_i \left( z + a_i \hat{g} \right)} \\
		& = \varphi_{\hat{g}}(x),
	\end{align*}
	where $\varphi : Z \times X$ is the function
	\begin{align*}
		\varphi_t(x) := \chi \left( \sum_{i=1}^3{k_ih} \right) \chi \left( \psi(t,z) \right)
		\prod_{i=1}^3{\omega_i \left( z + a_it \right)}.
	\end{align*}
	By Proposition \ref{prop: HK P14}, the map $Z \ni t \mapsto \varphi_t \in L^2(\mu)$ is continuous.
	Since $z \mapsto z + \hat{g}$ is uniquely ergodic (see Lemma \ref{lem: compact group rotations}), it follows that, for any $\xi \in L^2(\mu)$,
	\begin{align*}
		\UClim_{g \in G}{\innprod{\varphi_{\hat{g}}}{\xi}} = \int_Z{\innprod{\varphi_t}{\xi}~dt}.
	\end{align*}
	That is,
	\begin{align*}
		\UClim_{g \in G}{\varphi_{\hat{g}}(x)} = \int_Z{\varphi_t(x)~dt}
	\end{align*}
	weakly in $L^2(\mu)$.
	
	Hence, the formula \eqref{eq: limit formula} holds weakly in $L^2(\mu)$.
	By more general results on norm convergence of multiple ergodic averages (see \cite{austin, zorin}),
	\eqref{eq: limit formula} also holds in norm.
	
\end{proof}


\section{Large intersections for double recurrence} \label{sec: dK}

We want to show the following Khintchine-type theorem for double recurrence:
\DoubleKhintchine*

As discussed in the introduction, the case $G = \Z$ was established in \cite{bhk} for $\varphi(n) = n$ and $\psi(n) = 2n$ and extended in \cite{frathree} to all admissible pairs.
It was shown in \cite{btz2} that Theorem \ref{thm: double Khintchine} holds in $G = \F_p^{\infty}$
when $\varphi$ and $\psi$ are of the form $g \mapsto cg$ with $c \in \Z$.
Though our situation is a significant generalization of these two special cases,
we are able to use the method in \cite{btz2} in order to deduce our result on large intersections
from the limit formula \eqref{eq: ET}.
First we prove a lemma:
\begin{lem} \label{lem: Kronecker with a twist}
	Let $\X$ be an ergodic system.
	Let $f_0, f_1, f_2 \in L^{\infty}(\mu)$, and let $\{\varphi,\psi\}$ be an admissible pair of homomorphisms.
	Then for every continuous function $\eta : Z_{\varphi,\psi} \to \C$,
	\begin{align*}
		\UClim_{g \in G}\eta \left( \hat{\varphi(g)}, \hat{\psi(g)} \right) &
		\int_X{f_0 \cdot T_{\varphi(g)} f_1 \cdot T_{\psi(g)} f_2~d\mu}
		\\  & = \int_{Z_{\varphi,\psi}}
		{\int_Z{ \eta(u,v) \tilde{f}_0(z) \tilde{f}_1(z+u) \tilde{f}_2(z+v)~dz~d\nu_{\varphi,\psi}(u,v)}},
	\end{align*}
	where $\tilde{f}_i$ is the projection of $f_i$ onto the Kronecker factor.
\end{lem}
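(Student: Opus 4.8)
\emph{Proof proposal.} The plan is to reduce to the case where $\eta$ is a single character of $Z_{\varphi,\psi}$, and then to absorb the oscillatory factor $\eta(\widehat{\varphi(g)},\widehat{\psi(g)})$ into $f_1$ and $f_2$, so that the asserted identity becomes a direct consequence of Theorem~\ref{thm: Kronecker}. For the reduction: for any continuous $\eta$ and any F\o lner sequence $(F_N)$, the average $\tfrac1{|F_N|}\sum_{g\in F_N}\eta(\widehat{\varphi(g)},\widehat{\psi(g)})\int_X f_0\cdot T_{\varphi(g)}f_1\cdot T_{\psi(g)}f_2\,d\mu$ is Lipschitz in $\eta$ in the sup norm, with constant $\|f_0\|_2\|f_1\|_\infty\|f_2\|_\infty$ and uniformly in $(F_N)$; the right-hand side of the lemma is linear and sup-norm continuous in $\eta$ as well. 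A standard $3\eps$-argument then reduces the statement to $\eta$ a trigonometric polynomial on $Z_{\varphi,\psi}$, and by linearity to $\eta$ a single character. Since $Z_{\varphi,\psi}$ is a closed subgroup of $Z\times Z$, such a character extends to a character of $Z\times Z$, i.e.\ $\eta(u,v)=e_\chi(u)e_\xi(v)$ on $Z_{\varphi,\psi}$ for some $\chi,\xi\in\hat Z$, where $e_\chi,e_\xi$ denote the corresponding characters of $Z$.

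Next, using the identification $\hat Z\cong\Lambda$ from Section~\ref{sec: factors}, let $f_\chi,f_\xi\in L^\infty(\mu)$ be the eigenfunctions with $|f_\chi|=|f_\xi|=1$, $T_gf_\chi=\chi(g)f_\chi$, $T_gf_\xi=\xi(g)f_\xi$, whose images under the Kronecker factor map $\pi$ are $e_\chi,e_\xi$. Since $e_\chi(\widehat{\varphi(g)})=\widehat{\varphi(g)}(\chi)=\chi(\varphi(g))$ (and likewise for $\xi$), and $T_{\varphi(g)}f_\chi=\chi(\varphi(g))f_\chi$ with $|f_\chi|=1$, one obtains
\[
\eta(\widehat{\varphi(g)},\widehat{\psi(g)})=\chi(\varphi(g))\,\xi(\psi(g))=\bigl(\overline{f_\chi}\,T_{\varphi(g)}f_\chi\bigr)\bigl(\overline{f_\xi}\,T_{\psi(g)}f_\xi\bigr)
\]
pointwise $\mu$-a.e., and hence
\[
\eta(\widehat{\varphi(g)},\widehat{\psi(g)})\int_X f_0\cdot T_{\varphi(g)}f_1\cdot T_{\psi(g)}f_2\,d\mu=\int_X \bigl(f_0\,\overline{f_\chi f_\xi}\bigr)\cdot T_{\varphi(g)}(f_\chi f_1)\cdot T_{\psi(g)}(f_\xi f_2)\,d\mu .
\]

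Now I would apply Theorem~\ref{thm: Kronecker} to the admissible pair $\{\varphi,\psi\}$ and the functions $f_\chi f_1,\,f_\xi f_2\in L^\infty(\mu)$: the $\UClim$ of $T_{\varphi(g)}(f_\chi f_1)\cdot T_{\psi(g)}(f_\xi f_2)$ exists in $L^2(\mu)$ and equals $\int_{Z^2}\widetilde{f_\chi f_1}(z+w_1)\,\widetilde{f_\xi f_2}(z+w_2)\,d\nu_{\varphi,\psi}(w_1,w_2)$, where $z=\pi(x)$; since $f_\chi,f_\xi$ are $\mathbf Z$-measurable, $\widetilde{f_\chi f_1}=e_\chi\tilde f_1$ and $\widetilde{f_\xi f_2}=e_\xi\tilde f_2$. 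Pairing this $L^2$-limit against the bounded function $f_0\,\overline{f_\chi f_\xi}$ and using $\E{f_0\,\overline{f_\chi f_\xi}}{Z}=\overline{e_\chi e_\xi}\,\tilde f_0$ collapses the integral to one over $Z\times Z_{\varphi,\psi}$; then the character identities $\overline{e_\chi(z)}\,e_\chi(z+w_1)=e_\chi(w_1)$, $\overline{e_\xi(z)}\,e_\xi(z+w_2)=e_\xi(w_2)$, together with $e_\chi(w_1)e_\xi(w_2)=\eta(w_1,w_2)$ for $(w_1,w_2)\in Z_{\varphi,\psi}$, recover the right-hand side of the lemma after one application of Fubini's theorem. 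The analytic ingredients (the Lipschitz-in-$\eta$ estimate, boundedness of conditional expectation, Fubini) are routine; the step I expect to require the most care is the pointwise identity $\eta(\widehat{\varphi(g)},\widehat{\psi(g)})=(\overline{f_\chi}\,T_{\varphi(g)}f_\chi)(\overline{f_\xi}\,T_{\psi(g)}f_\xi)$, which hinges on correctly matching the Pontryagin duality $\hat Z\cong\Lambda$ and the evaluation-map description of $Z$ with the eigenfunction relations $T_gf_\lambda=\lambda(g)f_\lambda$ and the normalization $|f_\lambda|=1$ — bookkeeping rather than genuine analysis, but the point at which a sign or an argument slot could easily go wrong.
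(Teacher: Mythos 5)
Your proposal is correct and follows essentially the same route as the paper: reduce by Stone–Weierstrass/extension to the case where $\eta$ is a character of $Z^2$ restricted to $Z_{\varphi,\psi}$, absorb it into $f_0,f_1,f_2$ via unimodular eigenfunctions (the paper multiplies by $\lambda_i\circ\pi$, which is the same as your $f_\chi,f_\xi$), and then apply Theorem~\ref{thm: Kronecker} and pair against the zeroth function, finishing with the character identities and Fubini. The bookkeeping step you flag, $e_\chi(\hat{\varphi(g)})=\chi(\varphi(g))$ and $\overline{f_\chi}\,T_{\varphi(g)}f_\chi=\chi(\varphi(g))$, is exactly right in the paper's conventions, so there is no gap.
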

\begin{proof}
	The trick is to absorb $\eta$ into the functions $f_i$ and then apply Theorem \ref{thm: Kronecker}.
	First observe that if $\eta : Z_{\varphi,\psi} \to \C$ is continuous, then $\eta$ extends to a continuous
	function $\eta_0$ on $Z^2$, since $Z_{\varphi,\psi} \subseteq Z^2$ is closed.
	Then by the Stone--Weierstrass theorem, it suffices to consider the case when
	$\eta_0$ is a character on $Z^2$.
	That is, $\eta_0 \in \hat{Z}^2 = \Lambda^2$.
	Thus, we can assume $\eta(u,v) = \lambda_1(u) \lambda_2(v)$
	with $\lambda_1, \lambda_2 \in \Lambda$.
	
	Now define functions $h_i$ by
	\begin{align*}
		h_0(x) & = \overline{\lambda_1(z)} \overline{\lambda_2(z)} f_0(x), \\
		h_1(x) & = \lambda_1(z) f_1(x), \\
		h_2(x) & = \lambda_2(z) f_2(x),
	\end{align*}
	where $x \mapsto z$ is the projection onto the Kronecker factor.
	The projections $\tilde{h}_i = \E{h_i}{Z}$ satisfy similar identities in terms of $\tilde{f}_i$.
	Now we can compute the limit using Theorem \ref{thm: Kronecker}:
	\begin{align*}
		\UClim_{g \in G}{\eta \left( \hat{\varphi(g)}, \hat{\psi(g)} \right)} &
		\int_X{f_0 \cdot T_{\varphi(g)} f_1 \cdot T_{\psi(g)} f_2~d\mu} \\
		= & ~\UClim_{g \in G}{\int_X{h_0 \cdot T_{\varphi(g)} h_1 \cdot T_{\psi(g)} h_2~d\mu}} \\
		= & \int_X{h_0(x)
			\left( \UClim_{g \in G}{T_{\varphi(g)} h_1(x) \cdot T_{\psi(g)} h_2(x)} \right)~d\mu(x)} \\
		= & \int_X{h_0(x) \left( \int_{Z_{\varphi,\psi}}{
				\tilde{h}_1(z+u) \tilde{h}_2(z+v)~d\nu_{\varphi,\psi}(u,v)} \right)~d\mu(x)} \\
		= & \int_Z{\int_{Z_{\varphi,\psi}}{\tilde{h}_0(z) \tilde{h}_1(z+u) \tilde{h}_2(z+v)
				~d\nu_{\varphi,\psi}(u,v)}~dz} \\
		= & \int_{Z_{\varphi,\psi}}{\int_Z{ \eta(u,v)
				\tilde{f}_0(z) \tilde{f}_1(z+u) \tilde{f}_2(z+v)~dz~d\nu_{\varphi,\psi}(u,v)}}.
	\end{align*}
\end{proof}

Now we prove Theorem \ref{thm: double Khintchine}:
\begin{proof}[Proof of Theorem \ref{thm: double Khintchine}]
	Let $\eps > 0$.
	Let $R_{\eps} := \left\{ g \in G : \mu \left( A \cap T_{\varphi(g)}^{-1}A \cap T_{\psi(g)}^{-1}A \right)
	> \mu(A)^3 - \eps \right\}$.
	Suppose for contradiction that $R_{\eps}$ is not syndetic.
	Then by Lemma \ref{lem: syndetic thick} there is a F{\o}lner sequence $(F_N)_{N \in \N}$ such that
	\begin{align} \label{eq: non-recurrence}
		\mu \left( A \cap T_{\varphi(g)}^{-1}A \cap T_{\psi(g)}^{-1}A \right) \le \mu(A)^3 - \eps
	\end{align}
	for every $N \in \N$ and $g \in F_N$.
	
	Let $f : Z \to \C$ be the projection of $\ind_A$ onto the Kronecker factor.
	Since $\ind_A$ is a nonnegative function, we have $f = \E{\ind_A}{Z} \ge 0$.
	Now by Jensen's inequality,
	\begin{align*}
		\int_Z{f(z) f(z) f(z)~dz} \ge \left( \int_Z{f~dz} \right)^3 = \mu(A)^3.
	\end{align*}
	Therefore,
	\begin{align*}
		\int_Z{f(z)f(z+u)f(z+v)~dz} \ge \mu(A)^3 - \frac{\eps}{2}
	\end{align*}
	for $(u,v)$ in some neighborhood of 0 in $Z_{\varphi,\psi}$.
	Hence, by Urysohn's lemma, there is a continuous function $\eta : Z_{\varphi,\psi} \to [0, \infty)$
	with $\int_{Z_{\varphi,\psi}}{\eta~d\nu_{\varphi,\psi}} = 1$ such that
	\begin{align*}
		\int_{Z_{\varphi,\psi}}{\int_Z{ \eta(u,v) f(z) f(z+u) f(z+v)~dz~d\nu_{\varphi,\psi}(u,v)}}
		\ge \mu(A)^3 - \frac{\eps}{2}.
	\end{align*}
	Applying Lemma \ref{lem: Kronecker with a twist}, we conclude
	\begin{align*}
		& \lim_{N \to \infty}{\frac{1}{|F_N|} \sum_{g \in F_N}{
				\eta \left( \hat{\varphi(g)}, \hat{\psi(g)} \right)
				\mu \left( A \cap T_{\varphi(g)}^{-1}A \cap T_{\psi(g)}^{-1}A \right)}} \\
		= & \lim_{N \to \infty}{\frac{1}{|F_N|} \sum_{g \in F_N}{
				\eta \left( \hat{\varphi(g)}, \hat{\psi(g)} \right)
				\int_X{\ind_A \cdot T_{\varphi(g)}\ind_A \cdot T_{\psi(g)}\ind_A~d\mu}}} \\
		\ge & ~\mu(A)^3 - \frac{\eps}{2}.
	\end{align*}
	
	On the other hand, by \eqref{eq: non-recurrence}, we have
	\begin{align*}
		& \limsup_{N \to \infty}{\frac{1}{|F_N|} \sum_{g \in F_N}{
				\eta \left( \hat{\varphi(g)}, \hat{\psi(g)} \right)
				\mu \left( A \cap T_{\varphi(g)}^{-1}A \cap T_{\psi(g)}^{-1}A \right)}} \\
		\le & \left( \mu(A)^3 - \eps \right)
		\lim_{N \to \infty}{\frac{1}{|F_N|} \sum_{g \in F_N}{\eta \left( \hat{\varphi(g)}, \hat{\psi(g)} \right)}} \\
		= & \left( \mu(A)^3 - \eps \right) \left( \int_{Z_{\varphi,\psi}}{\eta~d\nu_{\varphi,\psi}} \right) \\
		= & ~\mu(A)^3 - \eps.
	\end{align*}
	
	This is a contradiction, so $R_{\eps}$ must be syndetic for every $\eps > 0$.
\end{proof}


\section{Large intersections for triple recurrence} \label{sec: tK}

We will now show the following:
\TripleKhintchine*

In order to apply the limit formula in Theorem \ref{thm: limit formula},
we first need to reduce to studying multiple ergodic averages in a quasi-affine system.
By Theorem \ref{thm: CL characteristic}, the multiple ergodic averages
\begin{align*}
	\UClim_{g \in G}{T_{rg}f_1 \cdot T_{sg}f_2 \cdot T_{(r+s)g}f_3}
\end{align*}
are controlled by the $(r,s)$-CL factor $\B_{CL(r,s)}$.
We will reduce from $(r,s)$-CL cocycles to quasi-affine ($(1,1)$-CL) cocycles by passing to a finite index subgroup
with the help of the following two technical lemmas.
Recall that $CL_{\X}(r,s)$ denotes the group of all CL cocycles for the pair of homomorphisms
$g \mapsto rg$ and $g \mapsto sg$ (see Definition \ref{defn: CL cocycle}).

\begin{lem} \label{lem: CL product}
	Suppose $\rho \in CL_{\X}(r,s)$.
	Then for every $r', s' \in \Z \setminus \{0\}$, we have $\rho \in CL_{\X}(rr',ss')$.
\end{lem}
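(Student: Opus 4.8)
The plan is to verify directly that $\rho$ satisfies the Conze--Lesigne equation for the pair $(rr',ss')$, obtaining the required auxiliary data from that for $(r,s)$ via the substitution $g\mapsto r'g$. The only real work is bookkeeping with the subgroups $Z_r,Z_{rr'},Z_s,Z_{ss'}$ of $Z$ and checking that the relabelled functions remain measurable.

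First I would record the two elementary containments $Z_{rr'}\subseteq Z_r$ and $Z_{ss'}\subseteq Z_s$: indeed $\{\widehat{rr'g}:g\in G\}\subseteq\{\widehat{rh}:h\in G\}$ (take $h=r'g$), and likewise for $s$, and taking closures preserves the inclusion. Consequently there is a canonical continuous surjection $q:Z/Z_{rr'}\to Z/Z_r$, and every $u\in Z_{ss'}$ already lies in $Z_s$.

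Next, let $\Lambda:Z_s\times(Z/Z_r)\to\hat{G}$ and $K:Z_s\times Z\to S^1$ be measurable maps witnessing $\rho\in CL_{\X}(r,s)$, so that
\[
\frac{\rho_{rg}(z+u)}{\rho_{rg}(z)}=\Lambda_u(z+Z_r)(g)\,\frac{K_u(z+\widehat{rg})}{K_u(z)}
\]
for every $u\in Z_s$, a.e.\ $z\in Z$, and every $g\in G$. Restricting $u$ to $Z_{ss'}\subseteq Z_s$ and substituting $r'g$ for $g$ gives
\[
\frac{\rho_{rr'g}(z+u)}{\rho_{rr'g}(z)}=\Lambda_u(z+Z_r)(r'g)\,\frac{K_u(z+\widehat{rr'g})}{K_u(z)}.
\]
I would then set $K':Z_{ss'}\times Z\to S^1$ to be the restriction of $K$, and define $\Lambda':Z_{ss'}\times(Z/Z_{rr'})\to\hat{G}$ by $\Lambda'_u(z+Z_{rr'})(g):=\Lambda_u(z+Z_r)(r'g)$. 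This is well defined precisely because $Z_{rr'}\subseteq Z_r$, and it is measurable: it is the restriction of $\Lambda$ composed with the continuous quotient map $q$ and with the continuous homomorphism $\hat{G}\to\hat{G}$ dual to multiplication by $r'$ on $G$. With these choices the previous display becomes exactly the Conze--Lesigne equation for the pair $(rr',ss')$,
\[
\frac{\rho_{rr'g}(z+u)}{\rho_{rr'g}(z)}=\Lambda'_u(z+Z_{rr'})(g)\,\frac{K'_u(z+\widehat{rr'g})}{K'_u(z)},
\]
so $\rho\in CL_{\X}(rr',ss')$.

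No step here is genuinely hard: the argument is a change of variable together with a routine measurability check. The point that most deserves care is that one must be able to use the relation defining $\rho\in CL_{\X}(r,s)$ for \emph{every} $u$ in the subgroup $Z_{ss'}$, which may be a Haar-null closed subgroup of $Z_s$; this is harmless because that relation, as it is produced in Section~\ref{sec: CL}, holds for all $u\in Z_s$, with only the $z$-variable carrying an almost-everywhere qualifier.
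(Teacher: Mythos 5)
Your argument is essentially identical to the paper's proof: the paper likewise substitutes $g \mapsto r'g$ and restricts the shift parameter to $ss'Z = Z_{ss'}$ (written there as $u = ss'w$ with $w \in Z$), defines $\tilde{\Lambda}_{ss'w}(z + rr'Z)(g) := \Lambda_{ss'w}(z + rZ)(r'g)$, and justifies well-definedness exactly by the containment $rr'Z \subseteq rZ$, keeping $K$ unchanged. Your closing caveat about transferring the almost-everywhere qualifier in $u$ to the (possibly Haar-null) subgroup $Z_{ss'}$ is a point the paper passes over silently in the same step, so your write-up matches the published argument.
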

\begin{proof}
	Since $\rho$ is a $(r,s)$-CL cocycle, it satisfies the Conze--Lesigne equation
	\begin{align*}
		\frac{\rho_{rg}(z+u)}{\rho_{rg}(z)}
		= \Lambda_u(z + rZ)(g) \frac{K_u \left( z + r\hat{g} \right)}{K_u(z)}
	\end{align*}
	for all $g \in G$ and almost every $z \in Z, u \in sZ$.
	Hence, replacing $g$ with $r'g$ and $u$ with $ss'w$, we have
	\begin{align} \label{eq: CL rs}
		\frac{\rho_{rr'g}(z+ss'w)}{\rho_{rr'g}(z)}
		= \Lambda_{ss'w}(z + rZ)(r'g) \frac{K_{ss'w} \left( z + rr'\hat{g} \right)}{K_{ss'w}(z)}
	\end{align}
	for all $g \in G$ and almost every $z, w \in Z$.
	Now, let us define a new function $\tilde{\Lambda} : ss'Z \times Z/rr'Z \to \hat{G}$ by
	\begin{align} \label{eq: new Lambda}
		\tilde{\Lambda}_{ss'w}(z + rr'Z)(g) := \Lambda_{ss'w}(z + rZ)(r'g)
	\end{align}
	for $g \in G$ and $z, w \in Z$.
	Note that $\tilde{\Lambda}$ is well-defined in $z$ because $rZ \supseteq rr'Z$.
	Thus, substituting \eqref{eq: new Lambda} into \eqref{eq: CL rs},
	$\rho$ satisfies the Conze--Lesigne equation with parameters $(rr', ss')$:
	\begin{align*}
		\frac{\rho_{rr'g}(z+ss'w)}{\rho_{rr'g}(z)}
		= \tilde{\Lambda}_{ss'w}(z + rr'Z)(g) \frac{K_{ss'w} \left( z + rr'\hat{g} \right)}{K_{ss'w}(z)}
	\end{align*}
	for $g \in G$ and almost every $z, w \in Z$.
\end{proof}

\begin{lem} \label{lem: CL to QA}
	Let $a \in \Z$ such that $aG$ has finite index in $G$.
	Suppose $\rho \in CL_{\X}(a,a)$.
	Let $S_g := T_{ag}$ for $g \in G$, and let $\tau_g = \rho_{ag}$.
	Then on each of the finitely many ergodic components of $\left( X, \B, \mu, (S_g)_{g \in G} \right)$,
	the cocycle $\tau$ is quasi-affine.
\end{lem}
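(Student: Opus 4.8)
First I would record the structure of the situation. Since $g \mapsto \hat g$ is a homomorphism, $\widehat{ag} = a\hat g$, so $S_g$ acts on the Kronecker factor $\mathbf Z$ by rotation by $a\hat g$, and $Z_a := \overline{\{\widehat{ag} : g \in G\}} = aZ$ (the orbit $\{\hat g\}$ is dense in $Z$ and multiplication by $a$ is a continuous surjection of $Z$ onto $aZ$). Exactly as in the proof of Lemma \ref{lem: H divisibility}, $[Z : aZ] \le [G : aG] < \infty$, so $Z/aZ$ is finite. Hence the $(S_g)_{g \in G}$-action on $Z$ has finitely many ergodic components, namely the cosets $W = j + aZ$ of $aZ$; each carries the normalized restriction of $m_Z$, which is Haar measure on the coset, and after translation by $-j$ each $(W, m_W, (S_g))$ is the rotation of the compact abelian group $aZ$ by $a\hat g$, hence an ergodic Kronecker system by Lemma \ref{lem: compact group rotations}. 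Every ergodic component of $(X, \B, \mu, (S_g))$ projects under the Kronecker factor map onto one of these cosets. Also, $\tau_g := \rho_{ag}$ is a cocycle for $(\mathbf Z, (S_g))$: the cocycle equation for $\rho$ gives $\tau_{g+h}(z) = \rho_{ag+ah}(z) = \rho_{ag}(z + a\hat h)\,\rho_{ah}(z) = \tau_g(S_h z)\,\tau_h(z)$.

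The main step is then to check that $\tau$, viewed as an $S^1$-valued cocycle over each ergodic Kronecker system $W$, meets criterion (iii) of Proposition \ref{prop: HK P6}. Using $\rho \in CL_{\X}(a,a)$, I would take the measurable maps $\Lambda : aZ \times (Z/aZ) \to \hat G$ and $K : aZ \times Z \to S^1$ furnished by Definition \ref{defn: CL cocycle}, so that
\[
\frac{\rho_{ag}(z+u)}{\rho_{ag}(z)} = \Lambda_u(z + aZ)(g)\,\frac{K_u(z + a\hat g)}{K_u(z)}
\]
for all $g \in G$ and a.e.\ $(z,u) \in Z \times aZ$. Fix a coset $W = j + aZ$. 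By Fubini, the set of $u \in aZ$ for which this identity holds for all $g$ and a.e.\ $z \in Z$ — hence a.e.\ $z \in W$ — is a measurable, full-measure subset of $aZ$. For such $u$, and $z \in W$, one has $z + aZ = j + aZ$, so $\Lambda_u(z + aZ)(g) = \Lambda_u(j + aZ)(g) =: \gamma_u(g) \in \hat G$ is independent of $z$, and the identity becomes
\[
\frac{\tau_g(z+u)}{\tau_g(z)} = \gamma_u(g)\,\frac{K_u(S_g z)}{K_u(z)} \qquad (g \in G, \ z \in W \text{ a.e.}).
\]
This says exactly that the cocycle $g \mapsto \tau_g(\cdot + u)/\tau_g(\cdot)$ over $(W, m_W, (S_g))$ is cohomologous to the character $\gamma_u$, with transfer function $K_u|_W : W \to S^1$ (measurable and $S^1$-valued, hence admissible). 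Thus $\tau_g(\cdot + u)/\tau_g(\cdot)$ is cohomologous to a character for every $u$ in a measurable full-measure subset of the translation group $aZ$ of the Kronecker system $W$ — which is precisely condition (iii) of Proposition \ref{prop: HK P6}. Hence $\tau|_W$ is quasi-affine, and since every ergodic component of $(X, \B, \mu, (S_g))$ lies over one of the cosets $W$, the conclusion follows.

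The argument is essentially a formal manipulation of the Conze--Lesigne equation, so I do not expect a serious obstacle. The points needing care are: (a) identifying $Z_a$ with $aZ$ and observing that the ``translation variable'' $u$ in the CL equation ranges over exactly the group $aZ$ of which $W$ is a coset, so that a full-measure set of valid $u$'s is what criterion (iii) of Proposition \ref{prop: HK P6} requires; (b) the Fubini step turning ``for a.e.\ $(z,u)$'' into ``for a.e.\ $u$, the identity holds for a.e.\ $z \in W$'' on a fixed coset; and (c) the routine measurable-selection bookkeeping for the dependence of $\gamma_u$ and $K_u$ on $u$, of the type handled by \cite[Proposition 2]{lesigne} and already used repeatedly in this section.
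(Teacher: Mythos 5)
Your proposal is correct and follows essentially the same route as the paper: write out the Conze--Lesigne equation with parameters $(a,a)$, restrict to a fixed coset of $\tilde{Z} = aZ$ (where $\Lambda_u(z+aZ)$ becomes the $z$-independent character $\gamma_u$), and recognize the resulting identity as the quasi-affinity criterion of Proposition \ref{prop: HK P6} for the cocycle $\tau$ over that coset. The extra bookkeeping you supply (the cocycle check for $\tau$, the Fubini reduction to a.e.\ $u$ on a fixed coset, and the appeal to criterion (iii)) is consistent with, and merely makes explicit, what the paper's proof leaves implicit.
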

\begin{proof}
	For each $g \in G$, let $\alpha_g = a\hat{g}$, and let $\tilde{Z} := aZ = \overline{\left\{ \alpha_g : g \in G \right\}}$.
	By Lemma \ref{lem: compact group rotations}, each ergodic component of $S$ has Kronecker factor
	isomorphic to $\tilde{Z}$.
	In fact, the Kronecker factor of each ergodic component appears as a coset
	$\tilde{Z} + t \subseteq Z$ for some $t \in Z$.
	
	Now, the Conze--Lesigne equation with parameters $(a,a)$ reads
	\begin{align*}
		\frac{\rho_{ag}(z + u)}{\rho_{ag}(z)}
		= \Lambda_u \left( z + aZ \right)(g) \frac{K_u \left( z + a\hat{g} \right)}{K_u(z)}
	\end{align*}
	for $g \in G$, almost every $z \in Z$ and almost every $u \in aZ$.
	For a fixed coset $\tilde{Z} + t$, letting $\gamma_u(g) := \Lambda_u(t + \tilde{Z})(g)$, we therefore have
	\begin{align*}
		\frac{\tau_g(z+u)}{\tau_g(z)} = \gamma_u(g) \frac{K_u(z + \alpha_g)}{K_u(z)}
	\end{align*}
	for $g \in G$, almost every $z \in \tilde{Z} + t$, and almost every $u \in \tilde{Z}$.
	That is, $\tau : G \times (\tilde{Z} + t) \to S^1$ is a quasi-affine cocycle.
\end{proof}

\begin{proof}[Proof of Theorem \ref{thm: triple Khintchine}]
	Without loss of generality, we may assume that $\X$ is normal (defined in Section \ref{sec: abelian ext}).
	Indeed, if Theorem \ref{thm: triple Khintchine} holds for a system, then it trivially holds for every factor,
	so we can replace $\X$ by a normal extension, if needed, using Proposition \ref{prop: normal ext}.
	Since $\X$ is normal, it follows by Theorem \ref{thm: CL characteristic} that the averages
	\begin{align*}
		\UClim_{g \in G}{\mu \left( A \cap T_{ag}^{-1}A \cap T_{bg}^{-1}A \cap T_{(a+b)g}^{-1}A \right)}
	\end{align*}
	are controlled by a Conze--Lesigne factor $\B_{CL(a,b)}$ for every $a, b \in \Z$
	such that $\{a, b, a+b\}$ is admissible.
	By Lemma \ref{lem: CL product}, $\B_{CL(a,b)} \subseteq \B_{CL(aa',bb')}$.
	Therefore, if we take an inverse limit $\D := \bigvee_{n \ge 0}{\D_n}$
	with $\D_n = \B_{CL((rs)^n, (rs)^n)}$, then
	\begin{align*}
		& \UClim_{g \in G}{\mu \left( A \cap T_{r(rs)^ng}^{-1}A \cap T_{s(rs)^ng}^{-1}A
			\cap T_{(r+s)(rs)^ng}^{-1}A \right)} \\
		& = \UClim_{g \in G}{\int_X{\E{\ind_A}{\D} \cdot T_{r(rs)^ng}\E{\ind_A}{\D}
				\cdot T_{s(rs)^ng}\E{\ind_A}{\D}\cdot T_{(r+s)(rs)^ng}\E{\ind_A}{\D}~d\mu}}
	\end{align*}
	for every $n \ge 0$.
	Fix $N \in \N$ so that $\norm{L^1(\mu)}{\E{\ind_A}{\D_N} - \E{\ind_A}{\D}} < \frac{\eps}{8}$.
	Then for every $g \in G$, repeated application of H\"{o}lder's inequality and the triangle inequality gives
	\begin{align} \label{eq: inverse lim approx}
		& \left| \int_X{\E{\ind_A}{\D} \cdot T_{r(rs)^ng}\E{\ind_A}{\D}
			\cdot T_{s(rs)^ng}\E{\ind_A}{\D}\cdot T_{(r+s)(rs)^ng}\E{\ind_A}{\D}~d\mu} \right. \nonumber \\
		& - \left. \int_X{\E{\ind_A}{\D_N} \cdot T_{r(rs)^ng}\E{\ind_A}{\D_N}
			\cdot T_{s(rs)^ng}\E{\ind_A}{\D_N}\cdot T_{(r+s)(rs)^ng}\E{\ind_A}{\D_N}~d\mu} \right|
		< \frac{\eps}{2}.
	\end{align}
	Let $\X_{CL((rs)^N, (rs)^N)} = \mathbf{Z} \times_{\sigma} H$ denote the system corresponding to the factor $\D_N$.
	
	\bigskip
	
	Now we restrict to a finite index subgroup and consider the finitely many ergodic components
	for the corresponding subaction.
	Let $d := \gcd(r, s, r + s)$, and set $a_1 := \frac{r}{d}$, $a_2 := \frac{s}{d}$, and $a_3 := \frac{r+s}{d}$.
	For notational convenience, we will also take $a_0 = 0$.
	We will consider the action $S_g = T_{d(rs)^Ng}$ for $g \in G$.
	This action has finitely many ergodic components, since $[G : d(rs)^NG] < \infty$, and we have the identity
	\begin{align*}
		S_{a_0g}^{-1}A \cap S_{a_1g}^{-1}A \cap S_{a_2g}^{-1}A \cap S_{a_3g}^{-1}A
		= A \cap T_{r(rs)^Ng}^{-1}A \cap T_{s(rs)^Ng}^{-1}A \cap T_{(r+s)(rs)^Ng}^{-1}A
	\end{align*}
	for $g \in G$.
	
	We proceed to describe the ergodic decomposition.
	Let $X_1, \dots, X_m$ be the finitely many atoms of the invariant $\sigma$-algebra for $S$
	so that the ergodic decomposition is given by $\mu = \frac{1}{m} \sum_{j=1}^m{\mu_i}$ with
	$\mu_j(A) = \frac{\mu(A \cap X_j)}{\mu(X_j)} = m \cdot \mu(A \cap X_j)$.
	By Lemma \ref{lem: compact group rotations}, the (ergodic) system
	$\X_j = \left( X_j, \B \cap X_j, \mu_j, (S_g)_{g \in G} \right)$
	has Kronecker factor $Z_j = \overline{\left\{ \alpha_g + t_j : g \in G \right\}}$,
	where $\alpha_g = d(rs)^N\hat{g}$ is given by the action of $S$ on $\mathbf{Z}$ and $t_j \in Z$.
	Put $Z_0 := \overline{\left\{ \alpha_g : g \in G \right\}}$ so that $Z_j = Z_0 + t_j$.
	Observe that by Lemma \ref{lem: CL to QA}, the restriction of $\X_j$ to the factor $\D_N$
	is a quasi-affine system $\mathbf{Z}_j \times_{\tau^{(j)}} H_j$.
	
	\bigskip
	
	With all of this setup, we can prove a ``twisted'' version of Theorem \ref{thm: limit formula}.
	Recall the notation from \ref{thm: limit formula}.
	We let $k'_1 = - rs(r+s)$, $k'_2 = rs(r+s)$, and $k'_3 = -rs(s-r)$.
	Set $D := \gcd(k'_1, k'_2, k'_3) = rs \gcd(r+s, s-r)$ and $k_i = \frac{k'_i}{D}$.
	Finally, let $b_1, b_2, b_3 \in \Z$ so that $\sum_{i=1}^3{k_ib_i} = 1$, and take $b_0 = 0$.
	\begin{cla} \label{cla: twisted formula}
		Let $\eta : Z_0 \to \C$ be a continuous function.
		Then for any $f_0, f_1, f_2, f_3 \in L^{\infty}(Z \times H)$
		and each $j = 1, \dots, m$, we have
		\begin{align}
			& \UClim_{g \in G}{\eta(\alpha_g) \int_X{\prod_{i=0}^3{S_{a_ig}f_i}~d\mu_j}} \nonumber \\
			& = \int_{Z_0^2 \times H_j^3}{\eta(t)~
				\prod_{i=0}^3{f_i(z + t_j + a_it, h + a_iu + a_i^2v + b_i \psi_j(t,z+t_j))}~dh~du~dv~dz~dt}.
		\end{align}
	\end{cla}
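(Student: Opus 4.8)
\section*{Proof proposal}

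The plan is to deduce the claim from the limit formula of Theorem~\ref{thm: limit formula} by an "absorption" argument of the type used in Lemma~\ref{lem: Kronecker with a twist}. First I reduce to the case of a single character. Both sides of the asserted identity are linear in $\eta$, and both depend continuously on $\eta$ in the supremum norm on $Z_0$: on the left, $|\eta(\alpha_g)| \le \norm{}{\eta}_{\infty}$ uniformly in $g$ while $\left| \int_X \prod_{i=0}^3 S_{a_ig}f_i \, d\mu_j \right| \le \prod_i \norm{}{f_i}_{\infty}$, and on the right one invokes dominated convergence. Since $Z_0$ is a closed subgroup of the compact group $Z$, every character of $Z_0$ is the restriction of a character of $Z$ (see, e.g., \cite[Theorem 2.1.2]{rudin}), so by the Stone--Weierstrass theorem it suffices to prove the claim when $\eta$ is the restriction to $Z_0$ of a fixed character $\tilde{\eta} \in \hat{Z}$.

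Now comes the absorption. Because $d = \gcd(r, s, r+s)$, we have $\gcd(a_1, a_2, a_3) = 1$, so there are integers $n_1, n_2, n_3$ with $n_1a_1 + n_2a_2 + n_3a_3 = 1$. Put $c_i := \tilde{\eta}^{n_i} \in \hat{Z}$ for $i = 1, 2, 3$ and $c_0 := (c_1c_2c_3)^{-1} \in \hat{Z}$, so that $\prod_{i=0}^3 c_i = 1$ and, since $a_0 = 0$, $\prod_{i=0}^3 c_i^{a_i} = c_1^{a_1}c_2^{a_2}c_3^{a_3} = \tilde{\eta}$. Define $h_i(z,h) := c_i(z)f_i(z,h)$, viewing $c_i$ as a function of the Kronecker coordinate. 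Using $S_{a_ig}h_i(x) = c_i(z)\,c_i(\alpha_g)^{a_i}\,S_{a_ig}f_i(x)$, where $z$ is the Kronecker coordinate of $x$, one obtains the pointwise identity
\begin{equation*}
	\prod_{i=0}^3 S_{a_ig}h_i \ = \ \eta(\alpha_g)\prod_{i=0}^3 S_{a_ig}f_i
\end{equation*}
for all $g \in G$ (here $\prod_{i=0}^3 c_i(z) = 1$ and $\prod_{i=0}^3 c_i(\alpha_g)^{a_i} = \tilde{\eta}(\alpha_g) = \eta(\alpha_g)$ since $\alpha_g \in Z_0$). Integrating against $\mu_j$, taking the uniform Ces\`aro limit, and using $S_{a_0g} = S_0 = \mathrm{id}$, the left-hand side of the claim equals $\UClim_{g \in G} \int_X h_0 \cdot \prod_{i=1}^3 S_{a_ig}h_i \, d\mu_j$. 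Since $w_g := \prod_{i=1}^3 S_{a_ig}h_i$ has a uniform Ces\`aro limit in $L^2(\mu_j)$ by Theorem~\ref{thm: limit formula}, this averaging commutes with integration against the bounded function $h_0$, so this equals $\int_X h_0 \cdot \bigl(\UClim_{g \in G} \prod_{i=1}^3 S_{a_ig}h_i\bigr) d\mu_j$.

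It remains to evaluate this last expression using the limit formula. The triple $\{a_1, a_2, a_3\}$ is admissible and of the form $\{a_1, a_2, a_1 + a_2\}$: indeed $a_1G \supseteq rG$, $a_2G \supseteq sG$, $a_3G \supseteq (r+s)G$, $(a_2-a_1)G \supseteq (s-r)G$, $(a_3-a_1)G \supseteq sG$, $(a_3-a_2)G \supseteq rG$, all of finite index by hypothesis. Viewing $\X_j$ as a $G$-system via $g \mapsto S_g$, it is ergodic, and its restriction to $\D_N$ is the ergodic quasi-affine system $\mathbf{Z}_j \times_{\tau^{(j)}} H_j$ from the setup (built via Lemmas~\ref{lem: CL product} and~\ref{lem: CL to QA}), on which $S_g$ acts by $(\zeta, \eta) \mapsto (\zeta + \alpha_g, \eta + \tau^{(j)}_g(\zeta))$ with $\zeta$ ranging over the coset $Z_j = Z_0 + t_j$ and translations by $\alpha_g \in Z_0$. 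Applying Theorem~\ref{thm: limit formula} to $\mathbf{Z}_j \times_{\tau^{(j)}} H_j$ with the triple $\{a_1, a_2, a_3\}$ (whose reduced integers $k_i$ coincide with those of the statement, as $k'_i$ for $\{a_1,a_2,a_3\}$ equals $k'_i/d^3$, so the same $b_i$ serve) produces a function $\psi_j$ on $Z_0 \times Z_j$, with $\psi_j(0, \cdot) = 0$ and $t \mapsto \psi_j(t, \cdot)$ continuous into $\M(Z_j, H_j)$, such that, identifying $\mu_j$ with $m_{Z_j} \otimes m_{H_j}$,
\begin{equation*}
	\int_X h_0 \cdot \Bigl(\UClim_{g} \prod_{i=1}^3 S_{a_ig}h_i\Bigr) d\mu_j
	= \int_{Z_j \times H_j} \int_{Z_0 \times H_j^2} h_0(\zeta, \eta) \prod_{i=1}^3 h_i\bigl(\zeta + a_it,\ \eta + a_iu + a_i^2v + b_i\psi_j(t,\zeta)\bigr) \, du \, dv \, dt \, d\eta \, d\zeta.
\end{equation*}
Substituting $h_i = c_if_i$ back in, the characters contribute $\bigl(\prod_{i=0}^3 c_i\bigr)(\zeta) \cdot \bigl(\prod_{i=1}^3 c_i^{a_i}\bigr)(t) = \eta(t)$ (using $t \in Z_0$), and writing $\zeta = z + t_j$ with $z \in Z_0$, $\eta = h$, the $i = 0$ factor is $f_0(z + t_j, h)$ since $a_0 = b_0 = 0$. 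Reordering the (mutually independent) integrations over $Z_0^2 \times H_j^3$ gives exactly the right-hand side of the claim.

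The main obstacle is bookkeeping: keeping careful track of the coset structure $Z_j = Z_0 + t_j$ — so that the acting group of $\mathbf{Z}_j$ is $Z_0$, not $Z_j$ — and transporting the function $\psi$ of Theorem~\ref{thm: limit formula} along the isomorphism of $\mathbf{Z}_j$ with the Kronecker system on $Z_0$; the rest is the absorption identity together with $\gcd(a_1, a_2, a_3) = 1$.
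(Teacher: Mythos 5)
Your proposal is correct and follows essentially the same route as the paper: reduce to a character via Stone--Weierstrass, use a B\'ezout relation $\sum n_i a_i = 1$ (with coefficient sum zero) to absorb the twist $\eta(\alpha_g)$ into the functions, and then invoke Theorem~\ref{thm: limit formula} on the quasi-affine component $\mathbf{Z}_j \times_{\tau^{(j)}} H_j$. The only differences are cosmetic — you extend the character of $Z_0$ to $Z$ and use powers $\tilde{\eta}^{n_i}$ where the paper uses $\lambda(c_i(z-t_j))$ with $\lambda \in \hat{Z_0}$ — and your explicit check that the reduced integers $k_i$ (hence the $b_i$) are unchanged when passing from $(r,s,r+s)$ to $(a_1,a_2,a_3)$ is a point the paper leaves implicit.
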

	\begin{proof}[Proof of Claim.]
		By the Stone--Weierstrass theorem and linearity,
		it suffices to prove the identity in the case $\eta = \lambda \in \hat{Z_0}$.
		
		By construction, $\gcd(a_1, a_2, a_3) = 1$, so let $c_i \in \Z$ such that $\sum_{i=1}^3{c_ia_i} = 1$.
		Set $c_0 := -(c_1+c_2+c_3)$ so that $\sum_{i=0}^3{c_i} = 0$.
		Since $a_0 = 0$, this also gives $\sum_{i=0}^3{c_ia_i} = 1$.
		Define $\omega_i \in L^{\infty}(Z_j \times H_j)$ by
		\begin{align*}
			\omega_i(x) = \lambda(c_i(z-t_j)) f_i(x)
		\end{align*}
		for $x = (z,h) \in Z_j \times H_j$.
		
		Observe:
		\begin{align*}
			\prod_{i=0}^3{\omega_i(S_{a_ig}x)}
			& = \prod_{i=0}^3{\lambda(c_i(z-t_j) + c_i\alpha_{a_ig}) f_i(S_{a_ig}x)} \\
			& = \lambda\left( \sum_{i=0}^3{c_i}(z-t_j) \right) \lambda \left( \sum_{i=0}^3{c_ia_i}\alpha_g \right)
			\prod_{i=0}^3{f_i(S_{a_ig}x)} \\
			& = \lambda(\alpha_g) \prod_{i=0}^3{f_i(S_{a_ig}x)}.
		\end{align*}
		Similarly, for $z \in Z_j$, $h, u, v \in H_j$, and $t \in Z_0$, we have
		\begin{align*}
			\prod_{i=0}^3{\omega_i(z + a_it, h + a_iu + a_i^2v + b_i \psi_j(t,z))}
			& = \prod_{i=0}^3{\lambda(c_i(z-t_j) + c_ia_it) f_i(z + a_it, h + a_iu + a_i^2v + b_i \psi_j(t,z))} \\
			& = \lambda(t) \prod_{i=0}^3{f_i(z + a_it, h + a_iu + a_i^2v + b_i \psi_j(t,z))}. \\
		\end{align*}
		The result immediately follows by applying Theorem \ref{thm: limit formula} to the functions $\omega_i$
		and noting that $\mu_j$ projects to the Haar measure\footnote{By ``Haar measure,'' we mean the unique Borel
			probability measure invariant under shifts from the group $Z_0 \times H_j$.} on $Z_j \times H_j$.
	\end{proof}
	
	\bigskip
	
	Now let $f := \ind_A \in L^{\infty}(\mu)$.
	Write $\tilde{f} = \E{f}{Z \times H}$.
	For each $j = 1, \dots, m$, we can find a small neighborhood $U_j$ of $0$ in $Z_0$ so that
	\begin{align*}
		& \int_{Z_0 \times H_j^3}{\prod_{i=0}^3{\tilde{f}(z + t_j + a_it, h + a_iu + a_i^2v + b_i \psi_j(t,z+t_j))}
			~dh~du~dv~dz}\\
		& \ge \int_{Z_0 \times H_j^3}{\prod_{i=0}^3{\tilde{f}(z + t_j, h + a_iu + a_i^2v)}~dh~du~dv~dz} - \frac{\eps}{4}
	\end{align*}
	for $t \in U_j$.
	By Urysohn's lemma, we can then find a continuous function $\eta : Z_0 \to [0, \infty)$
	with $\int_{Z_0}{\eta} = 1$ concentrated on the neighborhood $U = \bigcap_{j=1}^m{U_j}$ so that,
	by Claim \ref{cla: twisted formula},
	\begin{align*}
		\UClim_{g \in G}{\eta(\alpha_g)~
			\int_X{\prod_{i=0}^3{S_{a_ig} \tilde{f}}~d\mu_j}}
		\ge \int_{Z_0 \times H_j^3}{\prod_{i=0}^3{\tilde{f}(z + t_j, h + a_iu + a_i^2v)}~dh~du~dv~dz} - \frac{\eps}{4}
	\end{align*}
	for every $j = 1, \dots, m$.
	
	We now want to show the inequality
	\begin{align} \label{eq: CL factor bound}
		\int_{Z_0 \times H_j^3}{\prod_{i=0}^3{\tilde{f}(z + t_j, h + a_iu + a_i^2v)}~dh~du~dv~dz}\ge \mu_j(A)^4.
	\end{align}
	Fix $z \in Z_j$ and let $F : H_j \to \C$ be the function $F(h) = \tilde{f}(z,h)$.
	We will show
	\begin{align*}
		\int_{H_j^3}{\prod_{i=0}^3{F(h + a_iu + a_i^2v)}~dh~du~dv} \ge \left( \int_{H_j}{F~dm_{H_j}} \right)^4.
	\end{align*}
	Note that \eqref{eq: CL factor bound} follows from this inequality by an application of Jensen's inequality.
	
	By Lemma \ref{lem: H divisibility}, $a_iH_j = H_j$ for each $i = 1, 2, 3$.
	We will use this fact to make a sequence of substitutions.
	First, take $h = a_3x$:
	\begin{align*}
		& \int_{H_j^3}{\prod_{i=0}^3{F(h + a_iu + a_i^2v)}~dh~du~dv} \\
		& = \int_{H_j^3}{F(a_3x) F\left( a_3\left( x + u + a_3v \right) \right)
			F(a_3x + a_1u + a_1^2v) F(a_3x + a_2u + a_2^2v)~du~dx~dv} \\
		\intertext{Next, $x + u + a_3v = y$:}
		& = \int_{H_j^3}{F(a_3x) F(a_3y) F(a_2x + a_1y - a_1a_2v) F(a_1x + a_2y - a_1a_2v)~dv~dx~dy} \\
		\intertext{Now, $a_1(x+y) - a_1a_2v = z$:}
		& = \int_{H_j^3}{F(a_3x) F(a_3y) F((a_2-a_1)x + z) F((a_2-a_1)y + z)~dx~dy~dz} \\
		& = \int_{H_j}{\left( \int_{H_j}{F(a_3x) F \left( (a_2 - a_1)x + z \right)~dx}\right)^2~dz} \\
		\intertext{Applying Jensen's inequality:}
		& \ge \left( \int_{H_j^2}{F(a_3x) F \left( (a_2 - a_1)x + z \right)~dz~dx} \right)^2 \\
		\intertext{Finally, let $z + (a_2-a_1)x = w$ and $a_3x = t$:}
		& = \left( \int_{H_j}{F(t)~dt} \right)^2 \left( \int_{H_j}{F(w)~dw} \right)^2 \\
		& = \left( \int_{H_j}{F~dm_{H_j}} \right)^4.
	\end{align*}
	Each of these substitutions is measure-preserving because translations
	and continuous surjective homomorphisms preserve the Haar measure (see, e.g., \cite[Section 1.1]{walters}).
	Each change in the order of integration is justified by Fubini's theorem so long as $F \in L^{\infty}(H_j)$,
	which is true for almost every $z \in Z_j$.
	
	\bigskip
	
	Now we finish the proof.
	We have shown
	\begin{align*}
		\UClim_{g \in G}{\eta(d(rs)^N\hat{g})~
			\int_X{\prod_{i=0}^3{T_{da_i(rs)^Ng} \tilde{f}}~d\mu_j}}
		= \UClim_{g \in G}{\eta(\alpha_g)~
			\int_X{\prod_{i=0}^3{S_{a_ig} \tilde{f}}~d\mu_j}}
		\ge \mu_j(A)^4 - \frac{\eps}{4}
	\end{align*}
	for each $j = 1, \dots, m$.
	Applying Jensen's inequality, it follows that
	\begin{align*}
		\UClim_{g \in G}{\eta(d(rs)^N\hat{g})~
			\int_X{\prod_{i=0}^3{T_{da_i(rs)^Ng} \tilde{f}}~d\mu}}
		& = \frac{1}{m} \sum_{j=1}^m{\UClim_{g \in G}{\eta(d(rs)^N\hat{g})~
				\int_X{\prod_{i=0}^3{T_{da_i(rs)^Ng} \tilde{f}}~d\mu_j}}} \\
		& \ge \frac{1}{m} \sum_{j=1}^m{\mu_j(A)^4} - \frac{\eps}{4} \\
		& \ge \left( \frac{1}{m} \sum_{j=1}^m{\mu_j(A)} \right)^4 - \frac{\eps}{4} \\
		& = \mu(A)^4 - \frac{\eps}{4}.
	\end{align*}
	Moreover, by \eqref{eq: inverse lim approx},
	\begin{align*}
		\left| \UClim_{g \in G}{\eta(d(rs)^N\hat{g})~
			\mu \left( A \cap T_{r(rs)^Ng}^{-1}A \cap T_{s(rs)^Ng}^{-1}A \cap T_{(r+s)(rs)^Ng}^{-1}A \right)} \right. \\
		- \left. \UClim_{g \in G}{\eta(d(rs)^N\hat{g})~
			\int_X{\prod_{i=0}^3{T_{da_i(rs)^Ng} \tilde{f}}~d\mu}} \right|
		\le \frac{\eps}{2}.
	\end{align*}
	Thus,
	\begin{align*}
		\UClim_{g \in G}{\eta(d(rs)^N\hat{g})~
			\mu \left( A \cap T_{r(rs)^Ng}^{-1}A \cap T_{s(rs)^Ng}^{-1}A \cap T_{(r+s)(rs)^Ng}^{-1}A \right)}
		\ge \mu(A)^4 - \frac{3\eps}{4}.
	\end{align*}

	Arguing by contradiction and following the strategy in the proof of Theorem \ref{thm: double Khintchine},
	this inequality demonstrates that the set
	\begin{align*}
		\left\{ g \in G : \mu \left( A \cap T_{r(rs)^Ng}^{-1}A \cap T_{s(rs)^Ng}^{-1}A \cap T_{(r+s)(rs)^Ng}^{-1}A \right)
		> \mu(A)^4 - \eps \right\}
	\end{align*}
	is syndetic in $G$.
	Since $(rs)^NG$ has finite index in $G$, this completes the proof.
\end{proof}


\section{Failure of large intersections for inadmissible families of homomorphisms} \label{sec: admissible}

From the start, we have assumed that all families of homomorphisms that we are dealing with are admissible. 
This property seems to be absolutely essential, and we give some support to this claim below,
both at the level of large intersections results and at the level of characteristic factors.

\begin{conj}
	Suppose $G$ is a countable discrete abelian group and $\varphi, \psi : G \to G$ are homomorphisms
	such that $\{ g \in G : \varphi(g) = \psi(g) \}$ has infinite index in $G$.\footnote{We include the assumption to avoid trivialities. Let $H = \{g \in G: \varphi(g) = \psi(g)\}$. By the ergodic theorem, $\UClim_{h\in H}{\mu \left( A \cap T_{\varphi(h)}^{-1}A \cap T_{\psi(h)}^{-1}A \right)} = \UClim_{h\in H}{\mu \left( A \cap T_{\varphi(h)}^{-1}A \right)} \ge \mu(A)^2$. This implies that $\left\{h \in H : \mu\left( A \cap T_{\varphi(h)}^{-1}A \cap T_{\psi(h)}^{-1}A \right) > \mu(A)^2 - \eps \right\}$ is syndetic in $H$. If $H$ has finite index in $G$, then this set is also syndetic in $G$.}
	Then $\{\varphi,\psi\}$ has the large intersections property if and only if $\{\varphi,\psi\}$ is admissible.
\end{conj}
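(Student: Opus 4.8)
The forward implication is not conjectural: an admissible pair has the large intersections property by Theorem~\ref{thm: double Khintchine}, since ``large intersections property'' is precisely the conclusion of that theorem. So the content of the conjecture is the converse, and the plan begins with a reduction. Failure of admissibility means that at least one of $\varphi(G)$, $\psi(G)$, $(\psi-\varphi)(G)$ has infinite index in $G$. For every $g$ one has $\mu(A\cap T_{\varphi(g)}^{-1}A\cap T_{\psi(g)}^{-1}A)=\mu\bigl(T_{-\varphi(g)}(A\cap T_{\varphi(g)}^{-1}A\cap T_{\psi(g)}^{-1}A)\bigr)=\mu(A\cap T_{-\varphi(g)}^{-1}A\cap T_{(\psi-\varphi)(g)}^{-1}A)$, and likewise the roles of $\varphi$ and $\psi$ may be swapped; hence having the large intersections property is invariant under the natural $S_3$-action permuting the three ``vertices'' $0,\varphi,\psi$ (equivalently, the three ``edges'' $\varphi,\psi,\psi-\varphi$ up to sign). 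This $S_3$ acts transitively on the three subgroups, so we may assume the offending one is $(\psi-\varphi)(G)$; write $\xi:=\psi-\varphi$, so $\xi(G)$ has infinite index. The standing hypothesis that $\ker\xi$ has infinite index is exactly what excludes the degenerate case (handled in the footnote to the conjecture) in which the problem collapses to a single return time, where the bound $\mu(A)^2>\mu(A)^3$ is automatic.

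The substantive step is to construct, for such $\{\varphi,\psi\}$, a counterexample system. Following the template of the counterexamples in \cite{bhk} (the length-four pattern) and \cite{chu} (corners, which is precisely the case $G=\Z^2$, $\varphi(n,m)=(n,0)$, $\psi(n,m)=(0,n)$ of Example~\ref{eg: Chu}), I would build an ergodic $G$-system that is a \emph{two-step} extension of a Kronecker system $\mathbf{Z}$ by a compact abelian group $H$, with an affine (Heisenberg-type) cocycle $\sigma:G\times Z\to H$ designed so that a \emph{quadratic} constraint links the three events $\{x\in A\}$, $\{T_{\varphi(g)}x\in A\}$, $\{T_{\psi(g)}x\in A\}$ negatively. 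Concretely: choose $Z$ and the Kronecker action so that $\hat Z$ detects the infinite quotient $G/\xi(G)$; choose $\sigma$ and $A=\{(z,h):z\in B,\ h\in C\}$ so that, for a topologically large set of $g$, membership of $x$ and $T_{\varphi(g)}x$ in $A$ forces $T_{\psi(g)}x$ away from $A$. The triple average $\UClim_{g}\mu(A\cap T_{\varphi(g)}^{-1}A\cap T_{\psi(g)}^{-1}A)$ can then be evaluated using the limit formulae developed here (Theorem~\ref{thm: Kronecker} for the degree-one part and the skew-product analysis of Sections~\ref{sec: Kronecker}--\ref{sec: limit formula 3} for the quadratic part). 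The goal is to produce, for some $\eps>0$, a set $\mathcal N\subseteq G$ whose complement is \emph{thick} with
\be
\mu\bigl(A\cap T_{\varphi(g)}^{-1}A\cap T_{\psi(g)}^{-1}A\bigr)\ \le\ \mu(A)^3-\eps\qquad\text{for all }g\notin\mathcal N,
\ee
so that the set of good returns lies inside $\mathcal N$ and is therefore not syndetic, by Lemma~\ref{lem: syndetic thick}.

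The main obstacle is in this last step, and it is why the statement remains a conjecture. A purely Kronecker (degree-one) system never suffices: there the triple average equals $\int_Z\int_{Z_{\varphi,\psi}}\tilde f(z)\,\tilde f(z+u)\,\tilde f(z+v)\,d\nu_{\varphi,\psi}(u,v)\,dz$ by Theorem~\ref{thm: Kronecker}, and even when $Z_{\varphi,\psi}$ is so thin that this drops below $\mu(A)^3$ \emph{on average}, the set of $g$ on which the average exceeds $\mu(A)^3-\eps$ turns out to contain a Bohr set, hence is syndetic — so the degree-one mechanism cannot violate the property. One therefore genuinely needs a two-step system in which the quadratic obstruction depresses the average below $\mu(A)^3$ not merely on a positive-density set but on a set whose complement is thick, and one needs this \emph{uniformly} across all countable abelian $G$ and all non-admissible pairs. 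This is the Diophantine core of Chu's argument (carried out there by a continued-fraction construction for $G=\Z^2$), and an argument robust to the extra features of the general setting — in particular the several ergodic components arising from infinite-index images and the torsion structure of $G$, which would govern the divisibility of $H$ in any two-step model — is what is missing. A softer route worth attempting is to transfer Chu's $\Z^2$-example along a homomorphism $G\to\Z^2$ (or onto a suitable compact quotient) carrying $\{\varphi,\psi\}$ to the corner pattern, but producing such a homomorphism from inadmissibility alone already appears to require new input.
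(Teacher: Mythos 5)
The statement you are addressing is a conjecture, and the paper does not prove it either: its only support is the ``if'' direction (which is exactly Theorem~\ref{thm: double Khintchine}), the single counterexample of Example~\ref{eg: Chu} (Chu's corners construction, where \emph{all three} of $\varphi(G)$, $\psi(G)$, $(\psi-\varphi)(G)$ have infinite index), and the discussion in Section~\ref{sec: admissible} of why the authors' machinery breaks down otherwise. Your proposal correctly disposes of the ``if'' direction, and your $S_3$-symmetry observation (shifting by $T_{\varphi(g)}$ and swapping $\varphi,\psi$ permutes the three subgroups $\varphi(G),\psi(G),(\psi-\varphi)(G)$ while preserving the large intersections property) is valid and lets you normalize which image is the offending one. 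But the substantive content of the conjecture is the ``only if'' direction, and there your text is a research plan, not a proof: no counterexample system is actually constructed for a general non-admissible pair, and you say as much. So the proposal does not establish the statement; it reproduces the same state of partial knowledge as the paper, with the honest acknowledgment that the core step is missing.

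Two specific points in your sketch deserve caution. First, your claim that degree-one (Kronecker) systems can never witness failure --- because the set of good return times ``turns out to contain a Bohr set'' --- is asserted without proof; for non-admissible pairs the measure $\nu_{\varphi,\psi}$ and the relevant ergodic decompositions behave quite differently from the admissible case, and Theorem~\ref{thm: Kronecker} itself is only proved under admissibility, so you cannot invoke it (or the skew-product limit formulae of Sections~\ref{sec: Kronecker}--\ref{sec: limit formula 3}) to evaluate the averages in the very setting where you want a counterexample. Second, and more structurally, the paper's Section~\ref{sec: admissible} cites Griesmer's construction of mixing $\Z^d$-systems for which any characteristic factor of a non-admissible pair $\{\varphi,\psi\}$ is nontrivial and mixing; this indicates that for exactly one infinite-index image the averages need not be governed by a Kronecker base with a two-step abelian extension at all, so the ``Heisenberg-type quadratic obstruction'' template you borrow from \cite{bhk} and \cite{chu} may not even be the right model. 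That case --- exactly one of $\varphi(G)$, $\psi(G)$, $(\psi-\varphi)(G)$ of infinite index --- is precisely where the paper admits it has no example, and your proposal does not close that gap.
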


First we show, with the help of an example due to Chu \cite{chu},
that if $\varphi$, $\psi$, and $\psi - \varphi$ all have infinite index images,
then the large intersections property may fail to hold.
\begin{eg} \label{eg: Chu}
	Let $G = \Z^2$, and let $\varphi(n,m) = (n,0)$ and $\psi(n,m) = (0,n)$.
	For ease of notation, given a $\Z^2$-action $(T_{(n,m)})_{(n,m) \in \Z^2}$,
	set $S_1 = T_{(1,0)}$ and $S_2 = T_{(0,1)}$.
	Then
	\begin{align*}
		\mu\left( A \cap T_{\varphi(n,m)}^{-1}A \cap T_{\psi(n,m)}^{-1}A \right)
		= \mu \left(A \cap S_1^{-n}A \cap S_2^{-n} A\right).
	\end{align*}
	A theorem of Chu completes the proof:
	\begin{thm}[\cite{chu}, Theorem 1.2]
		For every $c > 0$, there is a probability space $(X, \B, \mu)$,
		two commuting measure-preserving automorphisms $S_1, S_2 : X \to X$
		such that $(X, \B, \mu, S_1, S_2)$ is ergodic
		(i.e. the $\Z^2$-action generated by $S_1$ and $S_2$ is ergodic),
		and a set $A \in \B$ with $\mu(A) > 0$ such that
		\begin{align*}
			\mu \left( A \cap S_1^{-n}A \cap S_2^{-n}A \right) < c \mu(A)^3
		\end{align*}
		for every $n \ne 0$.
	\end{thm}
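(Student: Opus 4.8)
The assertion is required only pointwise in $c$: for each fixed $c>0$ it suffices to exhibit a single ergodic $\Z^2$-system $(X,\B,\mu,S_1,S_2)$ and a single set $A$ with $\mu(A)>0$ and $\mu(A\cap S_1^{-n}A\cap S_2^{-n}A)<c\,\mu(A)^3$ for all $n\neq 0$; there is no need for a universal example. My plan has two ingredients. The first is a ``primitive'' example: a genuinely $2$-step (distal, non-Kronecker) $\Z^2$-system in which corners decorrelate \emph{below} the random value $\mu(A)^3$ for every nonzero $n$. The second is an amplification: given a primitive example with $\mu(A\cap S_1^{-n}A\cap S_2^{-n}A)\le\rho\,\mu(A)^3$ with $\rho<1$, take a product of finitely many copies built over systems with pairwise disjoint (nontrivial) spectra — so the product $\Z^2$-action is still ergodic — and note that the triple intersection factors multiplicatively over the coordinates while $\mu(A)^3$ does too; taking enough factors pushes the ratio below $c$.

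For the primitive example I would use an Anzai-type skew product on $\mathbb{T}^2$ (Lebesgue measure) equipped with two commuting unipotent affine automorphisms: $S_1(x,y)=(x+\alpha,\,y+x)$ together with a commuting companion of the form $S_2=S_1^{k}\circ\tau$, where $\tau$ is a fiber rotation; a short computation shows any commuting pair of unipotent affine maps with $S_1\neq S_2$ is essentially of this shape, and a Fourier argument gives ergodicity of the $\Z^2$-action for generic parameters. The reason a $2$-step system is needed is a structural dichotomy between the arithmetic progression $(x,T^nx,T^{2n}x)$ and the corner $(x,S_1^nx,S_2^nx)$. The progression satisfies a parallelogram identity, so in the Conze--Lesigne factor the top-order coordinate of $T^{2n}x$ is determined by those of $x$ and $T^nx$ — this is exactly why $\mu(A)^3-\eps$ is attained along a syndetic set for $\Z$-actions. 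The corner, by contrast, carries a nonzero quadratic ``defect'' $Q(n)$ assembled from $\binom n2\alpha$ and the companion rotation, which never vanishes for $n\neq 0$ and equidistributes on $\mathbb{T}$ as $n$ runs over $\Z\setminus\{0\}$; this is the freedom that allows corners to be driven below $\mu(A)^3$.

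The set $A$ must be chosen as a level set adapted to this defect, \emph{not} as a box: a box is too degenerate, since the linear skewing term averages out the quadratic phase and one only beats $\mu(A)^2$ that way. Instead I would take $A=\{x:\beta(x)\in J\}$, where $\beta$ is the coordinate in which $Q$ lives and $J$ is a short arc, and then rewrite $\mu(A\cap S_1^{-n}A\cap S_2^{-n}A)$ as an integral over the base rotation of the measure of a triple intersection of three arcs whose relative offsets are governed by $Q(n)$. Since $Q(n)$ is bounded away from $0$ in a quantitative, $n$-uniform way (after passing to the right companion and the right $\beta$), the three arcs cannot be made to overlap efficiently, and shrinking $J$ — together with the product amplification above — yields $\mu(A\cap S_1^{-n}A\cap S_2^{-n}A)<c\,\mu(A)^3$ for every $n\neq 0$.

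The main obstacle, where essentially all the work sits, is \emph{uniformity in $n$}. For ``generic'' $n$ everything decorrelates and the bound is routine; the delicate range is the syndetic set of $n$ for which the base rotation $n\alpha$ is near an integer — the near-return times of the Kronecker factor. These are precisely the $n$ at which a Kronecker-level argument fails, and one must exploit the genuine $2$-step structure: quantitative equidistribution (Weyl sums / van der Corput) for the quadratic sequence $\binom n2\alpha$ and its companions, together with a careful choice of parameters (a badly approximable $\alpha$, the companion rotation, and the arc $J$) ensuring the quadratic defect is not absorbed for \emph{any} $n\neq 0$ — including the small $n$, which have to be dispatched by hand since no equidistribution is available there.
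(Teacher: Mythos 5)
You should first note what the paper itself does with this statement: it is not proved there at all, but quoted from Chu's paper and used as a black box in Example~\ref{eg: Chu}. So the benchmark is Chu's construction, which — like Theorem~\ref{thm: bhk ergodicity} and the counterexamples of Section~\ref{sec: ergodicity} — is built around a Behrend-type set (a union of many short intervals centred at points $j/N$ with $j$ ranging over a pattern-free set), not around a single short arc. Your structural remarks are sound (a Kronecker example cannot work, corners lack the parallelogram relation that drives the BHK lower bound, and a genuinely $2$-step system is the natural home for a counterexample), but the quantitative core of your plan is where the proof has to live, and there it breaks.

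The gap is concrete. In your model ($S_1(x,y)=(x+\alpha,y+x)$ and $S_2=S_1^{k}\tau_\beta$), the relevant defect is $Q(n)=\tfrac{k(k-1)}{2}n^2\alpha+n\beta \pmod 1$, and your argument hinges on $Q(n)$ being ``bounded away from $0$ in a quantitative, $n$-uniform way.'' That is false: for $k\notin\{0,1\}$ the leading coefficient is irrational, so by Weyl's theorem $(Q(n))_n$ equidistributes and $\|Q(n)\|$ gets arbitrarily small (and for $k\in\{0,1\}$ one has $Q(n)=n\beta$, with the same conclusion). This kills the single-arc choice $A=\mathbb{T}\times J$, $|J|=\delta$: writing the fibre increments as $u_1=nx+\binom{n}{2}\alpha$ and $u_2=ku_1+Q(n)$, for any $n$ with $\|Q(n)\|\le\delta/10$ the set of $x$ with $\|u_1\|\le\delta/(10|k|)$ has measure $\asymp\delta$, and for each such $x$ the three translates of $J$ overlap in measure $\ge\delta/2$, so $\mu\bigl(A\cap S_1^{-n}A\cap S_2^{-n}A\bigr)\gtrsim\delta^2\gg c\,\delta^3=c\,\mu(A)^3$ once $\delta$ is small. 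Since for each fixed $\delta$ such $n$ exist in infinite number, no choice of badly approximable $\alpha$, companion rotation, or hand-treatment of small $n$ can rescue a fixed set of this form; the statement requires the bound for every $n\neq0$. The missing idea is precisely the Behrend mechanism you set aside: $A$ must be fibred over a union $\tilde B$ of $\delta$-intervals indexed by a set $B$ free of the relevant linear pattern, so that membership of all three points in $A$ forces an integer relation among the labels rather than forcing the increments themselves to be $\delta$-small. Even then, in your Anzai-plus-companion model the quadratic defect reappears as a shift in that integer relation (of size roughly $N\binom{n}{2}\alpha$ after rescaling), so pattern-freeness of $B$ alone does not suffice for the $n$ at which this shift aligns with a nonzero representable value; Chu's construction is arranged so that no such uncontrolled shift occurs. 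Finally, your amplification step (products of finitely many spectrally disjoint copies) is correct but moot: it presupposes a primitive example with ratio $\rho<1$ uniformly in $n\neq0$, which is exactly what has not been produced.
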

	Subsequent work by Donoso and Sun sharpened this bound to $\mu(A)^l$ for any $l < 4$
	(see \cite[Theorem 1.2]{ds}).
\end{eg}

Second, it is reasonable to ask whether there is a pair $\{\varphi,\psi\}$ of homomorphisms with exactly one of $\varphi, \psi, \varphi-\psi$ having infinite index image such that $\{\varphi,\psi\}$ does not have the large intersections property. Although we do not have such an example, we briefly describe how the methods of this article break down in this situation.

Indeed, we use the fact that the Kronecker factor is characteristic for admissible $\{\varphi,\psi\}$ families (Theorem~\ref{thm: Kronecker}) in order to show that $\{\varphi,\psi\}$ has the large intersections property (Theorem~\ref{thm: double Khintchine}). The dependence on this theorem presents a difficulty if we suppose that, say, $\varphi$ has infinite index image. In \cite[Section 4.13]{griesmer}, for any $d > 1$, a mixing $\Z^d$-system is constructed such that any characteristic factor for the family $\{\varphi,\psi\}$ is mixing and nontrivial. Hence, our method to show that $\{\varphi,\psi\}$ has the large intersections property breaks down at essentially the first step if we do not work with an admissible pair $\{\varphi,\psi\}$, because if Theorem~\ref{thm: Kronecker} held even in this situation, then we would conclude that a nontrivial Kronecker factor is mixing. For longer expressions, a similar situation arises to contradict a hypothetical generalization of Theorem~\ref{thm: compact ext} for $Z_k$'s that are characteristic for all families $\{\varphi_1,\ldots,\varphi_k\}$ rather than merely admissible families.


\section{Failure of large intersections for non-ergodic systems} \label{sec: ergodicity}

We have assumed throughout that our measure-preserving systems are ergodic
(or at least that the ergodic decomposition has only finitely many ergodic components).
Based on several examples we will present in this section,
it seems that this assumption cannot be dropped.\footnote{
	Contrast this with Khintchine's theorem (Theorem \ref{history 1}), which does not require ergodicity.
}
Namely, we make the following conjecture:

\begin{conj} \label{conj: ergodicity}
	Let $G$ be a countable discrete abelian group.
	Let $\varphi, \psi : G \to G$ be an admissible pair of homomorphisms.
	There exists a (necessarily non-ergodic) measure-preserving system
	$\left( X, \B, \mu, (T_g)_{g \in G} \right)$, a set $A \in \B$ with $\mu(A) > 0$, and $c < 1$ such that
	\begin{align*}
		\mu \left( A \cap T_{\varphi(g)}^{-1}A \cap T_{\psi(g)}^{-1} A \right) \le c\mu(A)^3
	\end{align*}
	for all $g \ne 0$.
\end{conj}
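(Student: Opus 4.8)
The plan is to realize the desired counterexample as a measurable bundle of circle rotations, in the spirit of the constructions underlying Theorem~\ref{history 2 needs ergodicity} and Proposition~\ref{prop: ergodicity large p}. Fix a probability space $(\Omega,\mathbb{P})$ together with a measurable assignment $\omega \mapsto \theta_\omega \in \hat G$, put $X = \Omega \times \T$ with $\mu = \mathbb{P}\otimes m_{\T}$, and define $T_g(\omega,t) = (\omega, t + \theta_\omega(g))$. The $\Omega$-coordinate is $T$-invariant, so the system is non-ergodic, with ergodic components the rotation systems $t \mapsto t + \theta_\omega(g)$ on $\T$. Taking $A = \{(\omega,t) : t \in B_\omega\}$ for a measurable family $B_\omega \subseteq \T$, one has $\mu(A) = \int_\Omega |B_\omega|\,d\mathbb{P}(\omega)$ and
\begin{align*}
\mu\!\left(A \cap T_{\varphi(g)}^{-1}A \cap T_{\psi(g)}^{-1}A\right)
= \int_\Omega \Big| B_\omega \cap \big(B_\omega + \theta_\omega(\varphi(g))\big) \cap \big(B_\omega + \theta_\omega(\psi(g))\big)\Big|\, d\mathbb{P}(\omega).
\end{align*}
This reduces the problem to a purely measure-combinatorial one: choose $(\Omega,\mathbb{P})$, the rotations $\theta_\omega$, and the fibers $B_\omega$ so that the right-hand side is $\le c\,\mu(A)^3$ for a fixed $c < 1$ and every nonzero $g$.

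For the combinatorial core I would take each $B_\omega$ to be an arc of length $L_\omega$. Then the fiberwise integrand equals $\big(L_\omega - \operatorname{diam}\{0,\theta_\omega(\varphi(g)),\theta_\omega(\psi(g))\}\big)_+$, so it vanishes unless all three rotations $\theta_\omega(\varphi(g))$, $\theta_\omega(\psi(g))$, $\theta_\omega((\psi-\varphi)(g))$ lie within $O(L_\omega)$ of $0$ in $\T$. Following the multi-scale idea behind Theorem~\ref{history 2 needs ergodicity}, I would let the lengths $L_\omega$ oscillate over a rapidly increasing sequence of scales and choose $\Omega$ so that, for every fixed $g \ne 0$, the ``resonant'' set $R_g := \{\omega : \|\theta_\omega(\varphi(g))\| < L_\omega \text{ and } \|\theta_\omega(\psi(g))\| < L_\omega\}$ has $\mathbb{P}$-measure far smaller than $\mu(A)^2$. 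Admissibility enters precisely here: since $\varphi(G)$, $\psi(G)$, and $(\psi-\varphi)(G)$ have finite index in $G$, the dual endomorphisms $\hat\varphi,\hat\psi,\hat{\psi-\varphi}$ of $\hat G$ have finite kernels and hence are finite-to-one, so the families $\{\theta_\omega\circ\varphi\}$ and $\{\theta_\omega\circ\psi\}$ are ``as rich'' as $\{\theta_\omega\}$ up to bounded multiplicity; this should let one transport the estimates already available for the diagonal configuration—where the $\Z$-argument of \cite{bhk} and the $\F_p^\infty$-argument behind Proposition~\ref{prop: ergodicity large p} apply directly—to the general pair $\{\varphi,\psi\}$. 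Splitting the integral over $\Omega$ at $R_g$, the contribution of $\Omega\setminus R_g$ is $0$, while that of $R_g$ is at most $\int_{R_g} L_\omega\,d\mathbb{P}$, which the construction forces to be $\le c\,\mu(A)^3$.

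I expect the principal difficulty to be the uniformity over all $g \in G$ at once: a single set $A$ must be bad for every nonzero $g$, and for an arbitrary countable abelian $G$ the homomorphisms $\varphi,\psi$ can interact intricately with the torsion, divisible, and free parts of $G$, so the resonance scales attached to different $g$ must be organized coherently. A natural route is to use the structure of $G$ (or of $\hat G$ and its closed finite-index subgroups) to reduce to a short list of model groups—$\Z$, $\Z_p$, $\bigoplus_n \Z/p$, and finite quotients—carry out the multi-scale combinatorics in each, and then glue the resulting systems by a product construction compatible with the decompositions of $\varphi$ and $\psi$. A secondary technical point is that this construction only governs $g$ for which $\theta_\omega\circ\varphi$ and $\theta_\omega\circ\psi$ are nontrivial for positively many $\omega$, so one should either impose $\ker\varphi \cap \ker\psi = \{0\}$ (automatic, e.g., when $\varphi,\psi$ are multiplication by nonzero constants, the case considered in \cite{bhk,btz2}) or restrict the asserted inequality to $g \notin \ker\varphi \cup \ker\psi$, since a nonzero $g \in \ker\varphi \cap \ker\psi$ makes the left-hand side equal to $\mu(A)$. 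Finally, it is worth noting that only $c < 1$ is required, rather than the sharper $\tfrac12\mu(A)^\ell$ of \cite{bhk}: any definite separation of the triple return density below $\mu(A)^3$ would suffice, so a comparatively crude construction should be enough.
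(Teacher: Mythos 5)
You should first note that the statement you were asked to prove is stated in the paper as a \emph{conjecture}, and the paper itself does not prove it in this generality: it is known for $G=\Z$ (Theorem~\ref{thm: bhk ergodicity}, from \cite{bhk}), it is verified in full for $G=\Q$ (Theorem~\ref{thm: Q ergodicity}), and for $\Z^d$ with ring-of-integers pairs, for torsion groups, and for $\F_p^{\infty}$ with large $p$ the paper only gives partial or conditional constructions (Propositions~\ref{prop: Gaussian ergodicity} and~\ref{prop: ergodicity large p} and their neighbors), explicitly reducing the general case to Behrend-type combinatorial problems it cannot solve. So any complete argument here would be a genuinely new result, and your sketch does not supply one: the decisive quantitative step is asserted rather than proved, and you yourself flag the uniformity over all $g$ and the reduction to ``model groups'' as unresolved.

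The concrete gap is in the arc-fiber construction. If each fiber $B_\omega$ is a single arc of length $L_\omega$, then the resonance condition is just ``both rotations are $O(L_\omega)$,'' and for the natural rotation-parameter bases this set is \emph{not} small: e.g.\ with $\Omega=\T$, $\theta_\omega(n)=n\omega$, equidistribution gives $\mathbb{P}\{\omega:\|n\omega\|<L_\omega\}\approx 2\int L_\omega\,d\omega = 2\mu(A)$ for large $n$, so $\int_{R_g}L_\omega\,d\mathbb{P}\gtrsim \mu(A)^2 \gg c\,\mu(A)^3$; no choice of oscillating scales fixes this, because the resonant set for a fixed $g$ equidistributes across the scales. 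The constructions that actually work (in \cite{bhk} and in Sections~\ref{sec: ergodicity}--\ref{sec: quadruple} of the paper) take the fiber set to be a union of very short cells indexed by a Behrend/Ruzsa-type pattern-free set, so that the triple intersection forces the three points $y$, $y+\theta_\omega(\varphi(g))$, $y+\theta_\omega(\psi(g))$ into the \emph{same} cell, replacing the scale $\mu(A)$ by the much smaller cell width; this is where the gain below $\mu(A)^3$ comes from. Producing the analogous pattern-free sets for a general admissible pair is exactly the open combinatorial problem the paper records (e.g.\ the $P(n,m)$-free sets in $\{0,\dots,N-1\}^d$ needed already for Gaussian-integer pairs, i.e.\ rotated isosceles right triangles), and your appeal to ``finite-to-one dual endomorphisms, so transport the diagonal estimates'' is not an argument for it. Your observation about $g\in\ker\varphi\cap\ker\psi$ is a fair caveat about the literal statement (for such $g$ the left-hand side equals $\mu(A)$ and the inequality cannot hold), but it is orthogonal to actually constructing the system, which is the missing content.
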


Some evidence for this conjecture is given in \cite{bhk}.

\begin{thm}[\cite{bhk}, Theorem 2.1] \label{thm: bhk ergodicity}
	There is a non-ergodic system $(X, \B, \mu, T)$ such that, for every integer $l \ge 1$,
	there is a set $A = A(l) \in \B$ with $\mu(A) > 0$ such that
	\begin{align*}
		\mu \left( A \cap T^nA \cap T^{2n}A \right) \le \frac{\mu(A)^l}{2}
	\end{align*}
	for every $n \ne 0$.
\end{thm}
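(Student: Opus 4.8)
The plan is to take advantage of the fact that the obstruction behind Theorem~\ref{history 2}(i) — which forces $\mu(A\cap T^nA\cap T^{2n}A)>\mu(A)^3-\eps$ on a syndetic set when $T$ is \emph{ergodic} — has no counterpart for non-ergodic systems, and to arrange matters so that the triple return is actually \emph{constant} in $n\neq0$ and equal to the density of $3$-term progressions in an auxiliary set, which we then make tiny via a Behrend-type construction. Concretely, I would take $X=\T^2$ with Lebesgue measure $\mu$ and the transformation $T(\alpha,x)=(\alpha,x+\alpha)$. The first coordinate is $T$-invariant and non-constant, so $X$ is not ergodic; its ergodic components are the circle rotations $x\mapsto x+\alpha$ on the fibres $\{\alpha\}\times\T$. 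The decisive feature of this system is that for a ``cylindrical'' set $A=\T\times B$, with $B\subseteq\T$, and any nonzero integer $n$,
\begin{align*}
	\mu\bigl(A\cap T^nA\cap T^{2n}A\bigr)
	&=\int_\T\!\int_\T \ind_B(x)\,\ind_B(x-n\alpha)\,\ind_B(x-2n\alpha)\,dx\,d\alpha\\
	&=\int_\T\!\int_\T \ind_B(x)\,\ind_B(x+u)\,\ind_B(x+2u)\,dx\,du=:\Lambda_3(B),
\end{align*}
the middle step because the inner $x$-integral is a $1$-periodic function of $n\alpha$, so its average over $\alpha\in\T$ is unchanged by the $n$-fold covering $\alpha\mapsto n\alpha$ (and a sign change $u=-t$ puts it in the stated form). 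Thus $\mu(A\cap T^nA\cap T^{2n}A)=\Lambda_3(B)$ for every $n\neq0$, while $\mu(A)=|B|$, and the theorem is reduced to producing, for each $\ell$, a set $B=B(\ell)\subseteq\T$ with $|B|>0$ and $\Lambda_3(B)\le\tfrac12|B|^\ell$.

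For this I would use a Behrend set on the circle. Let $\{a_1,\dots,a_M\}\subseteq[1,N]\cap\Z$ be $3$-term-progression-free, and take $B=\bigcup_{i=1}^M I_i$ where $I_i$ is an arc of width $w$ centered at $c_i=4w\,a_i$, so that $|B|=Mw$, distinct centers satisfy $|c_i-c_k|\ge4w$, and $|2c_j-c_i-c_k|\ge4w$ for distinct $i,j,k$. A short case analysis on which arc each of $x,x+u,x+2u$ lies in shows that these separations rule out every configuration except the diagonal one in which all three points lie in a common $I_i$; hence $\Lambda_3(B)\le Mw^2=|B|^2/M$. Taking $M=M(\ell):=\lceil 2|B|^{2-\ell}\rceil$ then gives $\Lambda_3(B)\le\tfrac12|B|^\ell$, as wanted. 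The one thing to verify is feasibility: the centers must fit inside $\T$, i.e.\ $4wN<1$, equivalently $N<M/(4|B|)$, while Behrend's theorem only furnishes a $3$-progression-free subset of $[1,N]$ of size $M$ once $N\gtrsim M\,e^{c\sqrt{\log M}}$. These are compatible exactly when $e^{c\sqrt{\log M}}\lesssim 1/|B|$, i.e.\ $\log M\lesssim(\log(1/|B|))^2$; since we also want $\log M\approx(\ell-2)\log(1/|B|)$, it suffices to choose $|B|=|B(\ell)|$ small enough, depending on $\ell$, that $(\ell-2)\log(1/|B|)\le c^{-2}(\log(1/|B|))^2$, which holds for all sufficiently small $|B|$.

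I expect the main obstacle to be precisely this last balancing act: one must push $M$ high enough to beat $\Lambda_3(B)$ down to the $\ell$-th power of $|B|$, yet the $M$ progression-free centers, once suitably spaced, have to be packed into a circle of circumference one, and it is the super-polynomial strength of Behrend's bound that makes both possible simultaneously — at the price of letting $|B(\ell)|\to0$ as $\ell\to\infty$. Everything else is routine: the periodicity substitution reducing the return to $\Lambda_3(B)$, and the arc-by-arc bookkeeping giving $\Lambda_3(B)\le Mw^2$. With $B(\ell)$ so chosen, setting $A(\ell)=\T\times B(\ell)$ finishes the proof, since then $\mu(A(\ell))=|B(\ell)|>0$ and $\mu\bigl(A(\ell)\cap T^nA(\ell)\cap T^{2n}A(\ell)\bigr)=\Lambda_3(B(\ell))\le\tfrac12|B(\ell)|^\ell=\tfrac12\mu(A(\ell))^\ell$ for every integer $n\neq0$.
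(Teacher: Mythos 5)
Your construction is essentially the standard one: the paper does not actually prove Theorem \ref{thm: bhk ergodicity} (it quotes it from \cite{bhk}), but both the original argument and the paper's own generalizations in Section \ref{sec: ergodicity} (e.g.\ Proposition \ref{prop: Gaussian ergodicity} and Corollary \ref{cor: Gaussian Behrend}, whose $d=1$, $(a,b)=(1,2)$ case recovers this statement up to the harmless factor $\tfrac12$) use exactly your ingredients: the skew product $T(\alpha,x)=(\alpha,x+\alpha)$ on $\T^2$, a cylinder set $A=\T\times B$ over thickened, rescaled Behrend points, the measure-preserving substitution $\alpha\mapsto n\alpha$ making the triple correlation independent of $n\neq 0$, and a spacing argument forcing $x$, $x+u$, $x+2u$ into a single interval. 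Your reduction to $\Lambda_3(B)$ and the final balancing against Behrend's bound are correct. The one step to tighten is the spacing argument: the relation $x+(x+2u)=2(x+u)$ holds modulo $1$, so the inequality $|c_i+c_k-2c_j|\ge 4w$, which you verify as an identity of real numbers, only excludes the off-diagonal cases if $4w(a_i+a_k-2a_j)$ cannot wrap around the circle. Under your feasibility condition $4wN<1$ this quantity can be as large as nearly $2$, so a triple with $4w(a_i+a_k-2a_j)$ within $2w$ of $1$ would slip through. Requiring instead, say, $8wN<1$ (or confining all of $B$ to $[0,\tfrac14)$) removes the issue, and since only the order of magnitude $N\lesssim M/|B|$ enters the comparison with Behrend's theorem, this constant-level fix leaves the rest of your proof, including the choice of $|B(\ell)|$ small in terms of $\ell$, unchanged.
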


Key in the proof of Theorem \ref{thm: bhk ergodicity} is the following combinatorial fact
due to Behrend:\footnote{
	Behrend proved the case of 3-APs ($a=1, b=2$).
	In Behrend's proof (see \cite{behrend}), $B$ is constructed as the set of points
	$x = x_0 + x_1(2d-1) + \cdots + x_{n-1}(2d-1)^{n-1}$
	whose digits satisfy $0 \le x_i \le d-1$ and $x_0^2+x_1^2+\cdots+x_{n-1}^2 = k$
	(where $d$ and $k$ are chosen so that $B$ has the desired density).
	Modifying this construction to an expansion in base $bd-1$ with the same restriction on the digits
	will produce a pattern-free set of sufficiently large density with $N = (bd-1)^n$.
}
\begin{thm}[\cite{behrend}] \label{thm: Behrend}
	Let $a, b \in \N$ be distinct and nonzero.
	For every $N \in \N$, there is a subset $B \subseteq \{0, 1, \dots, N-1\}$
	such that $|B| > Ne^{-c\sqrt{\log{N}}}$
	and $B$ contains no configuration of the form $\{n, n+am, n+bm\}$.
\end{thm}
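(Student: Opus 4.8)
This is the classical Behrend construction, adapted so that the forbidden pattern is a dilate $\{n,n+am,n+bm\}$ rather than a three-term progression; the plan is to follow \cite{behrend} with the modification indicated in the footnote. Without loss of generality assume $0<a<b$, so in particular $b\ge 2$. Fix integers $d,n\ge 2$ (to be chosen at the end) and set $R:=bd-1$, noting $R-1=bd-2\ge b(d-1)$. Let $\Phi:\{0,1,\dots,d-1\}^n\to\{0,1,\dots,R^n-1\}$ be the digit map $\Phi(x_0,\dots,x_{n-1})=\sum_{i=0}^{n-1}x_iR^i$, which is injective since each digit lies in $\{0,\dots,R-1\}$. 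The quantity $\sum_i x_i^2$ for a point $x$ of the cube lies in $\{0,1,\dots,n(d-1)^2\}$, a set of size $n(d-1)^2+1\le nd^2$, so by pigeonhole there is an integer $k$ for which the sphere $\Sigma_k:=\{x\in\{0,\dots,d-1\}^n:\sum_i x_i^2=k\}$ satisfies $|\Sigma_k|\ge d^n/(nd^2)$. Put $B:=\Phi(\Sigma_k)\subseteq\{0,\dots,R^n-1\}$, so that $|B|\ge d^n/(nd^2)$.

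Next I would show that $B$ contains no configuration $\{p,p+am,p+bm\}$ with $m\ne 0$. Suppose $p_1=p$, $p_2=p+am$, $p_3=p+bm$ all lie in $B$. From $b(p_2-p_1)=abm=a(p_3-p_1)$ one obtains the identity $bp_2=ap_3+(b-a)p_1$ in $\Z$. Writing $p_j=\Phi(\mathbf x_j)$, the coefficient of $R^i$ on the left is $b(\mathbf x_2)_i$ and on the right is $a(\mathbf x_3)_i+(b-a)(\mathbf x_1)_i$, and both lie in $\{0,1,\dots,b(d-1)\}\subseteq\{0,\dots,R-1\}$; hence no carrying occurs, and uniqueness of base-$R$ digits forces $b\mathbf x_2=a\mathbf x_3+(b-a)\mathbf x_1$ in $\Z^n$, i.e. $\mathbf x_2=\tfrac ab\mathbf x_3+\tfrac{b-a}{b}\mathbf x_1$, a convex combination with both weights in $(0,1)$. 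Since $\mathbf x_1,\mathbf x_2,\mathbf x_3$ all have Euclidean norm $\sqrt k$, the triangle inequality gives $\sqrt k=\|\mathbf x_2\|\le\tfrac ab\|\mathbf x_3\|+\tfrac{b-a}{b}\|\mathbf x_1\|=\sqrt k$ with equality, which (via the equality case of the triangle inequality together with $\|\mathbf x_1\|=\|\mathbf x_3\|$) forces $\mathbf x_1=\mathbf x_3$, hence $\mathbf x_1=\mathbf x_2=\mathbf x_3$ and $m=0$, a contradiction; the case $m<0$ is identical.

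It then remains to optimize the parameters and to descend from length $R^n$ to an arbitrary $N$. Given $N$, set $n:=\lceil\sqrt{\log N}\rceil$ and $d:=\lfloor(N^{1/n}+1)/b\rfloor$, so that $R=bd-1\le N^{1/n}$, whence $R^n\le N$ and $B\subseteq\{0,\dots,N-1\}$; a routine estimate (Bernoulli's inequality applied to $(N^{1/n}-b)^n$) shows that for $N$ large one has $d\ge 2$ and $R^n\ge N/2$. Using $R\ge d$ and $d^n\le R^n\le N$, hence $\log d\le(\log N)/n$, one gets
\begin{align*}
\log\frac{|B|}{N}\ \ge\ n\log d-\log(nd^2)-\log N\ \ge\ -n\log\tfrac Rd-\log(nd^2)-\log 2\ \ge\ -n\log b-\log n-2\log d-\log 2,
\end{align*}
and substituting $n\le\sqrt{\log N}+1$ and $\log d\le(\log N)/n\le\sqrt{\log N}$ bounds the right-hand side below by $-c\sqrt{\log N}$ for a constant $c=c(a,b)$ (e.g. $c=\log b+4$) once $N$ is large; for the finitely many remaining values of $N$ the set $\{0\}$ works after enlarging $c$. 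This yields $|B|>Ne^{-c\sqrt{\log N}}$.

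The conceptual content is entirely that of the original argument: restricting to a sphere inside a box destroys dilated progressions because spheres are strictly convex, and the base $R=bd-1$ is chosen precisely so that the integer identity $bp_2=ap_3+(b-a)p_1$ produces no carries and therefore descends to the lattice $\Z^n$. I expect the only real work to be the parameter bookkeeping in the third paragraph — choosing $d$ and $n$ as functions of $N$ and verifying $N/2\le R^n\le N$ and $d\ge 2$ — and this is routine.
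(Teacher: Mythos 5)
Your proposal is correct and is precisely the argument the paper indicates in its footnote to Theorem~\ref{thm: Behrend}: Behrend's sphere-in-a-box construction carried out in base $bd-1$, where the digit bound $b(d-1)\le bd-2$ prevents carrying in the identity $bp_2=ap_3+(b-a)p_1$, so the relation descends to $\Z^n$ and strict convexity of the sphere kills the pattern; your parameter bookkeeping (choice of $n,d$ and the $R^n\ge N/2$ estimate) correctly fills in the density count. The only quibble is the trivial boundary case $N=1$ (where $Ne^{-c\sqrt{\log N}}=1$ no matter how large $c$ is), an artifact of the statement itself rather than of your argument.
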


We provide new counterexamples for three classes of groups:
free abelian groups $\Z^d$, torsion groups $(\Z/p\Z)^{\infty}$, and the group $(\Q,+)$ of rational numbers (with addition).
In the first two cases, we are able to produce systems with small intersections when
$\varphi$ and $\psi$ are multiplication by integers,
and we reduce more general situations to combinatorial problems
of a similar nature to Behrend's theorem.
As yet, we are unable to solve these combinatorial problems,
so we do not have a full solution to Conjecture \ref{conj: ergodicity} in this setting.

For the group of rational numbers, as we shall see, one can take $\varphi$ and $\psi$ to be multiplication by $r, s \in \Z$
without loss of generality, which allows for a direct application of (a generalized form of) Behrend's theorem.
We are therefore able to verify Conjecture \ref{conj: ergodicity} for $\Q$
(see Theorem \ref{thm: Q ergodicity} below).

The proof technique in all three cases is very much in the spirit of the proof of
Theorem \ref{thm: bhk ergodicity} given in \cite{bhk}.
However, there are some \emph{ad hoc} modifications that must be made in each case,
so we are unable to state a unifying result that applies to all of these examples at once.

\subsection{Free abelian groups of finite rank}

We will consider a special class of endomorphisms of $\Z^d$.
Namely, we will restrict our attention to pairs of homomorphisms arising as
multiplication in a ring of integers.
This additional algebraic structure allows us to employ additional tools that are not available
for general admissible pairs of homomorphisms.

Let $K$ be a number field (finite algebraic extension of $\Q$), and let $\O_K$ be the ring of integers of $K$.
Let $b_1, \dots, b_d \in \O_K$ be an integral basis\footnote{
	This means that every element $x \in \O_K$ has a unique representation of the form
	$x = \sum_{i=1}^d{a_ib_i}$ with $a_i \in \Z$.} for $\O_K$.

To provide a connection with combinatorics in $\Z^d$, we need to represent $\O_K$ by matrices.
We have an isomorphism of the additive groups $\O_K \cong \Z^d$ given by
$\sum_{i=1}^d{a_ib_i} \mapsto (a_1, a_2, \dots, a_d)$.
For each $x \in \O_K$, the map $m_x : y \mapsto xy$ is $\Z$-linear.
It can therefore be represented by an integer matrix $M_x$ in the basis $b_1, \dots, b_d$.

\begin{eg}
	Let $K = \Q(\zeta)$, where $\zeta$ is a $p$th root of unity for some prime $p$.
	Then $\O_K = \Z[\zeta]$ has integral basis $1, \zeta, \dots, \zeta^{p-2}$.
	Now, the minimal polynomial for $\zeta$ is the cyclotomic polynomial
	$\Phi_p(x) = 1+x+x^2+\cdots+x^{p-1}$, so we can easily compute the multiplication matrix $M_{\zeta}$
	in this basis:
	\begin{align*}
		M_{\zeta} := \left( \begin{array}{ccccc}
			0 & 0 & \cdots & 0 & -1 \\
			1 & 0 & \cdots & 0 & -1 \\
			0 & 1 & \ddots & 0 & -1 \\
			\vdots & \vdots & \ddots & \vdots & \vdots \\
			0 & 0 & \cdots & 1 & -1
		\end{array} \right).
	\end{align*}
	This provides an isomorphism $\Z[\zeta] \cong \left\{ q(M_{\zeta}) : q(x) \in \Z[x] \right\}$.
\end{eg}

With this notation, we can show that higher dimensional versions of Theorem \ref{thm: bhk ergodicity}
can be reduced to multi-dimensional combinatorial problems analogous to Behrend's theorem
(Theorem \ref{thm: Behrend}).
\begin{prop} \label{prop: Gaussian ergodicity}
	Let $r, s \in \O_K$ be distinct and nonzero.
	Suppose $r$ and $s$ have matrix representations $M_r = (r_{ij})_{1 \le i,j \le d}$
	and $M_s = (s_{ij})_{1 \le i,j \le d}$.
	For each $1 \le j \le d$, set $c_j := \max_{1 \le i \le d}{\left( |r_{ij}| + |s_{ij}| \right)}$,
	and let $c$ be the geometric mean $c := (c_1c_2 \cdots c_d)^{1/d}$.
	Suppose for some $N \in \N$, there is a set $B \subseteq \{0, 1, \dots, N-1\}^d$
	with $|B| > (2dcN)^{d/2}$ such that $B$ contains no configurations of the form
	\begin{align*}
		P(n,m) := \left\{ n, n + M_rm, n + M_sm \right\}
	\end{align*}
	with $n, m \in \Z^d$, $m \ne 0$.
	Then there is a (non-ergodic) measure-preserving system
	$\left( X, \B, \mu, (T_n)_{n \in \O_K} \right)$ and a set $A \in \B$ with $\mu(A) > 0$ such that
	\begin{align*}
		\mu \left( A \cap T_{rn}A \cap T_{sn}A \right)
		\le \left( \frac{(2dcN)^d}{|B|^2} \right) \mu(A)^3 < \mu(A)^3
	\end{align*}
	for $n \ne 0$.
\end{prop}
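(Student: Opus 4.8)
\emph{Overall strategy.} The plan is to adapt the construction behind Theorem~\ref{thm: bhk ergodicity} (\cite{bhk}, Theorem~2.1), transporting it from $\Z$ to the lattice $\Z^d \cong (\O_K,+)$ and feeding it the configuration-free set $B$ in place of a one-dimensional Behrend set, with multiplication by the integers $1,2$ replaced by the commuting integer matrices $M_r,M_s$. The modulus is $M := 2dcN$, and the finite model for the combinatorics is $Q := \O_K/M\O_K$, which under the integral basis $b_1,\dots,b_d$ is the additive group $(\Z/M\Z)^d$, with multiplication by $r$ (resp. $s$) realized by the reduction of $M_r$ (resp. $M_s$) modulo $M$. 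Note $\{0,\dots,N-1\}^d \subseteq Q$ (as $N \le M$), so $B$ sits inside $Q$.

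\emph{The construction.} I would take $X$ of ``skew-product-over-an-invariant-base'' type: $X = Q \times \Sigma$, where $\Sigma$ is a compact abelian group carrying an action of $\O_K$ by continuous ring endomorphisms whose multiplication-by-$n$ maps are essentially injective for all $n \ne 0$ (concretely, a profinite/solenoidal completion of $\O_K$ through which all the quotients $\O_K/M_k\O_K$ factor), together with a reduction map $\Sigma \to Q$, $\sigma \mapsto \bar\sigma$. Setting $T_n(q,\sigma) := (q + n\bar\sigma,\ \sigma)$, with $n\bar\sigma$ the product in $Q$, gives a measure-preserving $\O_K$-action — additivity of $n \mapsto n\bar\sigma$ makes the group law hold — and it is non-ergodic because the $\Sigma$-coordinate is invariant. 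The relevant set is $A := \{(q,\sigma) : q \in B\}$, so that $\mu(A) = |B|/|Q| = |B|/(2dcN)^d > 0$.

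\emph{Unfolding and the no-wraparound estimate.} For $n \ne 0$ one has
\begin{align*}
  A \cap T_{rn}^{-1}A \cap T_{sn}^{-1}A
  = \bigl\{(q,\sigma) : q,\ q + M_r(n\bar\sigma),\ q + M_s(n\bar\sigma) \in B \text{ in } Q\bigr\},
\end{align*}
and integrating out the two coordinates reduces $\mu(A \cap T_{rn}^{-1}A \cap T_{sn}^{-1}A)$, up to an explicit factor coming from the multiplicity of $n\cdot\colon Q \to Q$, to the number of configurations $\{q, q + M_r p, q + M_s p\}$ with all three vertices in $B$ inside $Q$. The crux is then to show that, because $M = 2dcN$ exceeds the $\ell^\infty$-``diameter'' of every configuration $P(n_0,m) = \{n_0, n_0+M_rm, n_0+M_sm\}$ based in the cube $\{0,\dots,N-1\}^d$ — this is exactly where the column bounds $c_j = \max_i(|r_{ij}|+|s_{ij}|)$ and the factor $2d$ are calibrated — any such configuration in $Q$ lifts to a genuine configuration $P(n_0,m)$ in $\Z^d$ with $m \ne 0$, contradicting the hypothesis on $B$. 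Hence only the degenerate term $p = 0$ survives, contributing $|B|/|Q|^2 = |B|/(2dcN)^{2d}$; rewriting $|B|/(2dcN)^{2d} = \bigl((2dcN)^d/|B|^2\bigr)\bigl(|B|/(2dcN)^d\bigr)^3 = \bigl((2dcN)^d/|B|^2\bigr)\mu(A)^3$ and invoking $|B| > (2dcN)^{d/2}$ for the strict inequality with $\mu(A)^3$ completes the estimate.

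\emph{Main obstacle.} The delicate points both live at the interface of the no-wraparound step and the exact design of the base space $\Sigma$. First, the lift must genuinely reproduce an honest configuration: one uses that $M_r$ and $M_s$ commute (mult. by elements of a commutative ring), which forces the compatibility identity $M_s(q_1 - q_0) = M_r(q_2 - q_0)$ for the lifted vertices, and one must check that the $\ell^\infty$-bound built from the $c_j$ is precisely tight enough to exclude wraparound — this is the routine-but-fussy heart of the proof and the reason the constant is $2dcN$ rather than anything smaller. Second, the estimate must hold \emph{uniformly over all} $n \ne 0$, including the most degenerate values (those acting trivially on $Q$, e.g. $n \in M\O_K$): these cannot be handled at the single finite level $M$, and it is the essential injectivity of the multiplication maps on the profinite direction space $\Sigma$ (together, if needed, with incorporating a family of scales $M_k = 2dcN_k$ in the spirit of \cite{bhk}) that is responsible for absorbing them. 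I expect this uniformity, rather than the combinatorial lifting itself, to be where the argument requires the most care.
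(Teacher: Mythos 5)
There is a genuine gap — in fact two — and both come from replacing the paper's torus construction by a single finite quotient $Q=(\Z/M\Z)^d$ with the uniform modulus $M=2dcN$. First, your no-wraparound step is not valid in general. The detection identity only gives $M_r(l-q)-M_s(k-q)\equiv 0 \pmod{M}$ for the lifted vertices $q,k,l\in\{0,\dots,N-1\}^d$, and the entries of this integer vector are bounded by $(N-1)\sum_j c_j$, not by $2dcN$: since $c$ is the \emph{geometric} mean of the $c_j$, one can easily have $\sum_j c_j > 2dc$ (e.g.\ $d=2$, $c_1=100$, $c_2=1$), so the congruence does not force exact vanishing and a mod-$M$ configuration need not lift to a $\Z^d$-configuration. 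The constants $c_j$ are calibrated for a coordinate-wise, anisotropic estimate, which is exactly why the paper works on $\T^{2d}$ with $T_n(x,y)=(x,y+M_nx)$ and $A=\T^d\times\tilde B$, where $\tilde B$ is a union of boxes $\prod_i\left[\frac{j_i}{N},\frac{j_i}{N}+\frac{1}{dc_iN}\right)$ whose side length depends on the coordinate $i$: the error in coordinate $i$ is then $<\sum_j (|r_{ij}|+|s_{ij}|)/(dc_jN)\le 1/N$, forcing the grid relation to hold exactly. This anisotropy cannot be transplanted into your finite model: coordinate-dependent moduli $M_j$ do not carry the action of $M_r,M_s$, and enlarging the uniform modulus to the needed scale $\sim(\sum_j c_j)N$ destroys the density hypothesis $|B|>(2dcN)^{d/2}$.

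Second, uniformity over all $n\ne 0$ genuinely fails in your model, and the profinite coordinate does not rescue it: the cocycle $\sigma\mapsto n\bar\sigma$ factors through $\O_K/M\O_K$, so the dynamics only sees $n\bmod M$. Quantitatively, even when only the term $p=n\bar\sigma=0$ survives, its contribution is $\frac{|B|}{|Q|}\cdot\frac{|\{p\in Q:np=0\}|}{|Q|}$, which exceeds $|B|/|Q|^2$ whenever multiplication by $n$ on $Q$ has nontrivial kernel, and in the extreme case $n\in M\O_K\setminus\{0\}$ the maps $T_{rn},T_{sn}$ are the identity and the intersection has measure $\mu(A)>\mu(A)^3$, violating the required bound. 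The paper avoids both problems at once by taking the fiber to be $\T^d$: for every $n\ne 0$, multiplication by $M_n$ is a measure-preserving surjection of $\T^d$, so the substitution $M_nx\mapsto x$ removes the $n$-dependence uniformly (no degenerate $n$ at all), while the anisotropic boxes provide the wraparound-free lifting. If you want to keep your skew-product formulation, you essentially have to let the skewing coordinate live on $\T^d$ (or another group on which every $M_n$, $n\ne0$, acts as a measure-preserving surjection) rather than on a finite quotient; at that point you have reconstructed the paper's proof.
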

\begin{proof}
	Consider the action $T_n : \T^{2d} \to \T^{2d}$ given by
	\begin{align*}
		T_n(x,y) = (x, y + M_nx)
	\end{align*}
	for $n \in \O_K$ and $x, y \in \T^d$.
	This action preserves the Haar measure $\mu$ on $\T^{2d}$.
	
	Now take $N \in \N$ and $B \subseteq \{0, 1, \dots, N-1 \}^d$ with $|B| > (2dcN)^{d/2}$.
	Define
	\begin{align*}
		\tilde{B} = \left\{ \left( \frac{j_1}{N}, \frac{j_2}{N}, \dots, \frac{j_d}{N} \right) : j \in B \right\}
		+ \prod_{i=1}^d{\left[ 0, \frac{1}{dc_iN} \right)},
	\end{align*}
	and let $A = \T^d \times \tilde{B} \subseteq \T^{2d}$.
	Note that $\mu(A) = \frac{|B|}{(dcN)^d}$.
	
	For $n \ne 0$, we have
	\begin{align*}
		\mu \left( A \cap T_{rn}A \cap T_{sn}A \right)
		& = \int_{\T^d \times \T^d}{\ind_{\tilde{B}}(y) \ind_{\tilde{B}}(y+M_{rn}x)
			\ind_{\tilde{B}}(y+M_{sn}x)~dx~dy} \\
		& = \int_{\T^d \times \T^d}{\ind_{\tilde{B}}(y) \ind_{\tilde{B}}(y+M_rx)
			\ind_{\tilde{B}}(y+M_sx)~dx~dy} \\
		& = \mu \left( \left\{ (x,y) : \left\{ y, y+M_rx, y+M_sx \right\} \subseteq \tilde{B} \right\} \right).
	\end{align*}
	It remains to bound the measure of this set.
	Let $(x,y) \in \T^d \times \T^d$, and suppose
	$\left\{ y, y+M_rx, y+M_sx \right\} \subseteq \tilde{B}$.
	From the construction of $\tilde{B}$, we can write
	\begin{align*}
		y & = \left( \frac{j_1}{N} + \alpha_1, \dots, \frac{j_d}{N} + \alpha_d \right), \\
		y+M_rx & = \left( \frac{k_1}{N} + \beta_1, \dots \frac{k_d}{N} + \beta_d \right), \\
		y+M_sx & = \left( \frac{l_1}{N} + \gamma_1,\dots \frac{l_d}{N} + \gamma_d \right),
	\end{align*}
	with $j, k, l \in B$ and
	$\alpha_i, \beta_i, \gamma_i \in  \left[ 0, \frac{1}{dc_iN} \right)$ for $i = 1, \dots, d$.
	Since $M_{s-r}y - M_s\left( y + M_rx \right) + M_r(y+M_sx) = 0$, we have
	\begin{align*}
		M_{s-r} & \left( \frac{j_1}{N}, \dots, \frac{j_d}{N} \right)
		- M_s \left( \frac{k_1}{N}, \dots \frac{k_d}{N} \right)
		+ M_r \left( \frac{l_1}{N}, \dots, \frac{l_d}{N} \right) \\
		& = - M_{s-r} (\alpha_1, \dots, \alpha_d)
		+ M_s (\beta_1, \dots, \beta_d) - M_r(\gamma_1, \dots, \gamma_d) \\
		& = \left( \begin{array}{ccccccccc}
			r_{11}(\alpha_1-\gamma_1) & + & s_{11}(\beta_1-\alpha_1)
			& + & \cdots & + & r_{1d}(\alpha_d-\gamma_d) & + & s_{1d}(\beta_d-\alpha_d) \\
			\vdots & & \vdots & & \vdots & & \vdots & & \vdots \\
			r_{d1}(\alpha_1-\gamma_1) & + & s_{d1}(\beta_1-\alpha_1)
			& + & \cdots & + & r_{dd}(\alpha_d-\gamma_d) & + & s_{dd}(\beta_d-\alpha_d)
		\end{array} \right) \\
		& \in \left( -\frac{1}{N}, \frac{1}{N} \right)^d.
	\end{align*}
	Thus, $M_{s-r} \left( \frac{j_1}{N}, \dots, \frac{j_d}{N} \right)
	- M_s \left( \frac{k_1}{N}, \dots, \frac{k_d}{N} \right)
	+ M_r\left( \frac{l_1}{N}, \dots, \frac{l_d}{N} \right) = 0$.
	
	The configuration $P(n,m)$ satisfies, and is in fact characterized by,
	the equation $M_{s-r}(n) - M_s(n+M_rm) + M_r(n+M_sm) = 0$.
	It follows that $\{j,k,l\} = P(j,m)$ for $m = M_r^{-1}(k-j)$, so $j=k=l$ by the choice of $B$.
	Hence, the points $y$, $y+M_rx$, and $y + M_sx$ all belong to the same region
	$\prod_{i=1}^d{\left[ \frac{j_i}{N}, \frac{j_i}{N} + \frac{1}{dc_iN} \right)}$.
	Therefore,
	$M_rx \in \prod_{i=1}^d{\left( -\frac{1}{dc_iN}, \frac{1}{dc_iN} \right)}$, so
	\begin{align*}
		\mu \left( A \cap T_{rn}A \cap T_{sn}A \right)
		& = \mu \left( \left\{ (x,y) : \left\{ y, y+M_rx, y+M_sx \right\} \subseteq \tilde{B} \right\} \right) \\
		& \le \mu \left( \left\{ (x,y) :
		M_rx \in \prod_{i=1}^d{\left( -\frac{1}{dc_iN}, \frac{1}{dc_iN} \right)},
		y \in \tilde{B} \right\} \right) \\
		& = \left( \frac{2}{dcN} \right)^d \mu(A) \\
		& = \left( \frac{(2dcN)^d}{|B|^2} \right) \mu(A)^3.
	\end{align*}
\end{proof}

\begin{cor} \label{cor: Gaussian Behrend}
	Let $d \in \N$.
	Let $a, b \in \Z$ be distinct and nonzero.
	There is a (non-ergodic) $\Z^d$-system $\left( X, \B, \mu, (T_n)_{n \in \Z^d} \right)$
	such that, for every integer $l \ge 1$, there is a set $A = A(l) \in \B$ with $\mu(A) > 0$ such that
	\begin{align*}
		\mu \left( A \cap T_{an}A \cap T_{bn}A \right) \le \mu(A)^l
	\end{align*}
	for $n \ne 0$.
\end{cor}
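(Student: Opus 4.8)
The plan is to deduce Corollary~\ref{cor: Gaussian Behrend} from Proposition~\ref{prop: Gaussian ergodicity}, using Theorem~\ref{thm: Behrend} to supply the required configuration-free sets. First I would fix a number field $K$ of degree $d$ over $\Q$ (for instance $K = \Q(\sqrt[d]{2})$, irreducible by Eisenstein) with ring of integers $\O_K$; since $\O_K$ is free of rank $d$ over $\Z$, an integral basis identifies $(\O_K,+)$ with $\Z^d$, so a measure-preserving $\O_K$-action is the same as a $\Z^d$-action, and multiplication by an integer $a \in \Z \subseteq \O_K$ is the map $n \mapsto an$ on $\Z^d$. Because $a, b$ are distinct and nonzero, they are distinct nonzero elements of $\O_K$ whose multiplication matrices in an integral basis are $M_a = aI_d$ and $M_b = bI_d$, so Proposition~\ref{prop: Gaussian ergodicity} applies with $r = a$, $s = b$: here $r_{ij} = a\delta_{ij}$, $s_{ij} = b\delta_{ij}$, hence $c_j = |a| + |b|$ for all $j$, $c = |a| + |b|$, and the configuration is $P(n,m) = \{n, n + am, n + bm\}$. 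I would emphasize that the system produced by Proposition~\ref{prop: Gaussian ergodicity} --- namely $X = \T^{2d}$ with $T_n(x,y) = (x, y + M_n x)$ --- depends only on $K$, not on the auxiliary parameters $N$ and $B$; only the set $A$ depends on these. Thus this one (non-ergodic) $\Z^d$-system will serve for all $l$.

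Next I would build the configuration-free sets. After translating the base point to $\min\{0,a,b\}$, the family of three-element sets $\{n, n + am, n + bm\}$ (with $m$ a nonzero integer) coincides with a family $\{x, x + a'm, x + b'm\}$ for suitable distinct nonzero $a', b' \in \N$, so Theorem~\ref{thm: Behrend} provides, for every $N$, a set $B_0 \subseteq \{0, \dots, N-1\}$ with $|B_0| > N e^{-c_0 \sqrt{\log N}}$ (for some constant $c_0 > 0$) avoiding all such configurations. Setting $B := B_0^d \subseteq \{0,\dots,N-1\}^d$: if $B$ contained a configuration $\{n, n + am, n + bm\}$ with $m \in \Z^d \setminus \{0\}$, then in a coordinate $i$ with $m_i \ne 0$ one would get $\{n_i, n_i + am_i, n_i + bm_i\} \subseteq B_0$, contradicting the choice of $B_0$; hence $B$ avoids the configurations $P(n,m)$ with $m \ne 0$, and $|B| = |B_0|^d > N^d e^{-c_0 d \sqrt{\log N}}$.

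Finally, given $l \ge 1$, I would choose $N = N(l)$ large enough that $|B| > (2dcN)^{d/2}$ (so Proposition~\ref{prop: Gaussian ergodicity} applies) and $2^d (dcN)^{(l-2)d} \le |B|^{l-1}$; both are satisfied for all large $N$ since $N^d e^{-(l-1)c_0 d \sqrt{\log N}} \to \infty$, polynomial growth beating the sub-polynomial Behrend correction. Proposition~\ref{prop: Gaussian ergodicity} then yields a set $A = A(l)$ with $\mu(A) = |B|/(dcN)^d > 0$ such that, for every $n \ne 0$,
\begin{align*}
	\mu(A \cap T_{an}A \cap T_{bn}A) \le \frac{(2dcN)^d}{|B|^2}\,\mu(A)^3 = \frac{2^d|B|}{(dcN)^{2d}} \le \frac{|B|^l}{(dcN)^{ld}} = \mu(A)^l,
\end{align*}
where the middle equality uses $\mu(A) = |B|/(dcN)^d$ and the last inequality is the chosen lower bound on $|B|$. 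Interpreting $(T_n)_{n \in \O_K}$ as a $\Z^d$-action finishes the argument. The one delicate point --- essentially the whole content beyond citing the two results above --- is this final bookkeeping: the near-optimal density furnished by Theorem~\ref{thm: Behrend} is precisely what is needed to dominate the arbitrarily large power $\mu(A)^l$, and it is crucial that $N$ (hence $A$) is allowed to depend on $l$ while the underlying system stays fixed.
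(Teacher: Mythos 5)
Your proposal is correct and follows essentially the same route as the paper: take $B = B_0^d$ for a Behrend-type set $B_0$ from Theorem~\ref{thm: Behrend}, verify coordinatewise that $B$ avoids the configurations $P(n,m)$, feed it into Proposition~\ref{prop: Gaussian ergodicity} with $M_a = aI_d$, $M_b = bI_d$ (so $c = |a|+|b|$), and check that Behrend's density $Ne^{-c_0\sqrt{\log N}}$ beats the required bound $|B_0| \gtrsim N^{1-1/(l-1)}$ for large $N$. Your treatment even tidies up points the paper leaves implicit (handling negative $a,b$ by translating to positive coefficients, fixing an explicit degree-$d$ field to realize the $\Z^d$-action, and noting the system is independent of $l$), so nothing further is needed.
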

\begin{proof}
	Let $B_0 \subseteq \{0, 1, \dots, N-1\}$ such that $B_0$ contains no configurations of the form
	$\{n, n + am, n + bm\}$ with $n,m \in \Z, d \ne 0$.
	Set $B = B_0^d$.
	We claim that $B$ contains no configurations of the form
	\begin{align*}
		P(n,m) = \{n, n + am, n + bm\}
	\end{align*}
	with $n, m \in \Z^d$ and $m \ne 0$.
	Indeed, if $P(n,m) \subseteq B$, then for every $i = 1, \dots, d$,
	we have $\{n_i, n_i + am_i, n_i + bm_i\} \subseteq B_0$.
	Hence $m_i = 0$.
	
	Let $\X = \left( X, \B, \mu, (T_n)_{n \in \Z^d} \right)$ and $A \in \B$ be as
	in the conclusion of Proposition \ref{prop: Gaussian ergodicity}.
	Now, $|B| = |B_0|^d$, and in the notation of Proposition \ref{prop: Gaussian ergodicity},
	$c_j = |a|+|b|$ for every $j = 1, \dots, d$, so
	\begin{align*}
		\mu \left( A \cap T_{an}A \cap T_{bn}A \right)
		\le \left( \frac{(2dcN)^d}{|B|^2} \right) \mu(A)^3
		= \left( \frac{2 \left( |a| + |b| \right) dN}{|B_0|^2} \right)^d \mu(A)^3
	\end{align*}
	for every $n \ne 0$.
	Using $\mu(A) = \left( \frac{|B_0|}{\left( |a| + |b| \right) dN} \right)^d$,
	it remains to check that $B_0$ can be chosen so that
	\begin{align*}
		\left( \frac{2 \left( |a| + |b| \right) dN}{|B_0|^2} \right)^d
		< \left( \frac{|B_0|}{\left( |a| + |b| \right) dN} \right)^{d(l-3)}.
	\end{align*}
	Equivalently, we need
	\begin{align*}
		|B_0| > 2^{1/(l-1)} \left( \left( |a| + |b| \right) d N \right)^{(l-2)/(l-1)} 
		= C_{d,l} N^{1 - \frac{1}{(l-1)}}.
	\end{align*}
	Theorem \ref{thm: Behrend} guarantees such a set for sufficiently large $N$.
\end{proof}

Now we turn to a specific example where the configurations have a simple geometric description.
For the Gaussian integers ($D = -1$), the set of configurations $\{ P(n,m) : n, m \in \Z^2 \}$
is the set of all rotations, translations, and scalings of a fixed triangle $\{0,r,s\} \subseteq \Z^2$.
In particular, for $r=1$ and $s=i$, the forbidden configurations are isosceles right triangles.
Ajtai and Szemer\'{e}di showed a related result:\footnote{The lower bound in Theorem \ref{thm: corner-free} has recently been improved by Linial and Shraibman \cite{ls} and by Green \cite{green-corners}.}

\begin{thm}[\cite{as}, Theorem 1] \label{thm: corner-free}
	There is a constant $c$ such that, for any $N \in \N$,
	there is a set $B \subseteq \{0, 1, \dots, N-1\}^2$ with $|B| > c(r_3(N))^2$
	such that $B$ contains no isosceles right triangles with legs parallel to the axes,
	where $r_3(N)$ denotes the size of the largest $3$-AP-free subset of $\{0, 1, \dots, N-1\}$.
\end{thm}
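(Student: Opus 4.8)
The plan is to construct $B$ explicitly from a one--dimensional $3$-AP-free set, exploiting the fact that an isosceles right triangle with legs parallel to the axes is ``seen'' by one of the two linear functionals $\ell_+(x,y)=2x+y$ and $\ell_-(x,y)=2x-y$ on $\Z^2$. Concretely, set $m:=\lfloor N/2\rfloor$, let $A_0\subseteq\{0,1,\dots,m-1\}$ be a $3$-AP-free set of size $r_3(m)$, and put $A:=4A_0=\{4a:a\in A_0\}\subseteq\{0,4,\dots,4(m-1)\}$; then $A$ is still $3$-AP-free and, crucially, $p+q\equiv 0\pmod 4$ for all $p,q\in A$. With $M:=2m$, define
\[
	B:=\bigl\{(x,y)\in\{0,\dots,M-1\}^2 : \ell_+(x,y)\in A \ \text{ and } \ \ell_-(x,y)\in A\bigr\}.
\]

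First I would verify that $B$ contains no isosceles right triangle with legs parallel to the axes. Any such triangle has vertices $(x,y)$, $(x+\eps_1 d,y)$, $(x,y+\eps_2 d)$ with $d>0$ and $\eps_1,\eps_2\in\{+1,-1\}$. A one-line computation shows that if $\eps_1=\eps_2$ then the three values of $\ell_+$ on the vertices form a genuine integer $3$-term arithmetic progression with common difference $\pm d$, while if $\eps_1\ne\eps_2$ the same holds for $\ell_-$. If all three vertices lay in $B$, then in either case three elements of $A$ would form a $3$-AP, and $3$-AP-freeness of $A$ forces $d=0$, a contradiction. Since no modular reduction occurs anywhere, there is no wraparound issue to manage --- that is the advantage of working over $\Z$ rather than over $\Z/P\Z$.

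Next I would bound $|B|$ from below. The map $(x,y)\mapsto(\ell_+(x,y),\ell_-(x,y))$ is injective (its determinant is $-4$) with image exactly $\{(p,q):4\mid p+q\}$ and inverse $x=(p+q)/4$, $y=(p-q)/2$. Hence $|B|$ equals the number of pairs $(p,q)\in A\times A$ with $0\le (p+q)/4\le M-1$ and $0\le(p-q)/2\le M-1$; since $A\subseteq\{0,\dots,4m-4\}$ and $M=2m$, a short check shows these range conditions collapse to the single inequality $p\ge q$, so $|B|\ge\binom{|A|+1}{2}\ge |A|^2/2=r_3(m)^2/2$. Finally, covering $\{0,\dots,N-1\}$ by at most three intervals of length $\le m$ and restricting an extremal $3$-AP-free set to each gives $r_3(N)\le 3\,r_3(m)$, whence $|B|\ge r_3(N)^2/18$. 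Taking $c$ to be the minimum of $1/18$ and a constant absorbing the finitely many small $N$ for which $|A|\ge 2$ or $M\le N$ fail finishes the argument; note $B\subseteq\{0,\dots,M-1\}^2\subseteq\{0,\dots,N-1\}^2$ as required.

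The step I expect to demand the most care is the triangle-detection computation together with its bookkeeping: one must make sure that \emph{all four} orientations are killed, which is precisely why two functionals are needed --- a single functional such as $\ell_+$ annihilates only the two ``slope $-1$'' orientations --- and that the choice $A\subseteq 4\Z$ is exactly what makes the preimage count clean. Everything else is routine. (I remark that the one-functional variant $B=\{(x,y)\in[M]^2:\ell_+(x,y)\in A\}$ in fact yields the stronger bound $|B|\gtrsim N\,r_3(N)$, but it only avoids corners of a single orientation; handling all isosceles right triangles is what brings the exponent down to the $(r_3(N))^2$ appearing in the statement.)
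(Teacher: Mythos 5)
Your construction is correct, and it checks out in detail: for a triangle $(x,y)$, $(x+\eps_1 d,y)$, $(x,y+\eps_2 d)$ the values of $\ell_+=2x+y$ are $a,\,a+2\eps_1 d,\,a+\eps_1 d$ when $\eps_1=\eps_2$, and the values of $\ell_-=2x-y$ are $b,\,b+2\eps_1 d,\,b+\eps_1 d$ when $\eps_2=-\eps_1$, so every one of the four orientations produces a nontrivial $3$-AP inside $A$; and since $A=4A_0$ forces $4\mid p+q$ and $2\mid p-q$, the map $(x,y)\mapsto(2x+y,2x-y)$ puts $B$ in bijection with $\{(p,q)\in A\times A: p\ge q\}$ (the other range constraints are automatic, as you say), giving $|B|>|A|^2/2=r_3(m)^2/2\ge r_3(N)^2/18$ via $r_3(N)\le 3r_3(\lfloor N/2\rfloor)$. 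The only caveat is cosmetic: for $N=1$ your construction yields $B=\es$, and shrinking $c$ alone cannot rescue a strict inequality against $r_3(1)=1$, so one should simply take $B$ to be a single point there (your phrasing about ``$|A|\ge 2$ or $M\le N$ failing'' is slightly off, since $M\le N$ always holds and $|A|\ge 2$ is never needed). As for comparison with the paper: the paper gives no proof of this statement at all --- it is quoted as Theorem 1 of Ajtai--Szemer\'{e}di --- so you have supplied a self-contained argument where the paper relies on a citation. Your route (two linear forms $2x\pm y$ mapping all four triangle orientations to $3$-APs in a Behrend-type set, with the factor-of-$4$ dilation making the inverse map integral) is very much in the spirit of the original Behrend-based construction and explains cleanly why the bound is $(r_3(N))^2$ rather than the $N\,r_3(N)$ one gets for a single corner orientation, which is a useful observation in the context of Section \ref{sec: ergodicity}.
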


However, allowing for rotations seems to complicate the picture, and we do not know how to prove the
combinatorial statement in Proposition \ref{prop: Gaussian ergodicity} even for this concrete case.
Since the bounds in Behrend's theorem are much stronger than what is needed for these counterexamples,
we conjecture that an appropriate analogue should hold.
Namely:

\begin{conj}
	Let $K$ be an algebraic number field with ring of integers $\O_K$,
	and let $r, s \in \O_K$ be distinct and nonzero.
	Then for every $C > 0$, there is an $N \in \N$ and a set $B \subseteq \{0, 1, \dots, N-1\}^d$
	such that $|B| > C N^{d/2}$ and $B$ contains no configurations $P(n,m)$ with $m \ne 0$.
\end{conj}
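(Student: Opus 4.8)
Towards this conjecture the natural plan is first to recast the pattern‑avoidance problem as one about a single translation‑invariant linear equation, and then to attempt a Behrend‑type construction for it. Recall from the proof of Proposition~\ref{prop: Gaussian ergodicity} that a configuration $P(n,m)$ satisfies the identity $M_{s-r}(n) - M_s(n+M_rm) + M_r(n+M_sm) = 0$. Since $M_r$ and $M_s$ are the matrices of multiplication by $r$ and $s$ in the \emph{commutative} ring $\O_K$, they commute, so every $P(n,m)$ with $m \ne 0$ produces a triple $(x_0,x_1,x_2) = (n,\, n+M_rm,\, n+M_sm)$ with $x_1 \ne x_0$ and
\[
M_s x_1 - M_r x_2 - M_{s-r} x_0 = 0,
\]
a linear relation whose matrix coefficients $M_s,\, M_{-r},\, M_{r-s}$ sum to $0$. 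Hence it suffices to find, for every $C$, an $N$ and a set $B \subseteq \{0,\dots,N-1\}^d$ containing no solution of this equation other than $x_0 = x_1 = x_2$, with $|B| > C N^{d/2}$. A solution of the equation need only satisfy $s(x_1-x_0) = r(x_2-x_0)$ in $\O_K$, not divisibility by $r$, so this reduction makes the task (a priori) strictly harder, which is harmless; one could alternatively try to exploit the divisibility constraint $x_1-x_0 \in M_r\Z^d$ to get away with a weaker construction.

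For the construction I would imitate Behrend (Theorem~\ref{thm: Behrend}): fix a large integer $q$ and expand elements of $\O_K \cong \Z^d$ in base $q$ with digit blocks $\delta_i \in \{0,\dots,\lfloor q/C'\rfloor\}^d$, so there are $L \sim d\log N/\log q$ blocks, where $C'$ is large relative to the entry sizes of $M_r$ and $M_s$. If $C'$ is large enough, no carries occur between blocks in the linear equation, so for $x_0,x_1,x_2 \in B$ it holds block by block: writing $a_i := \delta_i^{(1)} - \delta_i^{(0)}$ and $b_i := \delta_i^{(2)} - \delta_i^{(0)}$, one gets $M_s a_i = M_r b_i$ for every $i$. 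Now intersect $B$ with a level set of a quadratic form: choose a positive definite $Q_0$ on $\R^d$, set $Q\big((\delta_i)_i\big) := \sum_i Q_0(\delta_i)$, and keep only those $x$ whose digit vector lies on the most populated level $\{Q = \rho^2\}$. Taking $q \approx \exp(c\sqrt{\log N})$ so that $L \approx \sqrt{\log N}$, the pigeonhole loss is $N^{O(\sqrt{\log N})}$, which would give $|B| \ge N^{d - O(\sqrt{\log N})}$, far exceeding the required $C N^{d/2}$. The only remaining ingredient is a form $Q_0$ for which $Q(\delta^{(0)}) = Q(\delta^{(1)}) = Q(\delta^{(2)})$ together with $M_s a_i = M_r b_i$ forces $a_i = b_i = 0$.

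That last step is the main obstacle, and is precisely why the statement is only a conjecture. In the classical case one invokes strict convexity of $\|\cdot\|^2$: one of the three points of the configuration is a \emph{convex} combination of the other two, so equality of norms forces triviality. Here this fails: in the Minkowski embedding $\O_K \otimes \R \cong \prod_v K_v$ the triple $(n,\, n+M_rm,\, n+M_sm)$ is, place by place, a scaled and \emph{rotated} copy of the fixed triangle $\{0,\iota_v(r),\iota_v(s)\}$, and at a complex place (and already over $\R^d$ with $d\ge 2$ at a real place, unless $r,s,s-r$ happen to line up) this is a genuine two‑dimensional triangle, none of whose vertices is a convex combination of the other two — the very obstruction that makes the corners problem hard (cf. Theorem~\ref{thm: corner-free}). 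Since the target $N^{d/2}$ is only the trivial (probabilistic) threshold and is exponentially weaker than Behrend strength, I would expect a cruder argument to suffice: amplifying a single fixed small $P$‑free configuration by taking products in number fields of growing degree and using that the conjugates of one $m \in \O_K$ are rigidly linked (so that the products stay $P$‑free while their density climbs past $N^{d/2}$), or hybridizing the rational‑subspace construction — a $\Q$‑subspace $V \subseteq \Q^d$ with $V \cap M_{r/s}V = 0$, which already yields $\Theta(N^{d/2})$ pattern‑free points — with a Behrend set placed along a direction transverse to $M_{r/s}V$. Making either of these rigorous, i.e.\ pinning down the right quadratic form or the right amplification, is where the real work lies.
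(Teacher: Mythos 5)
This statement is presented in the paper as a conjecture and is not proved there: the only case the authors handle is $r, s \in \Z$ (Corollary~\ref{cor: Gaussian Behrend}), where $M_r$ and $M_s$ are scalar matrices, the configuration splits coordinatewise into one-dimensional progressions $\{n_i, n_i + rm_i, n_i + sm_i\}$, and a product of Behrend sets suffices. Your proposal does not supply a proof either, and you say so yourself. The entire weight of the Behrend-style construction rests on producing a quadratic form $Q_0$ for which equality of the $Q$-values, together with the blockwise relation $M_s a_i = M_r b_i$, forces $a_i = b_i = 0$; as you correctly diagnose, strict convexity cannot deliver this, because at each archimedean place the triple $(n,\, n+M_r m,\, n+M_s m)$ is a rotated and scaled copy of the fixed triangle $\{0, r, s\}$, no vertex of which is a convex combination of the other two. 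This is precisely the corners-type obstruction (cf.\ Theorem~\ref{thm: corner-free}), and no substitute for the convexity step is offered, so the central idea is missing, not merely unpolished.

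The fallback sketches do not repair the gap. For the hybrid route, note that any $\Q$-subspace $V$ with $V \cap M_s M_r^{-1} V = \{0\}$ satisfies $\dim V \le d/2$, so $V \cap \{0, \dots, N-1\}^d$ contains only $O_V(N^{d/2})$ points; since the conjecture demands $|B| > C N^{d/2}$ for \emph{every} $C$, the subspace alone can never reach the target, and all of the required gain must come from the Behrend-type ingredient placed transverse to $V$ --- which returns you to the unresolved step above, since the projected configurations are again rotated triangles. The amplification-over-growing-degree idea is likewise only a heuristic: you would need to verify both that the product configurations stay $P$-free and that the density genuinely climbs past $N^{d/2}$, and neither is carried out. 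In short, the proposal is a sensible plan of attack with the obstruction correctly identified, but it contains a genuine gap at its key step, and the statement remains open, exactly as the paper presents it.
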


\subsection{Torsion groups}

As was the case with homomorphisms in $\Z^d$,
we will deal with special classes of homomorphisms arising as multiplication in rings.
We will consider the groups $(\Z/p\Z)^{\infty} = \bigoplus_{n=1}^{\infty}{\Z/p\Z}$ with $p \in \Z$ prime,
which we will view as the additive group of the polynomial ring $\F_p[t]$.
With slight modifications to the method employed in the previous subsection,
we get an analogue of Proposition \ref{prop: Gaussian ergodicity} in this setting:

\begin{prop}
	Suppose there is a set $B \subseteq \F_p^N$ with $|B| > p^{N/2+1}$
	such that $B$ contains no nontrivial patterns of the form
	$\{y, y+(x_1, x_2, \dots, x_N), y+(0, x_1, x_2, \dots, x_{N-1})\}$.
	Then there is a measure-preserving system $\left( X, \B, \mu, (T_n)_{n \in \F_p[t]} \right)$
	and a set $A \in \B$ with $\mu(A) > 0$ such that
	\begin{align*}
		\mu \left( A \cap T_nA \cap T_{tn}A \right) \le \left( \frac{p^{N/2+1}}{|B|} \right)^2 \mu(A)^3 < \mu(A)^3
	\end{align*}
	for all $n \ne 0$.
\end{prop}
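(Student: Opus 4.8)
The plan is to imitate the proof of Proposition \ref{prop: Gaussian ergodicity} (itself modelled on the proof of Theorem \ref{thm: bhk ergodicity}): build a non-ergodic skew product in which the triple intersection $A \cap T_n A \cap T_{tn}A$ becomes an integral counting translates of the forbidden configuration inside a thickened copy of $B$, so that the hypothesis on $B$ forces the count to be small. The natural fiber is the compact ring of formal power series $H := \F_p[[t]]$, on which $\F_p[t]$ acts by multiplication; multiplication by $t$ carries the ideal $t^NH$ into itself and descends, on $H/t^NH$ identified with $\F_p^N$ via coefficient tuples $(c_0,\dots,c_{N-1})$, to the map $\tau(v_1,\dots,v_N) = (0,v_1,\dots,v_{N-1})$ — exactly the offset in the statement. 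I would take $T_n(x,y) := (x,\,y + nx)$ on $X := H \times H$ with the product of Haar measures; this is a measure-preserving $\F_p[t]$-action, highly non-ergodic since the first coordinate is invariant. Letting $\pi_N : H \to H/t^NH$ be the quotient map, set $\tilde B := \pi_N^{-1}(B)$ and $A := H \times \tilde B$, so that $\mu(A) = |B|/p^N$.

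The core of the argument is the identity
\begin{align*}
\mu\bigl(A \cap T_n A \cap T_{tn}A\bigr) = \int_H\int_H \ind_{\tilde B}(y)\,\ind_{\tilde B}(y-nx)\,\ind_{\tilde B}(y-tnx)\,dy\,dx .
\end{align*}
Projecting the three configuration points through $\pi_N$ and using $\pi_N(tw) = \tau(\pi_N(w))$, the integrand can be nonzero only when $\{\bar y,\,\bar y - \bar w,\,\bar y - \tau(\bar w)\} \subseteq B$, where $\bar w := \pi_N(nx)$; this triple is the translate $\bar y + \{0,\,-\bar w,\,\tau(-\bar w)\}$ of the forbidden pattern with parameter $-\bar w$, so pattern-avoidance of $B$ forces $\bar w = 0$, i.e.\ $nx \in t^NH$. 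In that case all three points lie in a single coset of $t^NH$, so the integrand reduces to $\ind_{\tilde B}(y)$, giving $\mu(A \cap T_n A \cap T_{tn}A) = \mu\bigl(\{x : nx \in t^NH\}\bigr)\cdot \mu(\tilde B)$. Granting $\mu(\{x : nx \in t^NH\}) \le p^{2-N}$, one gets $\mu(A\cap T_n A\cap T_{tn}A) \le p^{2-N}\,\mu(\tilde B) = |B|\,p^{2-2N} = (p^{N/2+1}/|B|)^2\,\mu(A)^3$, and the strict inequality $< \mu(A)^3$ is then the hypothesis $|B| > p^{N/2+1}$.

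The main obstacle is precisely the bound $\mu(\{x : nx \in t^NH\}) \le p^{2-N}$ uniformly in $n \ne 0$: writing $n = t^k u$ with $u$ a unit in $\F_p[[t]]$, this probability is $p^{-\max(N-k,0)}$, which is $\le p^{2-N}$ only when $\mathrm{ord}_t(n) = k \le 2$ and degenerates to $1$ once $t^N \mid n$. So the plain construction above already settles the case of $n$ of bounded $t$-adic order, and extending it to all $n$ calls for an extra device: just as invertibility of $M_n$ in Proposition \ref{prop: Gaussian ergodicity} allows $n$ to be absorbed by the measure-preserving substitution $M_n x \mapsto x$, one wants to engineer the base (or the cocycle) so that multiplication by each nonzero $n$ can be removed by a change of variables before the Behrend-type estimate is carried out, and to do so compatibly with the quotient $\pi_N$ intertwining multiplication by $t$ with $\tau$. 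That compatibility issue is the delicate technical step here, and it is the analogue of the \emph{ad hoc} modifications mentioned in the discussion preceding Conjecture \ref{conj: ergodicity}.
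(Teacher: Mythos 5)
Your setup is the right general strategy (it mirrors Proposition \ref{prop: Gaussian ergodicity}), but the construction as you give it does not prove the proposition, and the step you defer at the end is exactly the crux rather than a technical loose end. With fiber $H=\F_p[[t]]$ and $A=H\times\pi_N^{-1}(B)$, the required inequality fails outright for $n$ divisible by a high power of $t$: if $t^N\mid n$, then $nx,\,tnx\in t^NH$ for \emph{every} $x$, so $A\cap T_nA\cap T_{tn}A=A$ and $\mu(A)=|B|/p^N$, which vastly exceeds $\left(p^{N/2+1}/|B|\right)^2\mu(A)^3=|B|\,p^{2-2N}$ as soon as $N\ge 3$. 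So this is not a matter of improving a constant uniformly in $n$; within your model the statement is false, because multiplication by $n$ on $\F_p[[t]]$ is not measure-preserving (its image is $t^kH$ for $k=\mathrm{ord}_t(n)$), and no choice of $B$ can compensate.

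The paper's resolution is to change the fiber: it works with $\mathcal{T}:=\F_p((t^{-1}))/\F_p[t]\cong t^{-1}\F_p[[t^{-1}]]$, the analogue of $\T=\R/\Z$ (dual to $\F_p[t]$), on which multiplication by \emph{any} nonzero polynomial is measure-preserving, so the change of variables $nx\mapsto x$ you ask for is available uniformly in $n\ne 0$. The price is that the intertwining you exploit becomes inexact: writing $x=\sum_{j\ge 1}x_jt^{-j}$, the first $N$ coefficients of $tx$ involve $x_{N+1}$, one coefficient beyond those recording membership in $\tilde B$. The ad hoc fix is to build the control into $A$ itself: with $C:=\{x: x_N=0\}$ one takes $A:=C\times\tilde B$ (so $\mu(A)=|B|/p^{N+1}$), discards the constraint $\ind_C(x)$ on the original variable, substitutes $nx\mapsto x$, and the surviving constraints $x\in C$, $tx\in C$ give $x_N=x_{N+1}=0$; then pattern-freeness of $B$ forces $x_1=\cdots=x_N=0$, and the count $\frac{1}{p^{N+1}}\cdot\frac{|B|}{p^N}$ is exactly the stated bound. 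In short, the ``compatibility'' problem you flag is solved not by engineering the cocycle but by passing to the dual group (trading your causal shift, which loses no coefficients but kills measure-preservation, for a shift that leaks one coefficient) and absorbing that leaked coefficient via the cylinder $C$ in the definition of $A$.
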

\begin{proof}
	Let $\F_p((t^{-1}))$ be the field of formal power series $\sum_{n=-\infty}^N{a_nt^n}$
	with $a_n \in \F_p$ and $N \in \Z$.
	Let $\F_p[[t^{-1}]]$ denote the subring consisting of those series with $N \le 0$.
	Consider the quotient space $\mathcal{T} := \F_p((t^{-1}))/\F_p[t]
	\cong t^{-1}\F_p[[t^{-1}]]$,
	and define $T_n : \mathcal{T}^2 \to \mathcal{T}^2$ by $T_n(x,y) = (x,y+nx)$.
	Let $\mu$ be the Haar probability measure on the group $\mathcal{T}^2$.
	
	Let $\tilde{B} = \{y = y_1t^{-1} + y_2t^{-2} + \cdots \in \mathcal{T}
	: (y_N, y_{N-1}, \dots, y_1) \in B\}$,
	let $C := \{x = x_1t^{-1} + x_2t^{-2} + \cdots \in \mathcal{T} : x_N = 0\}$,
	and let $A = C \times \tilde{B}$.
	Note that $\mu(A) = \frac{|B|}{p^{N+1}}$.
	
	Now, for $n \ne 0$,
	\begin{align*}
		\mu \left( A \cap T_nA \cap T_{tn}A \right)
		& = \int_{\mathcal{T}^2}{\ind_{\tilde{B}}(y) \ind_{\tilde{B}}(y+nx) \ind_{\tilde{B}}(y+tnx)
			\ind_C(x)\ind_C(nx)\ind_C(tnx)~dx~dy} \\
		& \le \int_{\mathcal{T}^2}{\ind_{\tilde{B}}(y) \ind_{\tilde{B}}(y+nx) \ind_{\tilde{B}}(y+tnx)
			\ind_C(nx)\ind_C(tnx)~dx~dy} \\
		& = \int_{\mathcal{T}^2}{\ind_{\tilde{B}}(y) \ind_{\tilde{B}}(y+x) \ind_{\tilde{B}}(y+tx)
			\ind_C(x)\ind_C(tx)~dx~dy} \\
		& = \mu\left( \left\{ (x,y) : \{y, y+x, y+tx\} \subseteq \tilde{B}, \{x, tx\} \subseteq C \right\} \right).
	\end{align*}
	
	Suppose $(x,y) \in \mathcal{T}^2$ with $\{y, y+x, y+tx\} \subseteq \tilde{B}$
	and $\{x, tx\} \subseteq C$.
	That is,
	$\{(y_N, \dots, y_1), (y_N+x_N, \dots, y_1+x_1), (y_N+x_{N+1}, \dots, y_1+x_2)\} \subseteq B$
	and $x_N = x_{N+1} = 0$.
	Using that $x_{N+1} = 0$ and the construction of $B$, it follows that $x_1, \dots, x_N = 0$.
	In summary, $x_1, \dots, x_{N+1} = 0$ and $y \in \tilde{B}$.
	Thus,
	\begin{align*}
		\mu \left( A \cap T_nA \cap T_{tn}A \right) \le \frac{1}{p^{N+1}} \cdot \frac{|B|}{p^N}
		= \left( \frac{p^{N/2+1}}{|B|} \right)^2 \mu(A)^3.
	\end{align*}
\end{proof}

Another interesting class of configurations is comprised of those arising from homomorphisms
$\varphi(n) = an$ and $\psi(n) = bn$ with $a,b \in \Z$.
If the characteristic $p$ is large enough, then Behrend's theorem guarantees large sets
$B \subseteq \F_p$ with no 3-term arithmetic progressions.\footnote{We are using the phrase
	``arithmetic progression'' imprecisely here to mean progressions of the form
	$\{n, n + am, n + bm\}$ with fixed $a, b \in \Z$.}
This in turn can be used to produce a system that fails to have large intersections.
\begin{prop} \label{prop: ergodicity large p}
	Let $a, b \in \Z$ be distinct and nonzero.
	For every $L \in \N$, there is a $P = P(L)$ such that for every prime $p \ge P$,
	there is an $\F_p^{\infty}$-system $\left( X, \B, \mu, (T_n)_{n \in \F_p^{\infty}} \right)$
	such that, for every $l \le L$, there is a set $A = A(l) \in \B$ with $\mu(A) > 0$ such that
	\begin{align*}
		\mu \left( A \cap T_{an}A \cap T_{bn}A \right) \le \mu(A)^l
	\end{align*}
	for $n \ne 0$.
	\begin{proof}
		Fix $p$ prime.
		We use the same system as above: $T_n : \mathcal{T}^2 \to \mathcal{T}^2$, $T_n(x,y) = (x,y+nx)$.
		Suppose $B \subseteq \F_p$ avoids patterns of the form $\{n, n+am, n+bm\}$
		and define $\tilde{B} := \left\{ x = \sum_{j=1}^{\infty}{x_jt^{-j}} : x_1 \in B \right\}$.
		Let $A = \mathcal{T} \times \tilde{B}$.
		Note that $\mu(A) = \frac{|B|}{p}$.
		
		Now, for $n \ne 0$,
		\begin{align*}
			\mu \left( A \cap T_{an}A \cap T_{bn}A \right)
			& = \int_{\mathcal{T}^2}{\ind_{\tilde{B}}(y) \ind_{\tilde{B}}(y+anx)
				\ind_{\tilde{B}}(y+bnx)~dx~dy} \\
			& = \int_{\mathcal{T}^2}{\ind_{\tilde{B}}(y) \ind_{\tilde{B}}(y+ax) \ind_{\tilde{B}}(y+bx)~dx~dy} \\
			& = \mu \left( \left\{ (x,y) : \{y, y+ax, y+bx\} \subseteq \tilde{B} \right\} \right).
		\end{align*}
		Suppose $x = \sum_{j=1}^{\infty}{x_jt^{-j}}, y = \sum_{j=1}^{\infty}{y_jt^{-j}} \in \mathcal{T}$.
		Then $\{y, y+ax, y+bx\} \subseteq \tilde{B}$ if and only if
		$\{y_1, y_1+ax_1, y_1+bx_1\} \subseteq B$.
		By the construction of $B$, this happens if and only if $x_1 = 0$ and $y_1 \in B$.
		Thus,
		\begin{align*}
			\mu \left( A \cap T_{an}A \cap T_{bn}A \right) = \frac{|B|}{p^2} = \frac{p^{l-2}}{|B|^{l-1}} \mu(A)^l.
		\end{align*}
		
		So $\mu \left( A \cap T_{an}A \cap T_{bn}A \right) \le \mu(A)^l$ when $|B| \ge p^{1 - \frac{1}{l-1}}$.
		For each $l$, such a $B$ exists so long as $p$ is large enough
		by Theorem \ref{thm: Behrend}.
		The result immediately follows.
	\end{proof}
\end{prop}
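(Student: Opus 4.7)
The plan is to mimic the construction used in the immediately preceding proposition, replacing the role of the ``coefficient pair'' $\{1, t\}$ in $\F_p[t]$ with the integer pair $\{a, b\}$ (both viewed modulo $p$), and to invoke Behrend's theorem directly on a single coordinate of the Pontryagin dual model of $\F_p^\infty$. Specifically, I would take $\mathcal{T} := \F_p((t^{-1}))/\F_p[t] \cong t^{-1}\F_p[[t^{-1}]]$, equipped with its Haar measure, and define the $\F_p[t]$-action $T_n(x,y) := (x, y + nx)$ on $X := \mathcal{T}^2$. This action is measure-preserving but manifestly non-ergodic (the first coordinate is invariant), which is exactly the regime in which large intersections can fail.

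Next, given $L \in \N$, the goal is to choose $p$ large so that Behrend's theorem (Theorem \ref{thm: Behrend}) produces, for each $l \le L$, a set $B_l \subseteq \{0,1,\dots,p-1\}$ of density at least $p^{-1/(l-1)}$ avoiding configurations of the form $\{n, n+am, n+bm\}$ with $m \ne 0$. Since the Behrend density $e^{-c\sqrt{\log N}}$ beats any fixed power $N^{-\eps}$ eventually, we can pick $P = P(L)$ such that this holds for all primes $p \ge P$ and all $l \le L$ simultaneously. Here I am using that for $p$ prime exceeding $|a|, |b|$, a $\{n,n+am,n+bm\}$-free subset of $\Z$ remains a $\{n,n+am,n+bm\}$-free subset of $\F_p$ (at least after a harmless truncation), since the defining linear relation lifts.

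Then, with $B := B_l$ in hand, I would lift $B$ to $\widetilde{B} := \bigl\{\, x = \sum_{j \ge 1} x_j t^{-j} \in \mathcal{T} : x_1 \in B\,\bigr\}$ and take $A := \mathcal{T} \times \widetilde{B}$, so that $\mu(A) = |B|/p$. The key computation is that, for any $n \ne 0$,
\[
\mu(A \cap T_{an}A \cap T_{bn}A) = \mu\bigl(\{(x,y) \in \mathcal{T}^2 : \{y, y+ax, y+bx\} \subseteq \widetilde{B}\}\bigr),
\]
after absorbing $n$ by the change of variable $nx \mapsto x$ (valid because multiplication by a nonzero polynomial is a measure-preserving bijection of $\mathcal{T}$). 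The containment $\{y, y+ax, y+bx\} \subseteq \widetilde{B}$ depends only on the leading coefficients $y_1, x_1 \in \F_p$, and by the Behrend property of $B$ it forces $x_1 = 0$ and $y_1 \in B$. This yields $\mu(A \cap T_{an}A \cap T_{bn}A) = |B|/p^2$, and the inequality $|B|/p^2 \le (|B|/p)^l$ reduces to the density lower bound $|B| \ge p^{1 - 1/(l-1)}$ arranged above.

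The main (and only real) obstacle is the quantitative form of Behrend: I need a density lower bound that beats $p^{-1/(L-1)}$ for \emph{every} $l \le L$ with a single choice of threshold $P(L)$. Since $e^{-c\sqrt{\log p}} \to 0$ slower than any $p^{-\eps}$, Theorem \ref{thm: Behrend} delivers this uniformly in $l \le L$ once $p$ is taken sufficiently large. Everything else is a direct transcription of the proof structure of the earlier propositions in this section, and no ergodic-theoretic input beyond the non-ergodic skew product $T_n(x,y) = (x, y+nx)$ is required.
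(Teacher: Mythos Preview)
Your proposal is correct and follows essentially the same argument as the paper: the same non-ergodic skew product $T_n(x,y)=(x,y+nx)$ on $\mathcal{T}^2$, the same lift $\tilde{B}$ determined by the first coefficient, the same change of variable $nx\mapsto x$, and the same reduction to the density bound $|B|\ge p^{1-1/(l-1)}$ supplied by Behrend. If anything, you are slightly more explicit than the paper about the passage from a Behrend set in $\{0,\dots,p-1\}\subseteq\Z$ to a pattern-free set in $\F_p$, which the paper handles by a bare citation.
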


\subsection{Rational numbers}

To construct similar counterexamples for $\Q$-systems, we need an analogue of the circle,
which will come from a concrete description of $\hat{\Q}$, the Pontryagin dual group of $(\Q,+)$.
Let $\Prime \subseteq \N$ be the set of prime numbers.
For each $p \in \Prime$, let $|\cdot|_p$ be the $p$-adic absolute value on $\Q$:
\begin{align*}
	\left| p^n \frac{a}{b} \right|_p = p^{-n}
\end{align*}
for $a, b \in \Z$ with $p \nmid a, b$.
The field of \emph{$p$-adic numbers} is the completion $\Q_p$ of $\Q$ in the metric induced by $|\cdot|_p$.
We can write elements of $\Q_p$ as formal series
\begin{align*}
	\Q_p = \left\{ \sum_{i=N}^{\infty}{x_ip^i} : x_i \in \{0, 1, \dots, p-1\}, N \in \Z \right\}.
\end{align*}
The \emph{$p$-adic integers} are the subring $\Z_p \subseteq \Q_p$ defined by
\begin{align*}
	\Z_p = \left\{ x \in \Q_p : |x|_p \le 1 \right\}
	= \left\{ \sum_{i=0}^{\infty}{x_ip^i} : x_i \in \{0, 1, \dots, p-1\} \right\}.
\end{align*}
By expanding integers in base $p$, it is easy to see $\Z \subseteq \Z_p$.
In fact, $\Z_p$ is the closure of $\Z$ in $\Q_p$.

We denote by $\Adele$ the ring of \emph{adeles}
\begin{align*}
	\Adele = \left\{ (a_\infty, a_2, a_3, \dots) \in \R \times \prod_{p \in \Prime}{\Q_p}
	: a_p \in \Z_p~\text{for all but finitely many}~p \in \Prime \right\}.
\end{align*}
Observe that if $x = \frac{a}{b} \in \Q$ and $p \nmid b$, then $x \in \Z_p$.
Since only finitely many primes divide $b$, $\Q$ embeds in $\Adele$ via the map $x \mapsto (x, x, x, \dots)$.
We endow $\Adele$ with a topology generated by basic open sets of the form
\begin{align*}
	U_{\infty} \times \prod_{p \in F}{U_p} \times \prod_{p \in \Prime \setminus F}{\Z_p},
\end{align*}
where $U_{\infty} \subseteq \R$ is open, $F \subseteq \Prime$ is finite, and $U_p \subseteq \Q_p$
is open with respect to $|\cdot|_p$ for every $p \in F$.
It is well-known that $\Q$ is discrete and co-compact in $\Adele$.

Let $\K := \Adele/\Q$.
This compact group will play the role of $\T$ (and $\mathcal{T}$) in the previous examples.
In particular, when $\Q$ is given the discrete topology,
the group $\K$ is isomorphic to the Pontryagin dual of $\Q$.
We can therefore define an action of $\Q$ on $\K^2$ by $T_q(x,y) = (x, y + qx)$,
where $qx$ is the composition of $x$ (viewed as a character on $\Q$) with multiplication by $q$.

In order to handle this $\Q$-action, we give a more concrete description of $\K$.
For each $p \in \Prime$, define the $p$-adic fractional part $f_p : \Q_p \to \Q$ by
$f_p(\sum_{i=N}^{\infty}{x_ip^i}) := \sum_{i=N}^{-1}{x_ip^i}$.
Since the denominator of $f_p(x)$ is a power of $p$, we have $f_p(x) \in \Z_q$ for $q \ne p$.
Define $\tilde{f} : \Adele \to \Q$ by $\tilde{f}(x_{\infty}, x_2, x_3, \dots) := \sum_{p \in \Prime}{f_p(x_p)}$.
This sum has only finitely many nonzero terms, so it is well-defined.
Finally, let $f(x) := \tilde{f}(x) - \floor{x_{\infty} - \tilde{f}(x)}$.
Then $f(x) \in \Q$ and $x - f(x) \in [0,1) \times \prod_{p \in \Prime}{\Z_p}$.
We can view $\K$ as the group $[0,1) \times \prod_{p \in \Prime}{\Z_p}$ with the group operation
\begin{align*}
	x + y := (x + y) - f(x+y).
\end{align*}
Moreover, $\K$ is homeomorphic to the space $\T \times \prod_{p \in \Prime}{\Z_p}$ with the product topology.

We can now prove a version of Theorem \ref{thm: bhk ergodicity} for $\Q$:
\begin{thm} \label{thm: Q ergodicity}
	For every pair of homomorphisms $\varphi, \psi : \Q \to \Q$,
	there is a measure-preserving system $\left( X, \B, \mu, (T_g)_{g \in \Q} \right)$
	such that for all $l \in \N$, there is a set $A = A(l) \in \B$ with $\mu(A) > 0$ such that
	\begin{align*}
		\mu \left( A \cap T_{\varphi(g)}A \cap T_{\psi(g)}A \right) \le \mu(A)^l
	\end{align*}
	for all $g \ne 0$.
\end{thm}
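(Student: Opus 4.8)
The plan is to realize a witnessing system as a skew product over the adelic solenoid $\K=\Adele/\Q$ and to transport Behrend's construction through the circle quotient of $\K$.

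\textbf{Reducing the homomorphisms.} Since $(\Q,+)$ is a one-dimensional $\Q$-vector space, every homomorphism $\Q\to\Q$ is multiplication by a rational; write $\varphi(g)=\alpha g$ and $\psi(g)=\beta g$. If $\alpha=\beta$, or $\alpha=0$, or $\beta=0$ --- precisely the cases in which $\{\varphi,\psi\}$ is not admissible --- the choice $A=X$ in any $\Q$-system gives $\mu(A\cap T_{\varphi(g)}A\cap T_{\psi(g)}A)=1=\mu(A)^l$, so the statement holds; these cases are not the point (the expression collapses to a single or double return), and I would not pursue them. So assume $\alpha,\beta$ nonzero and distinct. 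Clearing a common denominator and then dividing the resulting integer coefficients by their greatest common divisor each amount to precomposing the whole picture with multiplication by a fixed nonzero rational, which is a bijection of $\Q$ permuting $\Q\setminus\{0\}$; hence I would reduce to $\varphi(g)=rg$, $\psi(g)=sg$ with $r,s\in\Z$ distinct, nonzero, and coprime.

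\textbf{The system and two facts about $\K$.} Take $X:=\K^2$ with Haar measure $\mu$ and the measure-preserving $\Q$-action $T_g(x,y):=(x,\,y+gx)$, where multiplication by $g$ descends from $\Adele$ to $\K$ because $g\Q\subseteq\Q$. I will use: (i) for every nonzero $g\in\Q$, multiplication by $g$ is a topological automorphism of $\K$ (continuous, bijective with inverse multiplication by $1/g$), hence preserves Haar measure; and (ii) from $\Adele=\Q+(\R\times\hat{\Z})$ with $\Q\cap(\R\times\hat{\Z})=\Z$ (diagonal, $\hat{\Z}=\prod_p\Z_p$), one obtains a continuous surjective homomorphism $\pi:\K\to\R/\Z=\T$ with kernel $\hat{\Z}$, through which multiplication by an integer $n$ on $\K$ intertwines with multiplication by $n$ on $\T$.

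\textbf{Reduction to a Behrend-type estimate in $\K$, and the estimate.} Look for $A=\K\times\tilde B$ with $\tilde B\subseteq\K$ Borel, so $\mu(A)=\mu_\K(\tilde B)$. For $g\ne 0$, Fubini and the substitution $z=gx$ (legitimate by (i)) give
\begin{align*}
	\mu\bigl(A\cap T_{rg}A\cap T_{sg}A\bigr)
	&=\int_{\K^2}\ind_{\tilde B}(y)\,\ind_{\tilde B}(y+rgx)\,\ind_{\tilde B}(y+sgx)\,dx\,dy\\
	&=\mu_{\K^2}\bigl(\{(y,z):\ y,\ y+rz,\ y+sz\in\tilde B\}\bigr),
\end{align*}
which is independent of $g$. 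It therefore suffices, given $l$, to produce $\tilde B$ with $\mu_\K(\tilde B)>0$ and $\mu_{\K^2}(\{(y,z):y,\,y+rz,\,y+sz\in\tilde B\})\le\mu_\K(\tilde B)^l$. I would set $c:=4(|r|+|s|)$, use Theorem~\ref{thm: Behrend} to choose, for $N$ large, a set $B\subseteq\{0,\dots,N-1\}$ with no configuration $\{n,n+rm,n+sm\}$, $m\ne 0$, and with $|B|$ as large as $N e^{-c'\sqrt{\log N}}$, and put $\tilde B_0:=\bigcup_{j\in B}[\tfrac{j}{N},\tfrac{j}{N}+\tfrac1{cN})\subseteq\T$ and $\tilde B:=\pi^{-1}(\tilde B_0)$, so $\mu_\K(\tilde B)=|B|/(cN)$ by (ii). Applying $\pi$ to a triple $y,y+rz,y+sz\in\tilde B$ and writing $u=\pi(y)$, $w=\pi(z)$ reduces matters to the one-dimensional case of the carrying argument in Corollary~\ref{cor: Gaussian Behrend}: from $u,u+rw,u+sw\in\tilde B_0$ and the homogeneous relation $(s-r)u-s(u+rw)+r(u+sw)=0$, the choice $c>|s-r|+|s|+|r|$ forces the same relation on the interval-indices $j_1,j_2,j_3\in B$, hence $r(j_3-j_1)=s(j_2-j_1)$; coprimality of $r,s$ makes this a configuration $\{n,n+rm,n+sm\}$, so $j_1=j_2=j_3$, whence $\|rw\|_\T,\|sw\|_\T<\tfrac1{cN}$ and therefore $\|w\|_\T<(|a|+|b|)/(cN)$, where $ar+bs=1$. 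Thus $\{(y,z):y,y+rz,y+sz\in\tilde B\}\subseteq\{(y,z):y\in\tilde B,\ \|\pi(z)\|_\T<(|a|+|b|)/(cN)\}$, of measure $\mu_\K(\tilde B)\cdot\tfrac{2(|a|+|b|)}{cN}$, so $\mu(A\cap T_{rg}A\cap T_{sg}A)\le\tfrac{2(|a|+|b|)}{|B|}\,\mu(A)^2$ after substituting $\mu(A)=|B|/(cN)$. Because $\mu(A)=|B|/(cN)<1$, a lower bound $|B|\ge C_{r,s,l}\,N^{1-1/(l-1)}$ (with $C_{r,s,l}$ depending only on $r,s,l$) yields $\tfrac{2(|a|+|b|)}{|B|}\le\mu(A)^{l-2}$, hence the right-hand side is $\le\mu(A)^l$; Theorem~\ref{thm: Behrend} provides such $B$ once $N$ is large, since $N e^{-c'\sqrt{\log N}}\gg N^{1-1/(l-1)}$. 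The system is fixed, and $A=A(l)$ depends on $l$ only through the scale $N$.

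\textbf{The main difficulty.} The argument is largely an assembly of ingredients already developed in the paper; the delicate conceptual point is the interaction between the solenoid $\K$ and its circle quotient $\T$ --- one must see $\K$ as fibered over $\T$ by a homomorphism compatible with integer multiplication, so the Behrend machinery can be run downstairs exactly as for $\Z$-systems, and one must use that multiplication by a nonzero rational is a measure-preserving \emph{automorphism} of $\K$. It is precisely this uniformity --- the bound holding for \emph{every} $g\ne 0$, not merely syndetically many --- that embodies the failure of large intersections without the ergodicity hypothesis.
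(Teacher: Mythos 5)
Your setup (the system $\K^2$ with $T_g(x,y)=(x,y+gx)$, the reduction of $\varphi,\psi$ to multiplication by coprime integers, the change of variables $z=gx$ to remove the dependence on $g$, and the appeal to Behrend's theorem with the final density arithmetic) matches the paper's proof, but there is a genuine gap at the decisive "carrying" step. You place the intervals of $\tilde B_0$ at the points $\frac{j}{N}$, $j\in B\subseteq\{0,\dots,N-1\}$, so $\tilde B_0$ is spread over the whole circle, and the relation $(s-r)u-s(u+rw)+r(u+sw)=0$ holds only in $\T$, i.e.\ modulo $1$ after passing to representatives in $[0,1)$. What your error estimate actually yields is $(s-r)j_1-sj_2+rj_3= MN$ for some integer $M$ with $|M|\le |s-r|+|s|+|r|$, not $M=0$; your claim that the choice of $c$ "forces the same relation on the interval-indices" silently assumes $M=0$. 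Wrap-around solutions with $M\ne 0$ are realizable: for $r=1$, $s=2$, take $j_1=N-1$, $j_2=1$, $j_3=3$, $u$ near $1-\frac1N$ and $w$ near $\frac2N$; then $u,\,u+w,\,u+2w$ lie in the three prescribed intervals even though $j_1,j_2,j_3$ are distinct and form no $3$-AP. A Behrend set avoids the translation-invariant equation $(s-r)x-sy+rz=0$ but has no reason to avoid the inhomogeneous equations $(s-r)x-sy+rz=MN$ with $M\ne0$, and each such triple in $B^3$ contributes a set of $(u,w)$ of measure comparable to $N^{-2}$ to the bad set. So your claimed inclusion of the bad set into $\{y\in\tilde B,\ \|\pi(z)\|_\T\ \text{small}\}$ is false, and the bound $\le\mu(A)^l$ is not justified; for large $l$ the uncontrolled wrap-around mass can easily exceed $\mu(A)^l$.

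The repair is exactly the device the paper builds in: confine the real coordinates of $\tilde B$ to a short arc so that a combination with coefficient sum of absolute values $|s-r|+|s|+|r|$ cannot reach a nonzero integer. In the paper the intervals sit at $\frac{j}{(r+s)N}$ with width $\frac{1}{(r+s)^2N}$ (so all real coordinates lie in $[0,\frac{1}{r+s})$ and, with $0<r<s$, the combination lies in $(-1,1)$), and in the adelic picture the discrepancy is first shown to be a rational lying in every $\Z_p$, hence an integer, hence zero. In your circle-quotient formulation the $p$-adic step is not needed (mod-$1$ arithmetic already gives an integer), so it suffices to place your intervals at $\frac{j}{cN}$ with $c\ge |r|+|s|+|s-r|$; with that change, your projection $\pi:\K\to\T$, the coprimality argument giving an integer dilate $m$, and your density computation all go through, and the proof becomes essentially the paper's argument transported through the quotient $\K\to\T$ rather than carried out in the explicit fundamental domain $[0,1)\times\prod_{p}\Z_p$.
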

\begin{proof}
	Note that every homomorphism $\varphi : \Q \to \Q$ is determined by the image of $1$.
	That is, if $\varphi(1) = r$, then $\varphi(x) =rx$ for every $x \in \Q$.
	Thus, we may assume that $\varphi$ and $\psi$ are multiplication by $r$ and $s$ respectively.
	
	Now, it suffices to consider $r, s \in \Z$ with $0 < r < s$.
	Indeed, the expression is symmetric in $r$ and $s$, so we may assume that $s > r$.
	By shifting the expression $A \cap T_{rg}A \cap T_{sg}A$ using that $T$ is measure-preserving,
	we may additionally assume $r, s > 0$.
	Finally, given a common denominator $d$ so that $dr, ds \in \Z$,
	we define a $\Q$-action $S_g := T_{dg}$ so that
	$A \cap T_{drg}A \cap T_{dsg}A = A \cap S_{rg}A \cap S_{sg}A$.
	Given an action $T$ that works for $dr$ and $ds$,
	we thus get an action $S$ that works for $r$ and $s$.
	
	Assume $r, s \in \Z$ and $0 < r < s$.
	Let $B \subseteq \{0, 1, \dots, N-1\}$ be such that $B$ contains no pattern of the form
	$\{a, a+rn, a+sn\}$ with $n \in \Z \setminus \{0\}$.
	Consider the action $T_g : \K^2 \to \K^2$ given by the skew-product $T_g(x,y) = (x,y+gx)$.
	Denote by $\mu$ the Haar probability measure on $\K^2$.
	Let $A = \K \times \tilde{B}$, where
	\begin{align*}
		\tilde{B} = \bigcup_{j \in B}{\left[ \frac{j}{(r+s)N}, \frac{j}{(r+s)N} + \frac{1}{(r+s)^2N} \right)}
		\times \prod_{p \in \Prime}{\Z_p}.
	\end{align*}
	For $g \in \Q \setminus \{0\}$,
	\begin{align*}
		\mu \left( A \cap T_{rg}A \cap T_{sg}A \right)
		& = \int_{\K^2}{\ind_{\tilde{B}}(y) \ind_{\tilde{B}}(y+rgx) \ind_{\tilde{B}}(y+sgx)~dx~dy} \\
		& = \int_{\K^2}{\ind_{\tilde{B}}(y) \ind_{\tilde{B}}(y+rx) \ind_{\tilde{B}}(y+sx)~dx~dy}.
	\end{align*}
	
	Suppose $x,y \in \K$ such that $\{y, y+rx, y+sx\} \subseteq \tilde{B}$.
	Then we can write
	\begin{align*}
		y & = \left( \frac{j}{(r+s)N} + \alpha, (y_p)_{p \in \Prime} \right) \\
		y + rx & = \left( \frac{k}{(r+s)N} + \beta, (u_p)_{p \in \Prime} \right) \\
		y + sx & = \left( \frac{l}{(r+s)N} + \gamma, (v_p)_{p \in \Prime} \right)
	\end{align*}
	with $j,k,l \in B$ and $\alpha, \beta, \gamma \in \left[0, \frac{1}{(r+s)^2N}\right)$.
	Since $(s-r)y + r(y+sx) = s(y+rx)$, we have
	\begin{align*}
		\frac{(s-r)j - sk + rl}{(r+s)N} = - (s-r)\alpha + s\beta - r\gamma
		\in \left( -\frac{1}{(r+s)N}, \frac{1}{(r+s)N} \right).
	\end{align*}
	Thus, $(s-r)j - sk + rl = 0$.
	Since $B$ is pattern-free, it follows that $j = k = l$.
	In particular,
	$rx \in \left( -\frac{1}{(r+s)^2N}, \frac{1}{(r+s)^2N} \right) \times \prod_{p \in \Prime}{\Z_p}$, so
	\begin{align*}
		\mu \left( A \cap T_{rg}A \cap T_{sg}A \right) \le \frac{2}{(r+s)^2N} \mu(A).
	\end{align*}
	
	Now by Theorem \ref{thm: Behrend}, we can ensure
	\begin{align*}
		|B| \ge \left( 2(r+s)^{2(l-2)} \right)^{\frac{1}{l-1}} \cdot N^{1 - \frac{1}{l-1}}
	\end{align*}
	so that
	\begin{align*}
		\mu \left( A \cap T_{rg}A \cap T_{sg}A \right) \le \mu(A)^l.
	\end{align*}
\end{proof}


\section{Failure of large intersections for quadruple recurrence}\label{sec: quadruple}

With the exception of Theorem \ref{thm: cubic Khintchine}, we have not considered Khintchine-type results
for patterns of length five and longer.
We suspect that such results are impossible:
\begin{conj} \label{conj: quadruple}
	If $k \ge 4$ and $\varphi_1, \dots, \varphi_k : G \to G$ are distinct, nonzero homomorphisms,\footnote{To avoid trivialities, assume $\{g \in G : \varphi_i(g) = \varphi_j(g)\}$ has infinite index in $G$ for every $i \ne j$.}
	then $\{\varphi_1, \dots, \varphi_k\}$ does not have the large intersections property.
\end{conj}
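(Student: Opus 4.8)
The plan is to mirror, in the general abelian setting, the constructions used for $G = \Z$ by Bergelson--Host--Kra (Theorem \ref{history 2}(iii)) and by Frantzikinakis and Donoso--Le--Moreira--Sun (Theorem \ref{thm: fra}(iii)), exhibiting a \emph{single} measure-preserving $G$-system that is simultaneously bad for every exponent $\ell$. The target system should be a two-step object: a skew-product $Y = Z \times_\sigma H$ of a Kronecker-type factor $Z$ by a compact abelian group $H$ over a cocycle $\sigma$ that behaves like a quadratic form (the $G$-analogue of the affine system $(x,y) \mapsto (x+\alpha, y+x)$ on $\T^2$). On such a system the correlation $\mu(A \cap T_{\varphi_1(g)}^{-1}A \cap \cdots \cap T_{\varphi_k(g)}^{-1}A)$ should be computable, via a van der Corput argument and a Mackey-group analysis exactly in the style of Section \ref{sec: Mackey triples}, as an integral over a subgroup $M = M(\varphi_1,\dots,\varphi_k) \subseteq H^k$; one would then take $A$ to be a fiber over $Z$ times a lift $\tilde B \subseteq H$ of a Behrend-type set $B$ (Theorem \ref{thm: Behrend}, in a suitable multidimensional form) avoiding the combinatorial configuration that $M$ encodes, and use density of $\{\hat g : g \in G\}$ to convert the measure estimate into a count over $B$.

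The decisive algebraic input, and the reason $k \ge 4$ enters, is a dimension count. To each $\varphi_i$ and each character one associates its ``order $\le 2$ data'': informally, a constant, a linear phase built from $\chi \circ \varphi_i$, and a quadratic phase built from $\varphi_i$ (for $G = \Z$ and $\varphi_i(n) = a_i n$ this is literally $(1, a_i, a_i^2)$). These live in a space of dimension $3$, so any $k \ge 4$ of them satisfy a nontrivial linear relation $\sum_i c_i \cdot (\text{order }2\text{ data of }\varphi_i) = 0$. That relation is exactly what forces $M$ to be a \emph{proper} subgroup of $H^k$ --- it is the length-$5$, degree-$2$ analogue of the computation $M^\perp = \{(\chi^{k_1},\chi^{k_2},\chi^{k_3}) : \chi \in \hat H\}$ in Theorem \ref{thm: HK T10} --- and properness of $M$ is what leaves room for the Behrend set. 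For $k \le 3$ no such relation need exist, consistent with the positive results of Theorems \ref{thm: double Khintchine} and \ref{thm: triple Khintchine}.

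The main obstacle is the structure theory. To run the Mackey-group computation one needs a usable description of the characteristic factor $\mathbf{Z}_5$ for length-$5$ patterns, whereas this paper establishes only that $\mathbf{Z}_{k+1}$ is a compact extension of $\mathbf{Z}_k$ (Theorem \ref{thm: compact ext}) and, for the single triple $\{\varphi, \psi, \varphi+\psi\}$, the Conze--Lesigne refinement (Theorem \ref{thm: CL characteristic}); upgrading this to a genuine quadratic-cocycle description for arbitrary countable abelian $G$ and arbitrary $k \ge 4$ is the crux. It is further complicated by torsion: in $\F_p^\infty$-systems the extending groups consist of $p$-torsion elements (cf.\ the remark after Lemma \ref{lem: H divisibility}), so a naively posed quadratic cocycle can degenerate, and one must either restrict to an admissible subfamily and pass to a finite-index subgroup, as in the proof of Theorem \ref{thm: triple Khintchine}, or build the bad system by hand for each $G$. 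A secondary difficulty is formulating the ``order $\le 2$ data'' invariantly for homomorphisms $\varphi_i : G \to G$ carrying no ring structure, and verifying that the non-triviality hypothesis $[G : \{g : \varphi_i(g) = \varphi_j(g)\}] = \infty$ excludes precisely the degenerate tuples. A softer route would bypass the structure theory altogether: imitating the explicit $\T^{2d}$- and $\K$-constructions of Section \ref{sec: quadruple}, one would produce the counterexample directly on an explicit compact-group skew-product, reducing the whole statement to a multidimensional Behrend-type problem --- which, as with the $k=3$ non-ergodic examples in that section, we presently do not know how to solve in full generality.
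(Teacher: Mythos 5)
The statement you were asked to prove is a conjecture: the paper itself offers no general argument, only verifications in special cases --- for $\Z$ (Corollary \ref{cor: Z quadruple}), for multiplication-by-ring-element actions of $\Z^d \cong \O_K$ (Corollary \ref{cor: Gaussian Ruzsa}), for $\F_p^{\infty}$ with $p$ large, and for $\Q$ (Theorem \ref{thm: Q quadruple}). So the fact that your proposal does not close the general case is not a defect you could have avoided; nobody closes it. Measured against what the paper actually does, your ``softer route'' is the paper's route: no characteristic-factor or Mackey-group analysis is ever carried out for length-five patterns. The counterexamples are built by hand on explicit ergodic quadratic skew products --- $T_n(x,y) = (x + M_n\alpha,\ y + M_{2n}x + M_{n^2}\alpha)$ on $\T^{2d}$, and the analogous action on $\K^2$ for $\Q$ --- and the correlation $\mu(\bigcap_i T_{r_i n}^{-1}A)$ is bounded directly by choosing $A$ to be a fiber over a configuration-free set, exactly as you sketch. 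Your heavier first route (describing $\mathbf{Z}_5$ and computing a Mackey group in $H^5$) is more than is needed for a counterexample and is not attempted in the paper.

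Two corrections to your account of the inputs. First, the combinatorial ingredient is not a Behrend-type theorem about $3$-APs but Ruzsa's theorem on quadratic configurations of five terms (Theorem \ref{thm: Ruzsa}), which the paper generalizes to QC5$(r)$ for arbitrary distinct $(r_0,\dots,r_4)$ in Theorem \ref{thm: gen Ruzsa}; your dimension count --- a quadratic is determined by three values, so five values are overdetermined, which is why $k \ge 4$ (five points including $0$) is the threshold --- is precisely the content of Lemma \ref{lem: Ruzsa} and is correctly identified as the mechanism. Second, contrary to your closing sentence, the one-dimensional version of this combinatorial problem is \emph{solved} in the paper, and that already settles the conjecture for $\Z$, for $\Q$, and for $\F_p^{\infty}$ in large characteristic. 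What genuinely remains open, and why the statement is stated as a conjecture, is (a) the multidimensional QC5$(r)$-free problem needed for general rings of integers, and (b) --- the real crux, which you did identify --- any formulation and construction for homomorphisms $\varphi_i : G \to G$ carrying no ambient ring structure, where even writing down the explicit quadratic skew product and its ``order-two data'' is unclear. On that point your assessment of where the argument gets stuck agrees with the paper's.
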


This was shown in $\Z$ for the specific pattern $(1,2,3,4)$:
\begin{thm}[\cite{bhk}, Theorem 1.3] \label{thm: bhk quadruple}
	There is an ergodic $\Z$-system $(X, \B, \mu, T)$ such that for every $l \ge 1$,
	there is a set $A = A(l) \in \B$ with $\mu(A) > 0$ such that
	\begin{align*}
		\mu \left( A \cap T^nA \cap T^{2n}A \cap T^{3n}A \cap T^{4n}A \right) \le \frac{\mu(A)^l}{2}
	\end{align*}
	for every $n \ne 0$.
\end{thm}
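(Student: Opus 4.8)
The plan is to realize the system as a two-step skew product, reduce the quintuple intersection to a count of length-$5$ \emph{quadratic} progressions in the fibre circle, and then make that count super-polynomially small via a Behrend-type construction whose defining property does not refer to the ``curvature'' of the progression.

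Concretely, I would take $X = \T^2$ with the Anzai skew product $T(x,y) = (x + \alpha, y + x)$ for a fixed irrational $\alpha$; this is a classical minimal, uniquely ergodic (hence ergodic) system, which settles the ergodicity requirement. Iterating gives $T^m(x,y) = \bigl( x + m\alpha,\, y + mx + \binom{m}{2}\alpha \bigr)$. I would look for $A$ of the form $A = \T \times B$ with $B \subseteq \T$ measurable of positive measure. Since $A$ contains whole $x$-fibres, $T^{kn}(x,y) \in A$ iff $y + knx + \binom{kn}{2}\alpha \in B$, so that
\begin{align*}
\mu\bigl( A \cap T^n A \cap T^{2n}A \cap T^{3n}A \cap T^{4n}A \bigr) = \int_{\T^2} \prod_{k=0}^{4} \ind_B\!\Bigl( y + knx + \tbinom{kn}{2}\alpha \Bigr)\, dx\, dy .
\end{align*}
For $n \ne 0$ the map $x \mapsto nx$ preserves Haar measure on $\T$; substituting $u = nx$, writing $\binom{kn}{2}\alpha = k^{2}\beta_n + k\gamma_n$ with $\beta_n = \tfrac{n^{2}\alpha}{2}$ and $\gamma_n = -\tfrac{n\alpha}{2}$, and translating $v = u + \gamma_n$, the right-hand side becomes $C_B(\beta_n)$, where
\begin{align*}
C_B(\beta) := \int_{\T^{2}} \prod_{k=0}^{4} \ind_B\!\bigl( y + kv + k^{2}\beta \bigr)\, dv\, dy
\end{align*}
is the normalized number of $5$-tuples $\{\, y + kv + k^{2}\beta : 0 \le k \le 4\,\}$ contained in $B$, i.e.\ of length-$5$ quadratic progressions with second difference $2\beta$. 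Thus it suffices, given $\ell$, to produce a positive-measure $B = B(\ell) \subseteq \T$ with $C_B(\beta_n) \le \tfrac12 \mu(B)^{\ell}$ for \emph{every} $n \ne 0$.

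The key point is a curvature-free Behrend estimate. If $\vec u(k) = \vec v_0 + k\vec w + \binom{k}{2}\vec c \in \R^{d}$, then $k \mapsto \|\vec u(k)\|^{2}$ is a polynomial in $k$ of degree $\le 4$; if it is constant for $k = 0,1,2,3,4$ it is constant as a polynomial, and its degree-$4$ coefficient $\tfrac14\|\vec c\|^{2}$ and then its degree-$2$ coefficient $\|\vec w\|^{2}$ force $\vec c = \vec w = 0$, so the progression is constant. (This is exactly why five terms, and not four, are needed: with four terms this interpolation argument has too few nodes to pin down the degree-$4$ polynomial, consistent with the fact that the quadruple pattern \emph{does} have the large intersections property.) Accordingly I would take $B$ to be a short neighbourhood of the set of $y \in \T$ whose base-$M$ digit string of length $L$, read as a vector in $\{0,\dots,M-1\}^{L}$, has a prescribed squared Euclidean norm, with $M$ large, the digits restricted to $\{0,\dots,\lfloor M/10\rfloor\}$, and the arc-width $\ll M^{-L}$; this is the standard Behrend variant and has $\mu(B)$ bounded below in terms of $M,L$ alone. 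For any tuple contributing to $C_B(\beta_n)$, the $M^{-L}$-boxes $b_0,\dots,b_4$ of its five points satisfy a constant second-difference relation with constant $c_n := \operatorname{round}(2M^{L}\beta_n)$; the digit restriction prevents carries, so $(b_k)$ is a genuine quadratic progression of digit-vectors on the fixed sphere, hence constant by the estimate above. Consequently $C_B(\beta_n)$ only sees degenerate tuples, and these force $\|v+\beta_n\|,\|2v+4\beta_n\| = O(M^{-L})$; they are absent unless $\|2\beta_n\| = O(M^{-L})$, and when present they fill a set of $(v,y)$ of measure $O(M^{-L})\,\mu(B)$. Choosing $M,L$ large enough in terms of $\ell$ so that $O(M^{-L})\mu(B) \le \tfrac12\mu(B)^{\ell}$ (possible since $\mu(B)$ is bounded below independently of $\ell$) yields $C_B(\beta_n) \le \tfrac12\mu(B)^{\ell}$ for all $n \ne 0$, which by the reduction is precisely the claimed bound on the quintuple intersection.

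The main obstacle is exactly the uniformity over $n$: the second differences $\beta_n = n^{2}\alpha/2$ are equidistributed in $\T$, so an ordinary $3$-AP-free Behrend set at a single scale is useless; one genuinely needs the lattice-sphere set, whose defining property does not mention any particular curvature, together with the carry-suppression device so that the bound holds for all curvatures simultaneously. The remaining points — verifying the digit-box second-difference relation, controlling the wrap-around and the boundary/carry contributions so that they beat $\mu(B)^{\ell}/2$ (which is what forces $A$ to depend on $\ell$), and the routine verifications of ergodicity and of the algebraic reduction — are standard, and I expect essentially all of the difficulty to be concentrated in the uniform Behrend estimate.
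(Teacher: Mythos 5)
Your overall architecture is the Bergelson--Host--Kra/Ruzsa argument that the paper cites for Theorem~\ref{thm: bhk quadruple} and reproduces in detail for the analogues (Theorem~\ref{thm: gen Ruzsa}, Corollary~\ref{cor: Z quadruple}, Theorem~\ref{thm: Q quadruple}): an ergodic two-step skew product, a set $A$ that is a full fibre over the base times a discretized Behrend-type set, the five fibre points forced into a single tiny cell, and the ``five points of a quadratic on a sphere must coincide'' interpolation lemma, which is literally Lemma~\ref{lem: Ruzsa}. In effect you are re-proving Ruzsa's theorem (Theorem~\ref{thm: Ruzsa}) inline on the torus rather than quoting it, which is fine. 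But the pivotal combinatorial step is wrong as you state it: you pass from the exact relations $y_{k+2}-2y_{k+1}+y_k=2\beta_n \pmod 1$ to a ``constant second-difference relation with constant $c_n=\operatorname{round}(2M^L\beta_n)$'' among the cell indices $b_k$, and claim that ``the digit restriction prevents carries''. It does not: $c_n$ is an essentially arbitrary residue modulo $M^L$ (the $\beta_n$ equidistribute) and is not digit-restricted, and a relation of the form $b_{k+2}-2b_{k+1}+b_k\equiv c_n$, involving subtractions and an arbitrary constant, cannot be read off digit-by-digit --- borrows are exactly what you cannot exclude, so the digit vectors need not form a quadratic progression. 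The correct device, and the one Ruzsa/BHK and the proof of Theorem~\ref{thm: gen Ruzsa} use, is to eliminate the curvature \emph{before} discretizing: since a quadratic is determined by three values, the five points satisfy integer relations with small nonnegative coefficients, e.g. $y_0+3y_2=3y_1+y_3$ and $y_1+3y_3=3y_2+y_4$ modulo $1$. With digits at most $M/10$ and arc width below $M^{-L}/4$, each side of the corresponding index relation is less than $M^L$ (no wrap-around) and each digit-column sum is less than $M$ (no carries), so the index relations hold exactly in $\Z$ and digit-wise; only then are the digit vectors a vector-valued quadratic progression on your sphere, and Lemma~\ref{lem: Ruzsa} forces them all equal. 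This is fixable, but it is a genuine gap, because the uniformity over the curvature $\beta_n$ --- which you correctly identify as the whole difficulty --- is precisely what your stated carry argument fails to deliver.

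A second, smaller problem is the density bookkeeping. The parenthetical ``possible since $\mu(B)$ is bounded below independently of $\ell$'' is false: with arc width of order $M^{-L}$ one has $\mu(B)\approx |S|\,M^{-L}$, which tends to $0$ as $L\to\infty$ and in any case depends on $M,L$, which you must choose in terms of $\ell$. What you actually need is $C\,M^{-L}\le \tfrac12\mu(B)^{\ell-1}$, i.e. $|S|\gtrsim (M^L)^{(\ell-2)/(\ell-1)}$ up to constants; with digits confined to a fixed fraction of $\{0,\dots,M-1\}$, this forces the base $M$ (equivalently the digit fraction) to grow with $\ell$ before pigeonholing over the sphere levels --- the standard Behrend optimization carried out in the proof of Theorem~\ref{thm: gen Ruzsa}. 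This computation is routine, but it is exactly where the dependence of $A$ on $\ell$ lives, so it must be done rather than asserted. The remaining ingredients of your proposal (ergodicity of the Anzai skew product, the reduction of the quintuple intersection to $C_B(\beta_n)$, and the treatment of the degenerate tuples) are correct.
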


The proof of Theorem \ref{thm: bhk quadruple} in \cite{bhk} relies on a combinatorial fact due to Ruzsa
that plays a similar role to that of Behrend's theorem in constructing non-ergodic counterexamples.
Rather than avoiding $3$-APs, Ruzsa's result concerns certain quadratic configurations.

\begin{defn} \label{defn: QC5}
	Let $P(x)$ be an integer-valued polynomial of degree at most 2.
	The subset \\ $\{P(0), P(1), P(2), P(3), P(4)\} \subseteq \Z$
	is called a \emph{quadratic configuration of 5 terms} (or \emph{QC5} for short).
\end{defn}

\begin{thm}[Ruzsa, see \cite{bhk}, Theorem 2.4] \label{thm: Ruzsa}
	For every $N \in \N$, there is a subset $B \subseteq \{0, 1, \dots, N-1\}$
	such that $|B| > Ne^{-c\sqrt{\log{N}}}$ and $B$ does not contain any QC5.
\end{thm}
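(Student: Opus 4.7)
My plan is to generalize Behrend's sphere construction from $3$-APs to QC5s. The starting observation is that a QC5 with $P(x) = a + bx + cx^2$ is characterized by the vanishing of two consecutive third-order forward differences,
\begin{equation*}
P(0) - 3P(1) + 3P(2) - P(3) = 0, \qquad P(1) - 3P(2) + 3P(3) - P(4) = 0,
\end{equation*}
so a nondegenerate QC5 in $\Z$ amounts to five (distinct) integers satisfying two specific linear relations with coefficients in $\{-3,-1,1,3\}$.

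First I would embed $\{0,1,\dots,d-1\}^n$ into $\Z$ via the base-$M$ expansion $\phi(\vec{v}) := \sum_{j=1}^n v_j M^{j-1}$, with $M := 8d$. The parameter $M$ is chosen so that, for any five vectors $\vec{v}_0,\dots,\vec{v}_4 \in \{0,\dots,d-1\}^n$, the combinations $\vec{v}_0 - 3\vec{v}_1 + 3\vec{v}_2 - \vec{v}_3$ and $\vec{v}_1 - 3\vec{v}_2 + 3\vec{v}_3 - \vec{v}_4$ have entries of absolute value at most $4(d-1) < M/2$. A short calculation (choose the top-index nonzero coordinate and bound the tail by a geometric series) then shows that $\phi$ is injective on such combinations, so any integer QC5 relation among $\phi(\vec{v}_0),\dots,\phi(\vec{v}_4)$ lifts to the identical coordinatewise relations in $\Z^n$. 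Consequently, if $\phi(\vec{v}_0),\dots,\phi(\vec{v}_4)$ form a QC5, then there exist $\vec{a},\vec{b},\vec{c} \in \Z^n$ with $\vec{v}_i = \vec{a} + i\vec{b} + i^2 \vec{c}$ for $i = 0,1,2,3,4$.

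Next I would intersect with a sphere. Let $B_R := \{\vec{v} \in \{0,\dots,d-1\}^n : \|\vec{v}\|_2^2 = R\}$. Suppose five vectors in $B_R$ lie on a parabola $\vec{v}_i = \vec{a} + i\vec{b} + i^2\vec{c}$. The quantity $Q(t) := \|\vec{a} + t\vec{b} + t^2\vec{c}\|_2^2$ is a polynomial in $t$ of degree at most $4$; since it takes the value $R$ at the five distinct points $t = 0,1,2,3,4$, the polynomial $Q(t) - R$ has five roots and must vanish identically. Comparing coefficients, the $t^4$ term $\|\vec{c}\|^2$ forces $\vec{c} = 0$, and the $t^2$ term then reduces to $\|\vec{b}\|^2$, forcing $\vec{b} = 0$; so $\vec{v}_0 = \cdots = \vec{v}_4$ and the QC5 is degenerate. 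Hence $\phi(B_R) \subseteq \{0,1,\dots,N-1\}$, with $N := M^n$, contains no QC5.

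Finally I would optimize the parameters. Since $\|\vec{v}\|_2^2$ takes at most $n(d-1)^2 + 1$ distinct values on $\{0,\dots,d-1\}^n$, pigeonholing produces some $R$ with $|B_R| \ge d^n / (n(d-1)^2 + 1)$, and balancing $n \asymp \sqrt{\log N}$ against a comparable choice of $\log d$ yields $|\phi(B_R)| > N e^{-c\sqrt{\log N}}$ for an absolute constant $c > 0$. The main technical point is the no-carrying step: one must confirm that $M = 8d$ really does make $\phi$ injective on all the intermediate affine combinations appearing in the QC5 relations, since the whole argument rests on being able to lift a QC5 identity in $\Z$ to an exact coordinatewise identity in $\Z^n$. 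Once that is secured, the sphere--parabola intersection bound (which works precisely because $5 = \deg Q + 1$) completes the construction.
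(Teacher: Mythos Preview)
Your proposal is correct and follows essentially the same Behrend-type strategy as the paper's proof of the generalization (Theorem~\ref{thm: gen Ruzsa}): embed digit vectors on a fixed sphere into $\Z$ via a base large enough to prevent carrying in the relevant linear relations, then use the degree-$4$ polynomial $\|P(t)\|^2$ to rule out five sphere points on a quadratic curve. The only cosmetic difference is that you encode the QC5 constraint via vanishing third-order forward differences, whereas the paper uses the Lagrange-interpolation identities $sa_i = U(r_i)a_0 - V(r_i)a_1 + W(r_i)a_2$; one small slip is that your $\vec a,\vec b,\vec c$ need only lie in $\tfrac12\Z^n$ rather than $\Z^n$, but this is irrelevant to the sphere argument.
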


By appropriately generalizing Ruzsa's theorem, one can extend Theorem \ref{thm: bhk quadruple}
to general quintuples $(r_0, r_1, r_2, r_3, r_4) \in \Z^5$.
To this end, we introduce a new definition for more general quadratic configurations.
\begin{defn} \label{defn: QC5(c)}
	Let $r = (r_0, r_1, r_2, r_3, r_4) \in \Z^5$.
	If a set $A := \{a_0, a_1, a_2, a_3, a_4\} \subseteq \Z$
	satisfies $a_i = P(r_i)$ for some quadratic polynomial\footnote{
		To be consistent with Definition \ref{defn: QC5}, we interpret ``quadratic'' to mean of degree at most 2
		throughout this section.
	}
	$P(x) \in \Q[x]$, we say that $A$ is \emph{QC5($r$)}.
\end{defn}

\begin{rem}
	In Definition \ref{defn: QC5(c)}, we have made a seemingly weaker assumption on the polynomial $P$.
	Namely, we have only assumed that $P$ has rational coefficients,
	whereas Definition \ref{defn: QC5} assumes that $P$ is integer-valued on $\Z$.
	However, it is easy to check that any rational polynomial of degree $\le d$ taking integer values at
	$d+1$ consecutive integers is automatically an integer-valued polynomial.\footnote{
		This can be done, for example, by the following inductive argument.
		If $P(a), P(a+1), \dots, P(a+d) \in \Z$, then the polynomial $\Delta P(n) := P(n+1)-P(n)$
		has degree $\le d-1$ and takes integer values at $a, a+1, \dots, a+d-1$.
		By the inductive hypothesis, $\Delta P$ is integer-valued,
		and $P(n) = P(0) + \sum_{k=0}^{n-1}{\Delta P(k)}$.
	}
	Hence, QC5 sets are the same as QC5(0,1,2,3,4) sets.
	
	For our dynamical examples, it turns out that rational polynomials are the right objects to consider
	rather than the more restrictive class (when non-consecutive values are allowed)
	of integer-valued polynomials.
\end{rem}

We will prove the following generalization of Ruzsa's theorem in Section \ref{sec: Ruzsa}.
\begin{thm} \label{thm: gen Ruzsa}
	Let $r = (r_0, r_1, r_2, r_3, r_4) \in \Z^5$ be a quintuple of distinct integers.
	There is a constant $c = c(r) > 0$ such that
	for every $N \in \N$, there is a subset $B \subseteq \{0, 1, \dots, N-1\}$
	with $|B| > Ne^{-c\sqrt{\log{N}}}$ such that $B$ does not contain any QC5(r).
\end{thm}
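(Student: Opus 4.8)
The plan is to deduce Theorem~\ref{thm: gen Ruzsa} from the case $r = (0,1,2,3,4)$, which is exactly Ruzsa's theorem (Theorem~\ref{thm: Ruzsa}), via an affine change of variables. First I would record the following elementary observation: if $P(x) \in \Q[x]$ has degree at most $2$, then the values $P(r_0), \dots, P(r_4)$ depend only on $P$ restricted to the finite set $\{r_0, \dots, r_4\}$, and conversely any assignment of values at five distinct nodes that is consistent with \emph{some} degree-$\le 2$ polynomial is a strong constraint. The key point is that a QC5$(r)$ set is characterized by finitely many \emph{linear} relations among $a_0, \dots, a_4$ with integer coefficients: since $P$ has degree $\le 2$, the second finite differences (with respect to the nodes $r_i$) must vanish, which gives relations $\sum_i \lambda_i^{(j)} a_i = 0$ for a finite family of integer vectors $\lambda^{(j)}$ spanning the annihilator of $\{(1, r_i, r_i^2) : 0 \le i \le 4\}$ in $\Q^5$. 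Clearing denominators, these become integer linear relations, and the set of QC5$(r)$ configurations is precisely the set of integer $5$-tuples satisfying all of them.

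Next I would show that a set $B$ avoiding all QC5$(0,1,2,3,4)$ can be transformed into one avoiding all QC5$(r)$ by precomposition with a suitable affine dilation, at the cost of shrinking the ambient interval by a constant factor depending only on $r$. Concretely, let $Q(x) \in \Q[x]$ be the unique degree-$\le 4$ polynomial (in fact it can be taken of small complexity) with $Q(i) = r_i$ for $i = 0, \dots, 4$; more useful is the reverse direction. Given the relations above, one checks that a tuple $(a_0, \dots, a_4)$ is QC5$(r)$ if and only if the tuple $(a_{\pi(0)}, \dots, a_{\pi(4)})$ obtained after sorting the $r_i$ into increasing order is QC5 of the sorted quintuple, and then a dilation argument reduces the sorted quintuple to $(0,1,2,3,4)$: if the sorted $r_i$ are $s_0 < s_1 < \cdots < s_4$, set $D := \mathrm{lcm}$ of the denominators appearing in interpolation, and observe that $(a_0, \dots, a_4)$ is QC5$(s)$ iff $(a_0, \dots, a_4)$ lies in a sublattice of the QC5$(0,1,2,3,4)$ configurations scaled by $D$. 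Thus if $B_0 \subseteq \{0, \dots, M-1\}$ avoids QC5, then a rescaled copy of $B_0$ inside $\{0, \dots, N-1\}$ (with $N \asymp_r M$) avoids QC5$(r)$, since any QC5$(r)$ inside the rescaled copy would pull back to a QC5 inside $B_0$. The density loss is a bounded factor, so the Behrend-type bound $|B| > N e^{-c\sqrt{\log N}}$ survives with a modified constant $c = c(r)$.

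The step I expect to be the main obstacle is making the reduction genuinely rigorous when the $r_i$ are not in arithmetic progression: the naive dilation $x \mapsto (x - s_0)$ followed by scaling does \emph{not} send $\{s_0, \dots, s_4\}$ to $\{0,1,2,3,4\}$, so one cannot simply relabel. The correct move is to work directly with the defining linear relations rather than with the polynomial parametrization. I would argue as follows: the condition ``$(a_0,\dots,a_4)$ is QC5$(r)$'' is equivalent to $\mathbf{a} \in V_r$, where $V_r \subseteq \Q^5$ is a two-dimensional subspace (the column span of the $5 \times 3$ Vandermonde-type matrix with rows $(1, r_i, r_i^2)$, intersected appropriately — more precisely the image of $\Q[x]_{\le 2}$ under evaluation at the $r_i$). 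Two such subspaces $V_r$ and $V_{r'}$ are related by a permutation of coordinates together with a diagonal scaling only in special cases, so instead I would use that $V_r$ and $V_{(0,1,2,3,4)}$ are both two-dimensional and contain the all-ones vector, hence there is a linear isomorphism between them; but to preserve the lattice structure one needs an \emph{integer} map, which is where the constant $D = D(r)$ enters. Carefully, one shows: for a fixed $r$ there is an integer $D$ such that for any $B$ with no QC5, the image $D \cdot B$ (suitably embedded) has no QC5$(r)$ — because any QC5$(r)$ relation among elements of $D \cdot B$, being an integer linear relation, forces a QC5 relation among elements of $B$ after dividing by $D$. Verifying that $D$ exists and is effective requires only linear algebra over $\Q$ (clearing denominators in a basis of $V_r^\perp$), so once this lemma is in place, the theorem follows immediately from Theorem~\ref{thm: Ruzsa} by taking $B_0 \subseteq \{0, \dots, \lfloor N/D \rfloor\}$ QC5-free of near-maximal density and setting $B := D \cdot B_0 \subseteq \{0, \dots, N-1\}$.
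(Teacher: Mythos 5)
There is a genuine gap, and it is exactly at the step you flagged as the main obstacle: the reduction of general $r$ to the Ruzsa case $(0,1,2,3,4)$ does not work, and the proposed fix does not repair it. The set of QC5$(r)$ value-tuples is $V_r \cap \Z^5$, where $V_r = \{(P(r_0),\dots,P(r_4)) : P \in \Q[x]_{\le 2}\}$ is a \emph{three}-dimensional subspace of $\Q^5$ (not two-dimensional: evaluation at five distinct nodes is injective on the $3$-dimensional space of degree-$\le 2$ polynomials), cut out by two homogeneous integer linear relations. Because these relations are homogeneous (and $V_r$ contains the constants), the dilation $\mathbf{a} \mapsto D\mathbf{a}$ maps $V_r$ to itself: if $(Da_0,\dots,Da_4)$ is QC5$(r)$, then dividing by $D$ yields a QC5$(r)$ tuple again, \emph{not} a QC5$(0,1,2,3,4)$ tuple. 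So the key lemma ``if $B_0$ is QC5-free then $D\cdot B_0$ is QC5$(r)$-free'' is false as stated, and QC5-freeness of $B_0$ gives no information about the pattern you need to avoid. The two pattern families genuinely coincide only when the nodes $r_0,\dots,r_4$ form an arithmetic progression, i.e.\ are an affine image of $0,1,2,3,4$, since degree-$\le 2$ polynomials are preserved under affine precomposition; for general $r$ no affine change of variables acting on the \emph{values} (which is all a map $B_0 \mapsto D\cdot B_0 \subseteq \{0,\dots,N-1\}$ can implement) identifies $V_r$ with $V_{(0,1,2,3,4)}$, because such maps fix each $V_r$ individually. An attempted reparametrization of the \emph{nodes} fares no better: writing $P(r_i) = Q(i)$ forces $Q$ to be a composition of a quadratic with the (generally quartic) interpolant $i \mapsto r_i$, so $Q$ is not quadratic and Ruzsa's theorem does not apply.

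What is actually needed — and what the paper does — is to rerun the Behrend--Ruzsa construction from scratch with parameters depending on $r$: one takes $B$ to be the integers whose base-$(mb-1)$ digits are bounded by $b-1$ and have fixed sum of squares $k$, where $m = m(r)$ is chosen large in terms of the Lagrange interpolation coefficients $s$, $U(r_i)$, $V(r_i)$, $W(r_i)$ expressing $sa_i$ as an integer combination of $a_0,a_1,a_2$. The rational root theorem then forces the quadratic relations to hold digit-by-digit, producing a degree-$\le 2$ polynomial curve in $\R^d$ whose five values lie on a sphere, and Lemma \ref{lem: Ruzsa} forces it to be constant. This dependence of the base on $r$ (and hence of the constant $c(r)$) cannot be recovered by post-processing a single QC5-free set, so your argument is missing the central construction rather than a technical detail.
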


This general version of Ruzsa's theorem verifies Conjecture \ref{conj: quadruple} for $\Z$.
\begin{cor} \label{cor: Z quadruple}
	Let $k \ge 4$.
	Then any $k$-tuple of distinct integers $(r_1, \dots, r_k) \in \Z^k$
	does not have the large intersections property for $\Z$-systems.
\end{cor}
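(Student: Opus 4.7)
The plan is to follow the strategy of \cite[Theorem 1.3]{bhk}, with Theorem \ref{thm: gen Ruzsa} replacing the classical Ruzsa theorem. The first step is a reduction to $k = 4$: for $k \ge 5$, pick any four distinct entries $r_1, r_2, r_3, r_4$ from the $k$-tuple and observe that the $(k+1)$-fold intersection is contained in the 5-fold intersection. Hence, if the $k = 4$ case produces, for every $\ell \ge 1$, an ergodic $\Z$-system and a set $A = A(\ell)$ with $0 < \mu(A) < 1$ satisfying $\mu\bigl(A \cap T^{r_1 n}A \cap T^{r_2 n}A \cap T^{r_3 n}A \cap T^{r_4 n}A\bigr) \le \mu(A)^\ell/2$ for all $n \ne 0$, then choosing $\ell > k+1$ gives $\mu(A)^\ell/2 < \mu(A)^{k+1} - \eps$ for some $\eps > 0$, so the set $\{n \in \Z : \mu(A \cap \bigcap_i T^{r_i n}A) > \mu(A)^{k+1} - \eps\}$ is empty, hence not syndetic.

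For the $k = 4$ case, set $r_0 = 0$ and use the 2-step affine extension $T(x, y) = (x + \alpha, y + x)$ on $\T^2$ with $\alpha$ irrational (an ergodic system), for which $T^n(x, y) = (x + n\alpha, y + nx + \binom{n}{2}\alpha)$. For each $N \in \N$, Theorem \ref{thm: gen Ruzsa} yields a set $B \subseteq \{0, 1, \dots, N-1\}$ of density at least $e^{-c\sqrt{\log N}}$ containing no QC5$(r_0, \dots, r_4)$. Thicken to $\tilde B := B/N + [0, \delta_N) \subseteq \T$ for a suitable small $\delta_N$ and set $A_N := \T \times \tilde B$. A point $(x, y) \in \bigcap_{i=0}^4 T^{r_i n} A_N$ with $n \ne 0$ forces the five real numbers $P(r_i) := y + r_i n x + \binom{r_i n}{2}\alpha$ each to land within $\delta_N$ of some $b_i / N$ with $b_i \in B$. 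Since $P$ is a quadratic polynomial in its argument, all divided differences of $(r_j, N P(r_j))$ of order $\ge 3$ vanish; the divided-difference operator on the fixed node set $\{r_0, \dots, r_4\}$ has rational coefficients with denominators bounded in terms of the $r_i$'s alone, so taking $\delta_N$ small enough forces the analogous divided differences of the integer sequence $(b_j)$ to vanish as well. Thus $\{b_0, \dots, b_4\}$ is a QC5$(r_0, \dots, r_4)$, and by QC5-freeness of $B$ one must have $b_0 = \cdots = b_4$. This collapses $P$ to a near-constant modulo 1, restricting $(x, y)$ to a set of Haar measure decaying as a positive power of $N^{-1}$.

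Comparing this with $\mu(A_N) = |B| \delta_N$ and tuning $N, \delta_N$ in terms of $\ell$ then yields the desired bound $\mu\bigl(\bigcap_{i=0}^4 T^{r_i n} A_N \bigr) \le \mu(A_N)^\ell/2$. The principal technical obstacle is the rigidity/rounding step — passing from ``five real values near integer multiples of $1/N$ lying on a real quadratic curve'' to ``those integers lie on a rational-coefficient quadratic at the prescribed nodes $r_0, \dots, r_4$.'' In \cite{bhk}, for the special tuple $(0, 1, 2, 3, 4)$, this is handled via second finite differences; for general nodes, divided differences furnish a clean replacement, but the quantitative interplay between $\delta_N$, the magnitudes $|r_i|$, and the density of $A_N$ must be tracked carefully to ensure the intersection bound beats $\mu(A_N)^\ell$.
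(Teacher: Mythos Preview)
Your approach is essentially the same as the paper's (which omits the proof, saying it follows from Theorem~\ref{thm: gen Ruzsa} exactly as Theorem~\ref{thm: bhk quadruple} follows from Ruzsa's theorem; the detailed template is the $\Q$-case, Theorem~\ref{thm: Q quadruple}). The reduction to $k=4$, the choice of the two-step affine system on $\T^2$, and the use of a QC5$(r)$-free set from Theorem~\ref{thm: gen Ruzsa} all match.

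There is, however, one genuine gap in your rounding step that is \emph{not} merely a matter of tuning $\delta_N$. You lift the second coordinates $P(r_i)$ to real numbers and invoke vanishing of third divided differences. But membership in $\tilde B$ only says $P(r_i)\equiv b_i/N+\varepsilon_i\pmod 1$, so the real lift is $P(r_i)=b_i/N+\varepsilon_i+m_i$ with uncontrolled integers $m_i$. Your divided-difference relation $\sum_j c_j P(r_j)=0$ then yields $\sum_j c_j b_j/N + (\text{small}) + \sum_j c_j m_j = 0$, and the integer $\sum_j c_j m_j$ can absorb an arbitrary amount---no choice of $\delta_N$ kills it. Concretely, since $b_j\in\{0,\dots,N-1\}$, the quantity $\sum_j c_j b_j/N$ can be any rational of magnitude up to $\sum_j|c_j|$, so you cannot conclude it vanishes.

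The fix (used in \cite{bhk} and in the paper's proof of Theorem~\ref{thm: Q quadruple}) is to insert an extra scaling factor $m$ depending only on $(r_0,\dots,r_4)$: set $\tilde B:=\bigcup_{j\in B}[\,j/(mN),\,j/(mN)+1/(m^2N)\,)$, so that $\tilde B\subseteq[0,1/m)$. With $m$ at least the sum of the absolute values of the coefficients in your linear relation (equivalently, $m=\max\{U(r_4)+W(r_4),\,V(r_4)+s\}$ in the paper's notation), the identity $\sum_j c_j a_j\equiv 0\pmod 1$ forces $\sum_j c_j a_j=0$ in $\R$, since the left side lies in $(-1,1)$. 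From there your divided-difference argument (or the paper's explicit relations $sa_i=U(r_i)a_0-V(r_i)a_1+W(r_i)a_2$) goes through and gives $b_0=\cdots=b_4$. Once you incorporate this scaling, the measure computations and the density comparison against $\mu(A_N)^\ell$ proceed exactly as in the $\Q$-case.
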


We omit the proof of Corollary \ref{cor: Z quadruple}, since it follows from Theorem \ref{thm: gen Ruzsa}
in the same way that Theorem \ref{thm: bhk quadruple} follows from Theorem \ref{thm: Ruzsa}.
We will, however, present the details of the analogous result for $\Q$ in Section \ref{sec: Q quadruple},
which follows the same general approach as in the integer case.
Corollary \ref{cor: Z quadruple} was obtained independently by similar methods in \cite[Theorem 1.5]{dlms}.

Using Theorem \ref{thm: gen Ruzsa}, analogous counterexamples can be constructed in $\F_p^{\infty}$
when the characteristic $p$ is large.
This approach mirrors the one taken in Proposition \ref{prop: ergodicity large p}.
\begin{thm}
	Suppose $c_1, c_2, c_3, c_4 \in \Z$ are distinct and nonzero.
	For every $L \in \N$, there is a $P = P(c, L)$ such that for every prime $p \ge P$,
	there is an ergodic $\F_p^{\infty}$-system $\left( X, \B, \mu, (T_n)_{n \in \F_p^{\infty}} \right)$ such that,
	for every $l \le L$, there is a set $A = A(l) \in \B$ with $\mu(A) > 0$ such that
	\begin{align}
		\mu \left( A \cap T_{c_1n}A \cap T_{c_2n}A \cap T_{c_3n}A \cap T_{c_4n}A \right) \le \mu(A)^l.
	\end{align}
\end{thm}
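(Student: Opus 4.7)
The plan is to mimic the proof of Theorem \ref{thm: bhk quadruple} in the $\F_p^\infty$ setting, substituting Theorem \ref{thm: gen Ruzsa} for the classical Ruzsa theorem and replacing the ergodic Kronecker-cocycle extension on $\T^2$ by an analogous ergodic quadratic skew-product on the compact group $\mathcal{T}^2$, where $\mathcal{T} := \F_p((t^{-1}))/\F_p[t]$ as in the proof of Proposition \ref{prop: ergodicity large p}. Identifying $\F_p^\infty$ with the additive group of $\F_p[t]$ and fixing a generic $\alpha \in \mathcal{T}$ (in the sense that $h\alpha \notin \F_p[t]$ for every $h \in \F_p[t] \setminus \{0\}$), I would define
$$T_g(x,y) \;=\; \bigl(x + g\alpha,\; y + gx + Q(g)\alpha\bigr), \qquad Q(g) := \tfrac{g^2 - g}{2},$$
which is well-defined since $p > 2$. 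A direct calculation gives $Q(g+h) = Q(g) + Q(h) + gh$, which is exactly the condition that $T_{g+h} = T_g T_h$, so this is a genuine measure-preserving $\F_p[t]$-action.

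Ergodicity will be checked by a Fourier argument on $L^2(\mathcal{T}^2) = \bigoplus_{h,k \in \F_p[t]} \C \cdot \chi_h \otimes \chi_k$: invariance of $F = \sum \hat F(h,k)\, \chi_h \otimes \chi_k$ under $T_g$ translates into the relation $\hat F(h - kg, k) = \hat F(h,k)\cdot(\text{unit-modulus phase})$ for every $g$. When $k \ne 0$, the coset $\{h - kg : g \in \F_p[t]\}$ is infinite, forcing $\hat F(\cdot,k) = 0$ by $\ell^2$-summability; when $k=0$, genericity of $\alpha$ yields that the base rotation is ergodic, so $F$ is constant. The key combinatorial input is then Theorem \ref{thm: gen Ruzsa} applied to the tuple $r = (0, c_1, c_2, c_3, c_4)$: for each $N$, it produces $B \subseteq \{0,1,\ldots,N-1\}$ with $|B| > N e^{-c\sqrt{\log N}}$ containing no non-constant QC5$(r)$. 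Embedding $B$ into $\F_p$ (taking $N \approx p$ and $p$ larger than $\max|c_i|, \max|c_i - c_j|$), let $\tilde B := \{y \in \mathcal{T} : y_1 \in B\}$ where $y_1$ denotes the coefficient of $t^{-1}$, and set $A := \mathcal{T} \times \tilde B$, so $\mu(A) = |B|/p$.

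The heart of the argument is iterating: $T_{c_i n}(x,y) = (x + c_i n\alpha,\, y + c_i n x + Q(c_i n)\alpha)$, and the coefficient of $t^{-1}$ in the $y$-slot equals
$$P(c_i) \;=\; y_1 + c_i\bigl((nx)_1 - \tfrac{1}{2}(n\alpha)_1\bigr) + c_i^2 \cdot \tfrac{1}{2}(n^2\alpha)_1,$$
a quadratic in $c_i$. If $(x,y) \in A \cap \bigcap_i T_{c_i n}^{-1} A$, then $\{P(0), P(c_1), \ldots, P(c_4)\} \subseteq B$; by the QC5$(r)$-freeness of $B$, this forces $P$ constant, i.e., $(n^2\alpha)_1 = 0$ and $(nx)_1 = \tfrac12(n\alpha)_1$. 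For $n \ne 0$ the first condition is deterministic (in $x,y$) while the second cuts the $x$-measure by $1/p$ since $(nx)_1$ is uniformly distributed on $\F_p$; combined with $y_1 \in B$ this gives
$$\mu\bigl(A \cap T_{c_1 n} A \cap \cdots \cap T_{c_4 n} A\bigr) \;\le\; \frac{|B|}{p^2} \;=\; \frac{\mu(A)}{p}.$$
The bound $\mu(A)/p \le \mu(A)^l$ is equivalent to $|B|^{l-1} \ge p^{l-2}$; with $|B| \ge p\,e^{-c\sqrt{\log p}}$ this holds once $\log p \gtrsim c^2(l-1)^2$, so setting $P(c,L) := \exp(C(L-1)^2)$ for a suitable $C$ (and large enough to ensure $p > 2$ and that the $c_i$ remain distinct and nonzero modulo $p$) completes the proof.

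The main obstacle I anticipate is the ergodicity verification: while the Fourier sketch above is morally correct, spelling out a genericity condition on $\alpha$ that simultaneously handles the base rotation and the interaction between the linear cocycle $gx$ and the quadratic correction $Q(g)\alpha$ requires care in positive characteristic (in particular confirming that $(nx)_1$ is truly equidistributed in $\F_p$ for every nonzero $n$, and ruling out low-characteristic degeneracies in the cocycle identity for $Q$). A subsidiary concern is pinning down the convention that "QC5$(r)$-free" in Theorem \ref{thm: gen Ruzsa} excludes non-constant polynomials only; constant configurations cause no issue since the corresponding $n$ contribute the degenerate bound captured by the measure of $\tilde B$ alone.
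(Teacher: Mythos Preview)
Your approach is correct and is exactly what the paper intends: the paper does not actually supply a proof of this theorem, saying only that ``this approach mirrors the one taken in Proposition~\ref{prop: ergodicity large p},'' and your ergodic quadratic skew product on $\mathcal{T}^2$ (the natural $\F_p[t]$-analogue of the system $T_n(x,y) = (x+n\alpha, y+2nx+n^2\alpha)$ used in Theorem~\ref{thm: Q quadruple}) is precisely the right system. Your ergodicity sketch, the identification of the $t^{-1}$-coefficient as a quadratic in $c_i$, and the final measure bound $|B|/p^2 = \mu(A)/p$ are all correct.

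There is one genuine gap worth flagging. Your polynomial $P$ has coefficients in $\F_p$, but Theorem~\ref{thm: gen Ruzsa} only guarantees that $B$ avoids QC5$(r)$ configurations coming from polynomials over $\Q$; you cannot directly conclude ``$P$ constant'' from $\{P(0),P(c_1),\ldots,P(c_4)\}\subseteq B$. The fix is the standard no-wraparound argument (implicit in Proposition~\ref{prop: ergodicity large p} and explicit in the proof of Theorem~\ref{thm: gen Ruzsa}): interpolate a rational quadratic $\tilde P$ through $(0,P(0)),(c_1,P(c_1)),(c_2,P(c_2))$ (treating the values as integers in $\{0,\ldots,N-1\}$); then $s\tilde P(c_i)$ is an integer of absolute value at most $m(N-1)$ for some $m=m(c)$ and is congruent to $sP(c_i)\pmod p$. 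Provided you take $N \le p/(m+|s|)$ rather than $N\approx p$, this forces $\tilde P(c_i)=P(c_i)$ as integers for $i=3,4$, so $\{\tilde P(0),\ldots,\tilde P(c_4)\}\subseteq B$ in $\Z$ and the QC5$(r)$-freeness applies. The constant-factor loss in $|B|$ does not affect your threshold $P(c,L)\asymp\exp(C(L-1)^2)$. (A cosmetic point: the cocycle $T_g(x,y)=(x+g\alpha,\,y+2gx+g^2\alpha)$, exactly as in Theorem~\ref{thm: Q quadruple}, already defines an action for $p>2$ and gives the slightly cleaner quadratic $P(c)=y_1 + c\,(2nx)_1 + c^2(n^2\alpha)_1$; your $Q(g)=(g^2-g)/2$ variant works equally well.)
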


As a consequence, if $c_1, \dots, c_k$ are distinct, nonzero integers with $k \ge 4$,
then $(c_1, \dots, c_k) \in \Z^k$ does not have the large intersections property for $\F_p^{\infty}$-systems
so long as $p$ is large enough (depending on $c$).
This complements an earlier result of \cite{btz2} from a different angle:\footnote{Our terminology differs slightly from \cite{btz2}. There, the authors say a tuple $(c_0, c_1, \dots, c_k)$ has the \emph{Khintchine property} if $\{ g \in \F_p^{\infty} : \mu(T_{c_0g}^{-1}A \cap T_{c_1g}^{-1}A \cap \cdots \cap T_{c_kg}^{-1}A) > \mu(A)^{k+1} - \eps \}$ is syndetic for every ergodic $T$, every measurable set $A$, and every $\eps > 0$. This is the same as saying that $(c_1-c_0, \dots, c_k-c_0)$ has the large intersections property.}

\begin{thm}[\cite{btz2}, Theorem 1.15] \label{thm: btz quadruple}
	Let $k \ge 3$.
	There is a constant $C_k$ depending only on $k$ such that for every prime $p$,
	there are at most $C_k p^{k-1}$ tuples $(c_1, \dots, c_k) \in \F_p^k$ with the large intersections property
	for $\F_p^{\infty}$-systems.
\end{thm}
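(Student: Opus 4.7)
The plan is to prove the contrapositive: for all but at most $C_k p^{k-1}$ tuples $(c_1,\dots,c_k) \in \F_p^k$, I will construct an explicit $\F_p^\infty$-system witnessing the failure of the large intersections property. Following the template of Proposition~\ref{prop: ergodicity large p}, the relevant system is a skew-product: let $\mathcal{T}$ denote the Pontryagin dual of $\F_p^\infty$, realized concretely as the quotient $\F_p((t^{-1}))/\F_p[t]$; consider the $\F_p^\infty$-action $T_g(x,y) = (x, y + gx)$ on $\mathcal{T}^2$; and take $A = \mathcal{T} \times \tilde{B}$ where $\tilde{B}$ is the cylinder over a combinatorial set $B \subseteq \F_p$ in the leading $\F_p$-coordinate. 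A Fubini computation then reduces the measure of $A \cap T_{c_1 g}^{-1} A \cap \cdots \cap T_{c_k g}^{-1} A$ to a count of patterns $\{y, y + c_1 m, \dots, y + c_k m\}$ contained in $B$ for $m \ne 0$, so the failure of large intersections is equivalent to producing a suitably dense $B \subseteq \F_p$ containing no such pattern.

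For the combinatorial step I would use a uniform Behrend--Ruzsa-type construction in the spirit of Theorem~\ref{thm: gen Ruzsa}. Polynomial interpolation associates to every $(k+1)$-tuple $(0, c_1,\ldots,c_k)$ a polynomial $P$ of degree at most $k$ with $P(i) = c_i$, and the Behrend sphere-intersection construction (or its polynomial generalization) yields a $B$ of density $p^{-o(1)}$ avoiding the pattern whenever $P$ has full degree in the relevant sense. This density is more than enough to beat $\mu(A)^{k+1}$ for $p$ large, producing the counterexample. Conversely, the construction breaks down precisely for tuples in which $P$ has unexpectedly low degree, i.e., those satisfying a polynomial identity such as the parallelogram relation $c_i + c_j = c_\ell + c_m$ for $k = 3$ (which is the sharp condition underlying Theorem~\ref{history 3}(ii)) or analogous higher-order degeneracies for $k \ge 4$. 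Each such identity cuts out a subvariety of $\F_p^k$ defined by a single polynomial equation and hence contains at most $O(p^{k-1})$ tuples; since only finitely many conditions (bounded in terms of $k$ alone) need be enumerated, the total count of structured tuples is at most $C_k p^{k-1}$.

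The principal obstacle I anticipate is establishing \emph{uniformity} of the combinatorial construction across all non-structured tuples: I need a single density bound and a single threshold on $p$, both depending only on $k$, that make the Behrend--Ruzsa counterexample succeed for every non-degenerate $(c_1,\ldots,c_k)$ simultaneously. This is the technical crux, since it amounts to ruling out unexpected obstructions to pattern-avoidance beyond the finite list of polynomial degeneracies enumerated above. A natural route, exploiting the same characteristic-factor technology used in Sections~\ref{sec: Kronecker}--\ref{sec: CL} and specialized to $\F_p^\infty$ in \cite{btz1, btz2}, is to reduce the averages $\UClim_{g \in \F_p^\infty}{\prod_i T_{c_i g} f_i}$ to integrals over a nilsystem of bounded step, which allows the ``good'' tuples to be identified as zeros of finitely many polynomials in the coordinates $c_i$. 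Combining this algebraic characterization with the counterexample construction above should yield the bound $C_k p^{k-1}$.
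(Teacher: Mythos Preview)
The paper does not prove this statement; it is quoted verbatim as Theorem~1.15 of \cite{btz2} and used only for context, so there is no proof in the paper to compare your proposal against.

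That said, your proposal contains a genuine gap. The large intersections property is defined with respect to \emph{ergodic} systems: a family $\{\varphi_1,\dots,\varphi_k\}$ has the property if the set of large returns is syndetic for every ergodic $G$-system. To show a tuple fails to have this property you must therefore exhibit an \emph{ergodic} counterexample. The system you propose, $T_g(x,y)=(x,y+gx)$ on $\mathcal{T}^2$, is not ergodic---the first coordinate is invariant---and this is precisely why Proposition~\ref{prop: ergodicity large p} (whose template you are following) is a statement about the necessity of ergodicity, not about the failure of large intersections for ergodic systems. The correct template for producing ergodic counterexamples is the one in Section~\ref{sec: quadruple}: a genuinely ergodic skew-product such as $T_n(x,y)=(x+n\alpha,\,y+2nx+n^2\alpha)$, where the first factor is already an ergodic rotation. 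For $k\ge 4$ this is exactly how the paper (and \cite{bhk}) proceeds; for $k=3$, which is the case you single out, the analysis in \cite{btz2} uses this kind of ergodic affine system and reduces to Ruzsa-type combinatorics, with the parallelogram identity $c_i+c_j=c_\ell+c_m$ emerging as the exceptional condition.

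Your combinatorial outline (Behrend--Ruzsa constructions, polynomial-degeneracy conditions cutting out $O(p^{k-1})$ exceptional tuples) is in the right spirit, but it needs to be grafted onto an ergodic base system for the argument to address the large intersections property as defined.
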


\begin{rem}
	Notice that the assumption in Theorem \ref{thm: btz quadruple} is $k \ge 3$ rather than $k \ge 4$.
	This is assumed because the triple $(c_1, c_2, c_3)$ will fail to have the large intersections property in high
	characteristic unless $(0, c_1, c_2, c_3)$ forms a parallelogram configuration.
	We will revisit this in Section \ref{sec: parallelogram}.
\end{rem}

\subsection{A generalization of Ruzsa's theorem} \label{sec: Ruzsa}

We now set out to prove Theorem \ref{thm: gen Ruzsa}.
The strategy is parallel to Ruzsa's approach to Theorem \ref{thm: Ruzsa}.
First, we show that quadratic configurations in $\R^d$ with at least five points cannot be contained
on the surface of a sphere (Lemma \ref{lem: Ruzsa}).
This fact allows us to mirror Behrend's construction for 3-AP-free sets:
we consider integer expansions in a conveniently chosen base,
and by putting restrictions on the digits of these expansions, we can avoid all QC5(r) patterns
while having relatively large density in the interval $\{0, 1, \dots, N-1\}$.

Let us turn to the details.
\begin{lemma} \label{lem: Ruzsa}
	Let $P : \R \to \R^d$ be a polynomial\footnote{
		By this, we mean that $P = (P_1, \dots, P_d)$ with $P_i(x) \in \R[x]$.
		The degree of $P$ is the maximum of the degrees of the $P_i$'s.
	} of degree at most 2.
	Suppose $r_0, r_1, r_2, r_3, r_4 \in \Z$ are distinct, and let $a_i = P(r_i)$.
	If $\|a_i\| = c$ for every $i=0, \dots, 4$, where $\|\cdot\|$ is the Euclidean norm on $\R^d$,
	then $P$ is constant.
	Hence, the values $a_i$ are all the same.
\end{lemma}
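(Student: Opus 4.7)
The plan is to exploit the fact that while $P$ takes values in $\R^d$, the squared Euclidean norm $\|P(r)\|^2 = \langle P(r), P(r)\rangle$ is a real-valued polynomial in the single variable $r$. Since $P$ has degree at most $2$, this scalar polynomial has degree at most $4$. The hypothesis gives five distinct integers $r_0, \ldots, r_4$ at which $\|P(r_i)\|^2 - c^2 = 0$, and a polynomial of degree at most $4$ with five distinct roots must be identically zero. Hence $\|P(r)\|^2 = c^2$ for every $r \in \R$, i.e.\ the entire image of $P$ lies on a sphere of radius $c$.

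Next, I would write $P(x) = A + Bx + Cx^2$ with $A, B, C \in \R^d$ and expand the squared norm explicitly as
\begin{equation*}
\|P(r)\|^2 = \|A\|^2 + 2\langle A, B\rangle\, r + \bigl(2\langle A, C\rangle + \|B\|^2\bigr) r^2 + 2\langle B, C\rangle\, r^3 + \|C\|^2\, r^4.
\end{equation*}
Since this equals the constant $c^2$ for all $r$, matching coefficients forces every non-constant term to vanish. Reading the $r^4$ coefficient gives $\|C\|^2 = 0$, so $C = 0$. Substituting $C = 0$ into the $r^2$ coefficient yields $\|B\|^2 = 0$, so $B = 0$. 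Thus $P \equiv A$ is constant, and in particular $a_0 = a_1 = a_2 = a_3 = a_4$.

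There is no real technical obstacle: the argument reduces a geometric statement about five equidistant points on the image of a quadratic curve in $\R^d$ to the elementary observation that a univariate polynomial of degree at most four with five distinct roots must vanish identically. The use of five points (as opposed to four) is exactly what is needed to kill the degree-four term $\|C\|^2 r^4$, after which the lower-order terms collapse automatically; this explains why the lemma, and hence the notion of QC5, is sharp for quadratic configurations.
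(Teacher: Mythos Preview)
Your proof is correct and follows essentially the same approach as the paper: both observe that $\|P(r)\|^2 - c^2$ is a polynomial of degree at most $4$ with five distinct roots, hence identically zero. The only cosmetic difference is the finishing step---the paper notes that $\|P(r)\| \equiv c$ makes $P$ a bounded polynomial and hence constant, while you explicitly expand $\|P(r)\|^2$ and match coefficients to kill $C$ and then $B$; these amount to the same thing.
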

\begin{proof}
	Consider the function $f : \R \to \R$ defined by $f(x) := \|P(x)\|^2 - c^2$.
	This can be written as $f(x) = P_1(x)^2 + \cdots + P_d(x)^2 - c^2$,
	so $f$ is a polynomial of degree at most 4.
	By assumption, $\|a_i\| = c$ for each $i=0, \dots, 4$, so $f(r_i) = 0$.
	Thus, $f$ has at least 5 distinct roots, which implies $f \equiv 0$.
	In particular, $P$ is a bounded function.
	Since $P$ is polynomial, it follows that $P$ is constant.
\end{proof}

We now prove Theorem \ref{thm: gen Ruzsa}.
\begin{proof}[Proof of Theorem \ref{thm: gen Ruzsa}]
	We assume without loss of generality that $0 = r_0 < r_1 < r_2 < r_3 < r_4$.
	Let $b, d, m \in \N$, and let $k \in \N$ with $k \le (b-1)^2 d$.
	Consider the set
	\begin{align*}
		B := \left\{ x = x_0 + x_1(mb-1) + \cdots + x_{d-1}(mb-1)^{d-1}
		: 0 \le x_j \le b-1, \sum_{j=0}^{d-1}{x_j^2} = k \right\}.
	\end{align*}
	We will specify $b$, $d$, $m$, and $k$ in what follows so that $B$ has the desired properties.
	
	First, we claim that $B$ is QC5($r$)-free for appropriately chosen $m$.
	Indeed, suppose $P$ is a quadratic polynomial such that $a_i := P(r_i) \in B$ for each $i = 0, \dots, 4$.
	Since a quadratic polynomial is determined by its values at any 3 points, we have the relations
	\begin{align} \label{eq: polynomial}
		(r_1r_2^2-r_1^2r_2) a_i = &
		\left( (r_2-r_1)r_i^2 - (r_2^2-r_1^2)r_i + (r_1r_2^2 - r_1^2r_2) \right) a_0 \nonumber \\
		& - (r_2r_i^2 - r_2^2r_i) a_1 + (r_1r_i^2 - r_1^2r_i) a_2
	\end{align}
	for $i = 3, 4$.
	
	We will now specify $m$ in terms of the coefficients.
	Define
	\begin{align*}
		s & := r_1r_2^2-r_1^2r_2, \\
		U(r) & := (r_2-r_1)r^2 - (r_2^2-r_1^2)r + (r_1r_2^2 - r_1^2r_2), \\
		V(r) & := r_2 r^2 - r_2^2 r, \\
		W(r) & := r_1 r^2 - r_1^2 r.
	\end{align*}
	Set
	\begin{align*}
		m := \max\{U(r_4) + W(r_4), V(r_4) + s\}.
	\end{align*}
	Note that $s \ge 1$, and $U(r)$, $V(r)$, and $W(r)$ are all positive and increasing on $(r_2, \infty)$,
	so replacing $r_4$ by $r_3$ in this expression produces a smaller constant.
	
	Expand $a_i$ in base $mb-1$ as $a_i = a_{i,0} + a_{i,1}(mb-1) + \cdots + a_{i,d-1}(mb-1)^{d-1}$.
	Then $0 \le a_{i,j} \le b-1$ and $\sum_{j=0}^{d-1}{a_{i,j}^2} = k$.
	Using the relations \eqref{eq: polynomial}, we see that the polynomial
	\begin{align*}
		f_i(x) := \sum_{j=0}^{d-1}{\left( U(r_i)a_{0,j} - V(r_i)a_{1,j} + W(r_i)a_{2,j} - s a_{i,j} \right) x^j}
	\end{align*}
	has a root at $x = mb-1$ for $i=3, 4$.
	By the choice of $m$, the coefficients of $f_i$ are integers in the interval $\left( -m(b-1), m(b-1) \right)$.
	Hence, by the rational root theorem, $f_i$ must be identically zero.
	That is, $sa_{i,j} = U(r_i)a_{0,j} - V(r_i)a_{1,j} + W(r_i)a_{2,j}$ for $i = 3,4$ and $j = 0, \dots, d-1$.
	
	There are therefore quadratic polynomials $Q_j$ such that $Q_j(r_i) = a_{i,j}$.
	Namely,
	\begin{align*}
		Q_j(r) = \frac{1}{s} \left( U(r)a_{0,j} - V(r)a_{1,j} + W(r)a_{2,j} \right).
	\end{align*}
	Let $Q = (Q_0, Q_1, \dots, Q_{d-1})$.
	Then
	\begin{align*}
		\|Q(r_i)\| = \left( \sum_{j=0}^{d-1}{a_{i,j}^2} \right)^{1/2} = \sqrt{k}
	\end{align*}
	for every $i=0, \dots, 4$,
	so by Lemma \ref{lem: Ruzsa}, all of the $a_i$'s are the same.
	Thus, $B$ contains no QC5($r$).
	
	It remains to choose $b$, $d$, and $k$ to ensure that $B$ is a sufficiently dense set.
	This argument will make the same kinds of estimates that appear in \cite{behrend}.
	There are $b^d$ values of $x$ with $0 \le x_j \le b-1$,
	and $(b-1)^2d+1$ possible values of $k$, so for some $k$, $B$ contains at least
	\begin{align*}
		\frac{b^d}{(b-1)^2d+1} > \frac{b^{d-2}}{d}
	\end{align*}
	elements.
	
	Given $N \in \N$, let $d = \floor{\sqrt{\frac{\log{N}}{\log{m}}}}$, and let $b \in \N$ so that
	\begin{align*}
		(mb-1)^d \le N < (mb-1)^{d+1}.
	\end{align*}
	Then $B \subseteq \{0, 1, \dots, N-1\}$ and, noting $b > \frac{N^{1/(d+1)}}{m}$,
	\begin{align*}
		|B| > \frac{b^{d-2}}{d} > \frac{N^{\frac{d-2}{d+1}}}{dm^{d-2}}
		= N \left( \frac{1}{N^{\frac{3}{d+1}} d m^{d-2}} \right).
	\end{align*}
	Now, since $d \le \sqrt{\frac{\log{N}}{\log{m}}} < d + 1$,
	\begin{align*}
		\log{\left( N^{\frac{3}{d+1}} d m^{d-2} \right)}
		& = \frac{3}{d+1} \log{N} + \log{d} + (d-2)\log{m} \\
		& < 3\sqrt{\log{m}} \sqrt{\log{N}}
		+ \frac{1}{2} \left( \log{\log{N}} - \log{\log{m}} \right)
		+ \sqrt{\log{m}} \sqrt{\log{N}}.
	\end{align*}
	Thus, $|B| > N e^{-c\sqrt{\log{N}}}$ for
	$c = 4 \sqrt{\log{m}} + \max_{N \in \N}{\left( \frac{\log{\log{N}}}{2\sqrt{\log{N}}} \right)}$.
\end{proof}

Though we will not need it for dynamical applications, it is not hard to extend this argument
to construct large sets that avoid degree $d$ polynomial configurations on $2d+1$ points.

\subsection{Rings of integers}

Just as higher-dimensional versions of Behrend's theorem would allow us to construct
non-ergodic systems that fail to have large intersections for double recurrence in rings of integers,
we can reduce analogues of Theorem \ref{thm: bhk quadruple}
to combinatorial results in the vein of Ruzsa's theorem.

Let $K$ be an algebraic number field of degree $d = [K : \Q]$.
Let $\O_K$ be the ring of integers of $K$, and let $b_1, \dots, b_d$ be an integral basis for $\O_K$.
Fix $r = (r_1, r_2, r_3, r_4) \in \O_K^4$.
For notational convenience, set $r_0 = 0$.
Call a set $\{a_0, a_1, a_2, a_3, a_4\} \subseteq \Z^d$ a \emph{QC5($r$) set}
if there is a quadratic polynomial $P(x) \in K[x]$ such that
$P(r_i) = \sum_{j=1}^d{a_{i,j}b_j}$ for $i = 0, \dots, 4$,
where $a_i = (a_{i,1}, \dots, a_{i,d}) \in \Z^d$.

\begin{prop}
	Let $r_1, r_2, r_3, r_4 \in \O_K$ be distinct and nonzero.
	Let $r = (0,r_1,r_2,r_3,r_4)$.
	If for every $C > 0$, there is an $N \in \N$ and a set $B \subseteq \{0, 1, \dots, N-1\}^d$
	such that $|B| > CN^{d/2}$ and $B$ is QC5($r$)-free,
	then $(r_1, r_2, r_3, r_4)$ does not have the large intersections property.
	\begin{proof}
		Fix $C > 0$, and let $B \subseteq \{0, 1, \dots, N-1\}^d$ be QC5($r$)-free with $|B| > CN^{d/2}$.
		Fix $\alpha \in \T^d$ such that $(M_n\alpha)_{n \in \O_K}$ is dense in $\T^d$.
		We will consider the skew-product system $T_n : \T^{2d} \to \T^{2d}$ given by
		\begin{align*}
			T_n(x,y) = \left( x + M_n\alpha, y + M_{2n}x + M_{n^2}\alpha \right),
		\end{align*}
		which preserves the Haar measure $\mu$ on $\T^{2d}$.
		This is an action of $(\O_K, +)$ because $n \mapsto M_n$ is a homomorphism from $(\O_K, +)$
		to the group of $d \times d$ integer matrices under addition.
		Moreover, $(T_n)_{n \in \O_K}$ is ergodic because $(M_n\alpha)_{n \in \O_K}$ is dense.
		
		Fix $m \in \N$ to be determined later.
		For $j \in \{0, 1, \dots, N-1\}^d$, let
		\begin{align*}
			C_j := \prod_{i=1}^d{\left[ \frac{j_i}{mN}, \frac{j_i}{mN} + \frac{1}{m^2N} \right)},
		\end{align*}
		and define $\tilde{B} := \bigcup_{j \in B}{C_j}$.
		Let $A = \T^d \times \tilde{B} \subseteq \T^{2d}$.
		Then $\mu(A) = \frac{|B|}{(m^2N)^d}$.
		
		For $n \in \O_K$,
		\begin{align*}
			\mu \left( A \cap T_{r_1n}A \cap T_{r_2n}A \cap T_{r_3n}A \cap T_{r_4n}A \right)
			& = \iint_{\T^d \times \T^d}{\ind_{\tilde{B}}(y)
				\prod_{i=1}^4{\ind_{\tilde{B}}(y+M_{2r_in}x+M_{r_i^2n^2}\alpha)}~dx~dy}.
		\end{align*}
		Suppose $n \ne 0$, and this integrand is nonzero.
		That is, $y \in \tilde{B}$ and $y + M_{2r_in}x + M_{r_i^2n^2}\alpha \in \tilde{B}$ for $i = 1, \dots, 4$.
		
		Interpreting $x$, $y$, and $\alpha$ as elements in $\R^d$ with coordinates in $[0,1)$,
		the map $f(r) := y + M_{2rn}x + M_{r^2n^2}\alpha$ is a polynomial of degree $2$
		from $\O_K$ to $\R^d$.
		It is therefore fully determined by its values at $0$, $r_1$, and $r_2$.
		To be explicit, for any $r \in \O_K$,
		\begin{align*}
			M_{r_1r_2^2 - r_1^2r_2} f(r) & = M_{(r_2-r_1)r^2 - (r_2^2-r_1^2)r + (r_1r_2^2 - r_1^2r_2)}f(0) \\
			& - M_{r_2r^2 - r_2^2r} f(r_1) + M_{r_1r^2 - r_1^2r} f(r_2).
		\end{align*}
		Letting $a_0 = f(0) = y$ and $a_i = f(r_i)$ for $i = 1, \dots, 4$, we have therefore found
		integer matrices $S$, $U_i$, $V_i$, and $W_i$ for $i = 3, 4$ so that
		\begin{align} \label{eq: polynomial relation}
			S a_i = U_i a_0 - V_i a_1 + W_i a_2.
		\end{align}
		
		By assumption, $a_i \in \tilde{B}$, say $a_i \in B_{j_i}$, for every $i = 0, \dots, 4$.
		Therefore, we can write $a_i = \frac{j_i}{mN} + \beta_i$ with $j_i \in B$
		and $\beta_i \in \left[ 0, \frac{1}{m^2N} \right)^d$.
		By \eqref{eq: polynomial relation}, we have
		\begin{align*}
			\frac{Sj_i - U_i j_0 + V_i j_1 - W_i j_2}{mN}
			= -S \beta_i + U_i \beta_0 - V_i \beta_1 + W_i \beta_2.
		\end{align*}
		for $i = 3, 4$.
		We can choose $m \ge \max\left\{ \left| s_{kl} \right|+\left| u^{(i)}_{kl} \right|
		+\left| v^{(i)}_{kl} \right|+\left| w^{(i)}_{kl} \right| : 1 \le k,l \le d, i = 3,4 \right\}$
		so that the quantity on the right-hand side is in the set $\left( -\frac{1}{mN}, \frac{1}{mN} \right)^d$.
		It then follows that the left-hand side is equal to 0.
		That is,
		\begin{align*}
			S j_i = U_i j_0 - V_i j_1 + W_i j_2.
		\end{align*}
		
		Now, $S$ does not depend on $i$, and the matrices $U_i$, $V_i$, and $W_i$ depend quadratically on $r_i$,
		so there is a polynomial $P$ of degree at most two for which $j_i = P(r_i)$.
		But $j_i \in B$ for every $i = 0, \dots, 4$ and $B$ is QC5($r$)-free by assumption.
		Thus, there is some $j \in B$ so that $j_i = j$ for all $i = 0, \dots, 4$.
		That is, $\{a_0, a_1, a_2, a_3, a_4\} \subseteq C_j$.
		
		Now,
		\begin{align*}
			M_{2(r_1r_2^2 - r_1^2r_2)n}x & = - M_{r_2^2-r_1^2} a_0 + M_{r_2^2} a_1 - M_{r_1^2} a_2 \\
			& = M_{r_1^2}(a_0 - a_2) + M_{r_2^2}(a_1-a_0).
		\end{align*}
		We know by the above that $a_i - a_j \in \left( -\frac{1}{m^2N}, \frac{1}{m^2N} \right)^d$.
		Thus, setting
		\begin{align*}
			L := 2 \max_{1 \le k,l \le d}{\left( \left| \left( M_{r_1^2} \right)_{kl} \right|
				+ \left| \left( M_{r_2^2} \right)_{kl} \right|\right)},
		\end{align*}
		we have $M_{2(r_1r_2^2 - r_1^2r_2)n}x \in \left( -\frac{L}{2m^2N}, \frac{L}{2m^2N} \right)^d$.
		
		By assumption, $y \in \tilde{B}$, so
		\begin{align*}
			\mu \left( A \cap T_{r_1n}A \cap T_{r_2n}A \cap T_{r_3n}A \cap T_{r_4n}A \right)
			\le \left( \frac{L}{m^2N} \right)^d \mu(A)
			= \left( \frac{m^{2d} L^d N^d}{|B|^2} \right) \mu(A)^3.
		\end{align*}
		Since $L$ and $m$ depend only on $r_1$ and $r_2$ (and not on $B$ or $N$),
		this shows that $(r_1, r_2, r_3, r_4)$ will fail to have the large intersections property
		so long as $C > m^d L^{d/2}$.
	\end{proof}
\end{prop}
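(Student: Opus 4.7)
The goal is to build, from the QC5($r$)-free set $B$, an ergodic $\O_K$-system whose intersections $\mu(A \cap T_{r_1 n}^{-1}A \cap \cdots \cap T_{r_4 n}^{-1} A)$ are uniformly much smaller than $\mu(A)^3$, so that the large intersections property (which would require a lower bound of the form $\mu(A)^{k+1} - \eps$) cannot hold. The natural template, imitating the $\Z$ case of Behrend/Ruzsa constructions, is a skew product on $\T^{2d}$ that encodes quadratic polynomials in the action variable $n$. Concretely, I would pick $\alpha \in \T^d$ such that $(M_n \alpha)_{n \in \O_K}$ is equidistributed in $\T^d$ and set
\begin{equation*}
T_n(x,y) \;=\; \bigl(x + M_n\alpha,\; y + M_{2n} x + M_{n^2}\alpha\bigr).
\end{equation*}
Additivity of $n \mapsto M_n$ makes this an $\O_K$-action preserving Haar measure, and the genericity of $\alpha$ gives ergodicity via a standard Fourier/Furstenberg argument on two-step nilpotent skew products.

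Next I would encode $B$ geometrically. Fix a large integer $m$ to be chosen, place the rescaled lattice point $j/(mN)$ inside $\T^d$ for each $j \in B$, and fatten it to a tiny box of side $1/(m^2 N)$: set $\tilde B := \bigcup_{j \in B} \prod_i [j_i/(mN), j_i/(mN) + 1/(m^2 N))$ and take $A = \T^d \times \tilde B$, so that $\mu(A) = |B|/(m^2 N)^d$. Evaluating the intersection, the key observation is that for fixed $(x,y,\alpha)$, the map $f\colon r \mapsto y + M_{2rn}x + M_{r^2 n^2}\alpha$ is a degree-$\le 2$ polynomial in $r$ with coefficients in $\R^d$. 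Membership of $f(r_i)$ and $f(0) = y$ in $\tilde B$ gives five points $a_i$ in the union of tiny boxes, each $a_i = j_i/(mN) + \beta_i$ with $j_i \in B$. The quadratic relation determining $f$ from its values at $0, r_1, r_2$ transfers, after multiplication by the integer scalar $s := \det$-type quantity $M_{r_1 r_2^2 - r_1^2 r_2}$, to a linear identity among the $j_i$ that must hold modulo errors of size $O(1/(m^2N))$; once $m$ is chosen so that all relevant matrix entries are absorbed, the errors vanish identically and we get $S j_i = U_i j_0 - V_i j_1 + W_i j_2$ for $i=3,4$, i.e., $\{j_0,\ldots,j_4\}$ is a QC5($r$) in $B$.

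By the QC5($r$)-free hypothesis, this forces $j_0 = j_1 = \cdots = j_4$, so all five values $f(r_i)$ lie in the \emph{same} small box. Solving the quadratic relations for $M_{2(r_1 r_2^2 - r_1^2 r_2)n}x$ in terms of differences $a_i - a_j \in (-1/(m^2 N), 1/(m^2 N))^d$ pins this vector down to a box of measure $O((m^2 N)^{-d})$, while $y$ is still free to range over $\tilde B$. Combining,
\begin{equation*}
\mu\bigl(A \cap T_{r_1 n}^{-1} A \cap \cdots \cap T_{r_4 n}^{-1} A\bigr) \;\le\; \frac{C_0(r)}{(m^2 N)^d}\, \mu(A) \;=\; C_0(r)\,m^{2d}\,\frac{N^d}{|B|^2}\,\mu(A)^3,
\end{equation*}
with $C_0(r)$ depending only on $r_1, r_2$. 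Since $|B| > C N^{d/2}$, this is at most $C_0(r)\,m^{2d}/C^2 \cdot \mu(A)^3$, which for $C$ sufficiently large is bounded by some $\kappa\,\mu(A)^3$ with $\kappa < 1$, uniformly in $n \neq 0$. That contradicts the large intersections property.

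The main obstacle is step three: verifying that the real-valued quadratic identity satisfied by $f$ genuinely descends to the integer "digits" $j_i$ so that QC5($r$)-freeness applies. This is the step where the parameter $m$ must be chosen carefully (controlled by the largest absolute entries of the matrices $M_s, M_{r_i^2}$ and the coefficients of the quadratic interpolation) so that the only way for a bounded integer vector to equal a small real error is to be zero. The remainder is bookkeeping, but one must also confirm ergodicity of $T_n$ on $\T^{2d}$ — standard once $(M_n\alpha)$ is dense, but worth writing down because the statement asserts failure among \emph{ergodic} systems, matching the hypothesis in Theorem~\ref{thm: double Khintchine} and its would-be quadruple analogue.
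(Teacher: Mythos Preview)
Your proposal is correct and follows essentially the same approach as the paper: the same skew-product $T_n(x,y) = (x + M_n\alpha, y + M_{2n}x + M_{n^2}\alpha)$ on $\T^{2d}$, the same box-thickening $\tilde B$ of the rescaled set $B$, the same reduction via the quadratic interpolation identity to force the integer labels $j_i$ to form a QC5($r$), and the same final bound $\mu(\bigcap_i T_{r_in}^{-1}A) \le C_0(r)\,m^{2d} N^d |B|^{-2}\,\mu(A)^3$. You have also correctly flagged the only delicate point, namely choosing $m$ large enough (in terms of the matrix entries of $S, U_i, V_i, W_i$) so that the real error term is forced below $1/(mN)$ and hence the integer relation holds exactly.
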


\begin{cor} \label{cor: Gaussian Ruzsa}
	Let $d \in \N$.
	Let $c_1, \dots, c_4 \in \Z$ be distinct and nonzero.
	There is an ergodic $\Z^d$-system $\left( X, \B, \mu, (T_n)_{n \in \Z^d} \right)$
	such that, for every integer $l \ge 1$, there is a set $A = A(l) \in \B$ with $\mu(A) > 0$ such that
	\begin{align*}
		\mu \left( A \cap T_{c_1n}A \cap T_{c_2n}A \cap T_{c_3n}A \cap T_{c_4n}A \right) \le \mu(A)^l
	\end{align*}
	for $n \ne 0$.
\end{cor}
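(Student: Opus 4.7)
The strategy is to realize the $\Z^d$-system in the corollary as the $\O_K$-system produced by the preceding proposition, applied to any algebraic number field $K$ of degree $d$. Fix such a $K$ with integral basis $b_1, \ldots, b_d$, so that the map $(n_1, \ldots, n_d) \mapsto \sum_j n_j b_j$ identifies $(\Z^d, +)$ with $(\O_K, +)$. Under this identification, scalar multiplication by $c \in \Z$ on $\Z^d$ agrees with multiplication by $c$ in $\O_K$, and the multiplication matrices degenerate to $M_{c_i} = c_i I_d$. Thus the corollary is equivalent to the $\O_K$-version for $r_i = c_i$, and it suffices to verify the combinatorial hypothesis of the preceding proposition and to sharpen its quantitative output.

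For the combinatorics, I would build pattern-avoiding sets in $\{0, 1, \ldots, N-1\}^d$ as Cartesian powers. Given the quintuple $r := (0, c_1, c_2, c_3, c_4) \in \Z^5$ (distinct by hypothesis), Theorem \ref{thm: gen Ruzsa} yields, for each $N$, a QC5($r$)-free set $B_0 \subseteq \{0, 1, \ldots, N-1\}$ with $|B_0| > N e^{-c_r \sqrt{\log N}}$, and I set $B := B_0^d$. If $\{a_0, \ldots, a_4\} \subseteq B$ is a QC5($r$)-configuration in $\Z^d$ with $a_i = P(c_i)$ for some quadratic $P(x) \in K[x]$, then expanding the coefficients of $P$ in the basis $(b_j)$ produces quadratic $P_j(x) \in \Q[x]$ with $a_{i,j} = P_j(c_i)$. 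Each coordinate slice $\{a_{0,j}, \ldots, a_{4,j}\} \subseteq B_0$ is itself a QC5($r$), forcing all five entries to coincide for every $j$, and hence $a_0 = \cdots = a_4$. So $B$ is QC5($r$)-free, with $|B| = |B_0|^d$ far exceeding any prescribed $C N^{d/2}$ for $N$ large.

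To upgrade from mere failure of the large intersections property to the stronger polynomial decay promised in the corollary, I would return to the effective estimate inside the proof of the preceding proposition, namely
\begin{equation*}
\mu\bigl(A \cap T_{c_1 n}A \cap T_{c_2 n}A \cap T_{c_3 n}A \cap T_{c_4 n}A\bigr) \le \frac{m^{2d} L^d N^d}{|B|^2}\,\mu(A)^3, \qquad \mu(A) = \frac{|B|}{(m^2 N)^d},
\end{equation*}
where $m$ and $L$ depend only on $c_1, c_2, c_3, c_4$. This is dominated by $\mu(A)^l$ as soon as $|B|^{l-1} \ge m^{2d(l-2)} L^d N^{d(l-2)}$, and the lower bound $|B_0|^d \ge N^d e^{-c_r d \sqrt{\log N}}$ comfortably beats this threshold (which grows only like $N^{d - d/(l-1)}$ up to constants) for all $N$ sufficiently large. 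The underlying skew-product system $T_n(x,y) = (x + M_n \alpha, y + M_{2n} x + M_{n^2} \alpha)$ on $\T^{2d}$ is fixed once and for all, ergodicity coming from density of $\{M_n \alpha : n \in \O_K\}$ in $\T^d$ for a suitably generic $\alpha$; only the set $A = A(l)$ varies with $l$.

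The only delicate point is confirming that the super-polynomial slack $e^{-c_r \sqrt{\log N}}$ furnished by Theorem \ref{thm: gen Ruzsa} overcomes the polynomial gap $N^{1/(l-1)}$ needed to absorb the prefactor $(m^{2d} L^d)^{1/(l-1)}$ for each prescribed $l$; this is immediate but forms the substantive quantitative step. Every other ingredient is either formal (the number-field interpretation and Cartesian-product reduction of multi-dimensional QC5 configurations to one-dimensional ones) or is quoted directly from the preceding proposition.
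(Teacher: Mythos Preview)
Your proposal is correct and follows essentially the same approach that the paper indicates: the paper states that the proof is ``completely parallel to the proof of Corollary~\ref{cor: Gaussian Behrend}, using Theorem~\ref{thm: gen Ruzsa} in place of Behrend's theorem,'' and this is precisely what you do---take a one-dimensional QC5($r$)-free set $B_0$ from Theorem~\ref{thm: gen Ruzsa}, form $B = B_0^d$, verify coordinate-wise that $B$ is QC5($r$)-free, and feed the effective bound from the preceding proposition together with the super-polynomial density of $B_0$ to get $\mu(A)^l$ for each $l$. You have in fact supplied the details the paper chose to omit.
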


The proof of Corollary \ref{cor: Gaussian Ruzsa} is completely parallel
to the proof of Corollary \ref{cor: Gaussian Behrend},
using Theorem \ref{thm: gen Ruzsa} in place of Behrend's theorem (Theorem \ref{thm: Behrend}) to satisfy the necessary bounds.
We therefore omit the details.

\subsection{Rational numbers} \label{sec: Q quadruple}

Using our generalization of Ruzsa's theorem (Theorem \ref{thm: gen Ruzsa}), we can prove an analogue of Theorem \ref{thm: bhk quadruple} in $\Q$.
By restricting our attention to the real coordinate of adeles, we can adapt the proof in \cite[Section 2.2]{bhk}.
\begin{thm} \label{thm: Q quadruple}
	There is an ergodic $\Q$-system $\left( X, \B, \mu, (T_n)_{n \in \Q} \right)$ such that
	for every quintuple of distinct rationals $r = (r_0, r_1, r_2, r_3, r_4) \in \Q^5$ and every $l \ge 1$,
	there is a set $A = A(r,l) \in \B$ with $\mu(A) > 0$ such that
	\begin{align*}
		\mu \left( T_{r_0n}A \cap T_{r_1n}A \cap T_{r_2n}A \cap T_{r_3n}A \cap T_{r_4n}A \right) \le \mu(A)^l
	\end{align*}
	for every $n \ne 0$.
\end{thm}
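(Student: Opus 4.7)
The plan is to build a single ergodic skew-product $\Q$-action on $\K^2$, where $\K = \Adele/\Q$, with enough quadratic structure to capture 5-term configurations, and to port the construction underlying Theorem~\ref{thm: gen Ruzsa} to this setting, exactly as \cite{bhk} do for $\Z$ and as we did for triples in the proof of Theorem~\ref{thm: Q ergodicity}. First I would fix $\alpha \in \K$ with $\{n\alpha : n \in \Q\}$ dense in $\K$ and define
$$T_n(x,y) := (x + n\alpha,\ y + 2nx + n^2\alpha)$$
on $(\K^2, \mu)$ with $\mu$ the Haar measure. A direct check gives $T_n \circ T_m = T_{n+m}$, and a standard Fourier argument on $\K^2$ parallel to the classical Heisenberg nilsystem shows this is an ergodic $\Q$-action. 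The key identity is
$$T_{rn}(x,y) = \bigl(x + rn\alpha,\ P(r)\bigr), \qquad P(r) := y + 2rnx + r^2 n^2 \alpha,$$
so the second coordinates of $T_{r_0 n}(x,y), \ldots, T_{r_4 n}(x,y)$ are the values of a quadratic polynomial in $r$ with coefficients in $\K$.

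Given a distinct quintuple $(r_0, \ldots, r_4) \in \Q^5$ and an integer $l \ge 1$, I would reduce to the integer case by choosing a positive common denominator $d$ with $dr_i \in \Z$; since the substitution $P(x) \leftrightarrow P(dx)$ preserves the class of quadratic polynomials, being QC5$(dr_0, \ldots, dr_4)$-free (in the sense of Definition~\ref{defn: QC5(c)}) is the same as having no configuration $\{P(r_0), \ldots, P(r_4)\}$ for a nonconstant quadratic $P \in \Q[x]$. Apply Theorem~\ref{thm: gen Ruzsa} to $(dr_0, \ldots, dr_4)$ to obtain, for each $N$, a QC5$(dr_0, \ldots, dr_4)$-free set $B \subseteq \{0, \ldots, N-1\}$ with $|B| > N e^{-c(r)\sqrt{\log N}}$. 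Fix a sufficiently large integer $M = M(r)$ and set
$$\tilde{B} := \bigcup_{j \in B}\left[\frac{j}{MN},\ \frac{j}{MN} + \frac{1}{M^2N}\right) \times \prod_{p \in \Prime} \Z_p \subseteq \K, \qquad A := \K \times \tilde{B},$$
so that $\mu(A) = |B|/(M^2N)$.

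The main analytic step is bounding $\mu\bigl(\bigcap_{i=0}^{4} T_{r_i n}^{-1} A\bigr)$ for $n \ne 0$. Suppose $(x,y)$ is in the intersection, so $P(r_i) \in \tilde{B}$ for each $i$; write the real coordinate of $P(r_i)$ as $j_i/(MN) + \beta_i$ with $j_i \in B$ and $\beta_i \in [0, 1/(M^2N))$. For $i = 3, 4$, Lagrange interpolation in $\Q[x]$ (clearing denominators) gives integer identities
$$s_i\, P(r_i) = u_i\, P(r_0) - v_i\, P(r_1) + w_i\, P(r_2),$$
with $s_i, u_i, v_i, w_i$ depending only on $r$. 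Choosing $M$ larger than $\max_i(|s_i|+|u_i|+|v_i|+|w_i|)$ and reading off the real coordinate, the contribution of the $\beta_j$ has absolute value strictly less than $1/(MN)$, forcing $s_i j_i = u_i j_0 - v_i j_1 + w_i j_2$. Thus $\{j_0, \ldots, j_4\}$ is a QC5$(dr_0, \ldots, dr_4)$ configuration inside $B$, and hence collapses to a single value. Consequently, the real part of $P(r_1) - P(r_0) = 2r_1 n x + r_1^2 n^2 \alpha$ lies in $(-1/(M^2N), 1/(M^2N))$, which (using $n, r_1 \ne 0$ and translation-invariance of Haar measure on $\K$) pins the real coordinate of $x$ to a set of measure at most $2/(M^2N)$. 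Combined with the constraint $y \in \tilde{B}$,
$$\mu\Bigl(\bigcap_{i=0}^{4} T_{r_i n}^{-1} A\Bigr) \le \frac{2}{M^2 N}\,\mu(A).$$

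Finally, since $\mu(A) = |B|/(M^2N)$, the inequality above is equivalent to
$$\mu\Bigl(\bigcap_{i=0}^{4} T_{r_i n}^{-1} A\Bigr) \le 2\,\mu(A)^l \cdot \frac{(M^2 N)^{l-2}}{|B|^{l-1}},$$
and the Ruzsa-type lower bound $|B| > N e^{-c\sqrt{\log N}}$ makes $(M^2 N)^{l-2}/|B|^{l-1} \le 1/2$ as soon as $N$ is large enough (depending on $r$ and $l$); hence $\mu(\bigcap) \le \mu(A)^l$ for this choice of $N$. The main obstacle, beyond routine bookkeeping, is verifying ergodicity of the quadratic adelic skew-product $T$; this is where the argument requires the most care, since the proof needs an appropriate Fourier expansion on $\K^2$ and an inspection of which characters can be $T$-invariant, in parallel with the classical $\T^2$ case but adapted to the dual group structure of $\K$. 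Everything else follows the pattern of \cite[Section 2.2]{bhk} applied to the adelic framework of the proof of Theorem~\ref{thm: Q ergodicity}.
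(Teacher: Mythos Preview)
Your proposal is correct and follows essentially the same approach as the paper: the same quadratic skew-product on $\K^2$, the same reduction to Theorem~\ref{thm: gen Ruzsa}, the same boxed set $\tilde B$ with scale $M$ chosen from the Lagrange coefficients, and the same two-step argument (first force $s_i j_i = u_i j_0 - v_i j_1 + w_i j_2$ to collapse the $j_i$'s, then pin $x$ to a small set). Two cosmetic points: your formula $P(r_1)-P(r_0)=2r_1nx+r_1^2n^2\alpha$ tacitly assumes $r_0=0$ (the paper makes this reduction explicitly), and the paper isolates $2snx$ via a three-term combination that kills the $\alpha$ term rather than invoking translation-invariance as you do---but both routes give the same bound.
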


Before proving the theorem, we note that it immediately implies that
Conjecture \ref{conj: quadruple} holds in $\Q$:
\begin{cor}
	If $k \ge 4$, then every $k$-tuple of distinct nonzero rationals
	$(r_1, \dots, r_k) \in \Q^k$ fails to have the large intersections property for $\Q$-systems.
\end{cor}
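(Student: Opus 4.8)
The plan is to derive this immediately from Theorem~\ref{thm: Q quadruple} by a monotonicity argument, exactly as Corollary~\ref{cor: Z quadruple} is meant to follow from Theorem~\ref{thm: gen Ruzsa}. Fix a $k$-tuple $(r_1,\dots,r_k)\in\Q^k$ of distinct nonzero rationals with $k\ge 4$, and write $\varphi_i(g):=r_ig$. Since $r_1,\dots,r_4$ are distinct and nonzero, the quintuple $r:=(r_0,r_1,r_2,r_3,r_4)$ with $r_0:=0$ consists of five distinct rationals, so Theorem~\ref{thm: Q quadruple} applies with this $r$ and with the choice $l:=k+2$: there is an ergodic $\Q$-system $\X=(X,\B,\mu,(T_g)_{g\in\Q})$ and a set $A\in\B$ with $\mu(A)>0$ such that, using $T_{r_0n}A=A$,
\begin{align*}
	\mu\left(A\cap T_{r_1n}A\cap T_{r_2n}A\cap T_{r_3n}A\cap T_{r_4n}A\right)\le\mu(A)^{k+2}
\end{align*}
for every $n\ne 0$. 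Inspecting the construction in the proof of Theorem~\ref{thm: Q quadruple}, the set $A$ is of the form $\K\times\tilde B$ with $\tilde B$ a proper subset of $\K$, so in fact $0<\mu(A)<1$; this will matter below.

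Next I would pass from the quintuple back to the full $k$-tuple by simply discarding the surplus terms: since intersecting with additional sets can only decrease measure,
\begin{align*}
	\mu\left(A\cap\bigcap_{i=1}^k T_{\varphi_i(g)}A\right)\le\mu\left(A\cap\bigcap_{i=1}^4 T_{r_ig}A\right)\le\mu(A)^{k+2}
\end{align*}
for every $g\ne 0$. Now set $\eps:=\mu(A)^{k+1}-\mu(A)^{k+2}=\mu(A)^{k+1}\bigl(1-\mu(A)\bigr)$, which is strictly positive because $0<\mu(A)<1$. Then the displayed inequality shows that
\begin{align*}
	\left\{g\in\Q:\mu\left(A\cap T_{\varphi_1(g)}A\cap\cdots\cap T_{\varphi_k(g)}A\right)>\mu(A)^{k+1}-\eps\right\}\subseteq\{0\},
\end{align*}
since for $g\ne 0$ the quantity is at most $\mu(A)^{k+2}=\mu(A)^{k+1}-\eps$. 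But $\{0\}$ is not syndetic in $\Q$: finitely many translates of a single point form a finite set, which cannot cover the infinite group $\Q$ (alternatively, apply Lemma~\ref{lem: syndetic thick} to a F{\o}lner sequence avoiding $0$). Hence, for the ergodic system $\X$, the set $A$, and this choice of $\eps$, the set in \eqref{eq: large intersection hom} is not syndetic, so $\{\varphi_1,\dots,\varphi_k\}$ fails to have the large intersections property, which is the claim.

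There is essentially no hard step: the whole content is carried by Theorem~\ref{thm: Q quadruple} (and thus, at bottom, by the generalized Ruzsa theorem, Theorem~\ref{thm: gen Ruzsa}). The only point requiring a moment's care is the choice of $l$: because the set $A$ furnished by Theorem~\ref{thm: Q quadruple} depends on $l$, one cannot first fix $A$ and then let $l\to\infty$; instead I fix $l=k+2$ once and for all and use $0<\mu(A)<1$ to guarantee $\mu(A)^{k+2}<\mu(A)^{k+1}$, producing a positive $\eps$. The reduction from $k$-tuples to quintuples is pure monotonicity of measure and uses nothing beyond $k\ge 4$ and the distinctness of $r_1,\dots,r_4$. (Should one prefer the formulation of Theorem~\ref{thm: Q quadruple} with $T_{r_in}^{-1}$ in place of $T_{r_in}$, one simply applies it to the quintuple $(-r_0,-r_1,-r_2,-r_3,-r_4)$ of distinct rationals and uses $T_{r_in}^{-1}=T_{-r_in}$; the argument is otherwise unchanged.)
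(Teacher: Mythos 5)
Your argument is correct and is exactly the derivation the paper leaves implicit when it calls the corollary an immediate consequence of Theorem~\ref{thm: Q quadruple}: apply the theorem to the quintuple $(0,r_1,r_2,r_3,r_4)$ with a single fixed exponent $l$ exceeding $k+1$, discard the extra sets by monotonicity of measure, and observe that the resulting non-syndetic (indeed at most $\{0\}$) return set kills the large intersections property. Your extra care in checking $0<\mu(A)<1$ (via $\mu(A)=|B|/(m^2N)$ in the construction), which is needed to turn $\mu(A)^{k+2}\le\mu(A)^{k+1}$ into a strict gap and hence a positive $\eps$, is a worthwhile point the paper glosses over.
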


\begin{proof}[Proof of Theorem \ref{thm: Q quadruple}]
	Consider the $\Q$-action on $\K^2$ given by $T_n(x,y) = (x+n\alpha, y+2nx+n^2\alpha)$,
	which preserves the Haar probability measure $\mu$ on $\K^2$.
	We can choose $\alpha \in \K \setminus \Q$ so that $(n\alpha)_{n \in \Q}$ is dense
	and the $\Q$-action is consequently ergodic.
	
	Without loss of generality, we may assume $r \in \Z^5$ and $0 = r_0 < r_1 < r_2 < r_3 < r_4$.
	Let $B \subseteq \{0, 1, \dots, N-1\}$ be QC5($r$)-free.
	Define polynomials
	\begin{align*}
		s & := r_1r_2^2-r_1^2r_2, \\
		U(r) & := (r_2-r_1)r^2 - (r_2^2-r_1^2)r + (r_1r_2^2 - r_1^2r_2), \\
		V(r) & := r_2 r^2 - r_2^2 r, \\
		W(r) & := r_1 r^2 - r_1^2 r,
	\end{align*}
	and set
	\begin{align*}
		m := \max\{ U(r_4) + W(r_4), V(r_4) + s\}
	\end{align*}
	as in the proof of Theorem \ref{thm: gen Ruzsa}.
	For $j \in \{0, 1, \dots, N-1\}$, let
	\begin{align*}
		I_j := \left[ \frac{j}{mN}, \frac{j}{mN} + \frac{1}{m^2N} \right) \times \prod_{p \in \Prime}{\Z_p},
	\end{align*}
	and set $\tilde{B} := \bigcup_{j \in B}{I_j}$.
	Finally, let $A := \K \times \tilde{B}$.
	Note $\mu(A) = \frac{|B|}{m^2N}$.
	
	Suppose $(x,y) \in A \cap T_{r_1}nA \cap T_{r_2n}A \cap T_{r_3n}A \cap T_{r_4n}A$ for some $n \ne 0$.
	Then $\{y, y+2r_1nx+r_1^2n^2\alpha, y+2r_2nx+r_2^2n^2\alpha, y+2r_3nx+r_3^2n^2\alpha,
	y+2r_4nx+r_4^2n^2\alpha \} \subseteq \tilde{B}$.
	For $i = 0, \dots, 4$, let $a_i = y+r_i(2nx)+r_i^2(n^2\alpha)$.
	We have $\{a_0, a_1, a_2, a_3, a_4\} \subseteq \tilde{B}$.
	
	Consider $a_i$ as an element of $[0,1) \times \prod_{p\in\Prime}{\Z_p} \subseteq \Adele$.
	By the definition of $\tilde{B}$, the real coordinate of $a_i$ is in fact in the interval
	$\left[ 0, \frac{1}{m} \right)$.
	
	Observe
	\begin{align*}
		s a_i =  U(r_i) a_0 - V(r_i) a_1 + W(r_i) a_2 \pmod{\Q}
	\end{align*}
	for $i = 3, 4$.
	Now, by the choice of $m$, the adele on the right-hand side has real coordinate in the interval
	$\left( -\frac{m-1}{m}, 1 \right)$, since the real coordinate of each $a_i$ is in $\left[ 0, \frac{1}{m} \right)$.
	It follows that the real coordinate of the left-hand side is equal to the real coordinate of the right-hand side
	considered as elements of $\Adele$.
	
	Let $j_i \in B$ such that $a_i \in I_{j_i}$.
	Then $U(r_i) a_0 - V(r_i) a_1 + W(r_i) a_2$ belongs to the set
	\begin{align*}
		J := \left( \frac{U(r_i) j_0 - V(r_i) j_1 + W(r_i) j_2}{mN} - \frac{m-1}{m^2N},
		\frac{U(r_i) j_0 - V(r_i) j_1 + W(r_i) j_2}{mN} + \frac{1}{mN} \right)
		\times \prod_{p \in \Prime}{Z_p}.
	\end{align*}
	If $j \ne U(r_i) j_0 - V(r_i) j_1 + W(r_i) j_2$, then $J \cap I_j = \es$.
	Thus, $s j_i = U(r_i) j_0 - V(r_i) j_1 + W(r_i) j_2$ for $i=3,4$.
	
	It follows that there is a quadratic polynomial $P$ such that $P(r_i) = j_i$ for $i = 0, \dots, 4$.
	But $B$ is QC5($r$)-free, so $P$ must be constant.
	That is, $a_i \in I_j$ for some fixed $j \in B$.
	Now,
	\begin{align*}
		2sn x & = - (r_2^2-r_1^2) a_0 + r_2^2 a_1 - r_1^2 a_2.
	\end{align*}
	so $2snx \in \left( - \frac{L}{2m^2N}, \frac{L}{2m^2N} \right)
	\times \prod_{p \in \Prime}{\Z_p}$, where $L = 2r_2^2$.
	
	The transformation $x \mapsto 2snx$ is measure-preserving, so we have
	\begin{align*}
		\mu \left( \bigcap_{i=0}^4{T_{r_in}A} \right)
		\le \frac{L |B|}{m^4N^2}.
	\end{align*}
	To get the bound of $\mu(A)^l$, it suffices to have
	\begin{align*}
		|B| \ge L^{\frac{1}{l-1}} m^{2 \left( 1 - \frac{1}{l-1} \right)} N^{1 - \frac{1}{l-1}}.
	\end{align*}
	Theorem \ref{thm: gen Ruzsa} guarantees that such a set $B$ exists
	for large enough $N$ depending on $r$ and $l$.
\end{proof}


\section{Failure of large intersections for triple recurrence without the parallelogram condition}
\label{sec: parallelogram}

We say that a quadruple $(\varphi_0,\varphi_1,\varphi_2,\varphi_3)$ \emph{forms a parallelogram} if
$\varphi_i+\varphi_j=\varphi_k+\varphi_l$
for some permutation $(i,j,k,l)$ of $(0,1,2,3)$.
This property characterizes quadruples $(0, \varphi, \psi, \varphi + \psi)$ up to reordering and shifts.
Theorem \ref{thm: triple Khintchine} shows that, for admissible families of homomorphisms coming as multiplication by integers, the parallelogram condition is sufficient for large intersections.
We believe it is also necessary.

\begin{conj} \label{conj: parallelogram}
	Let $G$ be a countable discrete abelian group.
	An admissible triple $(r, s, t)$
	has the large intersections property if and only if $(0, r, s, t)$ forms a parallelogram.
\end{conj}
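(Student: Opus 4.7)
The ``if'' direction is immediate from Theorem \ref{thm: triple Khintchine}: if $\{0, r, s, t\}$ forms a parallelogram, then after reindexing we can write the triple as $(\tilde r, \tilde s, \tilde r + \tilde s)$, and admissibility is preserved under this relabeling, so the triple has the large intersections property.

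For the ``only if'' direction, my plan is to adapt the counterexample strategy of Section \ref{sec: quadruple}. Given a non-parallelogram admissible triple $(r,s,t) \in \Z^3$, the goal is to construct an ergodic quasi-affine $G$-system $\X = \mathbf{Z} \times_{\sigma} H$ (a $2$-step Heisenberg-type skew product) together with a set $A \in \B$ of positive measure such that $\mu(A \cap T_{rn}^{-1} A \cap T_{sn}^{-1} A \cap T_{tn}^{-1} A) < \mu(A)^4 - \eps$ for every nonzero $n$. Concretely, for $G = \Z$, I would begin with the skew product $T_n(x,y) = (x + n\alpha, y + 2nx + n^2 \alpha)$ on $\T^2$ (with $\alpha$ irrational so that the action is ergodic) and take $A = \T \times \tilde B$. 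Iterating yields $T_{kn}(x,y) = (x+kn\alpha, Q_n(k))$ where $Q_n(k) = y + 2knx + n^2 k^2 \alpha$ is quadratic in $k$, so the $4$-fold intersection becomes the probability that the quadratic configuration $\{Q_n(0), Q_n(r), Q_n(s), Q_n(t)\}$ lies in $\tilde B$.

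The intended conclusion would then follow from a combinatorial lemma in the spirit of Theorem \ref{thm: gen Ruzsa}: namely, that for each $N$ there exists a subset $B \subseteq \{0, 1, \dots, N-1\}$ of density close to $1$ containing no nontrivial QC$4(0,r,s,t)$ pattern $\{P(0), P(r), P(s), P(t)\}$ with $P$ a nonconstant rational quadratic. One then lifts $B$ to a subset $\tilde B \subseteq \T$ as in the proof of Theorem \ref{thm: Q quadruple}, and checks that the skew product constraints, together with avoidance of QC$4$ patterns in $\tilde B$, force the four values $Q_n(0), \ldots, Q_n(t)$ to coincide, which (for $n \neq 0$ and $\alpha$ irrational) produces a system of equations with no solutions, squeezing the measure of the intersection.

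The main obstacle, and the reason the result is only conjectured, lies in the combinatorial lemma itself. The proof of Theorem \ref{thm: gen Ruzsa} relies decisively on Lemma \ref{lem: Ruzsa}, which uses five sample points to force a quadratic polynomial $P : \R \to \R^d$ with $\|P(r_i)\|$ all equal to be constant (via the observation that $\|P(u)\|^2 - c^2$ is a degree $\leq 4$ real polynomial). With only four sample points the analogous rigidity genuinely fails: for any four distinct real $r_0, r_1, r_2, r_3$ one can produce nonconstant quadratic $P$ with $\|P(r_i)\| = c$ for all $i$, since $\|P(u)\|^2 - c^2$ can equal $\alpha \prod (u - r_i)$ for suitable $\alpha > 0$. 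Thus the direct Behrend--Ruzsa construction cannot be applied. Closing this gap will likely require exploiting the non-parallelogram hypothesis in a more refined way—for instance, by restricting attention to quadratics $P$ whose leading coefficient is constrained by integrality conditions (so that the $\alpha \prod(u-r_i)$ term is forbidden), or by a direct Fourier-analytic construction on a specific $2$-step nilmanifold in the spirit of Chu's example (Example \ref{eg: Chu}), or by reducing to the finite-group patterns studied in \cite{bsst}, which suggest that the failure of large intersections for non-parallelogram configurations can be detected already at the level of concrete combinatorial examples in finite abelian groups.
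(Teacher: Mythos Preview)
This statement is a \emph{conjecture}, not a theorem: the paper does not supply a proof, so there is no ``paper's own proof'' to compare against. Your proposal correctly reflects this status.

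Your handling of the ``if'' direction is right: any parallelogram relabels to $(\tilde r, \tilde s, \tilde r + \tilde s)$, and admissibility of the original triple $(r,s,t)$ furnishes exactly the finite-index conditions $\tilde r G, \tilde s G, (\tilde r \pm \tilde s)G$ required by Theorem~\ref{thm: triple Khintchine}.

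For the ``only if'' direction, you have accurately diagnosed the obstruction: the Behrend--Ruzsa sphere argument (Lemma~\ref{lem: Ruzsa}) genuinely needs five sample points, since $\|P(u)\|^2 - c^2$ has degree $\le 4$, and with only four points one can always write $\|P(u)\|^2 - c^2 = \alpha \prod_{i=0}^3 (u - r_i)$ for suitable $\alpha$. So the combinatorial input you would need (a dense QC4-free set) is not available by this route, and you are right to flag it as the gap.

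What the paper adds beyond your outline is context rather than proof: it records that the $\Z$ case of the conjecture is already known (attributed to a remark in \cite{btz2}), and it cites the finitary analogue of Sah--Sawhney--Zhao (Theorem~\ref{thm: ssz}), which shows that non-parallelogram quadruples in $\Z$ fail the popular-differences property at the combinatorial level. These results suggest that the counterexamples for general $G$ should indeed come from a $2$-step construction of the type you describe, but via a different combinatorial input than a naive QC4 avoidance---more in line with the Fourier-analytic arguments behind \cite{ssz} and \cite{bsst}, as you speculate at the end.
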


It is stated in a footnote in \cite{btz2} that if $(0,c_1,c_2,c_3) \in \Z^4$
does not form a parallelogram, then $(c_1,c_2,c_3)$ fails to have the large intersections property for $\Z$-systems,
establishing Conjecture \ref{conj: parallelogram} for $\Z$.
Moreover, if $p$ is sufficiently large (depending on $(c_1,c_2,c_3)$),
then the triple fails to have the large intersections property for $\F_p^{\infty}$-systems as well.

\bigskip

Some progress has also been made on a related combinatorial problem.
As noted in the introduction, for $r, s \in \Z$ distinct and nonzero and $\delta, \eps > 0$,
there exists $N_0 = N_0(r, s, \delta, \eps) \in \N$ such that if $N \ge N_0(\delta, \eps)$ and $A \subseteq \{1, \dots, N\}$ has size $|A| \ge \delta N$, then there exists $n \ne 0$ such that
\begin{align*}
	\left| A \cap (A-rn) \cap (A-sn) \cap (A-(r+s)n) \right| & > \left( \delta^4 - \eps \right) N.
\end{align*}

Recent work of Sah, Sawhney, and Zhao shows that the parallelogram property is necessary for this combinatorial result.
\begin{thm}[\cite{ssz}, Theorem 1.6] \label{thm: ssz}
	There is an absolute constant $\delta > 0$ such that, for every $(c_0, c_1, c_2, c_3) \in \Z^4$
	that does not form a parallelogram, the following holds:
	for all $\alpha \in \left( 0, \frac{1}{2} \right)$ and all $N_0 \in \N$,
	there is an $N \ge N_0$ and a set $B \subseteq \{0, 1, \dots, N-1\}$
	with $|B| > \alpha N$ such that for every $n \in \Z$,
	\begin{align*}
		\left| (B-c_0n) \cap (B-c_1n) \cap (B-c_2n) \cap (B-c_3n) \right| \le (1-\delta)\alpha^4 N.
	\end{align*}
\end{thm}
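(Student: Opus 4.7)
After translating so that $c_0=0$, the non-parallelogram hypothesis becomes the statement that for every $\epsilon \in \{-1,0,+1\}^4 \setminus \{(0,0,0,0)\}$ with $\sum_i \epsilon_i = 0$, we have $\sum_i \epsilon_i c_i \neq 0$. The plan is to construct $B$ as a random subset of $\Z/N\Z$ for a large prime $N$ (identifying with $\{0,1,\dots,N-1\}$, incurring only $O(\max_i |c_i|)$ boundary error that is negligible for large $N$). Each $x$ will be included in $B$ independently with probability $p(x) = \alpha + \beta\,\psi(x)$, where $\psi: \Z/N\Z \to [-1,1]$ is a mean-zero trigonometric polynomial chosen as a function of $(c_1,c_2,c_3)$, and $\beta \in (0,\min(\alpha,1-\alpha))$ is a small amplitude. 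By standard concentration, $|B| > \alpha N$ with high probability.

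The analysis proceeds via a Fourier expansion of the expected intersection count
$$
\mathbb{E}[I(n)] \;=\; \sum_{x \in \Z/N\Z} \prod_{i=0}^3 p(x + c_i n) \;=\; \alpha^4 N \;+\; \sum_{k=2}^{4} \alpha^{4-k}\beta^k \sum_{|S|=k}\sum_{x}\prod_{i \in S}\psi(x + c_i n).
$$
(The linear $k=1$ terms vanish because $\psi$ is mean-zero.) Writing $\psi$ as a sum of exponentials and performing the $x$-summation forces each contributing sign pattern $\epsilon$ on a subset $S$ to satisfy $\sum_{i\in S}\epsilon_i = 0$; the surviving term then carries a phase $e^{2\pi i (\sum_i \epsilon_i c_i) n / N}$. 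For $|S| = 3$ no cancellation is possible, so these terms vanish. For $|S| = 4$ with a nontrivial sign pattern, the non-parallelogram hypothesis is precisely what forces $\sum \epsilon_i c_i \neq 0$, so these terms oscillate in $n$ and can be annihilated by averaging over random signs in the construction of $\psi$.

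The $|S|=2$ correction has the shape $\alpha^2 \beta^2 \sum_{i<j} C_\psi\!\bigl((c_j-c_i)n\bigr)$, where $C_\psi(m) = \sum_x \psi(x)\psi(x+m)$ is the autocorrelation. By choosing $\psi$ to be a carefully designed sum of cosines along a Bohr-type frequency set $T$ depending on $(c_1,c_2,c_3)$ (or an average over random choices of $T$), we arrange that $\sum_{i<j} C_\psi((c_j-c_i)n)$ is \emph{uniformly negative} in nonzero $n$, and of order $\beta^2 N$. Scaling $\beta$ appropriately then yields $\mathbb{E}[I(n)] \leq (1-\delta)\alpha^4 N$ for every nonzero $n$, with $\delta > 0$ an absolute constant. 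A Chernoff bound on the independent Bernoulli family $\{\mathbf{1}[x \in B]\}_x$ gives that $I(n)$ concentrates around $\mathbb{E}[I(n)]$ with fluctuations $O(N^{1/2}\log N)$, and a union bound over $n$ produces a single realization with $|B| > \alpha N$ and $I(n) \leq (1-\delta)\alpha^4 N$ for all $n \neq 0$.

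The main obstacle will be arranging the quadratic correction to be \emph{uniformly} (not merely on average) negative in $n$: a single frequency gives an oscillating cosine, so some $n$ would give a positive contribution and hence $I(n) > \alpha^4 N$. The resolution requires combining many frequencies, chosen so the resulting autocorrelation is bounded away from $0$ on the negative side across all nonzero shifts; the non-parallelogram condition enters here quantitatively through the fact that for integer tuples the forbidden sums $\sum \epsilon_i c_i$ are nonzero \emph{integers}, hence of absolute value at least $1$, which is what makes $\delta$ universal rather than degenerating as the tuple approaches parallelogram configurations.
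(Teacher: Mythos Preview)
This theorem is quoted from \cite{ssz} and is not proved in the present paper, so there is no argument here to compare your proposal against directly. Evaluated on its own, however, your approach has a genuine obstruction at exactly the step you flag as ``the main obstacle.''

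You want the $|S|=2$ correction $\alpha^2\beta^2\sum_{i<j} C_\psi\bigl((c_j-c_i)n\bigr)$ to be uniformly at most $-c\,\alpha^4 N$ for some fixed $c>0$ and all $n\ne 0$. But sum this quantity over $n\in\Z/N\Z$: with $N$ prime and larger than each $|c_j-c_i|$, every difference is invertible, so $\sum_n C_\psi\bigl((c_j-c_i)n\bigr)=\sum_m C_\psi(m)=\bigl(\sum_x\psi(x)\bigr)^2=0$. Hence
\[
\sum_{n\ne 0}\ \sum_{i<j} C_\psi\bigl((c_j-c_i)n\bigr)\;=\;-6\,C_\psi(0)\;=\;-6\sum_x\psi(x)^2.
\]
The constraint $p(x)=\alpha+\beta\psi(x)\in[0,1]$ forces $\beta\,|\psi(x)|\le\min(\alpha,1-\alpha)$, so $\beta^2\sum_x\psi(x)^2\le\alpha^2 N$. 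Thus the \emph{average} over $n\ne 0$ of the full $|S|=2$ correction is $O(\alpha^4)$---a bounded constant, not a constant multiple of $N$. Consequently, for any choice of $\psi$ (one frequency or many, Bohr-type or otherwise), at most $O_\delta(1)$ values of $n$ can satisfy the bound you need; for the remaining $n$ the inequality $\mathbb{E}[I(n)]\le(1-\delta)\alpha^4 N$ fails. The same averaging argument kills the $|S|=4$ term: for a non-parallelogram tuple and a linear-phase $\psi$, the balanced sign patterns $\sum_i\epsilon_i=0$ all have $\sum_i\epsilon_ic_i\ne 0$, so $\sum_n\sum_x\prod_i\psi(x+c_in)=0$ and the quartic correction is again $O(1)$ on average over $n$.

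This is structural rather than technical. Four-term configurations are governed by the $U^3$ norm, not $U^2$; a trigonometric-polynomial perturbation is a purely $U^2$ object and cannot produce a uniform-in-$n$ deficit for any four-term pattern, parallelogram or not. The non-parallelogram hypothesis is a statement about \emph{second-order} relations among the $c_i$ (the vanishing or nonvanishing of the balanced sums $\sum_i\epsilon_ic_i$), and any construction realizing the theorem has to introduce genuinely quadratic structure---a phase of the shape $\cos(2\pi\theta x^2/N)$ or an equivalent nilsequence---so that the hypothesis enters through the nondegeneracy of the resulting quadratic forms in $(x,n)$. Your outline would need to be rebuilt around such a perturbation; the linear-Fourier scaffolding cannot carry the argument.
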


For general admissible families of homomorphisms $\{\varphi, \psi, \varphi + \psi\}$, it is unclear from our methods whether or not the large intersections property holds.
Indeed, the limit formula in Theorem \ref{thm: limit formula} only applies to homomorphisms given by multiplication by integers.

\bigskip

We now discuss a generalization of parallelogram configurations where we can prove a (much weaker) result about multiple recurrence.
One advantage of parallelogram configurations is that we can view them iteratively.
That is,
\begin{align*}
	\mu \left( A \cap T_{\varphi(g)}^{-1}A \cap T_{\psi(g)}^{-1}A \cap T_{(\varphi+\psi)(g)}^{-1}A \right)
	= \mu \left( \left( A \cap T_{\varphi(g)}^{-1}A \right)
	\cap T_{\psi(g)}^{-1} \left( A \cap T_{\varphi(g)}^{-1}A \right) \right).
\end{align*}
In this form, we see, through two successive applications of Khintchine's theorem, that there are $g, h \in G$
such that
\begin{align*}
	\mu \left( \left( A \cap T_{\varphi(g)}^{-1}A \right)
	\cap T_{\psi(h)}^{-1} \left( A \cap T_{\varphi(g)}^{-1}A \right) \right) > \mu(A)^4 - \eps.
\end{align*}
The difficulty is in getting pairs $(g,h)$ to lie in the diagonal of $G^2$, i.e. $g = h$.
Beyond this, there are no obvious obstacles to repeating an iterative process.
Since large intersections begin to fail for longer expressions, this idea must be well short of verifying that parallelogram families have the large intersections property.
On the other hand, we can produce meaningful results about large intersections, albeit of a different variety, for general ``cubic'' configurations:

\begin{thm} \label{thm: cubic Khintchine}
	Let $\X = \left( X, \B, \mu, (T_g)_{g \in G} \right)$ be a measure-preserving system.\footnote{
		Note that we do not assume ergodicity here, in contrast to the results of Section \ref{sec: ergodicity}.
		Moreover, as noted in the discussion above, we obtain here ``cubic'' expressions of arbitrary length.
		This is an indication that these methods are unlikely to prove that
		$R_{\eps}(\varphi, \psi)$ is syndetic for any fixed pair of homomorphisms $\{\varphi,\psi\}$.}
	Let $A \in \B$, $k \in \N$, and $\eps > 0$.
	There is a syndetic set $S \subseteq \mathrm{Hom}(G,G)^k$ such that
	for every $(\varphi_1, \varphi_2, \dots, \varphi_k) \in S$, the set
	\begin{align*}
		R_{\eps}(\varphi_1, \dots, \varphi_k) := \left\{ g \in G :
		\mu \left( \bigcap_{\eps \in \{0,1\}^k}{
			T_{(\eps_1\varphi_1 + \eps_2\varphi_2 + \cdots + \eps_k\varphi_k)(g)}^{-1}A} \right)
		> \mu(A)^{2^k} - \eps \right\}
	\end{align*}
	is syndetic in $G$.
\end{thm}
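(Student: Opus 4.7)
The proof combines a cube analog of Khintchine's theorem with a parametrization trick that exploits the natural action of $\mathrm{Hom}(G,G)$ on $X$ obtained by evaluation at a fixed group element. First, I would establish the following \emph{cube inequality}: for any measure-preserving action $(S_\gamma)_{\gamma \in \Gamma}$ of a countable discrete abelian group $\Gamma$ on $(X,\B,\mu)$ and any $A \in \B$,
$$\UClim_{\gamma \in \Gamma^k} \mu\Bigl( \bigcap_{\eps \in \{0,1\}^k} S^{-1}_{\eps \cdot \gamma} A \Bigr) \geq \mu(A)^{2^k}.$$
This is proved by induction on $k$: writing $F_{\gamma'}(x) := \prod_{\eps' \in \{0,1\}^{k-1}} \ind_A(S_{\eps' \cdot \gamma'} x)$, the integrand in the cube average factors as $F_{\gamma'}(x) \cdot S_{\gamma_k} F_{\gamma'}(x)$; the mean ergodic theorem (Theorem~\ref{thm: ergodic theorem}) applied to the UC-limit over $\gamma_k$ gives $\|\E{F_{\gamma'}}{\mathcal{I}}\|_2^2 \geq (\int F_{\gamma'}\,d\mu)^2$, and a second application of Cauchy--Schwarz to the outer UC-limit over $\gamma' \in \Gamma^{k-1}$ combined with the inductive hypothesis closes the induction.

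The key observation is now that for each fixed $g \in G$, the map $\varphi \mapsto T_{\varphi(g)}$ is a measure-preserving action of the abelian group $\mathrm{Hom}(G,G)$ on $X$, since $\varphi \mapsto \varphi(g)$ is a group homomorphism $\mathrm{Hom}(G,G) \to G$. Applying the cube inequality to this parametrized action yields, for every $g \in G$,
$$\UClim_{\varphi \in \mathrm{Hom}(G,G)^k} \mu\Bigl( \bigcap_{\eps} T^{-1}_{\eps \cdot \varphi(g)} A \Bigr) \geq \mu(A)^{2^k}.$$

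To produce the syndetic set $S$, write $\alpha_\varphi(g) := \mu(\bigcap_\eps T^{-1}_{\eps \cdot \varphi(g)} A)$ and set
$$S := \bigl\{\varphi \in \mathrm{Hom}(G,G)^k : \UClim_g \alpha_\varphi(g) > \mu(A)^{2^k} - \eps/2\bigr\}.$$
A Fubini-type interchange of uniform Ces\`aro limits on the product group $\mathrm{Hom}(G,G)^k \times G$ yields
$$\UClim_\varphi \UClim_g \alpha_\varphi(g) = \UClim_g \UClim_\varphi \alpha_\varphi(g) \geq \mu(A)^{2^k}.$$
By Lemma~\ref{lem: syndetic thick}, $S$ is syndetic in $\mathrm{Hom}(G,G)^k$, and for each $\varphi \in S$ a further application of Lemma~\ref{lem: syndetic thick} to the inner UC-limit over $g$ yields that $R_\eps(\varphi)$ is syndetic in $G$.

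The main obstacle I expect is rigorously justifying the Fubini-type interchange of uniform Ces\`aro limits in the last step, particularly since $\mathrm{Hom}(G,G)$ may be uncountable (as happens for $G = \bigoplus_{n \in \N} \Z$). The interchange requires uniform convergence of the inner UC-limit with respect to the outer parameter; this should follow from the Hilbert-space $L^2$-convergence underlying the mean ergodic theorem, but demands careful handling of F\o lner sequences in product groups---most likely by restricting attention to a suitable countable subgroup of $\mathrm{Hom}(G,G)$ containing all the homomorphisms arising in the construction and exploiting translation-invariance of the cube average to pass between F\o lner sequences.
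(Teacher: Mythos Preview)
Your approach is essentially the same as the paper's: establish a cubic Khintchine inequality by induction (using the ergodic theorem and Cauchy--Schwarz at each step), specialize to the $\mathrm{Hom}(G,G)$-action $\varphi \mapsto T_{\varphi(g)}$, then interchange uniform Ces\`aro limits to pass from the joint average to the iterated one and extract the syndetic set $S$.

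The one place where your treatment diverges from the paper is the handling of the uncountability of $\mathrm{Hom}(G,G)$, and your suggested fix is not quite right. You propose restricting to a countable subgroup, but the conclusion demands syndeticity in the full group $\mathrm{Hom}(G,G)^k$, and before you can even state that, you must say what ``syndetic'' means in a potentially uncountable group. The paper resolves this by equipping $\mathrm{Hom}(G,G)$ with the topology of pointwise convergence, under which it is a locally compact abelian (hence amenable) group; syndeticity is then defined via compact sets, F{\o}lner sequences via Haar measure, and the interchange of UC-limits is handled by invoking a black-box Fubini lemma for amenable groups (Lemma~\ref{lem: Fubini}, taken from \cite{bl}), which requires only boundedness and continuity of the map together with existence of the relevant limits. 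This is both more general and less delicate than trying to reduce to the countable case, and it sidesteps the uniformity issue you anticipated: the lemma does not require uniform convergence of the inner limit, only pointwise existence plus existence of the joint limit.
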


Before proving Theorem \ref{thm: cubic Khintchine}, we need to unpack some definitions.
The collection of homomorphisms from $G$ to itself, $\Hom(G,G)$,
is an abelian group under pointwise addition $(\varphi + \psi)(g) := \varphi(g) + \psi(g)$.
Even for countable $G$, the group $\Hom(G,G)$ may be uncountable.
For example, $\Hom(\F_p^{\infty}, \F_p^{\infty})$ contains the group $\Hom(\F_p^{\infty}, \F_p) \cong \hat{\F_p^{\infty}} \cong \prod_{n=1}^{\infty}{\F_p}$.
To discuss uniform Ces\`{a}ro limits and syndetic sets in uncountable groups, we need to introduce a toplogy.

The natural topology to put on $\Hom(G,G)$ is the topology of pointwise convergence.\footnote{When dealing with groups $G$ that are not discrete, the appropriate topology is the compact-open topology.}
That is, $\varphi_n \to \varphi$ if and only if for every $g \in G$, $\varphi_n(g) = \varphi(g)$ for all large enough $n$.
Note that this topology has a basis of clopen sets of the form $U_{g_1, \dots, g_n}(\varphi_0) := \left\{ \varphi \in \Hom(G,G) : \varphi(g_i) = \varphi_0(g_i)~\text{for}~i = 1, \dots, n \right\}$.
It is an easy exercise to check that these sets are also compact, so $\Hom(G,G)$ is a locally compact abelian group.

In this setting, we say $S \subseteq \Hom(G,G)$ is \emph{syndetic} if there is a compact set $K \subseteq \Hom(G,G)$ such that $S + K = \Hom(G,G)$.
A sequence of compact subsets $(F_N)_{N \in \N}$ is a \emph{F{\o}lner sequence} if for all $\varphi \in \Hom(G,G)$,
$\frac{m \left( (F_N + \varphi) \triangle F_N \right)}{m(F_N)} \to 0$,
where $m$ is the Haar measure on $\Hom(G,G)$.\footnote{Note that these definitions are consistent with the definitions of syndetic sets and F{\o}lner sequences given for countable discrete groups in the introduction.}
It is still true that a set is syndetic if and only if it intersects every F{\o}lner sequence
(see Lemma \ref{lem: syndetic thick}),
so uniform Ces\`{a}ro limits remain useful for proving syndeticity in this more general context.

The key tool for proving Theorem \ref{thm: cubic Khintchine} is a ``Fubini'' theorem for (locally compact) amenable groups proven in \cite{bl}:

\begin{lem}[\cite{bl}, Lemma 1.1] \label{lem: Fubini}
	Let $G, H$ be amenable groups, and let $(h,g) \mapsto v_{h,g}$ be a bounded continuous map
	from $H \times G$ to a Banach space $V$.
	Assume that $\UClim_{(h,g) \in H \times G}{v_{h,g}}$ exists and for every $g \in G$,
	$\UClim_{h \in H}{v_{h,g}}$ exists.
	Then
	\begin{align*}
		\UClim_{(h,g) \in H \times G}{v_{h,g}} = \UClim_{g \in G}{\UClim_{h \in H}{v_{h,g}}}.
	\end{align*}
\end{lem}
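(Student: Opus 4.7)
The plan is to show that the iterated uniform Ces\`{a}ro limit exists and equals the joint limit. Writing $L := \UClim_{(h,g)\in H\times G} v_{h,g}$ and $w_g := \UClim_{h\in H} v_{h,g}$, by the definition of the uniform Ces\`{a}ro limit it suffices to fix an arbitrary F{\o}lner sequence $(\Phi_N)_{N\in\N}$ in $G$ and prove
\[
\frac{1}{|\Phi_N|}\sum_{g\in\Phi_N} w_g \longrightarrow L.
\]
I will couple $(\Phi_N)$ with a F{\o}lner sequence in $H$ and then use a diagonal argument, so that the joint-limit hypothesis can be applied to a product F{\o}lner sequence in $H\times G$.

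First, fix any F{\o}lner sequence $(F_N)_{N\in\N}$ in $H$. The key observation is that for any function $M\colon\N\to\N$ with $M(N)\to\infty$, the product $F_{M(N)}\times\Phi_N$ is a F{\o}lner sequence in $H\times G$. Indeed, for each $(h_0,g_0)\in H\times G$ one has
\[
\frac{\bigl|((F_{M(N)}\times\Phi_N)+(h_0,g_0))\triangle(F_{M(N)}\times\Phi_N)\bigr|}{|F_{M(N)}|\cdot|\Phi_N|} \leq \frac{|(F_{M(N)}+h_0)\triangle F_{M(N)}|}{|F_{M(N)}|} + \frac{|(\Phi_N+g_0)\triangle\Phi_N|}{|\Phi_N|},
\]
and both summands tend to $0$ as $N\to\infty$.

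The heart of the argument is a diagonalization that compensates for the failure of the inner averages to converge uniformly in $g$. For each $N$ and each $g\in\Phi_N$, existence of $w_g$ gives $M(N,g)\in\N$ with
\[
\left\| \frac{1}{|F_M|}\sum_{h\in F_M} v_{h,g} - w_g \right\| < \frac{1}{N}\quad\text{for all } M\ge M(N,g).
\]
Since $\Phi_N$ is finite, the number $M(N) := \max\bigl(\{N\}\cup\{M(N,g):g\in\Phi_N\}\bigr)$ is finite and satisfies $M(N)\to\infty$. The triangle inequality then gives
\[
\left\| \frac{1}{|\Phi_N|}\sum_{g\in\Phi_N} w_g \;-\; \frac{1}{|F_{M(N)}|\cdot|\Phi_N|}\sum_{(h,g)\in F_{M(N)}\times\Phi_N} v_{h,g} \right\| \leq \frac{1}{N}.
\]
Applying the hypothesis to the F{\o}lner sequence $F_{M(N)}\times\Phi_N$, the second term converges to $L$, so the iterated average on the left converges to $L$ as well.

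The step requiring the most care is the construction of $M(N)$: it must grow to infinity along the diagonal, but also dominate the finitely many thresholds $M(N,g)$ for $g\in\Phi_N$. Finiteness of the F{\o}lner sets makes this routine in the countable discrete setting handled by the preceding sections of the paper; in the general locally compact amenable setting one would replace $\max$ with a supremum controlled via continuity of $v$ together with compactness of $\Phi_N$, an adaptation requiring only minor modifications.
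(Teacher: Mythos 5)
The paper itself offers no proof of this lemma---it is quoted from \cite{bl}---so there is nothing internal to compare against; I can only judge your argument on its own terms. For countable discrete $G$ and $H$ it is correct and complete: the product $F_{M(N)}\times\Phi_N$ is indeed a F{\o}lner sequence in $H\times G$ whenever $M(N)\to\infty$ (your symmetric-difference estimate is right), your diagonal choice of $M(N)$ as a maximum over the finitely many thresholds $M(N,g)$, $g\in\Phi_N$, makes the iterated average lie within $1/N$ of the average over $F_{M(N)}\times\Phi_N$, and the joint-limit hypothesis---being a statement about \emph{all} F{\o}lner sequences in $H\times G$---applies to this particular one; since $(\Phi_N)$ was arbitrary, $\UClim_{g\in G} w_g = L$ follows.

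The one step you treat too casually is the extension to the setting in which the paper actually invokes the lemma: in the proof of Theorem \ref{thm: cubic Khintchine} the groups are of the form $\Hom(G,G)^k$, which are locally compact but not discrete, so $\Phi_N$ is an uncountable compact set and the ``max over $g\in\Phi_N$'' is unavailable. Your suggested repair---controlling a supremum via continuity of $v$ and compactness of $\Phi_N$---does not work as stated: pointwise convergence of the inner averages $\frac{1}{|F_M|}\int_{F_M} v_{h,g}\,dh \to w_g$ together with continuity of $v$ does not give uniform convergence on compacta (one would need some equicontinuity in $g$ of these averages, which is not part of the hypotheses). The cheap fix uses the boundedness assumption, which your argument otherwise never touches: the error $e_M(g) := \bigl\| \frac{1}{|F_M|}\int_{F_M} v_{h,g}\,dh - w_g \bigr\|$ is bounded and tends to $0$ pointwise in $g$, and it is measurable (the averages are continuous in $g$, and $g\mapsto w_g$ is a pointwise limit of such functions), so by dominated convergence $\frac{1}{|\Phi_N|}\int_{\Phi_N} e_M(g)\,dg \to 0$ as $M\to\infty$ for each fixed $N$; choosing $M(N)\ge N$ so that this average is below $1/N$ replaces your maximum, and the rest of your argument goes through verbatim with sums replaced by Haar integrals. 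With that modification your proof is a correct, self-contained substitute for the citation, and it is in the same diagonalization-over-product-F{\o}lner-sets spirit as the original argument of \cite{bl}.
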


\begin{proof}[Proof of Theorem \ref{thm: cubic Khintchine}]
	First we will apply Lemma \ref{lem: Fubini} to show that
	\begin{align} \label{eq: cubic avg}
		\UClim_{(\varphi_1, \dots, \varphi_k; g)}{\mu \left( \bigcap_{\eps \in \{0,1\}^k}{
				T_{(\eps_1\varphi_1 + \eps_2\varphi_2 + \cdots + \eps_k\varphi_k)(g)}^{-1}A} \right)}
		\ge \mu(A)^{2^k}.
	\end{align}
	This implies that the set of $(\varphi_1, \dots, \varphi_k; g)$ with large intersections is syndetic.
	We then apply Lemma \ref{lem: Fubini} again to obtain a syndetic slice $S \subseteq \Hom(G,G)^k$
	for which syndetically many $g \in G$ produce large intersections.
	
	\bigskip
	
	We know that for every $g \in G$
	\begin{align*}
		\UClim_{\varphi_1 \in \Hom(G,G)}{\mu(A \cap T_{\varphi_1(g)}^{-1}A)} \ge \mu(A)^2.
	\end{align*}
	This is the content of (a general version of) Khintchine's recurrence theorem
	for the action of $\Hom(G,G)$ given by $S_{\varphi} := T_{\varphi(g)}$,
	and it follows immediately from the ergodic theorem and Cauchy--Schwarz.
	Applying Lemma \ref{lem: Fubini}, we therefore have
	\begin{align*}
		\UClim_{(\varphi_1; g)}{\mu(A \cap T_{\varphi_1(g)}^{-1}A)}
		& = \UClim_{g}{\UClim_{\varphi_1}{\mu(A \cap T_{\varphi_1(g)}^{-1}A)}} \\
		& \ge \UClim_{g}{\mu(A)^2} \\
		& = \mu(A)^2,
	\end{align*}
	which proves the $k = 1$ case of \eqref{eq: cubic avg}.
	
	Now suppose \eqref{eq: cubic avg} holds for some $k \ge 1$.
	For notational convenience, let
	\begin{align*}
		A_{\varphi_1, \dots, \varphi_k; g} := \bigcap_{\eps \in \{0,1\}^k}{
			T_{(\eps_1\varphi_1 + \eps_2\varphi_2 + \cdots + \eps_k\varphi_k)(g)}^{-1}A}
	\end{align*}
	for $(\varphi_1, \dots, \varphi_k; g)$.
	In this notation, the induction hypothesis says
	\begin{align*}
		\UClim_{(\varphi_1, \dots, \varphi_k; g)}{\mu \left( A_{\varphi_1, \dots, \varphi_k; g} \right)}
		\ge \mu(A)^{2^k}.
	\end{align*}
	Applying Lemma \ref{lem: Fubini}, we have
	\begin{align*}
		& \UClim_{(\varphi_1, \dots, \varphi_k, \varphi_{k+1}; g)}{
			\mu \left( \bigcap_{\eps \in \{0,1\}^{k+1}}{
				T_{(\eps_1\varphi_1 + \eps_2\varphi_2 + \cdots
					+ \eps_k\varphi_k + \eps_{k+1}\varphi_{k+1})(g)}^{-1}A} \right)} \\
		& = \UClim_{(\varphi_1, \dots, \varphi_k; g)}{\UClim_{\varphi_{k+1}}{
				\mu \left( A_{\varphi_1, \dots, \varphi_k; g}
				\cap T_{\varphi_{k+1}(g)}^{-1} A_{\varphi_1, \dots, \varphi_k; g} \right)}} \\
		& \ge \UClim_{(\varphi_1, \dots, \varphi_k; g)}{\mu\left( A_{\varphi_1, \dots, \varphi_k; g} \right)^2} \\
		& \ge \left( \UClim_{(\varphi_1, \dots, \varphi_k; g)}
		{\mu \left( A_{\varphi_1, \dots, \varphi_k; g} \right)} \right)^2 \\
		& \ge \left( \mu(A)^{2^k} \right)^2 \\
		& = \mu(A)^{2^{k+1}}.
	\end{align*}
	Thus, \eqref{eq: cubic avg} holds by induction.
	
	Now, we can apply Lemma \ref{lem: Fubini} one final time to obtain the inequality
	\begin{align*}
		\UClim_{(\varphi_1, \dots, \varphi_k)}{\UClim_{g}{
				\mu \left( \bigcap_{\eps \in \{0,1\}^k}{
					T_{(\eps_1\varphi_1 + \eps_2\varphi_2 + \cdots + \eps_k\varphi_k)(g)}^{-1}A} \right)}}
		\ge \mu(A)^{2^k}.
	\end{align*}
	It follows that the set
	\begin{align*}
		S := \left\{ (\varphi_1, \dots, \varphi_k) \in \Hom(G,G)^k :
		R_{\eps}(\varphi_1, \dots, \varphi_k)~\text{is syndetic in}~G \right\}
	\end{align*}
	is syndetic in $\Hom(G,G)^k$.
\end{proof}


\section*{Acknowledgments} 
We thank Or Shalom for detecting an erroneous formula in an earlier version of this paper. This necessitated significant revisions, which are reflected in Sections \ref{sec: limit formula 3} and \ref{sec: tK}.
We also thank an anonymous referee for helpful comments and drawing our attention to the paper \cite{dlms}.

\bibliographystyle{amsplain}




\begin{dajauthors}
\begin{authorinfo}[ack]
  Ethan Ackelsberg\\
  Ohio State University\\
  Columbus, Ohio, USA\\
  ackelsberg\imagedot{}1\imageat{}buckeyemail\imagedot{}osu\imagedot{}edu
\end{authorinfo}
\begin{authorinfo}[ber]
	Vitaly Bergelson\\
    Ohio State University\\
  Columbus, Ohio, USA\\
  vitaly\imageat{}math\imagedot{}ohio-state\imagedot{}edu
\end{authorinfo}
\begin{authorinfo}[bes]
	Andrew Best\\
    Ohio State University\\
  Columbus, Ohio, USA\\
  best\imagedot{}221\imageat{}buckeyemail\imagedot{}osu\imagedot{}edu
\end{authorinfo}
\end{dajauthors}

\end{document}